\let\printindex\undef
\setlist{nosep}
\setlist[enumerate, 1]{label={\rm (}\emph{\alph*}{\rm )}}
\setlist[enumerate, 2]{label={\rm (}\emph{\alph{enumi}.\arabic*}{\rm )}}
\newcommand{\@m@mchapter}[1][]{%
  \def\ch@pt@c{#1}
  \@ifnextchar[{\@chapter}{\@chapter[]}%
}
\def\m@m@empty{\@empty}
\def\chapter{\cleardoublepage
  \thispagestyle{empty}\global\@topnum\z@
  \@afterindenttrue
  \@ifstar{\@schapter}{\@m@mchapter}}
\def\@chapter[#1]#2{\refstepcounter{chapter}%
  \def\f@rbdy{#2}%
  \ifx\ch@pt@c\@empty 
    \def\f@rtoc{#2}%
    \def\f@rhdr{#2}%
  \else                  
    \let\f@rtoc\ch@pt@c
    \ifx\@empty#1\@empty
      \let\f@rhdr\ch@pt@c
    \else
      \def\f@rhdr{#1}%
    \fi
  \fi
  \ifnum\c@secnumdepth<\z@ \let\@secnumber\@empty
  \else \let\@secnumber\thechapter \fi
  \typeout{\chaptername\space\@secnumber}%
  \def\@toclevel{0}%
  \ifx\chaptername\appendixname
     \@tocwriteb\tocappendix{chapter}{\f@rtoc}%
  \else \@tocwriteb\tocchapter{chapter}{\f@rtoc}\fi
  \chaptermark{\f@rhdr}%
  \addtocontents{lof}{\protect\addvspace{10\p@}}%
  \addtocontents{lot}{\protect\addvspace{10\p@}}%
  \@makechapterhead{\f@rbdy}\@afterheading}
\def\smf@titleii{\clearpage
  \thispagestyle{empty}
  \ifx\@empty\addresses\else \@setaddresses\fi
  \vfill
  \ifx\@empty\@subjclass
     \ifx\@empty\@keywords\else
        \hrule\medskip\@setkeywords\medskip\hrule\vfill\fi
  \else
     \hrule\medskip\@setsubjclass\par\medskip
     \ifx\@empty\@keywords\else\@setkeywords\par\medskip\fi
     \ifx\@empty\@keywords\else\@setaltkeywords\par\medskip\fi
     \hrule\vfill
  \fi
  \ifx\@empty\@translators\else\@settranslators\bigskip\fi
  \ifx\@empty\thankses\else\@setthanks\par\fi
  \ifx\@empty\@date\else\@setdate\fi
  \vfilneg
}
\patchcmd{\smf@captionsfrench}{Bibliographie}{Références}{}{}
\theoremstyle{plain}
\newtheorem{thm}{Théorème}
\newtheorem{prop}[thm]{Proposition}
\newtheorem{lemme}[thm]{Lemme}
\newtheorem{coro}[thm]{Corollaire}
\newtheorem{conj}[thm]{Conjecture}
\theoremstyle{remark}
\newtheorem{rem}[thm]{Remarque}
\newtheorem{exem}[thm]{Exemple}
\theoremstyle{definition}
\newtheorem{paragr}[thm]{}
\newtheorem*{notations}{Notations et terminologie}
\newtheorem*{remerciements}{Remerciements}
\newtheorem*{organisation}{Organisation de l'article}
\numberwithin{equation}{thm}
\preto\chapter{\numberwithin{thm}{chapter}}
\preto\section{\numberwithin{thm}{section}}
\newcommand\termindex[1]{%
  \Hy@raisedlink{\hypertarget{\thenoentry}{}}%
  \label{\thenoentry}%
  \expandafter\xdef\csname idx\thenoentry\endcsname{\thethm}%
  \sindex[term]{#1|linkindex}%
  \stepcounter{noentry}%
}
\newcounter{noentry}
\newcommand\notindex[1]{%
  \Hy@raisedlink{\hypertarget{\thenoentry}{}}%
  \label{\thenoentry}%
  \expandafter\xdef\csname idx\thenoentry\endcsname{\thethm}%
  \sindex[not]{\thenoentry @\detokenize{#1}|linkindex}%
  \stepcounter{noentry}%
}
\newcommand\linkindex[1]{%
  \ifnum\pdf@strcmp{\@nameuse{idx#1}}{0.0}=0%
    \hyperlink{#1}{\hbox{p.~\pageref*{#1}, \S~Notations et terminologie}}%
  \else
    \hyperlink{#1}{p.~\pageref*{#1},~\S~\@nameuse{idx#1}}%
  \fi
}
\newcommand\ndef[2][\@empty]{%
  \@ifmtarg{#1}{}{%
  \Hy@raisedlink{\hypertarget{\thethm}{}}%
  \ifx\@empty#1%
    \termindex{#2}%
  \else%
    \termindex{#1}%
  \fi%
  }%
  \emph{#2}%
}
\newcommand\nnot[2][\@empty]{%
  \@ifmtarg{#1}{}{%
  \Hy@raisedlink{\hypertarget{\thethm}{}}%
  \ifx\@empty#1%
    \notindex{#2}%
  \else%
    \notindex{#1}%
  \fi%
  }%
  #2%
}
\newcommand\forlang\emph
\newcommand\noemph\emph
\newcommand\nbd\nobreakdash
\newcommand\nquad{\mkern-18mu}
\def\xpoint{\futurelet\@let@token\@xpoint}
\def\@xpoint{%
  \ifx\@let@token.\else
    .%
  \fi
  \xspace}
\newcommand{\setstretch}[1]{%
  \def\baselinestretch{#1}%
  \@currsize
}
\newcommand\lp(
\newcommand\rp)
\newcommand\zbox[1]{\makebox[0pt][l]{#1}}
\newcommand\pbox[1]{\zbox{\quad#1}}
\newcommand\defssi{\overset{\text{\tiny déf}}{\Longleftrightarrow}}
\newcommand\quadtext[1]{\quad\text{#1}\quad}
\newcommand\quadmath[1]{\quadtext{$#1$}}
\newcommand\quadimpl{\quadmath{\Rightarrow}}
\newcommand\quadet{\quadtext{et}}
\newcommand\quadssi{\quadtext{si et seulement si}}
\newcommand\quaddefssi{\quadmath{\defssi}}
\renewcommand\le\leqslant
\renewcommand\ge\geqslant
\renewcommand\epsilon\varepsilon
\renewcommand\phi\varphi
\newcommand\longto\longrightarrow
\newcommand\ot\leftarrow
\newcommand\longot\longleftarrow
\newcommand\hookto\hookrightarrow
\newcommand\xto\xrightarrow
\newcommand\xot\xleftarrow
\newcommand\xlongto[1]{\xrightarrow{\,\,#1\,\,}}
\newcommand\e{\epsilon}
\newcommand\ep{{\e'}}
\newcommand\epd{{1-\e'}}
\newcommand\var\bullet
\newcommand\lec{\preccurlyeq}
\newcommand\leN{\le_\N}
\newcommand\lNv{<^\vide_\N}
\newcommand\leNv{\le^\vide_\N}
\newcommand\geNv{\ge^\vide_\N}
\newcommand{\sauf}{\mathchoice{\raise 1.8pt\hbox{${\scriptstyle\kern
2.5pt\smallsetminus\kern 2.5pt}$}}{\raise 1.8pt\hbox{${\scriptstyle\kern
2.5pt\smallsetminus\kern 2.5pt}$}}{\raise
1.8pt\hbox{${\scriptscriptstyle\kern 1.5pt\smallsetminus\kern
1.5pt}$}}{\raise 1.8pt\hbox{${\scriptscriptstyle\kern
1.5pt\smallsetminus\kern 1.5pt}$}}}
\newcommand\Homig{\Homi^{\mathrm{g}}}
\newcommand\HomOpLax{\Homi_{\mathrm{oplax}}}
\newcommand\HomLax{\Homi_{\mathrm{lax}}}
\newcommand\joint\star
\newcommand\jointc{\star^{\text{c}}}
\newcommand\vide\varnothing
\newcommand\otimesG{\otimes_\text{G}}
\newcommand\susp[1]{\Sigma{#1}}
\newcommand\aug{e}
\newcommand\opp{\mathrm{op}}
\newcommand\co{\mathrm{co}}
\newcommand\coop{\mathrm{coop}}
\newcommand\pdfoo{\texorpdfstring{$\infty$}{\unichar{"221E}}}
\newcommand\pdfn{\texorpdfstring{$n$}{n}}
\newcommand\pdfTheta{\texorpdfstring{$\Theta$}{Theta}}
\newcommand\Z{\mathbb{Z}}
\newcommand\N{\mathbb{N}}
\newcommand\limind\varinjlim
\newcommand\limproj\varprojlim
\renewcommand\C{\mathcal{C}}
\newcommand\D{\mathcal{D}}
\newcommand{\Hom}{\operatorname{\mathsf{Hom}}}
\newcommand{\Homi}{\operatorname{\kern.5truept\underline{\kern-.5truept\mathsf{Hom}\kern-.5truept}\kern1truept}}
\newcommand{\Homg}{\Homi^{\mathrm{g}}}
\newcommand{\Homd}{\Homi^{\mathrm{d}}}
\newcommand{\pref}[1]{{\widehat{ #1 }}}
\newcommand{\faisc}[1]{{\widehat{#1}_{\mathrm{glob}}}}
\newcommand{\faiscp}[1]{{(\widehat{#1})_{\mathrm{glob}}}}
\newcommand\id[1]{1_{#1}}
\newcommand\op\circ
\newcommand{\Ob}{\operatorname{\mathsf{Ob}}}
\newcommand{\Fl}{\operatorname{\mathsf{Fl}}}
\newcommand{\Ens}{{\mathcal{E}\mspace{-2.mu}\it{ns}}}
\newcommand{\Ab}{{\mathcal{A}\mspace{-2.mu}\it{b}}}
\newcommand{\Cat}{{\mathcal{C}\mspace{-2.mu}\it{at}}}
\newcommand{\nCatTiny}[1]{\text{$#1$-$\Cat$}}
\newcommand{\nCat}[1]{{#1}\hbox{\protect\nbd-}\kern1pt\Cat}
\newcommand{\ooCat}{\nCat{\infty}}
\newcommand{\ooCatTiny}{\nCatTiny{\infty}}
\newcommand\oo{$\infty$\nbd}
\newcommand{\FlTord}{\operatorname{\widetilde{\mathsf{Fl}}}}
\newcommand{\FlTordG}{\operatorname{\widetilde{\mathsf{Fl}}}_{\text{g}}}
\newcommand{\FlTordD}{\operatorname{\widetilde{\mathsf{Fl}}}_{\text{d}}}
\newcommand\comp\ast
\newcommand{\tb}[1]{\tau_{\le #1}^{\mathrm b}}
\newcommand{\ti}[1]{\tau_{\le #1}^{\mathrm i}}
\newcommand\dual[1]{D_{#1}}
\newcommand\cocatD{\mathrm{D}_{\bullet}}
\newcommand\Dn[1]{\mathrm{D}_{#1}}
\newcommand\amalgDn[1]{\amalg_{\Dn{#1}}}
\newcommand\Homcocat\Hom
\newcommand\ThetanAug[1]{(\Theta_{#1})_+}
\newcommand\ThetaAug{\Theta_+}
\newcommand\DeltaAug{\cDelta_+}
\newcommand\cDelta{\mathbf{\Delta}}
\newcommand\cDeltaAug{\mathbf{\Delta}_{+}}
\newcommand\Deltan[1]{\varDelta_{#1}}
\newcommand\OAug{\mathcal{O}_+}
\newcommand\On[1]{\mathcal{O}_{#1}}
\newcommand{\tr}[2]{\mathchoice
  {#1\raise -1.8pt\vbox{\hbox{$\kern -.8pt/#2$}}}
  {#1\raise -1.8pt\vbox{\hbox{$\kern -.8pt/#2$}}\kern .8pt}
  {#1\raise -1.8pt\vbox{\hbox{$\scriptstyle\kern -.8pt /#2$}}}
  {#1\raise -1.8pt\vbox{\hbox{$\scriptscriptstyle\kern -.8pt /#2$}}}}
\newcommand{\trm}[2]{\mathchoice
  {#1\raise -1.8pt\vbox{\hbox{$\kern -.8pt\!\stackrel{\,\,\,\rm co}{/}\!\!#2$}}}
  {#1\raise -1.8pt\vbox{\hbox{$\kern -.8pt\!\stackrel{\,\,\,\rm co}{/}\!\!#2$}}\kern .8pt}
  {#1\raise -1.8pt\vbox{\hbox{$\scriptstyle\kern -.8pt\!\!\stackrel{\,\,\,\rm co}{/}\!\!#2$}}\kern .8pt}
  {TODO}}
\newcommand{\cotr}[2]{\mathchoice
  {\raise -1.8pt\vbox{\hbox{$#2\backslash$}}#1}
  {\raise -1.8pt\vbox{\hbox{$#2\backslash$}}#1}
  {\raise -1.8pt\vbox{\hbox{$\scriptstyle#2\backslash$}}#1}
  {\raise -1.8pt\vbox{\hbox{$\scriptscriptstyle#2\backslash$}}#1}}
\newcommand{\cotrm}[2]{\mathchoice
  {\raise -1.8pt\vbox{\hbox{$#2\!\stackrel{\!\!\!\!\rm co}{\backslash}$}}#1}
  {\raise -1.8pt\vbox{\hbox{$#2\!\stackrel{\!\!\!\!\rm co}{\backslash}$}}#1}
  {\raise -1.8pt\vbox{\hbox{$\scriptstyle#2\stackrel{\!\!\!\rm
  co}{\backslash}$}}#1}
  {TODO}}
\newcommand{\tru}[2]{\mathchoice
  {#1\raise -1.8pt\vbox{\hbox{$\kern -.8pt/^1#2$}}}
  {#1\raise -1.8pt\vbox{\hbox{$\kern -.8pt\overset{\,\scriptscriptstyle1}{/}#2$}}\kern .8pt}
  {#1\raise -1.8pt\vbox{\hbox{$\scriptstyle\kern -.8pt /^1#2$}}}
  {#1\raise -1.8pt\vbox{\hbox{$\scriptscriptstyle\kern -.8pt /^1#2$}}}}
\newcommand{\cotru}[2]{\mathchoice
  {\raise -1.8pt\vbox{\hbox{$#2^1\backslash$}}#1}
  {\raise -1.8pt\vbox{\hbox{$#2\overset{\!\scriptscriptstyle1}{\backslash}$}}#1}
  {\raise -1.8pt\vbox{\hbox{$\scriptstyle#2^1\backslash$}}#1}
  {\raise -1.8pt\vbox{\hbox{$\scriptscriptstyle#2^1\backslash$}}#1}}
\newcommand{\cotrCeg}[2]{#2/\hspace{-3pt}/#1}
\newcommand{\Cda}{\mathcal{C}_{\mathrm{da}}}
\newcommand{\CdaSigma}{\mathcal{C}_{\mathrm{da}}^\Sigma}
\newcommand{\CdaSSigma}{\mathcal{C}_{\mathrm{da}}^{\phantom{\Sigma}}}
\newcommand{\nCda}[1]{{#1}\hbox{\protect\nbd-}\mathcal{C}_{\mathrm{da}}}
\newcommand\Stf{{\mathcal{S}t}_{\mathrm{f}}}
\newcommand{\atom}[1]{\langle{#1}\rangle}
\newcommand{\tablm}[2]{\begin{pmatrix}#1^0_0 &\dots
  &#1^0_{#2}\cr\noalign{\vskip 3pt} #1^1_0 &\dots
  &#1^1_{#2}\end{pmatrix}}
\newcommand{\tabld}[2]{\begin{pmatrix}#1^0_0 &\dots &#1^0_{#2-1}
  &#1^0_{#2}\cr\noalign{\vskip 3pt} #1^1_0 &\dots &#1^1_{#2-1}
  &#1^1_{#2}\end{pmatrix}}
\newcommand{\tablnu}[3]{\begin{pmatrix}#1^0_0 &\dots &#1^0_{#2}
  &#3\cr\noalign{\vskip 3pt} #1^1_0 &\dots &#1^1_{#2} &#3\end{pmatrix}}
\newcommand{\tabll}[2]{\begin{pmatrix}#1^0_0 &#1^0_1 &\dots &#1^0_{#2-1}
  &#1^0_{#2}\cr\noalign{\vskip 3pt} #1^1_0 &#1^1_1 &\dots &#1^1_{#2-1}
  &#1^1_{#2}\end{pmatrix}}
\newcommand{\supp}{\operatorname{supp}}
\newcommand\Zdec{\underline{\mathbb{Z'}\kern -2.5pt}\kern 2pt}
\newcommand\Catmod{\mathcal{V}}    
\newcommand\VCatd{{\Vcat{V}^{\kern 1pt\mathrm d}}}       
\newcommand\VCatg{{\Vcat{V}^{\kern 1pt\mathrm g}}}       
\newcommand\CatmodI{I}             
\newcommand\Vcat[1]{\mathbb{#1}}   
\newcommand\VHom{\operatorname{\kern.5truept\underline{\kern-.5truept\mathsf{Hom}\kern-1truept}\kern1.5truept}^{}}    
\newcommand\Vcomp{\circ}           
\newcommand\VId{\mathsf{id}}       
\newcommand\ev{\mathsf{ev}}        
\newcommand\ooCatGr{\infty\hbox{\protect\nbd-}\mathbb{C}\mathsf{at}^{}_\mathrm{oplax}}              
\newcommand\ooCatGrg{\infty\hbox{\protect\nbd-}\mathbb{C}\mathsf{at}^{}_\mathrm{lax}}               
\newcommand\ooCatopl{\ooCat^{}_\mathrm{oplax}}
\newcommand\ooCatlax{\ooCat^{}_\mathrm{lax}}
\newcommand\transp{{}^t}            
\newcommand\HomCdad{\operatorname{\kern.5truept\underline{\kern-.5truept\mathsf{Hom}\kern-1.5truept}\kern1.9truept}^{\mathrm{d}}_{\Cda}}   
\newcommand\HomCdag{\operatorname{\kern.5truept\underline{\kern-.5truept\mathsf{Hom}\kern-1.5truept}\kern1.9truept}^{\mathrm{g}}_{\Cda}}   
\newcommand{\CdaGr}{\Vcat{C}^{\mathrm{d}}_{\mathsf da}}   
\newcommand{\CdaGrg}{\Vcat{C}^{\mathrm{g}}_{\mathsf da}}  
\author{Dimitri Ara}
\address{Aix~Marseille~Univ,~CNRS,~Centrale~Marseille,~I2M,~Marseille,~France}
\email{dimitri.ara@univ-amu.fr}
\urladdr{\href{http://www.i2m.univ-amu.fr/perso/dimitri.ara/}{http://www.i2m.univ-amu.fr/perso/dimitri.ara/}}
\author{Georges Maltsiniotis}
\address{%
Institut de Math\'ematiques de Jussieu\\
Universit\'e Paris 7 Denis Diderot\\
Case Postale 7012\\
B\^atiment Sophie Germain\\
75205 Paris Cedex 13\\
France}
\email{georges.maltsiniotis@imj-prg.fr}
\urladdr{\href{https://webusers.imj-prg.fr/~georges.maltsiniotis/}
{http://webusers.imj-prg.fr/%
\raise -3.3pt\vbox{\hbox{$\widetilde{ \ }\,$}}georges.maltsiniotis/}}
\title[Joint et tranches pour les \pdfoo-catégories strictes]{Joint et tranches
pour les\\ \pdfoo-catégories strictes}
\begin{document}

\frontmatter

\begin{abstract}
  Le but de cet article est de développer une théorie du joint et des
  tranches pour les \oo-catégories strictes. À deux \oo-catégories strictes,
  on en associe une troisième qu'on appelle leur joint. Cette opération est
  compatible au joint usuel des catégories à troncation près. On montre que
  le joint définit une structure de catégorie monoïdale sur la catégorie des
  \oo-catégories strictes et qu'il commute aux limites inductives connexes
  en chaque variable. En particulier, on obtient l'existence de certains
  adjoints à droite ; ces adjoints définissent des tranches
  \oo-catégoriques, en un sens généralisé. On énonce des conjectures de
  fonctorialité du joint et des tranches par rapport aux transformations lax
  et oplax supérieures et on démontre des premiers résultats dans ce sens.
  Ces résultats sont utilisés dans un autre travail pour établir un théorème
  A de Quillen \oo-catégorique. Enfin, dans un appendice, on revisite le
  produit tensoriel de Gray \oo-catégorique. Un des principaux outils
  utilisés dans ce travail est la théorie des complexes dirigés augmentés de
  Steiner.
\end{abstract}

\begin{altabstract}
  The goal of this paper is to develop a theory of join and slices for
  strict \oo-categories. To any pair of strict \oo-categories, we associate
  a third one that we call their join. This operation is compatible with the
  usual join of categories up to truncation. We show that the join defines a
  monoidal category structure on the category of strict \oo-categories and
  that it respects connected inductive limits in each variable. In
  particular, we obtain the existence of some right adjoints; these adjoints
  define \oo-categorical slices, in a generalized sense. We state some
  conjectures about the functoriality of the join and the slices with
  respect to higher lax and oplax transformations and we prove some first
  results in this direction.
  These results are used in another paper to establish a Quillen Theorem A
  for strict \oo-categories.
  Finally, in
  an appendix, we revisit the Gray tensor product of strict \oo-categories.
  One of the main tools used in this paper is Steiner's theory of augmented
  directed complexes.
\end{altabstract}

\subjclass{18A25, 18D05, 18D20, 18G30, 18G35, 18G55, 55U10, 55U15}

\keywords{$\infty$-catégories de Gray, $\infty$-catégories strictes,
catégories monoïdales, catégories monoïdales localement bifermées, complexes
dirigés augmentés, joint, nerf de Street, orientaux, produit tensoriel de
Gray, tranches, transformations lax}

\altkeywords{Gray $\infty$-categories, strict $\infty$-categories, monoidal
categories, locally biclosed monoidal categories, augmented directed
complexes, join, Street's nerve, orientals, Gray tensor product, slices, lax
transformations}

\maketitle

\tableofcontents
\renewcommand\bibname{Références}
\mainmatter

\setlength{\dimen0}{\textheight}
\addtolength{\dimen0}{-\topskip}
\divide\dimen0\baselineskip
\setlength{\textheight}{\number\dimen0 \baselineskip}
\addtolength{\textheight}{\topskip}

\chapter*{Introduction}

\kern-40pt

Ce travail, même s'il en est essentiellement indépendant, est issu d'un
projet consacré à la théorie homotopique des \oo-catégories strictes, projet
constitué actuellement des textes \cite{AraMaltsiNThom}, \cite{Ara2Thom},
\cite{AraMaltsiCondE}, \cite{AraMaltsiThmAI}, \cite{AraMaltsiThmAII},
\cite{AraThmB} et \cite{AraMaltsiNerfs}, ainsi que de l'article \cite{Gagna}
de Gagna. Ces travaux sont motivés par le fait que les \oo-catégories
strictes fournissent des modèles des types d'homotopie plus proches de
l'intuition géométrique que ceux fournis par les catégories.
Une description détaillée de ce projet et de ses motivations se
trouve dans l'introduction de \cite{AraMaltsiThmAI}. C'est en travaillant
sur un théorème A de Quillen pour les \oo-catégories strictes, résultat
principal de \cite{AraMaltsiThmAI} et \cite{AraMaltsiThmAII}, que le besoin
de définir une théorie du joint et des tranches \oo-catégoriques
généralisées s'est fait sentir. En effet, non seulement l'énoncé même du
théorème A fait intervenir des tranches du type~$\cotr{C}{c}$, où $c$ est un
objet de $C$, mais surtout, sa démonstration, déjà pour les
$2$\nbd-catégories strictes, fait intervenir des tranches de la forme
$\cotr{C}{c}$, où $c$ est un $n$-simplexe du nerf de~$C$ (voir la preuve
de~\hbox{\cite[théorème 2.16]{ChicheThmA}}, preuve inspirée des références
originales~\cite{BullCegGeom2Cat} et~\cite{CegThmB}).  La motivation
initiale du présent travail était de fournir les outils pour définir et
étudier ces tranches généralisées pour les \oo-catégories en vue d'une
démonstration d'un théorème~A \oo-catégorique. Néanmoins, les notions de
joint et de tranches \oo-catégoriques sont, nous semble-t-il, des notions
fondamentales de la théorie des \oo-catégories strictes dont l'intérêt
dépasse largement les applications qui les ont motivées.

\medskip

Commençons par rappeler la situation pour le joint et les tranches en
dimension~$1$, c'est-à-dire pour les catégories. Si $A$ et $B$ sont deux
catégories, on définit une nouvelle catégorie $A \joint B$, appelée le
\ndef[]{joint} de $A$ et $B$, de la manière suivante. Le graphe sous-jacent à
$A \joint B$ est le graphe sous-jacent à la somme disjointe $A \amalg B$
auquel on adjoint une flèche $j_{b,a}$ de $a$ vers $b$ pour tout couple $(a,
b)$ formé d'un objet $a$ de $A$ et d'un objet $b$ de $B$. Les identités et
la composition sont définies de la manière évidente. On obtient ainsi un
foncteur
\[
  \begin{split}
    \Cat \times \Cat & \to \Cat \\
    (A, B) \,\,\, & \mapsto A \joint B \pbox{,}
  \end{split}
\]
où $\Cat$ désigne la catégorie des petites catégories.
On vérifie facilement que le joint définit une structure de catégorie
monoïdale sur $\Cat$ d'unité la catégorie vide. Cette structure
n'est pas bifermée mais est localement bifermée au sens suivant : pour toutes
petites catégories $A$ et $B$, les foncteurs
\[
  \begin{split}
    \Cat & \to \cotr{\Cat}{A} \\
    B & \mapsto (A \joint B, \iota_1 : A \to A \joint B)
  \end{split}
  \quadet
  \begin{split}
    \Cat & \to \cotr{\Cat}{B} \\
    A & \mapsto (A \joint B, \iota_2 : B \to A \joint B) \pbox{,}
  \end{split}
\]
où
\[
  A \xto{\iota_1} A \joint B \xot{\iota_2} B
\]
désignent les foncteurs canoniques, admettent des adjoints à droite. On
obtient ainsi des foncteurs
\[
  \begin{split}
    \cotr{\Cat}{A} & \to \Cat \\
    (C, u : A \to C) & \mapsto \cotr{C}{u}
  \end{split}
  \quadet
  \begin{split}
    \cotr{\Cat}{B} & \to \Cat \\
    (C, v : B \to C) & \mapsto \tr{C}{v}
  \end{split}
\]
qu'on appelle respectivement les foncteurs \ndef[]{tranches généralisées
au-dessous} et \ndef[]{au-dessus}. Explicitement, si $u : A \to C$ est un
foncteur, les objets de la catégorie $\cotr{C}{u}$ sont les cônes inductifs
sur le diagramme $u$ et les morphismes sont les morphismes de cônes en un
sens évident. De même pour la catégorie $\tr{C}{v}$ et les cônes projectifs.
Si $c$ est un objet de $C$, on peut considérer $c$ comme un foncteur $c : e
\to C$, où $e$ désigne la catégorie finale, et la catégorie $\cotr{C}{c}$ au
sens précédent n'est autre que la tranche usuelle.  Ainsi, les tranches
$\cotr{C}{u}$ sont des tranches généralisées au sens où on considère des
tranches au-dessous d'un diagramme quelconque à valeur dans $C$ et non pas
seulement d'un objet de $C$.

Ce point de vue sur les tranches ne joue traditionnellement pas un rôle
important en théorie des catégories, sans doute parce que les objets en jeu
sont simples à décrire explicitement. Néanmoins, le joint et ses deux
adjoints sont au c\oe{}ur de la théorie des quasi-catégories (voir par
exemple \cite{JoyalQuasiKan}) et sont un outil précieux pour obtenir la
structure de catégorie de modèles de Joyal \cite{JoyalQCatAppl} sur les
ensembles simpliciaux.

\medbreak

Le résultat principal de cet article est la généralisation du formalisme du
joint et des tranches à la catégorie $\ooCat$ des \oo-catégories strictes.
On définit un foncteur
\[
  \begin{split}
    \ooCat \times \ooCat & \to \ooCat \\
    (A, B) \qquad & \mapsto A \joint B
  \end{split}
\]
qu'on appelle le \ndef[]{joint \oo-catégorique}. Ce foncteur est compatible au
joint $1$-catégorique au sens suivant : si $A$ et $B$ sont deux catégories
vues comme des \oo-catégories, alors leur joint \oo-catégorique $A \joint B$
est une $3$-catégorie dont le tronqué $1$-catégorique, obtenu en appliquant
l'adjoint à gauche du foncteur d'inclusion de $\Cat$ dans $\ooCat$, est le
joint $1$-catégorique usuel. En effet, le joint \oo-catégorique $A \joint B$
s'obtient à partir du joint $1$-catégorique en ajoutant, pour tout objet $a$
de $A$ et toute flèche $g : b \to b'$ de~$B$, une $2$-flèche dans le
triangle formé de $g$, $j_{b, a}$ et $j_{b', a}$, pour toute flèche $f : a
\to a'$ de $A$ et tout objet~$b$ de~$B$, une $2$-flèche dans le triangle
formé de $f$, $j_{b, a}$ et $j_{b, a'}$ et, pour toute flèche de $A$ et
toute flèche de $B$, une $3$-flèche dans le tétraèdre formé des triangles du
type précédent, et en quotientant par les relations évidentes. Notre joint
\oo-catégorique est par ailleurs compatible au joint des ensembles
simpliciaux (voir par exemple~\cite[section 3]{JoyalQuasiKan}) au sens où,
si $X$ et $Y$ sont deux ensembles simpliciaux et si $c_\infty$ désigne
l'adjoint à gauche du nerf de Street \cite{StreetOrient}, on a un
isomorphisme canonique de \oo-catégories $c_\infty(X \joint Y) \simeq
c_\infty(X) \joint c_\infty(Y)$.

On montre que le joint \oo-catégorique définit une structure de catégorie
monoïdale sur $\ooCat$ d'unité la \oo-catégorie vide et que cette structure
est localement bifermée. Ainsi, pour toute \oo-catégorie~$A$ et toute
\oo-catégorie $B$, les foncteurs
\[
  \begin{split}
    \ooCat & \to \cotr{\ooCat}{A} \\
    B & \mapsto (A \joint B, \iota_1 : A \to A \joint B)
  \end{split}
  \quadet
  \begin{split}
    \ooCat & \to \cotr{\ooCat}{B} \\
    A & \mapsto (A \joint B, \iota_2 : B \to A \joint B) \pbox{,}
  \end{split}
\]
où
\[
  A \xto{\iota_1} A \joint B \xot{\iota_2} B
\]
désignent des \oo-foncteurs canoniques, admettent des adjoints à droite. On
obtient ainsi des foncteurs
\[
  \begin{split}
    \cotr{\ooCat}{A} & \to \ooCat \\
    (C, u : A \to C) & \mapsto \cotr{C}{u}
  \end{split}
  \quadet
  \begin{split}
    \cotr{\ooCat}{B} & \to \ooCat \\
    (C, v : B \to C) & \mapsto \trm{C}{v} \pbox{.}
  \end{split}
\]
Le premier foncteur définit les \ndef[]{tranches \oo-catégoriques généralisées
au-dessous}. Si $c$ est un objet de $C$, on peut considérer $c$ comme un
\oo-foncteur $c : e \to C$, où $e$ désigne la \oo-catégorie finale, et on
obtient une tranche $\cotr{C}{c}$ de $C$ au-dessous de l'objet $c$. Dans ce
cas particulier, on décrit explicitement la \oo-catégorie $\cotr{C}{c}$ et
on retrouve les formules connues décrivant les cônes $n$-catégoriques (voir
\cite{MetPolRes} pour le cas, plus général, des cylindres).

Nous avons décidé de réserver la notation $\tr{C}{v}$, ainsi que la terminologie «
tranches au-dessus », à une variante de la \oo-catégorie
$\smash{\trm{C}{v}}$. La catégorie $\ooCat$ possède un automorphisme
remarquable qui envoie une \oo-catégorie $C$ sur la \oo-catégorie $C^\op$
obtenue en renversant le sens des $i$-flèches de $C$ pour tout $i > 0$. Le
joint n'est pas compatible à cet automorphisme et on obtient un
foncteur
\[
  \begin{split}
    \ooCat \times \ooCat & \to \ooCat \\
    (A, B) \qquad & \mapsto (B^\op \joint A^\op)^\op
  \end{split}
\]
qui définit une seconde structure de catégorie monoïdale sur la catégorie des
\oo-catégories. Ce foncteur permet d'obtenir comme ci-dessus des foncteurs
\[
  \begin{split}
    \cotr{\ooCat}{A} & \to \ooCat \\
    (C, u : A \to C) & \mapsto \cotrm{C}{u}
  \end{split}
  \quadet
  \begin{split}
    \cotr{\ooCat}{B} & \to \ooCat \\
    (C, v : B \to C) & \mapsto \tr{C}{v} \pbox{.}
  \end{split}
\]
C'est ce second foncteur qui définit les \ndef[]{tranches
\oo-catégoriques généralisées au-dessus}.

Le choix de privilégier les \oo-catégories $\cotr{C}{u}$ et $\tr{C}{v}$ par
rapport aux \oo-ca\-tégories~$\smash{\cotrm{C}{u}}$ et $\smash{\trm{C}{v}}$
est motivé par le fait que les premières admettent une description beaucoup
plus simples que les dernières. Par ailleurs, on a des isomorphismes
canoniques
\[
    \tr{C}{v} \simeq (\cotr{C^\op}{v^\op})^\op
    \quadet
    \cotrm{C}{u} \simeq (\trm{C^\op}{u^\op})^\op.
\]

Si $A$ et $B$ sont deux $n$-catégories strictes, pour un $n \ge 0$, leur
joint est une \hbox{$(2n+1)$}\nbd-caté\-gorie stricte. En tronquant cette
$(2n+1)$-catégorie en dimension $n$ (c'est-à-dire en appliquant l'adjoint à
gauche du foncteur d'inclusion), on obtient une $n$\nbd-catégorie $A \joint_n B$
qu'on appelle leur joint $n$-catégorique. On montre qu'on définit ainsi une
structure de catégorie monoïdale sur $\nCat{n}$, la catégorie des
$n$\nbd-catégories strictes, et que cette structure est localement bifermée
comme ci-dessus. Lorsque $n = 1$, on retrouve la structure de catégorie
monoïdale définie par le joint $1$-catégorique usuel.

\medbreak

Pour construire ce joint \oo-catégorique, on adopte une stratégie inspirée,
d'une part, d'une esquisse de construction du produit tensoriel de Gray
\oo-catégorique donnée par Street dans \cite[section 9]{StreetDescent} et,
d'autre part, d'idées de Steiner pour construire ce même produit tensoriel
basées sur sa théorie des complexes dirigés augmentés \cite{Steiner}.

La théorie de Steiner associe à tout complexe dirigé augmenté, c'est-à-dire
à tout complexe de chaînes de groupes abéliens en degrés positifs augmenté et
muni en chaque degré d'un sous-monoïde des chaînes, une \oo-catégorie
stricte. Un des résultats importants de \cite{Steiner} donne des conditions
suffisantes pour que la \oo-catégorie ainsi associée soit libre au sens des
polygraphes.  La théorie de Steiner permet ainsi de décrire en termes de
complexes de chaînes une sous-catégorie pleine de $\ooCat$, que nous
appellerons la catégorie des \oo-catégories de Steiner fortes, qui contient
notamment la catégorie $\Theta$ de Joyal \cite{JoyalTheta}, les orientaux de
Street \cite{StreetOrient} et les cubes $n$-catégoriques. (Une théorie
alternative permettant de décrire ces \oo-catégories de manière combinatoire
est la théorie des complexes de parité de Street \cite{StreetParComp,
StreetParCompCorr}.)

Afin de construire le joint \oo-catégorique, on commence par décrire, en
termes de produit tensoriel de complexes de chaînes, le joint de deux
complexes dirigés augmentés, s'inspirant d'une construction analogue due à
Street \cite{StreetParComp} dans le cadre des complexes de parité.  On
obtient alors une structure de catégorie monoïdale sur la catégorie des
complexes dirigés augmentés. On montre que cette structure induit une
structure de catégorie monoïdale sur la catégorie des \oo-catégories de
Steiner fortes. Il s'agit ensuite d'étendre cette structure à la catégorie
de toutes les \oo-catégories strictes. Pour cela, suivant la stratégie de
Street pour définir le produit tensoriel de Gray
\cite[section~9]{StreetDescent}, on utilise un théorème d'extension de
structures de catégorie monoïdale à la Day \cite{DayFunct, DayRefl}.
Notre structure de catégorie monoïdale n'étant pas bifermée mais seulement
localement bifermée, les résultats de Day ne s'appliquent pas tels quels. On
est donc conduit à généraliser un théorème de Day au cas localement
bifermé.  Enfin, la partie la plus délicate de la construction du joint
\oo-catégorique est la vérification des hypothèses de ce théorème à la Day.
Il s'agit essentiellement de construire à la main les tranches $\cotr{C}{u}$
et $\smash{\trm{C}{v}}$ dans le cas où les sources de $u$ et $v$ sont des
\oo-catégories de Steiner fortes, et de vérifier que ces tranches ont les
propriétés universelles attendues.

\medbreak

Dans un appendice, on construit le produit tensoriel de Gray \oo-catégorique
en suivant une stratégie analogue. Rappelons que le produit tensoriel de
Gray \oo-catégorique est une généralisation \oo-catégorique du produit
tensoriel introduit par Gray dans~\cite{GrayFCT} sur la catégorie des
$2$-catégories strictes. Ce produit définit une structure de catégorie
monoïdale bifermée sur $\ooCat$. Le produit tensoriel de Gray
\oo-catégorique a été construit pour la première fois par Al-Agl et Steiner
\cite{AlAglSteiner}, généralisant une construction analogue pour les
\oo-groupoïdes due à Brown et Higgins \cite{BrownTensor}. Deux
constructions alternatives sont données par Crans dans sa thèse
\cite{CransThese}.  Dans \cite{Steiner}, Steiner propose une nouvelle
construction qui a l'avantage d'être relativement explicite. Néanmoins, la
preuve de~\cite{Steiner} est incomplète. En effet, dans la preuve de son
théorème 7.3, Steiner affirme qu'il est évident que son produit tensoriel
commute aux limites inductives, sous-entendant que cette commutation est
formelle, ce qui n'est pas le cas. Steiner nous a cependant affirmé savoir
compléter cette preuve, esquissant un argument \cite{SteinerTensor}. Dans
l'appendice~\ref{sec:Gray}, on propose une démonstration alternative en
adoptant la stratégie qu'on a utilisé pour construire le joint. En
particulier, on utilise de manière cruciale un théorème de Day et, comme
dans le cas du joint, la partie la plus délicate du travail consiste à
vérifier que les hypothèses du théorème de Day sont satisfaites.

\medbreak

On formule de très générales conjectures de fonctorialité du joint et des
tranches. Pour cela, on introduit la notion de \oo-catégorie de Gray,
catégorie enrichie dans la catégorie des \oo-catégories strictes munie du
produit tensoriel de Gray. (On définit également au passage la notion de
\oo-sesquicatégorie.) La \oo-catégorie de Gray paradigmatique est $\ooCatGr$
dont les objets sont les \oo-catégories strictes, les $1$-flèches les
\oo-foncteurs stricts, les $2$-flèches les transformations oplax et les
flèches supérieures les transformations oplax supérieures. De même, on
introduit la notion de catégorie de Gray gauche, catégorie enrichie dans la
catégorie des \oo-catégories strictes munie du produit tensoriel $(C, D)
\mapsto D \otimes C$ obtenu en transposant celui de Gray. La \oo-catégorie
de Gray gauche paradigmatique est $\ooCatGrg$ obtenue en utilisant cette
fois les transformations lax. On conjecture l'existence de tranches pour les
\oo-catégories de Gray et les \oo-catégories de Gray gauches au-dessus ou
au-dessous d'un objet. En particulier, si $C$ désigne une \oo-catégorie
stricte, on disposerait de \oo-catégories de Gray
\[
  \cotrm{\ooCatGr}{C}
  \quadet
  \tr{\ooCatGr}{C}
\]
et de \oo-catégories de Gray gauches
\[
  \cotr{\ooCatGrg}{C}
  \quadet
  \trm{\ooCatGrg}{C}.
\]
On conjecture que les foncteurs $\var \joint C$ et $C \joint \var$
proviennent de \oo-foncteurs de Gray et de Gray gauches (c'est-à-dire des
foncteurs enrichis) respectivement
\[
  \begin{split}
    \var \joint C & : \ooCatGr \to \cotrm{\ooCatGr}{C} \\
    C \joint \var & : \zbox{$\ooCatGrg$}\phantom{\ooCatGr} \to \cotr{\ooCatGrg}{C}
  \end{split}
\]
et que l'association $(A, A \xto{u} C) \mapsto \cotr{C}{u}$, selon qu'on la
considère comme un foncteur en $C$ ou en $A$, provient de
\oo-foncteurs de Gray gauches et de Gray respectivement
\[
  \begin{split}
    \cotr{\ooCatGrg}{A} & \to \ooCatGrg  \\
    (\trm{\ooCatGrg}{C})^\op & \to \ooCatGr,
  \end{split}
\]
où $^\op$ est une dualité transformant une \oo-catégorie de Gray gauche en
une \oo-catégorie de Gray (et réciproquement). Dans ce texte, on s'intéresse
spécifiquement à la dernière conjecture, c'est-à-dire au \oo-foncteur de
Gray \smash{$(\trm{\ooCatGrg}{C})^\op \to \ooCatGr$}, car c'est celui-ci qui
nous permet de démontrer un théorème A \oo-catégorique
dans~\cite{AraMaltsiThmAII}. Explicitons en petite dimension à quoi correspond
l'action d'un tel \oo-foncteur sur les cellules :
\[
  \shorthandoff{;:}
  \begin{split}
  \raisebox{20pt}{
  \xymatrix{
    A \ar[d]_u \\
    C
  }
  }
  \hskip4.2em
  & \mapsto
  \hskip4.7em
  \cotr{C}{u}\pbox{,} \\
  \raisebox{20pt}{
    \xymatrix@C=1.5pc{
      A \ar[rr]^v \ar[dr]_{u}_{}="f" & & A' \ar[dl]^(0.42){u'} \\
      & C
      \ar@{}"f";[ur]_(.15){}="ff"
      \ar@{}"f";[ur]_(.55){}="oo"
      \ar@<-0.5ex>@2"ff";"oo"^{\alpha}
    }
  }
  \quad
  & \mapsto
  \hskip2.2em
  \xymatrix{\cotr{C}{u'} \ar[r]^{(v, \alpha)^\ast} & \cotr{C}{u} \pbox{,}} \\
  \raisebox{20pt}{
    \xymatrix@C=1.5pc@R=3pc{
    A \ar@/^2ex/[rr]^(.33){v'}_{}="1" \ar@/_2ex/[rr]^(.30)v_{}="0"
    \ar[dr]_{}="f"_{\phantom{u'}u}
    \ar@2"0";"1"_{\beta}
    & & A' \ar[dl]^{u'} \\
    & C
    \ar@{}"f";[ur]_(.15){}="ff"
    \ar@{}"f";[ur]_(.55){}="oo"
    \ar@<-0.5ex>@/^1ex/@{:>}"ff";"oo"^(.18){\alpha'\!\!}_(.30){}="h'"
    \ar@<-2.0ex>@/^-1ex/@2"ff";"oo"_(.36){\alpha}_(.80){}="h"
    \ar@3"h";"h'"_(.20){\,\,\Lambda}
    }
  }
  \quad
  & \mapsto
  \quad
      \xymatrix@C=4.5pc{
        \cotr{C}{u'}
        \ar@/^2.5ex/[r]^{(v', \alpha')^\ast}_{}="1"
        \ar@/_2.5ex/[r]_{(v, \alpha)^\ast}_{}="0"
        \ar@2"1";"0"^(.55){\,(\beta, \Lambda)^\ast}
        &
       \cotr{C}{u'} \pbox{,}
      }
  \end{split}
\]
où les $2$-flèches et $3$-flèches à gauche du signe <<~$\mapsto$~>>
sont des transformations lax et des $2$-transformations lax respectivement,
et la $2$-flèche à droite de ce signe est une transformation oplax.

\medbreak

Ces conjectures sont étayées par le fait que les conjectures analogues pour
les complexes dirigés augmentés (vérifiant une hypothèse anodine) sont
vraies. Dans ce texte, nous démontrons ces analogues en petites dimensions.
Plus précisément, nous montrons qu'on peut définir une correspondance comme
dans l'illustration ci-dessus en remplaçant les \oo-catégories par des
complexes dirigés augmentés, les \oo-foncteurs par des morphismes de
complexes dirigés augmentés, les transformations oplax par des homotopies et
les transformations lax par des antihomotopies, notion duale à celle
d'homotopie. On montre par ailleurs la sesquifonctorialité de cette
correspondance.

Un des résultats principaux de cet article est l'extension de cette
correspondance en basse dimension aux \oo-catégories strictes,
malheureusement sous des hypothèses restrictives (mais suffisantes pour
obtenir notre théorème A \oo-catégorique dans \cite{AraMaltsiThmAII}). On
montre qu'on peut définir une correspondance comme dans l'illustration
ci-dessus si on se restreint aux diagrammes obtenus en composant un
diagramme comme dans l'illustration, où $A$, $A'$ et $C$ sont des
\oo-catégories de Steiner fortes et $u'$ est un monomorphisme vérifiant une
hypothèse technique, par un \oo-foncteur quelconque~$C \to C'$ (en
particulier, on ne demande pas que $C'$ soit une \oo-catégorie de Steiner
forte). On montre que cette correspondance est sesquifonctorielle.

Ces résultats, dans le cas des complexes dirigés augmentés, sont obtenus en
produisant des formules explicites et par le calcul. Cette approche s'adapte
a priori mal au cas des \oo-catégories mais nous montrons qu'on peut
déduire le cas des \oo-catégories de celui des complexes dirigés augmentés.
Pour ce faire, on utilise, d'une part, la densité de la sous-catégorie
pleine de $\ooCat$ formée des \oo-catégories de Steiner fortes et, d'autre
part, un résultat, démontré dans ce texte, de stabilité des \oo-catégories
de Steiner fortes par certaines sommes amalgamées. C'est des hypothèses de
ce dernier résultat que provient la condition restrictive sur $u'$.

\medbreak

Après que nous avons rendu publique une première version de ce texte,
Dominic Verity nous a signalé que ses travaux sur les ensembles compliciaux
permettraient également de définir un joint \oo-catégorique. Dans
\cite[section 3]{VerityWeakCompI}, Verity définit un joint des ensembles
stratifiés, ensembles simpliciaux munis de la donnée supplémentaire de
simplexes distingués dits fins. Il nous a affirmé qu'on pourrait montrer, en
utilisant un théorème de Day, que ce joint induit un joint sur la
sous-catégorie réflexive formée des ensembles compliciaux. Or, en vertu de
la conjecture de Street-Roberts qu'il a démontrée \cite{VerityComplicial},
les ensembles compliciaux sont équivalents aux \oo-catégories strictes. Il
est probable que ce joint \oo-catégorique serait isomorphe au nôtre, même si
la comparaison ne semble pas triviale.

\begin{organisation}
Le premier chapitre est consacré aux préliminaires sur les \oo-catégories
strictes. On fixe nos notations et conventions et on rappelle quelques
constructions classiques. On définit la notion standard de \oo-catégorie
stricte engendrée librement au sens des polygraphes et on dégage un
résultat de commutation aux limites inductives de ces \oo-catégories qui
jouera un rôle important dans la suite du texte. On définit la notion de
\oo-transformation oplax en donnant des formules explicites et on étudie
quelques moyens de composer ces \oo-transformations oplax.

Dans le deuxième chapitre, on rappelle la théorie des complexes dirigés
augmentés de Steiner et on y apporte quelques compléments. On définit les
foncteurs $\lambda$ et $\nu$ de Steiner qui forment un couple de foncteurs
adjoints entre la catégorie des \oo-catégories strictes et la catégorie des
complexes dirigés augmentés. On introduit les termes «~complexe de Steiner~»
et «~complexe de Steiner fort~» pour désigner ce que Steiner appelle les
complexes dirigés augmentés à base unitaire sans boucle ou fortement sans
boucle. On rappelle les deux résultats fondamentaux de la théorie de Steiner
: le foncteur $\nu$ restreint aux complexes de Steiner est, d'une part,
pleinement fidèle et, d'autre part, à valeurs dans la catégorie des
\oo-catégories libres au sens des polygraphes.  On apporte ensuite les
compléments suivants. On définit la notion de complexe dirigé augmenté
décent, notion peut-être plus pertinente que celle de complexe dirigé
augmenté. On définit des endofoncteurs de dualité de la catégorie des
complexes dirigés augmentés et des notions de troncations pour ces
complexes. On étudie les compatibilités entre ces constructions et les
foncteurs $\lambda$ et $\nu$ de Steiner. Enfin, on introduit les notions
d'homotopie et d'antihomotopie entre morphismes de complexes dirigés
augmentés et, plus généralement, celles de $n$-homotopie et de
$n$-antihomotopie, et on décrit quelques opérations sur ces objets.

Dans le troisième chapitre, on décrit les limites inductives de complexes de
Steiner et on étudie la commutation du foncteur $\nu$ à celles-ci. On
introduit la notion de système de Steiner et on montre que le foncteur $\nu$
commute aux limites inductives de ces systèmes. On définit la notion
d'inclusion rigide ordonnée entre complexes de Steiner et on montre qu'une
somme amalgamée de complexes de Steiner forts faisant intervenir des
inclusions rigides ordonnées forme un système de Steiner. Le premier
résultat joue un rôle important dans la construction même du joint et le
second dans les propriétés de fonctorialité du joint.

Le quatrième chapitre est consacré à la catégorie $\Theta$ de Joyal.
On la définit comme une sous-catégorie pleine de la catégorie des
\oo-catégories strictes en utilisant la notion de sommes globulaires.
On indique comment les préfaisceaux sur $\Theta$ compatibles à ces sommes
globulaires induisent des \oo-catégories. On étudie la restriction du
foncteur~$\lambda$ de Steiner à $\Theta$. On montre que $\Theta$ est dans
l'image du foncteur $\nu$ de Steiner en utilisant les résultats du chapitre
précédent (ce résultat était déjà connu de Steiner \cite{SteinerTheta}).

Dans le cinquième chapitre, on s'intéresse à la question suivante : pour
définir une structure de catégorie monoïdale sur une catégorie, sous quelles
conditions suffit-il de le faire sur une sous-catégorie dense ? Un théorème
de Day répond à cette question pour les catégories monoïdales bifermées. On
généralise ce résultat à ce qu'on appelle les catégories monoïdales
localement bifermées qui, si la catégorie sous-jacente est localement
présentable, sont les structures pour lesquelles le produit tensoriel
commute aux limites inductives \emph{connexes} en chaque variable. C'est ce
résultat qu'on utilisera dans le chapitre suivant pour construire le joint.

Le sixième chapitre est le chapitre central de l'article. C'est dans
celui-ci qu'on construit le foncteur joint et les tranches généralisées. On
commence par des rappels sur la structure de catégorie monoïdale bifermée
sur la catégorie des complexes dirigés augmentés définie par le produit
tensoriel. On en déduit, grâce à un jeu de suspensions déjà présent sous une
autre forme dans \cite{StreetParComp}, une nouvelle structure de catégorie
monoïdale sur cette même catégorie ; on appelle le produit de cette
structure le joint. On montre que cette structure est localement bifermée et
qu'elle induit une structure de catégorie monoïdale sur la catégorie des
complexes de Steiner forts. On transporte cette structure sur la
sous-catégorie pleine de $\ooCat$ formée des \oo-catégories de
\hbox{Steiner} fortes et on vérifie que les hypothèses de notre théorème à
la Day dégagé dans le chapitre précédent sont satisfaites ; c'est le
contenu de la proposition~\ref{prop:pu_cotranche}, proposition clé de ce
chapitre. On en déduit l'existence d'une structure de catégorie monoïdale
localement bifermée sur la catégorie des \oo-catégories strictes. On obtient
ainsi un foncteur joint et des foncteurs tranches. On finit le chapitre par
une étude des compatibilités du joint et des tranches avec les dualités de
la catégorie des \oo-catégories strictes.

Dans le septième chapitre, on montre comment on peut définir les orientaux de
Street et le nerf de Street formellement à partir des propriétés de notre
joint \oo-catégorique, idée apparaissant déjà dans \cite{StreetParComp}.
Plus précisément, on montre que la \oo-catégorie finale est munie d'une
unique structure de monoïde pour la structure de catégorie monoïdale définie
par le joint et que l'objet cosimplicial associé à ce monoïde est l'objet
cosimplicial de Street. On démontre par ailleurs que l'adjoint à gauche du
nerf de Street est un foncteur monoïdal pour le joint simplicial et le joint
\oo-catégorique.

Dans le huitième chapitre, on montre que la structure de catégorie monoïdale
sur la catégorie des \oo-catégories strictes définie par le joint induit par
troncation une structure de catégories monoïdales sur la catégorie des
$n$-catégories strictes pour tout~$n \ge 0$. On obtient de plus que cette
structure est localement bifermée. On vérifie que pour~$n = 1$ on retrouve
le joint et les tranches $1$-catégoriques usuels.

Le but du neuvième chapitre est de décrire par des formules explicites les
tranches au-dessous d'un objet qui ont été définies abstraitement dans le
sixième chapitre. En particulier, pour $n = 2$, on retrouve les formules
définissant les tranches qui apparaissent déjà dans la littérature.

Le dixième chapitre est consacré aux tranches des complexes dirigés
augmentés et à leurs propriétés de fonctorialité. On définit explicitement
des tranches généralisées pour les complexes dirigés augmentés et on montre
que celles-ci ont la propriété universelle attendue. On étudie des
propriétés de fonctorialité de l'opération tranche. On montre qu'à tout
triangle commutatif à une antihomotopie près, on peut associer un morphisme
entre les tranches associées et qu'à tout cône commutatif à une
$2$\nbd-antihomotopie près, on peut associer une homotopie entre les morphismes
entre les tranches associées. On étudie les compatibilités de ces
constructions aux différentes compositions.

Le onzième chapitre contient d'importants résultats de fonctorialité des
tranches \oo-catégoriques, analogues \oo-catégoriques des résultats du
chapitre précédent. On montre qu'à tout triangle de \oo-foncteurs commutatif
à une transformation lax près se factorisant par un triangle de complexes de
Steiner forts (et satisfaisant une hypothèse technique),
on peut associer un \oo-foncteur entre les tranches \oo-catégoriques
associées. De même, on montre qu'à tout cône satisfaisant une condition
analogue on peut associer une transformation oplax. Enfin, on étudie les
compatibilités de ces constructions aux différentes compositions. Tous ces
résultats sont obtenus à partir des résultats analogues pour les complexes
dirigés augmentés. Cette réduction au cas des complexes dirigés augmentés
utilise de manière cruciale les résultats sur les sommes amalgamées de
complexes de Steiner forts établis dans le troisième chapitre.

L'appendice~\ref{sec:Gray} est consacré au produit tensoriel de Gray
\oo-catégorique. On applique les techniques utilisées précédemment pour le
joint pour construire le produit tensoriel de Gray et montrer qu'il définit
une structure de catégorie monoïdale bifermée sur la catégorie des
\oo-catégories strictes. On définit les \oo-catégories strictes des
foncteurs lax et oplax comme $\Hom$ interne à droite et à gauche. On discute
les compatibilités du produit tensoriel et de ces deux $\Hom$ internes
avec les dualités de la catégorie des \oo-catégories strictes. On montre que,
comme dans le cas du joint, le produit tensoriel induit par troncation une
structure de catégorie monoïdale bifermée sur la catégorie des
$n$-catégories strictes pour tout $n \ge 0$.

Le but de l'appendice~\ref{sec:trans_oplax} est de montrer l'équivalence
entre la définition concrète des \oo-foncteurs oplax donnée dans le premier
chapitre et la notion abstraite définie en termes du produit tensoriel
\oo-catégorique. Ceci nous permet de définir la composition verticale des
transformations oplax sans donner explicitement de formules. On utilise par
ailleurs cette équivalence pour associer à toute homotopie de complexes
dirigés augmentés une transformation oplax de \oo-catégories et on étudie
les propriétés de cette correspondance. Pour finir, on étudie le lien entre
les \oo-catégories $\Dn{1} \otimes C$, $\Dn{0} \joint C$ et $\susp{C}$, où
$\Dn{0}$ et $\Dn{1}$ désignent les \oo-catégories associées aux ensembles
ordonnés $\{0\}$ et $\{0 < 1\}$, et $\susp{}$ désigne la suspension,
introduite dans cet appendice.

Enfin, dans l'appendice~\ref{sec:conj}, on dégage des conjectures de
fonctorialités du joint et des tranches dont les résultats du douzième
chapitre sont des cas particuliers. On introduit la notion de
$\Catmod$-sesquicatégorie, où $\Catmod$ est une catégorie munie d'un
foncteur vers la catégorie des ensembles. Dans le cas où $\Catmod$ est la
catégorie $\ooCat$ des \oo-catégories strictes munie du foncteur ensemble
des objets, on obtient la notion de \oo-sesquicatégorie. On introduit par
ailleurs les notions de \oo-catégorie de Gray et de \oo-catégorie de Gray
gauche, \oo-catégories enrichies dans $\ooCat$ munie respectivement du
produit tensoriel de Gray et du transposé de ce produit tensoriel. On donne
l'exemple de $\ooCatGr$ (resp. de $\ooCatGrg$), \oo-catégorie de Gray (resp.
de Gray gauche) dont les objets sont les \oo-catégories strictes, les
$1$-flèches les \oo-foncteurs stricts et les $i$-flèches pour~$i > 1$ les
$(i-1)$-transformations oplax (resp. lax). On conjecture l'existence d'une
\oo-catégorie de Gray (resp. de Gray gauche) tranche au-dessus ou au-dessous
d'un objet d'une \oo-catégorie de Gray (resp. de Gray gauche). On conjecture
que les foncteurs joint et tranches \oo-catégoriques, vus comme foncteurs en
une de leurs deux variables, proviennent de \oo-foncteurs de Gray ou de Gray
gauches (c'est-à-dire de foncteurs enrichis) entre des tranches de
$\ooCatGr$ et $\ooCatGrg$.
\end{organisation}

\begin{remerciements}
  Les auteurs remercient Andrea Gagna pour les nombreuses coquilles qu'il a
  débusquées dans une version préliminaire de ce texte, ainsi que pour
  l'exemple~\ref{exem:contre-ex_rig} qui lui est dû et qui montre que la
  première rédaction du troisième chapitre était légèrement incorrecte.
\end{remerciements}

\begin{notations}
  Si $\C$ est une catégorie, on notera \nnot{$\Ob(\C)$} la classe de ses objets.
  La catégorie opposée à $\C$ sera notée \nnot{$\C^\op$}. Si $\D$ est une seconde
  catégorie, la catégorie des foncteurs de $\C$ vers $\D$ sera notée
  \nnot{$\Homi(\C, \D)$}. On notera \nnot{$\Ens$} la catégorie des ensembles
  et \nnot{$\Ab$} la catégorie des groupes abéliens. Si $A$ est une petite
  catégorie, on notera \nnot{$\pref{A}$} la catégorie des préfaisceaux sur
  $A$, c'est-à-dire la catégorie $\Homi(A^\op, \Ens)$ des foncteurs de
  $A^\op$ vers la catégorie des ensembles.

  On dira qu'une catégorie est connexe si elle $0$-connexe. Autrement dit,
  on ne considérera pas la catégorie vide comme une catégorie connexe.
  Ainsi, lorsqu'on parlera de limites inductives connexes, on supposera que
  la catégorie source du système inductif est non vide.
\end{notations}

\chapter{Préliminaires \pdfoo-catégoriques}

\begin{paragr}\label{paragr:conv_ooCat}
  On notera \nnot{$\ooCat$} la catégorie des petites \oo-catégories
  \emph{strictes} et des \oo-foncteurs \emph{stricts} entre celles-ci.
  Toutes les \oo-catégories et tous les \oo-foncteurs considérés dans ce
  texte seront stricts et on se permettra donc d'omettre l'adjectif
  «~strict~».  Sauf mention expresse du contraire, ces \oo-catégories seront
  de plus supposées petites. Si $C$ est une \oo-catégorie, pour $i \ge 0$,
  on notera \nnot{$C_i$} l'ensemble de ses $i$-flèches ou $i$-cellules.
  \termindex{$i$-flèche!d'une $\infty$-catégorie}%
  Si $x$ est une $i$-flèche pour $i > 0$, on notera
  respectivement~\nnot[$s(x)$, $s_j(x)$]{$s(x)$} et \nnot[$t(x)$,
  $t_j(x)$]{$t(x)$} les $(i-1)$\nbd-flèches source et but de $x$. Plus
  généralement, pour $j$ tel que $0 \le j \le i$, on notera $s_j(x)$
  et~$t_j(x)$ la \ndef[$j$-source d'une $i$-flèche!d'une
  $\infty$-catégorie]{$j$-source} et le \ndef[$j$-but d'une $i$-flèche!d'une
  $\infty$-catégorie]{$j$-but} de $x$, c'est-à-dire les $j$\nbd-flèches
  source et but itérés de $x$ en dimension $j$. On dira que deux
  $i$\nbd-flèches $x$ et~$y$ sont \ndef[$i$-flèches $j$-composables!d'une
  $\infty$-catégorie]{$j$-composables} si on a $s_j(x) = t_j(y)$. Si $x$ et
  $y$ sont $j$-composables, on notera~\nnot{$x \comp_j y$} leur composé
  qu'on appellera parfois \ndef[$j$-composé de $i$-flèches!d'une
  $\infty$-catégorie]{$j$-composé}.  Si $x$ est une $i$-flèche de $C$, on
  notera~\nnot{$\id{x}$}
  \termindex{unité d'une cellule!d'une $\infty$-catégorie}%
  la $(i+1)$-flèche identité de $x$. On dira parfois qu'une $(i+1)$-flèche
  de la forme $\id{x}$, pour $x$ une $i$-flèche, est
  \ndef[$i$-flèche!triviale]{triviale}.

  Afin de simplifier certaines formules, on adoptera les conventions
  suivantes. Pour tous $i, j > k \ge 0$, si $x$ est une $i$-flèche de $C$, $y$
  une $j$-flèche de $C$ et qu'on a l'égalité $s_k(x) = t_k(y)$, on notera $x
  \comp_k y$ la $l$-flèche, où $l$ est le maximum de $\{i, j\}$, définie par
  \[
  x \comp_k y =
  \begin{cases}
    \id{x} \comp_k y & \text{si $i \le j$,}\\
    x \comp_k \id{y} & \text{si $i \ge j$,}
  \end{cases}
  \]
  où $\id{z}$, pour $z = x, y$, désigne l'identité itérée de $z$ en dimension
  $l$. Par ailleurs, si $i < j$, on considérera que l'opération $\comp_i$
  est prioritaire sur l'opération $\comp_j$ de sorte qu'on a
  \[
  x \comp_i y \comp_j z
  =
  (x \comp_i y) \comp_j z
  \quadet
  x \comp_j y \comp_i z
  =
  x \comp_j (y \comp_i z),
  \]
  lorsque ces formules ont un sens.
\end{paragr}

\begin{paragr}\label{paragr:tronque}
  Soit $n \ge 0$. On considérera une $n$-catégorie $C$ comme une \oo-catégorie
  dont les ensembles de $j$-flèches $C_j$ pour $j > n$ ne contiennent que des
  cellules triviales. On notera \nnot{$\nCat{n}$} la sous-catégorie pleine de
  $\ooCat$ formée des $n$-catégories.

  Le foncteur d'inclusion $\nCat{n} \hookto \ooCat$ des $n$-catégories dans les
  \oo-catégories admet un adjoint à gauche et un adjoint à droite qu'on notera
  respectivement
  \[
    \ti{n} : \ooCat \to \nCat{n}
    \quadet
    \tb{n} : \ooCat \to \nCat{n}.
  \]
  \notindex{$\ti{n}(C), \tb{n}(C)$}%
  On appellera $\ti{n}$ le foncteur \ndef[$n$-tronqué!intelligent!d'une
  $\infty$-catégorie]{$n$-tronqué intelligent} et $\tb{n}$
  le foncteur \ndef[$n$-tronqué!bête!d'une $\infty$-catégorie]{$n$-tronqué
  bête}.  Explicitement, si $C$ est une \oo-catégorie, on a
  \[
    \ti{n}(C)_i = \begin{cases}
      C_i & \text{si $i < n$,} \\
      {C_n}\slash{\sim} & \text{si $i \ge n$,} \\
    \end{cases}
  \]
  où $\sim$ est la relation d'équivalence sur $C_n$ définie par « $x \sim y$
  s'il existe un zigzag de $(n+1)$\nbd-flèches de $C$ entre $x$ et $y$ », et
  \[
    \tb{n}(C)_i = \begin{cases}
      C_i & \text{si $i <  n$,} \\
      C_n & \text{si $i \ge n$,}
    \end{cases}
  \]
  les compositions et identités étant héritées de celles de $C$ de la
  manière évidente.

  On identifiera souvent $\tb{n}(C)$ à une sous-\oo-catégorie de $C$
  \forlang{via} le morphisme d'adjonction $\tb{n}(C) \hookto C$.

  Par ailleurs,  le foncteur $\tb{n}$ admet lui-même un adjoint à droite : le
  foncteur qui associe à une $n$-catégorie $C$ la $(n+1)$-catégorie dont le
  $n$-tronqué bête est $C$ et telle que pour toutes $n$-flèches $x$ et $y$, il
  existe exactement une $(n+1)$-flèche de $x$ vers~$y$. En particulier, le
  foncteur $\tb{n}$ commute aussi bien aux limites projectives qu'aux limites
  inductives.
\end{paragr}

\begin{paragr}
  Soit $C$ une \oo-catégorie. Un \ndef[ensemble multiplicatif de
  cellules]{ensemble multiplicatif} de cellules de $C$ est un ensemble $M$
  de cellules de $C$ satisfaisant aux deux conditions suivantes :
  \begin{enumerate}
    \item pour tout $i \ge 0$ et toute $i$-flèche $x$ de $C$, si $x$
      appartient à $M$, alors la \hbox{$(i+1)$}\nbd-flèche $\id{x}$ appartient à $M$;
    \item pour tous $i > j \ge 0$ et tout couple $x, y$ de $i$-flèches
      $j$-composables de $C$, si $x$ et~$y$ appartiennent à $M$, alors
      la $i$\nbd-flèche composée $x \comp_j y$ appartient à $M$.
  \end{enumerate}
  On dit qu'un ensemble $E$ de cellules d'une $\infty$\nbd-catégorie $C$
  \emph{engendre $C$ par compositions} si l'ensemble de toutes les cellules de
  $C$ est le plus petit ensemble multiplicatif de cellules de $C$ contenant
  $E$. Dans ce cas, $E$ contient nécessairement l'ensemble des objets de~$C$
  et, pour tout $i>0$, toute $i$\nbd-flèche de $C$ est un composé d'un nombre
  fini de $i$\nbd-flèches qui sont dans $E$ ou sont des identités itérées de
  cellules appartenant à $E$.
\end{paragr}

\begin{paragr}\label{paragr:def_pol}
  Soient $C$ une \oo-catégorie et $E$ un ensemble de cellules de $C$. Pour $i
  \ge 0$, posons $E_i = E \cap C_i$. On dira que
  \ndef[$\infty$-catégorie!engendrée librement au sens des polygraphes]{$E$
  engendre librement $C$ au sens des polygraphes} si
  \begin{enumerate}
    \item\label{item:pol_a} $E_0 = C_0$ ;
    \item\label{item:pol_b} pour tout $i \ge 0$, toute \oo-catégorie $D$, tout
      \oo-foncteur $u : \tb{i}(C) \to D$ et toute application $f : E_{i+1} \to
      D_{i+1}$ tels que, pour tout $x$ dans $E_{i+1}$, on ait
      \[ s(f(x)) = u(s(x)) \quad\text{et}\quad t(f(x)) = u(t(x)), \]
      il existe un unique \oo-foncteur $u' : \tb{i+1}(C) \to D$ tel que
      \[ u'_{|\tb{i}(C)} = u \quad\text{et}\quad u'_{|E_{i+1}} = f. \]
  \end{enumerate}
\end{paragr}

\begin{prop}\label{prop:eng_pol_comp}
  Soient $C$ une $\infty$\nbd-catégorie et $E$ un ensemble de cellules de $C$
  qui l'engendre librement au sens des polygraphes. Alors $E$ engendre $C$ par
  compositions.
\end{prop}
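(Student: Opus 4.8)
The plan is to argue by induction on dimension that the smallest multiplicative set $M$ of cells of $C$ containing $E$ exhausts all cells. Since the set of all cells of $C$ is multiplicative and contains $E$, such a minimal $M$ exists; and since condition (\emph{a}) of polygraphic freeness forces $E_0 = C_0$, the set $M$ contains every object of $C$, which settles the base case. I would then fix $i \ge 0$, assume as inductive hypothesis that every cell of dimension $\le i$ belongs to $M$, and aim to show that every $(i+1)$-cell belongs to $M$ as well.

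For the inductive step, the key device is to manufacture an $\infty$-category on which to run the universal property of~\ref{paragr:def_pol}. I would introduce the $(i+1)$-category $C'$ defined as the sub-collection of $\tb{i+1}(C)$ whose $j$-cells are all of $C_j$ for $j \le i$ and are exactly $M \cap C_{i+1}$ in dimension $i+1$. The point to verify is that $C'$ is genuinely a sub-$(i+1)$-category of $\tb{i+1}(C)$, i.e.\ that $M \cap C_{i+1}$ is stable under the $(i+1)$-dimensional operations: stability under the compositions $\comp_j$ is exactly condition (\emph{b}) of multiplicativity, while the identities $\id{x}$ of $i$-cells $x$ lie in $M \cap C_{i+1}$ because $x \in M$ by the inductive hypothesis and condition (\emph{a}) of multiplicativity applies. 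I expect this verification, and in particular the bookkeeping forced by the convention on composing cells of different dimensions (where a lower-dimensional cell is silently replaced by its iterated identity, which lies in $M$ by repeated use of condition (\emph{a}) of multiplicativity), to be the only genuinely delicate point.

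Write $\iota : C' \hookto \tb{i+1}(C)$ for the inclusion. I would then apply the polygraphic universal property at level $i$ with target $D = C'$: the canonical functor $u : \tb{i}(C) \to C'$, which is the identity on cells of dimension $\le i$, together with the inclusion $f : E_{i+1} \hookto M \cap C_{i+1} = C'_{i+1}$, satisfies the required source/target compatibility, so there is a unique $\infty$-functor $u' : \tb{i+1}(C) \to C'$ restricting to $u$ and to $f$. The composite $\iota \op u'$ is an endofunctor of $\tb{i+1}(C)$ restricting to the canonical inclusion on $\tb{i}(C)$ and to the inclusion on $E_{i+1}$; by the uniqueness clause of the universal property, applied this time with target $\tb{i+1}(C)$, it must coincide with $\id{\tb{i+1}(C)}$. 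Consequently, for every $(i+1)$-cell $w$ of $C$ one has $w = \iota(u'(w)) = u'(w)$ with $u'(w) \in C'_{i+1} = M \cap C_{i+1}$, whence $w \in M$. This completes the induction and shows that $M$ consists of all cells of $C$, that is, $E$ generates $C$ by compositions.
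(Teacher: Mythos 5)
Your proof is correct and takes essentially the same route as the paper's: both arguments build the sub-$(i+1)$-category of $\tb{i+1}(C)$ whose top-dimensional cells are those generated under composition by $E_{i+1}$ (you describe them as $M \cap C_{i+1}$ for $M$ the minimal multiplicative set, the paper as composites of elements of $E_{i+1}$ and iterated identities), apply condition (\emph{b}) of the definition in~\ref{paragr:def_pol} to obtain a functor from $\tb{i+1}(C)$ into that subcategory, and use the uniqueness clause to conclude that its composite with the inclusion is the identity. The only difference is one of bookkeeping: your induction tracks membership in the global multiplicative set $M$ directly, whereas the paper's induction statement is that $E_{\le i}$ generates $\tb{i}(C)$ by compositions.
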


\begin{proof}
  Pour $i \ge 0$, on pose $E_i = E\cap C_i$ et \smash{$E_{\le
  i}=\mathop{\cup}\nolimits_{\,0\le j\le i}E_j$}. On va montrer par
  récurrence sur $i \ge 0$ que l'ensemble $E_{\le i}$ engendre $\tb{i}(C)$
  par compositions, ce qui prouvera la proposition. Pour $i=0$, l'assertion
  est évidente grâce à la condition~\ref{item:pol_a} du
  paragraphe~\ref{paragr:def_pol}. Supposons donc l'assertion démontrée pour
  $i$ et démontrons-la pour~$i+1$. Soit~$D$ la sous-$(i+1)$\nbd-catégorie de
  $\tb{i+1}(C)$ dont les $j$\nbd-flèches, pour $0\le j\le i$, sont les
  $j$\nbd-flèches de~$C$ et dont les $(i+1)$\nbd-flèches sont les composés
  d'éléments de~$E_{i+1}$ et d'identités itérées de $j$\nbd-flèches de $C$
  pour $0\le j\le i$. Par hypothèse de récurrence, l'ensemble $E_{\le i+1}$
  engendre $D$ par compositions. Il suffit donc de montrer que
  $\tb{i+1}(C)=D$ ou, de manière équivalente, que l'inclusion
  $v:D\hookto\tb{i+1}(C)$ admet une section. Par définition, on a une
  inclusion $u:\tb{i}(C)\hookto D$ et une inclusion
  $f:E_{i+1}\hookto D_{i+1}$ satisfaisant aux hypothèses de la
  condition~\ref{item:pol_b} du paragraphe~\ref{paragr:def_pol}. On en
  déduit l'existence d'un $\infty$\nbd-foncteur $u':\tb{i+1}(C)\to D$ tel
  que
  \[ u'_{|\tb{i}(C)}=u \quadet u'_{|E_{i+1}}=f. \]
  La restriction de $vu'$ à $\tb{i}(C)$ est donc l'inclusion
  $\tb{i}(C)\hookto\tb{i+1}(C)$ et la restriction à $E_{i+1}$ l'inclusion
  $E_{i+1}\hookto C_{i+1}$, ce qui, en vertu de la partie unicité de la
  condition~\ref{item:pol_b} du paragraphe~\ref{paragr:def_pol}, prouve que
  $u'$ est une section de $v$, ce qu'il fallait démontrer.
\end{proof}

\begin{prop}\label{prop:iso_pol}
  Soient $P : A \to \ooCat$ un foncteur de source une petite catégorie $A$,
  $C$ une \oo-catégorie et $\alpha$~une transformation naturelle de $P$ vers
  le foncteur constant de valeur $C$. On suppose fixé, pour tout objet $a$
  de $A$, un ensemble $E_a$ engendrant librement la \oo-catégorie $P(a)$ au
  sens des polygraphes, ainsi qu'un ensemble $E$ engendrant librement la
  \oo-catégorie $C$ au sens des polygraphes. On suppose que
  \begin{enumerate}
    \item\label{item:iso_pol_a} pour tout objet $a$ de $A$, on a
      $\alpha_a(E_a) \subset E$ ;
    \item\label{item:iso_pol_b} pour toute flèche $f : a \to b$ de $A$, on a
      $P(f)(E_a) \subset E_b$ ;
    \item\label{item:iso_pol_c} l'application $\limind_{a \in A} E_a \to E$
      induite par $\alpha$ est une bijection.
  \end{enumerate}
  Alors le \oo-foncteur $\limind_{a \in A} P(a) \to C$ induit par $\alpha$
  est un isomorphisme de \oo-caté\-gories.
\end{prop}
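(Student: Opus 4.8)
The plan is to show that the canonical \oo-functor $\phi : \limind_{a \in A} P(a) \to C$ induced by $\alpha$ is an isomorphism by proving, by induction on $n \ge 0$, that each truncation $\tb{n}(\phi)$ is an isomorphism. This suffices: an \oo-functor is an isomorphism if and only if it is bijective on $i$-cells for every $i$, and $\tb{n}(\phi)_i$ agrees with $\phi_i$ for $n \ge i$. A preliminary observation is that, by conditions~\ref{item:iso_pol_b} and~\ref{item:iso_pol_a}, the assignment $a \mapsto E_a$ is a functor $A \to \Ens$ and $\alpha$ induces a map $\limind_{a \in A} E_a \to E$; since \oo-functors preserve dimension, this colimit of sets is computed dimension by dimension, so condition~\ref{item:iso_pol_c} yields bijections $\limind_{a \in A} (E_a)_i \xto{\sim} E_i$ for every $i \ge 0$. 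The base case $n = 0$ is then immediate: $\tb{0}(C)$ is the discrete $0$-category on $C_0$, and since $\tb{0}$ commutes with inductive limits (\ref{paragr:tronque}) one has $\tb{0}(\limind_a P(a)) = \limind_a \tb{0}(P(a))$ computed in $\Ens$; as $(E_a)_0 = P(a)_0$ and $E_0 = C_0$ by~\ref{paragr:def_pol}\ref{item:pol_a}, the map $\tb{0}(\phi)$ is exactly the dimension-$0$ bijection above.

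For the inductive step, the key point is that condition~\ref{item:pol_b} of~\ref{paragr:def_pol} is precisely the universal property exhibiting $\tb{i+1}$ as a pushout adjoining the generators of dimension $i+1$: for any \oo-category with generators $E$,
\[
  \tb{i+1}(C) \simeq \tb{i}(C) \amalg_{\coprod_{x \in E_{i+1}} \partial\Dn{i+1}} \Big(\coprod_{x \in E_{i+1}} \Dn{i+1}\Big),
\]
where $\Dn{i+1}$ is the \oo-category freely generated by one $(i+1)$-cell, $\partial\Dn{i+1}$ its boundary, the left leg is the boundary inclusion, and the attaching map sends the boundary of the generator $x$ to the pair $(s(x), t(x))$ in $\tb{i}(C)$. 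Applying this both to $C$ and to each $P(a)$, and using that $\tb{n+1}$ commutes with $\limind_a$ (\ref{paragr:tronque}), that inductive limits commute with pushouts, and that the copower $S \mapsto \coprod_S \Dn{n+1}$ preserves colimits in the set variable $S$, one presents $\tb{n+1}(\limind_a P(a))$ as a pushout with span
\[
  \textstyle\coprod_{\limind_a (E_a)_{n+1}} \Dn{n+1} \longleftarrow \coprod_{\limind_a (E_a)_{n+1}} \partial\Dn{n+1} \longrightarrow \tb{n}(\limind_a P(a)).
\]

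It remains to compare this square with the analogous one presenting $\tb{n+1}(C)$. Via the bijection $\limind_a (E_a)_{n+1} \simeq E_{n+1}$ of~\ref{item:iso_pol_c} and the inductive hypothesis that $\tb{n}(\phi)$ is an isomorphism, $\phi$ induces a morphism of the two pushout squares that is an isomorphism on each term of the span. The commutativity of the attaching squares is where naturality enters: the boundary of a generator $x \in (E_a)_{n+1}$ is sent by $\tb{n}(\phi)$ to $(\tb{n}(\phi)s(x), \tb{n}(\phi)t(x))$, which, since $\phi \op \iota_a = \alpha_a$ and $\alpha_a$ commutes with source and target, is exactly the boundary of $\alpha_a(x) \in E_{n+1}$ in $\tb{n}(C)$. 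A morphism of pushout squares that is an isomorphism on the span is an isomorphism on the pushout, so $\tb{n+1}(\phi)$ is an isomorphism, completing the induction and hence the proof.

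The main obstacle will be the bookkeeping in this interchange step: expressing the polygraph structure of~\ref{paragr:def_pol} as a genuine pushout, and then commuting the inductive limit over $A$ past that pushout and past the coproducts indexed by the generators. This requires both that $\tb{n+1}$ preserves inductive limits and that the copower functor preserves them in the set variable, so that the colimit of the spans is again the span built from $\limind_a (E_a)_{n+1}$; condition~\ref{item:iso_pol_c} is precisely what guarantees that this indexing set is $E_{n+1}$. Once the two pushout presentations are in place and their spans are identified, the comparison is formal.
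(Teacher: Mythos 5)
Your proof is correct, and it shares the paper's skeleton---induction on the truncations $\tb{n}$, with hypothesis~\ref{item:iso_pol_c} used dimension by dimension and condition~\ref{item:pol_a} of~\ref{paragr:def_pol} settling the base case---but the inductive step is run by a genuinely different mechanism. The paper verifies the universal property of the inductive limit by hand: given a cocone $\beta : \tb{i+1}P \to D$, it produces the factorization $v : \tb{i+1}(C) \to D$ by applying the induction hypothesis on $\tb{i}(C)$, defining $v$ on generators via the bijection $\limind_{a} E_{a,i+1} \simeq E_{i+1}$, invoking the existence clause of condition~\ref{item:pol_b} of~\ref{paragr:def_pol} for $C$, and then checking the cocone identities $\beta_a = v \circ \tb{i+1}(\alpha_a)$ using the uniqueness clause applied to each $P(a)$. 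You instead recognize condition~\ref{item:pol_b} as saying exactly that $\tb{i}(C) \to \tb{i+1}(C)$ is a pushout along $\coprod_{E_{i+1}} \partial\Dn{i+1} \to \coprod_{E_{i+1}} \Dn{i+1}$, and then everything follows from formal commutation of colimits (truncations and copowers preserve inductive limits, and pushouts commute with them). That reformulation---polygraphic freeness as a cellular extension---is correct and gives a more conceptual, essentially computation-free inductive step; what it costs is extra apparatus the paper's argument never needs: the corepresenting objects $\Dn{i+1}$ and their boundaries $\partial\Dn{i+1}$ (which the paper only introduces later, in the section on $\Theta$), and the observation that the attaching spans are functorial in $a$, which is precisely where hypothesis~\ref{item:iso_pol_b} of the proposition enters and which should be stated explicitly before forming $\limind_a$ of the pushout squares. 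The paper's direct verification, by contrast, stays entirely within the bare lifting property of~\ref{paragr:def_pol} and needs no auxiliary objects.
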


\begin{proof}
  \abovedisplayskip=3.5pt
  \belowdisplayskip=3.5pt
  \tolerance=500
  Pour tout $i\ge0$, on pose $E_i=E\cap C_i$ et, pour tout objet $a$ de~$A$,
  $E_{a,i}=E_a\cap P(a)_i$. Par hypothèse, pour tout $i \ge 0$, l'application
  $\limind_A E_{a,i}\to E_i$ induite par $\alpha$ est bijective. On va montrer
  par récurrence sur $i$ que, pour tout $i\ge0$, le tronqué bête
  (défini au paragraphe~\ref{paragr:tronque})
  \[
    \tb{i}(\limind_A P)\simeq\limind_A\tb{i} P
    \longto
    \tb{i}(C)
  \]
  du \oo-foncteur $\limind_A P \to C$ induit par $\alpha$ est un
  isomorphisme, ce qui prouvera la proposition.  Pour $i=0$, cela résulte de
  la bijectivité de l'application \hbox{$\limind_AE_{a,0}\to E_0$} et du fait que,
  en vertu de la condition~\ref{item:pol_a} du
  paragraphe~\ref{paragr:def_pol}, on a $C_0=E_0$ et~\hbox{$P(a)_0 =
  E_{a,0}$} pour tout $a$ dans $A$. Supposons l'assertion établie pour un $i
  \ge 0$ et prouvons-la pour~\hbox{$i+1$}. Soient $D$ une $\infty$\nbd-catégorie et
  \hbox{$\beta:\tb{i+1} P \to D$} une transformation naturelle de but le
  foncteur constant de valeur~$D$. Il s'agit de montrer qu'il existe un
  unique $\infty$\nbd-foncteur $v:\tb{i+1}(C)\to D$ tel que, pour tout objet
  $a$ de~$A$, on ait \hbox{$\beta_a=v\circ\tb{i+1}(\alpha_a)$}. En vertu de
  l'hypothèse de récurrence, il existe un unique $\infty$\nbd-foncteur
  $u:\tb{i}(C)\to D$ tel que, pour tout objet $a$ de~$A$,
  la restriction de~$\beta_a$ à $\tb{i}(P(a))$ soit égale à $u\circ\tb{i}(\alpha_a)$.
  D'autre part, la
  bijectivité de l'application \hbox{$\limind_A E_{a,i+1} \to E_{i+1}$} implique
  l'existence d'une unique application $f:E_{i+1}\to D_{i+1}$ telle que,
  pour tout objet $a$ de $A$ et tout $x_a$ dans $E_{a,i+1}$, on ait
  $\beta_a(x_a)=f(\alpha_a(x_a))$. Or, pour tout $x$ dans $E_{i+1}$, il
  existe un objet $a$ de $A$ et $x_a$ dans $E_{a,i+1}$ tel que
  $x=\alpha_a(x_a)$. On a donc
  \[
    \begin{split}
      s(f(x)) & = s(f(\alpha_a(x_a)))=s(\beta_a(x_a))=\beta_a(s(x_a)) \\
      & = u\alpha_a(s(x_a))=u(s(\alpha_a(x_a)))=u(s(x))
    \end{split}
  \]
  et, de même, $t(f(x))=u(t(x))$. Comme $E$ engendre librement $C$ au sens
  des polygraphes, en vertu de la condition~\ref{item:pol_b} du
  paragraphe~\ref{paragr:def_pol}, il existe donc un unique
  $\infty$\nbd-foncteur $v:\tb{i+1}(C)\to D$ tel que
  \[ v_{|\tb{i}(C)} = u \quadet v_{|E_{i+1}} = f. \]
  Il reste à prouver que, pour tout objet $a$ de $A$, on a
  $\beta_a=v\circ\tb{i+1}(\alpha_a)$. Or, on a les égalités
  \[
    {\beta_a}_{|\tb{i}(P(a))} = u \circ \tb{i}(\alpha_a)
    = v_{|\tb{i}(C)} \circ \tb{i}(\alpha_a) = (v \circ
    \tb{i+1}(\alpha_a))_{|\tb{i}(P(a))}
  \]
  et, pour tout $x_a$ dans $E_{a,i+1}$, on a
  $\beta_a(x_a)=f(\alpha_a(x_a))=v(\alpha_a(x_a))$ et donc
  \[
    {\beta_a}_{|E_{a,i+1}}= (v \circ \tb{i+1}({\alpha_a}))_{|E_{a,i+1}}.
  \]
  Comme $E_{a}$ engendre librement $P(a)$ au sens des polygraphes, en vertu de
  la partie unicité de la condition~\ref{item:pol_b} du
  paragraphe~\ref{paragr:def_pol}, on a donc
  $\beta_a=v\circ\tb{i+1}(\alpha_a)$, ce qui achève la démonstration.
\end{proof}

\begin{rem}
  On vérifie facilement que la proposition précédente reste vraie si on
  remplace l'hypothèse « $E_a$ engendre librement $P(a)$ au sens des
  polygraphes » par l'hypothèse plus faible « $E_a$ engendre $P(a)$ par
  compositions ». En revanche, on doit toujours supposer que l'ensemble $E$
  engendre librement $C$ au sens des polygraphes. On n'aura pas besoin de ce
  résultat dans la suite.
\end{rem}

\begin{paragr}\label{paragr:dual_ooCat}
  Soit $C$ une \oo-catégorie. On définit une \oo-catégorie
  \nnot[$C^\op$, $C^\opp$, $C^\co$]{$C^\op$}, appelée le \ndef[dual!d'une
  $\infty$-catégorie]{dual} de~$C$, en inversant le sens des $i$-flèches de
  $C$ pour tout $i > 0$. On vérifie immédiatement que l'application $C
  \mapsto C^\op$ définit un endofoncteur involutif $\dual{} : \ooCat \to
  \ooCat$.

  Plus généralement, pour toute partie $J$ de $\N^\ast = \N \sauf \{0\}$, on
  définit un endofoncteur involutif $\dual{J} : \ooCat \to \ooCat$
  \notindex{$\dual{J}(C)$}%
  qui envoie une \oo-catégorie sur la \oo-catégorie obtenue en inversant le
  sens des $i$-flèches pour tout $i$ dans $J$. On appellera $\dual{J}(C)$ le
  \ndef[$j$-z@$J$-dual!d'une $\infty$-catégorie]{$J$-dual} de $C$. Par
  définition, on a $\dual{} = \dual{\N^\ast}$.

  Deux autres dualités jouent un rôle particulièrement important. Si $J$ est
  l'ensemble des entiers strictement positifs impairs, on appellera le
  $J$-dual le \ndef[dual impair!d'une $\infty$-catégorie]{dual impair} et on
  désignera $\dual{J}(C)$ par $C^\opp$.
  Si $J$ est l'ensemble des entiers strictement positifs pairs, on parlera
  du \ndef[dual pair!d'une $\infty$-catégorie]{dual pair} et on désignera
  $\dual{J}(C)$ par $C^\co$. Notons que si $C$ est une \oo-catégorie, on a
  $C^\op = (C^\opp)^\co = (C^\co)^\opp$.  Pour cette raison, la
  \oo-catégorie $C^\op$ est parfois notée~$C^\coop$ dans la littérature.
\end{paragr}

\begin{paragr}\label{paragr:def_trans}
  \abovedisplayskip=3.0pt
  \belowdisplayskip=3.0pt
  Soient $u, v : C \to D$ deux \oo-foncteurs. Une \ndef{prétransformation
  oplax} $\alpha$ de $u$ vers $v$ consiste en la donnée, pour tout $i \ge 0$
  et toute $i$-flèche $x$ de $C$, d'une $(i+1)$-flèche
  \[
    \alpha_x :
    \alpha_{t_{i-1}(x)} \comp_{i-1} \cdots \comp_1 \alpha_{t_0(x)} \comp_0 u(x)
    \longto
    v(x) \comp_0 \alpha_{s_0(x)} \comp_1 \cdots \comp_{i-1}
    \alpha_{s_{i-1}(x)}
  \]
  \notindex{$\alpha_x$}%
  de $D$. (Pour lire ces formules, il faut garder en tête les conventions
  énoncées au paragraphe~\ref{paragr:conv_ooCat} : en particulier, pour
  $k < l$, l'opération $\comp_k$ est prioritaire sur
  l'opération~$\comp_l$.)

  Ainsi, si $x$ est un objet de $C$, on dispose d'une $1$-flèche
  \[
    \xymatrix{
      u(x) \ar[d]_{\alpha_x} \\ v(x)
    }
  \]
  de $D$; si $x$ est une $1$-flèche de $C$, on dispose d'une $2$-flèche
  \[
    \shorthandoff{;:}
    \xymatrix{
      u(s_0(x)) \ar[d]_{\alpha_{s_0(x)}} \ar[r]^{u(x)} &
      u(t_0(x)) \ar[d]^{\alpha_{t_0(x)}} \\
      v(s_0(x)) \ar[r]_{v(x)} &
      v(t_0(x))
      \ar@{}[u];[l]_(.35){}="x"
      \ar@{}[u];[l]_(.65){}="y"
      \ar@2"x";"y"_{\alpha_x}
    }
  \]
  de $D$; si $x$ est une $2$-flèche de $C$, on dispose d'une $3$-flèche
  \[
    \shorthandoff{;:}
    \xymatrix@C=4pc@R=4pc{
      u(s_0(x))
      \ar@/^2ex/[r]^{u(s_1(x))}_{}="0"
      \ar@/_2ex/[r]_(0.60){u(t_1(x))}_{}="1"
      \ar[d]_{}="f"_{\alpha_{s_0(x)}}
      \ar@2"0";"1"_{u(x)}
      &
      u(t_0(x))
      \ar[d]^{\alpha_{t_0(x)}} \\
      v(s_0(x))
      \ar@{.>}@/^2ex/[r]^(0.40){v(s_1(x))}_{}="0"
      \ar@/_2ex/[r]_{v(t_1(x))}_{}="1"
      \ar@2{:>}"0";"1"_{v(x)}
      &
      v(t_0(x))
      \ar@{}[u];[l]_(.40){}="x"
      \ar@{}[u];[l]_(.60){}="y"
      \ar@<-1ex>@/_1ex/@{:>}"x";"y"_(0.60){\alpha_{s_1(x)}}_{}="0"
      \ar@<1ex>@/^1ex/@2"x";"y"^(0.40){\alpha_{t_1(x)}}_{}="1"
      \ar@{}"1";"0"_(.05){}="z"
      \ar@{}"1";"0"_(.95){}="t"
      \ar@3"z";"t"_{\alpha_x}
    }
  \]
  de $D$ de source $\alpha_{t_1(x)} \comp_1 (\alpha_{t_0(x)} \comp_0 u(x))$ et
  de but $(v(x) \comp_0 \alpha_{s_0(x)}) \comp_1 \alpha_{s_1(x)}$; etc.

  Une telle prétransformation oplax est une
  \ndef[transformation!oplax]{transformation oplax}
  si elle satisfait aux axiomes de fonctorialité suivants :
  \begin{enumerate}
    \item pour tout $i \ge 0$ et toute $i$-flèche $x$ de $C$, on a
      \[ \alpha^{}_{\id{x}} = \id{\alpha^{}_x}; \]
    \item pour tous $i > j \ge 0$ et tout couple $x, y$ de $i$-flèches
      $j$-composables de $C$, on a
      \[
        \begin{split}
          \alpha_{x \comp_j y} & =
          \left(v(t_{j+1}(x)) \comp_0 \alpha_{s_0(y)} \comp_1 \cdots
          \comp_{j-1} \alpha_{s_{j-1}(y)} \comp_j \alpha_y\right) \\
          & \phantom{=1} \qquad
          \comp_{j+1} \left(\alpha_x \comp_j \alpha_{t_{j-1}(x)} \comp_{j-1}
          \cdots \comp_1 \alpha_{t_0(x)} \comp_0 u(s_{j+1}(y))\right).
        \end{split}
      \]
  \end{enumerate}
\end{paragr}

\begin{rem}
  On montrera dans l'appendice~\ref{sec:trans_oplax} (voir notamment le
  corollaire~\ref{coro:trans_oplax_abs}) que les
  transformations oplax entre \oo-foncteurs de $C$ vers $D$ peuvent
  s'interpréter en termes du produit tensoriel de Gray \oo-catégorique. Plus
  précisément, en notant $\Dn{1}$ la catégorie associée à l'ensemble
  ordonné $0 < 1$, de telles transformations oplax correspondent aux
  \oo-foncteurs \hbox{$\Dn{1} \otimes C \to D$}, où $\otimes$ désigne le
  produit tensoriel de Gray  (voir le paragraphe~\ref{paragr:def_tens}), ou
  encore, par adjonction, aux \oo-foncteurs $C \to \HomLax(\Dn{1}, D)$
  (voir le paragraphe~\ref{paragr:def_HomOpLax}).
\end{rem}

\begin{rem}
  La \oo-catégorie $\HomLax(\Dn{1}, D)$ de la remarque précédente est
  isomorphe à la \oo-catégorie $HD$ des cylindres dans $D$ introduite par
  Métayer dans~\cite{MetPolRes} (voir notre remarque~\ref{rem:HC}).
  Ainsi, les formules définissant les transformations oplax se trouvent déjà
  dans \cite{MetPolRes}.
\end{rem}

\begin{paragr}
  Soient $u, v : C \to D$ deux \oo-foncteurs. Une
  \ndef[transformation!lax]{transformation lax}
  de $u$ vers~$v$ est une transformation oplax de $u^\co : C^\co \to D^\co$
  vers $v^\co : C^\co \to D^\co$.
\end{paragr}

\begin{rem}
  On verra dans l'appendice~\ref{sec:trans_oplax} (voir le
  corollaire~\ref{coro:trans_lax_abs}) que les transformations lax entre
  \oo-foncteurs de $C$ vers $D$ correspondent aux \oo-fonc\-teurs~$C \otimes
  \Dn{1} \to D$.
\end{rem}

\begin{rem}\label{rem:dual_trans}
  Si $u, v : C \to D$ sont deux \oo-foncteurs, on vérifie facilement qu'une
  transformation oplax de $u^\op$ vers $v^\op$ n'est autre qu'une
  transformation oplax de $v$ vers $u$ ; on en déduit qu'une
  transformation oplax de $u^\opp$ vers $v^\opp$ est une transformation lax
  de $v$ vers $u$.
\end{rem}

Dans ce texte, on privilégiera les transformations oplax aux transformations
lax. On laisse le soin au lecteur d'expliciter la notion de transformation
lax et de dualiser les énoncés concernant les transformations oplax en des
énoncés sur les transformations lax.

\begin{paragr}\label{paragr:def_trans_id}
  Soit $u : C \to D$ un \oo-foncteur. On vérifie immédiatement qu'on définit
  une transformation oplax $\id{u}$ de $u$ vers $u$ en posant, pour toute
  cellule $x$ de $C$,
  \[
    (\id{u})_x = \id{u(x)}.
  \]
  \notindex{$\id{u}$}%
  On appellera cette transformation oplax la
  \ndef[transformation!oplax!identité]{transformation oplax identité} de
  $u$.
\end{paragr}

\begin{paragr}\label{paragr:def_trans_comp}
  Soient $v_0, v_1 : C \to D$ deux \oo-foncteurs et $\alpha$ une
  transformation oplax de~$v_0$ vers $v_1$. Soit $u : B \to C$ un
  \oo-foncteur. On vérifie immédiatement qu'on définit une transformation
  oplax $\alpha \comp u$
  \notindex{$\alpha \comp u$, $u \comp \alpha$}%
  de $v_0u$ vers $v_1u$ (qui sont des \oo-foncteurs
  de $B$ vers~$D$) en posant, pour $x$ une cellule de $B$,
  \[ (\alpha \comp u)_x = \alpha_{u(x)}. \]

  De même, si $w : D \to E$ est un \oo-foncteur, on définit une
  transformation oplax $w \comp \alpha$ de~$wv_0$ vers $wv_1$ (qui sont des
  \oo-foncteurs de $C$ vers $E$) en posant, pour $x$ une cellule de $C$,
  \[ (w \comp \alpha)_x = w(\alpha^{}_x). \]
\end{paragr}

\begin{paragr}
  Fixons une \oo-catégorie $C$. Soient $(A, v : A \to C)$ et $(B, w : B \to
  C)$ deux \oo-catégories au-dessus de $C$ et $u_0, u_1 : A \to B$ deux
  \oo-foncteurs au-dessus de $C$. Autrement dit, on a un diagramme
  \[
    \shorthandoff{;:}
    \xymatrix@C=1.5pc@R=3pc{
      A \ar@/^2ex/[rr]^{u_0} \ar@/_2ex/[rr]_{u_1}
      \ar[dr]_v
      & & B \ar[dl]^w \\
      & C &
    }
  \]
  de \oo-foncteurs formé de deux triangles commutatifs. On dira qu'une
  transformation oplax $\alpha$ de $u_0$ vers $u_1$ est
  \ndef[transformation!oplax!au-dessus d'une $\infty$-catégorie]{au-dessus
  de~$C$} si $w \comp \alpha$ est la transformation oplax identité de~$v$.
\end{paragr}

\chapter[Rappels et compléments sur la théorie de Steiner]{Rappels et
compléments sur\\ la théorie de Steiner}

Le but de ce chapitre est d'exposer la théorie développée par Steiner dans
\cite{Steiner} et d'y apporter quelques compléments.

\begin{paragr}\label{paragr:conv_comp}
  Dans ce texte, par « complexe de chaînes » on entendra toujours « complexe
  de chaînes de groupes abéliens en degrés positifs ». On rappelle qu'un
  \emph{élément homogène} d'un complexe de chaînes $K$ est un élément de la
  somme disjointe ensembliste des~$K_n$ pour $n \ge 0$. Le \ndef[degré d'un
  élément homogène d'un complexe de chaînes]{degré} d'un élément homogène
  $x$, c'est-à-dire l'unique $n$ tel que~$x$ appartienne à $K_n$, sera noté
  \nnot[$\vert x \vert$]{$|x|$}.
\end{paragr}

\begin{paragr}
  Un \ndef{complexe dirigé augmenté} est un triplet $(K, K^\ast, e)$, où
  \[
    K =
    \xymatrix{\cdots \ar[r]^-{d_{n+1}} & K_n \ar[r]^-{d_n} & K_{n-1}
    \ar[r]^-{d_{n-1}} & \cdots \ar[r]^{d_2} & K_1 \ar[r]^{d_1} & K_0}
  \]
  est un complexe de chaînes (de groupes abéliens en degrés positifs),
  \nnot{$e : K_0 \to \Z$} est une augmentation (on a donc $e d_1 = 0$) et $K^\ast =
  (K^\ast_n)_{n \ge 0}$ est la donnée pour tout~$n \ge 0$ d'un sous-monoïde
  \nnot{$K^\ast_n$} du groupe abélien $K_n$. (On ne demande aucune compatibilité
  entre la différentielle et ces sous-monoïdes.) On appellera les
  sous-monoïdes~$K^\ast_n$, pour~$n \ge 0$, les \ndef[sous-monoïde de
  positivité]{sous-monoïdes de positivité} de $K$.

  Un \ndef{morphisme de complexes dirigés augmentés} est un morphisme de
  complexes de chaînes augmentés qui respecte les sous-monoïdes de
  positivité. Plus précisément, si $(K, K^\ast, e)$ et $(K', K'^\ast, e')$
  sont deux complexes dirigés augmentés, un morphisme du premier vers le
  second est donné par un morphisme de complexes de chaînes $f : K \to K'$
  tel que $e'f_0 = e$ et $f_n(K^\ast_n) \subset K'^\ast_n$ pour tout $n \ge
  0$. On notera \nnot{$\Cda$} la catégorie des complexes dirigés augmentés.

  On désignera souvent, par abus de notation, un complexe dirigé augmenté par
  son complexe de chaînes sous-jacent.
\end{paragr}

\begin{paragr}
  On définit un foncteur
  \[ \lambda : \ooCat \to \Cda \]
  \notindex{$\lambda(C)$}%
  de la manière suivante.

  Soit $C$ une \oo-catégorie. Pour $i \ge 0$, le groupe abélien
  $\lambda(C)_i$ est défini par les générateurs
  \[ [x], \qquad\text{pour $x$ une $i$-flèche de $C$}, \]
  \notindex{$[x]$}%
  soumis aux relations
  \[
    [x \ast_j y] = [x] + [y], \qquad\text{pour $0 \le j < i$ et $x$ et $y$
    des $i$-flèches $j$-composables.}
  \]
  Le sous-monoïde de positivité $\lambda(C)^\ast_i$ est le sous-monoïde
  engendré par les générateurs~$[x]$ pour $x$ une $i$-flèche de $C$. Pour $i > 0$, la
  différentielle $d_i :
  \lambda(C)_i \to \lambda(C)_{i-1}$ est définie par
  \[
    d([x]) = [t(x)] - [s(x)],
    \qquad\text{pour $x$ une $i$-flèche de $C$}.
  \]
  Enfin, l'augmentation $\lambda(C)_0 \to \Z$ est l'unique morphisme qui
  envoie, pour tout objet~$x$ de $C$, le générateur $[x]$ sur $1$. On vérifie
  immédiatement qu'on obtient bien ainsi un complexe dirigé augmenté.

  Si $u : C \to D$ est un \oo-foncteur, on définit, pour tout $i \ge 0$, un
  morphisme de groupes abéliens $\lambda(u)_i : \lambda(C)_i \to \lambda(D)_i$ en
  envoyant un générateur $[x]$, pour $x$ une $i$-flèche de $C$, sur le
  générateur $[u(x)]$. On vérifie immédiatement qu'on obtient ainsi un
  morphisme de complexes dirigés augmentés $\lambda(u) : \lambda(C) \to
  \lambda(D)$ et que $\lambda$ ainsi défini est bien un foncteur.
\end{paragr}

\begin{paragr}\label{paragr:def_nu}
  On définit un foncteur
  \[
    \nu : \Cda \to \ooCat
  \]
  \notindex{$\nu(K)$}%
  de la manière suivante.

  Soit $K$ un complexe dirigé augmenté. Pour $i \ge 0$, les $i$-flèches de
  $\nu(K)$ sont les tableaux
  \[
    \tabld{x}{i}
  \]
  \notindex{$x^\e_i$, $x_i$}%
  tels que
  \begin{enumerate}
    \item $x^\epsilon_k$ appartient à $K^\ast_k$ pour $\epsilon = 0, 1$ et $0
      \le k \le i$ ;
    \item $d(x^\epsilon_k) = x^1_{k-1} - x^0_{k-1}$ pour $\epsilon = 0, 1$ et $0
      < k \le i$ ;
    \item $e(x^\epsilon_0) = 1$ pour $\epsilon = 0, 1$ ;
    \item $x_i^0 = x_i^1$.
  \end{enumerate}
  La structure de \oo-catégorie sur $\nu(K)$ est définie de la manière
  suivante. Soit
  \[
    x = \tabld{x}{i},
  \]
  pour $i \ge 0$, une $i$-flèche de $\nu(K)$. Si $i > 0$, les sources et
  buts de $x$ sont donnés par les tableaux
  \[
    s(x)=\tablnu{x}{i-2}{x^0_{i-1}}
    \quad\text{et}\quad
    t(x)=\tablnu{x}{i-2}{x^1_{i-1}}.
  \]
  Pour tout $i \ge 0$, l'identité de $x$ est le tableau
  \[
    \id{x} = \tablnu{x}{i}{0}.
  \]
  Enfin, si
  \[
    x = \tabld{x}{i}
    \quad\text{et}\quad
    y = \tabld{y}{i}
  \]
  sont deux $i$-flèches $j$-composables pour $i > j \ge 0$, on pose
  \[
    x \comp_j y =
    \begin{pmatrix}
      y^0_0
      &\dots
      &y^0_j
      &x^0_{j+1}+y^0_{j+1}
      &\dots
      &x^0_{i}+y^0_{i}
      \cr\noalign{\vskip 3pt}
      x^1_0
      &\dots
      &x^1_j
      &x^1_{j+1}+y^1_{j+1}
      &\dots
      &x^1_{i}+y^1_{i}
    \end{pmatrix}.
  \]
  On vérifie qu'on définit bien ainsi une \oo-catégorie.

  Si $x$ est une $i$-flèche de $\nu(K)$ pour un $i \ge 0$, on notera
  $x^\e_k$, pour $0 \le k \le i$ et $\e = 0, 1$, les composantes du
  tableau
  \[
    x = \tabld{x}{i}.
  \]
  On désignera par $x_i$ l'élément $x^0_i = x^1_i$ et, pour $k > i$
  et $\e = 0, 1$, on posera $x^\e_k = 0$.

  Si maintenant $f : K \to K'$ est un morphisme de complexes dirigés
  augmentés, on vérifie qu'on définit un \oo-foncteur $\nu(f) : \nu(K) \to
  \nu(K')$ par la formule
  \[
    x =
    \tabld{x}{i}
    \mapsto
    f(x) =
    \begin{pmatrix}
      f(x^0_1)
      &\dots
      &f(x^0_{i-1})
      &f(x^0_{i})
      \cr\noalign{\vskip 3pt}
      f(x^1_1)
      &\dots
      &f(x^1_{i-1})
      &f(x^1_{i})
    \end{pmatrix}.
  \]
  On vérifie immédiatement que $\nu$ ainsi défini est bien un foncteur.
\end{paragr}

\begin{thm}[Steiner]
  Les foncteurs
  \[
    \lambda : \ooCat \to \Cda, \qquad \nu : \Cda \to \ooCat
  \]
  forment un couple de foncteurs adjoints.
\end{thm}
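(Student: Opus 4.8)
Le plan est de construire explicitement la bijection d'adjonction

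$$\Hom_{\Cda}(\lambda(C), K) \simeq \Hom_{\ooCat}(C, \nu(K))$$

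naturelle en la \oo-cat�gorie $C$ et en le complexe dirig� augment� $K$. L'id�e directrice est qu'un \oo-foncteur $u : C \to \nu(K)$ est enti�rement d�termin� par l'image des cellules de $C$, et qu'� toute $i$-fl�che $x$ de $C$ le tableau $u(x)$ encode exactement les donn�es $[s_k(x)]$, $[t_k(x)]$ et $[x]$ dans $K$, c'est-�-dire l'information port�e par le morphisme $\lambda(u) : \lambda(C) \to K$. Le c\oe ur de l'argument consiste donc � v�rifier qu'une famille de composantes $x^\e_k$ satisfait les axiomes (a)--(d) du paragraphe~\ref{paragr:def_nu} si et seulement si elle provient d'un morphisme de complexes dirig�s augment�s.

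\emph{Premi�rement,} je me donnerais un morphisme $g : \lambda(C) \to K$ dans $\Cda$ et j'en d�duirais un candidat \oo-foncteur $\phi(g) : C \to \nu(K)$ en posant, pour toute $i$-fl�che $x$ de $C$,
$$
\phi(g)(x) = \begin{pmatrix}
  g([s_0(x)]) & \dots & g([s_{i-1}(x)]) & g([x]) \\
  g([t_0(x)]) & \dots & g([t_{i-1}(x)]) & g([x])
\end{pmatrix},
$$
le tableau ayant pour composantes $x^0_k = g([s_k(x)])$ et $x^1_k = g([t_k(x)])$ pour $k < i$, et $x^0_i = x^1_i = g([x])$. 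La condition (a) r�sulte de ce que $g$ respecte les sous-mono�des de positivit� et que $[s_k(x)], [t_k(x)], [x]$ sont dans $\lambda(C)^\ast$; la condition (b) provient de la compatibilit� de $g$ � la diff�rentielle, combin�e � la formule $d([z]) = [t(z)] - [s(z)]$ et aux identit�s it�r�es $s(s_k(x)) = s_{k-1}(x)$, $t(s_k(x)) = s_{k-1}(x)$ (car $s_k(x)$ a pour source et but $s_{k-1}(x)$); la condition (c) vient de ce que $g$ respecte l'augmentation et que les $0$-sources et $0$-buts sont des objets, d'augmentation $1$; la condition (d) est immediate par construction.

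\emph{Deuxi�mement,} je v�rifierais que $\phi(g)$ respecte sources, buts, identit�s et compositions, donc est bien un \oo-foncteur. La compatibilit� aux sources et buts est imm�diate vu les formules de $s(x)$ et $t(x)$ dans $\nu(K)$; pour les compositions $x \comp_j y$, il faut confronter la formule donnant $(x \comp_j y)^\e_k$ dans $\nu(K)$ avec la relation $[x \comp_j y] = [x] + [y]$ d�finissant $\lambda(C)$, ce qui donne l'additivit� voulue en degr� $> j$ et la co�ncidence des composantes partag�es en degr� $\le j$. \emph{R�ciproquement,} � un \oo-foncteur $u : C \to \nu(K)$ j'associerais le morphisme $\psi(u) : \lambda(C) \to K$ envoyant le g�n�rateur $[x]$ (pour $x$ une $i$-fl�che) sur la composante $u(x)_i = u(x)^0_i = u(x)^1_i \in K_i$; il faut s'assurer que ceci est compatible aux relations $[x \comp_j y] = [x] + [y]$ (cons�quence directe de la formule de composition dans $\nu(K)$ en degr� maximal), puis que $\psi(u)$ commute � la diff�rentielle et � l'augmentation et respecte les mono�des de positivit�, ce qui d�coule des axiomes (a)--(c).

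\emph{Enfin,} il reste � �tablir que $\phi$ et $\psi$ sont mutuellement inverses et naturels. La naturalit� en $K$ et en $C$ se ram�ne � la fonctorialit� des constructions $\lambda$ et $\nu$ rappel�e avant l'�nonc�, et ne pose pas de difficult� conceptuelle. \textbf{Le point le plus d�licat} sera de v�rifier que $\psi \circ \phi = \mathrm{id}$, c'est-�-dire que le tableau $\phi(g)(x)$ reconstitue fid�lement $g$ sur tous les g�n�rateurs : il faut pour cela utiliser que $\lambda(C)$ est engendr� comme groupe ab�lien par les $[x]$ (via la proposition~\ref{prop:eng_pol_comp}, qui garantit que toute cellule s'obtient par compositions, donc tout �l�ment de $\lambda(C)$ est combinaison des g�n�rateurs) et contr�ler que les composantes de rang $k < i$ du tableau, � savoir $g([s_k(x)])$ et $g([t_k(x)])$, sont bien redondantes avec les valeurs de $g$ sur les sous-cellules, de sorte que la donn�e de $g$ sur les seuls $[x]$ de degr� maximal d�termine tout. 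Cette redondance est exactement ce qui rend l'adjonction possible, et c'est l� que se concentre la v�rification combinatoire.
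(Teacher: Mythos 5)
Your strategy is the right one --- indeed it is essentially the only one available, and it is Steiner's own: the paper gives no proof of this theorem, referring instead to \cite[th\'eor\`eme 2.11]{Steiner}, and the proof there is exactly the explicit bijection you describe, $\phi(g)(x)$ being the table of the $g([s_k(x)])$, $g([t_k(x)])$, $g([x])$, and $\psi(u)$ sending the generator $[x]$ to the top component $u(x)_i$. So, compared with the paper, your proposal supplies an actual argument where the paper defers to the literature, and its outline (well-definedness of $\phi(g)$ cell by cell, functoriality of $\phi(g)$, well-definedness of $\psi(u)$ on generators, compatibility with $d$, $e$ and the positivity submonoids, the two composites being identities, naturality) is the correct decomposition of the work.

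Two points need correcting, one of which is a genuine mathematical error. Your verification of condition (b) invokes the identity $t(s_k(x)) = s_{k-1}(x)$, ``car $s_k(x)$ a pour source et but $s_{k-1}(x)$''. This is false: the globular identities give $s(s_k(x)) = s_{k-1}(x)$ but $t(s_k(x)) = t_{k-1}(x)$. With the identity as you state it one would get
\[
d\bigl(g([s_k(x)])\bigr) = g([s_{k-1}(x)]) - g([s_{k-1}(x)]) = 0,
\]
and condition (b), which requires $d(x^\e_k) = x^1_{k-1} - x^0_{k-1}$, would fail outright; the step is saved only by the correct identity, which yields $d([s_k(x)]) = [t_{k-1}(x)] - [s_{k-1}(x)]$, precisely what (b) demands. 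The lesser point: your final paragraph locates the difficulty in the wrong place. The equality $\psi\circ\phi = \mathrm{id}$ is immediate, since $\psi(\phi(g))([x]) = \phi(g)(x)_i = g([x])$ and the $[x]$ generate $\lambda(C)_i$ \emph{by definition} of $\lambda(C)$ as a quotient of the free abelian group on the cells; proposition~\ref{prop:eng_pol_comp}, which concerns polygraphic generation of $\infty$-categories, is irrelevant here. Dually, $\phi\circ\psi = \mathrm{id}$ only uses that an $\infty$-functor commutes with iterated sources and targets, so that $u(s_k(x))_k = u(x)^0_k$. The genuinely nontrivial verifications are the ones you pass over quickly: that $\phi(g)$ preserves identities (which needs $[\id{x}] = 0$ in $\lambda(C)$, a consequence of the defining relations applied to $\id{x} \comp_i \id{x} = \id{x}$) and compositions (which needs $s_k(x \comp_j y) = s_k(x) \comp_j s_k(y)$ for $k > j$, and not merely the relation $[x \comp_j y] = [x] + [y]$ in top degree), as well as the compatibility of $\psi(u)$ with the differential via condition (b).
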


\begin{proof}
  Voir \cite[théorème 2.11]{Steiner}.
\end{proof}

\begin{paragr}
  Une \ndef[base d'un complexe dirigé augmenté]{base} d'un complexe dirigé
  augmenté $K$ est un ensemble gradué \hbox{$B = (B_i)_{i \ge 0}$} tel que,
  pour tout $i \ge 0$,
  \begin{enumerate}
    \item $B_i$ est une base du $\Z$-module $K_i$ ;
    \item $B_i$ engendre le sous-monoïde $K^\ast_i$ de $K_i$.
  \end{enumerate}
  On identifiera souvent une base $B = (B_i)_{i \ge 0}$ à l'ensemble
  $\coprod_{i \ge 0} B_i$.

  Soit $K$ un complexe dirigé augmenté. On définit une relation de préordre
  $\le$ sur $K_i$ en posant
  \[
    x \le y \quaddefssi y - x \in K_i^\ast.
  \]
  On vérifie immédiatement que si $K$ admet une base, cette relation de
  préordre est une relation d'ordre et que les éléments de $B_i$ sont
  les éléments minimaux de~\hbox{$(K_i^\ast \sauf \{0\}, \le)$}. Ainsi, si
  $K$ admet une base, cette base est unique.

  On dira que le complexe dirigé augmenté $K$ est \ndef[complexe dirigé
  augmenté!à base]{à base} s'il admet une base (nécessairement unique).
\end{paragr}

\begin{paragr}
  Fixons $A$ un groupe abélien libre muni d'une base $B$. Soit
  \[ x = \sum_{b \in B} x_b\, b \]
  un élément de $A$. On appellera \ndef[support d'un élément d'un groupe
  abélien libre]{support} de $x$ l'ensemble
  \[
    \supp(x) = \{b \in B \mid x_b \neq 0\}.
  \]
  \notindex{$\supp(x)$}%
  Notons $A^\ast$ le sous-monoïde de $A$ engendré par $B$.
  On définit deux éléments $x_+$ et $x_-$ de $A^\ast$ par
  \[
    x_+ = \sum_{\substack{b \in B \\ x_b > 0}} x_b\,b
    \qquad\text{et}\qquad
    x_- = -\sum_{\substack{b \in B \\ x_b < 0}} x_b\,b.
  \]
  \notindex{$x_+, x_-$}%
  On a alors $x = x_+ - x_-$.

  En particulier, si $K$ est un complexe dirigé augmenté admettant une base
  \hbox{$B = (B_i)_{i \ge 0}$}, pour tout $x$ dans $K_i$ pour un $i \ge 0$,
  on dispose, en appliquant le paragraphe précédent au groupe abélien $K_i$
  muni de sa base $B_i$, d'une notion de support de $x$ et d'éléments $x_+$
  et $x_-$ de $K^\ast_i$.
\end{paragr}

\begin{paragr}\label{paragr:def_atome}
  Soit $K$ un complexe dirigé augmenté admettant une base $B = (B_i)_{i
  \ge 0}$. Pour $i \ge 0$ et $x$ un élément de $K_i$, on définit un tableau
  \[
    \atom{x}=\tabll{\atom{x}}{i},
  \]
  \notindex{$\atom{x}, \atom{x}^\e_i$}%
  où les $\atom{x}^\epsilon_k$ sont définis par récurrence descendante sur $k$
  de $i$ à $0$ :
  \begin{itemize}
    \item $\atom{x}^0_i = x = \atom{x}^1_i$ ;
    \item $\atom{x}^0_{k - 1} = d(\atom{x}^0_k)_-$ et $\atom{x}^1_{k - 1} =
      d(\atom{x}^1_k)_+$ pour $0 < k \le i$.
  \end{itemize}
  On vérifie facilement que ce tableau est une $i$-flèche de $\nu(K)$ si et
  seulement si, d'une part, $x$ appartient à $K^\ast_i$ et, d'autre part, on
  a les égalités $e(\atom{x}^0_0) = 1 = e(\atom{x}^1_0)$.
  On posera par ailleurs $\atom{x}_i = x$ et $\atom{x}^\e_k = 0$ pour tout
  $k > i$ et $\e = 0, 1$. Lorsque $\atom{x}$ est une $i$-flèche, ces
  conventions sont compatibles avec celles du
  paragraphe~\ref{paragr:def_nu}.

  On dit que la base $B$ de $K$ est \ndef[base d'un complexe dirigé
  augmenté!unitaire]{unitaire} si, pour tout $i \ge
  0$ et tout $x$ dans~$B_i$, le tableau $\atom{x}$ est une $i$-flèche de
  $\nu(K)$, ce qui revient à dire qu'on a l'égalité \hbox{$e(\atom{x}^0_0) =
  1 = e(\atom{x}^1_0)$}.

  On dit qu'un complexe dirigé augmenté est \ndef[complexe dirigé augmenté!à
  base!unitaire]{à base unitaire} s'il est
  à base et que son unique base est unitaire. Si un complexe dirigé augmenté
  $K$ est à base unitaire, pour tout élément $x$ de la base de $K$, on
  appelle la cellule $\atom{x}$ de $\nu(K)$ l'\ndef[atome d'un complexe
  dirigé augmenté à base unitaire]{atome} associé à $x$.
\end{paragr}

\begin{paragr}
  Soit $K$ un complexe dirigé augmenté admettant une base $B$. Pour $i \ge 0$,
  on note \nnot{$\le_i$} la plus petite relation de préordre sur $B$ ($=
  \coprod_i B_i$) satisfaisant
  \[
    x \le_i y \quad\text{si}\quad
    \text{$|x| > i$, $|y| > i$ et
    $\supp(\atom{x}^1_i) \cap \supp(\atom{y}^0_i)
    \neq \vide$}.
  \]
  On dit que la base $B$ est \ndef[base d'un complexe dirigé augmenté!sans
  boucle]{sans boucle} si, pour tout $i \ge 0$, la relation de préordre
  $\le_i$ est une relation d'ordre.

  On dit qu'un complexe dirigé augmenté est \ndef[complexe dirigé augmenté!à
  base!sans boucle]{à base sans boucle} s'il est à base et si sa base est
  sans boucle.

\end{paragr}

\begin{paragr}
  On appellera \ndef[complexe!de Steiner]{complexe de Steiner} un complexe
  dirigé augmenté à base dont la base est unitaire et sans boucle.

  On appellera \ndef[$\infty$-catégorie!de Steiner]{\oo-catégorie de
  Steiner} une
  \oo-catégorie dans l'image essentielle de la restriction du foncteur $\nu
  : \Cda \to \ooCat$ à la catégorie des complexes de Steiner. Le théorème
  suivant affirme que le foncteur $\nu$ induit une équivalence de catégories
  entre la catégorie des complexes de Steiner et celle des \oo-catégories de
  Steiner.
\end{paragr}

\begin{thm}[Steiner]\label{thm:Steiner}
  Pour tout complexe de Steiner $K$, le morphisme d'adjonction
  \[
    \lambda(\nu(K)) \to K
  \]
  est un isomorphisme. En particulier, la restriction du foncteur $\nu : \Cda
  \to \ooCat$ à la catégorie des complexes de Steiner est un foncteur
  pleinement fidèle.
\end{thm}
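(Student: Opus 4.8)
The plan is to first make the adjunction counit explicit, then reduce the theorem to a single combinatorial statement about the cells of $\nu(K)$, and finally prove that statement by an induction powered by the loop-free hypothesis. Denote by $\pi_K\colon\lambda(\nu(K))\to K$ the counit. A direct computation shows that in degree $i$ it sends a generator $[x]$, for $x$ an $i$-cell $\tabld{x}{i}$ of $\nu(K)$, to its top component $x_i=x^0_i=x^1_i$. This is a well-defined morphism of $\Cda$: the composition formula of~\ref{paragr:def_nu} gives $(x\comp_j y)_i=x_i+y_i$ for $j<i$, so the relations $[x\comp_j y]=[x]+[y]$ are respected; conditions (ii) and (iii) on cells show that $\pi_K$ commutes with differentials (as $d(x_i)=x^1_{i-1}-x^0_{i-1}=(t(x))_{i-1}-(s(x))_{i-1}$) and augmentations; and condition (i) gives $x_i\in K^\ast_i$. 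Granting that $\pi_K$ is an isomorphism for Steiner complexes, full faithfulness of $\nu$ on these is then formal: for $K,L$ Steiner, the composite of $\nu\colon\Cda(K,L)\to\ooCat(\nu(K),\nu(L))$ with the adjunction bijection $\ooCat(\nu(K),\nu(L))\simeq\Cda(\lambda(\nu(K)),L)$ is precomposition by $\pi_K$, hence a bijection as soon as $\pi_K$ is invertible.

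Next I would reduce the isomorphism statement to a generation property. The functor $\lambda$ kills identities (for a $0$-cell $a$ one has $\id a\comp_0\id a=\id a$, whence $[\id a]=2[\id a]=0$, and similarly in every degree) and turns composites into sums, so any cell $x$ expressible as a composite of atoms and iterated identities satisfies $[x]=\sum_b m_b[\atom b]$ in $\lambda(\nu(K))_i$, where $x_i=\sum_b m_b\,b$ with $m_b\ge 0$. Since the basis is unitary, each $\atom b$ ($b\in B_i$) is a cell with $\pi_K([\atom b])=b$; as $B_i$ is a $\Z$-basis of $K_i$, the rule $b\mapsto[\atom b]$ defines, in each degree, a section $\sigma$ of $\pi_K$. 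Thus $\pi_K$ is a split epimorphism degreewise, and it is an isomorphism exactly when $\sigma$ is surjective, i.e. when the classes $[\atom b]$ generate $\lambda(\nu(K))$. The whole theorem is therefore reduced to:
\[ (\star)\qquad \text{every cell of $\nu(K)$ is a composite of atoms $\atom{b}$, $b\in B$, and of iterated identities.} \]

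I would prove $(\star)$ by induction on $N(x)=\sum_b m_b$ (with an outer induction on the dimension $i$ to construct the lower chains). If $N(x)=0$ then $x_i=0$ and $x=\id{s(x)}$ is an iterated identity. If $N(x)\ge 1$, the heart of the argument is a peeling lemma: using loop-freeness, so that $\le_{i-1}$ is a genuine order, one selects a $\le_{i-1}$-minimal element $b_0$ of the finite set $\supp(x_i)\subset B_i$ and factors $x=x'\comp_{i-1}\atom{b_0}$, where $x'$ is a cell with $x'_i=x_i-b_0$ and $N(x')=N(x)-1$. Minimality of $b_0$ is precisely what yields the composability condition $s_{i-1}(x')=t_{i-1}(\atom{b_0})$: no other atom of $\supp(x_i)$ has target meeting the source chain $d(b_0)_-=\atom{b_0}^0_{i-1}$, so this portion of the $(i-1)$-source of $x$ can be cleaved off and the composition formula of~\ref{paragr:def_nu} reproduces $x$ entry by entry. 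The induction hypothesis applied to $x'$ then gives $(\star)$.

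I expect this peeling lemma to be the main obstacle: producing the cell $x'$ and verifying, through the loop-free order on supports, that the tableau identity $x=x'\comp_{i-1}\atom{b_0}$ holds in every degree is the only place where loop-freeness is genuinely used, and it is where the real combinatorial work lies. The surrounding bookkeeping (that the chains $x^\e_k$ are themselves cells to which the induction applies, and that unitarity supplies the augmentation condition $e(\atom{b_0}{}^\e_0)=1$) is routine. Alternatively, the statement is exactly Steiner's, so one may simply invoke \cite{Steiner}.
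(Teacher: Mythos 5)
The paper's own proof of this theorem is the single citation ``Voir \cite[th\'eor\`eme 5.6]{Steiner}'', so your closing alternative coincides exactly with it; and your formal reductions are sound: the explicit description of the counit $\pi_K([x])=x_i$, the deduction of full faithfulness from invertibility of $\pi_K$ by naturality of the counit, and the reduction of the theorem to $(\star)$ via the section $b\mapsto[\atom{b}]$ are all correct. The gap is in the peeling lemma, which is false as stated. If $x=x'\comp_{i-1}\atom{b_0}$, then by the composition formula of~\ref{paragr:def_nu} the $(i-1)$-source of $x$ equals that of $\atom{b_0}$, namely $d(b_0)_-$, whereas in general $x^0_{i-1}$ is strictly larger; minimality of $b_0$ in $\supp(x_i)$ does nothing to repair this, because the obstruction is not the composability of $x'$ with $\atom{b_0}$ but the fact that \emph{any} composite of that form has the wrong $(i-1)$-source. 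Concretely, let $K=\lambda(\Dn{2}\amalgDn{0}\Dn{2})$ (a strong Steiner complex by Proposition~\ref{prop:Theta_Steiner}), with $2$-dimensional basis elements $\alpha\colon a_0\Rightarrow a_1$ and $\beta\colon b_0\Rightarrow b_1$ placed side by side, and let $x=\atom{\beta}\comp_0\atom{\alpha}$ be the horizontal composite, so that $x_2=\alpha+\beta$ and $x^0_1=a_0+b_0$. Both $\alpha$ and $\beta$ are $\le_1$-minimal in $\supp(x_2)$, yet there is no cell $x'$ with $x=x'\comp_1\atom{\alpha}$, since any such composite has $1$-source $a_0\neq a_0+b_0$; the same holds for $\beta$.

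What is true, and what Steiner actually proves, is that one can peel off a cell $w$ having a single top-dimensional basis element but whose lower rows match those of $x$: in the example, $x=(\atom{\beta}\comp_0\atom{a_1})\comp_1(\atom{b_0}\comp_0\atom{\alpha})$, and the peeled factor $w=\atom{b_0}\comp_0\atom{\alpha}$ is the atom $\atom{\alpha}$ whiskered by lower-dimensional atoms. Producing such a $w$ (and showing that both $w$ and the complementary tableau $x'$ are again cells of $\nu(K)$) is where loop-freeness genuinely enters, and it forces a double induction, on the dimension and on $N(x)$, since the lower-dimensional padding must itself be decomposed by the same procedure. Your $N(x)$-induction with bare atoms cannot be repaired without this extra layer; once it is added, the argument essentially becomes Steiner's proof of his th\'eor\`eme 5.6, which is precisely what the paper invokes.
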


\begin{proof}
  Voir \cite[théorème 5.6]{Steiner}.
\end{proof}

\begin{thm}[Steiner]\label{thm:Steiner_pol}
  Soit $K$ un complexe de Steiner. Alors la \oo-caté\-gorie~$\nu(K)$
  est engendrée librement au sens des polygraphes par ses atomes,
  c'est-à-dire par les $\atom{x}$, où $x$ varie dans la base de $K$.
\end{thm}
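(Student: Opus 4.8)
The plan is to verify directly the two conditions of paragraph~\ref{paragr:def_pol} for the set $E$ of atoms of $\nu(K)$, writing $E_i = E \cap \nu(K)_i = \{\atom{x} \mid x \in B_i\}$, where $B = (B_i)_{i \ge 0}$ is the unique unitary loop-free base of $K$. Condition~\ref{item:pol_a} is straightforward. An object of $\nu(K)$ is an element $a \in K^\ast_0$ with $e(a) = 1$; by unitarity applied in degree $0$ (paragraph~\ref{paragr:def_atome}) one has $e(b) = 1$ for every $b \in B_0$, so writing $a = \sum_{b \in B_0} n_b\, b$ with $n_b \ge 0$, the relation $e(a) = \sum_b n_b = 1$ forces a single coefficient to equal $1$ and the others to vanish. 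Hence $a \in B_0$ and $\nu(K)_0 = E_0$.

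The substance lies in condition~\ref{item:pol_b}, which I would treat by dimension. Fix $i \ge 0$, an \oo-cat�gorie $D$, a \oo-foncteur $u : \tb{i}(\nu K) \to D$ and a map $f : E_{i+1} \to D_{i+1}$ compatible with $u$ along sources and targets. The crucial ingredient is a \emph{normal-form} statement: every $(i+1)$-cell $z$ of $\nu(K)$ is a composite, for the operations $\comp_j$, of atoms $\atom{x}$ with $x \in B_{i+1}$ and of iterated identities of cells of dimension $\le i$. This statement simultaneously yields both halves of \ref{item:pol_b}. It gives generation by composition, whence uniqueness of $u'$: since $u'$ must restrict to $u$ on $\tb{i}(\nu K)$, send each atom to its prescribed value under $f$, and respect identities and the $\comp_j$, its value on every cell is forced. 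It also provides the recipe for existence: one defines $u'(z)$ by applying $u$ and $f$ to such a decomposition and composing the results in $D$, the remaining task being to show this is independent of the decomposition and defines an \oo-foncteur.

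The decomposition is read off from the top component. Writing $z = (z^\epsilon_k)_{0 \le k \le i+1}$, the element $z_{i+1} = z^0_{i+1} = z^1_{i+1}$ lies in $K^\ast_{i+1}$ and is therefore a sum $\sum_{x \in B_{i+1}} n_x\, x$ with $n_x \ge 0$; the atoms occurring in any decomposition of $z$ are exactly the $\atom{x}$ with $n_x > 0$, each with multiplicity $n_x$ (identities of lower cells contribute $0$ in degree $i+1$). Loop-freeness enters precisely here: the order $\le_i$ on the base lets one arrange these atoms, suitably whiskered by identities so as to become $i$-composable, into an iterated $\comp_i$-composite realising $z$, and, by descending induction on the dimension of the components, fixes the lower whiskering data. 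Unitarity guarantees that each $\atom{x}$ is a genuine cell of $\nu(K)$ whose globular boundary is dictated by $d(x)$, so that these whiskerings are determined by $u$.

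The main obstacle is the existence half: proving that $u'(z)$ is independent of the chosen decomposition and that $u'$ is functorial. I would reduce the well-definedness to the level of chain complexes by exploiting theorem~\ref{thm:Steiner}: the isomorphism $\lambda(\nu K) \simeq K$ shows that the passage to top components is additive, $(z \comp_j w)_{i+1} = z_{i+1} + w_{i+1}$, so that the multiset of atoms occurring is an invariant of $z$ independent of any decomposition. Loop-freeness then rigidifies the relative order in which these atoms must be composed, removing the ambiguity that would otherwise obstruct consistency in the arbitrary target $D$. The remaining checks---compatibility of $u'$ with sources, targets and identities, and with each composition $\comp_j$---are a bookkeeping induction on dimension using the explicit formulas of paragraph~\ref{paragr:def_nu}. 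Since this is exactly Steiner's freeness theorem, one may alternatively invoke the corresponding result of~\cite{Steiner} directly.
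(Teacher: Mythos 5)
The paper offers no argument for this statement: its proof is the single line ``Voir \cite[th\'eor\`eme~6.1]{Steiner}''. So your closing remark --- that one may simply invoke Steiner's freeness theorem --- coincides exactly with what the paper does, and to that extent the proposal is fine; your preliminary observations (the identification $\nu(K)_0 = E_0$ via unitarity in degree $0$, the additivity of top components under $\comp_j$, hence the invariance of the multiset of top-dimensional atoms) are also correct.

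Read as a self-contained proof, however, the sketch has a genuine gap at precisely the two points that constitute the content of Steiner's theorem. First, the normal-form claim --- that every $(i+1)$-cell of $\nu(K)$ is a $\comp_j$-composite of whiskered atoms $\atom{x}$, $x \in B_{i+1}$, and of identities --- is asserted, not proved. This existence statement is the technical heart of \cite[th\'eor\`eme~6.1]{Steiner}; loop-freeness must be used to \emph{produce} a decomposition by an induction on the support of $z_{i+1}$, not merely to order a decomposition already known to exist. Second, and more seriously, the well-definedness step is circular as stated. Knowing that the multiset of atoms is an invariant of $z$ is far from sufficient: two decompositions of the same cell can differ not only in the order of the factors but also in the lower-dimensional whiskerings and in the way associativity and interchange are applied; and the claim that ``loop-freeness rigidifies the relative order'' is false, since $\le_i$ is only a partial order, so incomparable atoms may be composed in either order. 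Showing that the value $u'(z)$, computed in an \emph{arbitrary} target $D$ from an arbitrary decomposition, is independent of all these choices is essentially equivalent to the freeness being proved, so it cannot be dispatched as bookkeeping --- it is exactly what Steiner's inductive construction of the extension $u'$ is designed to achieve. As it stands, your argument either needs that construction carried out in full, or it collapses to the citation, which is the paper's proof.
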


\begin{proof}
  Voir \cite[théorème 6.1]{Steiner}.
\end{proof}

\begin{paragr}\label{paragr:def_le_N}
  Soit $K$ un complexe dirigé augmenté admettant une base $B$. On notera
  \nnot{$\leN$} la plus petite relation de préordre sur $B$ satisfaisant
  \[
    x \leN y \quad\text{si}\quad x \in \supp(d(y)_-) \text{ ou }
    y \in \supp(d(x)_+),
  \]
  où, par convention, $d(b) = 0$ si $b$ est dans $B_0$.
  On dit que la base $B$ est \ndef[base d'un complexe dirigé
  augmenté!fortement sans boucle]{fortement sans boucle} si la relation de
  préordre $\leN$ est une relation d'ordre.

  On dit qu'un complexe dirigé augmenté est \ndef[complexe dirigé
  augmenté!à base!fortement sans boucle]{à base fortement sans boucle} s'il
  est à base et si sa base est fortement sans boucle.
\end{paragr}

\begin{prop}[Steiner]
  Soit $K$ un complexe dirigé augmenté admettant une base $B$. Si la base $B$
  est fortement sans boucle, alors elle est sans boucle.
\end{prop}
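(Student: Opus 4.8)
The plan is to prove the stronger statement that for every $i \ge 0$ the preorder $\le_i$ is contained in the preorder $\leN$, as relations on $B$. Once this is established the conclusion is immediate: a reflexive relation contained in an antisymmetric relation is itself antisymmetric, so if $\leN$ is an order (the hypothesis that $B$ is fortement sans boucle) then each $\le_i$, being a preorder contained in $\leN$, is automatically an order, which is exactly the assertion that $B$ is sans boucle.

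To prove the containment, note that $\leN$ is a preorder while $\le_i$ is by construction the smallest preorder containing its generating relations; it therefore suffices to check that every generating relation of $\le_i$ already holds in $\leN$. So fix $x, y \in B$ with $|x| > i$, $|y| > i$ and $\supp(\atom{x}^1_i) \cap \supp(\atom{y}^0_i) \neq \vide$, and choose $b$ in this intersection. The crux is the pair of auxiliary claims: (A) if $|x| > i$ and $b \in \supp(\atom{x}^1_i)$ then $x \leN b$; and dually (B) if $|x| > i$ and $b \in \supp(\atom{x}^0_i)$ then $b \leN x$. Granting these, (A) applied to $x$ yields $x \leN b$, while (B) applied to $y$ yields $b \leN y$; by transitivity of $\leN$ we obtain $x \leN y$, as required.

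I would prove (A) by induction on $|x| - i \ge 1$, using the recursive definition of the atom. When $|x| = i+1$ one has $\atom{x}^1_i = d(x)_+$, so $b \in \supp(d(x)_+)$ gives $x \leN b$ directly from the defining clause $y \in \supp(d(x)_+)$ of $\leN$. For the inductive step, set $w = \atom{x}^1_{i+1} = \sum_c a_c\, c \in K^\ast_{i+1}$ with all $a_c > 0$; then $\atom{x}^1_i = d(w)_+$, and if $b$ lies in its support the coefficient $\sum_c a_c\,(d(c))_b$ of $b$ in $d(w) = \sum_c a_c\, d(c)$ is strictly positive. Hence some $c \in \supp(w)$ satisfies $(d(c))_b > 0$, that is $b \in \supp(d(c)_+)$, so $c \leN b$; the induction hypothesis applied to $c \in \supp(\atom{x}^1_{i+1})$ gives $x \leN c$, and transitivity gives $x \leN b$. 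Claim (B) is proved in the same way, replacing $\atom{x}^1_k = d(\atom{x}^1_{k+1})_+$ by $\atom{x}^0_k = d(\atom{x}^0_{k+1})_-$ and the clause $y \in \supp(d(x)_+)$ by $x \in \supp(d(y)_-)$.

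The only delicate point, and the place where one might expect trouble, is the cancellation occurring in $d(w) = \sum_c a_c\, d(c)$: a basis element $b$ appearing in several $d(c)$ with opposite signs could in principle be killed in $d(w)_+$, so there is no hope of describing $\supp(\atom{x}^1_i)$ in terms of the individual $\supp(d(c)_+)$. The argument sidesteps this entirely because it only needs the converse implication: if $b$ \emph{does} survive in $\supp(d(w)_+)$ then, all the $a_c$ being positive, at least one summand must contribute a strictly positive coefficient to $b$, which furnishes the single relation $c \leN b$ we need. Thus no control over the cancellations is required and the induction closes cleanly. Note finally that the strongly-loop-free hypothesis itself is used only at the very end, to guarantee antisymmetry of $\leN$; the containment $\le_i \subseteq \leN$ holds unconditionally.
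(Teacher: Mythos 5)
Votre preuve est correcte : la strat�gie de montrer l'inclusion $\le_i\,\subseteq\,\leN$ en v�rifiant les relations g�n�ratrices via les deux affirmations (A) et (B) (d�montr�es par r�currence sur $|x|-i$ � partir de la d�finition r�cursive des tableaux $\atom{x}$) est saine, le point d�licat des simplifications dans $d(w)=\sum_c a_c\,d(c)$ est correctement �cart� puisque seul le sens � si $b$ survit dans $d(w)_+$ alors un $c$ contribue positivement � est n�cessaire, et l'antisym�trie de $\le_i$ d�coule bien de celle de $\leN$ par inclusion. Notez que le texte ne donne pas de d�monstration mais renvoie � \cite[proposition 3.7]{Steiner} ; votre argument est une reconstruction autonome et compl�te de cette d�monstration, qui suit essentiellement les m�mes lignes que celle de Steiner.
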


\begin{proof}
  Voir \cite[proposition 3.7]{Steiner}.
\end{proof}

\begin{paragr}\label{paragr:def_ooCat_Stf}
  On appellera \ndef[complexe!de Steiner fort]{complexe de Steiner fort} un
  complexe dirigé augmenté à
  base dont la base est unitaire et fortement sans boucle. En vertu de la
  proposition précédente, un complexe de Steiner fort est un complexe de
  Steiner. On notera \nnot{$\Stf$} la sous-catégorie pleine de la catégorie des
  complexes dirigés augmentés formée des complexes de Steiner forts.

  On appellera \ndef[$\infty$-catégorie!de Steiner forte]{\oo-catégorie de
  Steiner forte} une \oo-catégorie dans l'image essentielle de la
  restriction du foncteur $\nu : \Cda \to \ooCat$ à
  la catégorie des complexes de Steiner forts. En vertu du
  théorème~\ref{thm:Steiner}, le foncteur $\nu$ induit donc une équivalence
  de la catégorie des complexes de Steiner forts vers la catégorie des
  \oo-catégories de Steiner fortes.
\end{paragr}

\begin{exem}
  L'exemple suivant montre que la classe des \oo-catégories de Steiner
  fortes est strictement incluse dans celle des \oo-catégories de Steiner.
  Considérons la $3$-catégorie $C$ définie par le diagramme
  \[
    \shorthandoff{;}
    \xymatrix@C=3pc@R=3pc{
      a \ar[r]^u_{}="ab" \ar[d]_f &
      b \ar[d]^g
      &
      a \ar[r]^u_{}="ab'" \ar[d]_f_{}="t3"
        &
      b \ar[d]^g
      \\
      c \ar[r]_v_{}="cd" \ar[ur]^{h\!\!\!} & d \ar@{}[u]_{}="s3"
      &
      c \ar[r]_v_{}="cd'" \ar[ur]^{h'\!\!\!\!\!} & d
      \ar@{}"s3";"t3"_(0.25){}="ss3"_(0.75){}="tt3"
      \ar@3"ss3";"tt3"^{\Lambda\,\,\,}
      \ar@{}"ab";[lll]_(0.15){}="sa"_(0.55){}="ta"
      \ar@2"sa";"ta"_(0.45){\alpha}
      \ar@{}[ull];"cd"_(0.45){}="sb"_(0.85){}="tb"
      \ar@2"sb";"tb"^{\beta}
      \ar@{}"ab'";[l]_(0.15){}="sa'"_(0.55){}="ta'"
      \ar@2"sa'";"ta'"_(0.45){\alpha'\!\!}
      \ar@{}[u];"cd'"_(0.45){}="sb'"_(0.85){}="tb'"
      \ar@2"sb'";"tb'"^{\beta'}
      \pbox{.}
    }
  \]
  On vérifie facilement que $\lambda(C)$ est un complexe de Steiner,
  de base les éléments de~$\lambda(C)$ correspondant aux cellules «
  génératrices » de $C$ (c'est-à-dire celles qui sont nommées dans le diagramme
  ci-dessus), et que le morphisme canonique $C \to \nu(\lambda(C))$ est un
  isomorphisme.  Néanmoins, il n'est pas vrai que $\lambda(C)$ soit un
  complexe de Steiner fort : en effet, on a
  \[ \beta \leN \Lambda \leN \alpha' \leN h' \leN b \leN g \leN \beta, \]
  où $\leN$ désigne la relation du paragraphe~\ref{paragr:def_le_N} et où on
  a identifié les cellules génératrices de $C$ avec les éléments de la base
  de $\lambda(C)$ leur correspondant.
\end{exem}

Dans la suite de ce chapitre, on introduit quelques compléments sur
la notion de complexe dirigé augmenté.

\begin{paragr}\label{paragr:def_decent}
  On dira qu'un complexe dirigé augmenté $K$ est \ndef[complexe dirigé
  augmenté!décent]{décent} si, pour tout
  $x$ dans~$K^\ast_0$, on a $e(x) \ge 0$. Notons que tout complexe dirigé
  augmenté $K$ à base unitaire, et donc en particulier tout complexe de Steiner
  (et donc tout complexe de Steiner fort), est décent puisque, par
  définition, si $x$ est dans la base de $K^\ast_0$, on a $e(x) = 1$.
\end{paragr}

\begin{paragr}\label{paragr:def_dual_Cda}
  Soient $K$ un complexe dirigé augmenté et $J$ une partie de $\N^\ast = \N
  \sauf \{0\}$. On définit un complexe dirigé augmenté \nnot{$\dual{J}(K)$}, appelé
  le \ndef[$j$-z@$J$-dual!d'un complexe dirigé augmenté]{$J$-dual} de
  $K$, de la manière suivante :
  \begin{itemize}
    \item pour tout $n \ge 0$, on a
    \[ \dual{J}(K)_n = K_n \quadet \dual{J}(K)^\ast_n = K^\ast_n \pbox{;}\]
    \item $\dual{J}(K)$ a même augmentation que $K$ ;
    \item la différentielle $d'$ de $\dual{J}(K)$ est
      donnée, pour $n \ge 1$, par
      \[ d'_n =
        \begin{cases}
          -d_n & \text{si $n$ appartient à $J$,} \\
          d_n & \text{sinon.}
        \end{cases}
      \]
  \end{itemize}
  On vérifie immédiatement que $K \mapsto \dual{J}(K)$ définit un
  endofoncteur involutif \hbox{$\dual{J} : \Cda \to \Cda$} de la catégorie
  des complexes dirigés augmentés.

  Comme dans le cas des \oo-catégories, trois dualités jouent un rôle
  particulièrement important. Si $J$ est l'ensemble des entiers strictement
  positifs impairs, on appellera le $J$-dual le \ndef[dual impair!d'un
  complexe dirigé augmenté]{dual impair} et on
  désignera \nnot[$K^\op$, $K^\opp$, $K^\co$]{$\dual{J}(K)$} par $K^\opp$.
  Si $J$ est l'ensemble des entiers strictement positifs pairs, on parlera
  du \ndef[dual pair!d'un complexe dirigé augmenté]{dual pair} et on
  désignera $\dual{J}(K)$ par $K^\co$. Enfin, si $J$ est l'ensemble des
  entiers strictement positifs, on appellera le $J$-dual le \ndef[dual!d'un
  complexe dirigé augmenté]{dual} et on désignera $\dual{J}(K)$ par $
  K^\op$.
\end{paragr}

\begin{prop}\label{prop:dual_lambda_nu}
  Pour toute partie $J$ de $\N^\ast$, les carrés de foncteurs
  \[
    \xymatrix{
    \ooCat \ar[r]^{\dual{J}} \ar[d]_\lambda  & \ooCat \ar[d]^\lambda \\
    \Cda \ar[r]_{\dual{J}} & \Cda \pbox{,} \\
    }
    \qquad
    \xymatrix{
    \Cda \ar[r]^{\dual{J}} \ar[d]_\nu & \Cda \ar[d]^\nu \\
    \ooCat \ar[r]_{\dual{J}} & \ooCat \pbox{,}
    }
  \]
  où les foncteurs horizontaux sont ceux définis au paragraphe précédent et
  au paragraphe~\ref{paragr:dual_ooCat}, sont commutatifs à isomorphisme
  canonique près. En particulier, pour toute \oo-catégorie $C$, on a des
  isomorphismes canoniques
  \[
    \lambda(C^\opp) \simeq (\lambda(C))^\opp,
    \quad
    \lambda(C^\co) \simeq (\lambda(C))^\co
    \quadet
    \lambda(C^\op) \simeq (\lambda(C))^\op,
  \]
  et, pour tout complexe dirigé augmenté $K$, on a des isomorphismes
  canoniques
  \[
    \nu(K^\opp) \simeq (\nu(K))^\opp,
    \quad
    \nu(K^\co) \simeq (\nu(K))^\co
    \quadet
    \nu(K^\op) \simeq (\nu(K))^\op.
  \]
\end{prop}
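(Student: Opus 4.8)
La strat\'egie consiste \`a \'etablir directement la commutativit\'e du premier carr\'e (celui concernant $\lambda$) par inspection des d\'efinitions, puis \`a en d\'eduire celle du second (celui concernant $\nu$) par un argument d'unicit\'e des adjoints, sans aucun calcul suppl\'ementaire.

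Pour le premier carr\'e, fixons une \oo-cat\'egorie $C$ et une partie $J$ de $\N^\ast$. Les \oo-cat\'egories $C$ et $\dual{J}(C)$ ayant exactement les m\^emes cellules, les groupes ab\'eliens gradu\'es sous-jacents \`a $\lambda(\dual{J}(C))$ et \`a $\lambda(C)$ sont pr\'esent\'es par les m\^emes g\'en\'erateurs $[x]$ et les m\^emes relations : en effet, la relation $[x \comp_j y] = [x] + [y]$ est insensible \`a l'ordre des facteurs, donc n'est pas affect\'ee par l'\'eventuel renversement du sens de la composition $\comp_j$ dans $\dual{J}(C)$. De m\^eme, les sous-mono\"ides de positivit\'e (engendr\'es par les $[x]$) et l'augmentation (qui envoie $[x]$ sur $1$ pour $x$ un objet, les objets \'etant insensibles \`a $\dual{J}$ puisque $0 \notin J$) co\"incident. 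Enfin, pour $x$ une $i$-fl\`eche, on a $d([x]) = [t(x)] - [s(x)]$, et dans $\dual{J}(C)$ la source et le but d'une $i$-fl\`eche sont \'echang\'es si et seulement si $i$ appartient \`a $J$ ; la diff\'erentielle de $\lambda(\dual{J}(C))$ en degr\'e $i$ est donc $-d_i$ si $i \in J$ et $d_i$ sinon, ce qui est exactement la diff\'erentielle de $\dual{J}(\lambda(C))$. L'identit\'e sur les g\'en\'erateurs fournit ainsi un isomorphisme canonique $\lambda(\dual{J}(C)) \simeq \dual{J}(\lambda(C))$, visiblement naturel en $C$ (les deux foncteurs $\lambda(\dual{J}(u))$ et $\dual{J}(\lambda(u))$ envoyant $[x]$ sur $[u(x)]$).

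Pour le second carr\'e, on invoque l'unicit\'e de l'adjoint \`a droite. L'endofoncteur $\dual{J}$ de $\ooCat$ (resp. de $\Cda$) \'etant un isomorphisme involutif de cat\'egories, il est son propre adjoint \`a droite. Comme $\nu$ est adjoint \`a droite de $\lambda$, le foncteur $\dual{J} \circ \nu$ est adjoint \`a droite de $\lambda \circ \dual{J}$ et le foncteur $\nu \circ \dual{J}$ est adjoint \`a droite de $\dual{J} \circ \lambda$. L'isomorphisme naturel $\lambda \circ \dual{J} \simeq \dual{J} \circ \lambda$ obtenu ci-dessus induit alors, par unicit\'e de l'adjoint \`a droite \`a isomorphisme canonique pr\`es, un isomorphisme naturel $\dual{J} \circ \nu \simeq \nu \circ \dual{J}$, c'est-\`a-dire la commutativit\'e cherch\'ee. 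Les isomorphismes canoniques particuliers pour $\opp$, $\co$ et $\op$ s'en d\'eduisent en sp\'ecialisant $J$.

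Le point le plus d\'elicat est la v\'erification du comportement de la diff\'erentielle dans le premier carr\'e (s'assurer que le renversement du sens des fl\`eches en degr\'e $i \in J$ produit pr\'ecis\'ement le changement de signe $d_i \mapsto -d_i$, et que les relations et mono\"ides de positivit\'e sont bien pr\'eserv\'es) ; une fois ce carr\'e acquis, le passage aux adjoints rend le second totalement formel. On pourrait \'egalement \'etablir le second carr\'e directement, en exhibant l'isomorphisme au niveau des tableaux d\'efinissant $\nu$, donn\'e par l'\'echange des deux lignes dans la colonne $k$ lorsque $k+1 \in J$ ; mais cette voie impose de v\'erifier \`a la main la compatibilit\'e aux sources, buts et compositions (en tenant compte du renversement de l'ordre de $\comp_j$ lorsque $j+1 \in J$ et du partage des bords de dimension inf\'erieure \`a $j$ entre facteurs composables), et l'argument d'adjonction lui est donc nettement pr\'ef\'erable.
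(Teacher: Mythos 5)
Votre preuve est correcte et suit essentiellement la même démarche que celle de l'article : vérification directe du carré relatif à $\lambda$ (mêmes générateurs, relations insensibles à l'ordre des facteurs, changement de signe de la différentielle en degré $i \in J$), puis passage au carré relatif à $\nu$ par adjonction, en utilisant que $\dual{J}$, étant une involution, est son propre adjoint. La seule différence est d'ordre rédactionnel (l'article énonce d'abord l'équivalence des deux carrés par adjonction, puis vérifie le premier), sans incidence mathématique.
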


\begin{proof}
  Puisque $\dual{J}$ est une involution, ce foncteur est son propre adjoint
  à gauche comme à droite. La commutativité à isomorphisme près des deux
  carrés de l'énoncé est donc équivalente par adjonction et il suffit de
  vérifier la commutativité du premier.

  Soit $C$ une \oo-catégorie. Par définition, pour $i \ge 0$, le groupe
  abélien $\lambda(\dual{J}(C))_i$ est engendré par les $[x]$, pour $x$
  dans $\dual{J}(C)_i = C_i$, soumis aux relations $[x \comp'_j y] = [x] +
  [y]$, pour $x$ et $y$ des $i$-flèches composables pour la
  $j$-composition~$\comp'_j$ de~$\dual{J}(C)$. Si $j + 1$ n'appartient pas à
  $J$, la composition $\comp'_j$ est la composition $\comp_j$ de~$C$. Si $j
  + 1$ appartient à $J$, alors $x$ et $y$ sont composables pour $\comp'_j$
  si et seulement si $y$ et $x$ sont composables pour $\comp_j$ et on a alors
  $x \comp'_j y = y \comp_j x$. Puisque $[x] + [y] = [y] + [x]$, les
  relations définissant $\lambda(\dual{J}(C))_i$ sont donc les mêmes que
  celles qui définissent $\lambda(C)_i$. Puisque par définition, on a
  $\dual{J}(\lambda(C))_i = \lambda(C)_i$, on obtient donc l'égalité
  \[ \lambda(\dual{J}(C))_i = \dual{J}(\lambda(C))_i. \]
  On a également
  \[ \lambda(\dual{J}(C))^\ast_i = \dual{J}(\lambda(C))^\ast_i \]
  puisque ces sous-monoïdes sont engendrés par les mêmes générateurs. Enfin, si
  $i > 0$ et $x$~appartient à $C_i$, la différentielle $d'$ de
  $\lambda(\dual{J}(C))$ envoie $[x]$ sur $[t'(x)] - [s'(x)]$, où $s'$
  et~$t'$ désignent la source et le but dans $\dual{J}(C)$. Si $i$
  n'appartient pas à $J$, on a \hbox{$s'(x) = s(x)$} et $t'(x) = t(x)$, et donc
  $d'([x]) = d([x])$, où $d$ désigne la différentielle de~$\lambda(C)$.  Si
  $i$ appartient à $J$, on a $s'(x) = t(x)$ et $t'(x) = s(x)$, et donc
  \hbox{$d'([x]) = -d([x])$}, ce qui montre que $\lambda(\dual{J}(C))$ et
  $\dual{J}(\lambda(C))$ ont même différentielle. L'égalité des
  augmentations étant évidente, ceci achève la démonstration.
\end{proof}

\begin{paragr}
  Soit $K$ un complexe dirigé augmenté. On appellera \ndef[dimension d'un
  complexe dirigé augmenté]{dimension} de $K$
  le plus petit entier $n \ge 0$ tel que $K_i = 0$ pour tout $i > n$. Si un tel
  $n$ n'existe pas, on dira que $K$ est de \ndef[]{dimension infinie}. Pour $n
  \ge 0$, on notera \nnot{$\nCda{n}$} la sous-catégorie pleine de $\Cda$ formée des
  complexes dirigés augmentés de dimension au plus $n$.

  Le foncteur d'inclusion $\nCda{n} \hookto \Cda$ admet un adjoint à gauche
  et un adjoint à droite qu'on notera respectivement
  \[
    \ti{n} : \Cda \to \nCda{n}
    \quadet
    \tb{n} : \Cda \to \nCda{n}.
  \]
  \notindex{$\ti{n}(K)$, $\tb{n}(K)$}%
  On appellera $\ti{n}$ le foncteur \ndef[$n$-tronqué!intelligent!d'un
  complexe dirigé augmenté]{$n$-tronqué intelligent} et $\tb{n}$ le foncteur
  \ndef[$n$-tronqué!bête!d'un complexe dirigé augmenté]{$n$-tronqué bête}.

  Explicitement, si $K$ est un complexe dirigé augmenté, on a
  \[
    \ti{n}(K)_i =
    \begin{cases}
      K_i & \text{si $i < n$,} \\
      K_n\slash d(K_{n+1}) & \text{si $i = n$,} \\
      0 & \text{si $i > n$,} \\
    \end{cases}
    \quadet
    \ti{n}(K)^\ast_i =
    \begin{cases}
      K^\ast_i & \text{si $i < n$,} \\
      \overline{K^\ast_n} & \text{si $i = n$,} \\
      0 & \text{si $i > n$,} \\
    \end{cases}
  \]
  où $\overline{K^\ast_n}$ désigne l'image de $K^\ast_n$ dans
  $K_n/d(K_{n+1})$,
  et
  \[
    \tb{n}(K)_i = \begin{cases}
      K_i & \text{si $i \le  n$,} \\
      0 & \text{si $i > n$,}
    \end{cases}
    \quadet
    \tb{n}(K)^\ast_i = \begin{cases}
      K^\ast_i & \text{si $i \le  n$,} \\
      0 & \text{si $i > n$,}
    \end{cases}
  \]
  les différentielles et augmentations étant héritées de celles de $K$ de
  la manière évidente.
\end{paragr}

\begin{paragr}
  Fixons un entier $n \ge 0$. Si $C$ est une $n$-catégorie, il est immédiat que
  $\lambda(C)$ est un complexe dirigé augmenté de dimension au plus $n$. De
  même, si $K$ est un complexe dirigé augmenté de dimension~$n$, il est
  immédiat que $\nu(K)$ est une $n$-catégorie. Les foncteurs~$\lambda$ et
  $\nu$ induisent donc des foncteurs
  \[
    \lambda : \nCda{n} \to \nCat{n}
    \quadet
    \nu : \nCat{n} \to \nCda{n}
  \]
  et ceux-ci forment un couple de foncteurs adjoints. Par définition, les carrés
  \[
     \xymatrix{
       \nCat{n} \ar[r] \ar[d]_\lambda & \ooCat \ar[d]^\lambda \\
       \nCda{n} \ar[r] & \Cda \pbox{,}
     }
     \qquad
     \xymatrix{
       \nCda{n} \ar[r] \ar[d]_\nu & \Cda \ar[d]^\nu \\
       \nCat{n} \ar[r] & \ooCat \pbox{,}
     }
  \]
  où les flèches horizontales désignent les foncteurs d'inclusion,
  sont commutatifs.
\end{paragr}

\begin{prop}\label{prop:tr_lambda}
  Pour tout entier $n \ge 0$, les carrés
  \[
    \xymatrix{
      \ooCat \ar[r]^{\tb{n}} \ar[d]_\lambda & \nCat{n} \ar[d]^\lambda \\
      \Cda \ar[r]_{\tb{n}} & \nCda{n} \pbox{,}
    }
    \qquad
    \xymatrix{
      \ooCat \ar[r]^{\ti{n}} \ar[d]_\lambda & \nCat{n} \ar[d]^\lambda \\
      \Cda \ar[r]_{\ti{n}} & \nCda{n} \pbox{,}
    }
    \qquad
    \xymatrix{
      \Cda \ar[r]^{\tb{n}} \ar[d]_\nu & \nCda{n} \ar[d]^\nu \\
      \ooCat \ar[r]_{\tb{n}}  & \nCat{n}
    }
  \]
  sont commutatifs à isomorphisme canonique près.
\end{prop}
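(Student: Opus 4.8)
The proposition contains three squares; I would dispose of the first and third by direct inspection from the explicit formulas, and reserve a uniqueness-of-adjoints argument for the second, which is the only subtle one.

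\emph{The squares involving $\tb{n}$.} For the first square, fix an $\infty$-category $C$. By the explicit description of the b\^ete truncation on $\ooCat$, the $\infty$-category $\tb{n}(C)$ has the same $i$-cells as $C$ for $i \le n$, with the same $\comp_j$-compositions for $j < i \le n$, whereas every cell of dimension $> n$ is an iterated identity of an $n$-cell. In $\lambda$ one has $[\id{x}] = [\id{x} \comp_i \id{x}] = [\id{x}] + [\id{x}]$, hence $[\id{x}] = 0$; thus every generator arising from a cell of dimension $> n$ vanishes, while in dimensions $\le n$ the generators and the composition relations are literally those defining $\lambda(C)$. Consequently $\lambda(\tb{n}(C))_i = \lambda(C)_i$ for $i \le n$ and $0$ for $i > n$, with differential and augmentation inherited from $\lambda(C)$; this is precisely $\tb{n}(\lambda(C))$, and the identification is natural in $C$. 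For the third square, fix a complex $K$ in $\Cda$. For $i \le n$ the tableau conditions defining an $i$-cell of $\nu(\tb{n}(K))$ only involve degrees $\le i \le n$, where $\tb{n}(K)$ agrees with $K$, so $\nu(\tb{n}(K))_i = \nu(K)_i$; for $i > n$ the vanishing of $\tb{n}(K)$ above degree $n$ forces each such tableau to be an iterated identity of an $n$-cell of $\nu(K)$. Comparing with the explicit formula for $\tb{n}(\nu(K))$ yields a bijection in each dimension that respects sources, targets, identities and compositions, whence the desired natural isomorphism.

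\emph{The square involving $\ti{n}$.} This is the delicate case: the intelligent truncation genuinely identifies cells --- on the $\infty$-categorical side by the zigzag equivalence on $n$-cells, on the chain side by the subgroup $d(K_{n+1})$ --- and matching these two quotients directly is awkward. I would instead argue by uniqueness of adjoints. The functor $\ti{n} \circ \lambda \colon \ooCat \to \nCda{n}$ is a composite of left adjoints: $\lambda$ is left adjoint to $\nu$ (the adjunction of Steiner recalled above) and $\ti{n} \colon \Cda \to \nCda{n}$ is left adjoint to the inclusion $\nCda{n} \hookto \Cda$; its right adjoint is therefore the inclusion $\nCda{n} \hookto \Cda$ followed by $\nu$. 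Likewise $\lambda \circ \ti{n} \colon \ooCat \to \nCda{n}$ is a composite of left adjoints: $\ti{n} \colon \ooCat \to \nCat{n}$ is left adjoint to $\nCat{n} \hookto \ooCat$, and the restricted functor $\lambda \colon \nCat{n} \to \nCda{n}$ is left adjoint to the restricted functor $\nu \colon \nCda{n} \to \nCat{n}$; its right adjoint is therefore the restricted $\nu \colon \nCda{n} \to \nCat{n}$ followed by the inclusion $\nCat{n} \hookto \ooCat$.

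These two right adjoints coincide: this is exactly the commutativity, established in the paragraph preceding the statement, of the square relating $\nu$ to the inclusion functors, namely that $\nCda{n} \hookto \Cda$ followed by $\nu$ equals the restricted $\nu \colon \nCda{n} \to \nCat{n}$ followed by $\nCat{n} \hookto \ooCat$. Since left adjoints are unique up to a unique natural isomorphism, $\ti{n} \circ \lambda$ and $\lambda \circ \ti{n}$ are canonically isomorphic, which is the second square. The main obstacle is precisely this second square; the adjunction argument is what lets me sidestep the explicit comparison of the zigzag quotient with the quotient by $d(K_{n+1})$, which would otherwise be the crux of the proof.
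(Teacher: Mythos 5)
Your proof is correct and follows essentially the same route as the paper: the two squares involving $\tb{n}$ are handled by direct inspection (the paper simply calls them evident, you spell out the details), and the square involving $\ti{n}$ is obtained by uniqueness of left adjoints from the strict commutativity of the square expressing that $\nu$ restricted to $\nCda{n}$ is compatible with the inclusions, which is exactly the paper's adjunction argument.
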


\begin{proof}
  L'existence des flèches de comparaison dans ces carrés résulte formellement
  de la commutativité des deux carrés du paragraphe précédent. Les
  commutativités à isomorphisme près du premier et du troisième carrés sont
  évidentes. Enfin, la commutativité à isomorphisme près du deuxième carré
  résulte par adjonction de la commutativité du second carré du paragraphe
  précédent.
\end{proof}

\begin{prop}\label{prop:tr_nu}
  Soit $K$ un complexe dirigé augmenté à base. Pour tout entier $n \ge 0$,
  le morphisme canonique
  \[ \ti{n}\nu(K) \to \nu\ti{n}(K) \]
  est un isomorphisme.
\end{prop}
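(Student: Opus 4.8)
Le plan consiste à montrer que le \oo-foncteur de comparaison, qui est un morphisme entre les $n$-catégories $\ti{n}\nu(K)$ et $\nu\ti{n}(K)$, est bijectif sur les $i$-flèches pour tout $i$. Comme la source et le but sont des $n$-catégories, il suffit de traiter le cas $i \le n$ : en dimension $> n$, toutes les cellules sont des identités itérées de $n$-flèches, de sorte que la bijectivité en dimension $> n$ résultera de celle en dimension $n$. En dimension $i < n$, il n'y a rien à démontrer, car $\ti{n}(K)_i = K_i$ avec le même sous-monoïde de positivité et la même différentielle ; une $i$-flèche de $\nu\ti{n}(K)$ est alors littéralement une $i$-flèche de $\nu(K)$, on a de même $\ti{n}\nu(K)_i = \nu(K)_i$, et le morphisme de comparaison est l'identité. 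Tout se concentre donc en dimension $n$.

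On commencerait par décrire les $n$-flèches de part et d'autre. Une $n$-flèche de $\nu\ti{n}(K)$ est un tableau dont les composantes $x^\e_k$, pour $k < n$, appartiennent à $K^\ast_k$ et dont la composante de tête $x_n$ appartient à $\overline{K^\ast_n}$, l'image de $K^\ast_n$ dans $K_n/d(K_{n+1})$ ; comme la différentielle de $\ti{n}(K)$ en degré $n$ est induite par celle de $K$ et est à valeurs dans $K_{n-1}$ (inchangé), la condition de bord $d(x_n) = x^1_{n-1} - x^0_{n-1}$ a un sens et est héritée. Pour la surjectivité, étant donné un tel tableau, on choisirait un représentant dans $K^\ast_n$ de sa composante de tête en conservant les composantes inférieures ; le tableau obtenu est une véritable $n$-flèche de $\nu(K)$ (les conditions en degrés $< n$, y compris l'égalité $e(x_0) = 1$, sont inchangées, et l'égalité $d(x_n) = x^1_{n-1} - x^0_{n-1}$ vaut dans $K_{n-1}$ pour n'importe quel représentant), et sa classe s'envoie sur le tableau de départ. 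Le morphisme de comparaison est donc surjectif en dimension $n$.

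Le point crucial est l'injectivité, c'est-à-dire le fait que deux $n$-flèches $x$ et $y$ de $\nu(K)$ ayant même image sont identifiées dans $\ti{n}\nu(K)$. Avoir même image signifie exactement que $x^\e_k = y^\e_k$ pour $k < n$ (donc que $x$ et $y$ sont parallèles) et que $y_n - x_n \in d(K_{n+1})$, tandis qu'être identifiées dans $\ti{n}\nu(K)$ signifie que $x$ et $y$ sont reliées par un zigzag de $(n+1)$-flèches de $\nu(K)$. Or une $(n+1)$-flèche dont la source et le but ont le bord inférieur commun de $x$ et $y$ est la donnée d'un élément $w$ de $K^\ast_{n+1}$ et de composantes $z^0_n, z^1_n$ dans $K^\ast_n$ vérifiant $d(z^\e_n) = x^1_{n-1} - x^0_{n-1}$ et $z^1_n - z^0_n = d(w)$ ; une telle flèche réalise le déplacement élémentaire envoyant $z^0_n$ sur $z^0_n + d(w)$ en restant dans le cône de positivité. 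Il s'agirait donc de relier $x_n$ à $y_n$ par une chaîne de tels déplacements, chaque étape restant dans $K^\ast_n$ : en écrivant $y_n - x_n = d(w_+) - d(w_-)$ avec $w_\pm$ dans $K^\ast_{n+1}$, on aimerait passer par la valeur commune $x_n + d(w_+) = y_n + d(w_-)$, et toute la difficulté est d'assurer la positivité des cellules intermédiaires. C'est ici que l'existence d'une base interviendrait de façon essentielle, à travers la description de $K^\ast_n$ et de $d(K^\ast_{n+1})$ en termes de la base. On s'attend à ce que cette gestion de la positivité soit l'obstacle principal, les autres dimensions et la surjectivité étant routinières.
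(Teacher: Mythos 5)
Your proposal follows the paper's proof almost step for step: the same reduction to dimension $n$, the same explicit descriptions of the two sides (an $n$-cell of $\ti{n}\nu(K)$ is a class of $n$-cells of $\nu(K)$ under zigzags of $(n+1)$-cells; an $n$-cell of $\nu\ti{n}(K)$ is a tableau whose top component lies in the image of $K^\ast_n$ in $K_n/d(K_{n+1})$), and essentially the same surjectivity argument by lifting the top component. But the point where you stop is the only non-routine step, so as it stands your proof has a genuine gap: you never show that two $n$-cells $x,y$ with equal lower components and $y_n - x_n \in d(K_{n+1})$ are zigzag-equivalent. The paper settles this in one sentence, asserting (``on v\'erifie facilement'') that the transitive closure of the symmetrized elementary relation is the relation obtained by letting the interpolating element $z_{n+1}$ run through $K^\ast_{n+1} + (-K^\ast_{n+1})$, the subgroup generated by $K^\ast_{n+1}$, which is all of $K_{n+1}$ since $K$ has a basis. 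That assertion presupposes exactly what you flag: that one can travel from $x_n$ to $y_n = x_n + d(w_+) - d(w_-)$ through intermediate cells whose top components stay in $K^\ast_n$.

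Your caution is in fact fully justified, and more: under the sole hypothesis ``\`a base'' this interpolation can be impossible, and the proposition itself fails, so no bookkeeping with the basis can close the gap. Take
\[
K_0 = \Z p, \qquad K_1 = \Z e_1 \oplus \Z e_2, \qquad K_2 = \Z u \oplus \Z v, \qquad K_i = 0 \quad (i \ge 3),
\]
with positivity submonoids generated by these elements, $e(p) = 1$, $d(e_1) = d(e_2) = 0$, $d(u) = e_2 - e_1$, $d(v) = 2e_2 - e_1$; this complex has a basis (even a strongly loop-free one, though not a unitary one). Then $\nu(K)$ has a single object, its $1$-cells form the monoid $\N e_1 + \N e_2$, and there is a $2$-cell from $a$ to $b$ if and only if $b - a = k\,d(u) + l\,d(v)$ with $k,l \ge 0$. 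For $(k,l) \ne (0,0)$ the element $k\,d(u) + l\,d(v) = (k+2l)e_2 - (k+l)e_1$ lies neither in $K^\ast_1$ nor in $-K^\ast_1$, so no nontrivial $2$-cell has the $1$-cell $0$ as source or target; hence the class of $0$ in $\ti{1}\nu(K)$ is the singleton $\{0\}$ and $\ti{1}\nu(K)$ has at least two $1$-cells. On the other hand $d(K_2)$ contains $d(v)-d(u) = e_2$ and $d(v)-2\,d(u) = e_1$, so $d(K_2) = K_1$ and $\nu\ti{1}(K)$ has exactly one $1$-cell: the canonical morphism is not injective. Concretely, $0$ and $e_2 = d(v-u)$ are congruent modulo $d(K_2)$ but not zigzag-equivalent, because the candidate intermediates $0 + d(v) = 2e_2 - e_1$ and $0 - d(u) = e_1 - e_2$ are not positive --- precisely the obstacle you anticipated. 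So the statement requires more than the existence of a basis. In the places where the paper applies it (the proof of Proposition \ref{prop:tronque_joint} and the identification $\ti{1}(\On{m}) \simeq \Deltan{m}$) the complex at hand is a strong Steiner complex, and it is in that generality that the proposition should be proved; such a proof must make genuine use of unitarity of the basis to produce positive intermediates, an argument that appears neither in your proposal nor in the paper.
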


\begin{proof}
  Explicitons la flèche de comparaison \hbox{$\alpha : \ti{n}\nu(K) \to
  \nu\ti{n}(K)$}.  Par adjonction, puisque $\nu\ti{n}(K)$ est une
  $n$\nbd-catégorie, il s'agit de définir un \oo-fonc\-teur~$\nu(K) \to
  \nu\ti{n}(K)$. On obtient ce \oo-foncteur en appliquant le foncteur $\nu$
  au morphisme canonique $K \to \ti{n}(K)$.  Montrons que \hbox{$\alpha :
  \ti{n}\nu(K) \to \nu\ti{n}(K)$} est un isomorphisme. Pour~$i < n$, il est
  évident que $\alpha_i$ est une bijection. Pour conclure, il suffit donc de
  montrer que $\alpha_n$ est une bijection.  Décrivons explicitement
  \hbox{$\alpha_n : (\ti{n}\nu(K))_n \to (\nu\ti{n}(K))_n$}. Une $n$-flèche
  de $\ti{n}\nu(K)$ est la classe d'équivalence d'un élément
  \[ \tabld{x}{n} \]
  de $\nu(K)_n$ pour la plus petite relation d'équivalence identifiant
  \[
  x = \tabld{x}{n} \quadet y = \tabld{y}{n}
  \]
  s'il existe
  \[
    z =
    \begin{pmatrix}
      z^0_0 &\dots & z^0_n & z^0_{n+1}
      \cr\noalign{\vskip 3pt}
      z^1_0 &\dots &z^1_n & z^1_{n+1}
    \end{pmatrix}
  \]
  dans $\nu(K)_{n+1}$ tel que $x = s(z)$ et $y = t(z)$. Concrètement, cette
  dernière condition signifie qu'on a
  \[
    \tablm{x}{n-1} = \tablm{y}{n-1}
  \]
  et qu'il existe $z_{n+1}$ dans $K^\ast_{n+1}$ tel que $d(z_{n+1}) = y^1_n
  - x^0_n$. On vérifie facilement que la condition définissant la relation
  d'équivalence engendrée s'obtient de la même manière mais en demandant que
  $z_{n+1}$ soit dans le sous-groupe de $K_{n+1}$ engendré
  par~$K^\ast_{n+1}$. (Plus précisément, la clôture symétrique s'obtient en
  demandant que $z_{n+1}$ soit dans~$K^*_{n+1}$ ou dans $-K^*_{n+1}$, et la
  clôture transitive de la clôture symétrique en demandant que $z$ soit dans
  $K^*_{n+1} + (-K^*_{n+1})$ qui n'est autre que le sous-groupe engendré par
  $K^*_{n+1}$.) Or, puisqu'on a supposé le complexe $K$ à base, ce
  sous-groupe est~$K_{n+1}$ tout entier. Par ailleurs, une $n$-flèche de
  $\nu\ti{n}(K)$ est un tableau
  \[
    \begin{pmatrix}
      x^0_0 &\dots & x^0_{n-1} & t^0_n
      \cr\noalign{\vskip 3pt}
      x^1_0 &\dots &x^1_{n-1} & t^1_n
    \end{pmatrix} \pbox{,}
  \]
  où les $x_i^\epsilon$, pour $0 \le i < n$ et $\epsilon = 0,1$, sont dans
  $K^\ast_i$, et $t^0_n = t^1_n$ est dans l'image de~$K^\ast_n$ dans $K_n\slash
  d(K_{n+1})$, satisfaisant aux conditions du
  paragraphe~\ref{paragr:def_nu}. L'application~$\alpha_n$ envoie la
  $n$-flèche donnée par la classe de
  \[ \tabld{x}{n} \]
  sur la $n$-flèche
   \[
    \begin{pmatrix}
      x^0_0 &\dots & x^0_{n-1} & \overline{x^0_n}
      \cr\noalign{\vskip 3pt}
      x^1_0 &\dots &x^1_{n-1} & \overline{x^1_n}
    \end{pmatrix} \pbox{,}
  \]
  où $\overline{x^0_n} = \overline{x^1_n}$ désigne l'image de $x^0_n =
  x^1_n$ dans $K_n\slash d(K_{n+1})$. Notre description de la relation
  d'équivalence définissant les $n$-flèches de $\ti{n}\nu(K)$ montre
  que cette application est une bijection, ce qui achève la démonstration.
\end{proof}

\begin{rem}
  Il n'est pas vrai en général que le morphisme de l'énoncé précédent soit
  un isomorphisme si on ne fait aucune hypothèse sur le complexe dirigé
  augmenté $K$ ; un contre-exemple pour $n = 0$ est donné par le
  complexe de chaînes normalisé de l'ensemble simplicial~$\Deltan{1}$ muni
  des sous-monoïdes de positivité $\N^2 \subset \Z^2$ et $0 \subset \Z$ en
  degré~$0$ et $1$ respectivement.
\end{rem}

\begin{paragr}
  Soient $f, g : K \to L$ deux morphismes de complexes de chaînes. Rappelons
  qu'une \ndef[homotopie!de complexes de chaînes]{homotopie} $h$ de $f$ vers
  $g$ consiste en la donnée de morphismes de groupes abéliens
  \[ h_i : K_i \to L_{i+1}, \quad\text{pour $i \ge 0$,} \]
  satisfaisant à la condition
  \[ d_{i+1}h_i + h_{i-1}d_i = g_i - f_i, \qquad\text{pour $i \ge 0$,} \]
  en convenant que $h_{-1} = 0$ et $d_0 = 0$. Pour $i = 0$, on a donc
  \[ d_1h_0 = g_0 - f_0. \]
\end{paragr}

\begin{paragr}\label{paragr:def_antihomotopie}
  Soient $f, g : K \to L$ deux morphismes de complexes dirigés augmentés.
  Une \ndef[homotopie!de complexes dirigés augmentés]{homotopie} $h$ de $f$
  vers $g$ est une homotopie entre les morphismes de complexes de chaînes
  sous-jacents satisfaisant à la condition supplémentaire suivante :
  \[ h^{}_i(K^\ast_i) \subset L^\ast_{i+1}, \quad\text{pour $i \ge 0$.} \]

  Soient $f, g : K \to L$ deux morphismes de complexes dirigés augmentés. Une
  \ndef{antihomotopie} de $f$ vers $g$ est une homotopie de $f^\co :
  K^\co \to L^\co$ vers $g^\co : K^\co \to L^\co$. Explicitement, une
  antihomotopie $h$ de $f$ vers $g$ consiste en la donnée de morphismes de
  groupes abéliens
  \[ h_i : K_i \to L_{i+1}, \quad\text{pour $i \ge 0$,} \]
  satisfaisant aux conditions suivantes :
  \begin{itemize}
    \item pour tout $i \ge 0$, on a
      \[ d_{i+1}h_i - h_{i-1}d_i = (-1)^i(g_i - f_i), \]
      en convenant toujours que $h_{-1} = 0$ et $d_0 = 0$ (en particulier,
      pour $i = 0$, on a $d_1h_0 = g_0 - f_0$) ;
    \item pour tout $i \ge 0$, on a
      \[ h^{}_i(K^\ast_i) \subset L^\ast_{i+1}. \]
  \end{itemize}
\end{paragr}

\begin{rem}
  On verra dans l'appendice~\ref{sec:trans_oplax} (voir le
  paragraphe~\ref{paragr:homot_abs}) qu'une homotopie entre morphismes de
  complexes dirigés augmentés de $K$ vers $L$ correspond à la donnée d'un
  morphisme de $\lambda(\Dn{1}) \otimes K$ vers~$L$, où $\Dn{1}$ désigne la
  catégorie associée à l'ensemble ordonné $\{0 < 1\}$ et
  $\otimes$ le produit tensoriel des complexes dirigés augmentés (voir le
  paragraphe~\ref{paragr:def_produit_cda}). On en déduira qu'on obtient la
  notion d'antihomotopie en inversant les deux facteurs de $\lambda(\Dn{1})
  \otimes K$, c'est-à-dire en considérant des morphismes de $K \otimes
  \lambda(\Dn{1})$ vers~$L$.
\end{rem}

\begin{paragr}\label{paragr:def_n-homot}
  Plus généralement, on va définir une notion de $n$\nbd-homotopie de
  complexes dirigés augmentés pour tout $n \ge 0$. Soient $K$ et $L$ deux
  complexes dirigés augmentés. On définit simultanément par récurrence sur
  $n \ge 0$ les notions de $n$-homotopie de~$K$ vers $L$ et de
  $n$-homotopies de $K$ vers $L$ parallèles comme suit. Une
  \ndef[]{$0$-homotopie} de~$K$ vers $L$ est un morphisme de complexes dirigés
  augmentés de $K$ vers $L$. Deux telles $0$-homotopies sont par définition
  toujours \ndef[]{parallèles}. Si $n > 0$, une
  \ndef[$n$-homotopie]{$n$\nbd-homotopie}~$H$ de~$K$ vers $L$ est la donnée
  de deux $(n-1)$-homotopies $h$ et $k$ parallèles de $K$ vers $L$, ainsi
  que de morphismes de groupes abéliens
  \[ H_i : K_i \to L_{i+n}, \quad\text{pour $i \ge 0$,} \]
  satisfaisant aux conditions suivantes :
  \begin{itemize}
    \item pour tout $i \ge 0$, on a
    \[ d_{i+n}H_i - (-1)^nH_{i-1}d_i = k_i - h_i, \]
    en convenant que $H_{-1} = 0$ et $d_0 = 0$ (en particulier,
      pour $i = 0$, on a l'égalité $d_nH_0 = k_0 - h_0$) ;
    \item pour tout $i \ge 0$, on a
    \[ H^{}_i(K^\ast_i) \subset L^\ast_{i+n}. \]
  \end{itemize}
  On dit alors également que $H$ est une $n$-homotopie \ndef[]{de $h$ vers
  $k$}. Deux $n$-homotopies sont \ndef[$n$-homotopies
  parallèles]{parallèles} si elles vont toutes deux d'une même
  $(n-1)$-homotopie vers une même $(n-1)$-homotopie.

  Pour $n = 1$, on retrouve la notion d'homotopie de morphismes de complexes
  dirigés augmentés.

  On définit de même une notion de \ndef{$n$-antihomotopie} pour tout $n \ge 0$ en
  remplaçant la relation liant $H$ à $h$ et $k$ dans la définition d'une
  $n$-homotopie par l'égalité
  \[
     d_{i+n}H_i - H_{i-1}d_i = (-1)^i(k_i - h_i),
      \quad\text{pour $i \ge 0$,}
  \]
  en convenant toujours que $H_{-1} = 0$ et $d_0 = 0$ (en particulier, pour
  $i = 0$, on a $d_nH_0 = k_0 - h_0$). On retrouve également, pour
  $n = 1$, la notion d'antihomotopie.
\end{paragr}

\begin{rem}
  Les notions de $n$\nbd-homo\-topie et $n$\nbd-antihomotopie peuvent
  également s'interpréter en termes du produit tensoriel des complexes
  dirigés augmentés (voir la remarque~\ref{rem:n-homot_abs}).
\end{rem}

On verra au paragraphe~\ref{paragr:conj_Cda} que les complexes dirigés
augmentés munis des $n$\nbd-homotopies (resp. des $n$\nbd-antihomotopies),
pour $n \ge 0$, forment une \oo\nbd-sesqui\-catégorie au sens du
paragraphe~\ref{paragr:def_oosesqui} et même une \oo-catégorie de Gray
(resp. une \oo-catégorie de Gray gauche) au sens du
paragraphe~\ref{paragr:Graycat}. Dans la suite de ce chapitre, on décrit
quelques unes des opérations de ces structures.

\medbreak

\emph{Dans la suite de ce chapitre, on fixe $K$ et $L$ deux complexes
dirigés augmentés.}

\begin{paragr}\label{paragr:def_antih_id}
  Soit $h$ une $n$-homotopie (resp. une $n$-antihomotopie) de $K$ vers $L$,
  pour un~$n \ge 0$. On définit une \hbox{$(n+1)$}\nbd-homotopie (resp. une
  $(n+1)$\nbd-anti\-homo\-topie)~\nnot[$\id{f}$, $\id{h}$]{$\id{h}$} de~$h$
  vers $h$ en posant, pour $i \ge 0$,
  \[ (\id{h})_i = 0. \]
  On appellera $\id{h}$ l'\ndef[homotopie!identité]{homotopie identité}
  (resp. l'\ndef[antihomotopie!identité]{antihomotopie identité}) de $h$.

  En particulier, si $f : K \to L$ est un morphisme de complexes dirigés
  augmentés, on dispose d'une homotopie $\id{f}$ de $f$ vers $f$ et d'une
  antihomotopie, également notée~$\id{f}$, de $f$ vers $f$.
\end{paragr}

\begin{paragr}\label{paragr:def_antih_sesqui}
  Soient $h$ et $h'$ deux $(n-1)$-homotopies (resp. deux
  $(n-1)$-antihomotopies) de $K$ vers $L$, pour un $n \ge 1$, et soit $H$
  une $n$-homotopie (resp. une $n$\nbd-antihomotopie) de $h$ vers $h'$. Si
  $g : J \to K$ est un morphisme de complexes dirigés augmentés, on vérifie
  immédiatement, par récurrence, qu'on définit une $n$-homotopie (resp. une
  $n$\nbd-antihomotopie) \nnot[$Hg$, $gH$]{$Hg$} de~$h'g$ vers~$hg$ (qui
  vont de $J$ vers $L$) en posant, pour $i \ge 0$,
  \[ (Hg)_i = H_ig_i. \]

  De même, si on a un morphisme de complexes dirigés augmentés $g : L \to
  M$, on définit une $n$-homotopie (resp. une $n$-antihomotopie) $gH$ de
  $gh$ vers $gh'$ (qui vont de~$K$ vers $M$) en posant,
  pour $i \ge 0$,
  \[ (gH)_i = g_{i+n}H_i. \]
\end{paragr}

\begin{paragr}\label{paragr:def_antih_comp}
  Soient $h_0$, $h_1$ et $h_2$ trois $(n-1)$-homotopies (resp. trois
  $(n-1)$\nbd-anti\-homo\-topies) de $K$ vers $L$, pour un $n \ge 1$,
  soient $H$ une $n$\nbd-homo\-topie (resp.
  une $n$\nbd-antihomotopie) de $h_0$ vers $h_1$ et $H'$ une $n$-homotopie
  (resp. une $n$\nbd-antihomotopie) de $h_1$ vers $h_2$
  \[
    \shorthandoff{;:}
    \xymatrix@C=4pc{
    K
    \ar@/^4.5ex/[r]^(.35){h_0}_{}="0"
    \ar[r]_(.30){h_1}_{}="1"
    \ar@/_4.5ex/[r]_(.35){h_2}_{}="2"
    \ar@2"0";"1"^{\,H^{}}
    \ar@2"1";"2"^{\,H'}
    &
    L \pbox{.}
    }
  \]
  On vérifie immédiatement qu'on définit une
  $n$-homotopie (resp. une $n$-antihomotopie) \nnot{$H' + H$} de $h_0$ vers
  $h_2$ en posant, pour~$i \ge 0$,
  \[ (H' + H)^{}_i = H'_i + H^{}_i. \]
\end{paragr}

\begin{paragr}\label{paragr:def_homot_cod2}
  Soient $f_0$, $f_1$ et $f_2$ trois $(n-2)$-homotopies (resp. trois
  $(n-2)$-anti\-homo\-topies) de $K$ vers $L$,  pour un $n \ge 2$, soient
  $h$ et~$k$ deux $(n-1)$-homotopies (resp. deux $(n-1)$-antihomotopies) de
  $f_0$ vers~$f_1$, et $h'$ et $k'$ deux $(n-1)$-homotopies (resp. deux
  $(n-1)$-antihomotopies) de $f_1$ vers $f_2$, et soient enfin $H$ une
  $n$-homotopie (resp. une $n$-antihomotopie) de $h$ vers $k$ et $H'$ une
  $n$-homotopie (resp. une $n$-antihomotopie) de~$h'$ vers $k'$
  \[
    \shorthandoff{;:}
    \xymatrix@C=6pc{
    K
    \ar@/^5.5ex/[r]^(.35){}_(.35){}="0"_(.65){}="0p"^(.20){f_0}
    \ar[r]_(.30){}_(.35){}="1"_(.65){}="1p"^(.15)*+<-.3em>{\labelstyle f_1}
    \ar@/_5.5ex/[r]_(.35){}_(.35){}="2"_(.65){}="2p"_(.20){f_2}
    \ar@2"0";"1"_{h^{}\,}_{}="h"
    \ar@2"1";"2"_{h'\,}_{}="hp"
    \ar@2"0p";"1p"^{\,k^{}}_{}="k"
    \ar@2"1p";"2p"^{\,k'}_{}="kp"
    \ar@{}"h";"k"_(.10){}="s"
    \ar@{}"h";"k"_(.90){}="t"
    \ar@{}"hp";"kp"_(.10){}="sp"
    \ar@{}"hp";"kp"_(.90){}="tp"
    \ar@3"s";"t"^{H^{}}
    \ar@3"sp";"tp"^{H'}
    &
    L \pbox{.}
    }
  \]
  On vérifie immédiatement qu'on définit une $n$-homotopie (resp. une
  $n$-antihomotopie) \nnot{$H'+H$} de $h' + h$ vers $k' + k$ (voir le
  paragraphe précédent) en posant, pour $i \ge 0$,
  \[ (H' + H)^{}_i = H'_i + H^{}_i. \]

  On prendra garde au fait qu'on représente par « + » deux opérations
  différentes sur les $n$-homotopies et $n$-antihomotopies (voir le
  paragraphe précédent pour la première), ces opérations correspondant
  aux composition en codimension $1$ et $2$ respectivement. Néanmoins, le
  contexte rendra
  toujours clair l'opération désignée par « + ». Par ailleurs, dans ce
  texte, nous n'utiliserons l'opération introduite dans ce paragraphe que
  dans le cas particulier où $H$ ou $H'$ est une identité.
\end{paragr}

\begin{paragr}\label{paragr:def_homot_Gray}
  Soient $f, g : K \to L$ et $f', g' : L \to M$ des morphismes de complexes
  dirigés augmentés et soient $h$ une homotopie (resp. une antihomotopie) de
  $f$ vers $g$ et $h'$ une homotopie (resp. une antihomotopie) de $f'$ vers
  $g'$
  \[
    \shorthandoff{;:}
    \xymatrix@C=3pc{
      K \ar@/^2.3ex/[r]^{f}_{}="1"
        \ar@/_2.3ex/[r]_{g\phantom{'}}_{}="0"
      &
      L \ar@/^2.3ex/[r]^{f'}_{}="2"
        \ar@/_2.3ex/[r]_{g'}_{}="3"
      &
      \zbox{$M$}
      \phantom{L}
      \ar@2"1";"0"^{\,h^{}}
      \ar@2"2";"3"^{\,h'}
      \pbox{.}
    }
  \]
  On définit une $2$-homotopie (resp. une $2$-antihomotopie) \nnot{$h'h$} de
  $h'g + f'h$ vers $g'h + h'f$ (resp. de $g'h + h'f$ vers $h'g + f'h$)
   en posant, pour $i \ge 0$,
  \[ (h'h)_i = h'_{i+1}h^{}_i. \]
  Vérifions-le dans le cas des antihomotopies (c'est celui que nous utiliserons
  dans ce texte). Pour $i \ge 0$, avec les conventions adéquates pour le cas
  $i = 0$, on a
  \[
    \begin{split}
      d^{}_{i+2}h'_{i+1}h^{}_i - h'_{i}h^{}_{i-1}d^{}_i
      & =
      d^{}_{i+2}h'_{i+1}h^{}_i - h'_id^{}_{i+1}h^{}_i
      + h'_id_{i+1}h_i - h'_{i}h^{}_{i-1}d_i \\
      & =
      (d^{}_{i+2}h'_{i+1} - h'_id^{}_{i+1})h^{}_i
      + h'_i(d^{}_{i+1}h^{}_i - h^{}_{i-1}d^{}_i) \\
      & =
      (-1)^{i+1}(g'_{i+1} - f'_{i+1})h^{}_i
      + (-1)^i h'_i(g^{}_i - f^{}_i) \\
      & =
      (-1)^i[(h'_i g^{}_i + f'_{i+1}h^{}_i) - (g'_{i+1}h^{}_i +
      h'_if^{}_i)],
    \end{split}
  \]
  d'où l'assertion.
\end{paragr}

\chapter{Limites inductives de complexes de Steiner}
\label{sec:lim_ind_Steiner}

Le but de ce chapitre est de dégager des résultats de commutation
du foncteur $\nu : \Cda \to \ooCat$ de Steiner à certaines classes de
limites inductives.

\begin{paragr}
  On peut montrer que la catégorie des complexes dirigés augmentés est
  localement présentable. En particulier, elle est cocomplète. Dans la suite
  du texte, nous aurons besoin d'une description explicite de ses limites
  inductives. Soit donc $F : I \to \Cda$ un foncteur de source une petite
  catégorie $I$. Notons $(K(i), K(i)^\ast, e(i))$ le complexe dirigé
  augmenté $F(i)$. On définit un complexe dirigé augmenté $(K, K^\ast, e)$
  de la manière suivante. Le complexe de chaînes~$K$ est la limite inductive
  des complexes de chaînes $K(i)$.  Rappelons que cette limite inductive est
  calculée degré par degré. Pour $n \ge 0$, le sous-monoïde $K^\ast_n$ de
  $K_n$ est le sous-monoïde engendré par les images des $K(i)^\ast_n$ par
  les morphismes $K(i)_n \to K_n$. Enfin, l'augmentation $e$ est donnée par
  la propriété universelle de $K_0 = \limind_{i \in I} K(i)_0$. On vérifie
  facilement que $(K, K^\ast, e)$ est bien la limite inductive de $F$.
\end{paragr}

\begin{paragr}\label{paragr:def_rigide}
  Soient $K$ et $L$ deux complexes dirigés augmentés à base. Un morphisme
  \hbox{$f : K \to L$} sera dit \ndef[morphisme de complexes dirigés
  augmentés!prérigide]{prérigide} s'il envoie tout élément de
  la base de $K$ sur un élément de la base de $L$. (On rappelle que si un
  complexe dirigé augmenté admet une base, cette base est unique.) On dira
  que $f$ est \ndef[morphisme de complexes dirigés augmentés!rigide]{rigide}
  si, de plus, pour tout élément $b$ de la base de~$K$, on a
  $f(\atom{b}^\e_k) = \atom{f(b)}^\e_k$, pour $\e = 0, 1$ et $0
  \le k \le |b|$. Lorsque $K$ est à base unitaire, cette dernière condition
  signifie exactement que, pour tout élément $b$ de la base de~$K$, on a
  $\nu(f)(\atom{b}) = \atom{f(b)}$. On verra plus loin
  (proposition~\ref{prop:mono_rigide}) que les notions de morphismes
  prérigides et rigides coïncident lorsque $f$ est un monomorphisme.
\end{paragr}

\begin{rem}
  Si $f : K \to L$ est un morphisme rigide entre complexes de
  \hbox{Steiner}, de sorte que $\nu(K)$ et $\nu(L)$ sont en vertu du
  théorème~\ref{thm:Steiner_pol} engendrés librement au sens des polygraphes
  par leurs atomes, le \oo-foncteur $\nu(f) : \nu(K) \to \nu(L)$ est rigide
  au sens où il provient d'un morphisme de polygraphes. Autrement dit, ce
  \oo-foncteur respecte les générateurs au sens des polygraphes de $\nu(L)$
  et $\nu(K)$ donnés par les atomes.
\end{rem}

\begin{exem}\label{exem:contre-ex_rig}
  L'exemple suivant montre qu'un morphisme prérigide n'est pas
  nécessairement rigide, même entre complexes de Steiner forts. Considérons
  les $2$\nbd-catégories
  \[
    \shorthandoff{;:}
    C = \raisebox{1.6pc}{\xymatrix{
      \bullet \ar[d]_{h_1} \ar[r]^{h_0} &
      \bullet \ar[d]^{k} \\
      \bullet \ar[r]_{l} &
      \bullet
      \ar@{}[u];[l]_(.30){}="x"
      \ar@{}[u];[l]_(.70){}="y"
      \ar@2"x";"y"_{\alpha}
    }}
    \quadet
    S = \xymatrix@C=3pc@R=3pc{\bullet \ar[r]^{h'} & \bullet
    \ar@/^2.5ex/[r]_{}="0"^{k'}
      \ar@/_2.5ex/[r]_{}="1"_{l'}
      \ar@2"0";"1"_{\alpha'}
    &  \bullet \pbox{,}}
  \]
  et le $2$-foncteur $u : C \to S$ défini par
  \[
    u(h_0) = u(h_1) = h', \quad
    u(k) = k', \quad
    u(l) = l' \quadet
    u(\alpha) = \alpha' \comp_0 h'.
  \]
  On vérifie facilement que le morphisme $f = \lambda(u) : \lambda(C) \to
  \lambda(S)$ est prérigide. En particulier, on a $f([\alpha]) = [\alpha']$.
  Néanmoins, on n'a pas $\nu(f)(\atom{[\alpha]}) =
  \atom{[\alpha']}$. Par exemple, on a $f(\atom{[\alpha]}^0_1) = [k'] + [h']$
  mais $\atom{[\alpha']}^0_1 = [k']$. On verra plus loin que les
  $2$-catégories~$C$ et $S$ sont des \oo-catégories de Steiner fortes (cela
  résulte des propositions~\ref{prop:Theta_Steiner}
  et~\ref{prop:tens_Steiner}, ainsi que de l'isomorphisme $C \simeq
  \nu(\lambda(\Dn{1}) \otimes \lambda(\Dn{1}))$, où $\Dn{1}$ désigne la
  catégorie associée à l'ensemble ordonné $\{0 < 1\}$). En particulier, les
  complexes $\lambda(C)$ et~$\lambda(S)$ sont de Steiner forts et le
  \oo-foncteur $\nu(f)$ s'identifie à $u$.
\end{exem}

\begin{paragr}
  On dira qu'un foncteur $F : I \to \Cda$ de source une petite catégorie $I$
  est un \ndef[système!prérigide]{système prérigide} (resp. un
  \ndef[système!rigide]{système rigide}) s'il
  satisfait aux conditions suivantes :
  \begin{enumerate}
    \item pour tout objet $i$ de $I$, le complexe dirigé augmenté $F(i)$ est
      à base ;
    \item pour tout morphisme $f : i \to i'$ de $I$, le morphisme $F(f) : F(i)
      \to F(i')$ est prérigide (resp. rigide).
  \end{enumerate}
\end{paragr}

\begin{prop}\label{prop:limind_syst_base}
  Si $F : I \to \Cda$ est un système prérigide, alors le complexe dirigé
  augmenté $\limind_{i \in I} F(i)$ est à base. Plus précisément, l'ensemble
  gradué
  \[ \Big(\limind_{i \in I} B(i)_n\Big)_{n \ge 0}, \]
  où $(B(i)_n)_{n \ge 0}$ désigne la base de $F(i)$, fournit une base de ce
  complexe dirigé augmenté. En particulier, les morphismes canoniques
  $F(i_0) \to \limind_{i \in I} F(i)$, pour $i_0$ un objet de~$I$, sont
  prérigides.
\end{prop}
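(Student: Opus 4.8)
Le plan est de traiter l'\'enonc\'e degr\'e par degr\'e, en le ramenant \`a l'observation que le foncteur groupe ab\'elien libre $\Z[\var] : \Ens \to \Ab$ pr\'eserve les limites inductives. Fixons un entier $n \ge 0$ et, pour tout objet $i$ de $I$, notons $B(i)_n$ la base en degr\'e $n$ de $F(i)$. Comme $F$ est un syst\`eme pr\'erigide, pour tout morphisme $f : i \to i'$ de $I$, le morphisme $F(f)_n : F(i)_n \to F(i')_n$ envoie $B(i)_n$ dans $B(i')_n$ ; \'etant un morphisme de groupes ab\'eliens envoyant une base sur une partie d'une base, il est enti\`erement d\'etermin\'e par sa restriction $B(i)_n \to B(i')_n$. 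On obtient ainsi un foncteur $i \mapsto B(i)_n$ de $I$ vers $\Ens$ dont l'image par $\Z[\var]$ s'identifie naturellement en $i$ au foncteur $i \mapsto F(i)_n$.

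Puisque $\Z[\var]$ est un adjoint \`a gauche du foncteur d'oubli, il commute aux limites inductives. La limite inductive dans $\Cda$ se calculant degr\'e par degr\'e, on obtient donc un isomorphisme canonique $K_n = \limind_{i \in I} F(i)_n \simeq \Z[\limind_{i \in I} B(i)_n]$, sous lequel l'application canonique $\limind_{i \in I} B(i)_n \to K_n$ s'identifie \`a l'inclusion des g\'en\'erateurs de ce groupe ab\'elien libre. Cette application est par cons\'equent injective et son image est une base du $\Z$-module $K_n$.

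Il reste \`a v\'erifier la condition portant sur les sous-mono\"ides de positivit\'e. Par d\'efinition de la limite inductive dans $\Cda$, le sous-mono\"ide $K^\ast_n$ est engendr\'e par les images des $F(i)^\ast_n$ ; or chaque $F(i)^\ast_n$ est engendr\'e, comme mono\"ide, par $B(i)_n$. Ainsi $K^\ast_n$ est engendr\'e par la r\'eunion des images des $B(i)_n$ dans $K_n$, c'est-\`a-dire par l'image de $\limind_{i \in I} B(i)_n$. Ceci ach\`eve de montrer que l'ensemble gradu\'e $(\limind_{i \in I} B(i)_n)_{n \ge 0}$ est une base de $\limind_{i \in I} F(i)$.

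Enfin, pour tout objet $i_0$ de $I$, le morphisme canonique $F(i_0) \to \limind_{i \in I} F(i)$ envoie par construction chaque \'el\'ement de $B(i_0)_n$ sur son image dans $\limind_{i \in I} B(i)_n$, qui est un \'el\'ement de la base ; ce morphisme est donc pr\'erigide. L'essentiel de la preuve r\'eside dans la factorisation du foncteur $i \mapsto F(i)_n$ \`a travers $\Z[\var]$, rendue possible pr\'ecis\'ement par la pr\'erigidit\'e ; le seul point demandant un peu de soin, l'injectivit\'e de l'application $\limind_{i \in I} B(i)_n \to K_n$, est automatiquement fourni par l'isomorphisme $K_n \simeq \Z[\limind_{i \in I} B(i)_n]$.
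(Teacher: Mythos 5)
Votre preuve est correcte et suit essentiellement la m\^eme strat\'egie que celle de l'article : la pr\'erigidit\'e permet de factoriser le foncteur $i \mapsto F(i)_n$ par le foncteur $\Z$-module libre, lequel commute aux limites inductives, ce qui donne $(\limind F)_n \simeq \Z^{\left(\limind_{i \in I} B(i)_n\right)}$, et le sous-mono\"ide de positivit\'e de la limite est engendr\'e par les images des $B(i)_n$. Votre r\'edaction explicite simplement deux points que l'article laisse implicites (la v\'erification que la factorisation est bien fonctorielle, et la pr\'erigidit\'e des morphismes canoniques), ce qui ne change rien \`a l'argument.
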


\begin{proof}
  Fixons $n \ge 0$. Par hypothèse, on a
  \[
    F(i)_n \simeq \Z^{(B(i)_n)}
    \quad\text{et}\quad
    F(i)^\ast_n \simeq \N^{(B(i)_n)}.
  \]
  Puisque $F$ est un système prérigide, le foncteur $i \mapsto F(i)_n$ se
  factorise par le foncteur $\Z$-module libre. Or celui-ci commute aux
  limites inductives et on a donc
  \[
    (\limind F)_n \simeq \limind_{i \in I} F(i)_n
    \simeq \limind_{i \in I} \Z^{(B(i)_n)}
    \simeq \Z^{\left(\limind_{i \in I} B(i)_n\right)}.
  \]
  Par ailleurs, $(\limind F)^\ast_n$ est le sous-monoïde engendré par les
  $\N^{(B(i)_n)}$ qui n'est autre que $\N^{\left(\limind_{i \in I}
  B(i)_n\right)}$, d'où le résultat.
\end{proof}

\begin{paragr}\label{paragr:def_syst_Steiner}
  On dira qu'un foncteur $F : I \to \Cda$ de source une petite catégorie $I$
  est un \ndef[système!de Steiner]{système de Steiner} (resp. un
  \ndef[système!de Steiner fort]{système de Steiner fort}) si $F$ satisfait
  aux conditions suivantes :
  \begin{enumerate}
    \item $F$ est un système rigide ;
    \item pour tout objet $i$ de $I$, le complexe $F(i)$ est de Steiner (resp.
      de Steiner fort) ;
    \item le complexe $\limind_{i \in I} F(i)$ est de Steiner (resp. de Steiner
      fort) ;
    \item pour tout objet $i_0$ de $I$, le morphisme canonique $F(i_0) \to
    \limind_{i \in I} F(i)$ est rigide.
  \end{enumerate}
  Si de plus la petite catégorie $I$ est connexe, on parlera de
  \ndef[système!de Steiner!connexe]{système de Steiner connexe} (resp. de
  \ndef[système!de Steiner fort!connexe]{système de Steiner fort connexe}).
\end{paragr}

\begin{thm}\label{thm:nu_syst_Steiner}
  Le foncteur $\nu : \Cda \to \ooCat$ commute aux limites inductives des
  systèmes de Steiner.
\end{thm}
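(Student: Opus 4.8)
The plan is to deduce the statement from the gluing criterion for polygraphically free \oo-categories (Proposition~\ref{prop:iso_pol}), exploiting the fact that all the \oo-categories in play are free on their atoms. Write $g_i : F(i) \to \limind_{j \in I} F(j)$ for the canonical morphisms; applying $\nu$ to the cocone $(g_i)$ produces a cocone $\nu(g_i) : \nu(F(i)) \to \nu(\limind_j F(j))$, hence a comparison \oo-functor
\[
  \limind_{i \in I} \nu(F(i)) \longto \nu\Big(\limind_{i \in I} F(i)\Big),
\]
and the assertion is exactly that this \oo-functor is an isomorphism. I would apply Proposition~\ref{prop:iso_pol} with $A = I$, $P = \nu \circ F$, $C = \nu(\limind_i F(i))$ and $\alpha = (\nu(g_i))$.

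It then remains to produce the polygraphic generators and to verify the three hypotheses. Since $F$ is a Steiner system, each $F(i)$ and, by assumption, $\limind_i F(i)$ are Steiner complexes, so Theorem~\ref{thm:Steiner_pol} provides that $\nu(F(i))$ (resp. $\nu(\limind_i F(i))$) is freely generated in the sense of polygraphs by the set $E_i$ (resp. $E$) of its atoms. By the definition of the atom (\ref{paragr:def_atome}), the assignment $b \mapsto \atom{b}$ is a bijection from the basis $B(i)$ of $F(i)$ onto $E_i$, and likewise from the basis $B$ of $\limind_i F(i)$ onto $E$. The crucial observation is that rigidity converts the atom-level conditions~(a)--(c) into statements about bases: for a rigid morphism $f$ between complexes with unitary bases one has $\nu(f)(\atom{b}) = \atom{f(b)}$ (\ref{paragr:def_rigide}). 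Concretely, hypothesis~(b) holds because, $F$ being a rigid system, each $F(f)$ is rigid, whence $\nu(F(f))(\atom{b}) = \atom{F(f)(b)}$ with $F(f)(b) \in B(i')$; hypothesis~(a) holds likewise since each $g_i$ is rigid (the last condition of~\ref{paragr:def_syst_Steiner}), giving $\nu(g_i)(\atom{b}) = \atom{g_i(b)} \in E$.

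For hypothesis~(c), these same identities say exactly that the atom-bijections $B(i) \xto{\sim} E_i$ and $B \xto{\sim} E$ are natural in $i$ and compatible with the two cocones; hence the square comparing $\limind_i B(i) \to B$ with $\limind_i E_i \to E$ commutes and has bijective vertical sides. Now Proposition~\ref{prop:limind_syst_base} identifies the basis of the colimit: its bottom map $\limind_i B(i) \to B$ is a bijection. Therefore $\limind_i E_i \to E$ is a bijection, which is hypothesis~(c), and Proposition~\ref{prop:iso_pol} yields the desired isomorphism.

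The bulk of the argument is formal once the recognition principle is in place; the single genuine input is Proposition~\ref{prop:limind_syst_base} (itself resting on the commutation of the free abelian group functor with colimits), which guarantees that passing to the colimit neither creates nor destroys basis elements. I expect the main subtlety to lie in hypothesis~(c): one must be careful that the naturality of the atom-bijections together with their compatibility with the canonical cocones — both consequences of rigidity, and precisely the reason rigidity (rather than mere prerigidity) is built into the definition of a Steiner system — suffice to transport the bijection $\limind_i B(i) \simeq B$ into a bijection $\limind_i E_i \simeq E$. Everything else is a direct translation through the formula $\nu(f)(\atom{b}) = \atom{f(b)}$.
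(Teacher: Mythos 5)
Your proposal is correct and follows essentially the same route as the paper's own proof: the comparison morphism is shown to be an isomorphism via Proposition~\ref{prop:iso_pol}, with Theorem~\ref{thm:Steiner_pol} supplying the polygraphic generators (the atoms), the rigidity conditions in the definition of a Steiner system giving hypotheses (\emph{a}) and (\emph{b}), and Proposition~\ref{prop:limind_syst_base} giving hypothesis (\emph{c}). Your extra elaboration on transporting the basis bijection to the atom bijection via $\nu(f)(\atom{b}) = \atom{f(b)}$ is exactly the point the paper leaves implicit.
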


\begin{proof}
  Soit $F : I \to \Cda$ un système de Steiner. Il s'agit de montrer que le
  morphisme canonique
  \[ \limind_{i \in I} \nu(F(i)) \to \nu\big(\limind_{i \in I} F(i)\big) \]
  est un isomorphisme de \oo-catégories. Pour ce faire, nous allons
  appliquer la proposition~\ref{prop:iso_pol}. En vertu du
  théorème~\ref{thm:Steiner_pol}, les \oo-catégories $\nu(F(i))$, pour $i$
  un objet de~$I$, et~$\nu(\limind F)$ sont engendrées librement au sens des
  polygraphes par les bases de $F(i)$ et $\limind F$ respectivement (ou,
  plus précisément, par les atomes associés à ces bases). Par hypothèse,
  pour tout objet $i_0$ de $I$, le morphisme canonique $F(i_0) \to \limind F$
  est rigide. Cela implique la condition \ref{item:iso_pol_a} de la
  proposition~\ref{prop:iso_pol}. De même, le fait que, par hypothèse, pour
  tout morphisme $i \to i'$ de $I$, le morphisme $F(i) \to F(i')$ est rigide
  entraîne la condition~\ref{item:iso_pol_b}. Enfin, la
  condition~\ref{item:iso_pol_c} résulte de la
  proposition~\ref{prop:limind_syst_base}. On peut donc appliquer la
  proposition~\ref{prop:iso_pol}, ce qui achève la
  démonstration.
\end{proof}

Dans la suite de ce chapitre, on va dégager des conditions suffisantes
permettant d'appliquer ce théorème.

\begin{lemme}\label{lemme:rig_comp_pm}
  Soit $f : K \to L$ un monomorphisme prérigide entre complexes dirigés
  augmentés à base. Alors, pour tout élément homogène $x$ de $K$, on a
  \[ f(x_-) = f(x)_- \quadet f(x_+) = f(x)_+. \]
\end{lemme}

\begin{proof}
  Écrivons $x = x_+ - x_-$. Il s'agit de montrer que
  $f(x_+) - f(x_-)$ est la décomposition $f(x) = f(x)_+ - f(x)_-$,
  c'est-à-dire que les coefficients apparaissant dans l'écriture de $f(x_+)$
  et $f(x_-)$ selon la base de $L$ sont positifs, et que les supports de
  $f(x_+)$ et $f(x_-)$ sont disjoints. Le premier point est immédiat et le
  second résulte du fait que $f$ est un monomorphisme prérigide, d'où le
  résultat.
\end{proof}

\begin{prop}\label{prop:mono_rigide}
  Un monomorphisme prérigide entre complexes dirigés augmentés à base est
  rigide.
\end{prop}

\begin{proof}\label{lemme:atom_mon_rig}
  Soit $f : K \to L$ un monomorphisme prérigide entre complexes dirigés
  augmentés à base. Il s'agit de montrer que, pour tout élément $b$ de la
  base de~$K$, on a $f(\atom{b}^\e_k) = \atom{f(b)}^\e_k$,
  pour $\e = 0, 1$ et $0 \le k \le |b|$. Par définition, les $\atom{b}^\e_k$
  sont obtenus en itérant la différentielle et les opérations $x \mapsto
  x_-$ ou $x \mapsto x_+$ selon la valeur de $\e$. Or, le morphisme $f$ est
  compatible aux différentielles par définition et aux opérations $x \mapsto
  x_-$ et $x \mapsto x_+$ en vertu du lemme précédent, d'où le résultat.
\end{proof}

\begin{paragr}
  On dira qu'un foncteur $F : I \to \Ens$ d'une petite catégorie $I$ vers la
  catégorie des ensembles est \ndef[foncteur séparant]{séparant} si, pour
  tout objet $i_0$ de $I$, l'application canonique \hbox{$F(i_0) \to
  \limind_{i \in I} F(i)$} est une injection.

  On dira qu'un foncteur $F : I \to \Cda$ est un
  \ndef[système!séparant]{système séparant} s'il satisfait aux conditions
  suivantes :
  \begin{enumerate}
    \item $F$ est un système prérigide ;
    \item pour tout $n \ge 0$, le foncteur $I \to \Ens$ qui envoie $i$ sur la
      base de $F(i)_n$ est séparant.
  \end{enumerate}
\end{paragr}

\begin{prop}\label{prop:limind_comp_mono}
  Si $F : I \to \Cda$ est un système séparant, alors, pour tout
  objet~$i_0$ de $I$, le morphisme canonique $F(i_0) \to \limind_{i \in I}
  F(i)$ est un monomorphisme rigide.
\end{prop}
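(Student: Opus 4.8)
\medbreak

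Le plan est de combiner les propositions~\ref{prop:limind_syst_base} et~\ref{prop:mono_rigide}, la rigidit\'e devant r\'esulter formellement du caract\`ere monomorphe. Puisqu'un syst\`eme s\'eparant est en particulier un syst\`eme pr\'erigide, la proposition~\ref{prop:limind_syst_base} s'applique : le complexe $\limind_{i \in I} F(i)$ est \`a base, de base $\big(\limind_{i \in I} B(i)_n\big)_{n \ge 0}$ (o\`u $(B(i)_n)_{n \ge 0}$ d\'esigne la base de $F(i)$), et le morphisme canonique $F(i_0) \to \limind_{i \in I} F(i)$ est pr\'erigide. Il restera donc \`a \'etablir que ce morphisme est un monomorphisme, puis \`a invoquer la proposition~\ref{prop:mono_rigide}.

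Pour l'injectivit\'e, je proc\'ederais degr\'e par degr\'e. Fixons $n \ge 0$. En reprenant les identifications de la preuve de la proposition~\ref{prop:limind_syst_base}, on dispose d'isomorphismes $F(i_0)_n \simeq \Z^{(B(i_0)_n)}$ et $\big(\limind_{i \in I} F(i)\big)_n \simeq \Z^{\left(\limind_{i \in I} B(i)_n\right)}$ \`a travers lesquels le morphisme $F(i_0)_n \to \big(\limind_{i \in I} F(i)\big)_n$ s'identifie au morphisme de $\Z$-modules libres induit par l'application canonique $B(i_0)_n \to \limind_{i \in I} B(i)_n$. Par l'hypoth\`ese de syst\`eme s\'eparant, cette derni\`ere application est injective ; comme un morphisme de $\Z$-modules libres induit par une injection entre ensembles de base est injectif, on en d\'eduit que $F(i_0)_n \to \big(\limind_{i \in I} F(i)\big)_n$ est injectif. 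Ceci valant pour tout $n \ge 0$, le morphisme $F(i_0) \to \limind_{i \in I} F(i)$ est injectif en chaque degr\'e, donc est un monomorphisme de $\Cda$.

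Il ne resterait plus qu'\`a conclure : ce morphisme \'etant un monomorphisme pr\'erigide entre complexes dirig\'es augment\'es \`a base, il est rigide en vertu de la proposition~\ref{prop:mono_rigide}. La seule subtilit\'e de cette approche est l'identification des groupes de cha\^{\i}nes de la limite inductive aux $\Z$-modules libres sur la limite inductive des bases, mais celle-ci est d\'ej\`a fournie par la preuve de la proposition~\ref{prop:limind_syst_base} ; une fois cette identification acquise, tout se ram\`ene \`a l'injectivit\'e d'un morphisme de $\Z$-modules libres induit par une injection de bases.
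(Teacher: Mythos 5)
Votre preuve est correcte et suit essentiellement le m\^eme chemin que celle de l'article : r\'eduction \`a la rigidit\'e via la proposition~\ref{prop:mono_rigide}, puis injectivit\'e degr\'e par degr\'e obtenue en identifiant le morphisme \`a celui des $\Z$-modules libres induit par l'injection $B(i_0)_n \to \limind_{i \in I} B(i)_n$ fournie par l'hypoth\`ese de syst\`eme s\'eparant, la proposition~\ref{prop:limind_syst_base} donnant les identifications n\'ecessaires. Rien \`a redire.
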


\begin{proof}
  Fixons un objet $i_0$ de $I$. On a déjà vu
  (proposition~\ref{prop:limind_syst_base}) que le morphisme $F(i_0) \to
  \limind F$ est prérigide. En vertu de la
  proposition~\ref{prop:mono_rigide}, il s'agit donc de montrer que ce
  morphisme est un monomorphisme. Notons ${(B(i)_n)}_{n \ge 0}$ la base du
  complexe $F(i)$.
  En vertu de la proposition \ref{prop:limind_syst_base},
  \[ \Big(\limind_{i \in I} B(i)_n\Big)_{n \ge 0} \]
  est une base du complexe $\limind F$. Il s'agit donc de montrer que, pour
  tout $n \ge 0$, le morphisme
  \[
    \Z^{(B(i_0)^{}_{n})} \to \Z^{\big(\limind_{i \in I} B(i)^{}_{n}\big)}
  \]
  est un monomorphisme. Mais, puisque le système $F$ est séparant,
  l'application $B(i_0)_{n} \to \limind_{i \in I} B(i)_{n}$ est une
  injection et on obtient le résultat puisque le foncteur $\Z$-module libre
  préserve les monomorphismes.
\end{proof}

\begin{prop}\label{prop:rig_sep}
  Soit $F : I \to \Cda$ un système séparant. Pour tout morphisme
  $f : i \to i'$ de $I$, le morphisme $F(f) : F(i) \to F(i')$ est un
  monomorphisme. En particulier, un tel système est rigide.
\end{prop}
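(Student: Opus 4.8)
Le plan est de d\'emontrer que chaque $F(f)$ est un monomorphisme ; l'assertion de rigidit\'e en d\'ecoulera aussit\^ot gr\^ace \`a la proposition~\ref{prop:mono_rigide}, puisque $F(f)$ est pr\'erigide par hypoth\`ese et qu'un monomorphisme pr\'erigide entre complexes \`a base est rigide. Le syst\`eme $F$ sera alors rigide par d\'efinition.

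Pour montrer que $F(f) : F(i) \to F(i')$ est un monomorphisme, il suffit de v\'erifier que, pour tout $n \ge 0$, l'application $F(f)_n : F(i)_n \to F(i')_n$ est injective. Notons $(B(i)_n)_{n \ge 0}$ la base de $F(i)$. Comme $F$ est pr\'erigide, le morphisme $F(f)$ envoie la base de $F(i)$ dans celle de $F(i')$ ; par cons\'equent $F(f)_n$ n'est autre que le morphisme de $\Z$-modules libres $\Z^{(B(i)_n)} \to \Z^{(B(i')_n)}$ induit par une application ensembliste $B(i)_n \to B(i')_n$, et il est injectif d\`es que cette derni\`ere l'est. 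Tout se ram\`ene donc \`a l'injectivit\'e de $B(i)_n \to B(i')_n$.

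On exploite alors l'hypoth\`ese de s\'eparation. Notons $\phi_j : F(j) \to \limind_{k \in I} F(k)$ les morphismes canoniques. En vertu de la proposition~\ref{prop:limind_syst_base}, le complexe $\limind F$ est \`a base, de base $(\limind_k B(k)_n)_{n \ge 0}$ en chaque degr\'e, et les $\phi_j$ sont pr\'erigides ; en degr\'e $n$, ils se restreignent donc aux applications canoniques $B(j)_n \to \limind_k B(k)_n$ du c\^one inductif. L'\'egalit\'e $\phi_{i'} \circ F(f) = \phi_i$ fournit alors, au niveau des bases, une factorisation
\[
  B(i)_n \xrightarrow{\,F(f)\,} B(i')_n \xrightarrow{\,\phi_{i'}\,}
  \limind_k B(k)_n
\]
de l'application canonique $B(i)_n \to \limind_k B(k)_n$. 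Puisque $F$ est un syst\`eme s\'eparant, cette derni\`ere application est injective ; comme une application dont la compos\'ee avec une autre est injective est elle-m\^eme injective, l'application $B(i)_n \to B(i')_n$ est injective. On en d\'eduit l'injectivit\'e de $F(f)_n$ pour tout $n$, donc que $F(f)$ est un monomorphisme.

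L'argument est essentiellement formel ; le seul point demandant un peu de soin est l'identification, en chaque degr\'e et au niveau des bases, de la restriction de $\phi_{i'}$ avec l'application canonique vers la limite inductive $\limind_k B(k)_n$. C'est pr\'ecis\'ement ce que fournit la proposition~\ref{prop:limind_syst_base}, et c'est ce qui permet de transf\'erer l'hypoth\`ese de s\'eparation, portant sur les limites inductives des ensembles de base, en une propri\'et\'e des applications de transition $B(i)_n \to B(i')_n$.
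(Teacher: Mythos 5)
Votre preuve est correcte et suit essentiellement la m\^eme strat\'egie que celle de l'article : factoriser $F(f)$ \`a travers les morphismes canoniques vers $\limind F$ et conclure par simplification \`a gauche, puis obtenir la rigidit\'e via la proposition~\ref{prop:mono_rigide}. La seule diff\'erence est le degr\'e de d\'etail : l\`a o\`u l'article invoque la proposition~\ref{prop:limind_comp_mono} pour affirmer que les deux fl\`eches obliques du triangle sont des monomorphismes et conclut formellement, vous red\'emontrez l'injectivit\'e n\'ecessaire au niveau des ensembles de base, \`a partir de la proposition~\ref{prop:limind_syst_base} et de l'hypoth\`ese de s\'eparation, ce qui revient \`a r\'e\'etablir en passant la partie pertinente de la proposition~\ref{prop:limind_comp_mono}.
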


\begin{proof}
  On a un triangle commutatif
  \[
    \xymatrix@C=1.5pc{
      F(i) \ar[rr]^{F(f)} \ar[dr] & & F(i') \ar[dl] \\
      & \limind F & \pbox{,}
    }
  \]
  où, en vertu de la proposition précédente, les deux flèches obliques
  sont des monomorphismes. On en déduit qu'il en est de même de $F(f)$. La
  seconde assertion résulte de la première en vertu de la
  proposition~\ref{prop:mono_rigide}.
\end{proof}

\begin{prop}\label{prop:limind_comp_unitaire}
  Si $F : I \to \Cda$ est un système séparant à valeurs dans les
  complexes dirigés augmentés à base unitaire, alors le complexe $\limind_{i
  \in I} F(i)$ est à base unitaire.
\end{prop}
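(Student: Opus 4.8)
The plan is to reduce the unitarity of the basis of $\limind_{i \in I} F(i)$ to the unitarity of the bases of the individual complexes $F(i)$, transporting the defining augmentation condition along the canonical morphisms into the colimit. The point is that, for a separating system, these structural morphisms are rigid, and rigidity is exactly what is needed to carry an atom of $F(i_0)$ to the corresponding atom of the colimit.

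First I would record the two inputs already at our disposal. Since a separating system is in particular prerigid, Proposition~\ref{prop:limind_syst_base} tells us that $\limind_{i \in I} F(i)$ admits a basis, namely $\big(\limind_{i \in I} B(i)_n\big)_{n \ge 0}$, where $(B(i)_n)_{n \ge 0}$ denotes the basis of $F(i)$. Moreover, by Proposition~\ref{prop:limind_comp_mono}, each canonical morphism $\phi_{i_0} : F(i_0) \to \limind_{i \in I} F(i)$ is a rigid monomorphism. To establish unitarity it then suffices, by the characterization recalled in Paragraph~\ref{paragr:def_atome}, to check for every basis element $b$ of $\limind F$, say in degree $n$, that $e(\atom{b}^0_0) = 1 = e(\atom{b}^1_0)$, where $e$ is the augmentation of the colimit (the condition $b \in (\limind F)^\ast_n$ holds automatically since $b$ is a basis element).

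The heart of the argument is the transport step. Because the basis of $\limind F$ in degree $n$ is $\limind_{i \in I} B(i)_n$, any such $b$ is the image under some $\phi_{i_0}$ of a basis element $b_0$ of $F(i_0)$. By rigidity of $\phi_{i_0}$ one has $\phi_{i_0}(\atom{b_0}^\e_k) = \atom{b}^\e_k$ for $\e = 0, 1$ and all $0 \le k \le n$; in particular $\atom{b}^\e_0 = \phi_{i_0}(\atom{b_0}^\e_0)$. Since $\phi_{i_0}$ is a morphism of augmented directed complexes it is compatible with augmentations, so $e(\atom{b}^\e_0) = e_{i_0}(\atom{b_0}^\e_0)$, where $e_{i_0}$ is the augmentation of $F(i_0)$. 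As $F(i_0)$ is by hypothesis with unitary basis, $e_{i_0}(\atom{b_0}^\e_0) = 1$ for $\e = 0, 1$, whence $e(\atom{b}^\e_0) = 1$. This is precisely the condition for $\atom{b}$ to be an $n$-arrow of $\nu(\limind F)$, and so the basis of $\limind_{i \in I} F(i)$ is unitary.

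I do not expect a genuine obstacle here: the entire content is packaged into the rigidity of the structural morphisms $\phi_{i_0}$, supplied by Proposition~\ref{prop:limind_comp_mono}, after which the verification is a direct pull-back of the augmentation condition through a morphism of $\Cda$. The only mild care required is to note that the atom $\atom{b}$ computed in the colimit genuinely coincides, degree by degree, with the image of the atom $\atom{b_0}$ computed in $F(i_0)$, which is exactly the meaning of rigidity and not merely of prerigidity.
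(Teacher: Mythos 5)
Your proof is correct and follows essentially the same route as the paper: both reduce to checking the augmentation condition $e(\atom{b}^\e_0)=1$ on a basis element $b$ of the colimit, lift $b$ to some $b_0$ in $F(i_0)$ using the description of the basis from Proposition~\ref{prop:limind_syst_base}, and transport the unitarity of $F(i_0)$ through the canonical morphism, whose rigidity (Proposition~\ref{prop:limind_comp_mono}) guarantees that the atom of $b$ is the image of the atom of $b_0$. The only difference is that you spell out the final transport step (atom compatibility plus compatibility of augmentations), which the paper leaves implicit in its closing sentence.
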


\begin{proof}
  Notons ${(B(i)_n)}_{n \ge 0}$ la base du complexe $F(i)$. En vertu de la
  proposition~\ref{prop:limind_syst_base}, le complexe $\limind F$ est à
  base et sa base est
  \[
    \Big(\limind_{i \in I} B(i)_n\Big)_{n \ge 0}.
  \]
  Fixons $n \ge 0$. Soit $b$ un élément de $\limind_{i \in I} B(i)_{n}$. Il
  s'agit de montrer qu'on a
  \[
    e(\atom{b}_0^0) = 1
    \quad\text{et}\quad
    e(\atom{b}_0^1) = 1,
  \]
  où $e$ est l'augmentation de $\limind F$. L'élément $b$ provient
  d'un élément $b_0$ de $B(i_0)_n$ pour un certain objet $i_0$ de $I$
  par l'application canonique $B(i_0)_n \to \limind_{i \in I} B(i)_n$. De
  plus, puisque $B(i_0)$ est à base unitaire, on a
  \[
    e(i_0)(\atom{b_0}_0^0) = 1
    \quad\text{et}\quad
    e(i_0)(\atom{b_0}_0^1) = 1,
  \]
  où $e(i_0)$ désigne l'augmentation de $F(i_0)$. En vertu de la
  proposition~\ref{prop:limind_comp_mono}, le morphisme $F(i_0) \to
  \limind_{i \in I} F(i)$ est rigide, ce qui permet de conclure.
\end{proof}

\begin{prop}\label{prop:syst_Steiner_sep}
  Soit $F : I \to \Cda$ un foncteur satisfaisant aux conditions suivantes :
  \begin{enumerate}
    \item\label{item:syst_St_sep_a} $F$ est un système séparant ;
    \item\label{item:syst_St_sep_c} pour tout objet $i$ de $I$, le complexe
    $F(i)$ est de Steiner (resp. de Steiner fort) ;
    \item\label{item:syst_St_sep_d} le complexe $\limind_{i \in I} F(i)$ est
    à base sans boucle (resp. à base fortement sans boucle).
  \end{enumerate}
  Alors $F$ est un système de Steiner (resp. un système de Steiner fort). En
  particulier, le foncteur $\nu : \Cda \to \ooCat$ commute à la limite
  inductive de $F$.
\end{prop}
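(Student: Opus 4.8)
The plan is to verify, one by one, the four conditions in the definition of a Steiner system (resp. strong Steiner system) given in paragraph~\ref{paragr:def_syst_Steiner}, each of which follows from a result already established in this section, and then to deduce the commutation of $\nu$ from Theorem~\ref{thm:nu_syst_Steiner}.

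Three of these conditions come for free. First, since $F$ is separating by hypothesis~\ref{item:syst_St_sep_a}, Proposition~\ref{prop:rig_sep} guarantees that $F$ is a rigid system. Second, the condition requiring each complex $F(i)$ to be Steiner (resp. strong Steiner) is precisely hypothesis~\ref{item:syst_St_sep_c}. Third, again using that $F$ is separating, Proposition~\ref{prop:limind_comp_mono} yields the rigidity of the canonical morphisms $F(i_0) \to \limind_{i \in I} F(i)$.

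It then remains to check the most substantial condition, namely that the complex $\limind_{i \in I} F(i)$ is Steiner (resp. strong Steiner). I would assemble this from three ingredients. First, Proposition~\ref{prop:limind_syst_base} shows that this complex has a basis (this is where the prerigidity of $F$ enters). Next, since each $F(i)$ is Steiner (resp. strong Steiner) it has in particular a unitary basis, so that Proposition~\ref{prop:limind_comp_unitaire} applies and ensures that the basis of $\limind_{i \in I} F(i)$ is unitary. Finally, the loop-free (resp. strongly loop-free) character of this basis is exactly hypothesis~\ref{item:syst_St_sep_d}. One concludes that $\limind_{i \in I} F(i)$ has a unitary, loop-free (resp. strongly loop-free) basis, that is, it is Steiner (resp. strong Steiner).

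The only subtle point — the bulk of the technical work having been offloaded onto the earlier propositions of the section — is that the unitarity of the basis of the colimit is not assumed but must be derived, which is exactly what Proposition~\ref{prop:limind_comp_unitaire} provides (its applicability resting on the remark that Steiner complexes have unitary bases). Once the four conditions are in place, $F$ is a Steiner system (resp. strong Steiner system). In the strong case, I would note that a strong Steiner complex is in particular Steiner (paragraph~\ref{paragr:def_ooCat_Stf}), hence a strong Steiner system is a Steiner system; Theorem~\ref{thm:nu_syst_Steiner} therefore applies in both cases and gives that $\nu$ commutes with the inductive limit of $F$, completing the proof.
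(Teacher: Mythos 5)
Your proof is correct and follows essentially the same route as the paper: rigidity of the system via Proposition~\ref{prop:rig_sep}, unitarity of the basis of the colimit via Proposition~\ref{prop:limind_comp_unitaire}, the loop-free condition supplied directly by hypothesis~(\emph{c}), rigidity of the canonical morphisms via Proposition~\ref{prop:limind_comp_mono}, and finally Theorem~\ref{thm:nu_syst_Steiner}. The only cosmetic differences are that you invoke Proposition~\ref{prop:limind_syst_base} explicitly (the paper leaves it implicit inside Proposition~\ref{prop:limind_comp_unitaire}) and that you spell out why the theorem applies in the strong case, both of which are harmless.
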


\begin{proof}
  En vertu de la proposition~\ref{prop:rig_sep}, la
  condition~\ref{item:syst_St_sep_a} entraîne que $F$ est un système rigide.
  Par ailleurs, d'après la proposition~\ref{prop:limind_comp_unitaire}, dont
  les hypothèses découlent des conditions~\ref{item:syst_St_sep_a}
  et~\ref{item:syst_St_sep_c}, le complexe $\limind F$ est à base unitaire.
  La condition~\ref{item:syst_St_sep_d} implique donc que ce complexe est de
  Steiner (resp. de Steiner fort). Enfin, la
  proposition~\ref{prop:limind_comp_mono}, dont l'hypothèse est précisément
  la condition~\ref{item:syst_St_sep_a}, montre que, pour tout objet~$i_0$
  de $I$, le morphisme canonique $F(i_0) \to \limind F$ est rigide, ce qui
  achève de prouver la première assertion. La seconde résulte de la première
  et du théorème~\ref{thm:nu_syst_Steiner}.
\end{proof}

Terminons ce chapitre par l'étude du cas des sommes amalgamées.

\begin{paragr}\label{paragr:def_rigide_ordonnee}
  Soit $f : K \to L$ un morphisme de complexes dirigés augmentés à base. On
  dira que $f$ est une \ndef{inclusion rigide ordonnée} si
  \begin{enumerate}
    \item\label{item:rig_ord_a} $f$ est un monomorphisme rigide ;
    \item\label{item:rig_ord_b} si $x$ et $y$ sont des éléments de la base de
      $K$, alors on a
      \[ x \leN y \quadssi f(x) \leN f(y), \]
      où $\leN$ désigne la relation de préordre du
      paragraphe~\ref{paragr:def_le_N}.
  \end{enumerate}
  Notons que, en vertu du lemme~\ref{lemme:rig_comp_pm}, le sens
  direct dans l'équivalence de la condition~\ref{item:rig_ord_b} est
  automatique si la condition~\ref{item:rig_ord_a} est satisfaite ; le
  contenu de cette condition se trouve donc dans l'implication réciproque.
\end{paragr}

\begin{lemme}\label{lemme:ord_total}
  Soit $K$ un complexe dirigé augmenté à base fortement sans boucle pour
  lequel la relation d'ordre $\leN$ est totale. Alors tout monomorphisme
  rigide de~$K$ vers un complexe dirigé augmenté à base fortement sans
  boucle est une inclusion rigide ordonnée.
\end{lemme}

\begin{proof}
  Cela résulte immédiatement du fait que si $i : E \to F$ est une injection
  croissante d'un ensemble totalement ordonné $E$ vers un ensemble
  ordonné~$F$, alors, pour $x$ et $y$ dans $E$, on a $x \le y$ si et
  seulement si $i(x) \le i(y)$.
\end{proof}

\begin{lemme}\label{lemme:amalg_Steiner}
  Soit
  \[
    \xymatrix@C=1.5pc{
      K & M \ar[l]_i \ar[r]^j & L
    }
  \]
  un diagramme de complexes dirigés augmentés à base fortement sans boucle
  et d'inclusions rigides ordonnées. Alors la somme amalgamée $K \amalg_M L$
  est un complexe à base fortement sans boucle et les morphismes canoniques
  $K \to K \amalg_M L$ et $L \to K \amalg_M L$ sont des inclusions rigides
  ordonnées.
\end{lemme}

\begin{proof}
  La proposition~\ref{prop:limind_syst_base} et la description explicite des
  sommes amalgamées ensemblistes entraînent immédiatement que ce diagramme,
  vu comme un foncteur de \hbox{$I =  \bullet \ot \bullet \to
  \bullet$} vers $\Cda$, est un système séparant. Il résulte donc de
  la proposition~\ref{prop:limind_comp_mono} que les morphismes canoniques
  \[ K \to K \amalg_M L \quadet L \to K \amalg_M L \]
  sont des monomorphismes rigides. On va considérer dans la suite de cette
  démonstration les monomorphismes en jeu comme des inclusions.

  Montrons que le complexe $K \amalg_M L$ est fortement sans boucle. 
  En vertu de la proposition~\ref{prop:limind_syst_base}, la base de ce
  complexe est la somme amalgamée des bases de $K$ et $L$ au-dessous de
  celle de $M$. On définit une relation $\lec$ sur cette base en posant
  \[
    x \lec y \quaddefssi
    {
    \setstretch{0}
    {
    \begin{cases}
      & \text{$x$, $y$ sont dans la base de $K$ et $x \leN^K y$} \\
      \text{ou} \\
      & \text{$x$, $y$ sont dans la base de $L$ et $x \leN^L y$,}
    \end{cases}
    }
    }
  \]
  où on a noté $\leN^K$ et $\leN^L$ la relation d'ordre du
  paragraphe~\ref{paragr:def_le_N} pour $K$ et $L$ respectivement.  Notons
  que si $x$ et $y$ sont à la fois dans la base de $K$ et celle de $L$, cela
  signifie qu'ils sont dans la base de $M$ et le fait que $i$ et $j$ sont
  des inclusions rigides ordonnées montre qu'on a
  \[
    x \lec y \quadssi x \leN^M y.
  \]
  En particulier, si $x$ et $y$ sont dans la base de $K$, on a
  \[
    x \lec y \quadssi x \leN^K y.
  \]
  De même, si $x$ et $y$ sont dans la base de $L$, on a
  \[
    x \lec y \quadssi x \leN^L y.
  \]
  La clôture transitive de la relation $\lec$ n'est autre que la relation de
  préordre~$\leN$ du paragraphe~\ref{paragr:def_le_N} pour $K \amalg_M L$.
  En effet, cela résulte du fait que pour $x$ et $y$ deux éléments de la
  base de $K \amalg_M L$, si $x$ appartient à $\supp(d(y)_-)$ ou si $y$
  appartient à~$\supp(d(x)_+)$, alors $x$ et $y$ sont soit tous les deux
  dans la base de $K$, soit tous les deux dans la base de $L$.
  Pour conclure, il suffit donc de montrer que la relation $\lec$ est sans
  cycle non trivial. Soit donc
  \[
    x_0 \lec x_1 \lec \cdots \lec x_n = x_0
  \]
  un cycle non trivial, c'est-à-dire tel que $n \ge 2$, qu'on va supposer
  minimal. Par symétrie, on peut supposer que $x_0$ est dans la base de $K$.
  La relation $\le^K_\N$ étant sans cycle non trivial, il existe un élément
  du cycle qui n'est pas dans la base de $K$ et est donc dans celle de $L$.
  Soit~$i$ le plus petit indice tel que $x_i$ ne soit pas dans la base
  de~$K$. On a nécessairement $0 < i < n$.  Puisqu'on a $x_{i-1} \lec x_i
  \lec x_{i+1}$ et que $x_i$ n'est pas dans la base de~$K$, les
  éléments~$x_{i-1}$ et~$x_{i+1}$ sont nécessairement dans la base de~$L$.
  Les éléments $x_{i-1} \lec x_i \lec x_{i+1}$ sont donc tous les trois dans
  la base de $L$. Si~$n = 2$, cela montre que $x_0 \lec x_1 \lec x_2$ est un
  cycle non trivial pour la relation $\le^L_\N$, ce qui est exclu par
  hypothèse. Si $n > 2$, par transitivité de la relation~$\le^L_\N$, on
  obtient un cycle non trivial plus court en supprimant $x_i$ de notre cycle
  qui était pourtant supposé minimal, ce qui est absurde.

  Montrons maintenant que $K \to K \amalg_M L$ est une inclusion rigide
  ordonnée (le cas de $L \to K \amalg_M L$ en résultera par symétrie).
  Il s'agit donc de montrer que si on a
  \[ x = x_0 \lec x_1 \lec \cdots \lec x_n = y \]
  avec $x$ et $y$ dans la base de $K$, alors on a $x \leN^K y$, c'est-à-dire
  $x \lec y$. Ceci se démontre par un argument similaire à celui qu'on a
  utilisé pour montrer que~$\lec$ est sans cycle non trivial : on considère
  une chaîne minimale pour laquelle on n'a pas $x \lec y$; elle doit
  nécessairement contenir un élément qui n'est pas dans la base de~$K$ ; on
  en déduit une chaîne plus courte et donc une contradiction.
\end{proof}

\begin{thm}\label{thm:nu_somme_amalg}
  Un diagramme
  \[
    \xymatrix@C=0.5pc@R=1pc{
      K_1 & & K_2 & & K_3 & & \cdots & & K_{l-1} & & K_{l} \\
      & L_1 \ar[ul]^{f_1} \ar[ur]_(.43){g_1} & & L_2 \ar[ul]^{f_2} \ar[ur]_(.43){g_2}
      & & \cdots & & \cdots & & L_{l-1} \ar[ul]^{f^{}_{l-1}}
      \ar[ur]_(.44){g^{}_{l-1}}
    }
  \]
  de complexes de Steiner forts dont les morphismes sont des inclusions
  rigides ordonnées est un système de Steiner fort. En particulier, la
  limite inductive d'un tel diagramme est un complexe de Steiner fort et le
  foncteur $\nu : \Cda \to \ooCat$ commute à cette
  limite inductive.
\end{thm}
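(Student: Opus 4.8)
Le plan est de se ramener par récurrence sur $l$ au cas des sommes amalgamées, déjà entièrement traité par le lemme~\ref{lemme:amalg_Steiner} (qui n'est autre que le cas $l = 2$). Concrètement, je montrerais par récurrence sur $l \ge 1$ que la limite inductive $P_l$ du sous-diagramme allant de $K_1$ à $K_l$ est un complexe de Steiner fort et que, pour tout $i$, les morphismes canoniques $K_i \to P_l$ et $L_i \to P_l$ sont des inclusions rigides ordonnées. Le cas $l = 1$ est trivial puisque $P_1 = K_1$. L'idée est qu'à chaque étape on rajoute un objet $K_l$ connecté au diagramme précédent \emph{via} le seul objet $L_{l-1}$, de sorte que tout se réduit à une somme amalgamée.

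Avant d'entamer la récurrence, je commencerais par vérifier que les inclusions rigides ordonnées sont stables par composition : la composée de deux monomorphismes rigides est un monomorphisme rigide, car $gf(\atom{b}^\e_k) = g(\atom{f(b)}^\e_k) = \atom{gf(b)}^\e_k$, et la condition~\ref{item:rig_ord_b} du paragraphe~\ref{paragr:def_rigide_ordonnee} est clairement stable par composition. Pour l'étape de récurrence, j'utiliserais alors la décomposition de la limite inductive du diagramme de longueur $l$ sous la forme
\[
  P_l \simeq P_{l-1} \amalg_{L_{l-1}} K_l,
\]
où le morphisme $L_{l-1} \to P_{l-1}$ est le composé de $f_{l-1}$ et du morphisme canonique $K_{l-1} \to P_{l-1}$. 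Par hypothèse de récurrence, ce dernier est une inclusion rigide ordonnée, donc le composé aussi d'après l'observation précédente ; quant à $g_{l-1} : L_{l-1} \to K_l$, c'en est une par hypothèse. Le lemme~\ref{lemme:amalg_Steiner}, appliqué au diagramme $P_{l-1} \ot L_{l-1} \to K_l$, fournit à la fois le caractère Steiner fort de $P_l$ et le fait que $P_{l-1} \to P_l$ et $K_l \to P_l$ sont des inclusions rigides ordonnées. Les morphismes canoniques depuis les $K_i$ et $L_i$ pour $i < l$ s'en déduisent en les composant avec $P_{l-1} \to P_l$, ce qui clôt la récurrence.

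Pour conclure, je vérifierais que le diagramme, vu comme un foncteur $F$ à valeurs dans $\Cda$, satisfait aux quatre conditions du paragraphe~\ref{paragr:def_syst_Steiner} définissant un système de Steiner fort : les deux premières (système rigide, valeurs de Steiner fort) sont des hypothèses, tandis que les deux dernières ($\limind F$ de Steiner fort, morphismes canoniques rigides) résultent de la récurrence ci-dessus, toute inclusion rigide ordonnée étant en particulier rigide. Il ne resterait plus qu'à invoquer le théorème~\ref{thm:nu_syst_Steiner} pour obtenir la commutation de $\nu$ à la limite inductive de $F$. L'essentiel du travail mathématique étant concentré dans le lemme~\ref{lemme:amalg_Steiner}, je m'attends à ce que la principale difficulté ne réside pas dans la récurrence, qui est formelle, mais dans la justification soigneuse de la décomposition de la limite inductive en sommes amalgamées itérées, et dans le contrôle du fait que les morphismes canoniques demeurent des inclusions rigides ordonnées à chaque étape.
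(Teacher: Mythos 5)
Votre d�marche est, dans son architecture, celle du texte : d�composition de la limite inductive en sommes amalgam�es it�r�es, lemme~\ref{lemme:amalg_Steiner}, et stabilit� des inclusions rigides ordonn�es par composition. Il y a cependant un �cart pr�cis : vous faites dire au lemme~\ref{lemme:amalg_Steiner} plus qu'il n'affirme. Ce lemme garantit seulement que la somme amalgam�e $P_{l-1} \amalg_{L_{l-1}} K_l$ est un complexe \emph{� base fortement sans boucle} et que les morphismes canoniques $P_{l-1} \to P_l$ et $K_l \to P_l$ sont des inclusions rigides ordonn�es ; il ne dit pas que cette somme amalgam�e est un complexe de Steiner \emph{fort}, car il manque le caract�re \emph{unitaire} de la base. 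Votre pas de r�currence, tel qu'il est r�dig� (� fournit � la fois le caract�re Steiner fort de $P_l$ �), repose donc sur une conclusion que le lemme ne donne pas ; c'est exactement pour combler ce point que la preuve du texte ne conclut pas directement � partir du lemme.

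La lacune est r�parable de deux fa�ons. Celle du texte : la proposition~\ref{prop:limind_syst_base}, jointe � la description explicite des sommes amalgam�es ensemblistes, montre que le diagramme est un syst�me s�parant ; la proposition~\ref{prop:limind_comp_unitaire} en d�duit que la base de la limite inductive est unitaire, et la proposition~\ref{prop:syst_Steiner_sep} assemble le tout en fournissant du m�me coup la commutation de $\nu$. Si vous tenez � votre r�currence directe, il faut ajouter l'argument d'unitarit� suivant : en vertu de la proposition~\ref{prop:limind_syst_base}, tout �l�ment de la base de $P_l$ est l'image d'un �l�ment $b$ de la base de $P_{l-1}$ ou de $K_l$ par le morphisme canonique $f$ correspondant ; ce morphisme �tant rigide, on a $\atom{f(b)}^\e_0 = f(\atom{b}^\e_0)$ pour $\e = 0, 1$, d'o�, $f$ respectant l'augmentation, $e(\atom{f(b)}^\e_0) = e(\atom{b}^\e_0) = 1$ puisque $P_{l-1}$ et $K_l$ sont � base unitaire (pour $P_{l-1}$, c'est une partie de l'hypoth�se de r�currence qu'il faut alors inclure explicitement). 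Une fois ce point ajout�, votre preuve est correcte : la v�rification des conditions du paragraphe~\ref{paragr:def_syst_Steiner} suivie de l'appel au th�or�me~\ref{thm:nu_syst_Steiner} remplace l'appel du texte � la proposition~\ref{prop:syst_Steiner_sep}, et les deux arguments deviennent essentiellement �quivalents.
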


\begin{proof}
  En décomposant la limite inductive du diagramme en une somme amalgamée
  itérée, le lemme précédent et le stabilité des inclusions rigides
  ordonnées par composition impliquent que la limite inductive du diagramme
  est un complexe à base fortement sans boucle. La
  proposition~\ref{prop:limind_syst_base} et la description explicite des
  sommes amalgamées itérées ensemblistes entraînent que ce diagramme est un
  système séparant. Les hypothèses de la
  proposition~\ref{prop:syst_Steiner_sep} sont donc satisfaites et on
  conclut en invoquant cette proposition.
\end{proof}

\begin{coro}
Soit
\[
  \xymatrix@C=1.5pc{
    K & M \ar[l] \ar[r] & L
  }
\]
un diagramme de complexes de Steiner forts et de monomorphismes rigides.
On suppose que la relation d'ordre $\leN$ sur la base de $M$ est totale.
Alors $K \amalg_M L$ est un complexe de Steiner fort et le morphisme
canonique $\nu(K) \amalg_{\nu(M)} \nu(L) \to \nu(K \amalg_M L)$ est un
isomorphisme.
\end{coro}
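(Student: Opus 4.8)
The plan is to reduce the corollary to Theorem~\ref{thm:nu_somme_amalg} applied with $l = 2$, whose hypotheses demand that the two maps out of $M$ be \emph{inclusions rigides ordonn\'ees}, and not merely \emph{monomorphismes rigides}. The bridge between these two conditions is exactly Lemma~\ref{lemme:ord_total}, and this is the single place where the totality hypothesis on $\leN$ enters.

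First I would record the standing data: $M$ is a complexe de Steiner fort whose base carries a \emph{total} order $\leN$, while $K$ and $L$ are complexes de Steiner forts, hence in particular complexes \`a base fortement sans boucle. The two given maps $i : M \to K$ and $j : M \to L$ are monomorphismes rigides. Applying Lemma~\ref{lemme:ord_total}, with $M$ playing the role of the totally ordered source and $K$ (resp.\ $L$) the target, I conclude that both $i$ and $j$ are in fact inclusions rigides ordonn\'ees.

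Next I would view the span $K \xleftarrow{\,i\,} M \xrightarrow{\,j\,} L$ as a functor $F : I \to \Cda$ on the category $I = (\bullet \leftarrow \bullet \rightarrow \bullet)$. After the previous step this is precisely a diagram of complexes de Steiner forts and inclusions rigides ordonn\'ees, i.e.\ the shape treated by Theorem~\ref{thm:nu_somme_amalg} in the case $l = 2$ (take $K_1 = K$, $K_2 = L$, $L_1 = M$, $f_1 = i$, $g_1 = j$). That theorem yields at once both desired conclusions: that $K \amalg_M L$ is a complexe de Steiner fort, and that $\nu$ commutes with its limite inductive. Unwinding the latter is exactly the assertion that the canonical morphism $\nu(K) \amalg_{\nu(M)} \nu(L) \to \nu(K \amalg_M L)$ is an isomorphism, which completes the argument.

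The only genuinely nontrivial step is the passage from monomorphismes rigides to inclusions rigides ordonn\'ees; everything downstream is already packaged inside Theorem~\ref{thm:nu_somme_amalg} (which itself rests on Lemma~\ref{lemme:amalg_Steiner} and Proposition~\ref{prop:syst_Steiner_sep}). I expect this to be the main obstacle, and it is where the hypothesis is indispensable: by Lemma~\ref{lemme:rig_comp_pm} a rigid monomorphism automatically satisfies the direct implication $x \leN y \Rightarrow f(x) \leN f(y)$, but the reverse implication needed for the \emph{ordered} condition can fail in general, and it is precisely the totality of $\leN$ on the base of $M$ that forces it (an increasing injection out of a totally ordered set reflects the order). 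Without that totality the amalgamation machinery cannot be run, which is why the hypothesis appears in the statement.
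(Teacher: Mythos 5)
Your proof is correct and follows exactly the paper's own argument: you invoke Lemma~\ref{lemme:ord_total} (via the totality of $\leN$ on the base of $M$) to upgrade the rigid monomorphisms to ordered rigid inclusions, then apply Theorem~\ref{thm:nu_somme_amalg} in the case $l = 2$. The additional commentary on where totality is genuinely needed is accurate but not required.
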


\begin{proof}
  En vertu du lemme~\ref{lemme:ord_total}, les morphismes du diagramme sont
  des inclusions rigides ordonnées et le résultat découle donc du théorème
  précédent.
\end{proof}

\chapter{La catégorie \pdfTheta{} de Joyal}

\kern-22pt

Ce chapitre est consacré à des rappels sur la catégorie $\Theta$,
introduite par Joyal dans~\cite{JoyalTheta}.

\begin{paragr}\label{paragr:def_disque}
  Pour $i \ge 0$, on notera \nnot{$\Dn{i}$} la \oo-catégorie qui coreprésente le
  foncteur \nnot[$\Fl_i(C)$]{\hbox{$\Fl_i : \ooCat \to \Ens$}} qui envoie
  une \oo-catégorie $C$ sur l'ensemble $C_i$ de ses $i$-flèches. Cette
  \oo-catégorie est en fait une $i$-catégorie. Elle admet une unique
  $i$-flèche non triviale qu'on appellera sa \ndef[cellule principale de
  $\Dn{i}$]{cellule principale}. Pour tout $k$ tel que $0 \le k < i$, la
  $i$\nbd-catégorie~$\Dn{i}$ admet exactement deux $k$-flèches non triviales
  qui sont la source et le but itérés en dimension $k$ de sa cellule
  principale.  Voici les graphes sous-jacents (sans les identités) de
  $\Dn{i}$ pour $i = 0, 1, 2, 3$:
  \[
    \shorthandoff{;:}
    \Dn{0} = \xymatrix{\bullet}
    \text{,}\quad
    \Dn{1} = \xymatrix{\bullet \ar[r] & \bullet}
    \text{,}\quad
    \Dn{2} = \xymatrix@C=3pc@R=3pc{\bullet \ar@/^2.5ex/[r]_{}="0"
      \ar@/_2.5ex/[r]_{}="1"
      \ar@2"0";"1"
    &  \bullet}
    \quadet
    \Dn{3} = \xymatrix@C=3pc@R=3pc{\bullet \ar@/^3ex/[r]_(.47){}="0"^(.53){}="10"
      \ar@/_3ex/[r]_(.47){}="1"^(.53){}="11"
      \ar@<2ex>@2"0";"1"_{}="2" \ar@<-2ex>@2"10";"11"^{}="3"
      \ar@3"3";"2"_{}
    &  \bullet} \text{.}
  \]
  Il sera parfois utile d'étendre la notation $\Dn{i}$ au cas $i = -1$ en
  convenant que $\Dn{-1}$ est la \oo-catégorie vide.

  Pour $i > 0$, on a des \oo-foncteurs $\sigma_i, \tau_i : \Dn{i-1} \to
  \Dn{i}$
  \notindex{$\sigma_i : \Dn{i-1} \to \Dn{i}$, $\sigma_i^j : \Dn{j} \to
  \Dn{i}$}%
  \notindex{$\tau_i : \Dn{i-1} \to \Dn{i}$, $\tau_i^j : \Dn{j} \to
  \Dn{i}$}%
  qui coreprésentent respectivement les transformations naturelles
  source et but $\Fl_i \to \Fl_{i-1}$. Explicitement, \hbox{$\sigma_i :
  \Dn{i-1} \to \Dn{i}$} (resp. $\tau_i : \Dn{i-1} \to \Dn{i}$) envoie la
  cellule principale de $\Dn{i-1}$ sur la source (resp. le but) de la cellule
  principale de $\Dn{i}$.

  Pour $i \ge j \ge 0$, on notera $\sigma_j^i, \tau_j^i : \Dn{j} \to \Dn{i}$
  les \oo-foncteurs définis par
  \[
    \sigma_j^i = \sigma_i \cdots \sigma_{j+2} \sigma_{j+1}
    \quadet
    \tau_j^i = \tau_i \cdots \tau_{j+2} \tau_{j+1}.
  \]
\end{paragr}

\begin{paragr}\label{paragr:sch_comp_glob}
  Soient $l \ge 1$ et $i_1, \dots, i_l$, $j_1, \dots, j_{l-1}$ des entiers
  positifs vérifiant les inégalités
  \[ i_k > j_k < i_{k+1}, \qquad\text{pour $0 < k < l$.} \]
  On associe à ces entiers le diagramme
  \[
    \xymatrix@C=1pc@R=1pc{
      \Dn{i_1} & & \Dn{i_2} & & \Dn{i_3} & & \cdots & & \Dn{i_{l-1}} & &
      \Dn{i_l} \\
      & \Dn{j_1} \ar[ul]^{\sigma_{j_1}^{i_1}} \ar[ur]_{\tau_{j_1}^{i_2}}
      & & \Dn{j_2} \ar[ul]^{\sigma_{j_2}^{i_2}} \ar[ur]_{\tau_{j_2}^{i_3}}
      & & \cdots & & \cdots & &\Dn{j_{l-1}}
      \ar[ul]^{\sigma_{j_{l-1}}^{i_{l-1}}} \ar[ur]_{\tau_{j_{l-1}}^{i_l}}
    }
  \]
  \notindex{$\Dn{i_1} \amalg_{\Dn{j_1}} \dots \amalg_{\Dn{j_{l-1}}}
  \Dn{i_l}$}%
  dans $\ooCat$. On appellera \ndef{somme globulaire} la limite inductive
  d'un tel diagramme et on la notera simplement
  \[ \Dn{i_1} \amalg_{\Dn{j_1}} \dots \amalg_{\Dn{j_{l-1}}} \Dn{i_l} . \]
  Les \oo-catégories obtenues de cette manière seront appelées des
  \ndef[schéma de composition globulaire]{schémas de composition globulaires}.
\end{paragr}

\begin{paragr}\label{paragr:def_kappa_nabla}
  Pour $i \ge 0$, on notera
  \[ \kappa_i : \Dn{i+1} \to \Dn{i} \]
  \notindex{$\kappa_i : \Dn{i+1} \to \Dn{i}$, $\kappa_i^j : \Dn{i} \to
  \Dn{j}$}%
  le \oo-foncteur coreprésentant la transformation naturelle identité $\Fl_i
  \to \Fl_{i+1}$. Concrètement, $\kappa_i$ envoie la cellule principale de
  $\Dn{i+1}$ sur l'identité de la cellule principale de~$\Dn{i}$.
  Pour $i \ge j \ge 0$, on notera $\kappa_i^j : \Dn{i} \to \Dn{j}$ le
  \oo-foncteur défini par
  \[ \kappa_i^j = \kappa_j \cdots \kappa_{i-2} \kappa_{i-1}. \]

  Pour $i > j \ge 0$, on notera
  \[ \nabla^i_j : \Dn{i} \to \Dn{i} \amalgDn{j} \Dn{i} \]
  \notindex{$\nabla^i_j : \Dn{i} \to \Dn{i} \amalgDn{j} \Dn{i}$}%
  le \oo-foncteur qui coreprésente la transformation naturelle $\comp^i_j :
  \Fl_i \times_{\Fl_j} \Fl_i \to \Fl_i$ de $j$-composition des $i$-flèches.
  Concrètement, $\nabla^i_j$ envoie la cellule principale de $\Dn{i}$ sur le
  $j$-composé des deux $i$-flèches de $\Dn{i} \amalgDn{j} \Dn{i}$
  correspondant aux cellules principales des deux copies de $\Dn{i}$.
\end{paragr}

\begin{paragr}\label{paragr:def_Theta}
  La catégorie \nnot[$\Theta$, $\ThetaAug$]{$\Theta$} de Joyal est la
  sous-catégorie pleine de $\ooCat$ formée des schémas de composition
  globulaires. On notera $\ThetaAug$ la sous-catégorie pleine de $\ooCat$
  dont les objets sont les schémas de composition globulaires ainsi que la
  \oo-catégorie vide.
\end{paragr}

\begin{rem}
  La catégorie $\Theta$ a été introduite par Joyal dans \cite{JoyalTheta} avec
  un point de vue différent de celui adopté ici. C'est Berger
  \cite{BergerNerve} et Makkai-Zawadowski \cite{MakZaw} qui ont prouvé
  indépendamment l'équivalence des deux définitions.
\end{rem}

\begin{prop}\label{prop:Theta_dense}
  La catégorie $\Theta$ (et donc la catégorie $\ThetaAug$) est une
  sous-catégorie dense de $\ooCat$. Autrement dit, si $C$ est une
  \oo-catégorie, le morphisme canonique
  \[
    \limind_{(S, S \to C) \in \tr{\Theta}{C}} S \longto C
  \]
  est un isomorphisme de \oo-catégories.
\end{prop}

\begin{proof}
  Voir le théorème 1.12 de \cite{BergerNerve} ou le théorème 4.10 de
  \cite{WeberNerve} appliqué à l'exemple 4.18.
\end{proof}

\begin{prop}\label{prop:dual_Theta}
  Soit $J$ une partie de $\N^\ast$. Pour tout objet $S$ de $\ThetaAug$, la
  \oo-catégorie $\dual{J}(S)$ \noemph{(voir le
  paragraphe~\ref{paragr:dual_ooCat})} est isomorphe à un objet de
  $\ThetaAug$.
\end{prop}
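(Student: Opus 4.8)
The plan is to reduce the statement to two elementary facts, namely that the globes are self-dual and that $\dual{J}$ preserves inductive limits, and then to confront a purely combinatorial reindexing, which is the real difficulty.

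First, $\dual{J}$ fixes the empty \oo-category, so I may assume that $S$ is a globular composition scheme, that is (paragraphe~\ref{paragr:sch_comp_glob}) the inductive limit of a diagram
\[
  \Dn{i_1}\xot{\sigma_{j_1}^{i_1}}\Dn{j_1}\xto{\tau_{j_1}^{i_2}}\Dn{i_2}\xot{\sigma_{j_2}^{i_2}}\cdots\xto{\tau_{j_{l-1}}^{i_l}}\Dn{i_l}
\]
of globes and iterated source and target \oo-functors. Since $\dual{J}:\ooCat\to\ooCat$ is involutive (paragraphe~\ref{paragr:dual_ooCat}), it is its own left and right adjoint, hence commutes with all inductive limits; thus $\dual{J}(S)$ is the inductive limit of the diagram obtained by applying $\dual{J}$ to each object and arrow above.

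Next I would identify these dualized data. As $(\dual{J}(C))_n=C_n$ for every \oo-category $C$ and every $n\ge 0$, one has, naturally in $C$,
\[
  \Homi(\dual{J}(\Dn{n}),C)\simeq\Homi(\Dn{n},\dual{J}(C))\simeq(\dual{J}(C))_n=C_n=\Fl_n(C),
\]
so $\dual{J}(\Dn{n})$ corepresents $\Fl_n$ and the Yoneda lemma yields a canonical isomorphism $\dual{J}(\Dn{n})\simeq\Dn{n}$: the globes are self-dual. The same corepresentation argument shows that $\dual{J}(\sigma_n)$ corepresents the dimension-$n$ source map of $\dual{J}(C)$, which is the source of $C$ if $n\notin J$ and its target if $n\in J$; hence $\dual{J}(\sigma_n)$ equals $\sigma_n$ or $\tau_n$ accordingly, and symmetrically for $\tau_n$. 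Using the globular identities, any composite $\phi_i\cdots\phi_{j+1}$ with $\phi_k\in\{\sigma_k,\tau_k\}$ depends only on its innermost factor $\phi_{j+1}$, being $\sigma_j^i$ if $\phi_{j+1}=\sigma_{j+1}$ and $\tau_j^i$ otherwise. Therefore $\dual{J}(\sigma_j^i)=\sigma_j^i$ and $\dual{J}(\tau_j^i)=\tau_j^i$ when $j+1\notin J$, whereas $\dual{J}(\sigma_j^i)=\tau_j^i$ and $\dual{J}(\tau_j^i)=\sigma_j^i$ when $j+1\in J$. In other words, the two legs issued from $\Dn{j_k}$ are exchanged simultaneously, and exactly when $j_k+1\in J$, in agreement with the fact (proof of proposition~\ref{prop:dual_lambda_nu}) that $\dual{J}$ reverses the composition $\comp_{j_k}$ precisely in that case.

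It follows that $\dual{J}(S)$ is the inductive limit of a diagram of globes joined by iterated source and target maps, which looks like a globular composition scheme except that the spans $\Dn{j_k}$ with $j_k+1\in J$ now map in by $\tau$ on the left and by $\sigma$ on the right. The remaining, and main, task is to show that such a colimit is isomorphic to a \emph{standard} globular diagram, i.e.\ to reindex the globes so as to recover the canonical shape $i_1>j_1<i_2>\cdots<i_l$. Here I would first reduce to a single elementary duality: only the dimensions $\le\dim(S)$ matter, and on objects of dimension $\le\dim(S)$ the functor $\dual{J}$ is the composite of the commuting elementary dualities $\dual{\{n\}}$, so it suffices to treat $J=\{n\}$, where only the spans with $j_k=n-1$ are flipped. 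In that case I would prove, by induction on the number $l$ of globes, that the colimit can be rearranged into canonical form; the correct rearrangement is the one that reverses, in the planar tree associated with the table $(i_1,j_1,\dots,i_l)$, the left-to-right order of the subtrees rooted at the nodes of height $n-1$, an operation that visibly sends trees to trees and hence globular sums to globular sums. The subtle point, on which a naive ``reverse each maximal block of $\comp_{n-1}$-bonds'' recipe already fails, is to verify that the rearranged table still satisfies $i_k>j_k<i_{k+1}$ and that the reindexed cone has the same inductive limit; the globular identities relating the various $\sigma_j^i$ and $\tau_j^i$ are exactly what makes this work.
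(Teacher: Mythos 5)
Your preparatory reductions are correct and cleanly argued: $\dual{J}$, being involutive, is an isomorphism of $\ooCat$ and commutes with all inductive limits; the corepresentability argument does give canonical isomorphisms $\dual{J}(\Dn{n})\simeq\Dn{n}$; the coglobular identities do show that a composite of cosource/cotarget maps depends only on its lowest factor, whence $\dual{J}$ fixes $\sigma_j^i,\tau_j^i$ when $j+1\notin J$ and exchanges them when $j+1\in J$; and the reduction to $J=\{n\}$ is legitimate since $S$ is finite-dimensional. Note that this is already a genuinely different route from the paper's, whose proof is essentially a citation: it invokes the intrinsic characterization (due to Weber) of the \oo-cat�gories isomorphic to objects of $\Theta$ in terms of $\ooCat$ with its forgetful functor to \oo-graphs --- a characterization visibly invariant under the dualities --- and defers a direct argument, via Berger's wreath-product description of $\Theta$, to a later paper.

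The gap is that, after your reductions, the \emph{entire} content of the proposition is the one statement you do not prove: that the colimit of the diagram whose spans $\Dn{j_k}$ with $j_k+1\in J$ carry legs $(\tau_{j_k}^{i_k},\sigma_{j_k}^{i_{k+1}})$ is isomorphic to a standard globular sum. ``I would prove, by induction on $l$'' and ``the globular identities are exactly what makes this work'' are announcements, not arguments: no induction step is given and no isomorphism of cocones is constructed; and the remark that the re-ordering ``visibly sends trees to trees'' is beside the point, since what must be shown is that the flipped colimit agrees with the globular sum of the re-ordered tree. That this verification is not routine is shown by the fact that your recipe, as stated, is wrong. Take $S=\Dn{3}\amalg_{\Dn{1}}\Dn{2}$ (table $(3,1,2)$) and $n=2$: the tree of $S$ has a single node at height $n-1=1$, so reversing ``the left-to-right order of the subtrees rooted at the nodes of height $n-1$'' does nothing and predicts $\dual{\{2\}}(S)\simeq S$. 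But by your own analysis of the legs, $\dual{\{2\}}(S)$ is the colimit of $\Dn{3}\xot{\tau_1^3}\Dn{1}\xto{\sigma_1^2}\Dn{2}$, which is literally the standard globular sum $\Dn{2}\amalg_{\Dn{1}}\Dn{3}$ (table $(2,1,3)$); and this is not isomorphic to $S$, because an isomorphism preserves indecomposable cells together with their sources and targets, while in $S$ the $1$-source of the $3$-dimensional generator \emph{equals} the $1$-target of the $2$-dimensional generator, whereas in $\Dn{2}\amalg_{\Dn{1}}\Dn{3}$ these are distinct $1$-cells. The correct operation reverses, at each node of height $n-1$, the left-to-right order of its \emph{children} (i.e.\ of the subtrees rooted at the nodes of height $n$). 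So the proposal, as it stands, reduces the proposition to a combinatorial lemma that is both unproven and mis-stated; to complete it you would need to construct, by induction on the tree, the isomorphism between the flipped colimit and the globular sum of the correctly re-ordered tree, or else fall back on an intrinsic characterization of the essential image of $\Theta$ as the paper does.
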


\begin{proof}
  Cela résulte, par exemple, du fait que les \oo-catégories isomorphes à un
  objet de $\Theta$ peuvent être caractérisées de manière intrinsèque à la
  catégorie $\ooCat$ munie du foncteur d'oubli vers les \oo-graphes (voir
  par exemple \cite[exemple 4.18]{WeberNerve}). On donnera une démonstration
  directe dans \cite{AraMaltsiNerfs} basée sur la description de $\Theta$ en
  termes de produit en couronnes de \cite{BergerWreath} (voir également
  \cite{CisMaltsiTheta}).
\end{proof}

\begin{paragr}\label{paragr:pu_Theta}
  La catégorie $\Theta^\op$ est naturellement munie d'un objet \oo-catégorie interne.
  Plus précisément,  les objets
  \[ \Dn{i}, \quad\text{pour $i \ge 0$,} \]
  et les morphismes
  \begin{alignat*}{2}
    \sigma_i & : \Dn{i-1} \to \Dn{i}, && \quad\text{pour $i > 0$,}\\
    \tau_i & : \Dn{i-1} \to \Dn{i}, && \quad\text{pour $i > 0$,}\\
    \kappa_i & : \Dn{i+1} \to \Dn{i}, && \quad\text{pour $i \ge 0$,}\\
    \nabla^i_j & : \Dn{i} \to \Dn{i} \amalg_{\Dn{j}} \Dn{i}, && \quad\text{pour
    $i > j \ge 0$,}
  \end{alignat*}
  de $\Theta$, correspondant respectivement aux cellules, sources, buts,
  unités et compositions, vérifient les axiomes duaux à ceux des
  \oo-catégories. Il en résulte que tout préfaisceau $X : \Theta^\op \to \Ens$
  sur $\Theta$ qui \ndef[envoyer les sommes globulaires sur des produits
  globulaires!pour un préfaisceau sur $\Theta$]{envoie les sommes
  globulaires sur des produits globulaires}, au sens où, pour toute somme
  globulaire $\Dn{i_1} \amalgDn{j_1} \dots \amalgDn{j_{l-1}} \Dn{i_l}$,
  l'application canonique
  \[
    F(\Dn{i_1} \amalgDn{j_1} \dots \amalgDn{j_{l-1}} \Dn{i_l})
    \to
    F(\Dn{i_1}) \times_{F(\Dn{j_1})} \dots
      \times_{F(\Dn{j_{l-1}})} F(\Dn{i_l})
  \]
  est une bijection, définit une \oo-catégorie. On notera
  \nnot[$X(\cocatD)$, $f_{\cocatD}$]{$X(\cocatD)$} cette \oo-catégorie.
  L'ensemble de ses $i$-cellules, pour $i \ge 0$, est $X(\Dn{i})$ et les
  sources, buts, unités et compositions sont donnés par les applications
  $X(\sigma_i)$, $X(\tau_i)$, $X(\kappa_i)$ et $X(\nabla^i_j)$ pour des
  valeurs de $i$ et $j$ convenables. De plus, tout morphisme $f : X \to Y$
  entre des préfaisceaux sur~$\Theta$ envoyant les sommes globulaires sur
  des produits globulaires définit un \oo-foncteur $f_{\cocatD} : X(\cocatD)
  \to Y(\cocatD)$ entre les \oo-catégories associées. Ainsi, en notant
  \nnot{$\faisc{\Theta}$} la sous-catégorie pleine de $\pref{\Theta}$ formée
  des préfaisceaux envoyant les sommes globulaires sur des produits
  globulaires, on obtient un foncteur
  \[ \faisc{\Theta} \to \ooCat. \]
  Nous n'en aurons pas besoin dans la suite de ce texte mais on peut
  vérifier facilement que ce foncteur est une équivalence de catégories,
  un quasi-inverse étant induit par le \ndef{nerf cellulaire}~$\ooCat \to
  \pref{\Theta}$, foncteur « nerf » associé à l'inclusion~$\Theta \hookto \ooCat$.
\end{paragr}

\begin{paragr}\label{paragr:pu_ThetaAug}
  La catégorie $\ThetaAug$ possède une propriété analogue à celle de
  $\Theta$ exposée au paragraphe précédent.
  Soit \hbox{$X_+ : (\ThetaAug)^\op \to \Ens$} un préfaisceau sur $\ThetaAug$ qui
  \ndef[envoyer les sommes globulaires sur des produits globulaires!pour un
  préfaisceau sur $\ThetaAug$]{envoie
  les sommes globulaires sur des produits globulaires},
  c'est-à-dire tel que sa restriction $X$ à~$\Theta$ envoie les sommes
  globulaires sur des produits globulaires au sens du paragraphe précédent.
  En vertu de ce même paragraphe, un tel préfaisceau définit une
  \oo-catégorie~$X(\cocatD)$. De plus, les morphismes $\vide \to \Dn{i}$ de
  $\ThetaAug$, pour $i \ge 0$, permettent de munir cette \oo-catégorie d'un
  \oo-foncteur vers l'ensemble $X_+(\vide)$, vu comme une \oo-catégorie dont
  les $i$-cellules sont triviales pour $i > 0$. On notera
  \nnot[$X_+(\cocatD)$, $X_+(\cocatD)_x$]{$X_+(\cocatD)$} cette
  \oo-catégorie au-dessus de $X_+(\vide)$. Si
  $x$ est un élément de $X_+(\vide)$, on notera $X_+(\cocatD)_x$ la fibre en
  $x$ du \oo-foncteur $X(\cocatD) \to X_+(\vide)$. On a ainsi
  \[ X(\cocatD) = \coprod_{x \in X_+(\vide)} X_+(\cocatD)_x. \]
  Si $f_+ : X_+ \to Y_+$ est un morphisme entre préfaisceaux sur $\ThetaAug$
  envoyant les sommes globulaires sur des produits globulaires, la
  restriction $f$ de $f_+$ à $\Theta$ définit un \oo-foncteur
  $f_{\cocatD} : X(\cocatD) \to Y(\cocatD)$ rendant le carré
  \[
    \xymatrix{
      X(\cocatD) \ar[d] \ar[r]^{f_{\cocatD}} & Y(\cocatD) \ar[d] \\
      X_+(\vide) \ar[r]_{(f_+)^{}_{\vide}} & Y_+(\vide)
    }
  \]
  commutatif. En particulier, si $x$ appartient à $X_+(\vide)$ et $y =
  (f_+)_\vide(x)$, alors on dispose d'un \oo-foncteur $X_+(\cocatD)_x \to
  Y_+(\cocatD)_y$.

  Ainsi, en notant \nnot{$\faiscp{\ThetaAug}$} la sous-catégorie pleine de
  $\pref{\ThetaAug}$ formée des préfaisceaux envoyant les sommes globulaires
  sur des produits globulaires, on obtient un foncteur
  \[ \faiscp{\ThetaAug} \to \ooCat \downarrow \Ens, \]
  où $\ooCat \downarrow \Ens$ désigne la catégorie des \oo-catégories au-dessus
  d'un ensemble. Nous n'en aurons pas besoin dans la suite de ce texte mais
  on peut déduire facilement de l'énoncé analogue pour $\Theta$ que ce
  foncteur est une équivalence de catégories.
\end{paragr}

Nous allons maintenant étudier l'image de $\Theta$ par le
foncteur $\lambda : \ooCat \to \Cda$.

\begin{paragr}\label{paragr:desc_lambda_Dn}
  Fixons $i \ge 0$ et considérons la $i$-catégorie $\Dn{i}$. Notons $x$ sa
  cellule principale. Pour $k$ tel que $0 \le k < i$, les $k$-flèches non
  triviales de $\Dn{i}$ sont exactement $s_k(x)$ et~$t_k(x)$. Ces cellules
  seront également notées \nnot[$x^\e_k$, $x_i$]{$x^0_k$} et $x^1_k$. De
  même, on notera parfois $x^0_i$, $x^1_i$ ou $x_i$ la cellule $x$.

  On vérifie immédiatement que le complexe dirigé augmenté $\lambda(\Dn{i})$
  peut se décrire de la manière suivante. Pour $k \ge 0$, on a
  \[
    \lambda(\Dn{i})_k
    =
    \begin{cases}
      \Z^{(\{x^0_k, x^1_k\})} & \text{si $0 \le k < i$,} \\
      \Z^{(\{x_i\})} & \text{si $k = i$,} \\
      0 & \text{si $k > i$.}
    \end{cases}
  \]
  On identifiera souvent les cellules non triviales de $\Dn{i}$ avec les
  éléments de la base de~$\lambda(\Dn{i})$. De plus, on posera parfois
  $x^0_k = 0 = x^1_k$ pour $k > i$.
  Pour $k$ tel que $0 < k \le i$, la différentielle envoie $x^0_k$ et $x^1_k$
  sur $x^1_{k-1} - x^0_{k-1}$. Les sous-monoïdes $\lambda(\Dn{i})^\ast_k$ sont
  les sous-monoïdes engendrés par les bases canoniques et l'augmentation
  est donnée par la somme des coefficients. Le complexe dirigé
  augmenté~$\lambda(\Dn{i})$ est donc à base.

  Soient $k$ tel que $0 < k \le i$ et $\epsilon = 0,1$. On vérifie
  immédiatement qu'on a
  \[ \atom{x^\epsilon_k}^\eta_l = x^\eta_l \]
  pour $\eta = 0,1$ et $l$ tel que $0 \le l < k$ (voir le paragraphe
  \ref{paragr:def_atome} pour la notation $\atom{z}$). En particulier, on a
  \[
    \atom{x^\epsilon_k}^0_0 = x^0_0
    \quad\text{et}\quad
    \atom{x^\epsilon_k}^1_0 = x^1_0,
  \]
  et la base de $\lambda(\Dn{i})$ est unitaire.

  Par ailleurs, la relation de préordre $\le_\N$ sur $\lambda(\Dn{i})$ est
  l'ordre total
  \[
    x^0_0 \le_\N x^0_1 \le_\N \dots \le_\N x^0_{i-1} \le_\N x \le_\N
    x^1_{i-1} \le_\N \dots \le_\N x^1_1 \le_\N x^1_0,
  \]
  ce qui prouve que la base de $\lambda(\Dn{i})$ est fortement sans boucle. Le
  complexe $\lambda(\Dn{i})$ est donc un complexe de Steiner fort.
\end{paragr}

\begin{paragr}\label{paragr:desc_lambda_cocat}
  Fixons deux entiers positifs $i$ et $j$. On notera $x$ et $y$ les cellules
  principales respectives de $\Dn{j}$ et $\Dn{i}$.

  Supposons $i \ge j$. Le morphisme~$\lambda(\sigma_j^i) :
  \lambda(\Dn{j}) \to \lambda(\Dn{i})$ est donné par
  \[
    \begin{split}
      x^\epsilon_k & \mapsto y^\epsilon_k, \quad\text{pour $0 \le k < j$ et
      $\epsilon = 0, 1$,} \\
      x_j & \mapsto y^0_j.
    \end{split}
  \]
  De même, le morphisme $\lambda(\tau_j^i) : \lambda(\Dn{j}) \to
  \lambda(\Dn{i})$ est donné par
  \[
    \begin{split}
      x^\epsilon_k & \mapsto y^\epsilon_k, \quad\text{pour $0 \le k < j$ et
      $\epsilon = 0, 1$,} \\
      x_j & \mapsto y^1_j.
    \end{split}
  \]

  Supposons maintenant $i \le j$. Le morphisme $\lambda(\kappa_j^i) :
  \lambda(\Dn{j}) \to \lambda(\Dn{i})$ est donné par
  \[
    x^\epsilon_k \mapsto y^\epsilon_k, \quad\text{pour $0 \le k \le j$ et
    $\epsilon = 0, 1$}.
  \]
  Ainsi, $x^0_k$ et $x^1_k$, pour $k = i$, sont envoyés sur $y_i$ et, pour $k >
  i$, sur $0$.

  Soient maintenant $i > j \ge 0$ deux entiers. Considérons le morphisme de
  cocomposition \hbox{$\nabla^i_j : \Dn{i} \to \Dn{i} \amalgDn{j} \Dn{i}$}.
  Notons $x$ la cellule principale de la source $\Dn{i}$ de~$\nabla^i_j$
  et~$y$ et $z$ les cellules principales des deux copies de $\Dn{i}$
  apparaissant de gauche à droite dans $\Dn{i} \amalgDn{j} \Dn{i}$, vues
  comme des cellules de~$\Dn{i} \amalgDn{j} \Dn{i}$. On utilisera les
  notations du paragraphe~\ref{paragr:desc_lambda_Dn} pour la cellule $x$ et
  on les étendra de la manière évidente à $y$ et $z$ : on notera
  $y^\e_k$ et $z^\e_k$, pour $0 \le k \le i$ et $\e = 0, 1$, les sources et
  buts itérés de $y$ et $z$, et on identifiera ces cellules avec l'élément
  de $\lambda(\Dn{i} \amalgDn{j} \Dn{i})$ qu'elles définissent. Puisque
  $s_j(y) = t_j(z)$, on a
  \[
    y^0_j = z^1_j
    \quadet
    y^\e_k = z^\e_k \quad\text{pour $0 \le k < j$ et $\e = 0, 1$.}
  \]
  On vérifie que les $y^\e_i$, $z^\e_i$, pour $0 \le k \le i$ et $\e = 0,
  1$, modulo les relations ci-dessus et les relations triviales $y^0_i =
  y^1_i$ et $z^0_i = z^1_i$, forment une base de $\lambda(\Dn{i} \amalgDn{j}
  \Dn{i})$. Avec ces notations, le morphisme $\lambda(\nabla^i_j)$ est
  donné par
  \begin{alignat*}{3}{}
    x^0_k & \mapsto z^0_k, && \quad\text{pour $0 \le k \le j$,}\\
    x^1_k & \mapsto y^1_k, && \quad\text{pour $0 \le k \le j$,}\\
    x^\epsilon_k & \mapsto y^\epsilon_k + z^\epsilon_k, && \quad\text{pour $j
    < k \le i$ et $\epsilon = 0, 1$.}
  \end{alignat*}
\end{paragr}

\begin{paragr}\label{paragr:syst_Steiner_Theta}
  Soit $S = \Dn{i_1} \amalg_{\Dn{j_1}} \dots \amalg_{\Dn{j_{l-1}}}
  \Dn{i_l}$ un schéma de composition globulaire. Par définition, la
  \oo-catégorie $S$ est limite inductive du foncteur \nnot[$F_S : I_S \to
  \ooCat$]{\hbox{$F_S : I_S \to \ooCat$}} associé au diagramme décrit au
  paragraphe~\ref{paragr:sch_comp_glob}. On a donc un isomorphisme
  \[
    \lambda(S) \simeq \lambda(\Dn{i_1}) \amalg_{\lambda(\Dn{j_1})} \dots
    \amalg_{\lambda(\Dn{j_{l-1}})} \lambda(\Dn{i_l}),
  \]
  où le membre de droite est la limite inductive du foncteur \nnot[$G_S :
  I_S \to \Cda$]{\hbox{$G_S : I_S \xto{F_S} \ooCat \xto{\lambda} \Cda$}}.
  Cet isomorphisme s'étend au cas $S = \vide$ en considérant les uniques
  foncteurs \hbox{$F_\vide : \vide \to \ooCat$} et $G_\vide : \vide \to
  \Cda$.
\end{paragr}

\begin{prop}\label{prop:Theta_Steiner}
  Pour tout objet $S$ de $\ThetaAug$, le foncteur $G_S$ est un système de
  Steiner fort. En particulier, le complexe~$\lambda(S)$ est un
  complexe de Steiner fort. De plus, le morphisme d'adjonction $S \to
  \nu\lambda(S)$ est un isomorphisme.
\end{prop}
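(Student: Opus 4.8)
I need to prove Proposition~\ref{prop:Theta_Steiner}: for every object $S$ of $\ThetaAug$, the functor $G_S : I_S \to \Cda$ is a strong Steiner system; consequently $\lambda(S)$ is a strong Steiner complex; and the adjunction morphism $S \to \nu\lambda(S)$ is an isomorphism.

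**The setup.** For $S = \vide$ the statement is trivial (empty system, and $\nu\lambda(\vide) = \vide$), so I assume $S = \Dn{i_1} \amalg_{\Dn{j_1}} \cdots \amalg_{\Dn{j_{l-1}}} \Dn{i_l}$ is a globular composition scheme, with $G_S = \lambda \circ F_S$. I want to apply Theorem~\ref{thm:nu_somme_amalg}, which says precisely that a diagram of strong Steiner complexes whose transition maps are ordered rigid inclusions forms a strong Steiner system (and that $\nu$ commutes with its colimit). So the crux is to verify two things about the maps $\lambda(\sigma^{i_k}_{j_k})$ and $\lambda(\tau^{i_k}_{j_k})$ appearing in $G_S$: first, that each $\lambda(\Dn{i})$ is a strong Steiner complex, and second, that each of these transition maps is an ordered rigid inclusion.

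**The plan, step by step.** First I recall from Paragraph~\ref{paragr:desc_lambda_Dn} that each $\lambda(\Dn{i})$ is already shown to be a strong Steiner complex, with $\leN$ the total order $x^0_0 \leN x^0_1 \leN \cdots \leN x^0_{i-1} \leN x \leN x^1_{i-1} \leN \cdots \leN x^1_0$. This is the key leverage: because the base of each $\Dn{j}$ carries a \emph{total} $\leN$-order, Lemma~\ref{lemme:ord_total} tells me that \emph{any} rigid monomorphism out of $\lambda(\Dn{j})$ into a strong Steiner complex is automatically an ordered rigid inclusion. So I only need to check that $\lambda(\sigma^i_j)$ and $\lambda(\tau^i_j)$ are rigid monomorphisms. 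Their explicit formulas in Paragraph~\ref{paragr:desc_lambda_cocat} send each basis element $x^\e_k$ ($k<j$) to the basis element $y^\e_k$, and $x_j$ to $y^0_j$ (resp.\ $y^1_j$); these are injective on basis elements with disjoint images, hence monomorphisms, and they carry basis elements to basis elements, hence are prerigid. By Proposition~\ref{prop:mono_rigide} a prerigid monomorphism between based complexes is rigid, so each $\lambda(\sigma^i_j)$, $\lambda(\tau^i_j)$ is a rigid monomorphism, and then Lemma~\ref{lemme:ord_total} upgrades it to an ordered rigid inclusion.

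**Concluding.** With both hypotheses of Theorem~\ref{thm:nu_somme_amalg} verified, I conclude that $G_S$ is a strong Steiner system, that $\lambda(S) = \limind G_S$ is a strong Steiner complex, and that $\nu$ commutes with the colimit, i.e.\ the canonical map $\limind_{a} \nu(\lambda(F_S(a))) \to \nu(\lambda(S))$ is an isomorphism. For the final assertion about $S \to \nu\lambda(S)$: since each $F_S(a)$ is a disk $\Dn{i}$ (or $\Dn{j}$), which is a strong Steiner $\infty$-category, Theorem~\ref{thm:Steiner} gives that the adjunction map $\Dn{i} \to \nu\lambda(\Dn{i})$ is an isomorphism; passing to the colimit $S = \limind F_S$ and using that $\nu$ commutes with this particular colimit, I obtain that $S \to \nu\lambda(S)$ is an isomorphism as well. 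I expect the only genuinely delicate point to be bookkeeping the identifications in the colimit (matching $\limind \nu\lambda F_S$ with $\nu\lambda(\limind F_S) = \nu\lambda(S)$ naturally with the adjunction unit); the order-theoretic and rigidity verifications themselves are routine once Lemma~\ref{lemme:ord_total} is invoked, which is precisely the device that makes the totality of $\leN$ on disks do all the work.
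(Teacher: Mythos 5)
Your treatment of the first two assertions is correct and is, in substance, the paper's own argument. Where the paper simply declares that the maps $\lambda(\sigma^i_j)$ and $\lambda(\tau^i_j)$ are ordered rigid inclusions, you supply the intended justification: they are prerigid monomorphisms by the explicit formulas of~\ref{paragr:desc_lambda_cocat}, hence rigid by Proposition~\ref{prop:mono_rigide}, hence ordered rigid inclusions by Lemma~\ref{lemme:ord_total}, since $\leN$ is a total order on the base of $\lambda(\Dn{j})$ by~\ref{paragr:desc_lambda_Dn}. The application of Theorem~\ref{thm:nu_somme_amalg} is then exactly as in the paper.

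The final step, however, is circular. You justify the invertibility of the unit $\Dn{i} \to \nu\lambda(\Dn{i})$ by asserting that $\Dn{i}$ ``is a strong Steiner $\infty$-category'' and citing Theorem~\ref{thm:Steiner}. But a strong Steiner $\infty$-category is, by definition (paragraphe~\ref{paragr:def_ooCat_Stf}), an $\infty$-category in the essential image of $\nu$ restricted to strong Steiner complexes, and nothing established before this proposition gives $\Dn{i}$ that property; given that $\lambda(\Dn{i})$ is a strong Steiner complex, the property is in fact \emph{equivalent} to the claim you are using it to prove. Theorem~\ref{thm:Steiner} only provides the counit isomorphism $\lambda\nu(K) \to K$; to convert this into invertibility of the unit at $\Dn{i}$ (via the triangle identity $\nu(\epsilon_K)\circ\eta_{\nu(K)} = \mathrm{id}$) one must already know that $\Dn{i} \simeq \nu(K)$ for some Steiner complex $K$ --- which is precisely the point at issue. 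This is why the paper ends its proof by invoking a direct computation (``un calcul direct''): one must check by hand that $\Dn{k} \to \nu\lambda(\Dn{k})$ is an isomorphism, for instance by enumerating the tables that constitute the cells of $\nu(\lambda(\Dn{k}))$ and matching them with the cells of $\Dn{k}$, or by using Theorem~\ref{thm:Steiner_pol} to see that both sides are freely generated, in the polygraphic sense, by generators with the same sources and targets, the unit carrying generators to generators. Once that verification is supplied, the remainder of your argument --- passing to the colimit via Theorem~\ref{thm:nu_somme_amalg} --- goes through as written.
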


\begin{proof}
  Il est immédiat que les morphismes
  \[
    \lambda(\sigma_j^i), \lambda(\tau_j^i) : \lambda(\Dn{j}) \to \lambda(\Dn{i}),
    \qquad\text{pour $i > j \ge 0$,}
  \]
  sont des inclusions rigides ordonnées. Le foncteur $G_S : I_S \to \Cda$
  vérifie donc les hypothèses du théorème~\ref{thm:nu_somme_amalg}, ce qui
  entraîne la première assertion, ainsi que le fait que $\lambda(S)$ est un
  complexe de Steiner fort. Ce même théorème donne de plus un isomorphisme
  canonique
  \[
    \nu\lambda(S) \simeq \nu\lambda(\Dn{i_1}) \amalg_{\nu\lambda(\Dn{j_1})} \dots
    \amalg_{\nu\lambda(\Dn{j_{l-1}})} \nu\lambda(\Dn{i_l}).
  \]
  Pour obtenir la dernière assertion, il suffit donc de montrer que, pour
  tout \hbox{$k \ge 0$}, le morphisme d'adjonction $\Dn{k} \to
  \nu\lambda(\Dn{k})$ est un isomorphisme, ce qui se vérifie facilement par
  un calcul direct.
\end{proof}

\begin{rem}
  Le fait que $\lambda(S)$, pour $S$ un schéma de composition, est un complexe de
  Steiner fort et que le morphisme d'adjonction $S \to \nu\lambda(S)$ est un
  isomorphisme est dû à Steiner \cite{SteinerTheta}. On renvoie à ce même
  texte pour une étude plus complète des liens entre complexes dirigés
  augmentés et schémas de composition globulaires.
\end{rem}

\chapter[Extension de structures de catégorie monoïdale à la Day]{Extension
de structures de\\ catégorie monoïdale à la Day}

Le but de ce chapitre est d'étendre un théorème dû à Day d'extension de
structures de catégorie monoïdale bifermée au cadre plus général des
catégories monoïdales \emph{localement} bifermées, qu'on introduit dans le
chapitre.

\begin{paragr}\label{paragr:def_dense}
  Soit $\D$ une sous-catégorie pleine d'une catégorie $\C$. Rappelons qu'on
  dit que $\D$ est une \ndef{sous-catégorie dense} de $\C$ si tout objet de
  $\C$ est limite inductive canonique d'objets de $\D$. Cela signifie la
  chose suivante. Soit $X$ un objet de $\C$. Notons~\nnot{$\tr{\D}{X}$} la
  catégorie dont les objets sont les couples $(d, f : d \to X)$, où $d$ est
  un objet de~$\D$ et $f$ un morphisme de $\C$, et dont les morphismes entre
  deux tels couples $(d, f)$ et~$(d', f')$ sont les morphismes $g : d \to
  d'$ de $\C$ tels que $f'g = f$. On a un foncteur canonique $\tr{\D}{X} \to
  \C$ qui envoie $(d, d \to X)$ sur $d$. Ce foncteur est muni
  d'une transformation naturelle canonique vers le foncteur constant de
  valeur $X$. On dit que $X$ est \ndef[limite inductive canonique]{limite
  inductive canonique d'objets de $\D$} si cette transformation naturelle
  est un cône inductif universel ou, autrement dit, si la limite inductive
  de ce foncteur existe et le morphisme canonique
  \[ \limind_{(d, d \to X) \in \tr{\D}{X}} d \longto X \]
  est un isomorphisme.

  On vérifie facilement que si une sous-catégorie pleine $\D$ de $\C$
  contient une sous-catégorie pleine dense dans $\C$, alors $\D$ est dense
  dans $\C$.

  Rappelons enfin que si la sous-catégorie pleine $\D$ est une petite
  sous-catégorie $D$, alors $D$ est dense dans $\C$ si et seulement si le
  foncteur de $\C$ vers les préfaisceaux $\pref{D}$ sur $D$ défini par
  \[
  X \longmapsto \big(d \mapsto \Hom_\C(d, X)\big)
  \]
  est pleinement fidèle.
\end{paragr}

\begin{paragr}\label{paragr:biferme}
  Soit $\C$ une catégorie monoïdale de produit tensoriel $\otimes$. On
  rappelle que $\C$ est dite \ndef[catégorie monoïdale!fermée à
  droite]{fermée à droite} si, pour tout
  objet~$Y$ de $\C$, le foncteur $\var \otimes Y : \C \to \C$ admet un
  adjoint à droite. De même, on dit que $\C$ est \ndef[catégorie
  monoïdale!fermée à gauche]{fermée à gauche} si, pour tout objet $X$
  de~$\C$, le foncteur $X \otimes \var : \C \to \C$ admet un adjoint à
  droite. On dit que $\C$ est \ndef[catégorie monoïdale!bifermée]{bifermée}
  si elle est à la fois fermée à gauche et à droite.

  En vertu d'un résultat classique sur les adjonctions paramétrées (voir par
  exemple \cite[chapitre IV, section 7, théorème 3]{MacLane}), si $\C$ est
  fermée à droite, il existe un foncteur $\Homd_\C : \C^\op \times \C \to
  \C$
  \notindex{$\Homd_\C, \Homg_\C : \C^\op \times \C \to \C$}%
  et des bijections
  \[
    \Hom_\C(X \otimes Y, Z) \simeq \Hom_\C(X, \Homd_\C(Y, Z)),
  \]
  naturelles en $X$, $Y$ et $Z$ dans $\C$. De même, si $\C$ est fermée à
  gauche, il existe un foncteur $\Homg_\C : \C^\op \times \C \to \C$ et des
  bijections
  \[
    \Hom_\C(X \otimes Y, Z) \simeq \Hom_\C(Y, \Homg_\C(X, Z)),
  \]
  naturelles en $X$, $Y$ et $Z$ dans $\C$.
\end{paragr}

\begin{thm}[Day]\label{thm:Day}
  Soient $\C$ une catégorie complète et cocomplète, et $\D$ une
  sous-catégorie pleine de $\C$ munie d'une structure de catégorie
  monoïdale. On suppose qu'il existe une petite sous-catégorie pleine de
  $\D$ dense dans $\C$, des foncteurs
  \[ H, H' : \D^\op \times \C \to \C \]
  et des bijections
  \[
    \Hom_\C(S, H(S', Z)) \simeq
      \Hom_\C(S \otimes S', Z) \simeq \Hom_\C(S', H'(S, Z)),
  \]
  naturelles en $S$, $S'$ dans $\D$ et $Z$ dans $\C$.

  Alors il existe une et une seule structure monoïdale sur $\C$ (à unique
  isomorphisme monoïdal près) donnée par un produit tensoriel commutant aux
  petites limites inductives en chaque variable et pour laquelle le foncteur
  d'inclusion $\D \hookto \C$ s'étend en un foncteur monoïdal. De plus,
  cette structure monoïdale est bifermée.
\end{thm}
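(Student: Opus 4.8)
The plan is to let density and colimit\nbd-preservation \emph{force} the definition of the extended product, and then read off every remaining piece of structure from the data $(\otimes, H, H')$ on $\D$. Fix once and for all a small full subcategory $\D_0 \subseteq \D$ that is dense in $\C$, so that each object $X$ of $\C$ is its canonical colimit $X \simeq \limind_{(d,\,d\to X)\in\tr{\D_0}{X}} d$. Since the sought product $\overline\otimes$ must preserve small colimits in each variable, it is forced to be
\[
  X \overline\otimes Y := \limind_{(d,\,d\to X)\in\tr{\D_0}{X},\; (d',\,d'\to Y)\in\tr{\D_0}{Y}} d \otimes d',
\]
a colimit that exists because $\C$ is cocomplete and $\D_0$ is small. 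A routine interchange\nbd-of\nbd-colimits argument (fixing one variable and using density of $\D_0$) shows that $\var \overline\otimes Y$ and $X \overline\otimes \var$ do preserve small colimits.

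Next I would establish biclosedness together with the identification $\overline\otimes|_{\D\times\D}\simeq\otimes$, which is the heart of the matter. Writing $\Hom_\C(X\overline\otimes Y,Z)$ as the limit of the $\Hom_\C(d\otimes d',Z)$ over $\tr{\D_0}{X}\times\tr{\D_0}{Y}$ and feeding in the given bijections (legitimate, as $d,d'\in\D_0\subseteq\D$) yields natural isomorphisms
\[
  \Hom_\C(X\overline\otimes Y,Z)\simeq\Hom_\C\Big(X,\limproj_{(d',\,d'\to Y)}H(d',Z)\Big)\simeq\Hom_\C\Big(Y,\limproj_{(d,\,d\to X)}H'(d,Z)\Big),
\]
so that setting $\Homd_\C(Y,Z):=\limproj_{(d',\,d'\to Y)}H(d',Z)$ and $\Homg_\C(X,Z):=\limproj_{(d,\,d\to X)}H'(d,Z)$ (limits exist since $\C$ is complete) exhibits $\overline\otimes$ as biclosed, \emph{without any appeal to an adjoint functor theorem}. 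For the restriction to $\D$, I would first prove, for $S,T\in\D$, that $\var\otimes T$ and $T\otimes\var$ send the canonical colimit $S\simeq\limind_{d\to S}d$ to $S\otimes T$: this follows by the Yoneda lemma from the given bijections and density, using $H$ for one variable and $H'$ for the other (this is the one place where \textbf{both} partial internal homs are indispensable). Iterating these two facts identifies $S\overline\otimes S'\simeq S\otimes S'$ naturally for $S,S'\in\D$.

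The rest is coherence transport. The key technical device is an \emph{extension lemma}: for functors $\C^n\to\C$ preserving colimits in each variable, restriction along $\D_0^{\,n}$ induces a bijection on natural transformations (a colimit\nbd-preserving functor is a pointwise left Kan extension of its restriction, and a transformation is determined by, and reconstructible from, its components on $\D_0$). The unit is $I\in\D$ and the associator and unitors are defined by extending from $\D_0$ the corresponding coherence data of $\D$ (transported through $\overline\otimes|_\D\simeq\otimes$); since these extended transformations agree on $\D_0$ with the isomorphisms of $\D$ and both their sources and targets preserve colimits in each variable, they are isomorphisms. The pentagon and triangle identities then hold on $\C^{\,\bullet}$ because both sides of each are colimit\nbd-preserving multifunctorial transformations agreeing on $\D_0$, whence equal by the extension lemma; and the inclusion $\D\hookrightarrow\C$ is strong monoidal by construction. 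Uniqueness follows the same way: any colimit\nbd-preserving monoidal structure for which the inclusion is monoidal restricts to $\otimes$ on $\D$, hence agrees with $\overline\otimes$ on $\D_0$, hence is isomorphic to $\overline\otimes$ by the extension lemma, the comparison being monoidal by the coherence\nbd-transport argument.

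The main obstacle I expect is not any single computation but the bookkeeping around the multivariable extension/density machinery, together with the identification $\overline\otimes|_\D\simeq\otimes$ on which both the biclosed structure and the entire coherence transport rest. All size issues are controlled by the smallness of $\D_0$ (so the indexing slice categories and the colimits are small), and completeness and cocompleteness of $\C$ supply every (co)limit needed, so once the explicit formulas for $\overline\otimes$, $\Homd_\C$ and $\Homg_\C$ are in place the argument is purely formal.
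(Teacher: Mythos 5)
Your construction is, in substance, the paper's own: you define the extended product by the forced colimit formula over slices of a small dense subcategory $\D_0$, extend $H$ and $H'$ by the dual limit formulas, obtain the two adjunction bijections by the same chain of $\Hom$\nbd-manipulations (so biclosedness comes out directly, with no appeal to an adjoint functor theorem), and identify the restriction to $\D$ with the given $\otimes$ by the same two one-variable computations, one using $H$ and the other $H'$. The only genuine difference is at the coherence/uniqueness stage: the paper checks the unit and associativity isomorphisms by direct colimit manipulation and leaves the pentagon, the triangle and the uniqueness statement essentially implicit, whereas you package them into an extension lemma (natural transformations between functors cocontinuous in each variable are determined by, and extend uniquely from, their restrictions to $\D_0^{\,n}$, such functors being pointwise left Kan extensions of their restrictions). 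That lemma is correct --- prove it one variable at a time --- and it is a cleaner justification of precisely the points the paper glosses over; this is a welcome addition rather than a divergence.

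One sentence of yours is false as stated, though harmlessly so in context: the cocontinuity of $X \mathbin{\overline{\otimes}} {\var}$ and ${\var} \mathbin{\overline{\otimes}} Y$ is \emph{not} a ``routine interchange-of-colimits'' consequence of the defining formula. A pointwise left Kan extension along a dense inclusion need not preserve colimits: take $\C = \Ens$, let $\D_0$ be the full subcategory on a one-element and a two-element set (which is dense in $\Ens$), and let $G : \D_0 \to \Ens$ be constant at the point; then $\mathrm{Lan}(G)(X) = \pi_0(\tr{\D_0}{X})$ is a single point for every nonempty $X$, so it fails to preserve coproducts. In your proof, exactly as in the paper's, cocontinuity in each variable is genuinely a \emph{consequence} of the biclosedness you establish in the following paragraph (each partial tensor functor acquires a right adjoint); since nothing before that paragraph uses cocontinuity, the fix is simply to delete the offending sentence and record cocontinuity after the adjunctions are in place.
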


\begin{proof}
  Soit $D$ une petite sous-catégorie pleine de $\D$ dense dans $\C$.  On va
  commencer par montrer que si $S$ et $S'$ sont deux objets de $\D$, on a un
  isomorphisme canonique
  \[
    S \otimes S' \simeq
    \limind_{\substack{s \,\to S\, \in \tr{D}{S}\\
                       \,s'\! \to S'\! \in \tr{D}{S'}}}
                        s \otimes s',
  \]
  où la limite inductive, comme toutes les limites inductives dans cette
  preuve, est calculée dans~$\C$. On a, pour tout objet $Z$ de $\C$,
  {
    \allowdisplaybreaks
    \begin{align*}
      \Hom_\C(\limind_{s \to S} (s \otimes S'), Z)
      & \simeq \limproj_{s \to S} \Hom_\C(s \otimes S', Z) \\
      & \simeq \limproj_{s \to S} \Hom_\C(s, H(S', Z)) \\
      & \simeq \Hom_\C(\limind_{s \to S} s, H(S', Z)) \\
      & \simeq \Hom_\C(S, H(S', Z)) \\
      & \simeq \Hom_\C(S \otimes S', Z),
    \end{align*}
  }%
  d'où un isomorphisme canonique
  \[
    S \otimes S' \simeq \limind_{s \to S} (s \otimes S').
  \]
  De même, en utilisant $H'$, on obtient un isomorphisme canonique
  \[
    S \otimes S' \simeq \limind_{s' \to S'} (S \otimes s').
  \]
  On a donc
  \[
    S \otimes S' \simeq \limind_{s \to S} (s \otimes S')
    \simeq \limind_{s \to S} \limind_{s' \to S'} s \otimes s',
  \]
  d'où l'isomorphisme annoncé.

  Étendons maintenant le produit tensoriel de $\D$ à $\C$.
  On est contraint de poser, pour~$X$ et $Y$ dans $\C$,
  \[
    X \otimes Y =
    \limind_{\substack{s\, \to X \in \tr{D}{X}\\ s' \to Y \in \tr{D}{Y}}}
    s \otimes s'.
  \]
  Le paragraphe précédent montre qu'on prolonge bien ainsi le produit
  tensoriel de $\D$. De même, on va étendre les foncteurs $H$ et $H'$ en des
  foncteurs~$\C^\op \times \C \to \C$. Pour~$X$ et $Y$ des objets de $\C$,
  on pose
  \[
    H(X, Y) = \limproj_{s \to X \in \tr{D}{X}}H(s, Y)
    \quadet
    H'(X, Y) = \limproj_{s \to X \in \tr{D}{X}}H'(s, Y).
  \]
  On va montrer qu'on a des bijections
  \[
    \Hom_\C(X, H(Y, Z)) \simeq \Hom_\C(X \otimes Y, Z) \simeq \Hom_\C(Y,
    H'(X, Z)),
  \]
  naturelles en $X$, $Y$ et $Z$ dans $\C$.
  Faisons-le pour la bijection de gauche, celle de droite se traitant de
  manière analogue. On a
  {
    \allowdisplaybreaks
    \begin{align*}
      \Hom_\C(X \otimes Y, Z)
      & \simeq \Hom_\C(\limind_{s \to X, s' \to Y} s \otimes s', Z) \\
      & \simeq \limproj_{s \to X, s' \to Y} \Hom_\C(s \otimes s', Z) \\
      & \simeq \limproj_{s \to X, s' \to Y} \Hom_\C(s, H(s', Z)) \\
      & \simeq \Hom_\C(\limind_{s \to X} s, \limproj_{s' \to Y} H(s', Z)) \\
      & \simeq \Hom_\C(X, H(Y, Z)),
    \end{align*}
  }%
  ce qu'il fallait démontrer. Ceci montre que le foncteur $\otimes :
  \C \times \C \to \C$ commute aux limites inductives en chaque variable.

  Pour conclure, il suffit de vérifier que la structure monoïdale sur $\D$
  induit une structure monoïdale sur la catégorie $\C$ munie du produit
  qu'on vient de définir. Notons $I$ l'unité du produit tensoriel de $\D$.
  On a, pour tout objet $X$ de $\C$,
  \[
    I \otimes X \simeq I \otimes \limind_{s \to X} s
      \simeq \limind_{s \to X} (I \otimes s) \simeq \limind_{s \to X} s \simeq X
  \]
  et, de même, $X \otimes I \simeq X$. Enfin, en utilisant le fait que le
  produit sur $\C$ commute aux limites inductives en chaque variable, on
  obtient, pour tous objets $X$, $Y$ et~$Z$ de~$\C$, les isomorphismes
  \[
    \begin{split}
      (X \otimes Y) \otimes Z
      &\simeq \limind_{s \to X, s' \to Y, s'' \to Z} (s \otimes s') \otimes s''
      \\
      &\simeq \limind_{s \to X, s' \to Y, s'' \to Z} s \otimes (s' \otimes s'')
      \simeq X \otimes (Y \otimes Z),
    \end{split}
  \]
  ce qui achève la démonstration.
\end{proof}

\begin{rem}
  Le résultat ci-dessus, purement formel, est attribué à Day par Street
  dans~\cite{StreetDescent}. Les références données par celui-ci sont
  \cite{DayFunct} et \cite{DayRefl}.
\end{rem}

Le but de la suite du chapitre est de généraliser le théorème précédent
en un énoncé qui puisse s'appliquer à des produits tensoriels dont on
suppose uniquement qu'ils commutent aux limites inductives \emph{connexes}.

\begin{paragr}
  Soit $\C$ une catégorie monoïdale. Une \ndef[coaugmentation!à
  gauche]{coaugmentation à gauche} de $\C$ est
  la donnée d'un endofoncteur $M$ de $\C$ et de morphismes
  \[
    M(X) \xto{\iota_1} X \otimes Y,  \phantom{\xot{\iota_2} N(Y)}
  \]
  \notindex{$\iota_1 : M(X) \to X \otimes Y$, $\iota_2 : N(Y) \to X \otimes
  Y$}%
  naturels en $X$ et $Y$ dans $\C$. Plus formellement, on demande que
  $\iota_1$ soit une transformation naturelle
  \[
    Mp \xto{\iota_1} \otimes \phantom{\xot{\iota_2} Nq}
  \]
  de foncteurs de $\C \times \C$ vers $\C$, où $p : \C \times \C \to \C$
  désigne la première projection. De même, une \ndef[coaugmentation!à
  droite]{coaugmentation à droite} de $\C$ est la donnée d'un endofoncteur
  $N$ de $\C$ et de morphismes
  \[
    \phantom{M(X) \xto{\iota_1}} X \otimes Y \xot{\iota_2} N(Y),
  \]
  naturels en $X$ et $Y$ dans $\C$. Plus formellement, on demande que
  $\iota_2$ soit une transformation naturelle
  \[
    \phantom{Mp \xto{\iota_1}} \otimes \xot{\iota_2} Nq
  \]
  de foncteurs de $\C \times \C$ vers $\C$, où $q : \C \times \C \to \C$
  désigne la seconde projection. Enfin, on appellera \ndef{bicoaugmentation}
  de $\C$ la donnée d'une coaugmentation à gauche et d'une coaugmentation à
  droite
  \[
    M(X) \xto{\iota_1} X \otimes Y \xot{\iota_2} N(Y).
  \]

  Par abus de langage, on dira parfois que $M$ est une coaugmentation à
  gauche, sous-entendant ainsi la transformation naturelle $\iota_1$. De
  même pour les coaugmentations à droite et les bicoaugmentations.
\end{paragr}

\begin{exem}\label{exem:coaug}
  Soit $\C$ une catégorie monoïdale possédant un objet initial $\vide$. Il
  existe deux bicoaugmentations naturelles sur $\C$ :
  \begin{enumerate}
    \item la bicoaugmentation \ndef[bicoaugmentation!triviale]{triviale} :
    $M$ et $N$ sont les foncteurs constants de valeur $\vide$ et, pour tous
    objets $X$ et $Y$ de $\C$, les morphismes
      \[
        \vide \xto{\iota_1} X \otimes Y \xot{\iota_2} \vide
      \]
      sont l'unique morphisme $\vide \to X \otimes Y$ ;
    \item la bicoaugmentation
    \ndef[bicoaugmentation!pseudo-locale]{pseudo-locale} : $M$ et $N$ sont
    les foncteurs $\bullet \otimes \vide$ et $\vide \otimes \bullet$
    respectivement et, pour tous objets $X$ et $Y$ de $\C$, les morphismes
      \[
        X \otimes \vide \xto{\iota_1} X \otimes Y \xot{\iota_2} \vide
        \otimes Y
      \]
      sont induits par fonctorialité par les morphismes $\vide \to Y$ et
      $\vide \to X$.
  \end{enumerate}
  Notons que si la catégorie monoïdale $\C$ est bifermée, ces deux
  bicoaugmentations sont isomorphes en un sens évident.

  Par ailleurs, si l'unité de la catégorie monoïdale $\C$ est l'objet
  initial $\vide$, alors la bicoaugmentation pseudo-locale induit une
  bicoaugmentation isomorphe :
  \begin{enumerate}[resume]
    \item la bicoaugmentation \ndef[bicoaugmentation!locale]{locale} : $M$
    et $N$ sont l'endofoncteur identité de $\C$ et, pour tous objets $X$ et
    $Y$ de $\C$, les morphismes
      \[
        X \xto{\iota_1} X \otimes Y \xot{\iota_2} Y
      \]
      sont induits, à partir de la bicoaugmentation pseudo-locale, par les
      contraintes d'unité $X \simeq X \otimes \vide$ et $\vide \otimes Y
      \simeq Y$.
  \end{enumerate}
\end{exem}

\begin{paragr}\label{paragr:def_loc_ferm}
  \abovedisplayskip3.7pt
  \belowdisplayskip3.7pt
  Soit $\C$ une catégorie monoïdale munie d'une coaugmentation à droite $N$.
  On dira que $\C$ est \ndef[catégorie monoïdale!localement fermée à
  droite]{localement fermée à droite} si, pour tout objet
  $Y$ de $\C$, le foncteur
  \[
    \begin{split}
      \C & \to \cotr{\C}{N(Y)} \\
      X & \mapsto (X \otimes Y, \iota_2 : N(Y) \to X \otimes Y)
    \end{split}
  \]
  admet un adjoint à droite. De même, si $\C$ est munie d'une coaugmentation
  à gauche~$M$, on dira que $\C$ est \ndef[catégorie monoïdale!localement
  fermée à gauche]{localement fermée à gauche} si,
  pour tout objet $X$ de $\C$, le foncteur
  \[
    \begin{split}
      \C & \to \cotr{\C}{M(X)} \\
      Y & \mapsto (X \otimes Y, \iota_1 : M(X) \to X \otimes Y)
    \end{split}
  \]
  admet un adjoint à droite. Enfin, si $\C$ est munie d'une
  bicoaugmentation, on dira que~$\C$ est \ndef[catégorie
  monoïdale!localement bifermée]{localement bifermée} si elle
  est localement fermée à gauche et à droite.

  Si $\C$ est une catégorie monoïdale d'unité un objet initial, on
  appliquera parfois le vocabulaire précédent sans préciser les
  coaugmentations auxquelles on fait référence. On utilisera alors
  implicitement la bicoaugmentation locale de l'exemple~\ref{exem:coaug}.

  Si une catégorie monoïdale coaugmentée à droite $\C$ est localement fermée
  à droite, alors, pour tout objet $Y$ de $\C$, le foncteur $\var \otimes Y
  : \C \to \C$ commute aux limites inductives connexes. En effet, celui-ci
  se décompose en
  \[
    \C \to \cotr{\C}{N(Y)} \xto{U} \C,
  \]
  où le premier foncteur est le foncteur canonique déjà considéré et $U$ est
  le foncteur d'oubli. Or, puisque $\C$ est localement fermée à droite, ce
  premier foncteur admet un adjoint à droite. On obtient l'assertion puisque
  le second commute toujours aux limites inductives connexes (voir par
  exemple \cite[proposition 3.3.8]{Riehl}). De même, si
  $\C$ est localement fermée à gauche, alors, pour tout objet~$X$ de~$\C$,
  le foncteur $X \otimes \var : \C \to \C$ commute aux limites inductives
  connexes. Ainsi, si $\C$ est localement bifermée, son produit tensoriel
  commute aux limites inductives connexes en chaque variable.
\end{paragr}

\begin{rem}
  Notons que si $\C$ est une catégorie monoïdale avec un objet
  initial~$\vide$ munie de la bicoaugmentation triviale (voir
  l'exemple~\ref{exem:coaug}), alors $\C$ est bifermée si et seulement si
  elle est localement bifermée.
\end{rem}

\begin{rem}\label{rem:loc_ferm_th_adj}
  Soit $\C$ une catégorie monoïdale d'unité un objet initial $\vide$. On
  munit $\C$ de la bicoaugmentation locale. On a vu que si $\C$ est
  localement bifermée, alors le produit tensoriel de $\C$ commute aux
  limites inductives connexes en chaque variable. La réciproque est
  également vraie si on suppose que la catégorie $\C$ est localement
  présentable. En effet, on vérifie tout d'abord que, pour tout objet $Y$ de
  $\C$, le foncteur $\C \to \cotr{\C}{Y}$ du
  paragraphe~\ref{paragr:def_loc_ferm} commute aux limites inductives. Cela
  résulte du fait qu'il commute aux limites inductives connexes puisque le
  foncteur d'oubli $U : \cotr{\C}{Y} \to \C$ est conservatif et commute aux
  limites inductives connexes, et du fait qu'il envoie l'objet initial~$\vide$ de
  $\C$ sur $(\vide \otimes Y, \iota_2) \simeq (Y, \id{Y})$ qui est un objet
  initial de~$\cotr{\C}{Y}$. Par ailleurs, puisque la catégorie $\C$ est
  localement présentable, il en est de même de la catégorie~$\cotr{\C}{Y}$.
  Or, il est bien connu qu'un foncteur entre catégories localement
  présentables qui commute aux limites inductives admet un adjoint à droite.
  Ceci montre que $\C$ est localement fermée à droite. On montre de même que
  $\C$ est localement fermée à gauche. Nous n'utiliserons pas ce résultat
  dans la suite de ce texte.
\end{rem}

\begin{paragr}
  Soit $F : \C \to \D$ un foncteur. On notera \nnot{$\FlTord(F)$} la
  catégorie dont les objets sont les triplets $(X, f : F(X) \to Y, Y)$, où
  $X$ est un objet de $\C$, $Y$ un objet de $\D$ et
  \hbox{$f : F(X) \to Y$} un morphisme de $\D$, et dont les morphismes de
  $(X, f, Y)$ vers~$(X', f', Y')$ sont les couples $(g, h)$, où $g : X' \to
  X$ est un morphisme de $\C$ et $h : Y \to Y'$ un morphisme de $\D$ tels
  que le carré
  \[
    \xymatrix{
      F(X) \ar[d]_f & F(X') \ar[l]_{F(g)} \ar[d]^{f'} \\
      Y  \ar[r]_h & Y'
    }
  \]
  soit commutatif.

  Notons que, dans le cas où $F$ est le foncteur constant de valeur un objet
  initial de~$\D$, la catégorie $\FlTord(F)$ est canoniquement isomorphe à
  la catégorie $\C^\op \times \D$.
\end{paragr}

\begin{paragr}\label{paragr:def_FlTordD}
  Soit $\C$ une catégorie monoïdale munie d'une coaugmentation à droite $N$.
  On notera \nnot[$\FlTordD(\C)$, $\FlTordG(\C)$]{$\FlTordD(\C)$} la
  catégorie $\FlTord(N : \C \to \C)$. Une variante du théorème sur les
  adjonctions paramétrées déjà cité montre que, si $\C$ est localement
  fermée à droite, alors il existe un foncteur
  \[ H : \FlTordD(\C) \to \C \]
  et des bijections
  \[
    \Hom_{\cotr{\C}{N(Y)}}((X \otimes Y, \iota_2), (Z, g))
      \simeq \Hom_\C(X, H(Y, g, Z)),
  \]
  naturelles en $X$ dans $\C$ et $(Y, g, Z)$ dans $\FlTordD(\C)$.

  De même, si $\C$ est munie d'une coaugmentation à gauche $M$, on notera
  $\FlTordG(\C)$ la catégorie $\FlTord(M : \C \to \C)$ et on montre que, si
  $\C$ est localement fermée à gauche, alors il existe un foncteur
  \[ H' : \FlTordG(\C) \to \C \]
  et des bijections
  \[
    \Hom_{\cotr{\C}{M(X)}}((X \otimes Y, \iota_1), (Z, f))
      \simeq \Hom_\C(Y, H'(X, f, Z)),
  \]
  naturelles en $Y$ dans $\C$ et $(X, f, Z)$ dans $\FlTordG(\C)$.
\end{paragr}

\begin{lemme}\label{lemme:limind_tranche}
  Soient $F, G : I \to \C$ deux foncteurs de source une petite catégorie~$I$ et de
  but une catégorie cocomplète~$\C$, et $\alpha : F \to G$ une transformation
  naturelle. Alors, pour tout objet $Z$ de $\C$ et tout cône inductif $(f_i
  : F(i) \to Z)_{i \in I}$ de $F$ vers $Z$, on a une bijection canonique
  \[
    \Hom_{\cotr{\C}{\limind F}}((\limind G, \limind \alpha),
    (Z, f))
    \simeq
    \limproj_{i \in I} \Hom_{\cotr{\C}{F(i)}}((G(i), \alpha_i), (Z, f_i)),
  \]
  où $f : \limind F \to Z$ désigne le morphisme déduit du cône inductif
  $(f_i)_{i \in I}$.
\end{lemme}

\begin{proof}
  Un élément du membre de droite est donné par une famille de morphismes
  $(g_i : G(i) \to Z)_{i \in I}$ satisfaisant $g_i \alpha_i = f_i$ pour $i$
  dans $I$  et qui est de plus dans la limite projective en question,
  c'est-à-dire qui est telle que, pour tout $h : i \to i'$ dans $I$, on ait
  $g_{i'}G(h) = g_i$.  Cette dernière condition signifie exactement que
  $(g_i)_{i \in I}$ est un cône inductif. On peut donc associer à la famille
  $(g_i)_{i \in I}$ un foncteur $\limind G \to Z$. Dire que ce
  foncteur est au-dessous de $\limind F$ signifie exactement qu'on a $g_i
  \alpha_i = f_i$ pour tout $i$ dans $I$, d'où l'assertion.
\end{proof}

\begin{thm}\label{thm:Day_loc}
  Soit $\C$ une catégorie complète et cocomplète munie de deux endofoncteurs
  $M, N : \C \to \C$ commutant aux petites limites inductives connexes.
  Soit~$\D$ une sous-catégorie pleine de $\C$, stable par les endofoncteurs
  $M$ et $N$, munie d'une structure de catégorie monoïdale et d'une
  bicoaugmentation dont les endofoncteurs sous-jacents sont les restrictions
  de $M$ et $N$ à $\D$. On suppose que $\D$ contient un objet initial
  de~$\C$, qu'il existe une petite sous-catégorie pleine de $\D$ dense dans
  $\C$, des foncteurs
  \[
    H : \FlTordD(\D, \C) \to \C
    \quad\text{et}\quad
    H' : \FlTordG(\D, \C) \to \C,
  \]
  où on a posé
  \[
    \FlTordD(\D, \C) = \FlTord(N_{\vert \D} : \D \to \C)
    \quadet
    \FlTordG(\D, \C) = \FlTord(M_{\vert \D} : \D \to \C),
  \]
  des bijections
  \[
    \Hom_{\cotr{\C}{N(S')}}((S \otimes S', \iota_2), (Z, g))
    \simeq \Hom_\C(S, H(S', g, Z)),
  \]
  naturelles en $S$ dans $\D$ et $(S', g, Z)$ dans $\FlTordD(\D, \C)$,
  et des bijections
  \[
    \Hom_{\cotr{\C}{M(S)}}((S \otimes S', \iota_1), (Z, f))
    \simeq \Hom_\C(S', H'(S, f, Z)),
  \]
  naturelles en $S'$ dans $\D$ et $(S, f, Z)$ dans $\FlTordG(\D, \C)$.

  Alors il existe une et une seule structure de catégorie monoïdale sur $\C$
  (à unique isomorphisme monoïdal près) donnée par un produit tensoriel
  commutant aux petites limites inductives connexes en chaque variable et
  pour laquelle le foncteur d'inclusion $\D \hookto \C$ s'étend en un
  foncteur monoïdal. De plus, cette structure de catégorie monoïdale
  sur~$\C$ admet une unique bicoaugmentation d'endofoncteurs sous-jacents
  $M$ et~$N$ prolongeant la bicoaugmentation de $\D$, et la structure est
  localement bifermée pour cette bicoaugmentation.
\end{thm}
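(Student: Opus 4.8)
The plan is to follow the strategy of the proof of Theorem~\ref{thm:Day} (Day), but to carry out every colimit and adjunction computation \emph{inside the slice categories} $\cotr{\C}{M(S)}$ and $\cotr{\C}{N(S')}$ rather than in $\C$ itself. The point is that, although the underlying bifunctor $\var\otimes\var$ will only commute with connected colimits, the hypotheses provide, for $S,S'\in\D$, adjunction bijections exhibiting $\var\otimes S'\colon\C\to\cotr{\C}{N(S')}$ and $S\otimes\var\colon\C\to\cotr{\C}{M(S)}$ as \emph{left adjoints} (this is exactly local closedness). A left adjoint preserves \emph{all} colimits, so the Day-style manipulations go through once lifted to the slices; only at the very end, when one forgets down to $\C$ along the (connected-colimit-creating) forgetful functors, does the restriction to connected colimits appear.

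First I would record the observation that makes the connected-colimit bookkeeping work, and which is the crux of the whole argument. Choose the small dense subcategory $D\subseteq\D$ to contain an initial object $\vide$ of $\C$ (it may be adjoined without destroying smallness or density, by~\ref{paragr:def_dense}). Then for every $Y\in\C$ the comma category $\tr{D}{Y}$ is \emph{connected}: the object $(\vide,\vide\to Y)$ admits a unique morphism to every $(d,d\to Y)$, since any map out of $\vide$ is unique. Hence the canonical density presentation $Y\simeq\limind_{(d\to Y)\in\tr{D}{Y}}d$ is a connected colimit, and therefore $M$ and $N$, which preserve connected colimits by hypothesis, commute with it:
\[
  M(Y)\simeq\limind_{d\to Y}M(d),\qquad N(Y)\simeq\limind_{d\to Y}N(d).
\]
This is precisely what lets me transport the coaugmentations $\iota_1,\iota_2$ from $\D$ to all of $\C$; without the initial object the categories $\tr{D}{Y}$ could be disconnected and the construction would break down.

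With this in hand the construction proceeds in stages. For $S,S'\in\D$ one first checks, as in Theorem~\ref{thm:Day} but in the slice, the formula $(S\otimes S',\iota_2)\simeq\limind_{s\to S}(s\otimes S',\iota_2)$ in $\cotr{\C}{N(S')}$, by computing $\Hom$ out of it and using density together with the given adjunction for $H$. Next, for $S\in\D$ and $Y\in\C$, I would \emph{define}
\[
  (S\otimes Y,\iota_1):=\limind_{(s'\to Y)\in\tr{D}{Y}}(S\otimes s',\iota_1)
  \quadtext{in}\cotr{\C}{M(S)};
\]
since $\tr{D}{Y}$ is connected this colimit is created by the forgetful functor, so its underlying object is $\limind_{s'\to Y}(S\otimes s')$, and the compatible maps $N(s')\xto{\iota_2}S\otimes s'$ assemble, using $N(Y)\simeq\limind N(s')$, into a map $\iota_2\colon N(Y)\to S\otimes Y$. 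A short $\Hom$-computation (Hom out of a colimit is a limit, then the given adjunction for each $s'\in D$, then density) yields the slice adjunction $\Hom_{\cotr{\C}{N(Y)}}((S\otimes Y,\iota_2),(Z,g))\simeq\Hom_\C(S,H(Y,g,Z))$, where $H$ is extended to $\FlTordD(\C)$ by $H(Y,g,Z)=\limproj_{s'\to Y}H(s',g_{s'},Z)$. The symmetric construction gives $X\otimes S'$ for $X\in\C$, $S'\in\D$, together with the dual adjunction through $H'$.

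Finally I would extend to arbitrary $X,Y\in\C$ by setting $(X\otimes Y,\iota_2):=\limind_{(s\to X)\in\tr{D}{X}}(s\otimes Y,\iota_2)$ in $\cotr{\C}{N(Y)}$ and, symmetrically, $(X\otimes Y,\iota_1):=\limind_{s'\to Y}(X\otimes s',\iota_1)$ in $\cotr{\C}{M(X)}$; both are canonically the double connected colimit $\limind_{s\to X,\,s'\to Y}s\otimes s'$ by Fubini, so they agree. The two computations above then upgrade to the full adjunctions $\Hom_{\cotr{\C}{N(Y)}}((X\otimes Y,\iota_2),(Z,g))\simeq\Hom_\C(X,H(Y,g,Z))$ and its $H'$-analogue, which say exactly that $\C$ is locally biclosed and that $\var\otimes Y$ and $X\otimes\var$ are left adjoints into the respective slices (hence commute with connected colimits after forgetting to $\C$). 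The unit and associativity constraints, and their coherence, are inherited from $\D$ via the connected-colimit formulas (e.g. $I\otimes X\simeq\limind_{s'\to X}(I\otimes s')\simeq\limind_{s'\to X}s'\simeq X$), and uniqueness of the monoidal structure and of the bicoaugmentation follows because preservation of the connected density colimits forces the colimit formulas and the naturality of $\iota_1,\iota_2$. The main obstacle, as flagged above, is genuinely the connectedness issue: transporting the coaugmentations requires $M$ and $N$ to commute with the density colimits, and it is only the presence of an initial object in $\D$ that guarantees these colimits are connected.
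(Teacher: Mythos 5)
Your proposal is correct and takes essentially the same route as the paper's own proof: you adjoin an initial object to the small dense subcategory so that all density colimits become connected, extend the tensor product by these colimits computed in the slice categories, transport $\iota_1,\iota_2$ using the hypothesis that $M$ and $N$ preserve connected colimits, and extend $H,H'$ by the corresponding limits, with uniqueness forced by the colimit formulas. The only differences are cosmetic: you extend in two stages ($S\otimes Y$, then $X\otimes Y$) where the paper passes directly to the double colimit, and the step you summarize as ``Hom out of a colimit is a limit, then the given adjunction'' is exactly what the paper isolates beforehand as the lemme~\ref{lemme:limind_tranche}.
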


\begin{proof}
  Soit $D$ une petite sous-catégorie pleine de $\D$ dense dans $\C$
  contenant un objet initial $\vide$ de $\C$. On va commencer par montrer
  que si $S$ et $S'$ sont deux objets de $\D$, on a un
  isomorphisme canonique
  \[
    S \otimes S' \simeq
    \limind_{\substack{s \,\to S\, \in \tr{D}{S}\\
                       \,s'\! \to S'\! \in \tr{D}{S'}}}
                        s \otimes s',
  \]
  où la limite inductive, comme toutes les limites inductives dans cette
  preuve, est calculée dans~$\C$ (ou dans une tranche de $\C$). Notons que
  ces limites inductives sont connexes puisque $D$ admet un objet
  initial. On a, pour tout objet $(Z, g)$ de $\cotr{\C}{N(S')}$,
  {
    \allowdisplaybreaks
    \begin{align*}
      \Hom_{\cotr{\C}{N(S')}}(\limind_{s \to S} (s \otimes S', \iota_2),
        (Z, g))
      & \simeq
      \limproj_{s \to S} \Hom_{\cotr{\C}{N(S')}}
        ((s \otimes S', \iota_2), (Z, g)) \\
      & \simeq
      \limproj_{s \to S} \Hom_{\C}(s, H(S', g, Z)) \\
      & \simeq
      \Hom_{\C}(\limind_{s \to S} s, H(S', g, Z)) \\
      & \simeq
      \Hom_{\C}(S, H(S', g, Z)) \\
      & \simeq
      \Hom_{\cotr{\C}{N(S')}}((S \otimes S', \iota_2), (Z, g)),
    \end{align*}
  }%
  d'où un isomorphisme canonique
  \[
    (S \otimes S', \iota_2) \simeq \limind_{s \to S} (s \otimes S', \iota_2)
  \]
  dans $\cotr{\C}{N(S')}$
  et donc un isomorphisme canonique
  \[
    S \otimes S' \simeq \limind_{s \to S} (s \otimes S')
  \]
  dans $\C$ puisque la limite inductive en jeu est connexe et que le
  foncteur d'oubli
  $\cotr{\C}{N(S')} \to \C$ commute aux limites inductives connexes.
  De même, en utilisant $H'$, on obtient des isomorphismes canoniques
  \[
    (S \otimes S', \iota_1) \simeq \limind_{s' \to S'} (S \otimes s',
    \iota_1)
  \]
  dans $\cotr{C}{N(S)}$ et
  \[
    S \otimes S' \simeq \limind_{s' \to S'} (S \otimes s')
  \]
  dans $\C$. On a donc
  \[
    S \otimes S' \simeq \limind_{s \to S} (s \otimes S')
    \simeq \limind_{s \to S} \limind_{s' \to S'} s \otimes s',
  \]
  d'où l'isomorphisme annoncé.

  Étendons maintenant le produit tensoriel de $\D$ à $\C$.
  Puisque pour tout objet~$Z$ de~$\C$, la limite inductive
  canonique
  \[
    \limind_{s \to Z \in \tr{D}{Z}} s \simeq Z
  \]
  est connexe, on est contraint de poser, pour~$X$ et $Y$ dans $\C$,
  \[
    X \otimes Y =
    \limind_{\substack{s\, \to X \in \tr{D}{X}\\ s' \to Y \in \tr{D}{Y}}}
    s \otimes s'.
  \]
  Le paragraphe précédent montre qu'on prolonge bien ainsi le produit
  tensoriel de~$\D$. Par ailleurs, puisque l'endofoncteur $M$ commute aux
  petites limites inductives connexes,
  la transformation naturelle $\iota_1$ induit des morphismes
  \[
    M(X) \simeq \limind_{s \to X} M(s) \simeq \limind_{s \to X, s' \to Y} M(s)
      \xlongto{\limind \iota_1}
      \limind_{s \to X, s' \to Y} s \otimes s' = X \otimes Y,
  \]
  naturels en $X$ et $Y$ dans $\C$, qu'on notera également $\iota_1$. De même,
  la transformation naturelle $\iota_2$ induit des morphismes
  \[
    X \otimes Y \xot{\iota_2} N(Y),
  \]
  naturels en $X$ et $Y$ dans $\C$. Il résulte également du paragraphe
  précédent que ces morphismes $\iota_1$ et $\iota_2$ prolongent ceux
  définis sur $\D$. Étendons maintenant les foncteurs~$H$ et $H'$ en des
  foncteurs $\FlTordD(\C) \to \C$ et $\FlTordG(\C) \to \C$ respectivement.
  Pour $(X, g : N(X) \to Y, Y)$ un objet de $\FlTordD(\C)$ et $(X, f : M(X)
  \to Z, Z)$ un objet de~$\FlTordG(\C)$, on pose
  \[
      H(X, g, Y) = \limproj_{s \xto{k} X \in \tr{D}{X}} H(s, g \circ N(k), Y)
  \]
  et
  \[
      H'(X, f, Z) = \limproj_{s \xto{h} X \in \tr{D}{X}}H'(s, f \circ M(h), Z).
  \]
  On va montrer qu'on a des bijections
  \[
    \Hom_{\cotr{\C}{N(Y)}}((X \otimes Y, \iota_2), (Z, g))
      \simeq \Hom_\C(X, H(Y, g, Z)),
  \]
  naturelles en $X$ dans $\C$ et $(Y, g, Z)$ dans $\FlTordD(\C)$,
  et des bijections
  \[
    \Hom_{\cotr{\C}{M(X)}}((X \otimes Y, \iota_1), (Z, f))
      \simeq \Hom_\C(Y, H'(X, f, Z)),
  \]
  naturelles en $Y$ dans $\C$ et $(X, f, Z)$ dans $\FlTordG(\C)$.
  Faisons-le pour la première famille de bijections, la seconde se traitant
  de manière analogue. On a
  {
    \allowdisplaybreaks
    \begin{align*}
      \Hom_{\cotr{\C}{N(Y)}}((X \otimes Y, \iota_2), (Z, g))
      & \simeq
      \Hom_{\cotr{\C}{\limind\limits_{s' \to Y} N(s')}}
        \big((\limind_{s \to X, s' \to Y} s \otimes s', \iota_2),
        (Z, g)\big) \\
      & \simeq
      \limproj_{s \to X} \Hom_{\cotr{\C}{\limind\limits_{s' \to Y} N(s')}}
        \big((\limind_{s' \to Y} s \otimes s', \limind_{s' \to Y} \iota_2),
        (Z, g)\big) \\
      & \simeq
      \limproj_{s \to X, s' \xto{k} Y} \Hom_{\cotr{\C}{N(s')}}
        \big((s \otimes s', \iota_2), (Z, g \circ N(k))\big) \\*
      & \phantom{\simeq 1} \text{(en vertu du
      lemme~\ref{lemme:limind_tranche})} \\
      & \simeq
      \limproj_{s \to X, s' \xto{k} Y} \Hom_\C(s, H(s', g\circ N(k), Z)) \\
      & \simeq
      \Hom_\C(\limind_{s \xto{\phantom{k}} X} s, \limproj_{s' \xto{k} Y} H(s',
      g\circ N(k), Z)) \\
      & \simeq
      \Hom_\C(X, H(Y, g, Z)),
    \end{align*}
  }%
  ce qu'on voulait démontrer. Ceci montre que le foncteur $\otimes : \C
  \times \C \to \C$ commute aux limites inductives connexes en chaque
  variable.

  Pour conclure, il suffit de vérifier que la structure monoïdale sur $\D$
  induit une structure monoïdale sur $\C$ munie du produit tensoriel qu'on
  vient de définir. On procède exactement comme dans la preuve du
  théorème~\ref{thm:Day}.
\end{proof}

\begin{coro}\label{coro:Day_loc}
  Soient $\C$ une catégorie complète et cocomplète, et $\D$ une
  sous-catégorie pleine de $\C$ munie d'une structure monoïdale d'unité un
  objet initial de $\C$ et contenant une petite sous-catégorie pleine dense
  dans $\C$. On suppose qu'il existe des foncteurs
  \[
    H : \FlTordD(\D, \C) \to \C
    \quad\text{et}\quad
    H' : \FlTordG(\D, \C) \to \C,
  \]
  où on a posé
  \[
    \FlTordD(\D, \C) = \FlTord(\D \hookto \C) = \FlTordG(\D, \C),
  \]
  des bijections
  \[
    \Hom_{\cotr{\C}{S'}}((S \otimes S', \iota_2), (Z, g))
    \simeq \Hom_\C(S, H(S', g, Z)),
  \]
  naturelles en $S$ dans $\D$ et $(S', g, Z)$ dans $\FlTordD(\D, \C)$,
  et des bijections
  \[
    \Hom_{\cotr{\C}{S}}((S \otimes S', \iota_1), (Z, f))
    \simeq \Hom_\C(S', H'(S, f, Z)),
  \]
  naturelles en $S'$ dans $\D$ et $(S, f, Z)$ dans $\FlTordG(\D, \C)$.

  Alors il existe une et une seule structure de catégorie monoïdale sur
  $\C$ (à unique isomorphisme monoïdal près) donnée par un produit tensoriel
  commutant aux petites limites inductives connexes en chaque variable et
  pour laquelle le foncteur d'inclusion $\D \hookto \C$ s'étend en un
  foncteur monoïdal. De plus, cette structure monoïdale est localement
  bifermée.
\end{coro}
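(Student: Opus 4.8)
Le plan est de déduire ce corollaire du théorème~\ref{thm:Day_loc} en vérifiant que ses hypothèses sont bien satisfaites sous les hypothèses, plus fortes, du corollaire. Le point clé est que l'hypothèse « l'unité de la structure monoïdale sur $\D$ est un objet initial de $\C$ » permet de munir $\D$ (et, potentiellement, $\C$) de la bicoaugmentation \emph{locale} de l'exemple~\ref{exem:coaug}, dont les endofoncteurs $M$ et $N$ sont tous deux l'endofoncteur identité. C'est ce choix de bicoaugmentation qui fait apparaître les catégories $\FlTord(\D \hookto \C)$ de l'énoncé (puisque $M_{|\D} = N_{|\D}$ est alors le foncteur d'inclusion $\D \hookto \C$) et qui transforme les tranches $\cotr{\C}{N(S')}$ et $\cotr{\C}{M(S)}$ du théorème en les tranches $\cotr{\C}{S'}$ et $\cotr{\C}{S}$ du corollaire.

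Concrètement, je poserais $M = N = \mathrm{id}_\C$, l'endofoncteur identité de $\C$. Ces deux endofoncteurs commutent trivialement à toutes les limites inductives, en particulier aux petites limites inductives connexes, ce qui est la première hypothèse du théorème~\ref{thm:Day_loc}. La sous-catégorie pleine $\D$ est stable par $M$ et $N$ de façon évidente. Il reste à munir $\D$ de la bicoaugmentation locale : puisque l'unité $\vide$ de la structure monoïdale sur $\D$ est un objet initial de $\C$, et que $\D$ est pleine, $\vide$ est aussi un objet initial de $\D$, de sorte que la bicoaugmentation pseudo-locale de $\D$ induit bien une bicoaugmentation locale au sens de l'exemple~\ref{exem:coaug}, d'endofoncteurs sous-jacents les restrictions à $\D$ de $M = N = \mathrm{id}_\C$. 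De plus, $\D$ contient un objet initial de $\C$ (à savoir $\vide$) et contient, par hypothèse, une petite sous-catégorie pleine dense dans $\C$.

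Avec ces choix, les catégories auxiliaires du théorème deviennent
\[
  \FlTordD(\D, \C) = \FlTord(\mathrm{id}_{\vert \D} : \D \to \C)
  = \FlTord(\D \hookto \C)
  = \FlTord(\mathrm{id}_{\vert \D} : \D \to \C) = \FlTordG(\D, \C),
\]
c'est-à-dire exactement les catégories $\FlTordD(\D, \C) = \FlTord(\D \hookto \C) = \FlTordG(\D, \C)$ de l'énoncé du corollaire. De même, pour $S'$ et $S$ dans $\D$, on a $N(S') = S'$ et $M(S) = S$, de sorte que les tranches $\cotr{\C}{N(S')}$ et $\cotr{\C}{M(S)}$ s'identifient à $\cotr{\C}{S'}$ et $\cotr{\C}{S}$, et les bijections d'adjonction supposées dans le corollaire sont précisément celles requises par le théorème. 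Toutes les hypothèses du théorème~\ref{thm:Day_loc} sont donc vérifiées.

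**Je pense que la seule subtilité** réside dans la conclusion et son interprétation. Le théorème~\ref{thm:Day_loc} fournit une unique structure de catégorie monoïdale sur $\C$ prolongeant celle de $\D$, à produit tensoriel commutant aux petites limites inductives connexes en chaque variable, munie d'une unique bicoaugmentation d'endofoncteurs sous-jacents $M = N = \mathrm{id}_\C$ prolongeant la bicoaugmentation locale de $\D$, et localement bifermée pour cette bicoaugmentation. Il faut alors observer que, la structure monoïdale sur $\C$ ayant pour unité l'image de $\vide$ (qui reste un objet initial de $\C$), la bicoaugmentation ainsi obtenue n'est autre que la bicoaugmentation locale de $\C$ au sens de l'exemple~\ref{exem:coaug}. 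La propriété de locale bifermeture énoncée dans le corollaire (sans référence explicite à une bicoaugmentation) doit s'entendre, conformément à la convention du paragraphe~\ref{paragr:def_loc_ferm}, relativement à cette bicoaugmentation locale. La partie « unicité à unique isomorphisme monoïdal près » et le prolongement du foncteur d'inclusion en un foncteur monoïdal se transportent directement depuis le théorème, ce qui achève la démonstration.
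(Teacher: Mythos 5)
Votre preuve est correcte et suit exactement la même démarche que celle du texte : on applique le théorème~\ref{thm:Day_loc} avec $M = N = \mathrm{id}_\C$ et la bicoaugmentation locale de l'exemple~\ref{exem:coaug}. Vous explicitez simplement davantage de vérifications (stabilité de $\D$, identification des catégories $\FlTord$ et des tranches) que la preuve du texte, qui tient en une ligne.
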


\begin{proof}
   On applique le théorème précédent pour $M$ et $N$ l'endofoncteur
   identité et pour la bicoaugmentation locale de l'exemple~\ref{exem:coaug}.
\end{proof}

\begin{rem}
  On peut également obtenir le théorème~\ref{thm:Day} comme corollaire du
  théorème~\ref{thm:Day_loc}. Si $\D$ contient un objet initial de $\C$, il
  suffit d'appliquer ce théorème à $\D$ munie de la bicoaugmentation
  triviale (voir l'exemple~\ref{exem:coaug}). Si $\D$ ne contient pas
  d'objet initial, il suffit de faire de même en remplaçant $\D$ par la
  catégorie $\D'$ obtenue en ajoutant à $\D$ un objet initial de $\C$. On
  vérifie alors qu'on peut prolonger la structure de catégorie monoïdale et
  les foncteurs $H$ et $H'$ à $\D'$ en posant $\vide \otimes S \simeq \vide
  \simeq S \otimes \vide$ pour tout $S$ dans $\D$ et $H(\vide, Z) = \ast =
  H'(\vide, Z)$, où $\ast$ est un objet final de~$\C$, pour tout $Z$
  dans~$\C$.
\end{rem}

\chapter{Joint et tranches \pdfoo-catégoriques}
\label{sec:joint}

Le but de ce chapitre est de définir le joint et les tranches
\oo-catégoriques. Pour ce faire, on commence par définir un joint pour les
complexes de Steiner forts qu'on étend aux \oo-catégories grâce à un
théorème à la Day établi dans le chapitre précédent.

\begin{paragr}\label{paragr:def_produit_comp}
  Soient $K$ et $L$ deux complexes de chaînes (de groupes abéliens en degrés
  positifs, selon notre convention du paragraphe~\ref{paragr:conv_comp}).
  Rappelons que leur produit tensoriel \nnot{$K \otimes L$} est défini de la
  manière suivante. On pose
  \[
    (K \otimes L)_p =
      \bigoplus_{\substack{i + j = p\\i \ge 0,\, j \ge 0}} K_i \otimes L_j,
      \quad\text{pour $p \ge 0$,}
  \]
  et, pour $x$ dans $K_i$ et $y$ dans $L_j$ avec $i + j > 0$,
  \[
    d(x \otimes y) = d(x) \otimes y + (-1)^{|x|} x \otimes d(y),
  \]
  où $|x|$ désigne le degré de $x$, c'est-à-dire $i$.
  Ce produit tensoriel définit une structure de catégorie monoïdale sur la
  catégorie des complexes de chaînes. L'unité est donnée par le complexe
  concentré en degré $0$ de valeur $\Z$. On dispose d'un morphisme
  \hbox{$\gamma^{}_{K, L} : K \otimes L \to L \otimes K$} envoyant $x \otimes y$,
  pour $x$ un élément homogène de $K$ et $y$ un élément homogène de $L$, sur
  $(-1)^{|x||y|}y \otimes x$ qui définit une symétrie pour cette structure
  de catégorie monoïdale. De plus, cette catégorie monoïdale symétrique est
  fermée.

  Ce produit tensoriel s'étend aux complexes de chaînes augmentés de la
  manière suivante.  Si~$K$ et $L$ sont deux complexes de chaînes augmentés,
  on définit une augmentation sur $K \otimes L$ en posant
  \[ e(x \otimes y) = e(x)e(y), \]
  pour $x$ dans $K_0$ et $y$ dans $L_0$. On obtient ainsi une structure de
  catégorie monoïdale symétrique fermée sur la catégorie des complexes de
  chaînes augmentés. L'unité est l'unité du produit tensoriel des complexes
  de chaînes munie de l'augmentation identité~$\id{\Z} : \Z \to \Z$.
\end{paragr}

\begin{paragr}\label{paragr:def_produit_cda}
  Dans \cite[exemple 3.10]{Steiner}, Steiner étend le produit tensoriel des
  complexes de chaînes augmentés aux complexes dirigés augmentés de la
  manière suivante. Si $K$ et $L$ sont deux complexes dirigés augmentés, le
  complexe de chaînes augmenté sous-jacent à \nnot{$K \otimes L$} est le produit
  tensoriel des complexes de chaînes augmentés sous-jacents à~$K$ et $L$
  et, pour $p \ge 0$, le sous-monoïde de positivité $(K \otimes L)^\ast_p$
  est le sous-monoïde de~$(K \otimes L)_p$ engendré par les éléments de la
  forme $x \otimes y$, avec $x$ dans~$K^\ast_i$, $y$ dans $L^\ast_j$ et~$i +
  j = p$.

  Steiner montre \cite[exemple 3.10 et p.~198]{Steiner} qu'on obtient ainsi
  une structure de catégorie monoïdale bifermée (voir le
  paragraphe~\ref{paragr:biferme}) sur la catégorie des complexes dirigés
  augmentés. L'unité~\nnot{$\Zdec$} est définie de la manière suivante : le
  complexe sous-jacent est le complexe concentré en degré $0$ de
  valeur~$\Z$, l'augmentation est l'identité~$\id{\Z}$ et on a $\Zdec_0^\ast
  = \N$. On vérifie facilement que $\Zdec$ est un objet final de la
  catégorie des complexes dirigés augmentés \emph{décents} (voir le
  paragraphe~\ref{paragr:def_decent}). Notons par ailleurs que $\nu(\Zdec)$
  est la \oo-catégorie finale. On prendra garde que cette structure
  de catégorie monoïdale n'est \emph{pas} symétrique. De fait, la symétrie
  $\gamma_{K, L}$ du paragraphe~\ref{paragr:def_produit_comp} n'induit pas
  un morphisme de complexes dirigés augmentés car elle ne respecte pas
  les sous-monoïdes de positivité à cause des signes apparaissant dans sa
  définition. De plus, on peut exhiber des complexes dirigés augmentés $K$ et
  $L$ tels que $K \otimes L$ et $L \otimes K$ ne soient pas isomorphes (par
  exemple $K = \lambda(\Dn{1})$ et $L = \lambda(\Dn{2})$).
\end{paragr}

\begin{paragr}\label{paragr:susp_comp}
  Soit $K$ un complexe de chaînes augmenté de différentielle $d$ et
  d'augmentation~$e$. On notera \nnot[$\Sigma K$, $\Sigma^{-1}$]{$\Sigma K$}
  le complexe de chaînes de différentielle $d'$ défini par
  \[
    (\Sigma K)_p =
    \begin{cases}
      \Z & \text{si $p = 0$}, \\
      K_{p-1}   & \text{si $p > 0$},
    \end{cases}
  \]
  et
  \[
    d'_p =
    \begin{cases}
      \aug & \text{si $p = 1$,} \\
      d_{p-1} & \text{si $p > 1$}.
    \end{cases}
  \]
  On appellera ce complexe la \ndef[suspension!d'un complexe de chaînes
  augmenté]{suspension} de $K$.

  Il est immédiat que $\Sigma$ définit un isomorphisme de la catégorie des
  complexes de chaînes augmentés vers la sous-catégorie non pleine de la
  catégorie des complexes de chaînes formée des complexes $K$ tels que $K_0
  = \Z$ et des morphismes $f$ tels que~$f_0 = \id{\Z}$. On notera
  $\Sigma^{-1}$ l'inverse du foncteur $\Sigma$.
\end{paragr}

\begin{rem}\label{rem:conv_susp}
  Si on considère un complexe de chaînes augmenté $K$ comme un complexe de
  chaînes muni d'un morphisme vers le complexe concentré en degré~$0$ de
  valeur $\Z$, en suspendant ce morphisme avec la convention de signe
  usuelle, on obtient un complexe de chaînes qui diffère du complexe
  $\Sigma K$ qu'on a décrit ci-dessus : ce complexe s'obtient en multipliant
  par~$-1$ les différentielles $d'_p$, pour $p > 1$, du complexe $\Sigma K$.
  Notre choix de signe est dicté par une compatibilité aux
  orientaux de Street (voir la remarque~\ref{rem:orient_lax}).
\end{rem}

\begin{paragr}\label{paragr:def_joint_aug}
  Soient $K$ et $L$ deux complexes de chaînes augmentés. On définit un
  nouveau complexe de chaînes augmenté \nnot{$K \joint L$}, appelé le
  \ndef[joint!de complexes de chaînes augmentés]{joint} de $K$ et
  $L$, par la formule
  \[
    K \joint L = \Sigma^{-1}(\Sigma K  \otimes \Sigma L ).
  \]
  (Cette formule a un sens puisque $(\Sigma K \otimes \Sigma L)_0 = \Z \otimes
  \Z \simeq \Z$.)

  On obtient ainsi un foncteur
  \[  (K, L) \mapsto K \joint L. \]
  Les contraintes d'associativité et d'unité du produit tensoriel des
  complexes de chaînes induisent des contraintes pour le joint. Le joint
  définit donc une structure de catégorie monoïdale sur la catégorie des
  complexes de chaînes augmentés. L'unité est le complexe nul (muni de
  l'unique augmentation possible).

  Explicitement, pour $p \ge 0$, on a
  \[
    (K \joint L)_p = \bigoplus_{\substack{i + 1 + j = p\\i \ge -1,\, j \ge -1}} K_i \otimes L_j,
  \]
  où on a posé $K_{-1} = \Z$ et $L_{-1} = \Z$. Ainsi, on a
  \[
    (K \joint L)_p \simeq
      K_p \oplus (K_{p-1} \otimes L_0) \oplus \cdots \oplus (K_0 \otimes
      L_{p-1}) \oplus L_p.
  \]
  On notera $\vide$ le
  générateur positif de $K_{-1}$ et~$L_{-1}$. Si $x$ est dans $K_i$ et $y$ est
  dans $L_j$ avec $i + 1 + j = p$, on notera \nnot{$x \joint y$} l'élément de~$(K
  \joint L)_p$ correspondant.

  La différentielle du complexe $K \joint L$ est donnée par
  \[
    d(x \joint y) = dx \joint y + (-1)^{|x| + 1}x \joint dy,
  \]
  où on a posé
  \[
    dz = e(z)\vide
  \]
  pour $z$ de degré $0$ et
  \[
    d(\vide) = 0 \quad\text{et}\quad |\vide| = -1.
  \]
  En particulier, on a
  \[
    d(x \joint \vide) = dx \joint \vide
    \quad\text{et}\quad
    d(\vide \joint y) = \vide \joint dy.
  \]

  Enfin, l'augmentation de $K \joint L$ est donnée par
  \[
    \aug(x \joint \vide) = \aug(x)
    \quad\text{et}\quad
    \aug(\vide \joint y) = \aug(y),
  \]
  pour $x$ dans $K_0$ et $y$ dans $L_0$.
\end{paragr}

\begin{paragr}\label{paragr:susp_Cda}
  Soit $K$ un complexe dirigé augmenté. On notera \nnot[$\Sigma K$,
  $\Sigma^{-1}$]{$\Sigma K$}
  \termindex{suspension!d'un complexe dirigé augmenté}%
  le complexe dirigé augmenté défini par la suspension du complexe de
  chaînes augmenté sous-jacent à $K$ (voir le
  paragraphe~\ref{paragr:susp_comp}), les sous-monoïdes de positivité
  \[
    (\Sigma K)^\ast_p =
    \begin{cases}
      \N & \text{si $p = 0$}, \\
      K^\ast_{p-1}   & \text{si $p > 0$},
    \end{cases}
  \]
  et l'augmentation nulle $e = \Z \xto{0} \Z$.

  On obtient ainsi un foncteur $\Sigma : \Cda \to \Cda$. Il est immédiat que
  ce foncteur définit un isomorphisme de la catégorie des complexes dirigés
  augmentés vers la sous-catégorie non pleine \nnot{$\CdaSigma$} de la catégorie
  des complexes dirigés augmentés formée des complexes $K$ tels que $K_0 =
  \Z$, $K^\ast_0 = \N$ et d'augmentation nulle, et des morphismes~$f$ tels
  que $f_0 = \id{\Z}$. On notera $\Sigma^{-1}$ l'inverse du foncteur
  $\Sigma$.
\end{paragr}

\begin{paragr}\label{paragr:def_joint_cda}
  Soient $K$ et $L$ deux complexes dirigés augmentés. On définit un nouveau
  complexe dirigé augmenté \nnot{$K \joint L$}, appelé le \ndef[joint! de complexes
  dirigés augmentés]{joint} de $K$ et $L$, par la formule
  \[
    K \joint L = \Sigma^{-1}(\Sigma K  \otimes \Sigma L).
  \]
  (Cette formule a un sens puisque $(\Sigma K \otimes \Sigma L)_0 = \Z \otimes \Z
  \simeq \Z$, $(\Sigma K \otimes \Sigma L)^\ast_0 = \N \otimes \N \subset
  \Z$ vaut $\N$ et l'augmentation de ce complexe est nulle car produit de
  deux augmentations nulles.)

  Explicitement, le complexe de chaînes augmenté sous-jacent à $K \joint L$
  est le joint des complexes de chaînes sous-jacents à $K$ et $L$ et, si $p
  \ge 0$, le sous-monoïde de positivité~$(K \joint L)^\ast_p$ est le
  sous-monoïde engendré par les éléments de la forme $x \joint y$, avec $x$
  dans $K^\ast_i$ et $y$ dans~$L^\ast_j$, où $i$ et $j$ satisfont $i \ge
  -1$, $j \ge -1$ et $i + 1 + j = p$, en convenant que $K^\ast_{-1} = \N$
  et~$L^\ast_{-1} = \N$.

  On obtient ainsi un foncteur
  \[
    \begin{split}
      \Cda \times \Cda & \to \,\,\,\Cda \\
      (K,L)\,\,\,\, & \mapsto K \joint L
    \end{split}
  \]
  qui définit une structure de catégorie monoïdale sur la catégorie des
  complexes dirigés augmentés. L'unité est le complexe nul muni de l'unique
  augmentation et des uniques sous-monoïdes de positivité possibles. On
  notera $\vide$ ce complexe dirigé augmenté. Cette notation est justifiée
  par le fait que $\vide$ est un objet initial de la catégorie des complexes
  dirigés augmentés. De plus, la \oo-catégorie $\nu(\vide)$ est la
  \oo-catégorie vide. On prendra garde que cette structure monoïdale n'est
  \emph{pas} symétrique.
\end{paragr}

\begin{rem}
  Cette opération joint est le pendant dans le monde des complexes dirigés
  augmentés du joint des complexes de parité défini par Street dans
  \cite[section~5]{StreetParComp}.
\end{rem}

\begin{prop}\label{prop:joint_Cda_limind}
  Soit $K$ un complexe dirigé augmenté. Alors les endofoncteurs de la
  catégorie des complexes dirigés augmentés
  \[
    L \mapsto K \joint L
    \quad\text{et}\quad
    L \mapsto L \joint K
  \]
  commutent aux limites inductives connexes.
\end{prop}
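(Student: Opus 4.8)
The plan is to reduce everything to the biclosedness of the tensor product together with the behaviour of the suspension functor on connected inductive limits. Recall that $K \joint L = \Sigma^{-1}(\Sigma K \otimes \Sigma L)$ and that, by Steiner's result recalled in paragraph~\ref{paragr:def_produit_cda}, the tensor product defines a biclosed monoidal structure on $\Cda$; in particular, for any fixed complex $M$, the functors $\var \otimes M$ and $M \otimes \var$ admit right adjoints and hence commute with all small inductive limits. The only subtle point is therefore the interaction of $\Sigma$ and $\Sigma^{-1}$ with inductive limits, and this is exactly where connectedness will be used.

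First I would establish the following lemma: the functor $\Sigma : \Cda \to \Cda$ commutes with connected inductive limits, and likewise $\Sigma^{-1}$ on its domain. Write $\C_\Sigma$ for the non-full subcategory of $\Cda$ onto which $\Sigma$ is an isomorphism, namely the complexes $M$ with $M_0 = \Z$, $M^\ast_0 = \N$ and zero augmentation, and the morphisms $f$ with $f_0 = \id{\Z}$. The key observation is that $\C_\Sigma$ is stable under connected inductive limits computed in $\Cda$: given a connected (hence nonempty) diagram valued in $\C_\Sigma$, its underlying degree-$0$ diagram is the constant functor of value $\Z$ with identity transition maps, so its inductive limit in $\Cda$ has degree-$0$ group $\limind_I \Z \simeq \Z$ (it is here that the nonemptiness and connectedness of $I$ are essential), positivity submonoid $\N$, zero augmentation, and identity coprojections in degree $0$. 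Hence the inductive limit lands in $\C_\Sigma$ and coincides with the inductive limit computed in $\C_\Sigma$. Since $\Sigma$ is an isomorphism onto $\C_\Sigma$, it preserves connected inductive limits, and the same argument applied to $\Sigma^{-1}$ shows that $\Sigma^{-1}$ carries a connected inductive limit of objects of $\C_\Sigma$ to the corresponding inductive limit in $\Cda$.

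With this lemma in hand the proposition follows by assembling the pieces. Fix $K$ and a connected diagram $i \mapsto L_i$ with inductive limit $L$. I would compute
\[ \Sigma K \otimes \Sigma L \simeq \Sigma K \otimes \limind_i \Sigma L_i \simeq \limind_i (\Sigma K \otimes \Sigma L_i), \]
using the lemma for $\Sigma$ and then the commutation of the functor $\Sigma K \otimes \var$ with inductive limits. Each complex $\Sigma K \otimes \Sigma L_i$ lies in $\C_\Sigma$ (its degree-$0$ part is $\Z \otimes \Z \simeq \Z$, with positivity submonoid $\N \otimes \N = \N$, and augmentation the product of two zero augmentations), and the transition maps are the identity in degree $0$; the diagram $i \mapsto \Sigma K \otimes \Sigma L_i$ is thus valued in $\C_\Sigma$ and connected, so applying $\Sigma^{-1}$ and invoking the lemma once more gives
\[ K \joint L = \Sigma^{-1}(\Sigma K \otimes \Sigma L) \simeq \limind_i \Sigma^{-1}(\Sigma K \otimes \Sigma L_i) = \limind_i (K \joint L_i). \]
The statement for $L \mapsto L \joint K$ is obtained by the identical computation, using instead the commutation of the tensor product with inductive limits in the first variable.

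The main obstacle, and really the only delicate point, is the verification that connected inductive limits of the suspended diagrams remain inside $\C_\Sigma$, equivalently that $\Sigma$ and $\Sigma^{-1}$ commute with connected inductive limits. This rests entirely on the identity $\limind_I \Z \simeq \Z$ for $I$ connected and nonempty, and it is precisely here that the restriction to connected limits cannot be dropped: for a non-connected (for instance empty) indexing category the degree-$0$ groups would no longer assemble to $\Z$, the suspensions would fail to commute with the limit, and the join does not commute with such inductive limits, in accordance with the merely local (rather than global) closedness of this monoidal structure.
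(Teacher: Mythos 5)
Your proposal is correct and follows essentially the same route as the paper: the formula $K \joint L = \Sigma^{-1}(\Sigma K \otimes \Sigma L)$, the cocontinuity of $\Sigma K \otimes \var$ and $\var \otimes \Sigma L$ coming from biclosedness, and the fact that the non-full subcategory of complexes $M$ with $M_0 = \Z$, $M^\ast_0 = \N$ and zero augmentation (with morphisms that are the identity in degree $0$) is stable under connected inductive limits. Your elaboration of this last point via $\limind_I \Z \simeq \Z$ for $I$ connected and nonempty is exactly the content the paper invokes without spelling out.
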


\begin{proof}
  La formule
  \[ K \joint L = \Sigma^{-1}(\Sigma K  \otimes \Sigma L)\]
  montre que, pour établir le résultat, il suffit de vérifier
  que les foncteurs $\Sigma K \otimes \bullet$,  $\bullet \otimes \Sigma L$
  et le foncteur d'inclusion $\CdaSigma \hookto \CdaSSigma$ (voir le
  paragraphe~\ref{paragr:susp_Cda}) commutent aux limites inductives
  connexes. Les deux premiers foncteurs commutent à toutes les limites
  inductives puisqu'ils admettent des adjoints à droite. En ce qui concerne
  le troisième foncteur, il suffit de vérifier que la limite inductive
  dans $\CdaSSigma$ d'un diagramme connexe à valeurs dans $\CdaSigma$
  est dans $\CdaSigma$ ; cela résulte du fait que les limites
  inductives se calculent degré par degré dans $\CdaSSigma$ et du fait que
  la limite inductive d'un diagramme connexe constant de valeur $\Z$ vaut
  $\Z$.
\end{proof}

\begin{prop}\label{prop:dual_joint_cda}
  Soient $K$ et $L$ deux complexes dirigés augmentés. Les applications
  \[
    \begin{split}
      (K \joint L)_p & \to (L \joint K)_p \\
      x \joint y & \mapsto y \joint x \pbox{,}
    \end{split}
  \]
  pour $p \ge 0$, définissent un isomorphisme
  \[
    (K \joint L)^\opp \simeq L^\opp \joint K^\opp.
  \]
\end{prop}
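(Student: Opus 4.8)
The plan is to verify directly that the degreewise swap $\phi$ given by $\phi(x \joint y) = y \joint x$ defines an isomorphism of complexes dirig\'es augment\'es from $(K \joint L)^\opp$ to $L^\opp \joint K^\opp$; the only delicate point will be compatibility with the differentials, where several sign conventions must be reconciled. First I would note that, summand by summand, $\phi_p$ is the canonical symmetry $K_i \otimes L_j \to L_j \otimes K_i$, $x \otimes y \mapsto y \otimes x$ (with the conventions $K_{-1} = \Z = L_{-1}$ generated by $\vide$), so it is a degree-preserving isomorphism of graded abelian groups.

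Since taking the $\opp$-dual leaves the underlying groups, the augmentation, and the sous-mono\"ides de positivit\'e unchanged (paragraphe~\ref{paragr:def_dual_Cda}), compatibility with these structures reduces to checking generators. For positivity, $\phi$ carries the generator $x \joint y$ of $(K \joint L)^\ast_p$ (with $x \in K^\ast_i$, $y \in L^\ast_j$, and $K^\ast_{-1} = L^\ast_{-1} = \N$) bijectively onto the generator $y \joint x$ of $(L^\opp \joint K^\opp)^\ast_p$. For the augmentation, in degree $0$ one has $\phi(x \joint \vide) = \vide \joint x$ and $\phi(\vide \joint y) = y \joint \vide$, whose augmentations are $e_K(x)$ and $e_L(y)$, exactly those of $x \joint \vide$ and $\vide \joint y$.

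The heart of the proof is compatibility with the differentials. By definition the differential of $(K \joint L)^\opp$ in degree $p$ is $(-1)^p$ times that of $K \joint L$, so for $x \in K_i$, $y \in L_j$ with $p = i+1+j$ the join formula gives
\[
  d_{(K \joint L)^\opp}(x \joint y)
  = (-1)^p\bigl( d_K x \joint y + (-1)^{i+1}\, x \joint d_L y \bigr),
\]
whence $\phi\bigl(d_{(K \joint L)^\opp}(x \joint y)\bigr) = (-1)^p\, y \joint d_K x + (-1)^{p+i+1}\, d_L y \joint x$. On the other hand, since $d_{K^\opp} = (-1)^{i} d_K$ on $K_i$ and $d_{L^\opp} = (-1)^{j} d_L$ on $L_j$, the join differential of $L^\opp \joint K^\opp$ gives
\[
  d_{L^\opp \joint K^\opp}(y \joint x)
  = (-1)^{j}\, d_L y \joint x + (-1)^{i+j+1}\, y \joint d_K x.
\]
Comparing coefficients and using $p = i+1+j$, the two expressions coincide: the coefficient of $y \joint d_K x$ equals $(-1)^p = (-1)^{i+j+1}$ on both sides, and that of $d_L y \joint x$ equals $(-1)^{p+i+1} = (-1)^{j}$ on both sides. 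The boundary cases $i = -1$ or $j = -1$ and the elements of degree $0$ are covered by the same computation, using the conventions $dz = e(z)\vide$ for $z$ of degree $0$ and $d\vide = 0$.

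I expect the sign bookkeeping to be the main obstacle: one must simultaneously track the global factor $(-1)^p$ coming from the $\opp$-dual of the join, the Koszul sign $(-1)^{i+1}$ internal to the join differential, and the factorwise signs $(-1)^i$ and $(-1)^j$ coming from the $\opp$-duals of $K$ and $L$. Once these cancellations are established, $\phi$ is a morphism of complexes dirig\'es augment\'es, and being bijective in each degree it is the announced isomorphism.
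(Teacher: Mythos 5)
Your proof is correct and takes essentially the same route as the paper's: a direct verification that the swap respects positivity, augmentations and, above all, the differentials, and your coefficient comparison is exactly the paper's computation of $sd'$ versus $ds$, with the same sign bookkeeping. Your closing observation that the cases $i=-1$, $j=-1$ and degree~$0$ are absorbed by the conventions $dz = e(z)\vide$ and $d\vide = 0$ is precisely the point the paper isolates in the remark following the proposition (the identity $d'z=(-1)^{|z|}dz$ surviving in degree $0$ because dualities preserve augmentations), which is also why the analogous statement fails for the dualities $\co$ and $\op$.
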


\begin{proof}
  La compatibilité aux augmentations, aux sous-monoïdes de positivité et la
  bijectivité sont évidentes. Il s'agit donc de vérifier la compatibilité
  aux différentielles. Notons $s : x \joint y \mapsto y \joint x$
  l'application de l'énoncé. Pour tout élément homogène de $K \joint L$ de
  la forme $x \joint y$ et de degré au moins $1$, on a, en notant $d'$ les
  différentielles dual impair,
  \[
    \begin{split}
      sd'(x \joint y) & = (-1)^{|x| + 1 + |y|}sd(x \joint y)\\
      & = (-1)^{|x| + 1 + |y|}s(dx \joint y + (-1)^{|x|+1} x \joint dy)\\
      & = (-1)^{|x| + 1 + |y|}y \joint dx + (-1)^{|y|} dy \joint x
    \end{split}
  \]
  et
  \[
    \begin{split}
      ds(x \joint y) & = d(y \joint x)
      = d'y \joint x + (-1)^{|y|+1}y \joint d'x \\
      & = (-1)^{|y|} dy \joint x + (-1)^{|y| + 1 +|x|}y \joint dx \\
      & = (-1)^{|x| + 1 + |y|}y \joint dx + (-1)^{|y|} dy \joint x,
    \end{split}
  \]
  d'où le résultat.
\end{proof}

\begin{rem}
  Outre la dualité triviale, la dualité considérée dans l'énoncé
  précédent est la seule dualité pour laquelle on ait un isomorphisme de ce
  type. En particulier, on n'a \emph{pas} d'isomorphismes
  \[
  (K \joint L)^\co \simeq L^\co \joint K^\co
  \quad\text{ou}\quad
  (K \joint L)^\op \simeq K^\op \joint L^\op.
  \]

  De fait, la preuve précédente ne s'adapte pas aux dualités paire et
  totale. Dans cette preuve, et avec ses notations, on a utilisé le fait que
  l'égalité $d'z = (-1)^{|z|}dz$ reste valable quand $z$ est de degré $0$
  puisque dans cette formule, par convention, les différentielles en degré~$0$
  sont les augmentations, et que les dualités préservent les augmentations
  (voir le paragraphe~\ref{paragr:def_dual_Cda}). Mais si $d''$ et $d'''$
  désignent les différentielles dual pair et dual total, les égalités $d''z
  = (-1)^{|z|+1}dz$ et $d'''z = -dz$ sont fausses lorsque $z$ est de
  degré~$0$.
\end{rem}

\begin{paragr}
  Soient $K$ et $L$ deux complexes dirigés augmentés. Les morphismes
  canoniques $\vide \to K$ et $\vide \to L$ induisent des morphismes
  \[
    K \simeq K \joint \vide \longto K \joint L \longot \vide \joint L \simeq
    L
  \]
  qu'on notera $\iota_1$ et $\iota_2$. On a donc des morphismes
  \[
    K \xto{\iota_1} K \joint L \xot{\iota_2} L.
  \]
  \notindex{$\iota_1 : K \to K \joint L$, $\iota_2 : L \to K \joint L$}%
  (Ce sont les morphismes de la bicoaugmentation locale associée au joint,
  voir l'exemple~\ref{exem:coaug}.)
  Explicitement, pour tout élément homogène $x$ de $K$ et tout élément
  homogène $y$ de $L$, on a
  \[ \iota_1(x) = x \joint \vide \quadet \iota_2(y) = \vide \joint y. \]
\end{paragr}

\begin{paragr}\label{paragr:base_joint}
  Soient $K$ et $L$ des complexes dirigés augmentés admettant des bases $X$
  et~$Y$ respectivement. Il est immédiat que le complexe dirigé augmenté $K
  \joint L$ admet pour base l'ensemble
  %
  \notindex{$X \joint Y$}%
  \[ X \joint Y =
    \{ x \joint \vide \mid x \in X \} \cup \{x \joint y \mid x \in X, y \in Y\}
    \cup \{\vide \joint y \mid y \in Y\}.
  \]
\end{paragr}

\begin{lemme}\label{lemme:tab_joint}
  Soient $K$ et $L$ deux complexes dirigés augmentés.
  Pour tout élément homogène de $K \joint L$ de la forme $x \joint y$,
  tout $r \ge 0$ et $\epsilon = 0, 1$, on~a
  \[
    \atom{x \joint y}^\epsilon_r =
    \sum_{\substack{p + 1 + q = r\\-1 \le p \le |x|,\, -1 \le q \le |y|}}
    \atom{x}^\epsilon_p \joint \atom{y}^{p+1+\epsilon \bmod 2}_q,
  \]
  où on a posé $\atom{z}^0_{-1} = (e(\atom{z}^0_0)\vide)_-$
  et $\atom{z}^1_{-1} = (e(\atom{z}^1_0)\vide)_+$ pour $z$ un élément
  homogène de $K$ ou $L$, et $\atom{\vide}^\epsilon_{-1} = \vide$ pour
  $\epsilon = 0, 1$.
\end{lemme}

\begin{proof}
  \newcommand\esp{\!\!\!\!\!\!\!\!\!\!}
  Observons tout d'abord que si $z$ est un élément homogène de~$K$ ou $L$, en
  ajoutant la convention $d_0 = e$ à nos précédentes conventions, on a
  \hbox{$d(\atom{z}^\e_r) = \atom{z}^1_{r-1} - \atom{z}^0_{r-1}$}
  pour tout $r \ge 0$ et $\e = 0, 1$.  (Rappelons qu'on a convenu que
  $\atom{z}^\e_r = 0$ pour $\e = 0,1$ dès que $r > |z|$.) De plus, cette
  égalité reste valable pour $z = \vide$ si l'on convient que, pour $\e = 0,
  1$, en plus de $\atom{\vide}^\e_{-1} = \vide$, on a $\atom{\vide}^\e_r = 0$
  pour~$r \ge 0$.

  Démontrons maintenant le lemme.  Pour $r > |x \joint y| = |x| + 1 + |y|$,
  les deux membres de l'égalité sont nuls. Supposons donc $r \le |x \joint
  y|$. On va démontrer le résultat par récurrence descendante sur~$r$ de~$|x
  \joint y|$ à $0$.  Pour $r = |x \joint y|$, les deux membres de l'égalité
  valent~$x \joint y$. Supposons le résultat vrai pour $r + 1$ et
  montrons-le pour $r$. Par hypothèse de récurrence, on a, en sous-entendant
  les « ${\bmod}\,2$ »,
  \[
    \begin{split}
      d(\atom{x \joint y}_{r+1}^0)
      & = \esp\sum_{\substack{p' + 1 + q' = r+1\\
                            -1 \le p' \le |x|,\, -1 \le q' \le |y|}}\esp
        d\big(\atom{x}^0_{p'} \joint \atom{y}^{p'+1}_{q'}\big) \\
      & = \esp\sum_{\substack{p' + 1 + q' = r+1\\
                            -1 \le p' \le |x|,\, -1 \le q' \le |y|}}\esp
          \big(
            d(\atom{x}^0_{p'}) \joint \atom{y}^{p'+1}_{q'} +
            (-1)^{p'+1}\atom{x}^0_{p'} \joint d(\atom{y}^{p'+1}_{q'})
          \big) \\
      & = \esp\sum_{\substack{p' + 1 + q' = r+1\\
                            -1 \le p' \le |x|,\, -1 \le q' \le |y|}}\esp
           d(\atom{x}^0_{p'}) \joint \atom{y}^{p'+1}_{q'} +
          \esp\sum_{\substack{p' + 1 + q' = r+1\\
                            -1 \le p' \le |x|,\, -1 \le q' \le |y|}}\esp
           (-1)^{p'+1}\atom{x}^0_{p'} \joint d(\atom{y}^{p'+1}_{q'})
              \\
      & = \esp\sum_{\substack{p + 1 + q = r\\
                            -2 \le p \le |x|-1,\, -1 \le q \le |y|}}\esp
            d(\atom{x}^0_{p + 1}) \joint \atom{y}^{p}_q +
          \esp\sum_{\substack{p + 1 + q = r\\
                            -1 \le p \le |x|,\, -2 \le q \le |y|-1}}\esp
           (-1)^{p+1}\atom{x}^0_p \joint d(\atom{y}^{p+1}_{q+1}) \\*
      & \phantom{=1} \text{\lp en posant $p = p' - 1$ et $q = q'$ dans le
       terme de gauche et $p = p'$ }\\*
      & \phantom{=1 \lp} \text{et $q = q' - 1$ dans celui de
         droite\rp} \\
      & = \esp\sum_{\substack{p + 1 + q = r\\
                            -1 \le p \le |x|,\, -1 \le q \le |y|}}\esp
            d(\atom{x}^0_{p + 1}) \joint \atom{y}^{p}_q +
          \esp\sum_{\substack{p + 1 + q = r\\
                            -1 \le p \le |x|,\, -1 \le q \le |y|}}\esp
           (-1)^{p+1}\atom{x}^0_p \joint d(\atom{y}^{p+1}_{q+1}) \\*
      & \phantom{=1} \text{\lp car on a, d'une part,
          $d(\atom{x}^0_{-1}) = 0$ et $d(\atom{x}^0_{|x|+1}) = 0$ et,
          d'autre} \\*
      & \phantom{=1 \lp} \text{part, $d(\atom{y}^{p+1}_{-1}) = 0$ et
          $d(\atom{y}^{p+1}_{|y|+1}) = 0$)} \\
      & = \esp\sum_{\substack{p + 1 + q = r\\
                            -1 \le p \le |x|,\, -1 \le q \le |y|}}\esp
          \big(
            d(\atom{x}^0_{p + 1}) \joint \atom{y}^{p}_q +
            (-1)^{p+1}\atom{x}^0_p \joint d(\atom{y}^{p+1}_{q+1})
          \big).
    \end{split}
  \]
  Or, si $p$ est pair, la quantité sous le signe somme vaut
  \[
    \begin{split}
     \MoveEqLeft
     d(\atom{x}^0_{p + 1}) \joint \atom{y}^{p}_q +
       (-1)^{p+1}\atom{x}^0_p \joint d(\atom{y}^{p+1}_{q+1}) \\
       & = \big(\atom{x}^1_p \joint \atom{y}^0_q
         - \atom{x}^0_p \joint \atom{y}^0_q\big)
         - \big(\atom{x}^0_p \joint \atom{y}^1_q
         - \atom{x}^0_p \joint \atom{y}^0_q\big) \\
       & = \atom{x}^1_p \joint \atom{y}^0_q
         - \atom{x}^0_p \joint \atom{y}^1_q \\
    \end{split}
  \]
  et, si $p$ est impair,
  \[
    \begin{split}
     \MoveEqLeft
     d(\atom{x}^0_{p + 1}) \joint \atom{y}^{p}_q +
       (-1)^{p+1}\atom{x}^0_p \joint d(\atom{y}^{p+1}_{q+1}) \\
       & = \big(\atom{x}^1_p \joint \atom{y}^1_q
         - \atom{x}^0_p \joint \atom{y}^1_q\big)
         + \big(\atom{x}^0_p \joint \atom{y}^1_q
         - \atom{x}^0_p \joint \atom{y}^0_q\big) \\
       & = \atom{x}^1_p \joint \atom{y}^1_q
         - \atom{x}^0_p \joint \atom{y}^0_q. \\
    \end{split}
  \]
  Ainsi, on a
  \[
    d(\atom{x \joint y}_{r+1}^0) =
      \esp\sum_{\substack{p + 1 + q = r\\
      -1 \le p \le |x|,\, -1 \le q \le |y|}}\esp
          \big(
            \atom{x}^1_p \joint \atom{y}^p_q
            - \atom{x}^0_p \joint \atom{y}^{p+1}_q
          \big)
  \]
  et donc
  \[
    \atom{x \joint y}^0_r =
      \esp\sum_{\substack{p + 1 + q = r\\
      -1 \le p \le |x|,\, -1 \le q \le |y|}}\esp
      \atom{x}^0_p \joint \atom{y}^{p+1}_q
    \quadet
    \atom{x \joint y}^1_p =
      \esp\sum_{\substack{p + 1 + q = r\\
      -1 \le p \le |x|,\, -1 \le q \le |y|}}\esp
      \atom{x}^1_p \joint \atom{y}^{p}_q,
  \]
  ce qu'on voulait démontrer.
\end{proof}

\begin{prop}\label{prop:joint_base_unit}
  Si $K$ et $L$ sont des complexes dirigés augmentés à base unitaire, alors
  il en est de même de $K \joint L$.
\end{prop}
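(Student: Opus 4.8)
We need to show that if $K$ and $L$ are both à base unitaire, then $K \joint L$ is à base unitaire. By the Proposition~\ref{paragr:base_joint}, we already know that $K \joint L$ admits a base, namely
$$X \joint Y = \{x \joint \vide\} \cup \{x \joint y\} \cup \{\vide \joint y\}.$$
So what remains is the unitarity condition: for every basis element $b$ of $K \joint L$, the tableau $\atom{b}$ must be an $|b|$-cell of $\nu(K \joint L)$, equivalently $e(\atom{b}^0_0) = 1 = e(\atom{b}^1_0)$.

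Let me think about which formula I have available. The key tool is Lemma~\ref{lemme:tab_joint}, which gives an explicit formula for $\atom{x \joint y}^\epsilon_r$ as a sum over decompositions $p + 1 + q = r$ of $\atom{x}^\epsilon_p \joint \atom{y}^{p+1+\epsilon \bmod 2}_q$, with appropriate conventions at the boundary ($p = -1$ or $q = -1$).

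So the strategy is: apply the lemma with $r = 0$. The computation of $e(\atom{b}^0_0)$ and $e(\atom{b}^1_0)$ should reduce, via the augmentation formula $e(x \joint \vide) = e(x)$ and $e(\vide \joint y) = e(y)$, to the unitarity of $K$ and $L$ separately.

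**The plan in detail.**

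Let me work out $r = 0$ for a general basis element $b = x \joint y$ where $x$ is a basis element of $K$ (or $\vide$) and $y$ is a basis element of $L$ (or $\vide$). First I would specialize Lemma~\ref{lemme:tab_joint} to $r = 0$. The condition $p + 1 + q = 0$ with $p \ge -1$ and $q \ge -1$ forces $(p, q) \in \{(-1, 0), (0, -1)\}$. So the sum has (at most) two terms, and I get
$$\atom{x \joint y}^\epsilon_0 = \atom{x}^\epsilon_{-1} \joint \atom{y}^{\epsilon \bmod 2}_0 \; + \; \atom{x}^\epsilon_0 \joint \atom{y}^{1+\epsilon \bmod 2}_{-1}.$$

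Now I apply the augmentation, recalling the conventions from the lemma: $\atom{z}^0_{-1} = (e(\atom{z}^0_0)\vide)_-$, $\atom{z}^1_{-1} = (e(\atom{z}^1_0)\vide)_+$, and $\atom{\vide}^\epsilon_{-1} = \vide$. The first delicate point is to track these boundary terms carefully. When $x$ is a genuine basis element of $K$ (not $\vide$), by unitarity of $K$ we have $e(\atom{x}^0_0) = 1 = e(\atom{x}^1_0)$, so $\atom{x}^0_{-1} = (\vide)_- = 0$ and $\atom{x}^1_{-1} = (\vide)_+ = \vide$. Substituting these into the two-term formula and using $e(x \joint \vide) = e(x)$, $e(\vide \joint y) = e(y)$, I expect the two cases $\epsilon = 0$ and $\epsilon = 1$ to each collapse to a single surviving term whose augmentation is $1$, precisely because the unitarity of $K$ and $L$ kills the "wrong" boundary contributions.

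**The main obstacle and how I'd handle it.**

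The main work is a careful bookkeeping of the three types of basis elements ($x \joint \vide$, $x \joint y$, $\vide \joint y$) together with the two values of $\epsilon$ and the boundary conventions; I'd organize this as a short case analysis. I expect the trickiest case to be the mixed generator $x \joint y$ with both $x$ and $y$ nontrivial, where both terms of the two-term sum are genuinely present and I must verify that exactly one survives after applying $e$ and the $\pm$ truncation. The cleanest route is to observe that for $\epsilon = 0$ the relevant parity makes $\atom{y}^{\,?}_{-1}$ and $\atom{x}^0_{-1}$ vanish or become $\vide$ in a complementary way, so that $\atom{x \joint y}^0_0$ reduces to a single term of the form $\atom{x}^0_0 \joint \vide$ or $\vide \joint \atom{y}^0_0$, whose augmentation is $1$ by the unitarity of $K$ (resp. $L$); the case $\epsilon = 1$ is symmetric. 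I would write out the resulting values of $\atom{x \joint y}^\epsilon_0$ explicitly in each case, check $e(\atom{x \joint y}^\epsilon_0) = 1$, and conclude. Since $\atom{b}$ automatically satisfies the differential and positivity conditions (it lies in the right monoids because $K \joint L$ is à base, and $x \joint y \in (K \joint L)^\ast$ for basis elements), the equality $e(\atom{b}^0_0) = 1 = e(\atom{b}^1_0)$ is exactly what is needed for $\atom{b}$ to be a cell of $\nu(K \joint L)$, which is the definition of unitarity.
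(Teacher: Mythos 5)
Your proposal is correct and follows essentially the same route as the paper's proof: both specialize Lemma~\ref{lemme:tab_joint} to $r = 0$, so that the sum reduces to the two terms $(p,q) = (-1,0)$ and $(0,-1)$, and both use the unitarity of $K$ and $L$ through the boundary conventions to see that exactly one term survives in each case (for instance $\atom{x \joint y}^0_0 = \atom{x}^0_0 \joint \vide$ and $\atom{x \joint y}^1_0 = \vide \joint \atom{y}^1_0$), whence the augmentations equal $1$. The paper carries out the same case analysis you sketch, treating $x \joint y$, $x \joint \vide$ and $\vide \joint y$ separately.
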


\begin{proof}
  Notons $X$ et $Y$ les bases respectives des complexes $K$ et $L$.
  Soient $x$ un élément de $X$ et $y$ un élément de $Y$. En vertu
  du lemme précédent, on a
  {
    \allowdisplaybreaks
    \begin{align*}
      \atom{x \joint y}^0_0
      & = \atom{x}^0_{-1} \joint \atom{y}^0_0 +
            \atom{x}^0_0 \joint \atom{y}^1_{-1} \\
      & = (e(x^0_0)\vide)_- \joint \atom{y}^0_0 +
            \atom{x}^0_0 \joint (e(y^1_0)\vide)_+ \\
      & = \vide_- \joint \atom{y}^0_0 + \atom{x}^0_0 \joint \vide_+ \\
      & = 0 \joint \atom{y}^0_0 + \atom{x}^0_0 \joint \vide \\
      & = \atom{x}^0_0 \joint \vide,
    \end{align*}
  }%
  et, par un calcul similaire,
  \[
    \atom{x \joint y}^1_0 = \vide \joint \atom{y}^1_0.
  \]
  On en déduit les égalités
  \[
    e(\atom{x \joint y}^0_0) = e(\atom{x}^0_0) = 1
    \quadet
    e(\atom{x \joint y}^1_0) = e(\atom{y}^1_0) = 1.
  \]
  On a par ailleurs, pour $\e = 0, 1$,
  \[
     \atom{x \joint \vide}^\e_0 = \atom{x}^\e_0 \joint \vide
     \quadet
     \atom{\vide \joint y}^\e_0 = \vide \joint \atom{y}^\e_0,
  \]
  et donc
  \[
    e(\atom{x \joint \vide}^\e_0) = e(\atom{x}^\e_0) = 1
    \quadet
    e(\atom{\vide \joint y}^\e_0) = e(\atom{y}^\e_0) = 1,
  \]
  ce qui achève de prouver que le complexe $K \joint L$ est à base unitaire.
\end{proof}

\begin{lemme}\label{lemme:joint_morph_atom}
  Soient $f : K \to K'$ et $g : L \to L'$ des morphismes entre complexes
  dirigés augmentés à base. Considérons un élément homogène de $K
  \joint L$ de la forme~$x \joint y$. On suppose qu'il existe un élément
  homogène de $K' \joint L'$ de la forme $x' \joint y'$ tel que
  \begin{enumerate}
    \item si $x$ et $y$ sont différents de $\vide$, alors il
      en est de même de $x'$ et $y'$, et on a
      \[
         f(\atom{x}) = \atom{x'} \quadet
         g(\atom{y}) = \atom{y'},
      \]
    \item si $x = \vide$, alors
      \[ x' = \vide \quadet g(\atom{y}) = \atom{y'}, \]
    \item si $y = \vide$, alors
      \[ f(\atom{x}) = \atom{x'} \quadet y' = \vide. \]
  \end{enumerate}
   Alors on a
  \[ (f \joint g)(\atom{x \joint y}) = \atom{x' \joint y'}. \]
\end{lemme}

\begin{proof}
  Pour tout $k \ge 0$ et $\e = 0, 1$, on a
  {
    \allowdisplaybreaks
    \begin{align*}
      (f \joint g)(\atom{x \joint y}^\e_k)
      & =
      (f \joint g)\Big(
      \sum_{\substack{p + 1 + q = k\\
      -1 \le p \le |x|,\, -1 \le q \le |y|}}
      \atom{x}^\epsilon_p \joint \atom{y}^{p+1 + \e \bmod 2}_q\Big) \\*
      & \phantom{= 1} \text{(en vertu du
      lemme~\ref{lemme:tab_joint} et avec ses conventions)} \\
      & =
      \sum_{\substack{p + 1 + q = k\\
      -1 \le p \le |x|,\, -1 \le q \le |y|}}
      f\big(\atom{x}^\epsilon_p\big)
        \joint g\big(\atom{y}^{p+1 + \e \bmod 2}_q\big) \\*
        & \phantom{= 1} \text{(en convenant que $m(\vide) = \vide$ pour $m =
      f, g$)} \\
      & =
      \sum_{\substack{p + 1 + q = k\\
      -1 \le p \le |x'|,\, -1 \le q \le |y'|}}
      \atom{x'}^\epsilon_p \joint \atom{y'}^{p+1 + \e \bmod 2}_q,
    \end{align*}
  }%
  la dernière égalité découlant du fait que, d'une part, on a
  $f(\atom{x}^\e_p) = 0$ si $p > |x'|$ et $g(\atom{y}^{\e'}_q) = 0$ si~$q >
  |y'|$ et, d'autre part, nos hypothèses entraînent les égalités
  $f(\atom{x}^{\e}_{-1}) = \atom{x'}^{\e}_{-1}$ et
  \hbox{$g(\atom{y}^{\e'}_{-1}) = \atom{y'}^{\e'}_{-1}$}
  (toujours avec les conventions du lemme~\ref{lemme:tab_joint}).
  Une seconde application du lemme~\ref{lemme:tab_joint} permet de conclure.
\end{proof}

\begin{prop}\label{prop:joint_rig}
  Si $f : K \to K'$ et $g : L \to L'$ sont deux morphismes rigides
  \noemph{(voir le paragraphe~\ref{paragr:def_rigide})} entre complexes dirigés augmentés à
  base, alors leur joint $f \joint g : K \joint L \to K' \joint L'$ est
  également rigide.
\end{prop}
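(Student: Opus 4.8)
The plan is to check the two defining conditions of rigidity (see paragraph~\ref{paragr:def_rigide}) for $f \joint g$, namely pre-rigidity and the compatibility with the atom tables, reducing both to the corresponding properties of $f$ and $g$ via the explicit description of the base of a join and via Lemma~\ref{lemme:joint_morph_atom}.

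First I would recall, following paragraph~\ref{paragr:base_joint}, that if $X$ and $Y$ denote the bases of $K$ and $L$, then the base of $K \joint L$ consists of the elements $x \joint \vide$, $x \joint y$ and $\vide \joint y$ for $x$ in $X$ and $y$ in $Y$, and similarly for $K' \joint L'$. Since $f \joint g$ acts by $(f \joint g)(x \joint y) = f(x) \joint g(y)$ (with the convention $m(\vide) = \vide$ for $m = f, g$), and since $f$ and $g$, being rigid, are in particular pr\'erigides, the images $f(x)$ and $g(y)$ lie in $X'$ and $Y'$. Hence $f \joint g$ sends each of the three types of basis elements of $K \joint L$ to a basis element of $K' \joint L'$ of the same type, which establishes that $f \joint g$ is pr\'erigide.

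It remains to verify that for every basis element $w$ of $K \joint L$ and every $\e = 0, 1$ and $k$ with $0 \le k \le |w|$, one has $(f \joint g)(\atom{w}^\e_k) = \atom{(f \joint g)(w)}^\e_k$. This is exactly the conclusion of Lemma~\ref{lemme:joint_morph_atom}, so I only need to check its hypotheses for each $w$. The rigidity of $f$ means precisely that $f(\atom{x}^\e_k) = \atom{f(x)}^\e_k$ for every $x$ in $X$ and all $k$: the cases $k > |x|$ are automatic since both sides vanish ($f$ preserving degrees), and the degenerate case $x = \vide$ reduces to $f(0) = 0$ under the stated conventions. The analogous statement holds for $g$. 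Lemma~\ref{lemme:joint_morph_atom} then yields $(f \joint g)(\atom{w}^\e_k) = \atom{(f \joint g)(w)}^\e_k$ for all $k \ge 0$, and a fortiori for $0 \le k \le |w|$, which completes the verification.

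Since the combinatorial heart of the argument --- the formula for $\atom{x \joint y}^\e_r$ and its behaviour under $f \joint g$ --- is already packaged in Lemmas~\ref{lemme:tab_joint} and~\ref{lemme:joint_morph_atom}, I do not expect a genuinely hard step here; the only points requiring care are the bookkeeping with the conventions $\atom{\vide}^\e_k = 0$ and $m(\vide) = \vide$ when one of the two factors of a basis element is $\vide$, and the remark that pre-rigidity of $f \joint g$ can be read off directly from the explicit base of the join.
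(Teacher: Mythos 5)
Your proof is correct and follows essentially the same route as the paper, which deduces the proposition immediately from the lemme~\ref{lemme:joint_morph_atom}; you merely make explicit the routine verifications (pre-rigidity via the description of the base of the join, the automatic cases $k > |x|$, and the conventions for $\vide$) that the paper leaves implicit.
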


\begin{proof}
  Cela résulte immédiatement du lemme précédent.
\end{proof}

\begin{prop}\label{prop:joint_morph_atom}
  Soient $f : K \to K'$ et $g : L \to L'$ des morphismes entre complexes
  dirigés augmentés à base unitaire et $x \joint y$ un élément de la
  base de~$K \joint L$. Supposons qu'il existe un élément $x' \joint y'$ de
  la base de $K' \joint L'$ tel que
  \begin{enumerate}
    \item si $x$ et $y$ sont différents de $\vide$, alors il en est de même
      de $x'$ et $y'$, et on a
      \[
        \nu(f)(\atom{x}) = \id{\atom{x'}}
        \quadet
        \nu(g)(\atom{y}) = \id{\atom{y'}},
      \]
    \item si $x = \vide$, alors
    \[ x' = \vide \quadet \nu(g)(\atom{y}) = \id{\atom{y'}}, \]
    \item si $y = \vide$, alors
      \[ \nu(f)(\atom{x}) = \id{\atom{x'}} \quadet y' = \vide, \]
  \end{enumerate}
  où $\id{}$, dans les égalités ci-dessus, désigne une identité itérée
  (éventuellement $0$ fois).
  Alors on a
  \[ \nu(f \joint g)(\atom{x \joint y}) = \id{\atom{x' \joint y'}}. \]
  En particulier, lorsque $x' = f(x)$ et $y' = g(y)$ vérifient les
  hypothèses ci-dessus (en convenant que $m(\vide) = \vide$ pour $m = f,
  g$), on a
  \[ \nu(f \joint g)(\atom{x \joint y}) = \atom{f(x) \joint g(y)}. \]
\end{prop}

\begin{proof}
  Cela résulte immédiatement du lemme~\ref{lemme:joint_morph_atom}.
\end{proof}

\begin{prop}\label{prop:joint_atom_univ}
  Soient $K$ et $L$ deux complexes dirigés augmentés à base unitaire et $x
  \joint y$ un élément de la base de $K \joint L$, avec $x$ de degré $i$ et
  $y$ de degré $j$. 
  \begin{enumerate}
    \item Supposons $i \ge 0$ et $j \ge 0$, et notons $z$ la cellule
      principale de $\Dn{i}$ et $t$ celle de~$\Dn{j}$. Alors le diagramme
  \[
    \xymatrix@C=4pc{
    \nu(\lambda(\Dn{i}) \joint \lambda(\Dn{j}))
    \ar[r]^-{\nu\big(\widetilde{\atom{x}} \joint \widetilde{\atom{y}}\big)}
    & \nu(K \joint L) \\
    \Dn{i+1+j} \ar[u]^{\atom{z \joint t}}
    \ar[ur]_{\atom{x \joint y}}
    & \pbox{,}
    }
  \]
  où $\widetilde{\atom{x}} : \lambda(\Dn{i}) \to K$ et $\widetilde{\atom{y}}
  : \lambda(\Dn{j}) \to L$ désignent les morphismes transposés de $\atom{x}
  : \Dn{i} \to \nu(K)$ et $\atom{y} : \Dn{j} \to \nu(L)$ respectivement, est
  commutatif.
  \item Si $i = -1$, de sorte que $x = \vide$ et $\Dn{i} = \vide$, alors, en
    notant toujours $t$ la cellule principale de $\Dn{j}$, le triangle
    ci-dessus commute pour $z = \vide$ et $\widetilde{\atom{x}}$ l'unique
    morphisme de $\lambda(\Dn{i}) = \vide$ vers $\nu(K)$.
  \item Si $j = -1$, de sorte que $y = \vide$ et $\Dn{j} = \vide$, alors,
    en notant toujours $z$ la cellule principale de $\Dn{i}$, le triangle ci-dessus
    commute pour $t = \vide$ et $\widetilde{\atom{y}}$ l'unique morphisme
    de $\lambda(\Dn{j}) = \vide$ vers $\nu(L)$.
  \end{enumerate}
\end{prop}

\begin{proof}
  Il s'agit de montrer qu'on a
  \[
    \nu\big(\widetilde{\atom{x}} \joint \widetilde{\atom{y}}\big)(\atom{z
    \joint t}) = \atom{x \joint y}.
  \]
  Cela résulte de la proposition précédente puisque, par
  définition, si $i \ge 0$, on a
    $\nu(\widetilde{\atom{x}})(\atom{z}) = \atom{x}$ et, si $j \ge 0$, on a
    $\nu(\widetilde{\atom{y}})(\atom{t}) = \atom{y}$.
\end{proof}

\begin{prop}\label{prop:joint_sans_boucle}
  Si $K$ et $L$ sont des complexes dirigés augmentés décents \noemph{(voir le
  paragraphe~\ref{paragr:def_decent})} à base fortement
 sans boucle, alors il en est de même de $K \joint L$. De plus, les
 morphismes
  \[
    \iota_1 : K \to K \joint L \quadet \iota_2 : L \to K \joint L
  \]
  sont des inclusions rigides ordonnées.
\end{prop}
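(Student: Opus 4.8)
The plan is to work throughout with the explicit basis of $K \joint L$ furnished by paragraph~\ref{paragr:base_joint}, namely $X \joint Y$ where $X$ and $Y$ are the bases of $K$ and $L$. I split it into three families: \emph{type A} the elements $x \joint \vide$ ($x \in X$), \emph{type B} the elements $\vide \joint y$ ($y \in Y$), and \emph{type C} the elements $x \joint y$ ($x \in X$, $y \in Y$). Decency of $K \joint L$ is immediate: every element of $(K\joint L)^\ast_0$ is a nonnegative combination of the degree-$0$ basis elements $x \joint \vide$ ($x \in X_0$) and $\vide \joint y$ ($y \in Y_0$), whose augmentations are $e(x)$ and $e(y)$, each $\ge 0$ since $K$ and $L$ are decent (paragraph~\ref{paragr:def_decent}). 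The substance of the statement is therefore the strong loop-freeness of $K \joint L$ and the fact that $\iota_1, \iota_2$ are ordered rigid inclusions.

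First I would tabulate $\supp(d(b)_+)$ and $\supp(d(b)_-)$ for each family, using the differential formula of paragraph~\ref{paragr:def_joint_cda},
\[
  d(x \joint y) = dx \joint y + (-1)^{|x|+1} x \joint dy,
\]
with the conventions $dz = e(z)\vide$ in degree $0$ and $d\vide = 0$. This yields the generating one-step relations of the preorder $\leN$ (paragraph~\ref{paragr:def_le_N}). The outcome, in which the hypothesis of decency is used precisely to fix the signs of the augmentation terms $e(x)(\vide \joint y)$ and $e(y)(x \joint \vide)$, is: on type A the one-step relation is exactly that of $\leN^L$ for $K$, i.e.\ $\leN$ on $X$; on type B it is exactly $\leN$ on $Y$; every successor of a type-B element is again type B; the only type-changing relations are $x \joint \vide \leftrightarrow x \joint y$ (same $x$, forcing $|y| = 0$, the direction being governed by the parity of $|x|$) and $x \joint y \to \vide \joint y$ (same $y$, forcing $|x| = 0$); and within type C the first coordinate moves along $\leN$ for $K$ while the second moves along $\leN$ for $L$ or along its opposite according to the parity of $|x|$.

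The acyclicity of $\leN$ I would then establish by a minimal-cycle argument in the spirit of lemme~\ref{lemme:amalg_Steiner}. A nontrivial $\leN$-cycle unwinds to a closed one-step walk through at least two distinct basis elements. Such a walk cannot meet type B, since type-B elements have all their successors in type B and $\leN$ on $Y$ is an order. On the remaining elements the first-coordinate projection $p_1(x \joint \vide) = p_1(x \joint y) = x$ is monotone: the type-changing and second-coordinate edges fix $x$, while all other edges move it along $\leN$ for $K$; as this is an order, $x$ must be constant along the cycle. On the resulting fixed-$x$ subgraph, consisting of the single type-A vertex $x \joint \vide$ together with type-C vertices $x \joint y$, the type-A vertex has no incoming edge when $|x|$ is even and no outgoing edge when $|x|$ is odd, so in either case it lies on no cycle; removing it leaves a copy of $\leN$ for $L$ or of its opposite on the second coordinate, both of which are orders. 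Hence the subgraph is acyclic, a contradiction, and $K \joint L$ is strongly loop-free. The same two facts settle $\iota_1$ and $\iota_2$: both send bases into the basis, so they are prerigid monomorphisms and thus rigid by proposition~\ref{prop:mono_rigide}; the forward implication of the order condition is then automatic (lemme~\ref{lemme:rig_comp_pm} and the remark in paragraph~\ref{paragr:def_rigide_ordonnee}), while the converse follows from $p_1$-monotonicity (for $\iota_1$, a walk between two type-A elements forces $x \leN x'$ in $K$) and from the closure of type B under successors (for $\iota_2$, a walk between two type-B elements stays in type B, giving the relation in $L$).

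The main obstacle is the parity twist carried by the sign $(-1)^{|x|+1}$, which alternates the direction of the second-coordinate and mixed relations and so rules out any naive lexicographic order on $X \joint Y$; the first-coordinate projection together with the parity case analysis is exactly what circumvents it. I expect the bulk of the honest work to be the sign bookkeeping in the differential tabulation of the second step, where one must keep careful track of which boundary terms land in $d(b)_+$ versus $d(b)_-$ and verify that decency forces the type-B elements to remain sinks and the type-A/type-C exchanges to depend only on the parity of $|x|$.
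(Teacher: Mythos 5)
Your proof is correct, and its computational core coincides with the paper's: the tabulation of $d(x\joint y)_{\pm}$ with the parity sign $(-1)^{|x|+1}$, the use of decency to force the augmentation terms $e(x)(\vide\joint y)$ and $e(y)(x\joint\vide)$ into the \emph{positive} parts, and the resulting structural facts (type B is closed under successors, mixed edges fix one coordinate, the second coordinate moves along $\leN^L$ or its opposite according to the parity of $|x|$). Where you diverge is in how this data yields strong loop-freeness. The paper first extends the order on each factor to an order $\leNv$ on $X\cup\{\vide\}$ (resp.\ $Y\cup\{\vide\}$) by adjoining $\vide$ above the degree-$0$ basis elements of positive augmentation, and then defines one explicit parity-twisted lexicographic order $\lec$ on all of $X\joint Y$: $x\joint y\lec x'\joint y'$ iff $x\lNv x'$, or $x=x'$ with $y\leNv y'$ when $|x|$ is odd and $y\geNv y'$ when $|x|$ is even. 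It then only checks that the generating relations of $\leN$ are contained in $\lec$; antisymmetry of $\leN$ follows at once, and so does the order condition for $\iota_1$ and $\iota_2$, since $\lec$ restricts to $\leN^K$ on type A and to $\leN^L$ on type B. You instead prove acyclicity directly by a minimal-cycle argument on the generating relation --- the technique the paper uses for lemme~\ref{lemme:amalg_Steiner}, not for this proposition --- via monotonicity of the first-coordinate projection and the sink property of type B, and you then need separate walk arguments for $\iota_1$ and $\iota_2$. Both routes are sound: the auxiliary order $\lec$ packages your projection-plus-parity analysis into a single object and makes the statements about $\iota_1$, $\iota_2$ immediate, at the cost of verifying that $\lec$ is an order; your version avoids introducing $\lec$ but handles the closed-walk case analysis (exclusion of type B, constancy of the first coordinate, the fixed-$x$ subgraph) by hand. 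One minor slip to fix: in your tabulation, ``the one-step relation on type A is exactly that of $\leN^L$ for $K$'' should of course read $\leN^K$.
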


\begin{proof}
  Soit $M$ un complexe dirigé augmenté décent admettant une base~$Z$.
  On rappelle (voir le paragraphe~\ref{paragr:def_le_N}) qu'on note
  $\leN$ la plus petite relation de préordre sur $Z$ satisfaisant
  \[
    x \leN y \quad\text{si}\quad x \in \supp(d(y)_-) \text{ ou }
    y \in \supp(d(x)_+),
  \]
  où, par convention, $d(z) = 0$ si $z$ est dans $Z_0$. De
  même, on notera $\leNv$ la plus petite relation de préordre sur $Z
  \cup \{\vide\}$ satisfaisant
  \[
    x \leNv y \quad\text{si}\quad x \in \supp(d(y)_-) \text{ ou }
    y \in \supp(d(x)_+),
  \]
  où cette fois-ci on a convenu que $d(z)_+ = e(z)\vide$ et $d(z)_- = 0$ si $z$
  est dans $Z_0$, et que $d(\vide)_+ = 0$ et $d(\vide)_- = 0$. Par
  définition, la relation $\leNv$ est la plus petite relation de
  préordre prolongeant $\leN$ et vérifiant $z \leNv \vide$ pour tout $z$
  dans $Z$ de degré $0$ tel que $e(z) > 0$. On en déduit immédiatement que
  la relation de préordre $\leNv$ est une relation d'ordre si et seulement
  si la relation de préordre $\leN$ en est une.

  Démontrons maintenant la proposition. Il est immédiat que le complexe $K
  \joint L$ est décent et il s'agit de montrer qu'il est à base
  unitaire. Notons $X$ et $Y$ les bases respectives de $K$ et $L$.
  On définit une relation sur la base $X \joint Y$ de $K \joint L$ en posant
  \[
    x \joint y \lec x' \joint y' \quaddefssi
    {
    \setstretch{0}
    \begin{cases}
      & x \lNv x' \\
      \text{ou} \\
      & \text{$x = x'$, $|x|$ est impair et $y \leNv y'$} \\
      \text{ou} \\
      & \text{$x = x'$, $|x|$ est pair et $y \geNv y'$.}
    \end{cases}
    }
  \]
  Par hypothèse, les relations $\leN$ sur $K$ et $L$ sont des relations
  d'ordre et il en est donc de même des relations $\leNv$. On en déduit
  facilement que la relation $\lec$ est également une relation d'ordre.

  Soient $x \joint y$ et $x' \joint y'$ dans $X \joint Y$. Montrons que la
  relation $\lec$ vérifie la propriété
  \[
    x \joint y \in \supp(d(x' \joint y')_-) \text{ ou }
    x' \joint y' \in \supp(d(x \joint y)_+)
    \quadimpl
    x \joint y \lec x' \joint y',
  \]
  ce qui établira que la relation de préordre $\leN$ du complexe $K \joint
  L$ est une relation d'ordre. Supposons tout d'abord que $x \joint y$
  appartienne au support de $d(x' \joint y')_-$. Puisqu'on a
  \[
    \begin{split}
      d(x' \joint y')_-
      & = \big(d(x') \joint y' + (-1)^{|x'|+1} x' \joint d(y')\big)_- \\
      & = d(x')_- \joint y' + x' \joint d(y')_{(-1)^{|x'|}},
    \end{split}
  \]
  la deuxième égalité résultant de la non-simplification de termes pour des
  raisons de bidegrés, cela signifie que $x \joint y$ appartient au support d'un
  des deux termes de cette somme. Si~$x \joint y$ appartient au support
  de~$d(x')_- \joint y'$, cela signifie que $x$ appartient au support
  de~$d(x')_-$ et donc que $x \lNv x'$. On a donc bien $x \joint y \lec x'
  \joint y'$. Si $x \joint y$ appartient au support de~$x' \joint
  d(y')_{(-1)^{|x'|}}$, cela signifie que $x = x'$ et que $y$ appartient au
  support de~$d(y')_{(-1)^{|x'|}}$.  Ainsi, si $|x'|$ est impair, on a $y
  \leNv y'$ et, si $|x'|$ est pair, on a $y' \leNv y$. On a donc dans tous
  les cas $x \joint y \lec x' \joint y'$, ce qu'on voulait montrer. Si
  maintenant~$x' \joint y'$ appartient au support de $d(x \joint y)_+$, on
  conclut de manière analogue en utilisant la formule
  \[
    d(x \joint y)_+ = d(x)_+ \joint y + x \joint d(y)_{(-1)^{|x|+1}}.
  \]
  Ceci achève de montrer que $K \joint L$ est à base unitaire.

  Montrons enfin que le morphisme $\iota_1 : K \to K \joint L$ est une
  inclusion rigide ordonnée, le morphisme $\iota_2 : L \to K \joint L$ se
  traitant de manière analogue. Il est immédiat que ce morphisme est une
  inclusion rigide. Soient $x$ et $x'$ dans la base de $K$. Il s'agit de
  montrer l'implication
  \[ x \joint \vide \leN x' \joint \vide \quadmath{\Rightarrow} x \leN x'. \]
  Or on a
  \[
    x \joint \vide \leN x' \joint \vide \quadmath{\Rightarrow}
      x \joint \vide \lec x' \joint \vide
    \qquad\text{et}\qquad
    x \joint \vide \lec x' \joint \vide \quadmath{\Leftrightarrow}
      x \leN x',
  \]
  d'où le résultat.
\end{proof}

\begin{coro}\label{coro:joint_Steiner}
 Si $K$ et $L$ sont des complexes de Steiner forts, alors il en est de même
 de $K \joint L$. De plus, les morphismes
  \[
    \iota_1 : K \to K \joint L \quadet \iota_2 : L \to K \joint L
  \]
  sont des inclusions rigides ordonnées.
\end{coro}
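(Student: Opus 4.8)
Le plan est de d\'eduire ce corollaire des deux propositions qui le pr\'ec\`edent, sans aucun calcul suppl\'ementaire. Je commencerais par d\'eballer les d\'efinitions : d'apr\`es le paragraphe~\ref{paragr:def_ooCat_Stf}, un complexe de Steiner fort n'est autre qu'un complexe dirig\'e augment\'e \`a base dont l'unique base est unitaire et fortement sans boucle. Or, en vertu du paragraphe~\ref{paragr:def_decent}, tout complexe \`a base unitaire est d\'ecent. Ainsi, si $K$ et $L$ sont des complexes de Steiner forts, ce sont \`a la fois des complexes dirig\'es augment\'es \`a base unitaire et des complexes dirig\'es augment\'es d\'ecents \`a base fortement sans boucle ; les hypoth\`eses des propositions~\ref{prop:joint_base_unit} et~\ref{prop:joint_sans_boucle} sont donc simultan\'ement satisfaites.

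Il ne resterait plus qu'\`a combiner ces deux propositions. La proposition~\ref{prop:joint_sans_boucle} fournit d'une part que $K \joint L$ est d\'ecent \`a base fortement sans boucle, et d'autre part que les morphismes $\iota_1 : K \to K \joint L$ et $\iota_2 : L \to K \joint L$ sont des inclusions rigides ordonn\'ees, ce qui est exactement la seconde assertion de l'\'enonc\'e. La proposition~\ref{prop:joint_base_unit} fournit quant \`a elle que $K \joint L$ est \`a base unitaire. Comme ces deux descriptions portent sur l'unique base de $K \joint L$ (celle d\'ecrite au paragraphe~\ref{paragr:base_joint}), on conclut que $K \joint L$ est un complexe dirig\'e augment\'e \`a base unitaire et fortement sans boucle, c'est-\`a-dire un complexe de Steiner fort.

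Il n'y a pour ainsi dire pas d'obstacle \`a ce stade : tout le travail technique a d\'ej\`a \'et\'e effectu\'e dans les deux propositions invoqu\'ees, \`a savoir l'analyse combinatoire de la relation $\lec$ sur la base $X \joint Y$ (garantissant le caract\`ere fortement sans boucle via la relation $\leN$) et le calcul des atomes $\atom{x \joint y}^\e_0$ reposant sur le lemme~\ref{lemme:tab_joint} (garantissant l'unitarit\'e). Le seul point \`a surveiller est la compatibilit\'e des d\'efinitions : il faut s'assurer que les propri\'et\'es \'etablies s\'epar\'ement se r\'ef\`erent bien \`a la m\^eme base de $K \joint L$, et v\'erifier que l'implication \emph{\`a base unitaire} $\Rightarrow$ \emph{d\'ecent} est bien celle qui permet d'activer la proposition~\ref{prop:joint_sans_boucle} \`a partir de la seule hypoth\`ese \guillemotleft\,Steiner fort\,\guillemotright.
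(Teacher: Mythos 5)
Votre démonstration est correcte et suit exactement la même voie que celle du texte, qui se contente d'invoquer les propositions~\ref{prop:joint_base_unit} et~\ref{prop:joint_sans_boucle} ; vous explicitez simplement le point (laissé implicite dans le texte mais consigné au paragraphe~\ref{paragr:def_decent}) qu'un complexe à base unitaire est décent, ce qui légitime l'application de la proposition~\ref{prop:joint_sans_boucle}. Rien à redire.
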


\begin{proof}
  Cela résulte des propositions~\ref{prop:joint_base_unit}
  et \ref{prop:joint_sans_boucle}.
\end{proof}

\begin{rem}\label{rem:Stf_monoidal}
  En vertu du corollaire précédent (et du fait que le complexe dirigé
  augmenté $\vide$  est de Steiner fort), la catégorie des complexes de
  Steiner forts est une sous-catégorie monoïdale de la catégorie monoïdale
  des complexes dirigés augmentés définie par le joint.
\end{rem}

\begin{prop}
  Soient $K$ un complexe de Steiner fort et $F : I \to \Cda$ un système de
  Steiner fort connexe \noemph{(voir le
  paragraphe~\ref{paragr:def_syst_Steiner})}.
  Alors le foncteur
  \[
    \begin{split}
      K \joint F & : I \to \Cda \\
      &\phantom{{:I}} i \mapsto K \joint F(i)
    \end{split}
  \]
  est un système de Steiner fort.
\end{prop}

\begin{proof}
  Le foncteur $F$ étant un système rigide, il en est de même du foncteur
  \hbox{$K \joint F : i \mapsto K \joint F(i)$} en vertu de la
  proposition~\ref{prop:joint_rig}. Par ailleurs, puisque d'après le
  corollaire~\ref{coro:joint_Steiner} les complexes de Steiner forts sont
  stables par joint, le foncteur $K \joint F$ est à valeurs dans les
  complexes de Steiner forts. Enfin, le foncteur $K \joint \var$ commutant
  aux limites inductives connexes (voir la
  proposition~\ref{prop:joint_Cda_limind}), le morphisme canonique
  \[
    \limind_{i \in I} (K \joint F(i))
    \to
    K \joint \limind_{i \in I} F(i)
  \]
  est un isomorphisme de complexes dirigés augmentés et, pour tout
  objet $i_0$ de~$I$, le morphisme canonique $K \joint F(i_0) \to \limind_{i
  \in I} (K \joint F(i))$ s'identifie à travers cet isomorphisme au joint
  $K \joint F(i_0) \to K \joint \limind_{i \in I} F(i)$ de $K$ et du morphisme
  canonique associé à $F$. On en déduit le résultat en invoquant de nouveau
  la proposition~\ref{prop:joint_rig} et le
  corollaire~\ref{coro:joint_Steiner}.
\end{proof}

\begin{coro}
  Si $K$ est un complexe de Steiner fort, alors le foncteur
  \[
    \begin{aligned}
      \Cda & \to \ooCat \\
      L & \mapsto \nu(K \joint L)
    \end{aligned}
  \]
  commute aux limites inductives des systèmes de Steiner forts connexes.
\end{coro}

\begin{proof}
  Puisque le foncteur $\nu$ commute aux limites inductives des systèmes de
  Steiner forts (théorème~\ref{thm:nu_syst_Steiner}),
  l'assertion résulte de la proposition précédente.
\end{proof}

\begin{coro}
  Soit $K$ un complexe de Steiner fort. Alors le foncteur
  \[
    \begin{split}
      \ThetaAug & \to \ooCat \\
      S  & \mapsto \nu(K \joint \lambda(S))
    \end{split}
  \]
  commute aux sommes globulaires.
\end{coro}

\begin{proof}
  Cela résulte du corollaire précédent puisque, en vertu de
  la proposition~\ref{prop:Theta_Steiner}, les sommes globulaires proviennent
  de systèmes de Steiner forts (qui sont bien sûr connexes).
\end{proof}

\begin{paragr}\label{paragr:def_tr_St}
  Fixons $K$ un complexe de Steiner fort. Soient $C$ une \oo-catégorie
  et \hbox{$u : \nu(K) \to C$} un \oo-foncteur. Nous allons définir une
  \oo-catégorie \nnot{$\cotr{C}{u}$}. Il résulte du corollaire
  précédent que le foncteur
  \[
    \begin{split}
      (\ThetaAug)^\op & \to \Ens \\
      S  & \mapsto \Hom_{\ooCat}(\nu(K \joint \lambda(S)), C)
    \end{split}
  \]
  envoie les sommes globulaires sur des produits globulaires au sens du
  paragraphe~\ref{paragr:pu_ThetaAug}. Ainsi, en vertu de ce même
  paragraphe et avec ces notations, ce foncteur définit une \oo-catégorie
  $\Hom_{\ooCat}(\nu(K \joint \lambda(\Dn{\var})), C)$
  au-dessus de l'ensemble
  \[ \Hom_{\ooCat}(\nu(K \joint \lambda(\vide)), C) = \Hom_{\ooCat}(\nu(K
  \joint \vide), C) \simeq \Hom_{\ooCat}(\nu(K), C). \]
  On définit la \oo-catégorie $\cotr{C}{u}$ comme la fibre en $u$ de cette
  \oo-catégorie au-dessus de~$\Hom_{\ooCat}(\nu(K), C)$. Autrement dit,
  avec les notations du paragraphe~\ref{paragr:pu_ThetaAug}, on
  pose
  \[ \cotr{C}{u} = \Hom_{\ooCat}(\nu(K \joint \lambda(\Dn{\var})), C)_u. \]
  Explicitement, les $i$-flèches de $\cotr{C}{u}$ sont les \oo-foncteurs
  $\nu(K \joint \lambda(\Dn{i})) \to C$ rendant le
  triangle
  \[
    \xymatrix{
      \nu(K \joint \lambda(\Dn{i})) \ar[r] & C \\
      \nu(K) \ar[u]^{\nu(\iota_1)} \ar[ru]_u
    }
  \]
  commutatif.

  Notons qu'un morphisme $f : K \to K'$ entre complexes de Steiner forts
  induit, pour toute \oo-catégorie $C$, une application
  \[
    \Hom_{\ooCat}(\nu(K' \joint \lambda(S)), C)
      \to \Hom_{\ooCat}(\nu(K \joint \lambda(S)), C),
  \]
  naturelle en $S$ dans $\ThetaAug$. Ainsi, toujours en vertu du
  paragraphe~\ref{paragr:pu_ThetaAug}, pour tout triangle commutatif
  \[
    \xymatrix@C=1.5pc{
      \nu(K) \ar[rr]^{\nu(f)} \ar[dr]_u & & \nu(K') \ar[dl]^(.44){u'} \\
      & C & \pbox{,}
    }
  \]
  on obtient un \oo-foncteur $\cotr{C}{u'} \to \cotr{C}{u}$.
\end{paragr}

\begin{prop}\label{prop:pu_cotranche}
  Fixons $K$ un complexe de Steiner fort. Pour tout complexe de Steiner fort
  $L$, toute \oo-catégorie $C$ et tout \oo-foncteur $u : \nu(K) \to C$, on a une
  bijection
  \[
    \Hom_{\cotr{\ooCat}{\nu(K)}}((\nu(K \joint L), \nu(\iota_1)), (C, u))
    \simeq
    \Hom_{\ooCat}(\nu(L), \cotr{C}{u}),
  \]
  naturelle en $L$ et $u$.
\end{prop}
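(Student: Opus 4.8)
We must establish a natural bijection
\[
  \Hom_{\cotr{\ooCat}{\nu(K)}}((\nu(K \joint L), \nu(\iota_1)), (C, u))
  \simeq
  \Hom_{\ooCat}(\nu(L), \cotr{C}{u})
\]
for $K$ and $L$ complexes de Steiner forts. The key observation is that the
right-hand side, by definition of $\cotr{C}{u}$ in
paragraphe~\ref{paragr:def_tr_St}, unwinds to data living in the
\oo-cocat�gorie coaugment�e $\nu(K \joint \lambda(\Dn{\var}))$. An
\oo-foncteur $\nu(L) \to \cotr{C}{u}$ amounts, for each $i$-fl�che of $\nu(L)$,
to an $i$-fl�che of $\cotr{C}{u}$, that is, a \oo-foncteur $\nu(K \joint
\lambda(\Dn{i})) \to C$ under $\nu(K)$, all compatible with the
\oo-cat�gorie structure. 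The plan is thus to express both sides in terms of
the representing objects $\nu(K \joint \lambda(\Dn{i}))$ and reduce the claim
to a density/limit argument.

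**The main steps.**

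First I would use the density of $\ThetaAug$ in $\ooCat$
(proposition~\ref{prop:Theta_dense}) to write $\nu(L)$ as the canonical
connected inductive limit of its objects of $\ThetaAug$ lying over it; since
$L$ is a complexe de Steiner fort, the relevant diagram over it is a
syst�me de Steiner fort via proposition~\ref{prop:Theta_Steiner}, so
$\nu$ commutes with this limit by
th�or�me~\ref{thm:nu_syst_Steiner}. Next, the crucial point is that the
functor $L \mapsto K \joint L$ commutes with connected inductive limits
(proposition~\ref{prop:joint_Cda_limind}), so $K \joint L$ is itself realized
as the corresponding limit of the $K \joint \lambda(S)$, and again $\nu$
commutes with it by the corollaires following
proposition~\ref{prop:joint_sans_boucle} (specifically the one asserting that
$L \mapsto \nu(K \joint L)$ commutes with limits of connected syst�mes de
Steiner forts). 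Both $\Hom$ functors send these connected inductive limits to
projective limits, so it suffices to establish the bijection for $L =
\lambda(S)$ with $S$ an object of $\ThetaAug$, and by density once more
(reducing $S$ to its disques $\Dn{i}$ via its syst�me de Steiner fort) it
suffices to treat the generating case $L = \lambda(\Dn{i})$, where
$\nu(\lambda(\Dn{i})) \simeq \Dn{i}$ by
proposition~\ref{prop:Theta_Steiner}.

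**The generating case and the obstacle.**

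In the case $L = \lambda(\Dn{i})$, the left-hand side becomes the set of
\oo-foncteurs $\nu(K \joint \lambda(\Dn{i})) \to C$ under $\nu(K)$ via
$\nu(\iota_1)$, while the right-hand side is $\Hom_{\ooCat}(\Dn{i},
\cotr{C}{u}) = (\cotr{C}{u})_i$, which by the very definition of $\cotr{C}{u}$
is exactly the set of \oo-foncteurs $\nu(K \joint \lambda(\Dn{i})) \to C$
making the triangle under $\nu(K)$ commute. Thus in the generating case the
two sides are equal essentially by unwinding definitions, the comparison map
being the identity. The hard part will be verifying that the isomorphisms
assembled from the two density reductions are compatible, that is, that the
bijection obtained termwise for the $\Dn{i}$ is genuinely natural and glues
correctly along the transition morphisms of the syst�mes de Steiner forts;
this requires knowing that the representing object $\nu(K \joint
\lambda(\Dn{\var}))$ is a bona fide \oo-cocat�gorie coaugment�e in $\ooCat$
(which is the content of the preceding corollaire) and that the functor
$\Hom_\var(\nu(K \joint \lambda(\Dn{\var})), C)$ really computes $\cotr{C}{u}$
fibrewise over $u$. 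I would therefore organize the argument around the
\oo-cocat�gorie structure from the start, invoking the Yoneda-type results of
paragraphe~\ref{paragr:def_Hom_coaug} so that the naturality is built in
rather than checked by hand at the end.
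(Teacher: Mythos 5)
Your reduction breaks at its first step, and the failure is precisely the point that the paper's hands-on proof is designed to get around. You write $\nu(L)$ as the canonical density colimit $\limind_{(S,\, S \to \nu(L))} S$ and then want $\nu$, and above all $\nu(K \joint \var)$, to commute with this colimit, citing Proposition~\ref{prop:Theta_Steiner}, Theorem~\ref{thm:nu_syst_Steiner} and the corollary on connected strong Steiner systems. But Proposition~\ref{prop:Theta_Steiner} only says that, for a \emph{fixed} object $S$ of $\ThetaAug$, the globular-sum diagram $G_S$ defining $S$ is a strong Steiner system; it says nothing about the density diagram $\tr{\ThetaAug}{\nu(L)} \to \Cda$, $(S, S \to \nu(L)) \mapsto \lambda(S)$. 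That diagram is indexed by \emph{all} morphisms of $\ThetaAug$ over $\nu(L)$, in particular the degeneracies: by paragraph~\ref{paragr:desc_lambda_cocat}, $\lambda(\kappa_i)$ sends the top-dimensional basis element of $\lambda(\Dn{i+1})$ to $0$, so these transition morphisms are not even prerigid, let alone rigid. Hence the density diagram is not a Steiner system, and neither Theorem~\ref{thm:nu_syst_Steiner} nor its corollaries apply to it. (The right-hand side of your reduction, $\Hom_{\ooCat}(\nu(L), \cotr{C}{u}) \simeq \limproj_S \Hom_{\ooCat}(S, \cotr{C}{u})$, is fine by density; it is the left-hand side, which needs $(\nu(K \joint L), \nu(\iota_1))$ to be the colimit of the $(\nu(K \joint \lambda(S)), \nu(\iota_1))$, that is unsupported.)

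This missing commutation $\nu(K \joint L) \simeq \limind_{S \to \nu(L)} \nu(K \joint \lambda(S))$ is not a detail one can patch: a posteriori it is a consequence of Theorem~\ref{thm:joint} (via $\nu(K \joint L) \simeq \nu(K) \joint \nu(L)$ and commutation of $\joint$ with connected colimits), but Theorem~\ref{thm:joint} is itself deduced, through the Day-type Corollary~\ref{coro:Day_loc}, from the very proposition you are proving, so assuming it here is circular in substance. Your remaining steps are sound: the case $L = \lambda(\Dn{i})$ is indeed tautological from paragraph~\ref{paragr:def_tr_St}, and the reduction of a globular sum to its disks is legitimate because those diagrams have rigid ordered inclusions as transition maps and are strong Steiner systems. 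What is needed to pass from globular sums to an arbitrary strong Steiner complex $L$ is exactly what the paper does instead: it constructs the two inverse maps $\phi$ and $\psi$ explicitly, defining $\psi(G)$ on the atoms of $\nu(K \joint L)$ (which generate it freely in the polygraphic sense by Theorem~\ref{thm:Steiner_pol}) and verifying compatibility with sources and targets by means of Propositions~\ref{prop:eng_pol_comp} and~\ref{prop:joint_atom_univ}; this atom-by-atom verification replaces the colimit argument that is formally unavailable at this stage of the development.
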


\begin{proof}
  Si $M$ est un complexe de Steiner fort et $t$ est un élément de la base de
  $M$ de degré $i \ge 0$, on notera, pour simplifier,
  \[
    \begin{split}
    t^\epsilon_j & = \atom{t}^\epsilon_j \quad\text{pour $0 \le j \le i$}
    \quad\text{(voir le paragraphe~\ref{paragr:def_atome})}, \\
    t^0_{-1} & = 0 \quadet t^1_{-1} = \vide. \\
    \end{split}
  \]
  On notera par ailleurs $\vide^0_{-1} = \vide$ et $\vide^1_{-1} = \vide$.

  On va produire des fonctions
  \[
    \phi : \Hom_{\cotr{\ooCat}{\nu(K)}}((\nu(K \joint L), \nu(\iota_1)), (C, u))
    \to
    \Hom_{\ooCat}(\nu(L), \cotr{C}{u})
  \]
  et
  \[
    \psi :
    \Hom_{\ooCat}(\nu(L), \cotr{C}{u})
    \to
    \Hom_{\cotr{\ooCat}{\nu(K)}}((\nu(K \joint L), \nu(\iota_1)), (C, u))
  \]
  inverses l'une de l'autre.

  Commençons par $\phi$. Soit $F : \nu(K \joint L) \to C$ un \oo-foncteur
  au-dessous de $\nu(K)$. On définit un \oo-foncteur $\phi(F) : \nu(L) \to
  \cotr{C}{u}$ de la manière suivante. Soit $y : \Dn{i} \to \nu(L)$, pour $i
  \ge 0$, une $i$-flèche de $\nu(L)$. On doit lui associer une $i$-flèche
  $\phi(F)(y)$ de $\cotr{C}{u}$, c'est-à-dire un \oo-foncteur $\nu(K \joint
  \lambda(\Dn{i})) \to C$ au-dessous de $\nu(K)$. On pose
  \[
    \phi(F)(y) = \quad \nu(K \joint \lambda(\Dn{i})) \xto{\nu(K \joint
    \tilde{y})} \nu(K \joint L) \xto{F} C,
  \]
  où on a noté $\tilde{y} : \lambda(\Dn{i}) \to L$ le transposé de $y :
  \Dn{i} \to \nu(L)$. Ce \oo-foncteur étant le composé de deux \oo-foncteurs
  au-dessous de $\nu(K)$, il est bien au-dessous de $\nu(K)$. Le fait que
  $\phi(F)$ soit bien un \oo-foncteur résulte de la naturalité en
  $\Dn{i}$, et plus généralement en $S$ dans $\Theta$, du \oo-foncteur
  $\phi(F)(y)$. En particulier, pour $x$ un élément de la base de $K$ ou $x
  = \vide$, $y$ un élément de la base de~$L$ de degré $i$ et $z$ la cellule
  principale de~$\Dn{i}$, on a, pour $j$ tel que $-1 \le j \le i$ et
  $\epsilon = 0, 1$,
  \[
    \phi(F)(\atom{y})(\atom{x \joint z^\epsilon_j}) = F(\atom{x \joint
    y^\epsilon_j}).
  \]
  De plus, cette formule détermine $\phi(F)$ de manière unique puisque les
  \oo-catégories $\nu(L)$ et~$\nu(K \joint \lambda(\Dn{i}))$ sont engendrées
  librement au sens des polygraphes par leurs atomes en vertu du
  théorème~\ref{thm:Steiner_pol}.

  Définissons maintenant $\psi$. Soit $G : \nu(L) \to \cotr{C}{u}$ un
  \oo-foncteur. Il s'agit de définir un \oo-foncteur $\psi(G) : \nu(K \joint
  L) \to C$ au-dessous de $\nu(K)$. En vertu du
  théorème~\ref{thm:Steiner_pol}, la \oo-catégorie $\nu(K \joint L)$ est
  engendrée librement au sens des polygraphes par ses atomes. Il suffit donc
  de définir $\psi(G)$ sur les atomes de $\nu(K \joint L)$ et de vérifier
  les compatibilités aux sources et aux buts. Soit $x \joint y$ dans la base
  de $K \joint L$ avec $x$ de degré $i$ et $y$ de degré~$j$. On pose
  \[
    \psi(G)(\atom{x \joint y}) = \quad
    \Dn{i+1+j}
    \xto{\atom{x \joint z'_j}}
    \nu(K \joint \lambda(\Dn{j}))
    \xto{G(\atom{y})}
    C,
  \]
  où $z'$ désigne la cellule principale de $\Dn{j}$ si $j \ge 0$, et $z' =
  \vide$ si $j = -1$ et, par convention, $G(\atom{\vide}) : \nu(K) \to C$
  désigne $u$.

  Vérifions maintenant les compatibilités aux sources et aux buts. Fixons
  $m \ge 0$ et supposons que les formules ci-dessus définissent bien un
  $m$-foncteur. Il s'agit de montrer que, pour tous $x$ et $y$ comme
  ci-dessus avec $i + 1 + j = m + 1$, on a
  \[
    \psi(G)(s(\atom{x \joint y})) = s(\psi(G)(\atom{x \joint y}))
    \quad\text{et}\quad
    \psi(G)(t(\atom{x \joint y})) = t(\psi(G)(\atom{x \joint y})).
  \]
  Montrons la première égalité, la seconde se démontrant de manière
  analogue. Notons $z$ et $z'$ les cellules principales respectives de
  $\Dn{i}$ et $\Dn{j}$, en convenant que $z = \vide$ si $i = -1$ et $z' =
  \vide$ si $j = -1$. Puisque la \oo-catégorie \hbox{$\nu(\lambda(\Dn{i})
  \joint \lambda(\Dn{j}))$} est engendrée librement au sens des polygraphes
  par ses atomes, en vertu de la proposition~\ref{prop:eng_pol_comp}, ses
  atomes l'engendrent également par compositions et il existe donc une
  formule $\chi$ exprimant la source de $\atom{z_i \joint z'_j}$ en fonction
  des $\atom{z^\e_k \joint z'^\ep_l}$ avec $-1 \le k \le i$, $-1 \le l \le
  j$, $0 \le k + 1 + l < i + 1 + j$, $\e = 0, 1$ et $\ep = 0, 1$.
  On notera $\chi[\atom{z^\e_k \joint z'^\ep_l}]$ l'évaluation de la
  formule $\chi$ en les éléments $z^\e_k \joint z'^\ep_l$. On a donc
  $s(\atom{z_i \joint z'_j}) = \chi[\atom{z^\e_k \joint z'^\ep_l}]$. Plus
  généralement, il résulte de la proposition~\ref{prop:joint_atom_univ} que
  la même formule $\chi$ permet de calculer la source d'un atome $\atom{m
  \joint n}$, où $m$ est de degré $i$ et $n$ de degré $j$, d'un joint
  quelconque de complexes dirigés augmentés à base unitaire~$M$ et $N$. On
  obtient ainsi
  \[
    \begin{split}
      \psi(G)(s(\atom{x \joint y}))
      & =
      \psi(G)(\chi[\atom{x^\e_k \joint y^\ep_l}]) \\*
      & =
      \chi[\psi(G)(\atom{x^\e_k \joint y^\ep_l})]\\*
      & \phantom{=1} \text{(car $\psi(G)$ est un $m$-foncteur et $k + 1 + l
    < i + 1 + j = m + 1$)} \\
      & = \chi[G(\atom{y^\ep_l})(\atom{x^\e_k \joint z''_l})],
    \end{split}
  \]
  où $z''$ désigne la cellule principale de $\Dn{l}$ (en convenant toujours
  que $z'' = \vide$ si $l = - 1$), cette dernière égalité résultant de la
  définition de $\psi(G)$. Par ailleurs, on a
  \[
    G(\atom{y^\ep_l})(\atom{x^\e_k \joint z''_l})
    = G(\atom{y})(\atom{x^\e_k \joint z'^\ep_l}).
  \]
  En effet, si $l = -1$, c'est une conséquence des égalités
  \[
    G(\vide)(\atom{x^\e_k \joint \vide}) =
      u(\atom{x^\e_k}) = G(\atom{y})(\atom{x^\e_k \joint \vide})
  \]
  et, pour $l \ge 0$ et $\ep = 0$, cela résulte du calcul suivant :
  \[
    \begin{split}
      G(\atom{y^0_l})(\atom{x^\e_k \joint z''_l})
      & =
      G(s_l(\atom{y}))(\atom{x^\e_k \joint z''_l}) \\
      & =
      s_l(G(\atom{y}))(\atom{x^\e_k \joint z''_l}) \\
      & =
      G(\atom{y})
      \big(\nu(K \joint \lambda(\sigma_l^i))(\atom{x^\e_k \joint z''_l})\big)
        \\*
      & \phantom{=1} \text{\lp par définition des sources des
        cellules de $\cotr{C}{u}$\rp}\\
      & =
      G(\atom{y})(\atom{x^\e_k \joint z'^0_l}),
    \end{split}
  \]
  la dernière égalité étant conséquence de la
  proposition~\ref{prop:joint_morph_atom} puisque
  $\sigma_l^i(\atom{z''_l}) = \atom{z'^0_l}$. La démonstration dans le cas
  $\ep = 1$ est analogue. En insérant cette égalité dans notre calcul
  précédent, on obtient
   {
    \begin{align*}
      \psi(G)(s(\atom{x \joint y}))
      & = \chi[G(\atom{y^\ep_l})(\atom{x^\e_k \joint z''_l})] \\
      & = \chi[G(\atom{y})(\atom{x^\e_k \joint z'^\ep_l})] \\*
      & =
      G(\atom{y})(\chi[\atom{x^\e_k \joint z'^\ep_l}]) \\*
      & \phantom{=1} \text{(car $G(\atom{y})$ est un \oo-foncteur)} \\
      & =
      G(\atom{y})(s(\atom{x \joint z'_j})) \\
      & =
      s(G(\atom{y})(\atom{x \joint z'_j})) \\
      & =
      s(\psi(G)(\atom{x \joint y})),
    \end{align*}
  }%
  la dernière égalité résultant de la définition de $\psi(G)$,
  ce qui achève de montrer que~$\psi(G)$ est bien un \oo-foncteur.
  Ce \oo-foncteur $\psi(G)$ est bien au-dessous de $\nu(K)$. En effet, par
  définition, pour $x$ un élément de la base de $K$, on a \hbox{$\psi(G)(\atom{x
  \joint \vide}) = u(\atom{x})$}. Le \oo-foncteur $\psi(G)\iota_1 : \nu(K)
  \to C$ coïncide donc avec le \oo-foncteur $u$ sur les atomes et on obtient
  le résultat puisque $\nu(K)$ est engendrée librement au sens des
  polygraphes par ses atomes en vertu du théorème~\ref{thm:Steiner_pol}.

  Enfin, vérifions que $\phi$ et $\psi$ sont bien des bijections
  inverses l'une de l'autre. Soient $F : \nu(K \joint L) \to C$ au-dessous de
  $\nu(K)$ et $G : \nu(L) \to \cotr{C}{u}$ deux \oo-foncteurs. On a, avec
  les notations précédentes,
  \[
    \psi\phi(F)(\atom{x \joint y}) = \phi(F)(\atom{y})(\atom{x \joint z'_j})
    = F(\atom{x \joint y})
  \]
  et
  \[
    \phi\psi(G)(\atom{y})(\atom{x \joint z^\epsilon_j})
    = \psi(G)(\atom{x \joint y^\epsilon_j})
    = G(\atom{y})(\atom{x \joint z^\epsilon_j}).
  \]
  Les \oo-foncteurs $\psi\phi(F)$ et $F$ (resp. les \oo-foncteurs
  $\phi\psi(G)$ et $G$) coïncident donc sur les atomes et sont donc égaux,
  ce qu'il fallait démontrer.
\end{proof}

\begin{paragr}\label{paragr:pu_tranche}
  Soit $L$ un complexe de Steiner fort. On montre de même que le foncteur
  \[
    \begin{aligned}
      \Cda & \to \ooCat \\
      K & \mapsto \nu(K \joint L)
    \end{aligned}
  \]
  commute aux limites inductives des systèmes de Steiner forts connexes. On en
  déduit que le foncteur
  \[
    \begin{split}
      (\ThetaAug)^\op & \to \Ens \\
      S  & \mapsto \Hom_{\ooCat}(\nu(\lambda(S) \joint L), C)
    \end{split}
  \]
  envoie les sommes globulaires sur des produits globulaires. On définit
  alors, comme dans le paragraphe~\ref{paragr:def_tr_St}, pour toute
  \oo-catégorie $C$ et tout \oo-foncteur $v : \nu(L) \to C$, une
  \oo-catégorie~\nnot{$\trm{C}{v}$} en posant
  \[ \trm{C}{v} = \Hom_{\ooCat}(\nu(\lambda(\Dn{\var}) \joint L), C)_v. \]
  (La décoration « $\mathrm{co}$ » dans cette notation sera expliquée dans le
  paragraphe~\ref{paragr:def_tranche} et surtout dans la
  remarque~\ref{rem:cojoint_ooCat}.) On montre, comme dans la proposition
  précédente, que, pour tout complexe de Steiner fort~$K$, toute
  \oo-catégorie $C$ et tout \oo-foncteur $v : \nu(L) \to C$, on a une
  bijection
  \[
    \Hom_{\cotr{\ooCat}{\nu(L)}}((\nu(K \joint L), \nu(\iota_2)), (C, v)) \simeq
    \Hom_{\ooCat}(\nu(K), \trm{C}{v}),
  \]
  naturelle en $K$ et $v$.
\end{paragr}

\begin{thm}\label{thm:joint}
  Il existe une et une seule (à unique isomorphisme monoïdal près) structure
  de catégorie monoïdale sur $\ooCat$ de produit
  \begin{alignat*}{2}
    \joint & : \ooCat \times \ooCat & \to \ooCat\\
    & \phantom{=1}\quad\,\,(A, B) & \mapsto A \joint B
  \end{alignat*}
  ayant les deux propriétés suivantes :
  \begin{enumerate}
    \item le foncteur $\nu_{\vert \Stf} : \Stf \to \ooCat$, où la catégorie
      des complexes de Steiner forts~$\Stf$ est munie de la structure de
      catégorie monoïdale définie par le joint, s'étend en un foncteur
      monoïdal ;
    \item le foncteur $\joint : \ooCat \times \ooCat \to \ooCat$ commute aux
      petites limites inductives connexes en chaque variable.
  \end{enumerate}
  De plus, cette structure monoïdale est localement bifermée (au sens du
  paragraphe~\ref{paragr:def_loc_ferm}).
\end{thm}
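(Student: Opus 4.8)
The strategy is to invoke Corollary~\ref{coro:Day_loc}, taking $\C = \ooCat$ and $\D$ to be the full subcategory of \oo-catégories de Steiner fortes. First I would verify the structural hypotheses: $\ooCat$ is complete and cocomplete, and by Proposition~\ref{prop:Theta_dense} the category $\ThetaAug$ is a small dense subcategory of $\ooCat$; since every object of $\ThetaAug$ is a \oo-catégorie de Steiner forte (by Proposition~\ref{prop:Theta_Steiner}), the subcategory $\D$ contains this dense subcategory and hence is itself dense in $\ooCat$. The monoidal structure on $\D$ comes by transport: the join on $\Stf$ defines a monoidal structure (Remarque~\ref{rem:Stf_monoidal}) whose unit is the initial object $\vide$, and since $\nu$ restricted to $\Stf$ is fully faithful (Théorème~\ref{thm:Steiner}), we transport this to a monoidal structure on $\D$ with unit $\nu(\vide)$, the empty \oo-catégorie, which is indeed an initial object of $\ooCat$ contained in $\D$.

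The crucial hypothesis of Corollary~\ref{coro:Day_loc} is the existence of the functors $H$ and $H'$ together with the adjunction bijections. This is precisely where Proposition~\ref{prop:pu_cotranche} and its dual variant (paragraphe~\ref{paragr:pu_tranche}) enter. For objects $S = \nu(K)$ and $S' = \nu(L)$ of $\D$ (with $K$, $L$ complexes de Steiner forts) and any $Z = C$ in $\ooCat$, Proposition~\ref{prop:pu_cotranche} furnishes a natural bijection
\[
  \Hom_{\cotr{\ooCat}{\nu(K)}}((\nu(K \joint L), \nu(\iota_1)), (C, u))
  \simeq
  \Hom_{\ooCat}(\nu(L), \cotr{C}{u}),
\]
so setting $H'(S, u, C) = \cotr{C}{u}$ provides the left-closedness data; symmetrically, paragraphe~\ref{paragr:pu_tranche} gives $H(S', v, C) = \trm{C}{v}$ for right-closedness. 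One must check these assemble into functors on $\FlTordD(\D, \C)$ and $\FlTordG(\D, \C)$ respectively—the functoriality in the slice arguments follows from the naturality clauses in the cited statements, together with the functoriality of the slice construction recalled at the end of paragraphe~\ref{paragr:def_tr_St}.

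Granting these inputs, Corollary~\ref{coro:Day_loc} yields a unique monoidal structure on $\ooCat$ (up to unique monoidal isomorphism) whose tensor product commutes with small connected inductive limits in each variable and which extends the join on $\D$ along the inclusion; moreover this structure is automatically localement biferm\'ee. It remains only to identify the abstract tensor product produced by the theorem with the join $\joint$ announced in the statement, which is immediate since by construction it restricts to the join on $\Stf$ via $\nu$. \textbf{The main obstacle} is not in this final assembly—which is formal once the hypotheses are in place—but rather lies upstream in Proposition~\ref{prop:pu_cotranche} itself, whose proof requires building the slices $\cotr{C}{u}$ by hand and establishing their universal property via the polygraphic freeness of $\nu(K \joint L)$ (Th\'eor\`eme~\ref{thm:Steiner_pol}) and the explicit atom computations of Lemme~\ref{lemme:tab_joint} and Proposition~\ref{prop:joint_atom_univ}.
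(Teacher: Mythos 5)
Your proposal is correct and is essentially the paper's own proof: the paper deduces the theorem by applying Corollary~\ref{coro:Day_loc} to $\C = \ooCat$ and $\D$ the full subcategory of strong Steiner $\infty$-categories (identified with $\Stf$ via $\nu$), with $\ThetaAug$ as the small dense subcategory, the hypotheses being, as you say, precisely the content of Proposition~\ref{prop:pu_cotranche} and paragraph~\ref{paragr:pu_tranche}. The details you spell out (density of $\D$ via Propositions~\ref{prop:Theta_dense} and~\ref{prop:Theta_Steiner}, transport of the monoidal structure along $\nu_{\vert\Stf}$, and the identification of $H$ and $H'$ with $(S',v,C) \mapsto \trm{C}{v}$ and $(S,u,C) \mapsto \cotr{C}{u}$) are exactly what the paper leaves implicit.
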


\begin{proof}
  Le théorème résulte du corollaire~\ref{coro:Day_loc} appliqué à $\C =
  \ooCat$ et~$\D$~la catégorie des \oo-catégories de Steiner fortes (voir le
  paragraphe~\ref{paragr:def_ooCat_Stf}) munie du joint (par
  l'identification de cette sous-catégorie à celle des complexes de Steiner
  forts), la petite sous-catégorie dense étant la catégorie $\ThetaAug$. En
  effet, les hypothèses de ce corollaire sont précisément le contenu de la
  proposition~\ref{prop:pu_cotranche} et du
  paragraphe~\ref{paragr:pu_tranche}.
\end{proof}

\begin{paragr}\label{paragr:def_joint}
  On appellera \ndef[joint!$\infty$-catégorique]{joint} le produit monoïdal
  \[
    \joint : \ooCat \times \ooCat \to \ooCat
  \]
  \notindex{$A \joint B$}%
  défini par le théorème précédent. Si $K$ et $L$ sont des complexes de
  Steiner forts, on a, en vertu de ce même théorème, un isomorphisme
  canonique
  \[
    \nu(K) \joint \nu(L) \simeq \nu(K \joint L).
  \]
  En particulier, si $S$ et $T$ sont deux objets de $\ThetaAug$, puisqu'en
  vertu de la proposition~\ref{prop:Theta_Steiner} on a $S \simeq
  \nu\lambda(S)$ et $T \simeq \nu\lambda(T)$, on obtient un isomorphisme
  canonique
  \[ S \joint T \simeq \nu(\lambda(S) \joint \lambda(T)). \]
  Plus généralement, si $A$ et $B$ sont deux \oo-catégories, on a des
  isomorphismes canoniques
  \[
    \begin{split}
    A \joint B
    & \simeq \limind_{\substack{S \to A \in \tr{\ThetaAug}{A}\\
    T \to B \in \tr{\ThetaAug}{B}}} S \joint T \\
    & \simeq \limind_{\substack{S \to A \in \tr{\ThetaAug}{A}\\
    T \to B \in \tr{\ThetaAug}{B}}} \nu(\lambda(S) \joint \lambda(T)).
    \end{split}
  \]
  En effet, puisque la catégorie $\ThetaAug$ est dense dans $\ooCat$ et
  contient la \oo-catégorie vide, toute \oo-catégorie est limite inductive
  canonique connexe d'objets de $\ThetaAug$. La formule résulte alors du
  fait que le joint commute aux limites inductives connexes en chaque
  argument.

  L'unité du joint est l'image par $\nu$ du complexe dirigé
  augmenté $\vide$, c'est-à-dire la \oo-catégorie vide qu'on notera
  également~$\vide$. Si $A$ est une \oo-catégorie, on a donc
  \[
    A \joint \vide \simeq A \simeq \vide \joint A.
  \]

  Si $A$ et $B$ sont deux \oo-catégories, on notera
  \[
    A \xto{\iota_1} A \joint B \xot{\iota_2} B
  \]
  \notindex{$\iota_1 : A \to A \joint B$, $\iota_2 : B \to A \joint B$}%
  les \oo-foncteurs
  \[
    A \simeq A \joint \vide \longto A \joint B
        \longot \vide \joint B \simeq B,
  \]
  où les flèches pointant vers $A \joint B$ sont induites par les
  \oo-foncteurs $\vide \to B$ et \hbox{$\vide \to A$}. (Ce sont les
  \oo-foncteurs de la bicoaugmentation locale associée au joint, voir
  l'exemple~\ref{exem:coaug}.)
\end{paragr}

\begin{paragr}\label{paragr:def_tranche}
  En vertu du théorème~\ref{thm:joint}, la structure de catégorie monoïdale
  définie par le joint est localement bifermée. Cela signifie exactement que
  les foncteurs
  \[
    \begin{split}
      \ooCat & \to \cotr{\ooCat}{A} \\
      B & \mapsto (A \joint B, \iota_1 : A \to A \joint B)
    \end{split}
  \]
  et
  \[
    \begin{split}
      \ooCat & \to \cotr{\ooCat}{B} \\
      A & \mapsto (A \joint B, \iota_2 : B \to A \joint B)
    \end{split}
  \]
  admettent des adjoints à droite. On obtient donc des couples de foncteurs
  adjoints
  \[
    \begin{split}
      \ooCat & \to \cotr{\ooCat}{A}, \\
      B & \mapsto (A \joint B, \iota_1)
    \end{split}
    \qquad
    \qquad
    \begin{split}
      \cotr{\ooCat}{A} & \to \ooCat \\
      (C, A \xto{u} C) & \mapsto \cotr{C}{u} \\
    \end{split}
  \]
  et
  %
  \notindex{$\cotr{C}{u}$, $\trm{C}{u}$}%
  \[
    \begin{split}
      \ooCat & \to \cotr{\ooCat}{B}, \\
      A & \mapsto (A \joint B, \iota_2)
    \end{split}
    \qquad
    \qquad
    \begin{split}
      \cotr{\ooCat}{B} & \to \ooCat. \\
      (C, B \xto{v} C) & \mapsto \trm{C}{v} \\
    \end{split}
  \]

  Ainsi, si $A$ et $B$ sont des \oo-catégories et $u : A \to C$ et $v : B
  \to C$ des \oo-foncteurs, on a des bijections naturelles
  \[
    \begin{split}
      \Hom_{\cotr{\ooCat}{A}}((A \joint B, \iota_1), (C, u))
      & \simeq \Hom_{\ooCat}(B, \cotr{C}{u}), \\
      \Hom_{\cotr{\ooCat}{B}}((A \joint B, \iota_2), (C, v))
      & \simeq \Hom_{\ooCat}(A, \trm{C}{v}). \\
    \end{split}
  \]
  Si $C$ est une \oo-catégorie et $u : A \to C$ est un \oo-foncteur, on
  appellera la \oo-catégorie $\cotr{C}{u}$ la
  \ndef[tranche!$\infty$-catégorique!au-dessous]{tranche de $C$ au-dessous
  de $u$}. Si $v : B \to C$ est un \oo-foncteur, on réservera la notation
  $\tr{C}{v}$ et la terminologie « tranche de $C$ au-dessus de $v$ » à une
  variante de~\smash{$\trm{C}{v}$} qu'on introduira dans la
  remarque~\ref{rem:cojoint_ooCat}.

  Notons que, dans le cas où la source de $u$ est de la forme $\nu(K)$ pour
  $K$ un complexe de Steiner fort, la \oo-catégorie $\cotr{C}{u}$ que l'on
  vient d'introduire coïncide, en vertu de la
  proposition~\ref{prop:pu_cotranche}, avec celle définie dans le
  paragraphe~\ref{paragr:def_tr_St}. De même, pour la \oo-catégorie
  \smash{$\trm{C}{v}$} et la \oo-catégorie définie au
  paragraphe~\ref{paragr:pu_tranche}.
\end{paragr}

\begin{rem}
  La tranche $\cotr{C}{u}$ définie au paragraphe précédent est une tranche
  généralisée au sens où on prend la tranche de $C$ au-dessous d'un
  \oo-foncteur. Dans le cas où la source du \oo-foncteur $u$ est la
  \oo-catégorie finale, la donnée de $u$ devient équivalente à celle d'un
  objet de $C$ et on se trouve alors dans le cadre usuel des tranches. Dans
  le chapitre~\ref{sec:desc_expl}, on décrira explicitement ces tranches
  au-dessous d'un objet et on vérifiera que notre définition est compatible
  avec les définitions usuelles quand $C$ est une $1$-catégorie ou une
  $2$-catégorie.
\end{rem}

\begin{paragr}\label{paragr:desc_tr}
  Fixons $A$ une \oo-catégorie. Soient $C$ une \oo-catégorie et $u : A \to C$
  un \oo-foncteur. Par adjonction, pour $i \ge 0$, on a
  \[
    \Hom_{\ooCat}(\Dn{i}, \cotr{C}{u}) \simeq \Hom_{\cotr{\ooCat}{A}}((A \joint
    \Dn{i}, \iota_1), (C, u)).
  \]
  Ainsi, une $i$-flèche de $\cotr{C}{u}$ est donnée par un \oo-foncteur
  $A \joint \Dn{i} \to C$ faisant commuter le triangle
  \[
    \xymatrix{
      A \joint \Dn{i} \ar[r] & C \\
      A \ar[u]^{\iota_1} \ar[ru]_u & \pbox{.}
    }
  \]
  Notons qu'avec les notations du paragraphe~\ref{paragr:pu_ThetaAug} (voir
  également le paragraphe~\ref{paragr:def_tr_St}), on a un isomorphisme
  canonique
  \[ \cotr{C}{u} \simeq \Hom_{\ooCat}(A \joint \cocatD, C)_u. \]
\end{paragr}

\begin{prop}\label{prop:lambda_nu_monoidaux_joint}
  Les foncteurs
  \[
    \lambda : \ooCat \to \Cda
    \quadet
    \nu : \Cda \to \ooCat
  \]
  sont monoïdal et monoïdal lax respectivement, les catégories $\ooCat$ et
  $\Cda$ étant toutes deux munies des structures de catégorie monoïdale
  définies par le joint.
\end{prop}

\begin{proof}
  Par adjonction, il suffit de montrer que le foncteur $\lambda$ est
  monoïdal. On a évidemment $\lambda(\vide) = \vide$. Par ailleurs, si
  $A$ et $B$ sont deux \oo-catégories, on a, en désignant par $S$ et $T$ des
  objets de $\ThetaAug$,
  {
    \allowdisplaybreaks
    \begin{align*}
      \lambda(A \joint B)
      & \simeq
      \lambda\big(\limind_{S \to A, T \to B} \nu(\lambda(S) \joint
      \lambda(T))\big) \\*
      & \phantom{\simeq 1} \text{(en vertu du
      paragraphe~\ref{paragr:def_joint})} \\
      & \simeq
      \limind_{S \to A, T \to B} \lambda\nu(\lambda(S) \joint \lambda(T))
      \\*
      & \phantom{\simeq 1} \text{(car le foncteur $\lambda$ est un adjoint à
      gauche)} \\
      & \simeq
      \limind_{S \to A, T \to B} \lambda(S) \joint \lambda(T) \\*
      & \phantom{\simeq 1} \text{\lp en vertu du théorème~\ref{thm:Steiner}
        puisque $\lambda(S) \joint \lambda(T)$ est de Steiner fort}\\*
      & \phantom{\simeq 1} \text{d'après la proposition~\ref{prop:Theta_Steiner}
        et le corollaire~\ref{coro:joint_Steiner}\rp} \\
      & \simeq
      \big(\limind_{S \to A} \lambda(S)\big) \joint \big(\limind_{T \to
      B} \lambda(T)\big) \\*
      & \phantom{\simeq 1} \text{(en vertu de la
        proposition~\ref{prop:joint_Cda_limind})} \\
      & \simeq
      \lambda\big(\limind_{S \to A} S\big) \joint \lambda\big(\limind_{T \to
      B} T\big) \\
      & \simeq
      \lambda(A) \joint \lambda(B),
    \end{align*}
  }%
  d'où le résultat.
\end{proof}

\begin{prop}\label{prop:dual_joint}
  Soient $A$ et $B$ deux \oo-catégories. On a un isomorphisme naturel
  canonique
  \[
    (A \joint B)^\opp \simeq B^\opp \joint A^\opp.
  \]
\end{prop}

\begin{proof}
  Commençons par observer que si $S$ est un objet de $\ThetaAug$, alors le
  complexe dirigé augmenté $\lambda(S)^\opp$ est un complexe de Steiner fort.
  En effet, d'après les propositions~\ref{prop:dual_lambda_nu}
  et~\ref{prop:dual_Theta}, on a $\lambda(S)^\opp \simeq \lambda(S^\opp)
  \simeq \lambda(T)$, pour un certain $T$ dans~$\ThetaAug$, et on conclut en
  vertu de la proposition~\ref{prop:Theta_Steiner}. En particulier, si $S$
  et $T$ sont deux objets de $\ThetaAug$,
  en vertu du paragraphe~\ref{paragr:def_joint}, on a
  \[
    \nu(\lambda(T)^\opp \joint \lambda(S)^\opp) \simeq
    \nu(\lambda(T)^\opp) \joint \nu(\lambda(S)^\opp).
  \]

  Ceci étant établi, considérons deux \oo-catégories $A$ et $B$.
  On a, en désignant par~$S$ et $T$ des objets de $\ThetaAug$,
  {
    \allowdisplaybreaks
    \begin{align*}
      (A \joint B)^\opp
      & \simeq \big(\limind_{S \to A, T \to B} \nu(\lambda(S) \joint
      \lambda(T))\big)^\opp \\*
      & \phantom{\simeq 1} \text{(en vertu du paragraphe~\ref{paragr:def_joint})} \\
      & \simeq \limind_{S \to A, T \to B} \nu(\lambda(S) \joint
      \lambda(T))^\opp \\*
      & \phantom{\simeq 1} \text{(puisque $C \mapsto C^\opp$ est une
        équivalence de catégories)} \\
      & \simeq \limind_{S \to A, T \to B} \nu((\lambda(S) \joint \lambda(T))^\opp) \\*
      & \phantom{\simeq 1} \text{(en vertu de la
        proposition~\ref{prop:dual_lambda_nu})} \\
      & \simeq \limind_{S \to A, T \to B} \nu(\lambda(T)^\opp \joint \lambda(S)^\opp) \\*
      & \phantom{\simeq 1} \text{(en vertu de la
        proposition~\ref{prop:dual_joint_cda})} \\
      & \simeq \limind_{S \to A, T \to B} \nu(\lambda(T)^\opp) \joint \nu(\lambda(S)^\opp) \\*
      & \phantom{\simeq 1} \text{(en vertu du paragraphe préliminaire à cette
        preuve)} \\
      & \simeq \limind_{T \to B} \nu(\lambda(T)^\opp) \joint
                \limind_{S \to A} \nu(\lambda(S)^\opp) \\*
      & \phantom{\simeq 1} \text{(en vertu du théorème~\ref{thm:joint})} \\
      & \simeq (\limind_{T \to B} \nu\lambda(T))^\opp \joint
                (\limind_{S \to A} \nu\lambda(S))^\opp \\
      & \phantom{\simeq 1} \text{(par une nouvelle application de la
            proposition~\ref{prop:dual_lambda_nu})} \\
      & \simeq (\limind_{T \to B} T)^\opp \joint
                (\limind_{S \to A} S)^\opp \\
      & \phantom{\simeq 1} \text{(en vertu de la proposition~\ref{prop:Theta_Steiner})} \\
      & \simeq B^\opp \joint A^\opp,
    \end{align*}
  }%
  ce qu'il fallait démontrer.
\end{proof}

\begin{prop}\label{prop:dual_tr}
  Soient $C$ une \oo-catégorie et $v : B \to C$ un \oo-foncteur. On a un
  isomorphisme canonique
  \[
    \trm{C}{v} \simeq (\cotr{C^\opp}{v^\opp})^\opp.
  \]
\end{prop}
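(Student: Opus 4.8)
On veut établir un isomorphisme canonique $\trm{C}{v} \simeq (\cotr{C^\opp}{v^\opp})^\opp$. Les deux tranches en jeu ont été définies abstraitement par des propriétés universelles (paragraphe~\ref{paragr:def_tranche}) : $\trm{C}{v}$ est caractérisée par l'adjonction faisant intervenir $\iota_2 : B \to A \joint B$, tandis que $\cotr{C^\opp}{v^\opp}$ est caractérisée par l'adjonction faisant intervenir $\iota_1 : A \to A \joint B$ dans la catégorie $\ooCat$ avec $C$ remplacé par $C^\opp$. Les outils disponibles sont essentiellement la proposition~\ref{prop:dual_joint} (qui donne $(A \joint B)^\opp \simeq B^\opp \joint A^\opp$) et le fait que $\dual{J}$, et en particulier le dual impair $\var^\opp$, est un endofoncteur involutif de $\ooCat$ (paragraphe~\ref{paragr:dual_ooCat}).

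**Stratégie.** Le plan est de procéder par comparaison de propriétés universelles, en transportant l'adjonction définissant $\cotr{C^\opp}{v^\opp}$ le long de l'involution $\var^\opp$. Concrètement, je partirais de la bijection naturelle définissant $\cotr{C^\opp}{v^\opp}$, à savoir
\[
  \Hom_{\ooCat}(A, \cotr{C^\opp}{v^\opp}) \simeq
  \Hom_{\cotr{\ooCat}{B^\opp}}((B^\opp \joint A, \iota_1), (C^\opp, v^\opp)),
\]
valable pour toute \oo-cat�gorie $A$ (ici la variable est la source du \oo-foncteur, donc je note la variable $A$ et j'applique la propri�t� universelle des tranches au-dessous relativement � la source $B^\opp$). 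J'appliquerais ensuite l'involution $\var^\opp$, qui est une �quivalence (et m�me un isomorphisme) de cat�gories : elle induit une bijection $\Hom_{\ooCat}(A, X) \simeq \Hom_{\ooCat}(A^\opp, X^\opp)$ et, plus g�n�ralement, une bijection entre morphismes de tranches $\Hom_{\cotr{\ooCat}{B^\opp}}(\var, \var)$ et $\Hom_{\cotr{\ooCat}{B}}(\var, \var)$ obtenue en dualisant tous les objets et morphismes en jeu.

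**�tapes cl�s.** D'abord, j'observerais que dualiser le c�t� droit transforme le couple $(B^\opp \joint A, \iota_1)$ au-dessus de $B^\opp$ en le couple $((B^\opp \joint A)^\opp, \iota_1^\opp)$ au-dessus de $B$, et que la proposition~\ref{prop:dual_joint} fournit l'isomorphisme $(B^\opp \joint A)^\opp \simeq A^\opp \joint B$. Il faut v�rifier que, sous cet isomorphisme, le \oo-foncteur $\iota_1^\opp : B = (B^\opp)^\opp \to (B^\opp \joint A)^\opp$ correspond � $\iota_2 : B \to A^\opp \joint B$ : c'est la compatibilit� de l'isomorphisme de la proposition~\ref{prop:dual_joint} avec la bicoaugmentation locale, les r�les de $\iota_1$ et $\iota_2$ �tant �chang�s par la dualit� puisqu'elle renverse l'ordre des facteurs du joint. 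Ensuite, le but $(C^\opp, v^\opp)$ devient $(C, v)$ apr�s dualisation. En rempla�ant enfin $A$ par $A^\opp$ (licite puisque $\var^\opp$ est involutif et $A$ parcourt toutes les \oo-cat�gories), on obtient la cha�ne
\[
  \Hom_{\ooCat}(A^\opp, \cotr{C^\opp}{v^\opp}) \simeq
  \Hom_{\cotr{\ooCat}{B}}((A^\opp \joint B, \iota_2), (C, v)) \simeq
  \Hom_{\ooCat}(A^\opp, \trm{C}{v}),
\]
la derni�re bijection �tant pr�cis�ment la propri�t� universelle de $\trm{C}{v}$ du paragraphe~\ref{paragr:def_tranche}. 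Une derni�re application de $\var^\opp$ transforme le membre de gauche en $\Hom_{\ooCat}(A, (\cotr{C^\opp}{v^\opp})^\opp)$, et le lemme de Yoneda, toutes ces bijections �tant naturelles en $A$, fournit l'isomorphisme canonique cherch�.

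**Principal obstacle.** La partie d�licate sera la v�rification soigneuse que l'involution $\var^\opp$ �change bien les deux coaugmentations $\iota_1$ et $\iota_2$ � travers l'isomorphisme $(B^\opp \joint A)^\opp \simeq A^\opp \joint B$, et que toutes les bijections construites sont naturelles \emph{au-dessus} de la bonne tranche (c'est-�-dire qu'elles respectent les triangles de commutation d�finissant les cat�gories $\cotr{\ooCat}{B}$ et $\cotr{\ooCat}{B^\opp}$). Autrement dit, il faut contr�ler non seulement la bijection sur les \oo-foncteurs sous-jacents, mais aussi la contrainte d'�tre au-dessous de $B$ ou $B^\opp$. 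Ceci d�coule formellement de la fonctorialit� de $\var^\opp$ et de la naturalit� de l'isomorphisme de la proposition~\ref{prop:dual_joint}, mais m�rite d'�tre explicit� : c'est essentiellement le c\oe{}ur calculatoire de la preuve, le reste �tant une chasse au diagramme de Yoneda.
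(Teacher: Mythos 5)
Your proposal is correct and takes essentially the same route as the paper's own proof: both transport the defining adjunctions along the involution $C \mapsto C^\opp$, use Proposition~\ref{prop:dual_joint} to exchange the join factors together with $\iota_1$ and $\iota_2$, and conclude by the Yoneda lemma; your insistence on checking that the duality really interchanges the two coaugmentations is precisely the point the paper leaves implicit. The only blemish is an indexing slip in your displayed chain: after the substitution $A \mapsto A^\opp$, dualizing the universal property gives $(A \joint B, \iota_2)$ in the middle and $\Hom_{\ooCat}(A, \trm{C}{v})$ on the right (not $A^\opp \joint B$ and $\Hom_{\ooCat}(A^\opp, \trm{C}{v})$), but either consistent choice of indexing yields the stated isomorphism.
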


\begin{proof}
  En effet, pour toute \oo-catégorie $A$, on a des isomorphismes naturels
  {
    \allowdisplaybreaks
    \begin{align*}
      \Hom_{\ooCat}(A, (\cotr{C^\opp}{v^\opp})^\opp)
      & \simeq
      \Hom_{\ooCat}(A^\opp, \cotr{C^\opp}{v^\opp}) \\
      & \simeq
      \Hom_{\cotr{\ooCat}{B^\opp}}((B^\opp \joint A^\opp, \iota_1^\opp), (C^\opp,
        v^\opp)) \\*
      & \phantom{\simeq 1} \text{(par adjonction)} \\
      & \simeq
      \Hom_{\cotr{\ooCat}{B^\opp}}(((A \joint B)^\opp, \iota_2^\opp),
        (C^\opp, v^\opp)) \\*
      & \phantom{\simeq 1} \text{(en vertu de la proposition précédente)} \\
      & \simeq
      \Hom_{\cotr{\ooCat}{B}}((A \joint B, \iota_2), (C, v)) \\
      & \simeq
      \Hom_{\ooCat}(A, \trm{C}{v}),
    \end{align*}
  }%
  d'où le résultat.
\end{proof}

\begin{rem}\label{rem:cojoint_ooCat}
  Le foncteur
  \[
    \begin{split}
      \ooCat \times \ooCat & \to \,\,\,\ooCat \\
      (A,B)\qquad  & \mapsto (B^\op \joint A^\op)^\op,
    \end{split}
  \]
  qu'on appellera le \ndef[joint!$\infty$-catégorique!dual]{joint dual},
  définit une structure de catégorie monoïdale sur $\ooCat$ distincte de
  celle définie par le joint (et de celle définie par $(A, B) \mapsto B
  \joint A$).  Notons $A \joint' B = (B^\op \joint A^\op)^\op$
  \notindex{$A \joint' B$}%
  ce produit tensoriel. Cette structure de catégorie monoïdale est également
  localement bifermée et on dispose donc de couples de foncteurs adjoints
  \[
    \begin{split}
      \ooCat & \to \cotr{\ooCat}{A}, \\
      B & \mapsto (A \joint' B, \iota'_1)
    \end{split}
    \qquad
    \qquad
    \begin{split}
      \cotr{\ooCat}{A} & \to \ooCat \\
      (C, A \xto{u} C) & \mapsto \cotrm{C}{u} \\
    \end{split}
  \]
  \notindex{$\cotrm{C}{u}$, $\tr{C}{u}$}%
  et
  \[
    \begin{split}
      \ooCat & \to \cotr{\ooCat}{B}, \\
      A & \mapsto (A \joint' B, \iota'_2)
    \end{split}
    \qquad
    \qquad
    \begin{split}
      \cotr{\ooCat}{B} & \to \ooCat, \\
      (C, B \xto{v} C) & \mapsto \tr{C}{v} \\
    \end{split}
  \]
  où
  \[
    A \xto{\iota'_1} A \joint' B \xot{\iota'_2} B
  \]
  \notindex{$\iota'_1 : A \to A \joint' B$, $\iota'_2 : B \to A \joint' B$}%
  désigne la bicoaugmentation locale (voir l'exemple \ref{exem:coaug}).
  Explicitement, avec des notations évidentes, on a
  \[
    \iota'_{1, A, B} = \iota^\op_{2, B^\op, A^\op}
    \quadet
    \iota'_{2, A, B} = \iota^\op_{1, B^\op, A^\op}.
  \]
  Ainsi, si $A$ et $B$ sont des \oo-catégories et $u : A \to C$ et $v : B
  \to C$ des \oo-foncteurs, on a des bijections naturelles
  \[
    \begin{split}
      \Hom_{\cotr{\ooCat}{A}}((A \joint' B, \iota'_1), (C, u))
      & \simeq \Hom_{\ooCat}(B, \cotrm{C}{u}), \\
      \Hom_{\cotr{\ooCat}{B}}((A \joint' B, \iota'_2), (C, v))
      & \simeq \Hom_{\ooCat}(A, \tr{C}{v}).
    \end{split}
  \]

  On vérifie immédiatement qu'on a des isomorphismes canoniques
  \[
    \trm{C}{u} \simeq (\tr{C^\co}{u^\co})^\co
    \quadet
    \cotrm{C}{v} \simeq (\cotr{C^\co}{v^\co})^\co,
  \]
  ce qui explique les notations \smash{$\trm{C}{u}$} et
  \smash{$\cotrm{C}{v}$}. On a par ailleurs un isomorphisme canonique
  \[
    \tr{C}{u} \simeq (\cotr{C^\op}{u^\op})^\op.
  \]
  On appellera la \oo-catégorie $\tr{C}{u}$ la
  \ndef[tranche!$\infty$-catégorique!au-dessus]{tranche de $C$
  au-dessus de $u$}. Le choix de privilégier la \oo-catégorie $\tr{C}{u}$
  par rapport à \smash{$\trm{C}{u}$} est dicté par les meilleures propriétés
  formelles dont dispose $\tr{C}{u}$.
\end{rem}

\begin{paragr}\label{paragr:desc_morph_tr}
  Soit
  \[
    \xymatrix@C=1pc{
      A \ar[rr]^u \ar[dr]_{\vphantom{c'}c} & & A' \ar[dl]^{c'} \\
      & C
    }
  \]
  un triangle commutatif de \oo-foncteurs. On va définir un \oo-foncteur
  %
  \notindex{$u^\ast : \cotr{C}{c'} \to \cotr{C}{c}$}%
  \[ u^\ast : \cotr{C}{c'} \to \cotr{C}{c}. \]

  Soit $B$ un \oo-catégorie. En vertu du lemme de Yoneda, il suffit de
  définir une application
  \[
  \Hom_{\ooCat}(B, \cotr{C}{c'}) \to \Hom_{\ooCat}(B, \cotr{C}{c}),
  \]
  naturelle en $B$, ou encore, par adjonction, une application
  \[
    \Hom_{\cotr{\ooCat}{A'}}((A' \joint B, \iota_1), (C, c'))
    \to
    \Hom_{\cotr{\ooCat}{A}}((A \joint B, \iota_1), (C, c)).
  \]
  Il est immédiat, par naturalité de $\iota_1$, que la précomposition par
  \[ u \joint B : A \joint B \to A' \joint B \]
  fournit une telle application. La naturalité en $B$ résulte de la
  fonctorialité du joint et on obtient donc bien un \oo-foncteur $u^\ast :
  \cotr{C}{c'} \to \cotr{C}{c}$.

  Ce \oo-foncteur $u^\ast : \cotr{C}{c'} \to \cotr{C}{c}$ peut se décrire
  de manière alternative comme suit. Par fonctorialité du joint,
  le \oo-foncteur $u$ induit une application
  \[
    \Hom_{\ooCat}(A' \joint S, C) \to \Hom_{\ooCat}(A \joint S, C),
  \]
  naturelle en $S$ dans $\ooCat$. En vertu du
  paragraphe~\ref{paragr:pu_ThetaAug}, cette transformation naturelle induit
  un \oo-foncteur
  \[
    \cotr{C}{c'} \simeq \Hom_{\ooCat}(A' \joint \cocatD, C)_{c'} \to
    \Hom_{\ooCat}(A \joint \cocatD, C)_{c}
    \simeq \cotr{C}{c}
  \]
  qui n'est autre que $u^\ast$.

  Puisque $\vide \joint B$, pour $B$ une \oo-catégorie, est canoniquement isomorphe
  à $B$, il résulte de l'adjonction définissant la \oo-catégorie
  $\cotr{C}{c}$ que celle-ci est canoniquement isomorphe à $C$ lorsque
  $c : \vide \to A$ est l'unique morphisme de source la \oo-catégorie vide
  et de but~$A$. On obtient donc un \oo-foncteur
  \[
    \cotr{C}{c'} \to C
  \]
  qu'on appellera \ndef[$\infty$-foncteur d'oubli associé à une
  tranche]{\oo-foncteur d'oubli}.

  En revenant au cas général, notons que le \oo-foncteur
  \hbox{$u^\ast : \cotr{C}{c'} \to \cotr{C}{c}$} est au-dessus de $C$.
  Autrement dit, le triangle
  \[
    \xymatrix@C=1pc{
      \cotr{C}{c'} \ar[rr]^{u^\ast} \ar[dr] & & \cotr{C}{c} \ar[dl] \\
      & C & \pbox{,}
    }
  \]
  où les flèches obliques sont les \oo-foncteurs d'oubli, est commutatif.
\end{paragr}

\chapter[Une application : construction du nerf de Street]{Une application
  :\\ construction du nerf de Street}

Dans ce chapitre, on montre comment le joint permet de définir
facilement les orientaux et le nerf de Street \cite{StreetOrient}.

\begin{paragr}\label{paragr:def_Delta}
  On notera \nnot[$\cDelta$, $\cDeltaAug$]{$\cDelta$} la \ndef{catégorie des
  simplexes}. Rappelons que ses objets sont les ensembles ordonnés
  \[ \Deltan{n} = \{0 \le 1 \le \cdots \le n\}, \quad\text{pour $n \ge 0$}, \]
  \notindex{$\Deltan{n}$}%
  et que ses morphismes sont les applications croissantes (au sens large)
  entre ceux-ci. La \ndef{catégorie des simplexes augmentée} $\cDeltaAug$ se
  définit de la même manière en ajoutant l'ensemble ordonné $\Deltan{-1} =
  \vide$.  On considérera souvent $\cDelta$ et $\cDeltaAug$ comme des
  sous-catégories pleines de la catégorie $\Cat$ des petites catégories et
  donc comme des sous-catégories pleines de $\ooCat$. Avec ces conventions,
  la catégorie $\cDelta$ devient une sous-catégorie pleine de la catégorie
  $\Theta$ de Joyal et la catégorie $\cDeltaAug$ une sous-catégorie pleine
  de $\ThetaAug$.

  La catégorie $\cDeltaAug$ admet $\Deltan{-1}$ comme objet initial et la somme
  disjointe ensembliste
  \[
    (\Deltan{m},\Deltan{n}) \mapsto \Deltan{m} \amalg \Deltan{n} = \Deltan{m+1+n}
  \]
  induit une structure de catégorie monoïdale sur $\cDeltaAug$ d'objet unité
  $\Deltan{-1}$. On munira souvent implicitement la catégorie $\cDeltaAug$
  de la structure de catégorie monoïdale définie par la somme.

  Enfin, on rappelle que la catégorie des \ndef[ensemble
  simplicial]{ensembles simpliciaux} est la catégorie $\pref{\cDelta}$ des
  préfaisceaux sur la catégorie $\cDelta$.
\end{paragr}

\begin{paragr}\label{paragr:def_Street}
  Considérons la \oo-catégorie $\Deltan{0}$, qui n'est autre que la
  \oo-catégorie finale. Cette \oo-catégorie est munie d'une et une seule
  structure de monoïde dans la catégorie monoïdale $(\ooCat, \joint,
  \vide)$. En effet, on a des uniques \oo-foncteurs
  \[
     \Deltan{0} \joint \Deltan{0} \to \Deltan{0}
     \quadet
     \vide \to \Deltan{0}
  \]
  et ceux-ci vérifient trivialement les axiomes des monoïdes. Or, la donnée
  d'une structure de monoïde sur $\Deltan{0}$ est équivalente à celle d'un
  foncteur monoïdal $\DeltaAug \to \ooCat$ envoyant $\Deltan{0}$ sur
  $\Deltan{0}$ (voir par exemple \cite[chapitre VII, section 5]{MacLane}),
  défini à unique isomorphisme monoïdal près. Il existe donc un unique
  (à unique isomorphisme monoïdal près) foncteur monoïdal
  \[
    \OAug : \cDeltaAug \to \ooCat
  \]
  \notindex{$\On{} : \cDelta \to \ooCat$, $\OAug : \cDeltaAug \to \ooCat$}%
  envoyant $\Deltan{0}$ sur $\Deltan{0}$.
  Pour $n \ge -1$, on notera \nnot{$\On{n}$} l'image de $\Deltan{n}$ par ce
  foncteur. Explicitement, on a
  \[
    \On{n} = \Deltan{0} \joint \cdots \joint \Deltan{0},
  \]
  où $\Deltan{0}$ apparait $n+1$ fois dans le membre de droite.
  Par restriction, on obtient un foncteur
  \[
    \On{} : \cDelta \to \ooCat
  \]
  et donc un foncteur
  \[
       N_\infty : \ooCat \to \pref{\cDelta}
  \]
  \notindex{$N_\infty : \ooCat \to \pref{\cDelta}$}%
  défini par
  \[
    C \mapsto (\Deltan{n} \mapsto \Hom_{\ooCat}(\On{n}, C)).
  \]
\end{paragr}

Le but de la suite de ce chapitre est de démontrer que le foncteur
\hbox{$\On{} : \cDelta \to \ooCat$} n'est autre que l'objet
cosimplicial de Street défini dans \cite{StreetOrient} et que le foncteur
\hbox{$N_\infty : \ooCat \to \pref{\cDelta}$} est donc le nerf de Street. En
particulier, on obtiendra que $\On{n}$, pour $n \ge 0$, est le $n$-ième
oriental de Street. Voici une représentation graphique des premiers
orientaux:
  \[
    \shorthandoff{;}
    \On{0} = \Dn{0} = \xymatrix{\{0\}}, \qquad
    \On{1} = \Dn{1} = \xymatrix{0 \ar[r] & 1},
    \qquad
    \On{2} =
    \raisebox{1.5pc}{
    $\xymatrix{
      & 2
      \\
      0 \ar[r] \ar[ur]_{}="s" & 1 \ar[u]
      \ar@{}"s";[]_(0.05){}="ss"_(0.85){}="tt"
      \ar@2"ss";"tt"
    }$
    }
    \text{,}
  \]
  \[
    \shorthandoff{;}
    \On{3} =
    \raisebox{1.5pc}{
    $\xymatrix{
      0 \ar[r]_(0.60){}="03" \ar[d] \ar[dr]_{}="02"_(0.60){}="02'" &
      3
      &
      0 \ar[r]_(0.40){}="03'" \ar[d]_{}="t3"
        &
      3
      \\
      1 \ar[r] & 2 \ar[u]_{}="s3"
      &
      1 \ar[r] \ar[ur]_{}="13"_(0.40){}="13'" & 2 \ar[u]
      \ar@{}"s3";"t3"_(0.20){}="ss3"_(0.80){}="tt3"
      \ar@3"ss3";"tt3"
      \ar@{}"13";[]_(0.05){}="s123"_(0.85){}="t123"
      \ar@2"s123";"t123"
      \ar@{}"02";[lll]_(0.05){}="s012"_(0.85){}="t012"
      \ar@2"s012";"t012"
      \ar@{}"03'";"13'"_(0.05){}="s013"_(0.85){}="t013"
      \ar@2"s013";"t013"
      \ar@{}"03";"02'"_(0.05){}="s023"_(0.85){}="t023"
      \ar@2"s023";"t023"
      \\
    }$
    }
    \text{.}
  \]
Notons qu'une description analogue du $n$-ième oriental
en termes du joint des complexes de parités apparaît déjà chez
Street~\cite[section~6]{StreetParComp}.

\begin{paragr}\label{paragr:def_c}
  On va définir, selon Steiner \cite{Steiner}, un foncteur $c : \DeltaAug
  \to \Cda$. Fixons $m \ge -1$ et décrivons \nnot{$c(\Deltan{m})$}. Le complexe
  de chaînes sous-jacent à $c(\Deltan{m})$ est le complexe normalisé associé
  à l'ensemble simplicial $\Deltan{m}$. Explicitement, pour $n \ge 0$, on a
  \[
    c(\Deltan{m})_n = \Z^{(B_n)},
  \]
  où
  \[
    B_n = \{(i_0, \dots, i_n) \mid 0 \le i_0 < \cdots < i_n \le m\}.
  \]
  Pour $n \ge 1$, la différentielle $d_n : \Z^{(B_n)} \to \Z^{(B_{n-1})}$
  est donnée par
  \[
    d_n(i_0, \dots, i_n) = \sum_{k = 0}^n (-1)^k (i_0, \dots, \widehat{i_k},
    \dots, i_n),
  \]
  où on a posé $(i_0, \dots, \widehat{i_k}, \dots, i_n) =
  (i_0, \dots, i_{k-1}, i_{k+1}, \dots, i_n)$. Enfin, pour $n \ge 0$, les
  sous-monoïdes de positivité sont les
  \[
    c(\Deltan{m})^\ast_n = \N^{(B_n)}
  \]
  et l'augmentation $e : \Z^{(B_0)} \to \Z$ est donnée par la somme des
  coefficients.

  On vérifie immédiatement qu'on obtient bien ainsi un foncteur $\DeltaAug
  \to \Cda$.
\end{paragr}

\begin{rem}
  Le foncteur du paragraphe précédent est la restriction à $\cDeltaAug$ d'un
  foncteur $c : \pref{\cDelta} \to \Cda$ qui est étudié dans
  \cite{AraMaltsiCondE} (voir notamment la section 5).
\end{rem}

\begin{prop}[Steiner]\label{prop:cDelta_Steiner}
  Pour tout $m \ge -1$, le complexe dirigé augmenté $c(\Deltan{m})$ est un
  complexe de Steiner fort.
\end{prop}

\begin{proof}
  La preuve est esquissée dans \cite[exemple 3.8]{Steiner}. Pour une preuve
  détaillée (et s'appliquant non seulement à $\Deltan{m}$ mais également à
  n'importe quel complexe simplicial), voir
  \cite[théorème~8.6]{AraMaltsiCondE}.
\end{proof}

\begin{lemme}
  Pour tous $m, n \ge -1$, on a un isomorphisme canonique
  \[
  c(\Deltan{m}) \joint c(\Deltan{n}) \simeq c(\Deltan{m+1+n}),
  \]
  naturel en $\Deltan{m}$ et $\Deltan{n}$ dans $\cDeltaAug$.
\end{lemme}

\begin{proof}
  Par récurrence, il suffit de traiter le cas $m = 0$. Dans ce cas, on définit
  un isomorphisme $\chi : c(\Deltan{0}) \joint c(\Deltan{n}) \to
  c(\Deltan{1+n})$ par
  \[
    \begin{split}
      \vide \joint (i_0, \dots, i_k) & \mapsto (i_0+1, \dots, i_k+1) \\
      (0) \joint \vide & \mapsto (0) \\
      (0) \joint (i_0, \dots, i_k) & \mapsto (0, i_0+1, \dots, i_k+1). \\
    \end{split}
  \]
  Il est immédiat que cette application définit une bijection de la base du
  complexe $c(\Deltan{0}) \joint c(\Deltan{n})$ sur celle de
  $c(\Deltan{1+n})$ et qu'elle est compatible aux augmentations et aux
  sous-monoïdes de positivité. Pour conclure, il suffit donc de vérifier la
  compatibilité à la différentielle. Soit $(i_0, \dots, i_k)$ un élément de
  la base de $c(\Deltan{n})$. La compatibilité à la différentielle pour les
  éléments de la forme $\vide \joint (i_0, \dots, i_k)$ est évidente. Pour
  ceux de la forme $(0) \joint (i_0, \dots, i_k)$, on a
  {
    \allowdisplaybreaks
    \begin{align*}
      \MoveEqLeft
      \chi(d((0) \joint (i_0, \dots, i_k))) \\
      & = \chi(\vide \joint (i_0, \dots, i_k) - (0) \joint d(i_0, \dots, i_k)) \\
      & = (i_0+1, \dots, i_k+1) - \sum_{l=0}^k (-1)^l \chi((0) \joint (i_0,
      \dots, \widehat{i_l}, \dots, i_k)) \\
      & = (i_0+1, \dots, i_k+1) + \sum_{l=0}^k (-1)^{l+1} (0, i_0+1, \dots,
      \widehat{i_l+1}, \dots, i_k+1) \\
      & = (i_0+1, \dots, i_k+1) + \sum_{l=1}^{k+1} (-1)^l (0, i_0+1, \dots,
      \widehat{i_{l-1}+1}, \dots, i_k+1) \\
      & = d(0, i_0+1, \dots, i_k+1)\\
      & = d(\chi((0) \joint (i_0, \dots, i_k))),
    \end{align*}
  }
  ce qui achève la démonstration.
\end{proof}

\begin{rem}
  En particulier, pour $m \ge -1$, on a
  \[
    c(\Deltan{m}) \simeq c(\Deltan{0}) \joint \cdots \joint c(\Deltan{0}),
  \]
  où $c(\Deltan{0})$ apparaît $m+1$ fois dans le membre de droite. Les
  complexes $c(\Deltan{-1})$ et~$c(\Deltan{0})$ étant trivialement des
  complexes de Steiner forts, le fait que les $c(\Deltan{m})$ sont des
  complexes de Steiner forts (proposition~\ref{prop:cDelta_Steiner}) résulte
  donc aussi de la stabilité des complexes de Steiner forts par le joint
  (corollaire~\ref{coro:joint_Steiner}).
\end{rem}

\begin{prop}
  Pour tous $m, n \ge -1$, on a un isomorphisme canonique
  \[
    \nu(c(\Deltan{m})) \joint \nu(c(\Deltan{n})) \simeq
    \nu(c(\Deltan{m+1+n})),
  \]
  naturel en $\Deltan{m}$ et $\Deltan{n}$ dans $\cDeltaAug$.
\end{prop}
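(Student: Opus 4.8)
La d\'emonstration sera purement formelle : le plan est de d\'eduire cet \'enonc\'e de l'isomorphisme du lemme pr\'ec\'edent et de la compatibilit\'e de $\nu$ au joint sur les complexes de Steiner forts. On commencera par observer que, en vertu de la proposition~\ref{prop:cDelta_Steiner}, les complexes $c(\Deltan{m})$ et $c(\Deltan{n})$ sont de Steiner forts pour tous $m, n \ge -1$. On pourra donc invoquer l'isomorphisme canonique $\nu(K) \joint \nu(L) \simeq \nu(K \joint L)$, valable pour $K$ et $L$ des complexes de Steiner forts (voir le paragraphe~\ref{paragr:def_joint}, cons\'equence du th\'eor\`eme~\ref{thm:joint}), en prenant $K = c(\Deltan{m})$ et $L = c(\Deltan{n})$, ce qui fournira un isomorphisme
\[
  \nu(c(\Deltan{m})) \joint \nu(c(\Deltan{n}))
  \simeq \nu\big(c(\Deltan{m}) \joint c(\Deltan{n})\big).
\]

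On appliquera ensuite le foncteur $\nu$ \`a l'isomorphisme $c(\Deltan{m}) \joint c(\Deltan{n}) \simeq c(\Deltan{m+1+n})$ du lemme pr\'ec\'edent, ce qui donnera
\[
  \nu\big(c(\Deltan{m}) \joint c(\Deltan{n})\big) \simeq \nu(c(\Deltan{m+1+n})),
\]
puis on composera les deux isomorphismes pour conclure.

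Le seul point demandant un l\'eger soin sera la naturalit\'e en $\Deltan{m}$ et $\Deltan{n}$ dans $\cDeltaAug$. Elle r\'esultera de la naturalit\'e de chacun des deux isomorphismes compos\'es : celui du lemme pr\'ec\'edent est naturel par construction, et celui issu du th\'eor\`eme~\ref{thm:joint} est naturel en $K$ et $L$ parmi les complexes de Steiner forts, car il provient de l'extension en un foncteur mono\"idal de $\nu_{\vert \Stf} : \Stf \to \ooCat$ ; on le restreindra le long du foncteur $c : \cDeltaAug \to \Cda$, qui est \`a valeurs dans les complexes de Steiner forts d'apr\`es la proposition~\ref{prop:cDelta_Steiner}. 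De mani\`ere \'equivalente, tout ceci revient \`a constater que le foncteur compos\'e $\nu \circ c : \cDeltaAug \to \ooCat$ est mono\"idal, la cat\'egorie $\cDeltaAug$ \'etant munie de la structure d\'efinie par la somme et $\ooCat$ de celle d\'efinie par le joint.
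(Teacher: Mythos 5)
Your proof is correct and is essentially the paper's own argument: the paper likewise deduces the statement immediately from the preceding lemma, from the fact that the $c(\Deltan{p})$ are strong Steiner complexes (proposition~\ref{prop:cDelta_Steiner}), and from the monoidal character of the join on strong Steiner complexes (th\'eor\`eme~\ref{thm:joint}). Your additional remarks on naturality and on the equivalent formulation via the monoidality of $\nu \circ c$ simply make explicit what the paper leaves implicit.
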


\begin{proof}
  Cela résulte immédiatement de la proposition précédente, du fait que les
  $c(\Deltan{p})$ sont des complexes de Steiner forts
  (proposition~\ref{prop:cDelta_Steiner}) et du caractère monoïdal du joint
  sur les complexes de Steiner forts
  (voir le théorème~\ref{thm:joint}).
\end{proof}

\begin{thm}\label{thm:desc_orient}
  Le foncteur $\OAug : \cDelta \to \ooCat$ est canoniquement isomorphe au
  composé
  \[ \cDeltaAug \xto{c} \Cda \xto{\nu} \ooCat. \]
  En particulier, pour tout $n \ge -1$, on a $\On{n} \simeq \nu(c(\Deltan{n}))$.
\end{thm}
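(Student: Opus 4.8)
The theorem claims that the functor $\OAug : \cDeltaAug \to \ooCat$ (defined via the unique monoid structure on the terminal $\infty$-category $\Deltan{0}$ for the join) is canonically isomorphic to the composite $\nu \circ c$, where $c : \cDeltaAug \to \Cda$ is Steiner's augmented-directed-complex functor for the simplex.

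**My plan.**

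The strategy is to exploit the universal property that \emph{defines} $\OAug$: it is the unique (up to unique monoidal isomorphism) monoidal functor $\cDeltaAug \to \ooCat$ sending $\Deltan{0}$ to the terminal $\infty$-category $\Deltan{0}$. So to identify $\OAug$ with $\nu \circ c$, I only need to exhibit on $\nu \circ c$ the structure of a monoidal functor $(\cDeltaAug, \amalg, \Deltan{-1}) \to (\ooCat, \joint, \vide)$ sending $\Deltan{0}$ to the terminal $\infty$-category, and then invoke the uniqueness clause of \ref{paragr:def_Street}. Everything then reduces to checking three things about $\nu \circ c$.

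First I would record the monoidal comparison: the preceding proposition gives, for all $m, n \ge -1$, a canonical isomorphism $\nu(c(\Deltan{m})) \joint \nu(c(\Deltan{n})) \simeq \nu(c(\Deltan{m+1+n}))$ natural in $\Deltan{m}$ and $\Deltan{n}$. Since the monoidal product on $\cDeltaAug$ is exactly $\Deltan{m} \amalg \Deltan{n} = \Deltan{m+1+n}$, this is precisely the required laxity constraint for $\nu \circ c$ as a functor to $(\ooCat, \joint)$. Next I would check unit compatibility: on objects, $c(\Deltan{-1})$ is the complex supported in no positive degree built on the empty simplex, so $c(\Deltan{-1}) = \vide$ in $\Cda$, whence $\nu(c(\Deltan{-1})) = \nu(\vide) = \vide$, which is the unit of the join on $\ooCat$ (paragraph~\ref{paragr:def_joint_cda} and~\ref{paragr:def_joint}). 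Finally I would verify the normalization: $c(\Deltan{0})$ is $\Zdec$, the monoidal unit of the tensor product on $\Cda$, concentrated in degree $0$ with value $\Z$ and positivity submonoid $\N$; paragraph~\ref{paragr:def_produit_cda} notes $\nu(\Zdec)$ is the terminal $\infty$-category, so $\nu(c(\Deltan{0})) \simeq \Deltan{0}$ as desired.

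Once these three verifications are in place, the coherence axioms (associativity and unit constraints) for $\nu \circ c$ as a monoidal functor follow from those of the monoidal structure on $\Cda$ given by the join together with the naturality of the comparison isomorphism, exactly because $\nu$ restricted to strong Steiner complexes is monoidal for the join (Theorem~\ref{thm:joint}) and all the $c(\Deltan{m})$ are strong Steiner complexes (proposition~\ref{prop:cDelta_Steiner}). Therefore $\nu \circ c$ is a monoidal functor $\cDeltaAug \to \ooCat$ sending $\Deltan{0}$ to $\Deltan{0}$, and by the uniqueness in~\ref{paragr:def_Street} it is canonically isomorphic to $\OAug$. The final statement $\On{n} \simeq \nu(c(\Deltan{n}))$ for $n \ge -1$ is then just the value of this isomorphism on objects.

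**The main obstacle.**

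I expect the genuinely delicate point to be the \emph{coherence} of the comparison isomorphism: one must confirm that the associativity constraints on the two sides match, i.e.\ that the iterated isomorphisms $\nu(c(\Deltan{m})) \joint \nu(c(\Deltan{n})) \joint \nu(c(\Deltan{p})) \simeq \nu(c(\Deltan{m+1+n+1+p}))$ agree regardless of the order of bracketing, and that the unit constraints are compatible. The cleanest way to secure this—rather than checking pentagon and triangle diagrams by hand on chains—is to factor the argument through the monoidal functoriality already established: by proposition~\ref{prop:cDelta_Steiner} the functor $c$ lands in strong Steiner complexes, $c$ is itself monoidal from $(\cDeltaAug, \amalg)$ to $(\Stf, \joint)$ by the explicit isomorphism $c(\Deltan{m}) \joint c(\Deltan{n}) \simeq c(\Deltan{m+1+n})$ proved just above (whose coherence one checks once, at the level of the concrete chain-level formula $\chi$), and then composing with the monoidal functor $\nu_{\vert \Stf}$ of Theorem~\ref{thm:joint} yields a monoidal functor with coherent constraints by construction. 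This sidesteps any direct coherence computation in $\ooCat$ and makes the identification with $\OAug$ immediate.
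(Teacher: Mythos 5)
Your proposal is correct and follows essentially the same route as the paper: the paper's proof likewise invokes the characterization of $\OAug$ as the unique monoidal functor sending $\Deltan{0}$ to $\Deltan{0}$, notes that $\nu c(\Deltan{0}) \simeq \Deltan{0}$ is immediate, and obtains the monoidality of $\nu c$ from the preceding proposition, whose own proof is exactly your factorization (the chain-level isomorphism $c(\Deltan{m}) \joint c(\Deltan{n}) \simeq c(\Deltan{m+1+n})$, the fact that the $c(\Deltan{p})$ are strong Steiner complexes, and the monoidality of $\nu_{\vert \Stf}$ from Theorem~\ref{thm:joint}). Your extra attention to the coherence of the constraints is handled in the paper implicitly by the same device you propose, namely composing the two monoidal functors rather than verifying coherence by hand.
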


\begin{proof}
  Le foncteur $\OAug : \cDeltaAug \to \ooCat$ étant caractérisé par le fait
  qu'il est monoïdal et envoie $\Deltan{0}$ sur $\Deltan{0}$, il
  suffit de vérifier que le foncteur $\nu c : \cDeltaAug \to \ooCat$ vérifie
  ces deux propriétés.  Or, il est immédiat qu'on a bien $\nu c(\Deltan{0})
  \simeq \Deltan{0}$ et le caractère monoïdal du foncteur $\nu c$ est donné
  par la proposition précédente, d'où le résultat.
\end{proof}

\begin{coro}
  Les foncteurs
  \[ \On{} : \cDelta \to \ooCat \quadet N_\infty : \ooCat \to \pref{\cDelta}
  \]
  sont isomorphes à l'objet cosimplicial de Street et au nerf de Street
  définis dans \cite{StreetOrient}.
\end{coro}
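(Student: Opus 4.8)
The plan is to derive the statement from the preceding theorem~\ref{thm:desc_orient}, which already provides a canonical isomorphism $\On{} \simeq \nu c$ of functors $\cDelta \to \ooCat$. Granting this, it only remains to identify the composite $\nu c$ with Street's constructions and then to transport the comparison from orientals to nerves by a formal argument. Concretely, I would first recall from \cite{StreetOrient} the oriental functor $\mathcal{O}^{\mathrm{St}} : \cDelta \to \ooCat$, together with the associated nerve $N^{\mathrm{St}} : \ooCat \to \pref{\cDelta}$ defined by $C \mapsto (\Deltan{n} \mapsto \Hom_{\ooCat}(\mathcal{O}^{\mathrm{St}}_n, C))$, so that Street's nerve is by definition the nerve corepresented by the cosimplicial \oo-category $\mathcal{O}^{\mathrm{St}}$.

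The heart of the matter is the identification $\mathcal{O}^{\mathrm{St}} \simeq \nu c$ as cosimplicial \oo-categories. This comparison goes back to Steiner \cite{Steiner}: for each $n \ge 0$, the $n$-th oriental $\mathcal{O}^{\mathrm{St}}_n$ is canonically isomorphic to $\nu(c(\Deltan{n}))$, the augmented directed complex $c(\Deltan{n})$ being the one introduced in paragraph~\ref{paragr:def_c}. I would invoke this level-wise identification and then check that it is natural in $\Deltan{n}$, that is, compatible with the images under $\mathcal{O}^{\mathrm{St}}$ and $\nu c$ of the face and degeneracy maps of $\cDelta$; this naturality upgrades the pointwise isomorphisms into an isomorphism of functors $\cDelta \to \ooCat$. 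Combining with theorem~\ref{thm:desc_orient} then yields $\On{} \simeq \nu c \simeq \mathcal{O}^{\mathrm{St}}$.

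The comparison of nerves is afterwards purely formal. Since by paragraph~\ref{paragr:def_Street} one has $N_\infty(C)_n = \Hom_{\ooCat}(\On{n}, C)$, naturally in both $\Deltan{n}$ and $C$, the natural isomorphism $\On{} \simeq \mathcal{O}^{\mathrm{St}}$ of cosimplicial \oo-categories induces, upon applying $\Hom_{\ooCat}(-, C)$ and letting $C$ range over $\ooCat$, a natural isomorphism $N_\infty \simeq N^{\mathrm{St}}$ of functors $\ooCat \to \pref{\cDelta}$.

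I expect the only genuine difficulty to lie in the level-wise identification $\mathcal{O}^{\mathrm{St}}_n \simeq \nu(c(\Deltan{n}))$ together with its naturality. A more self-contained variant, avoiding a direct appeal to Steiner's computation, would exploit the fact that Street himself describes the $n$-th oriental as an iterated join (see \cite[section~6]{StreetParComp}): one would try to show that $\mathcal{O}^{\mathrm{St}}$ underlies a monoidal functor $\cDeltaAug \to \ooCat$ sending $\Deltan{0}$ to the terminal \oo-category and the ordinal sum to the join, and then conclude by the uniqueness up to unique monoidal isomorphism of such a functor established in paragraph~\ref{paragr:def_Street}. The delicate point in this second route is the compatibility of our join with Street's join of parity complexes on the orientals, which is precisely the comparison flagged in the remark following paragraph~\ref{paragr:def_joint_cda}; for that reason I would favour the direct appeal to \cite{Steiner} as the main line of proof.
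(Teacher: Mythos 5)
Your proposal follows exactly the paper's argument: invoke Theorem~\ref{thm:desc_orient} to get $\On{} \simeq \nu c$, then appeal to Steiner's identification of $\Deltan{n} \mapsto \nu(c(\Deltan{n}))$ with Street's cosimplicial object (the paper cites \cite[th\'eor\`eme 3.2]{SteinerOrient} rather than \cite{Steiner}, but this is the same external input), the nerve comparison then being formal since both nerves are corepresented by the respective cosimplicial objects. The proposal is correct and takes essentially the same route; the alternative monoidal-uniqueness argument you sketch is not needed.
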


\begin{proof}
  En vertu du théorème précédent, le foncteur $\On{} : \cDelta \to
  \ooCat$ est isomorphe au foncteur $\Deltan{n} \mapsto \nu(c(\Deltan{n}))$
  qui est l'objet cosimplicial de Street d'après~\cite[théorème
  3.2]{SteinerOrient}, d'où le résultat.
\end{proof}

\begin{rem}\label{rem:orient_lax}
  Dans ce chapitre, on a produit l'objet cosimplicial de Street et le nerf
  de Street à partir de la structure de catégorie monoïdale sur $\ooCat$
  définie par le joint. Si on avait utilisé la structure de catégorie
  monoïdale de la remarque~\ref{rem:cojoint_ooCat}, on aurait produit l'objet
  cosimplicial $\Deltan{n} \mapsto (\On{n})^\co$ et le nerf \hbox{$C \mapsto
  N_\infty(C^\co)$}. Quant à la convention de signe exposée dans la
  remarque~\ref{rem:conv_susp}, elle aurait mené à l'objet cosimplicial
  $\Deltan{n} \mapsto (\On{n})^\op$ et au nerf $C \mapsto
  N_\infty(C^\op)$.
\end{rem}

\begin{paragr}
  Rappelons brièvement la définition du joint simplicial (voir par
  exemple~\cite[section 3]{JoyalQuasiKan}). La structure de catégorie
  monoïdale sur $\DeltaAug$ définie au paragraphe~\ref{paragr:def_Delta}
  s'étend par limites inductives canoniques en une structure de catégorie
  monoïdale sur la catégorie des ensembles simpliciaux augmentés
  $\pref{\DeltaAug}$. On vérifie que la catégorie des ensembles simpliciaux
  forme une sous-catégorie monoïdale de cette structure, où on a considéré
  qu'un ensemble simplicial est un ensemble simplicial augmenté en
  l'augmentant au-dessus de $\Deltan{0}$ de l'unique manière possible. On
  obtient ainsi un foncteur
  \[
    \begin{split}
    \pref{\cDelta} \times \pref{\cDelta} & \to \pref{\cDelta} \\
      (X, Y) \,\, & \mapsto X \joint Y
    \end{split}
  \]
  \notindex{$X \joint Y$}%
  qu'on appelle le \ndef[joint!simplicial]{joint simplicial}. On vérifie
  immédiatement que l'unité de cette structure est l'ensemble simplicial
  vide. Par ailleurs, on montre que le joint simplicial commute aux limites
  inductives connexes en chaque variable. Ainsi, en vertu de la
  remarque~\ref{rem:loc_ferm_th_adj}, le joint simplicial définit une
  structure monoïdale localement bifermée (voir le
  paragraphe~\ref{paragr:def_loc_ferm}).
\end{paragr}

\begin{prop}
  Les foncteurs
  \[
    c_\infty : \pref{\cDelta} \to \ooCat
    \quadet
    N_\infty : \ooCat \to \pref{\cDelta},
  \]
  où $c_\infty$ désigne l'adjoint à gauche de $N_\infty$,
  sont monoïdal et monoïdal lax respectivement, les catégories
  $\pref{\cDelta}$ et $\ooCat$ étant toutes deux munies des structures de
  catégorie monoïdale définies par le joint.
\end{prop}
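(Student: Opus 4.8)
The final proposition asserts that $c_\infty : \pref{\cDelta} \to \ooCat$ is monoidal and $N_\infty : \ooCat \to \pref{\cDelta}$ is lax monoidal, for the join structures on both categories.

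Let me think about this.

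**Strategy.** The key observation is that $c_\infty$ is a left adjoint, hence commutes with all colimits. Both join structures (simplicial and $\infty$-categorical) are locally biclosed, so they commute with connected colimits in each variable. And every (augmented) simplicial set is a canonical connected colimit of representables $\Delta_n$ — because $\cDeltaAug$ is dense in $\pref{\cDeltaAug}$ (or $\cDelta$ in $\pref{\cDelta}$) and contains the initial object (the empty simplicial set).

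So the natural approach: verify that $c_\infty$ is monoidal on the dense subcategory of representables, then extend by the universal property via the density/colimit argument, exactly parallel to the Day-extension machinery already developed. Specifically, I want:
- $c_\infty(\Delta_n) \simeq \On{n}$ (the $n$-th oriental), which should follow from adjunction since $N_\infty$ is defined by $C \mapsto (\Delta_n \mapsto \Hom(\On{n}, C))$, so $c_\infty$ sends $\Delta_n$ to $\On{n}$.
- On representables, $c_\infty(\Delta_m \star \Delta_n) = c_\infty(\Delta_{m+1+n}) = \On{m+1+n} \simeq \On{m} \star \On{n} = c_\infty(\Delta_m) \star c_\infty(\Delta_n)$, using the Theorem establishing $\On{n} \simeq \nu(c(\Delta_n))$ together with $\On{m+1+n} \simeq \On{m} \star \On{n}$ (the definition $\On{n} = \Delta_0 \star \cdots \star \Delta_0$).
- Extend to all simplicial sets using that both joins commute with connected colimits and $c_\infty$ commutes with all colimits, and the density of representables.

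**How I would carry it out.** First I would record that $c_\infty(\Delta_n) \simeq \On{n}$, immediate from the definition of $N_\infty$ and adjunction (Yoneda: $\Hom(c_\infty(\Delta_n), C) \simeq \Hom(\Delta_n, N_\infty(C)) = \Hom(\On{n}, C)$). Next, on augmented representables, I would check $c_\infty$ is monoidal: the simplicial join on representables is $\Delta_m \star \Delta_n \simeq \Delta_{m+1+n}$ (paragraph on $\cDeltaAug$), and applying $c_\infty$ gives $\On{m+1+n}$, which by the definition of the orientals as iterated joins of $\Delta_0$ equals $\On{m} \star \On{n} \simeq c_\infty(\Delta_m) \star c_\infty(\Delta_n)$. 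I would also note $c_\infty$ preserves the unit, as both units are empty objects. Then, for general simplicial sets $X, Y$, I would write each as a canonical \emph{connected} colimit of representables (the empty simplicial set being the initial object of $\pref{\cDelta}$, so these colimits are connected, just as in Theorem~\ref{thm:Day_loc} and the paragraph~\ref{paragr:def_joint}), and compute
\[
  c_\infty(X \star Y) \simeq c_\infty\big(\limind_{\Delta_m \to X,\, \Delta_n \to Y} \Delta_m \star \Delta_n\big)
  \simeq \limind c_\infty(\Delta_m \star \Delta_n)
  \simeq \limind \On{m} \star \On{n}
  \simeq c_\infty(X) \star c_\infty(Y),
\]
using in turn that the simplicial join commutes with connected colimits in each variable, that $c_\infty$ commutes with all colimits, the monoidality on representables, and that the $\infty$-categorical join commutes with connected colimits in each variable (Theorem~\ref{thm:joint}). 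Finally, the coherence constraints (associator, unitors) for $c_\infty$ are induced from those on representables and are compatible by the same colimit formula; and the lax monoidality of $N_\infty$ follows formally by adjunction from the strong monoidality of its left adjoint $c_\infty$ (a right adjoint of a strong monoidal functor between monoidal categories is canonically lax monoidal, via the mate/doctrinal-adjunction formalism).

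**Main obstacle.** The genuinely delicate point is bookkeeping the \emph{connectedness} of all the colimits involved so that one is allowed to interchange join and colimit: the $\infty$-categorical join is only locally biclosed, not biclosed, so it does \emph{not} commute with empty or disconnected colimits. This is exactly why one must augment and pass to $\cDeltaAug$, and why the empty simplicial set being initial makes the slice categories $\tr{\cDeltaAug}{X}$ give connected diagrams. The verification that the coherence isomorphisms assemble into genuine monoidal-functor data (rather than merely an object-level isomorphism) is routine but tedious, and I would present it by invoking the uniqueness clause of the Day-extension theorem rather than checking pentagon/triangle identities by hand. The lax monoidality of $N_\infty$ requires no further work beyond citing the adjoint-functor principle, since $N_\infty$ is a right adjoint and $c_\infty$ is strong monoidal.
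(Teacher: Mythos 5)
Your proposal is correct and follows essentially the same route as the paper's proof: reduce to strong monoidality of $c_\infty$ by doctrinal adjunction, check the constraint on the (augmented) representables via $\On{m} \joint \On{n} \simeq \On{m+1+n}$, and extend by writing every simplicial set as a canonical \emph{connected} colimit of the $\Deltan{p}$ with $p \ge -1$, using that $c_\infty$ preserves all colimits and both joins preserve connected colimits in each variable. The paper handles the coherence data implicitly rather than by invoking the uniqueness clause of the Day theorem, but this is only a presentational difference.
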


\begin{proof}
  Par adjonction, il suffit de montrer que le foncteur $c_\infty$ est
  monoïdal. On a évidemment $c_\infty(\vide) = \vide$. Produisons maintenant
  la contrainte de compatibilité aux produits monoïdaux. Puisque $c_\infty$
  commute aux limites inductives et que les deux foncteurs joint commutent
  aux limites inductives connexes en chaque variable (l'un en vertu du
  théorème~\ref{thm:joint} et l'autre en vertu du paragraphe précédent), les
  foncteurs
  \[
     (X, Y) \mapsto c_\infty(X) \joint c_\infty(Y)
     \quadet
     (X, Y) \mapsto c_\infty(X \joint Y)
  \]
  de $\pref{\cDelta} \times \pref{\cDelta}$ dans $\ooCat$ commutent tout
  deux aux limites inductives connexes en chaque variable. Par ailleurs,
  pour $m \ge -1$ et $n \ge -1$, on a, en vertu du
  théorème~\ref{thm:desc_orient}, des isomorphismes naturels
  \[
     c_\infty(\Deltan{m}) \joint c_\infty(\Deltan{n})
     \simeq
     \On{m} \joint \On{n}
     \simeq
     \On{m+1+n}
     \simeq
     c_\infty(\Deltan{m+1+n})
     \simeq
     c_\infty(\Deltan{m} \joint \Deltan{n}),
  \]
  où $\Deltan{-1}$ désigne l'ensemble simplicial vide. On obtient le
  résultat puisque tout ensemble simplicial est limite inductive canonique
  de $\Deltan{p}$ avec $p \ge -1$ et que cette limite inductive est connexe.
\end{proof}

\begin{rem}
  Un ingrédient clé à la démonstration de notre théorème A de Quillen
  \oo-catégorique dans \cite{AraMaltsiThmAI} et \cite{AraMaltsiThmAII}
  est un résultat de compatibilité (partielle) du nerf de Street aux
  tranches \oo-catégoriques et simpliciales, ces dernières étant définies
  par les adjoints à droite associés à la structure de catégorie monoïdale
  localement bifermée définie par le joint simplicial. On renvoie à
  \cite[proposition 2.9]{AraMaltsiThmAII} pour un énoncé précis et une
  preuve de ce résultat, ainsi qu'à \cite[proposition 4.6]{AraMaltsiThmAI}
  pour une preuve qui n'utilise pas le joint dans le cas particulier des
  tranches au-dessous d'un objet.
\end{rem}

\chapter{Joint et tranches \pdfn-catégoriques}

Le but de ce chapitre est de montrer que le joint \oo-catégorique induit
par troncation un joint $n$-catégorique, et de montrer que pour $n = 1$ on
obtient le joint catégorique usuel.

\medskip

\emph{Dans tout le chapitre, on fixe un entier $n \ge 0$.}

\begin{paragr}
  On notera \nnot[$\Theta_n$, $\ThetanAug{n}$]{$\Theta_n$} la sous-catégorie
  pleine de la catégorie $\nCat{n}$ des $n$-catégories formée des objets de
  $\Theta$ qui sont des $n$-catégories. De même, on notera $\ThetanAug{n}$
  la sous-catégorie pleine de la catégorie $\nCat{n}$ formée des objets de
  $\ThetaAug$ qui sont des $n$\nbd-catégories.

  On vérifie facilement que, si $S$ est un objet de $\Theta$, alors
  $\ti{n}(S)$ (voir le paragraphe~\ref{paragr:tronque}) est un objet de
  $\Theta_n$. En particulier, le foncteur $\ti{n} : \ooCat \to \nCat{n}$
  induit un foncteur $\Theta \to \Theta_n$ qui fournit un adjoint à gauche
  au foncteur d'inclusion~$\Theta_n \hookto \Theta$. La catégorie $\Theta_n$
  est donc une sous-catégorie réflexive de $\Theta$. De même, la catégorie
  $\ThetanAug{n}$ est une sous-catégorie réflexive de $\ThetaAug$.
\end{paragr}

\begin{prop}\label{prop:Theta_n_dense}
  La catégorie $\Theta_n$ (et donc la catégorie $\ThetanAug{n}$) est une
  sous-catégorie dense de $\nCat{n}$. Autrement dit, si $C$ est une
  $n$-catégorie, le morphisme canonique
  \[
    \limind_{(S, S \to C) \in \tr{\Theta_n}{C}} S \longto C
  \]
  est un isomorphisme de $n$-catégories.
\end{prop}

\begin{proof}
  En vertu de la proposition~\ref{prop:Theta_dense}, on obtient un
  isomorphisme en remplaçant $\Theta_n$ par $\Theta$ dans la formule
  ci-dessus et il suffit donc de montrer que l'inclusion $\tr{\Theta_n}{C}
  \hookto \tr{\Theta}{C}$ est un foncteur cofinal. Or, il résulte
  immédiatement du fait que $\Theta_n$ est un sous-catégorie réflexive de
  $\Theta$ que ce foncteur est un adjoint à droite, d'où le résultat.
\end{proof}

\begin{lemme}
  Soient $p, q \ge 0$ deux entiers. Si $K$ est un complexe dirigé augmenté
  de dimension $p$ et $L$ un complexe dirigé augmenté de dimension $q$,
  alors leur joint~\hbox{$K \joint L$} est un complexe dirigé augmenté de
  dimension $p + 1 + q$.
\end{lemme}

\begin{proof}
  Cela résulte immédiatement de la formule explicite définissant le joint
  des complexes dirigés augmentés (voir les
  paragraphes~\ref{paragr:def_joint_aug} et~\ref{paragr:def_joint_cda}).
\end{proof}

\begin{lemme}
  Soient $S$ un objet de $\ThetanAug{p}$ et $T$ un objet de $\ThetanAug{q}$,
  où $p$ et $q$ sont des entiers positifs. Alors le joint de $S$ et $T$
  est une $(p + 1 + q)$-catégorie.
\end{lemme}

\begin{proof}
  En vertu du paragraphe~\ref{paragr:def_joint}, on a $S \joint T \simeq
  \nu(\lambda(S) \joint \lambda(T))$.  Puisque les foncteurs $\lambda$ et
  $\nu$ se restreignent en des foncteurs entre $r$-catégories et complexes
  dirigés augmentés de dimension au plus~$r$, le résultat est conséquence du
  lemme précédent.
\end{proof}

\begin{prop}\label{prop:dim_joint}
  Soient $p, q \ge 0$ deux entiers. Si $C$ est une $p$-catégorie et $D$
  une $q$-catégorie, alors on a un isomorphisme canonique
  \[
    C \joint D \simeq
    \limind_{\substack{
        (S, S \to C) \in \tr{\ThetanAug{p}}{C}\\
        (T, T \to D) \in \tr{\ThetanAug{q}}{D}}}
    S \joint T.
  \]
  En particulier, le joint de $C$ et $D$ est une $(p + 1 +
  q)$-catégorie.
\end{prop}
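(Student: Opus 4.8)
The plan is to deduce the isomorphism from the density of $\ThetanAug{p}$ and $\ThetanAug{q}$ (proposition~\ref{prop:Theta_n_dense}) together with the commutation of the join to connected inductive limits (th�or�me~\ref{thm:joint}). First I would note that, $C$ �tant une $p$-cat�gorie et $D$ une $q$-cat�gorie, la proposition~\ref{prop:Theta_n_dense} fournit des isomorphismes canoniques
\[
  C \simeq \limind_{(S, S \to C) \in \tr{\ThetanAug{p}}{C}} S
  \quadet
  D \simeq \limind_{(T, T \to D) \in \tr{\ThetanAug{q}}{D}} T.
\]
Puisque le foncteur d'inclusion $\nCat{r} \hookto \ooCat$ admet un adjoint � droite (le foncteur $\tb{r}$, voir le paragraphe~\ref{paragr:tronque}), il commute aux limites inductives, de sorte que ces isomorphismes valent dans $\ooCat$. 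Le point crucial est que ces limites inductives sont \emph{connexes} : la cat�gorie $\ThetanAug{p}$ contenant la \oo-cat�gorie vide $\vide$ comme objet initial, le couple $(\vide, \vide \to C)$ est un objet initial de $\tr{\ThetanAug{p}}{C}$, si bien que cette cat�gorie-tranche est connexe ; de m�me pour $\tr{\ThetanAug{q}}{D}$. C'est pr�cis�ment pour disposer de cette connexit� qu'on travaille avec $\ThetanAug{p}$ plut�t qu'avec $\Theta_p$.

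Je d�duirais alors l'isomorphisme annonc� en appliquant le th�or�me~\ref{thm:joint}, selon lequel le foncteur joint commute aux petites limites inductives connexes en chaque variable. Concr�tement, on a
\[
  C \joint D
  \simeq \Big(\limind_S S\Big) \joint \Big(\limind_T T\Big)
  \simeq \limind_S \Big(S \joint \limind_T T\Big)
  \simeq \limind_S \limind_T (S \joint T),
\]
la premi�re commutation utilisant la connexit� de $\tr{\ThetanAug{p}}{C}$ et la seconde celle de $\tr{\ThetanAug{q}}{D}$. La double limite inductive s'identifiant � la limite inductive sur la cat�gorie produit $\tr{\ThetanAug{p}}{C} \times \tr{\ThetanAug{q}}{D}$, on obtient bien la formule de l'�nonc�.

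Pour la derni�re assertion, j'invoquerais le lemme pr�c�dent, qui affirme que pour $S$ dans $\ThetanAug{p}$ et $T$ dans $\ThetanAug{q}$, le joint $S \joint T$ est une $(p+1+q)$-cat�gorie. Comme le foncteur d'inclusion $\nCat{p+1+q} \hookto \ooCat$ admet un adjoint � droite (de nouveau le paragraphe~\ref{paragr:tronque}), la sous-cat�gorie $\nCat{p+1+q}$ est stable par limites inductives dans $\ooCat$ ; la limite inductive $C \joint D \simeq \limind_{S, T}(S \joint T)$ est donc une $(p+1+q)$-cat�gorie. La seule subtilit� r�elle de cette d�monstration r�side dans la v�rification de la connexit� des limites inductives, qui conditionne l'applicabilit� du th�or�me~\ref{thm:joint} ; le reste est formel.
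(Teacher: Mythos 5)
Votre d�monstration est correcte et suit essentiellement la m�me d�marche que celle du texte : densit� de $\ThetanAug{p}$ et $\ThetanAug{q}$ (proposition~\ref{prop:Theta_n_dense}), commutation du joint aux limites inductives connexes en chaque variable (th�or�me~\ref{thm:joint}), puis le lemme pr�c�dent pour l'assertion de dimension. Les v�rifications suppl�mentaires que vous explicitez — la connexit� des cat�gories-tranches via l'objet initial $(\vide, \vide \to C)$ et la stabilit� de $\nCat{p+1+q}$ par limites inductives dans $\ooCat$ gr�ce � l'adjoint � droite $\tb{p+1+q}$ — sont pr�cis�ment les points laiss�s implicites dans le texte, et vos justifications en sont exactes.
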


\begin{proof}
  En vertu de la proposition~\ref{prop:Theta_n_dense}, pour $r = p, q$, la
  catégorie $\ThetanAug{r}$ est dense dans $\nCat{r}$ et on a donc
  \[
    C \simeq \limind_{\substack{(S, S \to C) \in \tr{\ThetanAug{p}}{C}}} S
    \quadet
    D \simeq \limind_{\substack{(T, T \to D) \in \tr{\ThetanAug{q}}{D}}} T.
  \]
  On obtient alors la formule de l'énoncé en utilisant la commutation du
  joint aux limites inductives connexes en chaque variable (voir le
  théorème~\ref{thm:joint}). Ainsi, en vertu du lemme précédent, le joint de
  $C$ et $D$ est limite inductive de $(p+1+q)$-catégories et est donc
  une $(p+1+q)$-catégorie.
\end{proof}

\begin{paragr}\label{paragr:def_joint_n}
  Soient $C$ et $D$ deux $n$-catégories. On appelle
  \ndef[joint!$n$-catégorique]{joint $n$-catégorique} de $C$ et $D$ la
  $n$-catégorie
  \[ C \joint_n D = \ti{n}(C \joint D). \]
  \notindex{$A \joint_n B$}%
  Cette opération définit un foncteur
  \[
    \begin{split}
      \nCat{n} \times \nCat{n} & \to \nCat{n} \\
      (C, D) \quad\,\,\, & \mapsto C \joint_n D.
    \end{split}
  \]
  Si $C$ est une $n$-catégorie, on a
  \[ \vide \joint_n C = \ti{n}(\vide \joint C) \simeq \ti{n}(C) = C \]
  et, de même,
  \[ C \joint_n \vide \simeq C. \]
  On notera, comme dans le cas de $\ooCat$,
  \[
    C \xto{\iota_1} C \joint_n D \xot{\iota_2} D
  \]
  \notindex{$\iota_1 : A \to A \joint_n B$, $\iota_2 : B \to A \joint_n B$}%
  les \oo-foncteurs canoniques.
\end{paragr}

Le but des énoncés suivants est d'établir que le joint $n$-catégorique
définit une structure de catégorie monoïdale sur $\nCat{n}$.

\begin{lemme}\label{lemme:joint_n}
  Soient $K$ et $L$ deux complexes dirigés augmentés.
  Les morphismes canoniques $K \to \ti{n}(K)$ et $L \to \ti{n}(L)$ induisent
  un isomorphisme
  \[
    \ti{n}(K \joint L) \simeq \ti{n}(\ti{n}(K) \joint \ti{n}(L)).
  \]
\end{lemme}

\begin{proof}
  Il s'agit de montrer que pour tout $r$ tel que $0 \le r \le n$,
  l'application canonique
  \[
    \alpha_r : \ti{n}(K \joint L)_r \to \ti{n}(\ti{n}(K) \joint \ti{n}(L))_r
  \]
  est une bijection. Si $r < n$, cette application n'est autre que
  l'application canonique
  \[
    (K \joint L)_r \to (\ti{n}(K) \joint \ti{n}(L))_r.
  \]
  Or, la description explicite du joint (voir les
  paragraphes~\ref{paragr:def_joint_aug} et~\ref{paragr:def_joint_cda}) montre
  que cette application est l'identité puisque $\ti{n}(K)_p = K_p$ et
  $\ti{n}(L)_q = L_q$ pour $p, q \le r < n$. Traitons maintenant le cas $r =
  n$. Par définition, $\ti{n}(K \joint L)_n$ est le quotient du
  groupe abélien
  \[
       \Big(
       \bigoplus_{\substack{p+1+q = n\\n > p \ge -1,\, n > q \ge -1}}
       (K_p \otimes L_q)
       \Big) \oplus K_n \oplus L_n
  \]
  par le sous-groupe
  \[
       \Big(
       \sum_{\substack{p'+1+q' = n+1\\n > p' \ge -1,\, n > q' \ge -1}}
       d(K_{p'} \otimes L_{q'})
       \Big)
       + d(K_n \otimes L_0) + d(K_0 \otimes L_n) + d(K_{n+1})
       + d(L_{n+1})
  \]
  et $\ti{n}(\ti{n}(K) \joint \ti{n}(L))_n$ le quotient du groupe abélien
  \[
       \Big(
       \bigoplus_{\substack{p+1+q = n\\n > p \ge -1,\, n > q \ge -1}}
       (K_p \otimes L_q)
       \Big) \oplus K_n/d(K_{n+1}) \oplus L_n/d(L_{n+1})
  \]
  par le sous-groupe
  \[
    \Big(
        \sum_{\substack{p'+1+q' = n+1\\n > p' \ge -1,\, n > q' \ge -1}}
        d(K_{p'} \otimes L_{q'})
    \Big)
    + d(K_n/d(K_{n+1}) \otimes L_0) + d(K_0 \otimes L_n/d(L_{n+1}))
  \]
  (modulo un ajustement mineur pour traiter le cas $n = 0$). Puisque
  \[
    d(K_n/d(K_{n+1}) \otimes L_0) = d(K_n \otimes L_0)
    \quadet
    d(K_0 \otimes L_n/d(L_{n+1})) = d(K_0 \otimes L_n),
  \]
  le morphisme canonique du premier quotient vers le second est un
  isomorphisme, ce qu'il fallait démontrer.
\end{proof}

\begin{prop}\label{prop:tronque_joint}
  Soient $C$ et $D$ deux \oo-catégories. Les \oo-foncteurs canoniques $C \to
  \ti{n}(C)$ et $D \to \ti{n}(D)$ induisent un isomorphisme
  \[
    \ti{n}(C \joint D) \simeq \ti{n}(\ti{n}(C) \joint \ti{n}(D)).
  \]
\end{prop}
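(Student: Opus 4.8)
The plan is to reduce the statement to the already-established complex-level version, lemme~\ref{lemme:joint_n}, by transporting it through the functors $\lambda$ and $\nu$ of Steiner, after first reducing to the case of objects of $\ThetaAug$ by density.

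First I would set up the reduction to $\ThetaAug$. The comparison \oo-functor $\ti{n}(C \joint D) \to \ti{n}(\ti{n}(C) \joint \ti{n}(D))$ induced by the canonical \oo-functors $C \to \ti{n}(C)$ and $D \to \ti{n}(D)$ is natural in $C$ and $D$. Both functors $(C, D) \mapsto \ti{n}(C \joint D)$ and $(C, D) \mapsto \ti{n}(\ti{n}(C) \joint \ti{n}(D))$ commute with connected inductive limits in each variable: indeed, $\ti{n}$ is a left adjoint (paragraphe~\ref{paragr:tronque}), hence commutes with all inductive limits, and the joint commutes with connected inductive limits in each variable (th�or�me~\ref{thm:joint}). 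Since $\ThetaAug$ is dense in $\ooCat$ and contains the empty \oo-category, every \oo-category is a canonical connected inductive limit of objects of $\ThetaAug$ (proposition~\ref{prop:Theta_dense}). Writing $C \simeq \limind_{S \to C} S$ and $D \simeq \limind_{T \to D} T$ over $\tr{\ThetaAug}{C}$ and $\tr{\ThetaAug}{D}$, the comparison morphism for $(C, D)$ is the inductive limit of the comparison morphisms for the pairs $(S, T)$. It therefore suffices to prove the isomorphism when $C = S$ and $D = T$ are objects of $\ThetaAug$.

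Then I would establish the case $S, T \in \ThetaAug$ by transporting lemme~\ref{lemme:joint_n} through $\nu$. Since $S \simeq \nu\lambda(S)$ and $T \simeq \nu\lambda(T)$ with $\lambda(S), \lambda(T)$ strong Steiner complexes (proposition~\ref{prop:Theta_Steiner}), and since $S \joint T \simeq \nu(\lambda(S) \joint \lambda(T))$ with $\lambda(S) \joint \lambda(T)$ again strong Steiner (paragraphe~\ref{paragr:def_joint} and corollaire~\ref{coro:joint_Steiner}), and since every strong Steiner complex admits a basis, proposition~\ref{prop:tr_nu} gives
\[
  \ti{n}(S \joint T) \simeq \nu\big(\ti{n}(\lambda(S) \joint \lambda(T))\big).
\]
On the other side, $\ti{n}(S)$ and $\ti{n}(T)$ lie in $\ThetanAug{n} \subset \ThetaAug$, so $\lambda(\ti{n}(S)) \simeq \ti{n}\lambda(S)$ and $\lambda(\ti{n}(T)) \simeq \ti{n}\lambda(T)$ (proposition~\ref{prop:tr_lambda}), and $\ti{n}(S) \joint \ti{n}(T) \simeq \nu(\ti{n}\lambda(S) \joint \ti{n}\lambda(T))$, this last complex being strong Steiner; a second application of proposition~\ref{prop:tr_nu} yields
\[
  \ti{n}\big(\ti{n}(S) \joint \ti{n}(T)\big) \simeq \nu\big(\ti{n}(\ti{n}\lambda(S) \joint \ti{n}\lambda(T))\big).
\]
Applying $\nu$ to the isomorphism of lemme~\ref{lemme:joint_n} for $K = \lambda(S)$ and $L = \lambda(T)$ identifies the two right-hand sides, which gives the desired isomorphism once one checks that these identifications are compatible with the canonical comparison morphism.

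The main obstacle is not the computation, which is a routine assembly of the cited isomorphisms, but the bookkeeping ensuring that the abstract isomorphisms produced at the level of $\Cda$ and transported by $\nu$ genuinely coincide with the comparison morphism induced by $C \to \ti{n}(C)$ and $D \to \ti{n}(D)$; this amounts to tracking the naturality of the monoidality isomorphism for $\lambda$ (proposition~\ref{prop:lambda_nu_monoidaux_joint}) and of the truncation isomorphisms throughout, and to verifying that the density reduction respects the comparison morphism. The essential mathematical content is entirely contained in lemme~\ref{lemme:joint_n}, everything else being a transport of structure along $\lambda$ and $\nu$ that is valid precisely because all the complexes involved are strong Steiner.
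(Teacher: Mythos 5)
Your proof is correct and follows essentially the same route as the paper: both reduce to objects of $\ThetaAug$ by density and commutation with connected inductive limits, then settle that case by transporting the complex-level isomorphism of lemme~\ref{lemme:joint_n} through $\nu$, using propositions~\ref{prop:tr_lambda}, \ref{prop:tr_nu}, \ref{prop:Theta_Steiner} and corollaire~\ref{coro:joint_Steiner} to ensure all intermediate complexes are strong Steiner (hence with basis) so that $\nu$ and $\ti{n}$ interact as required. The only difference is presentational: the paper writes the $\ThetaAug$ case as a single chain of isomorphisms and does the density reduction second, whereas you compute the two sides separately and reduce first.
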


\begin{proof}
  Commençons par démontrer le résultat dans le cas où $C = S$ et $D = T$
  sont des objets de $\ThetaAug$. Observons tout d'abord que si $U$ est un
  objet de~$\ThetaAug$, alors $\ti{n}(\lambda(U))$ est un complexe de
  Steiner fort. En effet, en vertu de la
  proposition~\ref{prop:tr_lambda}, on a $\ti{n}(\lambda(U)) =
  \lambda(\ti{n}(U))$ et, la $n$-catégorie $\ti{n}(U)$ étant un objet de
  $\ThetaAug$, on conclut en vertu de la
  proposition~\ref{prop:Theta_Steiner}. On a donc
  { \allowdisplaybreaks
    \begin{align*}
      \ti{n}(S \joint T)
      & \simeq
      \ti{n}\nu(\lambda(S) \joint \lambda(T)) \\
      & \phantom{\simeq 1} \text{(en vertu du
      paragraphe~\ref{paragr:def_joint})} \\
      & \simeq
      \nu\ti{n}(\lambda(S) \joint \lambda(T)) \\
      & \phantom{\simeq 1} \text{(en vertu de la
        proposition~\ref{prop:tr_nu})} \\
      & \simeq
      \nu\ti{n}(\ti{n}(\lambda(S)) \joint \ti{n}(\lambda(T))) \\
      & \phantom{\simeq 1} \text{(en vertu du lemme précédent)} \\
      & \simeq
      \ti{n}\nu(\ti{n}(\lambda(S)) \joint \ti{n}(\lambda(T))) \\
      & \simeq
      \ti{n}(\nu\ti{n}(\lambda(S)) \joint \nu\ti{n}(\lambda(T))) \\*
      & \phantom{\simeq 1} \text{(puisque $\ti{n}(\lambda(U))$ pour
          $U = S, T$ est de Steiner fort)} \\
      & \simeq
      \ti{n}(\ti{n}\nu(\lambda(S)) \joint \ti{n}\nu(\lambda(T))) \\
      & \simeq
      \ti{n}(\ti{n}(S) \joint \ti{n}(T)),
    \end{align*}
  }%
  où le dernier isomorphisme résulte de la
  proposition~\ref{prop:Theta_Steiner}.

  Passons au cas général. On a, en désignant par $S$ et $T$ des objets de
  $\ThetaAug$,
  { \allowdisplaybreaks
    \begin{align*}
      \ti{n}(C \joint D)
      & \simeq
      \ti{n}(\limind\limits_{S \to C, T \to D} S \joint T) \\*
      & \phantom{\simeq 1} \text{(en vertu du
      paragraphe~\ref{paragr:def_joint})} \\
      & \simeq
      \limind\limits_{S \to C, T \to D} \ti{n}(S \joint T) \\
      & \phantom{\simeq 1} \text{(puisque $\ti{n}$ est un adjoint à gauche)} \\
      & \simeq
      \limind\limits_{S \to C, T \to D} \ti{n}(\ti{n}(S) \joint \ti{n}(T))
      \\*
      & \phantom{\simeq 1} \text{(en vertu du cas précédent)} \\
      & \simeq
      \ti{n}(\ti{n}(\limind_{S \to C} S) \joint \ti{n}(\limind_{T \to D} T))
      \\*
      & \phantom{\simeq 1} \text{(en vertu du théorème~\ref{thm:joint})} \\
      & \simeq
      \ti{n}(\ti{n}(C) \joint \ti{n}(D)),
    \end{align*}
  }%
  ce qu'il fallait démontrer.
\end{proof}

\begin{prop}\label{prop:joint_n_monoidal}
  La structure de catégorie monoïdale sur $\ooCat$ définie par le joint
  induit une structure de catégorie monoïdale sur $\nCat{n}$ pour le joint
  $n$-catégorique.
\end{prop}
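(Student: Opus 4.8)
The plan is to transport the monoidal structure from $\ooCat$ to $\nCat{n}$ via the adjunction $\ti{n} \dashv \text{inclusion}$, using the compatibility result just established in proposition~\ref{prop:tronque_joint} as the crucial input. The key observation is that the intelligent truncation functor $\ti{n}$ is the left adjoint to the fully faithful inclusion $\nCat{n} \hookto \ooCat$, so $\nCat{n}$ is a reflective subcategory. The general principle is that a reflective subcategory inherits a monoidal structure from the ambient category provided the reflector is compatible with the tensor product in the sense that the localization maps $C \to \ti{n}(C)$ become tensor-isomorphisms after applying the reflector. This is exactly the content of proposition~\ref{prop:tronque_joint}.

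Concretely, I would proceed as follows. First, recall that for $n$-categories $C$ and $D$, the $n$-categorical join is defined by $C \joint_n D = \ti{n}(C \joint D)$, and since $C$ and $D$ are already $n$-categories we have $\ti{n}(C) = C$ and $\ti{n}(D) = D$. The unit is the empty $n$-category, with the unit isomorphisms $\vide \joint_n C \simeq C$ and $C \joint_n \vide \simeq C$ already verified in paragraph~\ref{paragr:def_joint_n}. The heart of the matter is the associativity constraint. For $n$-categories $C$, $D$, $E$, I would compute
\[
  (C \joint_n D) \joint_n E
  = \ti{n}\big(\ti{n}(C \joint D) \joint E\big)
  \simeq \ti{n}\big(\ti{n}(\ti{n}(C \joint D)) \joint \ti{n}(E)\big)
  \simeq \ti{n}\big((C \joint D) \joint E\big),
\]
where the first isomorphism is an instance of proposition~\ref{prop:tronque_joint} applied to the pair $(\ti{n}(C \joint D), E)$ (noting that $\ti{n}(E) = E$ and that $\ti{n}$ is idempotent so $\ti{n}(\ti{n}(C \joint D)) = \ti{n}(C \joint D)$), and the second isomorphism uses proposition~\ref{prop:tronque_joint} in the reverse direction. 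By the symmetric computation on the other side, together with the associativity isomorphism $(C \joint D) \joint E \simeq C \joint (D \joint E)$ of the ambient structure, one obtains
\[
  (C \joint_n D) \joint_n E
  \simeq \ti{n}\big(C \joint (D \joint E)\big)
  \simeq C \joint_n (D \joint_n E).
\]

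To finish, I would verify that these constraints satisfy the pentagon and triangle coherence axioms. The cleanest way to organize this is to observe that the constraints we have produced are obtained by applying the idempotent functor $\ti{n}$ to the corresponding constraints of the monoidal structure on $\ooCat$, via proposition~\ref{prop:tronque_joint}; the coherence axioms for $\joint_n$ then follow formally from those for $\joint$. More precisely, I expect the main point to be that proposition~\ref{prop:tronque_joint} is natural in both variables, so that the isomorphisms above assemble into natural transformations and the coherence diagrams for $\joint_n$ are the images under $\ti{n}$ of the (commutative) coherence diagrams for $\joint$. The expected main obstacle is this naturality and coherence bookkeeping: one must check that the reflector $\ti{n}$ interacts with the associator and unitors in a way that respects all the pasting of instances of proposition~\ref{prop:tronque_joint}. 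Since the underlying data all come from the genuinely monoidal structure on $\ooCat$ by applying a single functor, the verification is routine, and I would present it as a direct consequence of the reflective-subcategory formalism combined with proposition~\ref{prop:tronque_joint}.
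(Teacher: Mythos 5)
Your proposal is correct and follows essentially the same route as the paper: both define the associativity constraint by using proposition~\ref{prop:tronque_joint} (together with $\ti{n}(C)=C$ for $C$ an $n$\nbd-cat\'egorie) to identify $(C \joint_n D) \joint_n E$ and $C \joint_n (D \joint_n E)$ with $\ti{n}((C \joint D) \joint E)$ and $\ti{n}(C \joint (D \joint E))$ respectively, then transport the associator of $\joint$, the unit isomorphisms being those of paragraph~\ref{paragr:def_joint_n}. Your additional remarks on naturality and coherence simply make explicit what the paper's ``cela r\'esulte formellement'' leaves implicit.
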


\begin{proof}
  Cela résulte formellement de la proposition précédente. En effet,
  si $A$, $B$ et $C$ sont trois $n$-catégories, on a
  {
    \begin{align*}
      A \joint_n (B \joint_n C)
      & =
      \ti{n}(A \joint \ti{n}(B \joint C)) \\
      & \simeq
      \ti{n}(\ti{n}(A) \joint \ti{n}(B \joint C)) \\
      & \simeq
      \ti{n}(A \joint (B \joint C)),
    \end{align*}
  }
  les deux isomorphismes résultant de la proposition précédente. De même, on
  a
  \[
    (A \joint_n B) \joint_n C \simeq \ti{n}((A \joint B) \joint C),
  \]
  et la contrainte d'associativité du joint induit donc une contrainte
  d'associativité pour le joint $n$-catégorique. Par ailleurs, on a
  vérifié au paragraphe~\ref{paragr:def_joint_n} qu'on a
  \[ \vide \joint_n C \simeq C \quadet C \joint_n \vide \simeq C, \]
  d'où le résultat.
\end{proof}

\begin{lemme}
  Pour toute \oo-catégorie $A$ et pour tout $i \ge n$, le \oo-foncteur
  \hbox{$\kappa_i^n : \Dn{i} \to \Dn{n}$} coreprésentant l'identité \noemph{(voir le
  paragraphe~\ref{paragr:def_kappa_nabla})} induit un isomorphisme
  \[
    \ti{n}(A \joint \Dn{i}) \simeq \ti{n}(A \joint \Dn{n}).
  \]
\end{lemme}

\begin{proof}
  Le $n$-tronqué intelligent du \oo-foncteur $\kappa_i^n : \Dn{i} \to
  \Dn{n}$ étant un isomorphisme, l'assertion résulte de la
  proposition~\ref{prop:tronque_joint}.
\end{proof}

\begin{prop}\label{prop:tr_nCat}
  Si $C$ est une $n$-catégorie et $u : A \to C$ est un \oo-foncteur, alors
  les \oo-catégories $\cotr{C}{u}$ et \smash{$\trm{C}{u}$} sont des $n$-catégories.
\end{prop}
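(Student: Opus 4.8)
Le plan est de traiter s\'epar\'ement les deux tranches, en commen\c{c}ant par $\cotr{C}{u}$ puis en d\'eduisant le cas de $\trm{C}{u}$ par dualit\'e. Pour $\cotr{C}{u}$, il s'agit de montrer que, pour tout $i > n$, toute $i$-fl\`eche est triviale (au sens du paragraphe~\ref{paragr:tronque}). D'apr\`es le paragraphe~\ref{paragr:desc_tr}, on dispose pour tout $i \ge 0$ d'une bijection naturelle
\[
  (\cotr{C}{u})_i \simeq \Hom_{\cotr{\ooCat}{A}}((A \joint \Dn{i}, \iota_1), (C, u)),
\]
et il suffira de v\'erifier que, pour tout $i \ge n$, le \oo-foncteur $\kappa_i^n : \Dn{i} \to \Dn{n}$ induit une bijection $(\cotr{C}{u})_n \to (\cotr{C}{u})_i$ : par naturalit\'e en $\Dn{\var}$, cette bijection s'identifie \`a l'application envoyant une $n$-fl\`eche sur son identit\'e it\'er\'ee en dimension $i$, de sorte que toute $i$-fl\`eche sera alors une identit\'e it\'er\'ee d'une $n$-fl\`eche.

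D'abord, je remarquerais que, $C$ \'etant une $n$-cat\'egorie et $\ti{n}$ \'etant adjoint \`a gauche de l'inclusion $\nCat{n} \hookto \ooCat$, tout \oo-foncteur $A \joint \Dn{i} \to C$ se factorise de mani\`ere unique par le morphisme d'unit\'e $\pi : A \joint \Dn{i} \to \ti{n}(A \joint \Dn{i})$. En posant $\bar\iota_1 = \pi \circ \iota_1$, on obtient ainsi une bijection naturelle
\[
  (\cotr{C}{u})_i \simeq \Hom_{\cotr{\ooCat}{A}}((\ti{n}(A \joint \Dn{i}), \bar\iota_1), (C, u)).
\]
Ensuite, pour $i \ge n$, le lemme pr\'ec\'edent fournit un isomorphisme $\ti{n}(A \joint \kappa_i^n) : \ti{n}(A \joint \Dn{i}) \to \ti{n}(A \joint \Dn{n})$, lequel est au-dessous de $A$ car, par naturalit\'e de $\iota_1$ en la seconde variable du joint, on a $(A \joint \kappa_i^n) \circ \iota_1 = \iota_1$. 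En pr\'ecomposant, cet isomorphisme induit donc la bijection $(\cotr{C}{u})_n \to (\cotr{C}{u})_i$ cherch\'ee, et celle-ci co\"incide avec la pr\'ecomposition par $\kappa_i^n$ au niveau des cellules. On conclut que toute $i$-fl\`eche de $\cotr{C}{u}$, pour $i \ge n$, est une identit\'e it\'er\'ee d'une $n$-fl\`eche, donc que $\cotr{C}{u}$ est une $n$-cat\'egorie.

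Pour la tranche $\trm{C}{u}$, je proc\'ederais par dualit\'e. La proposition~\ref{prop:dual_tr} fournit un isomorphisme canonique $\trm{C}{u} \simeq (\cotr{C^\opp}{u^\opp})^\opp$. Comme les foncteurs de dualit\'e $\dual{J}$ (voir le paragraphe~\ref{paragr:dual_ooCat}) ne font que renverser le sens des fl\`eches sans modifier leur dimension, ils pr\'eservent la sous-cat\'egorie $\nCat{n}$ ; en particulier $C^\opp$ est une $n$-cat\'egorie. La premi\`ere partie de la d\'emonstration montre alors que $\cotr{C^\opp}{u^\opp}$ est une $n$-cat\'egorie, et il en est de m\^eme de son dual impair, d'o\`u le r\'esultat.

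La principale difficult\'e se situe dans le deuxi\`eme paragraphe : il faut v\'erifier soigneusement que la factorisation par $\ti{n}$ et l'isomorphisme du lemme pr\'ec\'edent sont compatibles avec la structure au-dessous de $A$ (ce qui repose sur la naturalit\'e de $\iota_1$ en la seconde variable du joint), et que la bijection obtenue sur les cellules provient bien, de mani\`ere naturelle en $\Dn{\var}$, du morphisme structural $\kappa_i^n$ de l'objet \oo-cocat\'egorie $\cocatD$, de fa\c{c}on \`a pouvoir l'interpr\'eter comme l'application ``identit\'e it\'er\'ee''.
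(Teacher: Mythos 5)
Votre d\'emonstration est correcte et suit essentiellement la m\^eme voie que celle de l'article : r\'eduction \`a $\cotr{C}{u}$ par dualit\'e, description des $i$-fl\`eches comme \oo-foncteurs $A \joint \Dn{i} \to C$ au-dessous de $A$ ne d\'ependant que du tronqu\'e $\ti{n}(A \joint \Dn{i})$, puis application du lemme pr\'ec\'edent sur les $\kappa$. La seule diff\'erence, purement cosm\'etique, est que l'article applique le lemme \`a $\kappa_i : \Dn{i+1} \to \Dn{i}$ dimension par dimension (toute $(i+1)$-fl\`eche est une identit\'e), tandis que vous utilisez directement $\kappa_i^n : \Dn{i} \to \Dn{n}$ pour identifier les $i$-fl\`eches aux identit\'es it\'er\'ees de $n$-fl\`eches.
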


\begin{proof}
  Par dualité, il suffit de montrer l'assertion pour la
  \oo-caté\-gorie~$\cotr{C}{u}$. Par définition, les $i$-flèches de cette
  \oo-catégorie sont les \oo-foncteurs $A \joint \Dn{i} \to C$ au-dessous
  de~$A$. Puisque $C$ est une $n$-catégorie, de tels \oo-foncteurs ne
  dépendent que du $n$\nbd-tronqué intelligent $\ti{n}(A \joint \Dn{i})$.
  Fixons $i \ge n$.  En vertu du lemme précédent, le \oo-foncteur $\Dn{i+1}
  \to \Dn{i}$ coreprésentant l'identité induit un isomorphisme sur ces
  $n$\nbd-tronqués. L'application identité $(\cotr{C}{u})_i \to
  (\cotr{C}{u})_{i+1}$, induite par $\kappa_i$, est donc une bijection, d'où
  le résultat.
\end{proof}

\begin{prop}\label{prop:tr_nCat_univ}
  Soient $A$ et $B$ des $n$-catégories.
  On a des couples de foncteurs adjoints
  \[
    \begin{split}
      \nCat{n} & \to \cotr{\nCat{n}}{A}, \\
      B & \mapsto (A \joint_n B, \iota_1)
    \end{split}
    \qquad
    \qquad
    \begin{split}
      \cotr{\nCat{n}}{A} & \to \nCat{n} \\
      (C, A \xto{u} C) & \mapsto \cotr{C}{u} \\
    \end{split}
  \]
  et
  \[
    \begin{split}
      \nCat{n} & \to \cotr{\nCat{n}}{B}, \\
      A & \mapsto (A \joint_n B, \iota_2)
    \end{split}
    \qquad
    \qquad
    \begin{split}
      \cotr{\nCat{n}}{B} & \to \nCat{n}. \\
      (C, B \xto{v} C) & \mapsto \trm{C}{v} \\
    \end{split}
  \]
\end{prop}

\begin{proof}
  Notons tout d'abord que l'énoncé est bien défini puisque, en vertu de la
  proposition précédente, les \oo-catégories $\cotr{C}{u}$ et
  \smash{$\trm{C}{v}$} sont des $n$-catégories. Par ailleurs, on a
  {
    \allowdisplaybreaks
    \begin{align*}
      \Hom_{\nCat{n}}(B, \cotr{C}{u})
      & \simeq
      \Hom_{\ooCat}(B, \cotr{C}{u}) \\*
      & \simeq
      \Hom_{\cotr{\ooCat}{A}}((A \joint B, \iota_1), (C, u)) \\*
      & \phantom{\simeq1} \text{(par adjonction définissant les tranches)} \\
      & \simeq
      \Hom_{\cotr{\ooCat}{\ti{n}(A)}}((\ti{n}(A \joint B), \ti{n}(\iota_1)),
      (C, u)) \\*
      & \phantom{\simeq1} \text{(par adjonction)} \\
      & \simeq
      \Hom_{\cotr{\ooCat}{A}}((A \joint_n B, \iota_1), (C, u)) \\
      & \simeq
      \Hom_{\cotr{\nCat{n}}{A}}((A \joint_n B, \iota_1), (C, u)),
    \end{align*}
  }
  ce qui établit la première adjonction. La deuxième s'en déduit par
  dualité.
\end{proof}

\begin{coro}\label{coro:joint_n_biferme}
  La structure de catégorie monoïdale sur $\nCat{n}$ définie par le joint
  $n$-catégorique est localement bifermée. En particulier, le joint
  $n$-catégorique commute aux limites inductives connexes en chaque
  variable.
\end{coro}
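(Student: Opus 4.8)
The plan is to observe that this corollary is essentially a repackaging of the adjunctions already established in Proposition~\ref{prop:tr_nCat_univ}, combined with the general formalism of paragraph~\ref{paragr:def_loc_ferm}. First I would record that, by Proposition~\ref{prop:joint_n_monoidal}, the $n$-categorical join endows $\nCat{n}$ with a monoidal structure whose unit is the empty $n$-category $\vide$, which is an initial object of $\nCat{n}$. This places us in the situation of paragraph~\ref{paragr:def_loc_ferm}, where one uses implicitly the local bicoaugmentation of Example~\ref{exem:coaug}: the endofunctors $M$ and $N$ are the identity, so that $M(A) = A$ and $N(B) = B$, and the structural morphisms are the canonical $\iota_1 : A \to A \joint_n B$ and $\iota_2 : B \to A \joint_n B$ of paragraph~\ref{paragr:def_joint_n}.

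Next I would unwind the definition of local biclosedness for this bicoaugmentation. Being locally closed on the left means that for every $n$-category $A$ the functor $B \mapsto (A \joint_n B, \iota_1)$ from $\nCat{n}$ to $\cotr{\nCat{n}}{A}$ admits a right adjoint, and being locally closed on the right means that for every $n$-category $B$ the functor $A \mapsto (A \joint_n B, \iota_2)$ from $\nCat{n}$ to $\cotr{\nCat{n}}{B}$ admits a right adjoint. But these are exactly the two pairs of adjoint functors produced by Proposition~\ref{prop:tr_nCat_univ}, whose right adjoints are $(C, u) \mapsto \cotr{C}{u}$ and $(C, v) \mapsto \trm{C}{v}$ respectively. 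Hence local biclosedness is immediate.

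Finally, the assertion introduced by \emph{en particulier} about commutation with connected inductive limits requires no new work: I would simply invoke the general observation of paragraph~\ref{paragr:def_loc_ferm}, according to which, in any locally biclosed monoidal category, the functor $\var \joint_n B$ factors as $\nCat{n} \to \cotr{\nCat{n}}{B} \xto{U} \nCat{n}$ through the canonical functor (a left adjoint, by the previous paragraph) followed by the forgetful functor $U$, and the latter always commutes with connected inductive limits; the symmetric argument, using $\cotr{\nCat{n}}{A}$, handles $A \joint_n \var$.

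Since every ingredient is already in place, there is in truth no genuine obstacle here. The only point demanding care is the bookkeeping that the adjunctions of Proposition~\ref{prop:tr_nCat_univ} match, term for term, the data demanded by the definition of local biclosedness, once one has identified the unit of $\joint_n$ with an initial object and thereby selected the local bicoaugmentation.
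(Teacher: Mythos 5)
Votre démonstration est correcte et suit exactement la même voie que celle de l'article, dont la preuve se réduit à « cela découle immédiatement de la proposition précédente » (la proposition~\ref{prop:tr_nCat_univ}) ; vous ne faites qu'expliciter le détail implicite, à savoir que l'unité $\vide$ est un objet initial de $\nCat{n}$, que l'on utilise donc la bicoaugmentation locale de l'exemple~\ref{exem:coaug}, et que les adjonctions de la proposition~\ref{prop:tr_nCat_univ} sont précisément les données exigées par la définition du paragraphe~\ref{paragr:def_loc_ferm}, la commutation aux limites inductives connexes résultant alors de l'observation générale de ce même paragraphe.
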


\begin{proof}
  Cela découle immédiatement de la proposition précédente.
\end{proof}

\begin{rem}
  On peut déduire par dualité des résultats analogues pour le joint dual
  (voir la remarque \ref{rem:cojoint_ooCat}). En particulier, le joint dual
  induit une structure de catégorie monoïdale localement bifermée sur
  $\nCat{n}$.
\end{rem}

Dans la suite du chapitre, nous allons comparer le joint $n$-catégorique
dans le cas $n = 1$ avec le joint catégorique classique tel qu'exposé, par
exemple, dans la section~3.1 de~\cite{JoyalQCatAppl}.

\begin{paragr}\label{paragr:def_joint_cl}
  Soient $C$ et $D$ deux catégories. Le \ndef[joint!catégorique classique]{joint
  catégorique classique} est la catégorie \nnot[$A \jointc_1 B$]{$C
\jointc_1 D$} définie de la manière suivante :
  \begin{itemize}
    \item on pose $\Ob(C \jointc_1 D) = \Ob(C) \coprod \Ob(D)$;
    \item pour tous $x$ et $y$ dans $\Ob(C \jointc_1 D)$, on pose
      \[
        \Hom^{}_{C \jointc_1 D}(x, y) =
        \begin{cases}
          \Hom_C(x, y) & \text{si $x$ et $y$ sont dans $\Ob(C)$,} \\
          \Hom_D(x, y) & \text{si $x$ et $y$ sont dans $\Ob(D)$,} \\
          \ast & \text{si $x$ est dans $\Ob(C)$ et $y$ dans
          $\Ob(D)$,} \\
          \vide & \text{si $x$ est dans $\Ob(D)$ et $y$ dans $\Ob(C)$;}
        \end{cases}
      \]
    \item la composition et les identités sont définies de la manière évidente.
  \end{itemize}
  On obtient ainsi un foncteur $\jointc_1 : \Cat \times \Cat \to \Cat$. On
  vérifie que ce foncteur définit une structure de catégorie monoïdale sur
  $\Cat$ d'unité la catégorie initiale $\vide$. Fixons maintenant $A$ et $B$
  deux catégories. En utilisant le fait que $\vide$ est l'unité de la
  structure monoïdale, on obtient des foncteurs
  \[
    A \xto{\iota_1} A \jointc_1 B \xot{\iota_2} B,
  \]
  \notindex{$\iota_1 : A \to A \jointc_1 B$, $\iota_2 : B \to A \jointc_1 B$}%
  et donc des foncteurs
  \[
    \begin{split}
      \Cat & \to \cotr{\Cat}{A} \\
      B & \mapsto (A \jointc_1 B, \iota_1 : A \to A \jointc_1 B)
    \end{split}
  \]
  et
  \[
    \begin{split}
      \Cat & \to \cotr{\Cat}{B} \\
      A & \mapsto (A \jointc_1 B, \iota_2 : B \to A \jointc_1 B).
    \end{split}
  \]
  Ces foncteurs admettent des adjoints à droite qu'on appellera les
  \ndef[tranche!catégorique classique]{tranches catégoriques classiques}
  au-dessous et au-dessus. En particulier, le foncteur joint catégorique
  classique commute aux limites inductives connexes en chaque variable. Dans
  le cas où $A$ et $B$ sont la catégorie finale, on retrouve les tranches
  usuelles $\cotr{C}{c}$ et~$\tr{C}{c}$ au-dessous ou au-dessus d'un objet
  $c$ de $C$.
\end{paragr}

\begin{lemme}
  Soit $p \ge -1$. On a un isomorphisme canonique $\ti{1}(\On{p}) \simeq
  \Deltan{p}$.
\end{lemme}

\begin{proof}
  En vertu du théorème~\ref{thm:desc_orient}, on a $\On{p} \simeq \nu
  c(\Deltan{p})$ et donc, d'après la proposition~\ref{prop:tr_nu},
  $\ti{1}(\On{p}) \simeq \ti{1}\nu c(\Deltan{p}) \simeq
  \nu\ti{1}c(\Deltan{p})$. Par ailleurs, puisque $\Deltan{p}$ appartient à
  $\ThetaAug$, on a $\Deltan{p} \simeq \nu \lambda(\Deltan{p})$ en vertu de
  la proposition~\ref{prop:Theta_Steiner}. Pour conclure, il suffit donc de
  définir un isomorphisme $\ti{1}c(\Deltan{p}) \to \lambda(\Deltan{p})$,
  ce qui est immédiat.
\end{proof}

\begin{lemme}
  Pour tous $p, q \ge -1$, on a un isomorphisme canonique
  \[
    \Deltan{p} \joint_1 \Deltan{q} \simeq \Deltan{p} \jointc_1
    \Deltan{q},
  \]
  naturel en $\Deltan{p}$ et $\Deltan{q}$ dans $\cDeltaAug$.
\end{lemme}

\begin{proof}
  On vérifie immédiatement qu'on a $\Deltan{p} \jointc_1 \Deltan{q} \simeq
  \Deltan{p + 1 + q}$. Par ailleurs, on a
  \[
    \begin{split}
      \Deltan{p} \joint_1 \Deltan{q}
      & =
      \ti{1}(\Deltan{p} \joint \Deltan{q}) \\
      & \simeq
      \ti{1}(\ti{1}(\On{p}) \joint \ti{1}(\On{q})) \\*
      & \phantom{\simeq1} \text{(en vertu du lemme précédent)} \\
      & \simeq
      \ti{1}(\On{p} \joint \On{q}) \\
      & \phantom{\simeq1} \text{(en vertu de la
        proposition~\ref{prop:tronque_joint})} \\
      & \simeq
      \ti{1}(\On{p + 1 + q}) \\
      & \simeq
      \Deltan{p + 1 + q}, \\
    \end{split}
  \]
  d'où le résultat.
\end{proof}

\begin{prop}
  Le joint $1$-catégorique et le joint catégorique classique sont
  canoniquement isomorphes.
  Autrement dit, on a un isomorphisme canonique
  \[ C \joint_1 D \simeq C \jointc_1 D, \]
  naturel en $C$ et $D$ dans $\Cat$.
\end{prop}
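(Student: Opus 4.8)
The plan is to exploit the fact that both functors $\joint_1$ and $\jointc_1$ commute with connected inductive limits in each variable, and that they already agree on a dense subcategory. The commutation for the $1$-categorical join is Corollary~\ref{coro:joint_n_biferme}, the one for the classical join is recorded in paragraph~\ref{paragr:def_joint_cl}, and the agreement on simplices is precisely the preceding lemma, which furthermore gives an isomorphism natural in both variables. The density input is Proposition~\ref{prop:Theta_n_dense} applied with $n = 1$: since $\cDeltaAug = \ThetanAug{1}$ (the simplex category being exactly $\Theta_1$), the category $\cDeltaAug$ is dense in $\Cat$, so every category is the canonical inductive limit of the simplices mapping into it.

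First I would fix categories $C$ and $D$ and write the canonical presentations
\[
  C \simeq \limind_{(\Deltan{p}, \Deltan{p} \to C) \in \tr{\cDeltaAug}{C}} \Deltan{p}
  \quadet
  D \simeq \limind_{(\Deltan{q}, \Deltan{q} \to D) \in \tr{\cDeltaAug}{D}} \Deltan{q}.
\]
Because $\cDeltaAug$ contains the empty category $\Deltan{-1} = \vide$, the object $(\vide, \vide \to C)$ is initial in $\tr{\cDeltaAug}{C}$ (and likewise for $D$); hence the indexing categories are connected and the presentations above are connected inductive limits. Applying the commutation of each join with connected inductive limits in each variable separately (a Fubini-type iteration) then yields
\[
  C \joint_1 D \simeq \limind_{\Deltan{p} \to C,\, \Deltan{q} \to D} \Deltan{p} \joint_1 \Deltan{q}
  \quadet
  C \jointc_1 D \simeq \limind_{\Deltan{p} \to C,\, \Deltan{q} \to D} \Deltan{p} \jointc_1 \Deltan{q},
\]
the two double limits being indexed by the same connected category $\tr{\cDeltaAug}{C} \times \tr{\cDeltaAug}{D}$. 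The preceding lemma supplies a canonical isomorphism $\Deltan{p} \joint_1 \Deltan{q} \simeq \Deltan{p} \jointc_1 \Deltan{q}$, so the two diagrams are isomorphic and their inductive limits agree, giving the desired isomorphism $C \joint_1 D \simeq C \jointc_1 D$.

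The argument is essentially formal, so the only points requiring care are bookkeeping ones. The first is connectedness of the indexing category: this is exactly what forces me to work with $\cDeltaAug$ rather than $\cDelta$, since the two commutation results only cover connected limits, and it is the presence of the initial object $\vide$ that guarantees connectedness. The second, and the one I expect to carry the real content, is the naturality of the final isomorphism in $C$ and $D$: I would obtain it from the naturality (in $\Deltan{p}$ and $\Deltan{q}$) of the lemma's isomorphism together with the functoriality of the canonical-limit presentations, so that the comparison map is constructed as the inductive limit of a natural transformation between diagrams and is therefore itself natural in both arguments.
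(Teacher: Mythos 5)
Your proof is correct and follows essentially the same route as the paper: both argue that the two joins agree on $\cDeltaAug$ by the preceding lemma, that every category is a canonical \emph{connected} inductive limit of objects of $\cDeltaAug$ (connectedness coming from the initial object $\vide$), and that both join functors commute with connected inductive limits in each variable. Your additional bookkeeping (identifying $\cDeltaAug$ with $\ThetanAug{1}$ to invoke Proposition~\ref{prop:Theta_n_dense}, and spelling out the naturality of the limit comparison) merely makes explicit what the paper leaves implicit.
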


\begin{proof}
  Le lemme précédent donne un isomorphisme canonique
  \[ C \joint_1 D \to C \jointc_1 D \]
  pour $C$ et $D$ dans $\cDeltaAug$. Or toute catégorie est limite inductive
  connexe d'objets de $\DeltaAug$ et les deux foncteurs commutent aux
  limites inductives connexes en chaque variable, d'où le résultat.
\end{proof}

\begin{coro}\label{coro:comp_tr_1}
  Si $C$ est une catégorie et $u : A \to C$ est un foncteur, alors
  les catégories $\cotr{C}{u}$ et \smash{$\trm{C}{u}$} sont les tranches
  catégoriques classiques.
\end{coro}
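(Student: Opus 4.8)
Le plan consiste à déduire ce corollaire de façon formelle de la proposition précédente, qui fournit un isomorphisme canonique $C \joint_1 D \simeq C \jointc_1 D$ naturel en $C$ et $D$. On commencerait par remarquer que, $C$ étant une $1$-catégorie, la proposition~\ref{prop:tr_nCat} (avec $n = 1$) garantit que $\cotr{C}{u}$ et $\trm{C}{u}$ sont des $1$-catégories, ce qui rend licite leur comparaison avec les tranches catégoriques classiques. Comme $\Cat = \nCat{1}$, la proposition~\ref{prop:tr_nCat_univ} appliquée avec $n = 1$ caractérise ces tranches par les adjonctions
\[
  \Hom_{\Cat}(B, \cotr{C}{u}) \simeq \Hom_{\cotr{\Cat}{A}}((A \joint_1 B, \iota_1), (C, u))
\]
et
\[
  \Hom_{\Cat}(A, \trm{C}{u}) \simeq \Hom_{\cotr{\Cat}{B}}((A \joint_1 B, \iota_2), (C, u)),
\]
naturelles en $B$ et $A$ respectivement.

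Les tranches catégoriques classiques, quant à elles, sont définies au paragraphe~\ref{paragr:def_joint_cl} comme les adjoints à droite des foncteurs $B \mapsto (A \jointc_1 B, \iota_1)$ et $A \mapsto (A \jointc_1 B, \iota_2)$, et sont donc caractérisées par les mêmes adjonctions dans lesquelles $\joint_1$ est remplacé par $\jointc_1$. Le cœur de la preuve serait alors de vérifier que l'isomorphisme canonique $A \joint_1 B \simeq A \jointc_1 B$ est compatible aux deux bicoaugmentations locales, c'est-à-dire aux morphismes $\iota_1$ et $\iota_2$ de chacune des structures. C'est le point le plus délicat : il faut s'assurer que l'isomorphisme de la proposition précédente induit un isomorphisme dans les catégories tranches $\cotr{\Cat}{A}$ et $\cotr{\Cat}{B}$, et non seulement dans $\Cat$.

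Pour établir cette compatibilité, on se ramènerait par naturalité en $A$ et $B$ --- et en utilisant que tout objet de $\Cat$ est limite inductive connexe d'objets de $\cDeltaAug$ et que les deux joints commutent à de telles limites --- au cas où $A$ et $B$ sont des objets de $\cDeltaAug$, cas dans lequel la compatibilité aux $\iota_1$ et $\iota_2$ se lit directement sur la construction explicite de l'isomorphisme $\Deltan{p} \joint_1 \Deltan{q} \simeq \Deltan{p} \jointc_1 \Deltan{q}$. Une fois cette vérification acquise, les deux familles d'adjonctions ci-dessus coïncident, et l'unicité des adjoints à droite (ou, de manière équivalente, le lemme de Yoneda) entraîne que $\cotr{C}{u}$ et $\trm{C}{u}$ sont bien les tranches catégoriques classiques au-dessous et au-dessus.
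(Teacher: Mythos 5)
Votre preuve est correcte et suit essentiellement la m\^eme voie que celle de l'article : celui-ci combine lui aussi le cas $n=1$ de la proposition~\ref{prop:tr_nCat_univ} avec la proposition pr\'ec\'edente pour conclure que $\cotr{C}{u}$ et $\trm{C}{u}$ v\'erifient les propri\'et\'es universelles des tranches classiques du paragraphe~\ref{paragr:def_joint_cl}. La seule diff\'erence est que vous explicitez la compatibilit\'e de l'isomorphisme $A \joint_1 B \simeq A \jointc_1 B$ avec les morphismes $\iota_1$ et $\iota_2$ (et sa v\'erification par r\'eduction \`a $\cDeltaAug$), point que la preuve de l'article laisse implicite.
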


\begin{proof}
  En vertu du cas $n = 1$ de la proposition \ref{prop:tr_nCat_univ} et de la
  proposition précédente, les tranches $\cotr{C}{u}$ et \smash{$\trm{C}{u}$}
  vérifient les mêmes propriétés universelles que les tranches catégoriques
  classiques (voir le paragraphe~\ref{paragr:def_joint_cl}), d'où le
  résultat.
\end{proof}

\begin{rem}
  Si $C$ est une $1$-catégorie et $u : A \to C$ est un $1$-foncteur, alors
  la catégorie \smash{$\trm{C}{u}$} coïncide avec la \oo-catégorie $\tr{C}{u}$ de la
  remarque~\ref{rem:cojoint_ooCat} (qui est donc une catégorie). Cela résulte
  immédiatement du fait que $D^\co = D$ si $D$ est une $1$-catégorie.
\end{rem}

\chapter[Description explicite des tranches au-dessous d'un
objet]{Description explicite des\\ tranches au-dessous d'un objet}
\label{sec:desc_expl}

Soient $C$ une \oo-catégorie et $c$ un objet de $C$. En considérant $c$
comme un \oo-foncteur $\Dn{0} \to C$, le paragraphe~\ref{paragr:def_tranche}
permet de définir une \oo-catégorie $\cotr{C}{c}$. Le but de ce chapitre
est de décrire explicitement cette \oo-catégorie.

\medbreak

\emph{Dans ce chapitre, on utilisera librement les conventions sur les formules
de composition de cellules dans une \oo-catégorie qu'on a fixées au
paragraphe~\ref{paragr:conv_ooCat}, ainsi que les notations relatives aux
atomes des complexes dirigés augmentés à base unitaire introduites au
paragraphe~\ref{paragr:def_atome}.}

\begin{paragr}
  Fixons $i \ge 0$. Nous allons commencer par décrire la
  \hbox{$(i+1)$}\nbd-catégorie $\Dn{0} \joint \Dn{i}$. On notera $a$
  l'unique objet de $\Dn{0}$ et $x$ la cellule principale de $\Dn{i}$. En
  vertu de la proposition~\ref{prop:Theta_Steiner} et du
  paragraphe~\ref{paragr:def_joint}, on a
  \[
    \Dn{0} \joint \Dn{i} \simeq \nu(\lambda(\Dn{0}) \joint \lambda(\Dn{i})).
  \]
  Par ailleurs, les paragraphes~\ref{paragr:desc_lambda_Dn}
  et~\ref{paragr:base_joint} montrent que le complexe $\lambda(\Dn{0})
  \joint \lambda(\Dn{i})$ a pour base l'ensemble formé des
  \[ a \joint \vide, \quad \vide \joint x^\e_k, \quad a \joint x^\e_k, \]
  où $k$ varie entre $0$ et $i$, et $\e = 0, 1$ (en se souvenant que
  $x^0_i = x^1_i$).
\end{paragr}

\begin{prop}\label{prop:s_t_cone_vide}
  Pour tout $k$ tel que $0 < k \le i$ et $\e = 0, 1$, on a
  \[
    s(\atom{\vide \joint x^\e_k}) = \atom{\vide \joint x^0_{k-1}}
    \quadet
    t(\atom{\vide \joint x^\e_k}) = \atom{\vide \joint x^1_{k-1}}.
  \]
\end{prop}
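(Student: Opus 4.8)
The plan is to reduce the statement to the explicit tableau formula for the atoms of a joint provided by Lemme~\ref{lemme:tab_joint}. First I would recall, using la proposition~\ref{prop:Theta_Steiner} et le paragraphe~\ref{paragr:def_joint}, that $\Dn{0} \joint \Dn{i} \simeq \nu(\lambda(\Dn{0}) \joint \lambda(\Dn{i}))$, the complex $\lambda(\Dn{0}) \joint \lambda(\Dn{i})$ being de Steiner fort (donc en particulier à base unitaire) en vertu du corollaire~\ref{coro:joint_Steiner} et de la proposition~\ref{prop:joint_base_unit}. Ainsi $\atom{\vide \joint x^\e_k}$ est bien l'atome associé à l'élément $\vide \joint x^\e_k$ de la base, et ses source et but se calculent par les formules de tableaux du paragraphe~\ref{paragr:def_nu}: pour une $k$-cellule $w$, la source $s(w)$ est la $(k-1)$-cellule de composantes $w^\epsilon_l$ pour $0 \le l \le k-2$ et de composantes en degré $k-1$ toutes deux égales à $w^0_{k-1}$, le but s'obtenant en remplaçant $w^0_{k-1}$ par $w^1_{k-1}$.

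L'observation cruciale est que le premier facteur $\vide$ est de degré $-1$, de sorte que dans la somme de Lemme~\ref{lemme:tab_joint} la condition $-1 \le p \le |\vide| = -1$ force $p = -1$ et ne laisse subsister qu'un unique terme. On obtient donc, pour tout $r$ et tout $\epsilon = 0, 1$,
\[
  \atom{\vide \joint x^\e_k}^\epsilon_r
  = \atom{\vide}^\epsilon_{-1} \joint \atom{x^\e_k}^\epsilon_r
  = \vide \joint \atom{x^\e_k}^\epsilon_r,
\]
où l'on a utilisé la convention $\atom{\vide}^\epsilon_{-1} = \vide$ et le fait que le décalage de parité $p+1+\epsilon \bmod 2$ vaut ici simplement $\epsilon$. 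En combinant ceci avec la description de $\lambda(\Dn{i})$ du paragraphe~\ref{paragr:desc_lambda_Dn}, à savoir $\atom{x^\e_k}^\epsilon_r = x^\epsilon_r$ pour $r < k$, on en déduit que le tableau de la $k$-cellule $\atom{\vide \joint x^\e_k}$ a pour composantes $\vide \joint x^\epsilon_r$ en chaque degré $r$ tel que $0 \le r < k$.

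Il ne reste alors qu'à appliquer les formules de source et de but. La source $s(\atom{\vide \joint x^\e_k})$ est la $(k-1)$-cellule dont le tableau a pour composantes $\vide \joint x^\epsilon_r$ pour $0 \le r \le k-2$ et pour composantes en degré $k-1$ toutes deux égales à $\atom{\vide \joint x^\e_k}^0_{k-1} = \vide \joint x^0_{k-1}$. Or le même calcul (par le Lemme~\ref{lemme:tab_joint} appliqué cette fois à l'élément $\vide \joint x^0_{k-1}$ de degré $k-1$) montre que ce tableau est précisément celui de $\atom{\vide \joint x^0_{k-1}}$, d'où la première égalité. La seconde se traite à l'identique, la composante de but en degré $k-1$ étant $\atom{\vide \joint x^\e_k}^1_{k-1} = \vide \joint x^1_{k-1}$, que l'on compare à $\atom{\vide \joint x^1_{k-1}}$. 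Le calcul est entièrement routinier; le seul point à surveiller est l'effondrement de la somme de Lemme~\ref{lemme:tab_joint} dû à $|\vide| = -1$, et aucun obstacle véritable n'est à prévoir.
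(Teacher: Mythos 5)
Your proof is correct, but it takes a different route from the paper's. The paper settles this proposition in one line: since
\[
  d(\vide \joint x^\e_k) = \vide \joint d(x^\e_k)
  = \vide \joint x^1_{k-1} - \vide \joint x^0_{k-1},
\]
the negative and positive parts of the differential are $\vide \joint x^0_{k-1}$ and $\vide \joint x^1_{k-1}$, and the recursion defining the atoms (iterating $d_\pm$, which commutes with $\vide \joint \var$ without any sign or extra term) immediately identifies the source and target tableaux with $\atom{\vide \joint x^0_{k-1}}$ and $\atom{\vide \joint x^1_{k-1}}$ --- no appeal to the lemme~\ref{lemme:tab_joint} is made. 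You instead specialize the general tableau formula of ce lemme to the case where the first factor is $\vide$, note the collapse of the sum forced by $|\vide| = -1$ (including the vanishing of the parity shift $p+1+\e \bmod 2 = \e$), and compare the tableaux entry by entry; this is precisely the mechanism the paper reserves for the harder companion computation, namely le lemme~\ref{lemme:tab_cone} used to establish la proposition~\ref{prop:s_t_cone} for the atoms $\atom{a \joint x^\e_k}$. What your route buys is uniformity and explicitness: the same tool handles both families of atoms, and nothing is left to an ``immediate'' recursion. What the paper's route buys is economy: for atoms of the form $\atom{\vide \joint x^\e_k}$, the operation $\vide \joint \var$ strictly commutes with the differential, so the single displayed equality suffices. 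Both arguments ultimately rest on this same commutation, and your proof contains no gap.
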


\begin{proof}
  Cela résulte immédiatement de l'égalité
  \[
    d(\vide \joint x^\e_k) = \vide \joint d(x^\e_k)
      = \vide \joint x^1_{k-1} - \vide \joint x^0_{k-1}.
    \qedhere
  \]
\end{proof}

\begin{lemme}\label{lemme:tab_cone}
  Pour tout $k$ tel que $0 \le k \le i$, tout $l$ tel que $0 \le l < k + 1$
  et $\e = 0, 1$, on a
  \[
    \atom{a \joint x^\e_k}^0_l = a \joint x^1_{l-1}
  \]
  et
  \[
    \atom{a \joint x^\e_k}^1_l = \vide \joint x^\eta_l + a \joint x^0_{l-1},
  \]
  où $\eta$ vaut $\e$ si $k = l$ et $1$ sinon. En particulier, on a
  \[
    \atom{a \joint x^\e_k}^0_0 = a \joint \vide
    \quadet
    \atom{a \joint x^\e_k}^1_0 = \vide \joint x^\eta_0.
  \]
\end{lemme}

\begin{proof}
  En vertu du lemme~\ref{lemme:tab_joint} (et avec ses conventions), on a
  \[
    \atom{a \joint x^\e_k}^0_l
    =
    \atom{a}^0_{-1} \joint \atom{x^\e_k}^0_l +
    \atom{a}^0_0 \joint \atom{x^\e_k}^1_{l-1}
    =
    a \joint x^1_{l-1}
  \]
  et
  \[
    \atom{a \joint x^\e_k}^1_l
    =
    \atom{a}^1_{-1} \joint \atom{x^\e_k}^1_l +
    \atom{a}^1_0 \joint \atom{x^\e_k}^0_{l-1}
    =
    \vide \joint x^\eta_l + a \joint x^0_{l-1},
  \]
  ce qu'on voulait démontrer.
\end{proof}

\begin{prop}\label{prop:s_t_cone}
  Pour tout $k$ tel que $0 \le k \le i$ et $\e = 0, 1$,
  on a
  \[
    s(\atom{a \joint x^\e_k}) = \atom{a \joint x^1_{k-1}}
  \]
  et
  \[
    t(\atom{a \joint x^\e_k}) =
    \atom{\vide \joint x^\e_k} \ast_0 \atom{a \joint x^0_0} \ast_1 \cdots
    \ast_{k-1} \atom{a \joint x^0_{k-1}}.
  \]
\end{prop}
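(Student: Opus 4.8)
The cell $z = \atom{a \joint x^\e_k}$ has degree $0 + 1 + k = k+1$, so it is a $(k+1)$-cell of $\nu(\lambda(\Dn{0}) \joint \lambda(\Dn{i}))$. By the description of the source and target of a cell of $\nu$ (paragraphe~\ref{paragr:def_nu}), the $k$-cells $s(z)$ and $t(z)$ have the same columns as $z$ in degrees $0 \le l \le k-1$, while their top column (degree $k$) equals $(z^0_k, z^0_k)$ and $(z^1_k, z^1_k)$ respectively. It therefore suffices to evaluate the entries $\atom{a \joint x^\e_k}^\epsilon_l$, which is precisely the content of the lemme~\ref{lemme:tab_cone}.

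For the source, the lemme~\ref{lemme:tab_cone} gives $z^0_k = a \joint x^1_{k-1}$, and for $l \le k-1$ it yields $z^0_l = a \joint x^1_{l-1}$ and $z^1_l = \vide \joint x^1_l + a \joint x^0_{l-1}$ (the modular exponent $\eta$ being $1$ throughout this range since $l < k$). Comparing with the lemme~\ref{lemme:tab_cone} applied to the $k$-cell $\atom{a \joint x^1_{k-1}}$, whose top entry is $a \joint x^1_{k-1}$ and whose lower columns are governed by exactly the same formulas, one sees that the two tables agree entry by entry, whence $s(z) = \atom{a \joint x^1_{k-1}}$.

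For the target, the top column is $z^1_k = \vide \joint x^\e_k + a \joint x^0_{k-1}$, a sum of two basis elements, which is the signature of a composite along $\ast_{k-1}$. The plan is to introduce the iterated composite $w = \atom{\vide \joint x^\e_k} \ast_0 \atom{a \joint x^0_0} \ast_1 \cdots \ast_{k-1} \atom{a \joint x^0_{k-1}}$ and to show that its table coincides with that of $t(z)$. First I would check that $w$ is well-formed, verifying the composability conditions at each stage from the lemme~\ref{lemme:tab_cone} and the proposition~\ref{prop:s_t_cone_vide}. Then I would compute the table of $w$ by induction on the partial composites $w_m = \atom{\vide \joint x^\e_k} \ast_0 \cdots \ast_{m-1} \atom{a \joint x^0_{m-1}}$, using the composition formula of the paragraphe~\ref{paragr:def_nu} together with the description of the iterated identities required to pad the factors $\atom{a \joint x^0_m}$ (of degree $m+1 \le k$) up to $k$-cells. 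The key bookkeeping fact is that $\atom{a \joint x^0_m}$ affects only the columns of index at least $m$, so that passing from $w_m$ to $w_{m+1}$ installs the correct entries in column $m$ while leaving the lower columns unchanged; at the last stage the two degree-$k$ factors $\atom{\vide \joint x^\e_k}$ and $\atom{a \joint x^0_{k-1}}$ contribute their top entries additively, reproducing $z^1_k$.

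The main obstacle is this last computation. Since the factors of $w$ have varying degrees, the composition formula must be applied to iterated identities, and one must track carefully how the lower columns combine under the successive operations $\ast_0, \dots, \ast_{k-1}$ while respecting the priority conventions of the paragraphe~\ref{paragr:conv_ooCat}. The verification is purely a matter of matching tables, but it is where essentially all the work lies; the source computation, by contrast, is immediate.
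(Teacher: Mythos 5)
Your argument is correct, and for the target formula it takes a genuinely different route from the paper's. The paper proves the proposition as the case $l = k$ of the more general lemme~\ref{lemme:s_t_cone_iter}, which computes \emph{all} iterated sources and targets $s_l(\atom{a \joint x^\e_k})$, $t_l(\atom{a \joint x^\e_k})$ by induction on $l$; the point of that formulation is that at each inductive step one never computes the full table of the composite $u$: one only computes its top entry (trivial, since top entries add under $\ast_j$ and the identity paddings contribute $0$ there), and then uses the fact that an $l$-cell of $\nu(K)$ is determined by its top entry, its source and its target, the source and target of $u$ being identified with $s_{l-1}$ and $t_{l-1}$ of the atom via the source formula and the induction hypothesis. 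You instead compute the full table of the iterated composite by induction on the partial composites $w_m$ and match it against the table of $t(\atom{a \joint x^\e_k})$; this does work, because under $w_{m+1} = w_m \ast_m \atom{a \joint x^0_m}$ the composability relations force the columns of index $< m$ to be unchanged, column $m$ gets its $0$-entry overwritten by that of $\atom{a \joint x^0_m}$, and only column $m+1$ receives a nonzero additive contribution, so the limit table is exactly the one predicted by lemme~\ref{lemme:tab_cone}. What your route buys is elementarity (only the composition formula of the paragraphe~\ref{paragr:def_nu} and lemme~\ref{lemme:tab_cone} are needed, and no auxiliary statement about $t_l$ for $l < k$); what the paper's route buys is much lighter computation and a stronger lemma that it reuses later (in the lemmas leading to la proposition~\ref{prop:desc_tr_nabla}). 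One caution on your well-formedness step: do not identify $t_m(\atom{a \joint x^0_m})$ by appealing to the target formula of the proposition itself for the smaller atoms --- that would be circular unless you restructure the whole proof as an induction on $k$. Instead, check composability entry-wise on tables: $s_m(w_m) = t_m(\atom{a \joint x^0_m})$ amounts to the equalities $(w_m)^\epsilon_l = \atom{a \joint x^0_m}^\epsilon_l$ for $l < m$ and $(w_m)^0_m = \atom{a \joint x^0_m}^1_m$, all of which are supplied by your inductive table computation together with lemme~\ref{lemme:tab_cone}. The source half of your proof is the same as the paper's, only written out more explicitly.
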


\begin{proof}
  C'est le cas $l = k$ du lemme plus général suivant.
\end{proof}

\begin{lemme}\label{lemme:s_t_cone_iter}
  Pour tous $k, l$ tels que $0 \le l \le k \le i$ et $\e = 0, 1$,
  on a
  \[
    s_l(\atom{a \joint x^\e_k}) = \atom{a \joint x^1_{l-1}}
  \]
  et
  \[
    t_l(\atom{a \joint x^\e_k}) =
    \atom{\vide \joint x^\eta_l} \ast_0 \atom{a \joint x^0_0} \ast_1 \cdots
    \ast_{l-1} \atom{a \joint x^0_{l-1}},
  \]
  où $\eta$ vaut $\e$ si $k = l$ et $1$ sinon.
\end{lemme}

\begin{proof}
  En vertu du lemme~\ref{lemme:tab_cone}, on a
  $\atom{a \joint x^\e_k}^0_l = a \joint x^1_{l-1}$ et donc
  $s_l(\atom{a \joint x^\e_k}) = \atom{a \joint x^1_{l-1}}$ par définition
  des atomes et de leurs sources.

  Démontrons la deuxième égalité par récurrence sur $l$. Pour $l =
  0$, en vertu du lemme~\ref{lemme:tab_cone}, on a
  $\atom{a \joint x^\e_k}^1_0 = \vide \joint x^\eta_0$
  et donc
  $t_0(\atom{a \joint x^\e_k}) = \atom{\vide \joint x^\eta_0}$.
  Supposons maintenant l'égalité démontrée au rang $l - 1$ et
  montrons-la au rang $l \le k$. Par la proposition~\ref{prop:s_t_cone_vide}
  et l'hypothèse de récurrence, on a
  \[
    \begin{split}
      \MoveEqLeft
      s_{l-1}\big(\atom{\vide \joint x^\eta_l} \ast_0 \atom{a \joint x^0_0}
      \ast_1 \cdots \ast_{l-2} \atom{a \joint x^0_{l-2}}\big) \\
      & =
      \atom{\vide \joint x^0_{l-1}} \ast_0 \atom{a \joint x^0_0}
      \ast_1 \cdots \ast_{l-2} \atom{a \joint x^0_{l-2}} \\
      & =
      t_{l-1}(\atom{a \joint x^0_{l-1}})
    \end{split}
  \]
  et la cellule
  \[
    u = \big(\atom{\vide \joint x^\eta_l} \ast_0 \atom{a \joint x^0_0}
    \ast_1 \cdots \ast_{l-2} \atom{a \joint x^0_{l-2}}\big)
    \ast_{l-1} \atom{a \joint x^0_{l-1}}
  \]
  est donc bien définie. Par ailleurs, en utilisant le
  lemme~\ref{lemme:tab_cone}, on obtient
  \[
    u^{}_l = \vide \joint x^\eta_l + a \joint x^0_{l-1} = \atom{a \joint
    x^\e_k}^1_l = t_l(\atom{a \joint x^\e_k})^{}_l.
  \]
  Pour conclure, il suffit donc de montrer qu'on a
  \[
    s(t_l(\atom{a \joint x^\e_k})) = s(u)
    \quadet
    t(t_l(\atom{a \joint x^\e_k})) = t(u).
  \]
  Or, en utilisant la première égalité de l'énoncé, on a
  \[
    \begin{split}
      s(t_l(\atom{a \joint x^\e_k}))
      =
      s_{l-1}(\atom{a \joint x^\e_k})
      =
      \atom{a \joint x^1_{l - 2}}
      =
      s_{l-1}(\atom{a \joint x^0_{l-1}})
      =
      s(u)
    \end{split}
  \]
  et, en utilisant l'hypothèse de récurrence, on a
  {
    \allowdisplaybreaks
    \begin{align*}
      t(t_l(\atom{a \joint x^\e_k}))
      & =
      t_{l-1}(\atom{a \joint x^\e_k}) \\*
      & =
      \atom{\vide \joint x^1_{l-1}} \ast_0 \atom{a \joint x^0_0} \ast_1 \cdots
      \ast_{l-2} \atom{a \joint x^0_{l-2}} \\
      & =
      t_{l-1}\big(
      \atom{\vide \joint x^\eta_l} \ast_0 \atom{a \joint x^0_0} \ast_1 \cdots
      \ast_{l-2} \atom{a \joint x^0_{l-2}} \joint_{l-1} \atom{a \joint
      x^0_{l-1}}\big) \\*
      & =
      t(u),
    \end{align*}
  }
  d'où le résultat.
\end{proof}

\begin{prop}\label{prop:desc_tr_pol}
  Soit $C$ une \oo-catégorie. Fixons
  \begin{itemize}
    \item $c$ un objet de $C$ ;
    \item $d$ une $i$-flèche de $C$ ;
  \item pour tout $k$ tel que $0 \le k \le i$ et $\e = 0, 1$, $\alpha^\e_k$
    une $(k+1)$-flèche de $C$, avec~$\alpha^0_i = \alpha^1_i$,
  \end{itemize}
  vérifiant
  les égalités
  \begin{equation*}
    \label{eq:s_t_cone}
    s(\alpha^\e_k) =
      \begin{cases}
         c & \text{si $k = 0$,} \\
        \alpha^1_{k-1} & \text{si $k > 0$,} \\
      \end{cases}
    \quadet
    t(\alpha^\e_k) = d^\e_k \ast_0 \alpha^0_0 \ast_1 \dots \ast_{k-1}
    \alpha^0_{k-1},
  \end{equation*}
  où on a posé
  \[
    d^\e_k =
      \begin{cases}
        s_k(d) & \text{si $\e = 0$,} \\
        t_k(d) & \text{si $\e = 1$.}
      \end{cases}
  \]
  Alors il existe un et un seul \oo-foncteur $h : \Dn{0} \joint \Dn{i} \to C$
  tel que
  \[
    c = h(\atom{a \joint \vide}),
    \quad
    d = h(\atom{\vide \joint x_i})
    \quadet
    \alpha^\e_k = h(\atom{a \joint x^\e_k}),
  \]
  pour tout $k$ tel que $0 \le k \le i$ et $\e = 0, 1$.
\end{prop}

\begin{proof}
  En vertu du théorème~\ref{thm:Steiner_pol}, la \oo-catégorie
  $\nu(\lambda(\Dn{0}) \joint \lambda(\Dn{i}))$, isomorphe à la
  \oo-catégorie $\Dn{0} \joint \Dn{i}$, est engendrée librement par ses
  atomes au sens des polygraphes. Cela signifie que la donnée d'un
  \oo-foncteur $h : \Dn{0} \joint \Dn{i} \to C$ est équivalente à donnée de
  \[
    c = h(\atom{a \joint \vide}), \quad
    d^\e_k = h(\atom{\vide \joint x^\e_k}) \quadet
    \alpha^\e_k = h(\atom{a \joint x^\e_k}),
  \]
  pour $0 \le k \le i$ et $\e = 0, 1$, avec $d^0_i = d^1_i$ et $\alpha^0_i =
  \alpha^1_i$, compatibles aux sources et aux buts. En vertu de la
  proposition~\ref{prop:s_t_cone}, cette compatibilité pour les atomes de la
  forme~$\atom{a \joint x^\e_k}$ s'exprime par les égalités de l'énoncé.
  Pour les atomes de la forme $\vide \joint x^\e_k$ avec $0 < k \le i$, en
  vertu de la proposition~\ref{prop:s_t_cone_vide}, ces compatibilités sont
  données par les égalités
  \[ s(d^\e_k) = d^0_{k-1} \quadet t(d^\e_k) = d^1_{k-1} \]
  Or, la donnée de $d^\e_k$ satisfaisant à ces égalités est équivalente à
  celle de \hbox{$d = d^0_i = d^1_i$}, d'où le résultat.
\end{proof}

\begin{paragr}\label{paragr:desc_tr_pol}
  Soient $C$ une \oo-catégorie et $c$ un objet de $C$. Par définition (voir
  le paragraphe~\ref{paragr:desc_tr}), les $i$-flèches de $\cotr{C}{c}$
  correspondent aux \oo-foncteurs $h : \Dn{0} \joint \Dn{i} \to C$ rendant
  le triangle
  \[
    \xymatrix{
    \Dn{0} \joint \Dn{i} \ar[r]^h & C \\
    \Dn{0} \ar[u]^{\iota_1} \ar[ru]_c
    }
  \]
  commutatif. En vertu de la proposition précédente, un tel
  \oo-foncteur est déterminé par un couple $(d, \alpha)$, où $d$ est une
  $i$-flèche de $C$ et $\alpha$ est une famille de
  cellules~$\alpha^\e_k$ de~$C$, pour $0 \le k \le i$ et $\e = 0,
  1$, avec $\alpha^0_i = \alpha^1_i$,
\[
    \begin{split}
      \alpha^\e_0 & : c \to d^\e_0, \quad\text{$1$-flèche}, \\
      \alpha^\e_k & : \alpha^1_{k-1} \to d^\e_k \comp_0 \alpha^0_0 \comp_1 \cdots
      \comp_{k-1} \alpha^0_{k-1}, \quad\text{$(k+1)$-flèche},
      \quad\text{pour $0 < k \le i$},
    \end{split}
  \]
  où on a posé
  \[
    d^\e_k =
      \begin{cases}
        s_k(d) & \text{si $\e = 0$,} \\
        t_k(d) & \text{si $\e = 1$.}
      \end{cases}
  \]
  Dans la suite de ce chapitre, on identifiera les $i$-flèches de
  $\cotr{C}{c}$ avec de tels couples~$(d, \alpha)$. On notera $\alpha_i$
  pour $\alpha^0_i = \alpha^1_i$.

  Voici une représentation graphique des objets, des $1$-flèches et des
  $2$-flèches de $\cotr{C}{c}$ :
 \[
   \xymatrix@R=3pc{
   c \ar[d]_{\alpha_0} \\
   d \pbox{,}
   }
   \qquad
   \qquad
    \shorthandoff{;}
    \xymatrix@C=1.5pc@R=3pc{
      & c
    \ar[dl]_{\alpha^0_0}_{}="f" \ar[dr]^{\alpha^1_0}_{}="s" \\
      d^0_0 \ar[rr]_d & & d^1_0
      \ar@{}"s";[ll]_(.15){}="ss"
      \ar@{}"s";[ll]_(.55){}="tt"
      \ar@<0.0ex>@2"ss";"tt"_{\alpha_1} \pbox{,}
    }
   \qquad
   \qquad
    \shorthandoff{:;}
    \xymatrix@C=1.5pc@R=3pc{
      & c
    \ar[dl]_{\alpha^0_0}_{}="f" \ar[dr]^{\alpha^1_0}_{}="s" \\
      d^0_0
      \ar@/^2ex/@{.>}[rr]_(.30){d^0_1}^{}="0"
      \ar@/_2ex/[rr]_(.33){d^1_1}^{}="1"
      \ar@{:>}"0";"1"^{\,d}
      & & d^1_0
      \ar@{}"s";[ll]_(.15){}="ss"
      \ar@{}"s";[ll]_(.55){}="tt"
      \ar@<-1.5ex>@/^-1ex/@{:>}"ss";"tt"_(.30){\alpha^0_1}_{}="11"
      \ar@<-0ex>@/^1ex/@2"ss";"tt"^(.20){\!\!\alpha^1_1}^{}="00"
      \ar@3"00";"11"_{\alpha_2}
      \pbox{.}
    }
  \]
\end{paragr}

Le but de la suite de ce chapitre est de décrire la structure de
\oo-catégorie de~$\cotr{C}{c}$ en termes des $(d, \alpha)$.

\begin{paragr}
  En vertu de l'adjonction définissant la \oo-catégorie $\cotr{C}{c}$,
  les sources, buts, identités et compositions de cette \oo-catégorie sont
  induits par les \oo-foncteurs
  \begin{alignat*}{3}{}
    \Dn{0} \joint \sigma_i & : \Dn{0} \joint \Dn{i-1} \to \Dn{0} \joint \Dn{i}
             && \qquad\text{pour $i \ge 1$,} \\
    \Dn{0} \joint \tau_i & : \Dn{0} \joint \Dn{i-1} \to \Dn{0} \joint \Dn{i}
             && \qquad\text{pour $i \ge 1$,} \\
    \Dn{0} \joint \kappa_i & : \Dn{0} \joint \Dn{i+1} \to \Dn{0} \joint \Dn{i}
             && \qquad\text{pour $i \ge 0$,} \\
    \Dn{0} \joint \nabla^i_j & : \Dn{0} \joint \Dn{i} \to (\Dn{0} \joint
    \Dn{i}) \amalg_{\Dn{0} \joint \Dn{j}} (\Dn{0} \joint \Dn{i})
             && \qquad\text{pour $i > j \ge 0$,}
  \end{alignat*}
  où  $\sigma_i$, $\tau_i$, $\kappa_i$ et $\nabla^i_j$ désignent les
  \oo-foncteurs des paragraphes~\ref{paragr:def_disque} et~\ref{paragr:def_kappa_nabla}, et
  où on a identifié $(\Dn{0} \joint \Dn{i}) \amalg_{\Dn{0} \joint \Dn{j}}
  (\Dn{0} \joint \Dn{i}$) et $\Dn{0} \joint (\Dn{i} \amalg_{\Dn{j}} \Dn{i})$.
\end{paragr}

Nous allons commencer par décrire concrètement les morphismes $\Dn{0} \joint \sigma_i$,
$\Dn{0} \joint \tau_i$ et $\Dn{0} \joint \kappa_i$. On note toujours $a$
l'unique objet de $\Dn{0}$.

\begin{prop}\label{prop:desc_tr_s_t}
  Fixons $i \ge 1$ et notons $x$ la cellule principale de $\Dn{i-1}$ et $y$
  celle de $\Dn{i}$. Alors le \oo-foncteur $\Dn{0} \joint \sigma_i : \Dn{0}
  \joint \Dn{i-1} \to \Dn{0} \joint \Dn{i}$ est donné par
  \begin{alignat*}{3}{}
    \atom{a \joint \vide} & \mapsto \atom{a \joint \vide}, \\
    \atom{\vide \joint x^\e_k} & \mapsto \atom{\vide \joint y^\e_k} &&
      \qquad\text{pour $0 \le k < i - 1$ et $\e = 0, 1$,}\\
    \atom{\vide \joint x_{i-1}} & \mapsto \atom{\vide \joint
      y^0_{i-1}}, \\
    \atom{a \joint x^\e_k} & \mapsto \atom{a \joint y^\e_k} &&
      \qquad\text{pour $0 \le k < i - 1$ et $\e = 0, 1$,}\\
    \atom{a \joint x_{i-1}} & \mapsto \atom{a \joint y^0_{i-1}}.
  \end{alignat*}
  De même, le \oo-foncteur $\Dn{0} \joint \tau_i : \Dn{0} \joint \Dn{i-1} \to \Dn{0} \joint
  \Dn{i}$ est donné par
  \begin{alignat*}{3}{}
    \atom{a \joint \vide} & \mapsto \atom{a \joint \vide}, \\
    \atom{\vide \joint x^\e_k} & \mapsto \atom{\vide \joint y^\e_k} &&
      \qquad\text{pour $0 \le k < i - 1$ et $\e = 0, 1$,}\\
    \atom{\vide \joint x_{i-1}} & \mapsto \atom{\vide \joint
      y^1_{i-1}}, \\
    \atom{a \joint x^\e_k} & \mapsto \atom{a \joint y^\e_k} &&
      \qquad\text{pour $0 \le k < i - 1$ et $\e = 0, 1$,}\\
    \atom{a \joint x_{i-1}} & \mapsto \atom{a \joint y^1_{i-1}}.
  \end{alignat*}
\end{prop}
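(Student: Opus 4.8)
The statement to prove describes explicitly the cofaces $\sigma_i$ and $\tau_i$ of the $\infty$-cocategory $\Dn{0} \joint \cocatD$, acting on the atoms of $\Dn{0} \joint \Dn{i-1}$. The plan is to reduce everything to the already-established description of $\lambda(\sigma_i)$ and $\lambda(\tau_i)$ for the cocategory $\cocatD$ given in the paragraph~\ref{paragr:desc_lambda_cocat}, transported through the joint. The key point is that the morphisms $\sigma_i, \tau_i : \Dn{0} \joint \Dn{i-1} \to \Dn{0} \joint \Dn{i}$ are, by construction, $\Dn{0} \joint$ applied to the structural morphisms $\sigma_i, \tau_i : \Dn{i-1} \to \Dn{i}$ of $\cocatD$. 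Since $\Dn{0}$ and the disks $\Dn{i}$ are objects of $\ThetaAug$, hence strong Steiner complexes via $\lambda$ (proposition~\ref{prop:Theta_Steiner}), the functor $\nu$ transforms joints of strong Steiner complexes into joints of $\infty$-categories (théorème~\ref{thm:joint}); consequently it suffices to compute at the level of augmented directed complexes and then transport by $\nu$.

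First I would fix the identification $\Dn{0} \joint \Dn{i} \simeq \nu(\lambda(\Dn{0}) \joint \lambda(\Dn{i}))$ coming from proposition~\ref{prop:Theta_Steiner} and the paragraph~\ref{paragr:def_joint}. Under this identification, $\sigma_i : \Dn{0} \joint \Dn{i-1} \to \Dn{0} \joint \Dn{i}$ corresponds to $\nu$ applied to $\lambda(\Dn{0} \joint \sigma_i^{\cocatD}) = \id{\lambda(\Dn{0})} \joint \lambda(\sigma_i)$, where $\lambda(\sigma_i) : \lambda(\Dn{i-1}) \to \lambda(\Dn{i})$ is the map described in paragraph~\ref{paragr:desc_lambda_cocat}: it sends $x^\e_k \mapsto y^\e_k$ for $0 \le k < i-1$ and the principal cell $x_{i-1} \mapsto y^0_{i-1}$ (the source inclusion). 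The analogous statement for $\tau_i$ sends $x_{i-1} \mapsto y^1_{i-1}$.

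The crucial step is then to apply lemme~\ref{lemme:joint_morph_atom}, which says precisely that a joint of two morphisms that respect atoms (i.e. satisfy $\nu(f)(\atom{z}) = \atom{f(z)}$) again respects atoms: $\nu(f \joint g)(\atom{x \joint y}) = \atom{f(x) \joint g(y)}$. Here $f = \id{\lambda(\Dn{0})}$ trivially respects atoms, and $g = \lambda(\sigma_i)$ is a rigid morphism between strong Steiner complexes (it is evidently prerigid, hence rigide by proposition~\ref{prop:mono_rigide}), so $\nu(\lambda(\sigma_i))(\atom{y}) = \atom{\lambda(\sigma_i)(y)}$. Applying lemme~\ref{lemme:joint_morph_atom} with these data immediately gives
\[
  \nu(\id{} \joint \lambda(\sigma_i))(\atom{a \joint x^\e_k}) =
  \atom{a \joint y^\e_k}, \qquad
  \nu(\id{} \joint \lambda(\sigma_i))(\atom{\vide \joint x^\e_k}) =
  \atom{\vide \joint y^\e_k},
\]
for $0 \le k < i-1$, while the principal-cell cases $\atom{a \joint x_{i-1}}$ and $\atom{\vide \joint x_{i-1}}$ map to $\atom{a \joint y^0_{i-1}}$ and $\atom{\vide \joint y^0_{i-1}}$ respectively, since $\lambda(\sigma_i)(x_{i-1}) = y^0_{i-1}$. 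This is exactly the asserted formula for $\sigma_i$; the computation for $\tau_i$ is identical, replacing $y^0_{i-1}$ by $y^1_{i-1}$ throughout.

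The main obstacle, such as it is, lies in checking that $\lambda(\sigma_i)$ and $\lambda(\tau_i)$ genuinely respect atoms in the strong sense required by lemme~\ref{lemme:joint_morph_atom}, rather than merely being prerigid: one must verify the equalities $\lambda(\sigma_i)(\atom{z}^\e_k) = \atom{\lambda(\sigma_i)(z)}^\e_k$ for all faces $z$ of $\Dn{i-1}$ and all $k$. This follows from proposition~\ref{prop:mono_rigide} once one observes that $\lambda(\sigma_i)$ is an injective prerigid map—indeed it is an inclusion rigide ordonnée by the argument already invoked in the proof of proposition~\ref{prop:Theta_Steiner}. Since a monomorphism prerigid is automatically rigide, the hypotheses of lemme~\ref{lemme:joint_morph_atom} are satisfied and the proof is complete. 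The remainder is a bookkeeping verification, already carried out generically in paragraph~\ref{paragr:desc_lambda_cocat}, that the explicit images of the non-principal faces agree with those written in the statement, which is immediate from the formulas there.
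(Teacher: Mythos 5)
Votre démonstration est correcte et suit essentiellement la même voie que celle de l'article : identifier $\sigma_i$ et $\tau_i$ à $\nu(\id{} \joint \lambda(\sigma_i))$ et $\nu(\id{} \joint \lambda(\tau_i))$, puis appliquer le lemme~\ref{lemme:joint_morph_atom} en utilisant la description de $\lambda(\sigma_i)$ et $\lambda(\tau_i)$ donnée au paragraphe~\ref{paragr:desc_lambda_cocat}, la rigidité de ces morphismes (déjà observée dans la preuve de la proposition~\ref{prop:Theta_Steiner}) garantissant les hypothèses du lemme. Votre vérification explicite de cette rigidité via la proposition~\ref{prop:mono_rigide} ne fait qu'expliciter ce que l'article laisse implicite.
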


\begin{proof}
  Cela résulte immédiatement de la proposition~\ref{prop:joint_morph_atom}
  et des formules
  \[
     \sigma_i(\atom{x^\e_k}) =
     \begin{cases}
       \atom{y^\e_k} & \text{si $0 \le k < i - 1$,} \\
       \atom{y^0_{i-1}} & \text{si $k = i - 1$,}
     \end{cases}
     \quadet
     \tau_i(\atom{x^\e_k}) =
     \begin{cases}
       \atom{y^\e_k} & \text{si $0 \le k < i - 1$,} \\
       \atom{y^1_{i-1}} & \text{si $k = i - 1$,}
     \end{cases}
   \]
   pour $\e = 0,1$.
\end{proof}

\begin{prop}\label{prop:desc_tr_k}
  Fixons $i \ge 0$ et notons $x$ la cellule principale de $\Dn{i+1}$ et $y$
  celle de $\Dn{i}$. Alors le \oo-foncteur
  $\Dn{0} \joint \kappa_i : \Dn{0} \joint \Dn{i+1} \to \Dn{0} \joint \Dn{i}$
  est donné par
  \begin{alignat*}{3}{}
    \atom{a \joint \vide} & \mapsto \atom{a \joint \vide}, \\
    \atom{\vide \joint x^\e_k} & \mapsto \atom{\vide \joint y^\e_k} &&
      \qquad\text{pour $0 \le k < i$ et $\e = 0, 1$,}\\
    \atom{\vide \joint x^\e_i} & \mapsto \atom{\vide \joint y_i} &&
      \qquad\text{pour $\e = 0, 1$,}\\
    \atom{\vide \joint x_{i+1}} & \mapsto \id{\atom{\vide \joint
      y_i}}, \\
    \atom{a \joint x^\e_k} & \mapsto \atom{a \joint y^\e_k} &&
      \qquad\text{pour $0 \le k < i$ et $\e = 0, 1$,}\\
    \atom{a \joint x^\e_i} & \mapsto \atom{a \joint y_i} &&
      \qquad\text{pour $\e = 0, 1$,}\\
    \atom{a \joint x_{i+1}} & \mapsto \id{\atom{a \joint y_i}}.
  \end{alignat*}
\end{prop}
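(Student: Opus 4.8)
The plan is to reduce everything to the join of the complexes $\lambda(\Dn{0})$ and $\lambda(\Dn{i+1})$, and to treat the two atoms involving the principal cell $x_{i+1}$ separately from all the others, since only those escape the scope of Lemma~\ref{lemme:joint_morph_atom}.

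First I would observe that, by the monoidal character of $\nu$ on strong Steiner complexes (Theorem~\ref{thm:joint}) together with Proposition~\ref{prop:Theta_Steiner}, the $\infty$-functor $\kappa_i$ of the statement corresponds, through the canonical isomorphisms $\Dn{0} \joint \Dn{m} \simeq \nu(\lambda(\Dn{0}) \joint \lambda(\Dn{m}))$, to $\nu(\id{\lambda(\Dn{0})} \joint g)$, where $g = \lambda(\kappa_i) : \lambda(\Dn{i+1}) \to \lambda(\Dn{i})$ is the morphism described in paragraph~\ref{paragr:desc_lambda_cocat}: it sends $x^\e_k$ to $y^\e_k$ for $0 \le k \le i$ and annihilates the top generator, i.e.\ $g(x_{i+1}) = 0$. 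Since $\nu(\lambda(\Dn{0}) \joint \lambda(\Dn{i+1}))$ is generated by its atoms (Theorem~\ref{thm:Steiner_pol}), it suffices to compute the image of each atom of $\Dn{0} \joint \Dn{i+1}$.

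For the atoms that do not involve $x_{i+1}$ — namely $a \joint \vide$ (which is fixed) and $\vide \joint x^\e_k$, $a \joint x^\e_k$ with $0 \le k \le i$ — I would note that $g$ is rigid on the cells $x^\e_k$ for $k \le i$: by paragraph~\ref{paragr:desc_lambda_Dn} the atom $\atom{x^\e_k}$ involves only the generators $x^\eta_l$ with $l < k \le i$ together with $x^\e_k$, all of which $g$ carries to the corresponding cells of $\lambda(\Dn{i})$, so $g(\atom{x^\e_k}) = \atom{y^\e_k}$ (with $y^\e_i = y_i$). The rigidity of $\id{\lambda(\Dn{0})}$ being trivial, Lemma~\ref{lemme:joint_morph_atom} then applies exactly as in the proof of Proposition~\ref{prop:desc_tr_s_t}, and yields the values $\atom{\vide \joint y^\e_k}$ and $\atom{a \joint y^\e_k}$ stated for $k < i$, and $\atom{\vide \joint y_i}$, $\atom{a \joint y_i}$ for $k = i$.

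The hard part is the remaining two atoms $\atom{\vide \joint x_{i+1}}$ and $\atom{a \joint x_{i+1}}$, for which Lemma~\ref{lemme:joint_morph_atom} is unavailable precisely because $g$ kills $x_{i+1}$ without killing its lower components $\atom{x_{i+1}}^\e_k = x^\e_k$ (so $\nu(g)(\atom{x_{i+1}})$ is an identity, not $\atom{0}$). Here I would compute directly: Lemma~\ref{lemme:tab_joint} gives the tableaux of $\atom{\vide \joint x_{i+1}}$ and $\atom{a \joint x_{i+1}}$ unconditionally, and applying $\id{\lambda(\Dn{0})} \joint g$ entrywise annihilates every summand in which $x_{i+1}$ occurs, while the surviving summands reassemble, by a second use of Lemma~\ref{lemme:tab_joint}, into the tableau of $\atom{\vide \joint y_i}$ (resp.\ $\atom{a \joint y_i}$) in all degrees $\le i+1$, with a zero in the top degree. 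Comparing this with the formula for $\id{z}$, which appends a $0$ in the top dimension (paragraph~\ref{paragr:def_nu}), and using $y^0_i = y^1_i = y_i$, one recognizes exactly $\id{\atom{\vide \joint y_i}}$ and $\id{\atom{a \joint y_i}}$, which establishes the last two lines of each list and completes the description.
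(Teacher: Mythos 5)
Your proposal is correct, and its overall structure coincides with the paper's: the reduction to $\nu(\id{\lambda(\Dn{0})} \joint \lambda(\kappa_i))$, and the treatment of every atom except $\atom{\vide \joint x_{i+1}}$ and $\atom{a \joint x_{i+1}}$ by combining Lemma~\ref{lemme:joint_morph_atom} with the description of $\lambda(\kappa_i)$ from the paragraphe~\ref{paragr:desc_lambda_cocat}, is exactly what the paper does. Where you genuinely diverge is on the two exceptional atoms, which are the crux. You compute their full tableaux via Lemma~\ref{lemme:tab_joint}, push them through $\id{\lambda(\Dn{0})} \joint \lambda(\kappa_i)$ entry by entry, and match the result degree by degree against the tableau of $\id{\atom{\vide \joint y_i}}$, resp. $\id{\atom{a \joint y_i}}$; this works (every summand containing $x_{i+1}$ dies, the rest map to the expected entries), though note that for $\atom{\vide \joint x_{i+1}}$ the surviving entries live in degrees $\le i$, not $\le i+1$ as your ``resp.'' phrasing suggests. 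The paper is more economical: it computes \emph{only} the top entries, $\kappa_i(\atom{\vide \joint x_{i+1}})_{i+1} = \vide \joint 0 = 0$ and $\kappa_i(\atom{a \joint x_{i+1}})_{i+2} = 0$, which already forces the images to be identities, and then identifies \emph{which} identities by functoriality: since $s(\atom{\vide \joint x_{i+1}}) = \atom{\vide \joint x^0_i}$ and $s(\atom{a \joint x_{i+1}}) = \atom{a \joint x^1_i}$ (Propositions~\ref{prop:s_t_cone_vide} et~\ref{prop:s_t_cone}), the already-established values on those source atoms pin down the answer. So your route trades the appeal to the source formulas for a longer but self-contained tableau verification; both arguments are complete.
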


\begin{proof}
  Cela résulte immédiatement de la proposition~\ref{prop:joint_morph_atom}
  et de la formule
  \[
    \kappa_i(\atom{x^\e_k}) =
      \begin{cases}
        \atom{y^\e_k} & \text{si $0 \le k < i$,} \\
        \atom{y_i} & \text{si $k = i$,} \\
        \id{\atom{y_i}} & \text{si $k = i+1$,}
      \end{cases}
  \]
  pour $\e = 0,1$.
\end{proof}

\begin{paragr}
  Fixons maintenant $j$ tel que $0 \le j < i$. Nous allons décrire
  explicitement le \oo-foncteur
  \[
    \Dn{0} \joint \nabla^i_j : \Dn{0} \joint \Dn{i} \to \Dn{0} \joint \Dn{i}
    \amalg_{\Dn{0} \joint \Dn{j}} \Dn{0} \joint \Dn{i}.
  \]
  Nous noterons $x$ la cellule principale de l'objet $\Dn{i}$
  apparaissant dans la source de~$\Dn{0} \joint \nabla^i_j$, et $y$ et $z$
  les cellules principales des objets $\Dn{i}$ apparaissant de gauche à
  droite dans le but de $\Dn{0} \joint \nabla^i_j$.

  En vertu du paragraphe \ref{paragr:desc_lambda_cocat} et avec ses
  notations, une base du complexe dirigé augmenté
  \[
    \lambda\big(\Dn{0} \joint \Dn{i}
    \amalg_{\Dn{0} \joint \Dn{j}} \Dn{0} \joint \Dn{i}\big)
    \simeq
    \lambda(\Dn{0}) \joint \lambda(\Dn{i} \amalgDn{j} \Dn{i})
  \]
  est donnée par les
  \[
    \atom{a \joint \vide}, \quad
    \atom{\vide \joint y^\e_i}, \quad
    \atom{\vide \joint z^\e_i}, \quad
    \atom{a \joint y^\e_i}, \quad
    \atom{a \joint z^\e_i},
  \]
  pour $0 \le k \le i$ et $\e = 0, 1$, modulo les identifications
  \[
      y^0_j = z^1_j, \quad
      y^\e_k = z^\e_k \quad\text{pour $0 \le k < j$ et $\e = 0, 1$,} \\
  \]
  ainsi que les identifications triviales $y^0_i = y^1_i$ et
  $z^0_i = z^1_i$.
\end{paragr}

\begin{lemme}
  Pour tout $k$ tel que $j < k \le i$ et $\e = 0, 1$, la $(k+1)$-flèche
  \[
    \atom{\vide \joint y^\eta_{j+1}} \ast_0 \atom{a \joint z^0_0} \ast_1 \cdots
    \ast_{j-1} \atom{a \joint z^0_{j-1}} \ast_j \atom{a \joint z^\e_k}
    \ast_{j+1} \atom{a \joint y^\e_k}
  \]
  de $\Dn{0} \joint \Dn{i} \amalg_{\Dn{0} \joint \Dn{j}} \Dn{0} \joint
  \Dn{i}$, où $\eta$ vaut $\e$ si $k = j + 1$ et $1$ sinon, est bien
  définie.
\end{lemme}

\begin{proof}
  En vertu de la proposition~\ref{prop:s_t_cone}, la $j$-flèche
  \[
    \atom{\vide \joint z^1_j} \ast_0 \atom{a \joint z^0_0} \ast_1
    \cdots \ast_{j-1} \atom{a \joint z^0_{j-1}}
  \]
  est bien définie.
  Puisque, pour $l \le j - 1$, on a
  \[
    s_l(\atom{\vide \joint y^\eta_{j+1}})
    =
    \atom{\vide \joint y^0_l}
    =
    \atom{\vide \joint z^0_l}
    =
    s_l(\atom{\vide \joint z^1_j}),
  \]
  on en déduit que la cellule
  \[
  \atom{\vide \joint y^\eta_{j+1}} \ast_0 \atom{a \joint z^0_0} \ast_1
  \cdots \ast_{j-1} \atom{a \joint z^0_{j-1}}
  \]
  est également bien définie. De plus, on a
  \[
    \begin{split}
     \MoveEqLeft
      s_j\big(
        \atom{\vide \joint y^\eta_{j+1}} \ast_0 \atom{a \joint z^0_0} \ast_1
      \cdots \ast_{j-1} \atom{a \joint z^0_{j-1}}\big) \\
      & =
        \atom{\vide \joint y^0_j} \ast_0 \atom{a \joint z^0_0} \ast_1
      \cdots \ast_{j-1} \atom{a \joint z^0_{j-1}} \\
      & =
        \atom{\vide \joint z^1_j} \ast_0 \atom{a \joint z^0_0} \ast_1
      \cdots \ast_{j-1} \atom{a \joint z^0_{j-1}} \\
      & =
      t_j(\atom{a \joint z^\e_k}),
    \end{split}
  \]
  la dernière égalité résultant du lemme~\ref{lemme:s_t_cone_iter},
  ce qui montre que la cellule
  \[
    \atom{\vide \joint y^\eta_{j+1}} \ast_0 \atom{a \joint z^0_0} \ast_1 \cdots
    \ast_{j-1} \atom{a \joint z^0_{j-1}} \ast_j \atom{a \joint z^\e_k}
  \]
  est bien définie. Enfin, on a
  {
    \allowdisplaybreaks
    \begin{align*}
     \MoveEqLeft
     s_{j+1}\big(
        \atom{\vide \joint y^\eta_{j+1}} \ast_0 \atom{a \joint z^0_0} \ast_1
        \cdots \ast_{j-1} \atom{a \joint z^0_{j-1}} \ast_j \atom{a \joint
        z^\e_k}\big) \\*
        & =
        \atom{\vide \joint y^\eta_{j+1}} \ast_0 \atom{a \joint z^0_0} \ast_1
        \cdots \ast_{j-1} \atom{a \joint z^0_{j-1}}
        \ast_j s_{j+1}(\atom{a \joint z^\e_k}) \\
      & =
        \atom{\vide \joint y^\eta_{j+1}} \ast_0 \atom{a \joint z^0_0} \ast_1
        \cdots \ast_{j-1} \atom{a \joint z^0_{j-1}} \ast_j \atom{a \joint
        z^1_j} \\*
      & \phantom{=1} \text{(en vertu du lemme~\ref{lemme:s_t_cone_iter})} \\
      & =
        \atom{\vide \joint y^\eta_{j+1}} \ast_0 \atom{a \joint y^0_0} \ast_1
        \cdots \ast_{j-1} \atom{a \joint y^0_{j-1}} \ast_j \atom{a \joint
        y^0_j} \\
      & =
        t_{j+1}(\atom{a \joint y^\e_k}),
    \end{align*}
  }%
  la dernière égalité résultant de nouveau du
  lemme~\ref{lemme:s_t_cone_iter}, ce qui achève de montrer que la cellule de
  l'énoncé est bien définie.
\end{proof}

\begin{lemme}\label{lemme:s_t_nabla}
  Pour tout $k$ tel que $j < k \le i$, $\e = 0, 1$ et pour tout $l$ tel
  que $0 \le l \le k + 1$, on a
  \[
    (\Dn{0} \joint \nabla^i_j)(\atom{a \joint x^\e_k})^0_l =
      \begin{cases}
        a \joint y^1_{l-1} & \text{si $0 \le l \le j+1$,} \\
        a \joint y^1_{l-1} + a \joint z^1_{l-1}
          & \text{si $j + 1 < l < k + 1$}, \\
        a \joint y^\e_{l-1} + a \joint z^\e_{l-1}
          & \text{si $l = k + 1$},
      \end{cases}
  \]
  et
  \[
    (\Dn{0} \joint \nabla^i_j)(\atom{a \joint x^\e_k})^1_l =
      \begin{cases}
        \vide \joint y^1_l + a \joint z^0_{l-1} &
          \text{si $0 \le l \le j$,} \\
        \vide \joint y^\eta_l + \vide \joint z^\eta_l + a \joint z^0_{l-1} &
          \text{si $l = j+1$,} \\
        \vide \joint y^\eta_l + \vide \joint z^\eta_l +
        a \joint y^0_{l-1} + a \joint z^0_{l-1} &
          \text{si $j+1 < l < k+1$,} \\
        a \joint y^\e_{l-1} + a \joint z^\e_{l-1} &
          \text{si $l = k+1$,} \\
      \end{cases}
  \]
  où $\eta$ vaut $\e$ si $l = k$ et $1$ sinon.
\end{lemme}

\begin{proof}
  Montrons la première égalité. On a, en utilisant le
  lemme~\ref{lemme:tab_cone},
  \[
    \begin{split}
    (\Dn{0} \joint \nabla^i_j)(\atom{a \joint x^\e_k})^0_l
    & =
    \lambda(\Dn{0} \joint \nabla^i_j)(\atom{a \joint x^\e_k}^0_l) \\
    & =
    \lambda(\Dn{0} \joint \nabla^i_j)(a \joint x^\delta_{l-1}) \\
    & =
    a \joint \lambda(\nabla^i_j)(x^\delta_{l-1}),
    \end{split}
  \]
  où $\delta$ vaut $\e$ si $l = k + 1$ et $1$ sinon, et on obtient l'égalité
  par la description de $\lambda(\nabla^i_j)$ donnée au
  paragraphe~\ref{paragr:desc_lambda_cocat}.

  Montrons la seconde. Le cas $l = k + 1$ s'obtient comme ci-dessus. Pour $l
  < k + 1$, en utilisant de nouveau le lemme~\ref{lemme:tab_cone}, on a
  \[
    \begin{split}
    (\Dn{0} \joint \nabla^i_j)(\atom{a \joint x^\e_k})^1_l
    & =
    \lambda(\Dn{0} \joint \nabla^i_j)(\atom{a \joint x^\e_k}^1_l) \\
    & =
    \lambda(\Dn{0} \joint \nabla^i_j)(\vide \joint x^\eta_l + a \joint x^0_{l-1}) \\
    & =
    \vide \joint \lambda(\nabla^i_j)(x^\eta_l) + a \joint
      \lambda(\nabla^i_j)(x^0_{l-1})
    \end{split}
  \]
  et on obtient le résultat en utilisant de nouveau la description de
  $\lambda(\nabla^i_j)$ donnée au paragraphe~\ref{paragr:desc_lambda_cocat}.
\end{proof}

\begin{prop}\label{prop:desc_tr_nabla}
  Le \oo-foncteur $\Dn{0} \joint \nabla^i_j : \Dn{0} \joint \Dn{i} \to \Dn{0} \joint \Dn{i}
  \amalg_{\Dn{0} \joint \Dn{j}} \Dn{0} \joint \Dn{i}$ est donné par
  {
    \allowdisplaybreaks
  \begin{alignat*}{3}{}
    \atom{a \joint \vide} & \mapsto \atom{a \joint \vide}, \\
    \atom{\vide \joint x^0_k} & \mapsto \atom{\vide \joint z^0_k}
                                   && \qquad\text{pour $0 \le k \le j$,} \\
    \atom{\vide \joint x^1_k} & \mapsto \atom{\vide \joint y^1_k}
                                   && \qquad\text{pour $0 \le k \le j$,} \\
    \atom{\vide \joint x^\e_k} & \mapsto \atom{\vide \joint y^\e_k} \ast_j
                                   \atom{\vide \joint z^\e_k}
                                   && \qquad\text{pour $j < k \le i$ et
                                      $\e = 0, 1$,} \\
    \atom{a \joint x^0_k} & \mapsto \atom{a \joint z^0_k}
                                   && \qquad\text{pour $0 \le k \le j$,} \\
    \atom{a \joint x^1_k} & \mapsto \atom{a \joint y^1_k}
                                   && \qquad\text{pour $0 \le k \le j$,} \\
    \atom{a \joint x^\e_k} & \mapsto u^\e_k && \qquad\text{pour $j < k \le i$
                                          et $\e = 0,1$},
  \end{alignat*}
  }
  où
  \[
    u^\e_k =
    \atom{\vide \joint y^\eta_{j+1}} \ast_0 \atom{a \joint z^0_0} \ast_1 \cdots
    \ast_{j-1} \atom{a \joint z^0_{j-1}} \ast_j \atom{a \joint z^\e_k}
    \ast_{j+1} \atom{a \joint y^\e_k},
  \]
  avec $\eta$ valant $\e$ si $k = j+1$ et $1$ sinon.
\end{prop}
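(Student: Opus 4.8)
La stratégie est identique à celle utilisée pour décrire les \oo-foncteurs $\sigma_i$, $\tau_i$ et $\kappa_i$ dans les propositions~\ref{prop:desc_tr_s_t} et~\ref{prop:desc_tr_k} : il s'agit d'appliquer le lemme~\ref{lemme:joint_morph_atom} au morphisme de complexes dirigés augmentés $\lambda(\nabla^i_j) : \lambda(\Dn{i}) \to \lambda(\Dn{i} \amalgDn{j} \Dn{i})$, dont la description explicite sur les bases est donnée au paragraphe~\ref{paragr:desc_lambda_cocat}. D'abord, je traiterais les atomes de la forme $\atom{\vide \joint x^\e_k}$ et $\atom{a \joint x^0_k}$, $\atom{a \joint x^1_k}$ pour $0 \le k \le j$, ainsi que $\atom{\vide \joint x^\e_k}$ pour $j < k \le i$ : pour ceux-ci, le lemme~\ref{lemme:joint_morph_atom} combiné aux formules $x^0_k \mapsto z^0_k$, $x^1_k \mapsto y^1_k$ (pour $k \le j$) et $x^\e_k \mapsto y^\e_k + z^\e_k$ (pour $k > j$) du paragraphe~\ref{paragr:desc_lambda_cocat} fournit immédiatement le résultat, l'addition $y^\e_k + z^\e_k$ se traduisant par la $j$-composition $\atom{\vide \joint y^\e_k} \comp_j \atom{\vide \joint z^\e_k}$ au niveau des atomes.

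Le c\oe ur de la preuve est le cas des atomes $\atom{a \joint x^\e_k}$ pour $j < k \le i$. Pour ceux-ci, l'image $u^\e_k$ n'est pas un simple atome mais un composé itéré, et le lemme~\ref{lemme:joint_morph_atom} ne donne pas directement la forme factorisée annoncée : il faut identifier la cellule $\nu(\lambda(\nabla^i_j))(\atom{a \joint x^\e_k})$, caractérisée par son tableau, avec le composé $u^\e_k$. La méthode consiste à vérifier que ces deux cellules ont même tableau, c'est-à-dire mêmes composantes $(\cdot)^\e_l$ pour tout $l$. Le lemme~\ref{lemme:s_t_nabla}, qui calcule explicitement $\nabla^i_j(\atom{a \joint x^\e_k})^\e_l$ via le lemme~\ref{lemme:tab_cone} et la description de $\lambda(\nabla^i_j)$, fournit exactement le membre de gauche. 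Pour le membre de droite, je calculerais les composantes du composé $u^\e_k$ en utilisant la formule de composition du paragraphe~\ref{paragr:def_nu} et les tableaux des atomes facteurs donnés par le lemme~\ref{lemme:tab_cone}, confrontées aux formules du lemme~\ref{lemme:s_t_nabla}.

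La principale difficulté sera la vérification combinatoire que le tableau du composé $u^\e_k$ coïncide avec les formules du lemme~\ref{lemme:s_t_nabla}, notamment le suivi des valeurs de $\eta$ (qui vaut $\e$ si $l = k$ et $1$ sinon) à travers les différentes $\comp_l$-compositions, et le recollement des termes en $y$ et $z$ aux niveaux $l \le j$, $l = j+1$, $j+1 < l < k+1$ et $l = k+1$. Le lemme préliminaire assurant que $u^\e_k$ est \emph{bien défini} (qui repose sur les lemmes~\ref{lemme:s_t_cone_iter} et~\ref{lemme:tab_cone} pour vérifier les conditions de composabilité $s_l = t_l$ aux différents niveaux) est un ingrédient essentiel, car sans lui l'expression $u^\e_k$ n'a pas de sens. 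Une fois l'égalité des tableaux établie sur tous les atomes, la compatibilité aux sources et aux buts est automatique puisque $\nu(\lambda(\nabla^i_j))$ est un \oo-foncteur, ce qui achève la démonstration via l'identification $\Dn{0} \joint \Dn{i} \simeq \nu(\lambda(\Dn{0}) \joint \lambda(\Dn{i}))$ de la proposition~\ref{prop:Theta_Steiner} et l'engendrement libre par les atomes du théorème~\ref{thm:Steiner_pol}.
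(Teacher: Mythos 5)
Your proposal is correct and, for the heart of the proposition, takes exactly the paper's route: the atoms $\atom{a \joint x^0_k}$, $\atom{a \joint x^1_k}$ with $k \le j$ are handled by Lemma~\ref{lemme:joint_morph_atom} together with paragraph~\ref{paragr:desc_lambda_cocat}, and the atoms $\atom{a \joint x^\e_k}$ with $k > j$ by comparing, for each $l$ and $\ep$, the component $(\nabla^i_j(\atom{a \joint x^\e_k}))^\ep_l$ given by Lemma~\ref{lemme:s_t_nabla} with the component $(u^\e_k)^\ep_l$ computed from Lemma~\ref{lemme:tab_cone} and the composition formula of $\nu$; the well-definedness lemma for $u^\e_k$ is indeed, as you note, a prerequisite. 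The only divergence is your treatment of the atoms $\atom{\vide \joint x^\e_k}$ for $k > j$. The paper disposes of \emph{all} the $\vide \joint$ atoms at once by naturality of $\iota_2$: they are images under the \oo-foncteur $\iota_2 : \Dn{i} \to \Dn{0} \joint \Dn{i}$ of cells of $\Dn{i}$, so their images under $\nabla^i_j$ follow from the explicit description of $\nabla^i_j : \Dn{i} \to \Dn{i} \amalgDn{j} \Dn{i}$ (paragraph~\ref{paragr:def_kappa_nabla}) and functoriality, with no table computation at all. Your route through Lemma~\ref{lemme:joint_morph_atom} also works, but it is not \emph{immediate} as you claim: for $k > j$ the image $\lambda(\nabla^i_j)(x^\e_k) = y^\e_k + z^\e_k$ is not a basis element, so you must first verify the hypothesis of the lemma for this element (namely that $\lambda(\nabla^i_j)$ carries the table of $\atom{x^\e_k}$ to the table of $\atom{y^\e_k + z^\e_k}$), and then identify $\atom{\vide \joint (y^\e_k + z^\e_k)}$ with the composite $\atom{\vide \joint y^\e_k} \ast_j \atom{\vide \joint z^\e_k}$; both points rest on the cancellation coming from $y^0_j = z^1_j$ in $d(y^\eta_{j+1} + z^\eta_{j+1})$, and are true, but they constitute a short computation of the same nature as the rest of your case analysis, not a formal consequence. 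Either fill in that computation or, more economically, adopt the paper's naturality argument for these atoms.
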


\begin{proof}
  Le cas des atomes de la forme $\atom{\vide \joint x^\e_k}$ résulte de la
  naturalité de~$\iota_2$ et, plus précisément, de la commutativité du
  carré
  \[
    \xymatrix@C=3.5pc{
      \Dn{0} \joint \Dn{i} \ar[r]^-{\Dn{0} \joint \nabla^i_j} &
    \Dn{0} \joint \big(\Dn{i} \amalg_{\Dn{j}} \Dn{i}\big) \\
      \Dn{i} \ar[u]^{\iota_2} \ar[r]_-{\nabla^i_j} &
    \Dn{i} \amalgDn{j} \Dn{i} \ar[u]_{\iota_2} \pbox{,}
    }
  \]
  ainsi que de la description explicite du \oo-foncteur $\nabla^i_j : \Dn{i}
  \to \Dn{i} \amalgDn{j} \Dn{i}$ (voir le
  paragraphe~\ref{paragr:def_kappa_nabla}).

  Le cas de l'atome $\atom{a \joint \vide}$ et des atomes de la forme
  $\atom{a \joint x^\e_k}$ avec $0 \le k \le j$ est conséquence de la
  proposition~\ref{prop:joint_morph_atom} et des formules
  \[
    \nabla^i_j(\atom{x^0_k}) = \atom{z^0_k}
      \quadet
    \nabla^i_j(\atom{x^1_k}) = \atom{y^1_k},
  \]
  pour $0 \le k \le j$.

  Enfin, traitons le cas des atomes de la forme $\atom{a \joint x^\e_k}$
  avec $k > j$.  Soient $l$ tel que $0 \le l \le k+1$ et $\ep = 0,1$. Il
  s'agit de montrer l'égalité $(\Dn{0} \joint \nabla^i_j)(\atom{a \joint x^\e_k})^\ep_l =
  (u^\e_k)^\ep_l$.  Le membre de gauche a été calculé dans le
  lemme~\ref{lemme:s_t_nabla}.  Calculons celui de droite.

  Si $l \le j+1$ (et donc $l \le k$), on a, en utilisant le
  lemme~\ref{lemme:tab_cone},
   \[
      (u^\e_k)^0_l = (s_l(u^\e_k))_l = \atom{a \joint y^\e_k}^0_l
      = a \joint y^1_{l-1}.
  \]
  Si $j + 1 <  l \le k + 1$, on a
  \[
    (u^\e_k)^\ep_l = \atom{a \joint z^\e_k}^\ep_l + \atom{a \joint
    y^\e_k}^\ep_l
  \]
  et donc, pour $l = k + 1$,
  \[
    (u^\e_k)^\ep_{l} = a \joint z^\e_k + a \joint y^\e_k
  \]
  et, pour $j + 1 < l < k + 1$, en vertu du lemme~\ref{lemme:tab_cone},
  \[
    (u^\e_k)^0_l = a \joint z^1_{l-1} + a \joint y^1_{l-1}
    \quadet
    (u^\e_k)^1_l = \vide \joint z^\eta_l + a \joint z^0_{l-1} +
    \vide \joint y^\eta_l + a \joint y^0_{l-1}.
  \]
  Si $l = j+1$, on a
  \[
    t_l(u^\e_k) = t_l\big(\atom{\vide \joint y^\eta_{j+1}}
      \ast_0 \atom{a \joint z^0_0} \ast_1 \cdots \ast_{j-1}
      \atom{a \joint z^0_{j-1}} \ast_j \atom{a \joint z^\e_k}\big)
  \]
  et donc, toujours en utilisant le lemme~\ref{lemme:tab_cone},
  \[
    (u^\e_k)^1_l
    = \atom{\vide \joint y^\eta_{j+1}}^1_l + \atom{a \joint z^\e_k}^1_l
    = \vide \joint y^\eta_l + \vide \joint z^\eta_l + a \joint z^0_{l-1}.
  \]
  Enfin, si $l < j + 1$, on a
  \[
    t_l(u^\e_k) = t_l\big(\atom{\vide \joint y^\eta_{j+1}}
      \ast_0 \atom{a \joint z^0_0} \ast_1 \cdots \ast_{l-1}
      \atom{a \joint z^0_{l-1}}\big)
  \]
  et donc
  \[
    (u^\e_k)^1_l
    = \atom{\vide \joint y^\eta_{j+1}}^1_l + \atom{a \joint z^0_{l-1}}^1_l
    = \vide \joint y^1_l + a \joint z^0_{l-1}.
  \]
  On a bien retrouvé dans tous les cas la valeur de $(\Dn{0} \joint
  \nabla^i_j)(\atom{a \joint x^\e_k})^\ep_l$ obtenue dans le
  lemme~\ref{lemme:s_t_nabla}, ce qui achève la démonstration.
\end{proof}

\begin{prop}\label{prop:desc_tr}
  Soient $C$ une \oo-catégorie et $c$ un objet de $C$. Fixons
  une $i$-flèche $(d, \alpha)$ de $\cotr{C}{c}$.
  \begin{enumerate}
    \item Si $i \ge 1$, on a $s(d, \alpha) = (s(d), \gamma)$, où
      \begin{alignat*}{3}{}
        \gamma^\e_k & = \alpha^\e_k
                   && \qquad\text{pour $0 \le k < i - 1$ et $\e = 0,1$,} \\
        \gamma_{i-1} & = \alpha^0_{i-1}.
      \end{alignat*}
    \item Si $i \ge 1$, on a $t(d, \alpha) = (t(d), \gamma)$, où
      \begin{alignat*}{3}{}
        \gamma^\e_k & = \alpha^\e_k
                   && \qquad\text{pour $0 \le k < i - 1$ et $\e = 0,1$,} \\
        \gamma_{i-1} & = \alpha^1_{i-1}.
      \end{alignat*}
    \item Si $i \ge 0$, on a $\id{(d, \alpha)} = (\id{d}, \gamma)$, où
      \begin{alignat*}{3}{}
        \gamma^\e_k & = \alpha^\e_k
                   && \qquad\text{pour $0 \le k < i$ et $\e = 0,1$,} \\
        \gamma^\e_i & = \alpha_i
                   && \qquad\text{pour $\e = 0, 1$,} \\
        \gamma_{i+1} & = \id{\alpha_i}.
      \end{alignat*}
  \end{enumerate}
  Soit $(e, \beta)$ une seconde $i$-flèche de $\cotr{C}{c}$.
  \begin{enumerate}[resume]
    \item Si $(d, \alpha)$ et $(e, \beta)$ sont $j$-composables pour un $j$
      tel que $0 \le j < i$, alors on a $(d, \alpha) \comp_j (e, \beta) = (d
      \comp_j e, \gamma)$, où
      \begin{alignat*}{3}{}
        \gamma^0_k & = \beta^0_k
                   && \qquad\text{pour $0 \le k \le j$,} \\
        \gamma^1_k & = \alpha^1_k
                   && \qquad\text{pour $0 \le k \le j$}
      \end{alignat*}
      et
      \[
        \gamma^\e_k = d^\eta_{j+1} \ast_0 \beta^0_0 \ast_1 \cdots
        \ast_{j-1} \beta^0_{j-1} \ast_j \beta^\e_k
        \ast_{j+1} \alpha^\e_k
      \]
      pour $j < k \le i$ et $\e = 0, 1$, où $\eta$ vaut $\e$ si $k = j + 1$
      et $1$ sinon.
  \end{enumerate}
\end{prop}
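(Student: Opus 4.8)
The plan is to exploit that, by the description recalled in \ref{paragr:desc_tr}, one has $\cotr{C}{c} = \Hom_c(\Dn{0} \joint \cocatD, C)$, so that by the very definition of the $\infty$-category associated to an $\infty$-cocategory (see \ref{paragr:def_cocat}) \emph{every} structural operation of $\cotr{C}{c}$ is induced, by precomposition, by the corresponding co-structure morphism of the $\infty$-cocategory $\Dn{0} \joint \cocatD$, namely $\sigma_i$ for the source, $\tau_i$ for the target, $\kappa_i$ for the identity and $\nabla^i_j$ for the $j$-composition. Moreover, by Proposition~\ref{prop:desc_tr_pol}, an $i$-cell $(d, \alpha)$ corresponds to the unique $\infty$-functor $h : \Dn{0} \joint \Dn{i} \to C$ with $\alpha^\e_k = h(\atom{a \joint x^\e_k})$ and $d^\e_k = h(\atom{\vide \joint x^\e_k})$, and since $\nu(\lambda(\Dn{0}) \joint \lambda(\Dn{i}))$ is freely generated by its atoms (Theorem~\ref{thm:Steiner_pol}) such a functor is determined by its values on the atoms $\atom{a \joint x^\e_k}$. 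Thus each assertion reduces to evaluating the appropriate composite $\infty$-functor on these atoms and reading off the resulting pair.

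For (a) and (b), the source (resp. target) of $(d,\alpha)$ corresponds to $h \circ \sigma_i$ (resp. $h \circ \tau_i$), and I would simply substitute the explicit formulas of Proposition~\ref{prop:desc_tr_s_t}. For instance $h(\sigma_i(\atom{a \joint x^\e_k})) = h(\atom{a \joint y^\e_k}) = \alpha^\e_k$ for $k < i-1$, while $h(\sigma_i(\atom{a \joint y_{i-1}})) = \alpha^0_{i-1}$, which gives exactly the $\gamma$ of (a); the target case is identical, using $\tau_i$ and yielding $\alpha^1_{i-1}$ in top degree. For (c), the identity corresponds to $h \circ \kappa_i$, and substituting Proposition~\ref{prop:desc_tr_k} gives $\gamma^\e_k = \alpha^\e_k$ for $k < i$, then $\gamma^\e_i = h(\atom{a \joint y_i}) = \alpha_i$, and finally $\gamma_{i+1} = h(\id{\atom{a \joint y_i}}) = \id{\alpha_i}$, as claimed.

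The substantive case is the composition (d). Here $(d,\alpha) \comp_j (e,\beta)$ corresponds to $(h, h') \circ \nabla^i_j$, where $h$ and $h'$ are the $\infty$-functors attached to $(d,\alpha)$ and $(e,\beta)$ on the two copies of $\Dn{0} \joint \Dn{i}$ (with principal cells $y$ and $z$) amalgamated over $\Dn{0} \joint \Dn{j}$, and $(h, h')$ denotes their copairing on the pushout. I would evaluate this composite on the atoms $\atom{a \joint x^\e_k}$ using Proposition~\ref{prop:desc_tr_nabla}. For $0 \le k \le j$, the formulas $\atom{a \joint x^0_k} \mapsto \atom{a \joint z^0_k}$ and $\atom{a \joint x^1_k} \mapsto \atom{a \joint y^1_k}$ yield directly $\gamma^0_k = h'(\atom{a \joint x^0_k}) = \beta^0_k$ and $\gamma^1_k = h(\atom{a \joint x^1_k}) = \alpha^1_k$; note that this computation is what pins down the handedness, forcing $(e,\beta)$ onto the copy $z$ and $(d,\alpha)$ onto $y$. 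For $j < k \le i$, one applies $(h,h')$ to the cell $u^\e_k$, using that $(h,h')$ is an $\infty$-functor (hence preserves each $\ast_l$) and that it sends $\atom{\vide \joint y^\eta_{j+1}}$, $\atom{a \joint z^0_l}$, $\atom{a \joint z^\e_k}$ and $\atom{a \joint y^\e_k}$ to $d^\eta_{j+1}$, $\beta^0_l$, $\beta^\e_k$ and $\alpha^\e_k$ respectively, producing the stated formula for $\gamma^\e_k$.

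The main obstacle is bookkeeping rather than anything conceptual. One must keep the two copies $y$ and $z$ straight (settled by matching the $k \le j$ formulas as above), observe that each output pair $(d', \gamma)$ is automatically a well-formed cell of $\cotr{C}{c}$ since it arises from a genuine $\infty$-functor \emph{via} Proposition~\ref{prop:desc_tr_pol}, and, in (d), recall that the composite $u^\e_k$ is well-defined precisely by the (unlabeled) well-definedness lemma established just before Proposition~\ref{prop:desc_tr_nabla}. No additional input is required: everything collapses onto the already-proved explicit descriptions of $\sigma_i$, $\tau_i$, $\kappa_i$ and $\nabla^i_j$ together with functoriality.
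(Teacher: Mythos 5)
Your proposal is correct and follows exactly the paper's own argument: the paper's proof is precisely the translation, through the bijection of Proposition~\ref{prop:desc_tr_pol} and paragraph~\ref{paragr:desc_tr_pol}, of the explicit formulas for $\sigma_i$, $\tau_i$, $\kappa_i$ and $\nabla^i_j$ established in Propositions~\ref{prop:desc_tr_s_t}, \ref{prop:desc_tr_k} and~\ref{prop:desc_tr_nabla}, which is what you carry out (your identification of the copies $y$ and $z$ with $(d,\alpha)$ and $(e,\beta)$ is the right one, forced by the pushout over $\sigma^i_j$ and $\tau^i_j$). The only blemish is a typo in part (a), where $\sigma_i(\atom{a \joint y_{i-1}})$ should read $\sigma_i(\atom{a \joint x_{i-1}})$.
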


\begin{proof}
  Ces formules sont la traduction, à travers la bijection de la
  proposition~\ref{prop:desc_tr_pol} et du
  paragraphe~\ref{paragr:desc_tr_pol}, des formules obtenues dans les
  propositions~\ref{prop:desc_tr_s_t}, \ref{prop:desc_tr_k}
  et~\ref{prop:desc_tr_nabla}.
\end{proof}

\begin{rem}
  Il résulte de la description de $\cotr{C}{c}$ obtenue dans la proposition
  précédente que cette \oo-catégorie est isomorphe à une sous-\oo-catégorie
  pleine de la \oo-catégorie~$HC$ des cylindres dans $C$ introduite par
  Métayer dans~\cite{MetPolRes} (voir notre remarque~\ref{rem:HC} pour une
  définition abstraite de $HC$). Plus précisément, cette \oo-catégorie est
  la fibre en $c$ du \oo-foncteur $HC \to C$ qui envoie un cylindre sur sa
  « cellule source ». La \oo-catégorie $\cotr{C}{c}$ apparaît également dans
  \cite{ALM} où elle est notée~$\Lambda(C, c)$.
\end{rem}

\begin{rem}
  Si $C$ est une $n$-catégorie, il résulte immédiatement de la
  proposition~\ref{prop:desc_tr} que la \oo-catégorie $\cotr{C}{c}$ est une
  $n$-catégorie. On retrouve ainsi un cas particulier de la
  proposition~\ref{prop:tr_nCat}.
\end{rem}

\begin{paragr}
  Soient $C$ une $2$-catégorie et $c$ un objet de $C$. En vertu des résultats
  de ce chapitre, en utilisant des notations adaptées au cas de la
  dimension $2$, on obtient la description suivante de la $2$-catégorie
  $\cotr{C}{c}$. Les objets de $\cotr{C}{c}$ sont les couples $(d, f)$, où
  $d$ est un objet de $C$ et $f : c \to d$ est une $1$-flèche de $C$ :
   \[
     \xymatrix{
     c \ar[d]_f \\
     d \pbox{.}
     }
   \]
   Si $(d, f)$ et $(d', f')$ sont deux objets de $\cotr{C}{c}$, une
   $1$-flèche du premier vers le second est un couple $(g, \alpha)$, où $g :
   d \to d'$ est une $1$-flèche de $C$ et $\alpha : f' \to gf$ une
   $2$-flèche de~$C$ :
  \[
    \shorthandoff{;}
    \xymatrix@C=1.5pc{
      & c
    \ar[dl]_{f^{}}_{}="f" \ar[dr]^{f'}_{}="s" \\
      d \ar[rr]_g & & d'
      \ar@{}"s";[ll]_(.15){}="ss"
      \ar@{}"s";[ll]_(.55){}="tt"
      \ar@<0.0ex>@2"ss";"tt"_{\alpha} \pbox{.}
    }
  \]
  Enfin, si $(g, \alpha)$ et $(g', \alpha')$ sont deux telles $1$-flèches,
  une $2$-flèche de la première vers la seconde est une $2$-flèche $\gamma :
  g \to g'$ satisfaisant $(\gamma \comp_0 f) \comp_1 \alpha = \alpha'$ :
  \[
    \shorthandoff{:;}
    \xymatrix@C=1.5pc@R=3pc{
      & c
    \ar[dl]_{f^{}}_{}="f" \ar[dr]^{f'}_{}="s" \\
      d
      \ar@/^2ex/@{.>}[rr]_(.30){g}^{}="0"
      \ar@/_2ex/[rr]_(.33){g'}^{}="1"
      \ar@{:>}"0";"1"^{\,\gamma}
      & & d'
      \ar@{}"s";[ll]_(.15){}="ss"
      \ar@{}"s";[ll]_(.55){}="tt"
      \ar@<-1.5ex>@/^-1ex/@{:>}"ss";"tt"_(.30){\alpha}_{}="11"
      \ar@<-0ex>@/^1ex/@2"ss";"tt"^(.20){\!\!\alpha'}^{}="00"
      \ar@{}"00";"11"|-{=}
      \pbox{.}
    }
  \]
  Par ailleurs, si $(d, f)$ est un objet et $(g, \alpha)$ est une
  $1$-flèche de $\cotr{C}{c}$, on a
  \[
    \id{(d, f)} = (\id{d}, \id{f})
    \quadet
    \id{(g, \alpha)} = \id{g}.
  \]
  Si $(g, \alpha) : (d, f) \to (d', f')$ et $(g', \alpha') : (d', f') \to
  (d'', f'')$ sont deux $1$-flèches composables de $\cotr{C}{c}$, on a
  \[
    (g', \alpha') \comp_0 (g, \alpha)
      = (g'g, (g' \comp_0 \alpha) \comp_1 \alpha').
  \]
  Enfin, les compositions horizontales et verticales des $2$-flèches de
  $\cotr{C}{c}$ sont héritées de celles de $C$, ce qui achève de décrire la
  $2$-catégorie $\cotr{C}{c}$.
\end{paragr}

\begin{rem}
  La description de $\cotr{C}{c}$ donnée dans le paragraphe précédent montre
  que cette $2$-catégorie est la $2$-catégorie $\cotrCeg{C}{c}$ introduite
  par Bullejos et Cegarra dans~\cite[section 2.1]{BullCegGeom2Cat},
  $2$-catégorie qui est, à une dualité près, un cas particulier de la
  construction des $2$-catégories « comma » définie par Gray dans
  \cite[paragraphe~I.2.5]{GrayFCT}. C'est par contre notre $2$-catégorie
  \smash{$\cotrm{C}{c}$} qui est noté $\cotrCeg{C}{c}$ par Cegarra
  dans~\cite{CegThmB}. Dans~\cite{ChicheThese}, Chiche appelle les
  $2$-catégories $\cotr{C}{c}$ et \smash{$\cotrm{C}{c}$} la optranche lax de $C$
  au-dessous de $c$ (voir sa définition~1.4.5) et la optranche colax de $C$
  au-dessous de $c$ (voir sa définition~1.4.7).
\end{rem}

\begin{rem}
  Lorsque $C$ est une $1$-catégorie, la description explicite
  de~$\cotr{C}{c}$ qu'on a donnée dans le cas où $C$ est une $2$-catégorie
  montre que $\cotr{C}{c}$ est la tranche $1$\nbd-catégorique usuelle. On
  retrouve ainsi un cas particulier du corollaire~\ref{coro:comp_tr_1}.
\end{rem}

\chapter[Fonctorialités des tranches : résultats pour les
complexes][Fonctorialités des tranches pour les complexes]{Fonctorialités
des tranches :\\ résultats pour les complexes}

Dans ce chapitre, on définit des tranches pour les complexes dirigés
augmentés. Ces tranches sont définies par des formules \forlang{ad hoc} dont
on montre ensuite qu'elles sont imposées par une relation d'adjonction.
Elles sont par ailleurs compatibles aux tranches \oo-catégoriques
introduites dans le chapitre~\ref{sec:joint} lorsqu'on se restreint aux
complexes de Steiner forts. Une partie importante du chapitre est consacrée
à l'étude des propriétés de fonctorialité et de $2$-fonctorialité de ces
tranches pour les complexes dirigés augmentés.

\section{Tranches pour les complexes dirigés augmentés}

\begin{paragr}
  Fixons $K$ un complexe dirigé augmenté. Soient $M$ un complexe dirigé
  augmenté et $g : K \to M$ un morphisme. On va définir un complexe dirigé
  augmenté~\nnot{$\cotr{M}{g}$}.
  \termindex{tranche!pour les complexes dirigés augmentés}%
  On conviendra des égalités suivantes :
  \[
    K_{-1} = \Z, \quad M_{-1} = \Z, \quad K^\ast_{-1} = \N, \quad
    M^\ast_{-1} = \N, \quad d_0 = e \quadet g_{-1} = \id{\Z}
  \]
  et, pour $j \le -2$,
  \[
    K_j = 0, \quad M_j = 0, \quad d_{j+1} = 0 \quadet g_j = 0. \]
  On définit
  $(\cotr{M}{g})_0$ comme le sous-ensemble
  \[
    (\cotr{M}{g})_0 \subset \prod_{j \ge -1} \Hom_\Ab(K_j, M_{j+1})
  \]
  constitué des éléments $(u_j)_{j \ge -1}$ vérifiant, pour $j \ge 0$,
  \[
    (-1)^{j+1}(d_{j+1}u_j - u_{j-1}d_j) = e(u_{-1}(1)).g_j.
  \]
  Notons qu'en convenant que $u_j = 0$ pour tout $j \le -2$, l'égalité
  ci-dessus reste vraie pour tout $j$ dans $\Z$ : pour $j = -1$, on obtient
  l'égalité $eu_{-1} = e(u_{-1}(1)).\id{\Z}$ et, pour $j \le -2$, l'égalité
  $0 = 0$. Pour $i \ge 1$, on pose
  \[
    (\cotr{M}{g})_i = \prod_{j \ge -1} \Hom_\Ab(K_j, M_{i+j+1}).
  \]

  Pour $i > 0$, on définit la différentielle
  \[ d_i : (\cotr{M}{g})_i \to (\cotr{M}{g})_{i-1} \]
  en envoyant $u = (u_j)_{j \ge -1}$ sur $d_i(u) = (d_i(u)_j)_{j \ge -1}$
  défini par, pour $j \ge -1$,
  \[
    d_i(u)_j = (-1)^{j+1}(d_{i+j+1}u_j - u_{j-1}d_j).
  \]
  Notons que pour $j = -1$, la formule se simplifie, en vertu de nos
  conventions, en $d_i(u)_{-1} = d_iu_{-1}$. Par ailleurs, en étendant cette
  formule à tout $j$ dans $\Z$, en convenant à nouveau que $u_j = 0$ pour
  $j \le -2$, on obtient bien que $d_i(u)_j = 0$ pour $j \le -2$.

  On définit l'augmentation
  \[ e : (\cotr{M}{g})_0 \to \Z \]
  en envoyant $(u_j)_{j \ge -1}$ sur $e(u_{-1}(1))$.

  Enfin, les sous-monoïdes de positivité, pour $i \ge 0$, sont les
  sous-ensembles
  \[
    (\cotr{M}{g})^\ast_i \subset (\cotr{M}{g})_i
  \]
  constitués des $(u_j)_{j \ge -1}$ tels que, pour tout $j \ge -1$, on ait
  l'inclusion
  \[ u_j(K^\ast_j) \subset M^\ast_{i+j+1}. \]
\end{paragr}

\begin{prop}
  Fixons $K$ un complexe dirigé augmenté. Soient $M$ un complexe dirigé
  augmenté et $g : K \to M$ un morphisme. Alors $\cotr{M}{g}$ est bien un
  complexe dirigé augmenté.
\end{prop}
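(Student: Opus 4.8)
Pour vérifier que $\cotr{M}{g}$ est un complexe dirigé augmenté, il faut vérifier cinq points : que $d$ est bien définie (c'est-à-dire que $d_i$ envoie $(\cotr{M}{g})_i$ dans $(\cotr{M}{g})_{i-1}$, en particulier que $d_1$ tombe bien dans le sous-ensemble $(\cotr{M}{g})_0$), que $d^2 = 0$, que $e$ est une augmentation (c'est-à-dire $ed_1 = 0$), que les $(\cotr{M}{g})^\ast_i$ sont bien des sous-monoïdes, et que chaque terme $(\cotr{M}{g})_i$ est bien un groupe abélien. Ce dernier point et le fait que les sous-monoïdes de positivité soient des sous-monoïdes sont immédiats, puisque $(\cotr{M}{g})_i$ est un produit de groupes de morphismes $\Hom_\Ab(K_j, M_{i+j+1})$ (pour $i \ge 1$) et que la condition $u_j(K^\ast_j) \subset M^\ast_{i+j+1}$ est clairement stable par somme et contient $0$. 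Je traiterais ces deux points en premier, rapidement.

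Le cœur du calcul est la vérification de $d^2 = 0$ et de la bonne définition de $d$. Pour $d^2 = 0$, je poserais $u = (u_j)$ une $i$-flèche avec $i \ge 2$ et je calculerais $(d_{i-1}d_i(u))_j$ en insérant la formule $d_i(u)_j = (-1)^{j+1}(d_{i+j+1}u_j - u_{j-1}d_j)$ deux fois. En développant, on obtient
\[
  d_{i-1}(d_i(u))_j = (-1)^{j+1}\big(d_{i+j}(d_i(u)_j) - (d_i(u)_{j-1})d_j\big),
\]
et en substituant, les quatre termes produits se répartissent en deux groupes : ceux contenant $d_{i+j}d_{i+j+1}u_j$ et $u_{j-2}d_{j-1}d_j$ qui s'annulent car $M$ et $K$ sont des complexes (donc $d^2 = 0$ chez eux), et les deux termes croisés $d_{i+j}u_{j-1}d_j$ et $d_{i+j}u_{j-1}d_j$ (à un signe près provenant des facteurs $(-1)$) qui se compensent mutuellement. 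Le suivi soigneux des signes $(-1)^{j+1}$ et $(-1)^{j}$ est la seule subtilité ; il faut notamment utiliser la convention $d_0 = e$ pour que le calcul reste homogène en $j = 0$ et $j = -1$.

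Le point le plus délicat — et ce que je considère comme l'obstacle principal — est de montrer que $d_1$ envoie $(\cotr{M}{g})_1$ dans le sous-ensemble $(\cotr{M}{g})_0$, c'est-à-dire que pour $u$ une $1$-flèche, $d_1(u)$ satisfait bien l'équation de contrainte $(-1)^{j+1}(d_{j+1}(d_1(u))_j - (d_1(u))_{j-1}d_j) = e((d_1(u))_{-1}(1)).g_j$ définissant $(\cotr{M}{g})_0$. C'est précisément ici qu'intervient de façon essentielle le morphisme $g$ : le membre de gauche se calcule comme dans la vérification de $d^2 = 0$, mais comme on part de la dimension $1$ et non d'une dimension supérieure, le terme qui « devrait » compléter l'annulation n'existe pas et fait apparaître à la place le terme source $e(\cdot).g_j$. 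Je pense que le calcul le plus sûr consiste à prolonger formellement la formule de $d_i(u)_j$ à $i = 0$ en posant, pour $u \in (\cotr{M}{g})_0$, la « pseudo-différentielle » $d_0(u)_j := e(u_{-1}(1)).g_j$ (ce qui est exactement la contrainte définissant $(\cotr{M}{g})_0$), puis de vérifier que l'identité $d_0 d_1 = 0$ redonne, une fois déroulée, précisément l'équation de contrainte pour $d_1(u)$ ; la bonne définition de $d$ en dimension $\ge 2$ et la relation $ed_1 = 0$ (c'est le cas $j = -1$, où $e((d_1u)_{-1}(1)) = e(d_1 u_{-1}(1)) = 0$ puisque $ed_1 = 0$ dans $M$) se déduisent alors du même calcul formel uniforme. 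La compatibilité des sous-monoïdes de positivité à $d$ n'étant pas requise dans la définition d'un complexe dirigé augmenté, il ne reste rien à vérifier de ce côté, ce qui achève la démonstration.
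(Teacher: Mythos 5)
Votre plan suit pour l'essentiel la m\^eme route que la d\'emonstration du texte : un unique calcul de signes, men\'e uniform\'ement gr\^ace aux conventions $d_0 = e$, $u_j = 0$ et $d_{j+1} = 0$ pour $j \le -2$, qui fournit \`a la fois la bonne d\'efinition de $d_1$, la relation $d_{i-1}d_i = 0$ pour $i \ge 2$ et la relation $ed_1 = 0$ ; les points que vous qualifiez d'imm\'ediats (structure de groupe, sous-mono\"ides, absence de compatibilit\'e exig\'ee entre la diff\'erentielle et les sous-mono\"ides) le sont effectivement.

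Il y a cependant une erreur dans le m\'ecanisme que vous d\'ecrivez pour l'\'etape que vous identifiez \`a juste titre comme le point d\'elicat. Vous affirmez que, pour $u$ dans $(\cotr{M}{g})_1$, l'annulation du membre de gauche de la contrainte est incompl\`ete et que le terme manquant ``fait appara\^itre'' le second membre $e(\cdot).g_j$. C'est inexact : le calcul uniforme vaut pour tout $i \ge 1$, y compris $i = 1$, et l'annulation y est compl\`ete ; on a
\[
  (-1)^{j+1}\big(d_{j+1}d_1(u)_j - d_1(u)_{j-1}d_j\big) = 0
  \qquad\text{pour tout $j \ge 0$,}
\]
et aucun terme en $g_j$ n'appara\^it dans ce membre de gauche, qui ne fait d'ailleurs intervenir $g$ d'aucune fa\c{c}on (aucune contrainte ne pèse sur les \'el\'ements de $(\cotr{M}{g})_1$). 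La contrainte d\'efinissant $(\cotr{M}{g})_0$ est satisfaite par $d_1(u)$ parce que son second membre est lui aussi nul : comme $d_1(u)_{-1} = d_1u_{-1}$, on a $e(d_1(u)_{-1}(1)) = 0$ puisque $ed_1 = 0$ dans $M$, de sorte que la contrainte se r\'eduit \`a $0 = 0$. Pour la m\^eme raison, votre variante par pseudo-diff\'erentielle ne suffit pas telle quelle : en posant $d_0(u')_j := e(u'_{-1}(1)).g_j$, l'identit\'e $d_0d_1 = 0$ ne donne que l'annulation du second membre, alors que la contrainte exige aussi celle du premier ; et les deux expressions possibles de la pseudo-diff\'erentielle (la formule prolong\'ee \`a $i=0$ et $e(\cdot).g_j$) ne co\"incident que sur $(\cotr{M}{g})_0$, ce qui est pr\'ecis\'ement ce qu'on cherche \`a \'etablir pour $d_1(u)$ --- l'argument est donc circulaire tel qu'\'enonc\'e. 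Une fois cette confusion corrig\'ee, c'est-\`a-dire une fois not\'e que le calcul uniforme donne l'annulation du premier membre pour tout $i \ge 1$ et que le cas $j = -1$ donne celle du second, votre d\'emonstration co\"incide avec celle du texte.
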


\begin{proof}
  Commençons par montrer que $d_1$ est bien à valeurs dans
  $(\cotr{M}{g})_0$. Soit $u = (u_j)_{j \ge -1}$ un élément de
  $(\cotr{M}{g})_1$. Il s'agit de montrer que, pour tout $j \ge 0$, on a
  \[ (-1)^{j+1}(d_{j+1}d_1(u)_j - d_1(u)_{j-1}d_j) = e(d_1(u)_{-1}(1)).g_j. \]
  Plus généralement, pour tout $i \ge 1$, tout $u = (u_j)_{j \ge -1}$ dans
  $(\cotr{M}{g})_i$ et tout $j \ge -1$, on a
  \[
    \begin{split}
     \MoveEqLeft
      (-1)^{j+1}(d_{i+j}d_i(u)_j - d_i(u)_{j-1}d_j) \\
      & = (-1)^{j+1}\Big[(d_{i+j}(-1)^{j+1}\big(d_{i+j+1}u_j - u_{j-1}d_j\big)
      - (-1)^j \big(d_{i+j}u_{j-1} - u_{j-2}d_{j-1}\big)d_j\Big] \\
      & = -d_{i+j}u_{j-1}d_j + d_{i+j}u_{j-1}d_j \\
      & = 0.
    \end{split}
  \]
  En particulier, pour $i = 1$, on a
  \[ (-1)^{j+1}(d_{j+1}d_1(u)_j - d_1(u)_{j-1}d_j) = 0. \]
  Pour $j = -1$, on trouve
  \[ ed_1(u)_{-1} = 0, \]
  et on a donc bien, pour tout $j \ge 0$,
  \[ (-1)^{j+1}(d_{j+1}d_1(u)_j - d_1(u)_{j-1}d_j) = e(d_1(u)_{-1}(1)).g_j. \]

  Montrons maintenant que, pour tout $i \ge 2$, on a
  \hbox{$d_{i-1}d_i = 0$}. Soit donc $u = (u_j)_{j \ge -1}$ un élément
  de $(\cotr{M}{g})_i$. On a, pour tout $j \ge -1$,
  \[
     (d_{i-1}d_i(u))_j
       = (-1)^{j+1}(d_{i+j}d_i(u)_j - d_i(u)_{j-1}d_j) \\
       = 0,
  \]
  la dernière égalité résultant du calcul ci-dessus, d'où l'assertion.

  Enfin, la relation $ed_1(u)_{-1} = 0$ trouvée précédemment montre qu'on a
  bien $ed_1 = 0$, d'où le résultat.
\end{proof}

\begin{paragr}
  Fixons $K$ un complexe dirigé augmenté. Soit $f : M \to M'$ un morphisme
  de complexes dirigés augmentés au-dessous de $K$, c'est-à-dire un triangle
  commutatif
  \[
    \xymatrix@C=1pc{
    & K \ar[dr]^{g'} \ar[dl]_{\vphantom{g'}g} \\
      M \ar[rr]_f  & & M'
    }
  \]
  de morphismes de complexes dirigés augmentés. On vérifie immédiatement
  qu'on associe à $f$ un morphisme de complexes dirigés augmentés
  $\cotr{M}{g} \to \cotr{M'}{g'}$ en envoyant, pour tout $i \ge 0$, un élément
  $(u_j)_{j \ge -1}$ de $(\cotr{M}{g})_i$ sur l'élément $(fu_j)_{j \ge -1}$
  de $(\cotr{M'}{g'})_i$. On définit ainsi un foncteur
  \[
    \begin{split}
      \cotr{\Cda}{K} & \to \Cda \\
      (M, K \xto{g} M) & \mapsto \cotr{M}{g}. \\
    \end{split}
  \]
\end{paragr}

\begin{prop}\label{prop:pu_cotr_Cda}
  Fixons $K$ un complexe dirigé augmenté. Les foncteurs
  \[
    \begin{split}
      \Cda & \to \cotr{\Cda}{K} \\
      L & \mapsto (K \joint L, \iota_1)
    \end{split}
    \qquad
    \qquad
    \begin{split}
      \cotr{\Cda}{K} & \to \Cda \\
      (M, K \xto{g} M) & \mapsto \cotr{M}{g} \\
    \end{split}
  \]
  forment un couple de foncteurs adjoints.
\end{prop}

\begin{proof}
  Soient $L$ un complexe dirigé augmenté et $M$ un complexe dirigé augmenté
  muni d'un morphisme $g : K \to M$. On va produire des fonctions
  \[
    \phi : \Hom_{\cotr{\Cda}{K}}((K \joint L, \iota_1), (M, g))
    \to
    \Hom_{\Cda}(L, \cotr{M}{g}),
  \]
  \[
    \psi :
    \Hom_{\Cda}(L, \cotr{M}{g})
    \to
    \Hom_{\cotr{\Cda}{K}}((K \joint L, \iota_1), (M, g)),
  \]
  naturelles en $L$ et $(M, g)$,
  inverses l'une de l'autre.

  Commençons par $\phi$. Soit $F : K \joint L \to M$ un morphisme
  au-dessous de $K$. On définit un morphisme $\phi(F) : L \to \cotr{M}{g}$
  de la manière suivante. Soient $j \ge 0$ et $y$ dans~$L_j$. On doit
  définir un élément $\phi(F)_j(y)$ de $(\cotr{M}{g})_j$ et donc,
  pour tout $i \ge -1$, un morphisme $\phi(F)_j(y)_i : K_i \to M_{i+j+1}$.
  Pour $x$ dans $K_i$, on pose
  \[ \phi(F)_j(y)_i(x) = F_{i+1+j}(x \joint y), \]
  en convenant, pour $i = -1$, d'identifier $1$ dans
  $K_{-1} = \Z$ avec $\vide$. Vérifions tout d'abord que, pour $j = 0$,
  $\phi(F)_0(y)$ est bien dans $(\cotr{M}{g})_0$. Il s'agit de voir que,
  pour tout $i \ge 0$, on a
  \[
      (-1)^{i+1}(d_{i+1}\phi(F)_0(y)_i - \phi(F)_0(y)_{i-1}d_i) =
      e(\phi(F)_0(y)_{-1}(1)).g_i.
  \]
  Plus généralement, pour tout $j \ge 0$ et tout $i \ge -1$, on a
  {
    \allowdisplaybreaks
    \begin{align*}
      \MoveEqLeft
      (-1)^{i+1}(d_{i+j+1}\phi(F)_j(y)_i - \phi(F)_j(y)_{i-1}d_i)(x) \\
      & =
      (-1)^{i+1}(d_{i+j+1}F_{i+j+1}(x \joint y)
        - F_{i+j}(d_ix \joint y)) \\
      & =
        (-1)^{i+1}(F_{i+j}d_{i+j+1}(x \joint y)
        - F_{i+j}(d_ix \joint y)) \\
      & =
        (-1)^{i+1}F_{i+j}(d_{i+j+1}(x \joint y) - d_ix \joint y) \\
      & =
        (-1)^{i+1}F_{i+j}((-1)^{i+1} x \joint d_jy) \\
      & =
      F_{i+j}(x \joint d_j y).
    \end{align*}
  }
  En revenant au cas $j = 0$, on a donc
  {
    \allowdisplaybreaks
    \begin{align*}
      \MoveEqLeft
      (-1)^{i+1}(d_{i+1}\phi(F)_0(y)_i - \phi(F)_0(y)_{i-1}d_i)(x) \\*
      & =
      F_i(x \joint e(y)\vide) \\
      & =
      e(y)F_i(x \joint \vide) \\*
      & =
      e(y)g_i(x),
    \end{align*}
  }
  puisque $F$ est au-dessous de $K$. D'où l'assertion puisque
  \[ e(\phi(F)_0(y)_{-1}(1)) = eF_j(\vide \joint y) = e(y). \]
  Vérifions maintenant la compatibilité aux différentielles. Pour tout $j
  \ge 1$ et tout $i \ge -1$, on a
  \[
    \begin{split}
      (d_j\phi(F)_j(y))_i(x)
      & =
      (-1)^{i+1}(d_{i+j+1}\phi(F)_j(y)_i - \phi(F)_j(y)_{i-1}d_i)(x) \\
      & =
      F_{i+j}(x \joint d_j y) \\*
      & \phantom{=1} \text{(par le calcul du début de la preuve)} \\
      & = \phi(F)_{j-1}(d_jy)_i(x),
    \end{split}
  \]
  d'où la compatibilité aux différentielles. La compatibilité aux
  augmentations résulte d'un calcul précédent :
  \[
    e(\phi(F)_0(y)) = e(\phi(F)_0(y)_{-1}(1)) = e(y).
  \]
  La compatibilité aux sous-monoïdes de positivité étant évidente, on a bien
  établi que $\phi(F) : L \to \cotr{M}{g}$ est un morphisme.

  Définissons maintenant $\psi$. Soit $G : L \to \cotr{M}{g}$ un morphisme.
  Il s'agit de définir un morphisme $\psi(G) : K \joint L \to M$ au-dessous
  de $K$. Soient $x$ un élément de $K_i$ et $y$ un élément de $L_j$ avec $i
  \ge -1$, $j \ge -1$ et $i + 1 + j \ge 0$. On pose
  \[ \psi(G)_{i+1+j}(x \joint y) = G_j(y)_i(x), \]
  en convenant que $G_{-1}(\vide)_i = g_i$.
  Vérifions que $\psi(G)$ est bien un morphisme au-dessous de $K$. La
  compatibilité aux sous-monoïdes de positivité est évidente. Il est
  immédiat que, pour tout $i \ge 0$, le morphisme $\psi(G)_i$ est bien
  au-dessous de $K_i$ ; en effet, pour $x$ dans $K_i$, on a
  \[
    \psi(G)_i(x \joint \vide) = G_{-1}(\vide)_i(x) = g_i(x).
  \]
  Ainsi, pour $i = 0$, on a
  \[
    e\psi(G)_0(x \joint \vide) = eg_0(x) = e(x).
  \]
  Par ailleurs, pour $y$ dans $L_0$, on a
  \[
    e(\psi(G)_0(\vide \joint y)) = e(G_0(y)_{-1}(1)) = eG_0(y) = e(y),
  \]
  ce qui montre la compatibilité de $\psi(G)_0$ aux augmentations.
  Montrons la compatibilité aux différentielles. Soient donc $x$ dans $K_i$
  et $y$ dans $L_j$ avec $i \ge -1$, $j \ge -1$ et~\hbox{$i + 1 + j \ge 1$}.
  Si $j = -1$, de sorte qu'on peut supposer $y = \vide$, on a
  \[
    d_i\psi(G)_i(x \joint \vide) = d_ig_i(x) = g_{i-1}d_i(x)
    = \psi(G)_{i-1}(d_i x \joint \vide) = \psi(G)_{i-1}d_i(x \joint \vide).
  \]
  Si $j = 0$, on a
  \[
    \begin{split}
      d_{i+1}\psi(G)_{i+1}(x \joint y)
      & = d_{i+1}G_0(y)_i(x) \\
      & = \big(G_0(y)_{i-1}d_i + (-1)^{i+1}e(G_0(y)_{-1}(1))g_i\big)(x) \\
      & \phantom{=1} \text{(par définition de $(\cotr{M}{g})_0$)} \\
      & = G_0(y)_{i-1}(d_i(x)) + (-1)^{i+1}e(y)g_i(x) \\*
      & \phantom{=1} \text{(par compatibilité de $G$ aux
          augmentations)} \\
      & = \psi(G)_i(d_i x \joint y) + (-1)^{i+1}\psi(G)_i(x \joint
        e(y)\vide) \\*
      & \phantom{=1} \text{(car $\psi(G)_i$ est au-dessous de $K$)} \\
      & = \psi(G)_i\big(d_i x \joint y + (-1)^{i+1}x \joint
      e(y)\vide\big) \\
      & = \psi(G)_i d_{i+1}(x \joint y).
    \end{split}
  \]
  Enfin, si $j \ge 1$, on a
  \[
    \begin{split}
      d_{i+j+1}\psi(G)_{i+j+1}(x \joint y)
      & = d_{i+j+1}G_j(y)_i(x) \\
      & = \big(G_j(y)_{i-1}d_i + (-1)^{i+1}(d_j(G_j(y)))_i\big)(x) \\
      & \phantom{=1} \text{(par définition de $d_j(G_j(y))$)} \\
      & = G_j(y)_{i-1}(d_ix) + (-1)^{i+1}(G_{j-1}(d_jy))_i(x) \\
      & = \psi(G)_{i+j}(d_i x \joint y) + (-1)^{i+1}\psi(G)_{i+j}(x \joint
      d_j y) \\
      & = \psi(G)_{i+j}\big(d_i x \joint y + (-1)^{i+1}x \joint
      d_j y\big) \\
      & = \psi(G)_{i+j}d_{i+j+1}(x \joint y),
    \end{split}
  \]
  ce qui achève de montrer que $\psi(G) : K \joint L \to M$ est bien un
  morphisme au-dessous de~$K$.

  Pour conclure, il suffit de vérifier que les applications $\phi$ et $\psi$
  sont bien inverses l'une de l'autre, ce qui est immédiat.
\end{proof}

\begin{paragr}
  Fixons $L$ un complexe dirigé augmenté. Soient $M$ un complexe dirigé
  augmenté et $g : L \to M$ un morphisme. On définit un complexe dirigé
  augmenté~\nnot{$\trm{M}{g}$} en posant
  \[
    \trm{M}{g} = (\cotr{M^\opp}{g^\opp})^\opp.
  \]
  On laisse le soin au lecteur de décrire explicitement ce complexe dirigé
  augmenté. On définit ainsi un foncteur
  \[
    \begin{split}
      \cotr{\Cda}{L} & \to \Cda \\
      (M, L \xto{g} M) & \mapsto \trm{M}{g}\,.
    \end{split}
  \]
\end{paragr}

\begin{prop}\label{prop:pu_tr_Cda}
  Fixons $L$ un complexe dirigé augmenté. Les foncteurs
  \[
    \begin{split}
      \Cda & \to \cotr{\Cda}{L} \\
      K & \mapsto (K \joint L, \iota_2)
    \end{split}
    \qquad
    \qquad
    \begin{split}
      \cotr{\Cda}{L} & \to \Cda \\
      (M, L \xto{g} M) & \mapsto \trm{M}{g} \\
    \end{split}
  \]
  forment un couple de foncteurs adjoints.
\end{prop}

\begin{proof}
  Le résultat se déduit par dualité des propositions~\ref{prop:pu_cotr_Cda}
  et~\ref{prop:dual_joint_cda} par un argument semblable à celui de la
  preuve de l'assertion analogue pour les \oo-catégories
  (proposition~\ref{prop:dual_tr}).
\end{proof}

\begin{coro}
  La structure de catégorie monoïdale sur $\Cda$ définie par le joint est
  localement bifermée.
\end{coro}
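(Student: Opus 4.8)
The plan is to observe that the statement is an immediate consequence of the two adjunctions established just above, once one unwinds the definition of local biclosedness relative to the appropriate bicoaugmentation. First I would recall that the unit of the join monoidal structure on $\Cda$ is the initial object $\vide$ (see paragraph~\ref{paragr:def_joint_cda}), so that, by the convention adopted in paragraph~\ref{paragr:def_loc_ferm}, the relevant bicoaugmentation is the local one of example~\ref{exem:coaug}, whose two underlying endofunctors $M$ and $N$ are both the identity functor of $\Cda$, the structural morphisms being the canonical $\iota_1 : K \to K \joint L$ and $\iota_2 : L \to K \joint L$.

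With this choice, being locally closed on the left means precisely that, for every fixed complex $K$, the functor $L \mapsto (K \joint L, \iota_1)$ from $\Cda$ to $\cotr{\Cda}{K}$ admits a right adjoint; this is exactly the content of proposition~\ref{prop:pu_cotr_Cda}, the right adjoint being $(M, g) \mapsto \cotr{M}{g}$. Dually, being locally closed on the right means that, for every fixed complex $L$, the functor $K \mapsto (K \joint L, \iota_2)$ from $\Cda$ to $\cotr{\Cda}{L}$ admits a right adjoint, which is exactly proposition~\ref{prop:pu_tr_Cda}, the right adjoint being $(M, g) \mapsto \trm{M}{g}$. Since $\Cda$ is thus locally closed both on the left and on the right, it is locally biclosed, which is the assertion.

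There is essentially no obstacle here: all the substantive work has already been carried out in the explicit construction of $\cotr{M}{g}$ and $\trm{M}{g}$ and in the verification of their universal properties. The only point requiring any care is the bookkeeping of matching the functors appearing in the definition of paragraph~\ref{paragr:def_loc_ferm}---with $N(Y) = Y$ and $M(X) = X$ for the local bicoaugmentation---against those appearing in the two propositions, and of checking that the morphisms $\iota_1$ and $\iota_2$ used there coincide with the coaugmenting morphisms of example~\ref{exem:coaug}. Both verifications are immediate from the definitions, so the proof reduces to a one-line combination of propositions~\ref{prop:pu_cotr_Cda} and~\ref{prop:pu_tr_Cda}.
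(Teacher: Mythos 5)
Your proof is correct and is essentially the paper's own argument: the corollary is deduced directly from propositions~\ref{prop:pu_cotr_Cda} and~\ref{prop:pu_tr_Cda}, which provide exactly the two right adjoints required by the definition of local biclosedness for the local bicoaugmentation (the unit $\vide$ being initial). The extra bookkeeping you spell out is implicit in the paper's one-line proof.
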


\begin{proof}
  Cela résulte des propositions~\ref{prop:pu_cotr_Cda}
  et~\ref{prop:pu_tr_Cda}.
\end{proof}

\begin{rem}
  On aurait pu montrer le corollaire précédent sans décrire explicitement
  les adjoints à droite. En effet, celui-ci résulte de la
  remarque~\ref{rem:loc_ferm_th_adj} puisque la catégorie $\Cda$ est
  localement présentable et que le joint commute aux limites inductives
  connexes en chaque variable (voir la
  proposition~\ref{prop:joint_Cda_limind}).
\end{rem}

\begin{prop}\label{prop:tr_Cda_ooCat}
  Fixons $K$ un complexe dirigé augmenté. Soient $M$ un complexe dirigé
  augmenté et $g : K \to M$ un morphisme. Alors, on dispose d'un morphisme
  canonique
  \[ \nu(\cotr{M}{g}) \to \cotr{\nu(M)}{\nu(g)}, \]
  où $\cotr{\nu(M)}{\nu(g)}$ désigne la tranche \oo-catégorique introduite
  au paragraphe~\ref{paragr:def_tranche}. De plus, lorsque $K$ est un
  complexe de Steiner fort, ce morphisme est un isomorphisme.
\end{prop}

\begin{proof}
  La sous-catégorie $\Theta$ étant dense dans $\ooCat$
  (proposition~\ref{prop:Theta_dense}), il suffit de
  définir une application
  \[
    \Hom_{\ooCat}(S, \nu(\cotr{M}{g}))
    \to
    \Hom_{\ooCat}(S, \cotr{\nu(M)}{\nu(g)}),
  \]
  naturelle en $S$ dans $\Theta$, et de montrer que celle-ci est bijective
  lorsque $K$ est un complexe de Steiner fort.
  Soit donc $S$ dans $\Theta$. En vertu de la
  proposition~\ref{prop:Theta_Steiner}, on a un isomorphisme naturel $S
  \simeq \nu(\lambda(S))$. En notant $L$ le complexe de Steiner fort
  $\lambda(S)$, on a des applications naturelles
  {
    \allowdisplaybreaks
    \begin{align*}
      \Hom_{\ooCat}(S, \nu(\cotr{M}{g}))
      & \simeq
      \Hom_{\ooCat}(\nu(L), \nu(\cotr{M}{g})) \\
      & \simeq
      \Hom_{\Cda}(\lambda\nu(L), \cotr{M}{g}) \\*
      & \phantom{\simeq1} \text{(par adjonction)} \\
      & \simeq
      \Hom_{\Cda}(L, \cotr{M}{g}) \\*
      & \phantom{\simeq1} \text{(en vertu du théorème~\ref{thm:Steiner})} \\
      & \simeq
      \Hom_{\cotr{\Cda}{K}}((K \joint L, \iota_1), (M, g)) \\*
      & \phantom{\simeq1} \text{(par adjonction)} \\
      & \xto{\alpha}
      \Hom_{\cotr{\ooCat}{\nu(K)}}((\nu(K \joint L), \nu(\iota_1)), (\nu(M),
      \nu(g))) \\*
      & \xto{\beta}
      \Hom_{\cotr{\ooCat}{\nu(K)}}((\nu(K) \joint \nu(L), \iota_1), (\nu(M),
      \nu(g))) \\*
      & \simeq
      \Hom_{\ooCat}(\nu(L), \cotr{\nu(M)}{\nu(g)}) \\*
      & \phantom{\simeq1} \text{(par adjonction)} \\*
      & \simeq
      \Hom_{\ooCat}(S, \cotr{\nu(M)}{\nu(g)}),
    \end{align*}
  }%
  où l'application $\alpha$ est induite par le foncteur $\nu$ et
  l'application $\beta$ par la contrainte du
  foncteur monoïdal lax $\nu$ (voir la
  proposition~\ref{prop:lambda_nu_monoidaux_joint}), d'où le morphisme
  recherché. Notons que l'application $\alpha$ se factorise en
  \[
    \begin{split}
      \Hom_{\cotr{\Cda}{K}}((K \joint L, \iota_1), (M, g))
      & \to \Hom_{\cotr{\Cda}{\lambda\nu(K)}}((\lambda\nu(K \joint L),
      \lambda\nu(\iota_1)), (M, g)) \\
      & \simeq \Hom_{\cotr{\ooCat}{\nu(K)}}((\nu(K \joint L), \nu(\iota_1)),
      (\nu(M), \nu(g))).
    \end{split}
  \]
  Ainsi, si $K$ est un complexe de Steiner fort, cette application est une
  bijection en vertu du théorème~\ref{thm:Steiner} et du
  corollaire~\ref{coro:joint_Steiner}. Par ailleurs, toujours sous
  l'hypothèse que $K$ est un complexe de Steiner fort, en vertu du
  théorème~\ref{thm:joint}, l'application $\beta$ est également une
  bijection, d'où le résultat.
\end{proof}

\section{Morphisme associé à un triangle}
\label{sec:img_tri_Cda}

\begin{paragr}\label{paragr:img_tri_Cda}
  Dans cette section, on fixe
  \[
    \shorthandoff{;}
    \xymatrix@C=1.5pc{
      K \ar[rr]^f \ar[dr]_{g}_{}="f" & & K' \ar[dl]^(0.42){g'} \\
      & L
      \ar@{}"f";[ur]_(.15){}="ff"
      \ar@{}"f";[ur]_(.55){}="oo"
      \ar@<-0.5ex>@2"ff";"oo"^{h}
    }
  \]
  un triangle dans la catégorie des complexes dirigés augmentés commutant à
  une anti\-homotopie $h$ de $g$ vers $g'f$ près. On suppose de plus que le
  complexe $L$ est décent (voir le paragraphe~\ref{paragr:def_decent}). On va
  définir un morphisme de complexes dirigés augmentés \nnot{$(f, h)^\ast :
  \cotr{L}{g'} \to \cotr{L}{g}$}.

  On conviendra que, pour tout $i \le -1$, on a
  \[ h_i = 0. \]
  Pour tout $i \ge 0$, on définit un morphisme $(f, h)^\ast_i :
  (\cotr{L}{g'})_i \to (\cotr{L}{g})_i$ en envoyant
  $u' = (u'_j : K'_j \to L_{i+1+j})_{j \ge -1}$ sur $(f, h)^\ast_i(u') =
  ((f, h)^\ast_i(u')_j)_{j \ge -1}$ défini par, pour tout $j \ge -1$,
  \[
    (f, h)^\ast_i(u')_j =
    \begin{cases}
     u'_jf_j + e(u'_{-1}(1)).h_j & \text{si $i = 0$,} \\
     u'_jf_j & \text{sinon}.
    \end{cases}
  \]
  (On vérifiera dans la preuve de la proposition suivante que $(f,
  h)^\ast_0$ est bien à valeurs dans $(\cotr{L}{g})_0$.) Afin d'avoir une
  formule uniforme en $i$, on étendra parfois l'augmentation $e$ de $L$ à
  tout élément homogène de $L$ en posant $e(z) = 0$ si $z$ est un élément
  homogène de degré non nul. Avec cette convention, pour tout $i \ge 0$, on
  a
  \[
    (f, h)^\ast_i(u')_j = u'_jf_j + e(u'_{-1}(1)).h_j.
  \]
  Notons qu'en vertu de nos précédentes conventions cette égalité reste
  vraie pour tout~$j$ dans $\Z$. Enfin, remarquons qu'on a
  $(f, h)^\ast_i(u')_{-1} = u'_{-1}$.
\end{paragr}

\begin{prop}\label{prop:img_tri_Cda}
  Les morphismes $(f, h)^\ast_i$, pour $i \ge 0$, définissent un morphisme
  de complexes dirigés augmentés $(f, h)^\ast : \cotr{L}{g'} \to
  \cotr{L}{g}$.
\end{prop}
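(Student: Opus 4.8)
The plan is to verify directly that the family of maps $(f, h)^\ast_i : (\cotr{L}{g'})_i \to (\cotr{L}{g})_i$ constitutes a morphism of augmented directed complexes. This requires checking four things: that each $(f, h)^\ast_i$ is a well-defined group homomorphism landing in the correct degree, that $(f, h)^\ast_0$ actually takes values in the subset $(\cotr{L}{g})_0$ (which is cut out by the defining relation involving $g$), that the family commutes with the differentials, and that it respects the augmentations and the positivity submonoids. The last point is immediate since $u'_j$ and $h_j$ send positivity submonoids into positivity submonoids and the coefficient $e(u'_{-1}(1))$ is nonnegative—here the hypothesis that $L$ is decent is precisely what guarantees $e(u'_{-1}(1)) \ge 0$, so that $e(u'_{-1}(1)).h_j(K^\ast_j) \subset L^\ast_{i+1+j}$.

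First I would establish that $(f, h)^\ast_0(u')$ lies in $(\cotr{L}{g})_0$. By definition this means checking, for all $j \ge 0$, the identity
\[
  (-1)^{j+1}(d_{j+1}(f, h)^\ast_0(u')_j - (f, h)^\ast_0(u')_{j-1}d_j)
  = e((f, h)^\ast_0(u')_{-1}(1)).g_j.
\]
Substituting the formula $(f, h)^\ast_0(u')_j = u'_jf_j + e(u'_{-1}(1)).h_j$, the term coming from $u'_jf_j$ should reproduce $e(u'_{-1}(1)).g'_jf_j$ using that $u'$ lies in $(\cotr{L}{g'})_0$ together with $f$ being a chain map, while the term coming from $e(u'_{-1}(1)).h_j$ should, via the defining antihomotopy relation $d_{j+1}h_j - h_{j-1}d_j = (-1)^j(g'_jf_j - g_j)$, convert $g'_jf_j$ into $g_j$. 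The signs $(-1)^{j+1}$ in the slice differential and $(-1)^j$ in the antihomotopy relation are arranged to cancel, and since $e((f, h)^\ast_0(u')_{-1}(1)) = e(u'_{-1}(1))$ (because $(f, h)^\ast_0(u')_{-1} = u'_{-1}$), everything should collapse to the required equality. This is the computational heart of the proof and the step I expect to be the main obstacle, precisely because the antihomotopy relation must be invoked with exactly the right sign conventions inherited from the paragraph defining antihomotopies and the paragraph defining $\cotr{M}{g}$.

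Next I would verify compatibility with the differentials: for each $i \ge 1$ and each $u'$ in $(\cotr{L}{g'})_i$, I must check that $d_i (f, h)^\ast_i(u') = (f, h)^\ast_{i-1} d_i(u')$. Here both $(f, h)^\ast_i$ and $(f, h)^\ast_{i-1}$ (for $i \ge 2$) are given simply by postcomposition-free precomposition $u'_j \mapsto u'_j f_j$, so the computation reduces to the fact that $f$ is a chain map and that the slice differential is defined componentwise by $d_i(u)_j = (-1)^{j+1}(d_{i+j+1}u_j - u_{j-1}d_j)$; the $f_j$ commute past $d_j$ on the right and the $d_{i+j+1}$ on the left act only on the $L$-side. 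The only delicate case is $i = 1$, where the target degree is $0$ and the extra antihomotopy term $e(u'_{-1}(1)).h_j$ appears in $(f, h)^\ast_0$; I would check that this term is exactly what is produced by applying $(f, h)^\ast_0$ to $d_1(u')$, again using the uniform formula valid for all $j \in \Z$ and the convention $h_i = 0$ for $i \le -1$. Once these compatibilities are confirmed together with the trivial augmentation and positivity checks, the four conditions defining a morphism of complexes dirigés augmentés are all met, completing the proof.
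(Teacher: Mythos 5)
Your overall strategy is exactly the paper's: direct verification, with the degree-zero membership check as the computational core. That part of your plan is sound, and your sign analysis is correct: precomposing the defining relation of $(\cotr{L}{g'})_0$ with $f_j$ (using that $f$ is a chain map) produces $(-1)^{j+1}e(z)g'_jf_j$ where $z = u'_{-1}(1)$, and the antihomotopy relation $d_{j+1}h_j - h_{j-1}d_j = (-1)^j(g'_jf_j - g_j)$ then converts $g'f$ into $g$ with the signs cancelling as you describe. You also invoke decency of $L$ for precisely the right purpose: since $u'_{-1}(1)$ lies in $L^\ast_0$, decency gives $e(u'_{-1}(1)) \ge 0$, so the term $e(u'_{-1}(1)).h_j$ respects the positivity submonoids. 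All of this matches the paper's proof.

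The gap is in your treatment of the case $i = 1$ of the differential compatibility. You correctly single it out as the delicate case, but the mechanism you propose --- ``the uniform formula valid for all $j \in \Z$ and the convention $h_i = 0$ for $i \le -1$'' --- does not close it, and your expectation that the extra antihomotopy term is ``exactly what is produced'' on the other side is misguided: there is nothing on the other side for it to match. Indeed, $d_1\bigl((f,h)^\ast_1(u')\bigr)_j = d_1(u')_jf_j$ carries no $h$-term at all (since $(f,h)^\ast_1$ is plain precomposition by $f$), whereas $(f,h)^\ast_0\bigl(d_1(u')\bigr)_j = d_1(u')_jf_j + e\bigl(d_1(u')_{-1}(1)\bigr).h_j$. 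Equality therefore requires the coefficient $e\bigl(d_1(u')_{-1}(1)\bigr)$ to \emph{vanish}, and it does, but for a reason you never state: one has $d_1(u')_{-1} = d_1u'_{-1}$, hence $e\bigl(d_1(u')_{-1}(1)\bigr) = e\bigl(d_1(u'_{-1}(1))\bigr) = 0$ because the augmentation of an augmented complex kills boundaries ($ed_1 = 0$). This one-line observation --- made explicitly at the end of the paper's proof --- is the missing idea; the conventions you cite only handle the $j = -1$ edge cases and cannot produce this vanishing. Note also that the extended convention $e(w) = 0$ for $w$ of nonzero degree does not apply here, since $d_1(u')_{-1}(1)$ sits in degree $0$.
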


\begin{proof}
  Commençons par vérifier que le morphisme $(f, h)^\ast_0$ est bien à valeurs
  dans~$(\cotr{L}{g})_0$. Soit donc $u' = (u'_j)_{j \ge -1}$ un élément de
  $(\cotr{L}{g'})_0$. Par définition, en posant $z = u'_{-1}(1) = (f,
  h)^\ast_0(u')_{-1}(1)$, on a, pour tout $j$ dans $\Z$,
   \[
     d_{j+1}u'_j - u'_{j-1}d_j = (-1)^{j+1} e(z)g'_j.
   \]
  Pour tout $j \ge -1$, on a
  \[
    \begin{split}
      d_{j+1}(f, h)^\ast_0(u')_j - (f, h)^\ast_0(u')_{j-1}d_j
      & =
      d_{j+1}(u'_jf_j + e(z)h_j) - (u'_{j-1}f_{j-1} + e(z)h_{j-1})d_j \\
      & =
      (d_{j+1}u'_j - u'_{j-1}d_j)f_j + e(z)(d_{j+1}h_j - h_{j-1}d_j) \\
      & =
      (-1)^{j+1}e(z)g'_jf_j + (-1)^je(z)(g'_jf_j - g_j) \\
      & =
      (-1)^{j+1}e(z)g_j,
    \end{split}
  \]
  ce qui montre que $(f, h)^\ast_0(u')$ appartient à $(\cotr{L}{g})_0$.

  La compatibilité de $(f, h)^\ast$ à l'augmentation est évidente et la
  compatibilité aux sous-monoïdes de positivité résulte du fait que $L$ est
  décent. Il nous reste à montrer la compatibilité aux différentielles.
  Soient donc $i \ge 1$ et $u' = (u'_j)_{j \ge -1}$ un élément
  de~$(\cotr{L}{g'})_i$. Pour tout $j \ge -1$, on a
  \[
    \begin{split}
    d_i(f, h)^\ast_i(u')_j
    & =
    (-1)^{j+1}(d_{i+j+1}(f, h)^\ast_i(u')_j - (f, h)^\ast_i(u')_{j-1}d_j) \\
    & =
    (-1)^{j+1}(d_{i+j+1}u'_jf_j - u'_{j-1}f_{j-1}d_j) \\
    & =
    (-1)^{j+1}(d_{i+j+1}u'_j - u'_{j-1}d_j)f_j \\
    & = d_i(u')_jf_j \\
    & = (f, h)^\ast_{i-1}(d_i(u'))_j,
    \end{split}
  \]
  la dernière égalité étant également valable quand $i = 1$ puisqu'on a
  \[
    e(d_1(u')_{-1}(1)) = ed_1(u') = 0,
  \]
  d'où le résultat.
\end{proof}

\begin{paragr}\label{paragr:fonc_Cda_comm}
  Un cas particulier important est celui où le triangle
  \[
    \shorthandoff{;}
    \xymatrix@C=1.5pc{
      K \ar[rr]^f \ar[dr]_{g}_{}="f" & & K' \ar[dl]^(0.42){g'} \\
      & L
      \ar@{}"f";[ur]_(.15){}="ff"
      \ar@{}"f";[ur]_(.55){}="oo"
      \ar@<-0.5ex>@2"ff";"oo"^{\id{}}
    }
  \]
  est commutatif et l'antihomotopie $h$ est l'identité de $g'f = g$ (voir le
  paragraphe~\ref{paragr:def_antih_id}). Dans ce cas, en vertu de la
  proposition précédente, on obtient un morphisme
  \hbox{$(f, \id{})^\ast : \cotr{L}{g'} \to \cotr{L}{g}$} qu'on notera plus
  simplement
  %
  \notindex{$f^\ast : \cotr{L}{g'} \to \cotr{L}{g}$}%
  \[
    f^\ast : \cotr{L}{g'} \to \cotr{L}{g}.
  \]
\end{paragr}

\begin{prop}\label{prop:morph_tr_Cda_ooCat}
  Soit
  \[
    \xymatrix@C=1.5pc{
      K \ar[rr]^f \ar[dr]_{g}_{}="f" & & K' \ar[dl]^(0.42){g'} \\
                                     & L
    }
  \]
  un triangle commutatif dans la catégorie des complexes dirigés augmentés.
  Alors le carré
  \[
    \xymatrix{
    \nu(\cotr{L}{g'}) \ar[r]^{\nu(f^\ast)} \ar[d]
    & \nu(\cotr{L}{g}) \ar[d] \\
    \cotr{\nu(L)}{\nu(g')} \ar[r]_{\nu(f)^\ast} &
    \cotr{\nu(L)}{\nu(g)} \pbox{,}
    }
  \]
  où les morphismes verticaux sont ceux de la
  proposition~\ref{prop:tr_Cda_ooCat} et le foncteur $\nu(f)^\ast$ celui du
  paragraphe~\ref{paragr:desc_morph_tr}, est commutatif.
\end{prop}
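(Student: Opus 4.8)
The plan is to reduce the commutativity of the square to an equality of $\infty$-foncteurs out of a \oo-cat�gorie de Steiner forte, and then to check this equality on atoms, using that the source is engendr�e librement au sens des polygraphes (th�or�me~\ref{thm:Steiner_pol}). The cleanest route is to \emph{not} unwind the explicit isomorphisms of the proposition~\ref{prop:tr_Cda_ooCat}, but rather to re-derive both composites of the square through the same chains of adjunction bijections that were used to establish that proposition, and to match them step by step. Indeed, the vertical isomorphisms in the statement are precisely those produced by the long computation in the proof of proposition~\ref{prop:tr_Cda_ooCat}, so it is natural to test the square after applying $\Hom_{\ooCat}(S, \var)$ for $S$ an object of $\Theta$, using the density of $\Theta$ (proposition~\ref{prop:Theta_dense}).

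\textbf{Reduction via Yoneda and density.} First I would fix $S$ in $\Theta$, set $L' = \lambda(S)$, which is a complexe de Steiner fort (proposition~\ref{prop:Theta_Steiner}), and apply $\Hom_{\ooCat}(S, \var) \simeq \Hom_{\ooCat}(\nu(L'), \var)$ to all four corners of the square. Each corner then unfolds, through the bijections assembled in the proof of proposition~\ref{prop:tr_Cda_ooCat} (adjunction $\lambda \dashv \nu$, th�or�me~\ref{thm:Steiner}, the universal property of the slice proposition~\ref{prop:pu_cotr_Cda}, the monoidal comparison th�or�me~\ref{thm:joint}, and corollaire~\ref{coro:joint_Steiner}), into sets of the form $\Hom_{\cotr{\Cda}{K}}((K \joint L', \iota_1), (L, g))$ and similarly with $g'$. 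The horizontal maps then become, on the top via $\nu(f^\ast)$, precisely pre-composition by the joint $f \joint L' : K \joint L' \to K' \joint L'$, because $f^\ast : \cotr{L}{g'} \to \cotr{L}{g}$ is by construction (paragraphe~\ref{paragr:fonc_Cda_comm} and proposition~\ref{prop:pu_cotr_Cda}) the adjoint transpose of exactly this pre-composition. On the bottom, $\nu(f)^\ast : \cotr{\nu(L)}{\nu(g')} \to \cotr{\nu(L)}{\nu(g)}$ is, by paragraphe~\ref{paragr:desc_morph_tr}, pre-composition by $\nu(f) \joint \nu(L') = \nu(f) \joint S$, which under the monoidal isomorphism of th�or�me~\ref{thm:joint} is identified with $\nu(f \joint L')$.

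\textbf{Matching the two composites.} The crux is therefore to verify that the two ways of going around the square induce, on $\Hom_{\ooCat}(\nu(L'), \var)$, the \emph{same} pre-composition map, namely by $f \joint L'$ on the complexe side versus $\nu(f) \joint S$ on the \oo-cat�gorie side. Both the naturality of $\iota_1$ and the functoriality of the joint ensure that the relevant squares of adjunction bijections commute; concretely, I would trace a single element $u' : K' \joint L' \to L$ (equivalently $\nu(K') \joint \nu(L') \to \nu(L)$ under th�or�me~\ref{thm:joint}) through both composites and check that it lands on the same morphism, the agreement reducing to the identity $\nu(f \joint L') = \nu(f) \joint \nu(L')$ supplied by the monoidality of the joint on complexes de Steiner forts. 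Since all the bijections in sight are natural in $S$, the matched identifications are natural in $S$, and density of $\Theta$ (proposition~\ref{prop:Theta_dense}) upgrades the pointwise agreement on objects of $\Theta$ to the desired equality of \oo-foncteurs.

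\textbf{Expected main obstacle.} I expect the hard part to be bookkeeping rather than conceptual: one must be scrupulous that the vertical isomorphisms appearing in the statement are literally the composites of adjunction bijections from the proof of proposition~\ref{prop:tr_Cda_ooCat}, and that the two horizontal pre-composition maps commute with each stage of those chains. The delicate point is the compatibility of the slice functoriality map $f^\ast$ (defined abstractly via the adjunction of proposition~\ref{prop:pu_cotr_Cda}) with its \oo-cat�gorique counterpart $\nu(f)^\ast$ (defined via the objet \oo-cocat�gorie coaugment�e description of paragraphe~\ref{paragr:desc_morph_tr}); reconciling these two a priori different descriptions of ``pre-composition by the joint of $f$'' is where the argument has genuine content, and it is exactly there that the monoidal identification $\nu(f \joint L') \simeq \nu(f) \joint \nu(L')$ and the naturality of $\iota_1$ must be invoked together.
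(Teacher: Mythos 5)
The overall skeleton of your proposal---unwind the two vertical isomorphisms through the chain of bijections from the proof of proposition~\ref{prop:tr_Cda_ooCat}, reduce to a square at the level of $\Cda$, and check that square on elements---is indeed the paper's approach (the paper tests against the disks $\Dn{l}$, i.e.\ on cells, rather than against all of $\Theta$ with density; that difference is immaterial). But the justification you give for the key step is circular. You assert that the top map becomes pre-composition by $f \joint L'$ \emph{because} ``$f^\ast$ is by construction the adjoint transpose of exactly this pre-composition'', and later that $f^\ast$ is ``defined abstractly via the adjunction of proposition~\ref{prop:pu_cotr_Cda}''. Neither claim is true in the paper: $f^\ast = (f, \id{})^\ast$ is defined by the explicit formulas of proposition~\ref{prop:img_tri_Cda}, which for $h$ the identity antihomotopy reduce to $f^\ast(u')_j = u'_j f_j$; it is the \emph{other} horizontal map, $\nu(f)^\ast$, that is defined abstractly by pre-composition with the joint (paragraphe~\ref{paragr:desc_morph_tr}). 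The entire content of the proposition is precisely to prove that these two a priori unrelated constructions---one formula-based on the $\Cda$ side, one abstract on the $\ooCat$ side---correspond under the comparison isomorphisms; taking that correspondence as holding ``by construction'' begs the question.

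Relatedly, the agreement does not ``reduce to the identity $\nu(f \joint L') = \nu(f) \joint \nu(L')$'': that monoidal identification only rewrites the bottom map. What actually closes the argument is the elementwise trace you gesture at, carried out with the genuine definitions: for $G : \lambda(\Dn{l}) \to \cotr{L}{g'}$, $x$ in $K_i$ and $y$ in $\lambda(\Dn{l})_j$, one has $\psi(f^\ast G)(x \joint y) = (f^\ast G)(y)_i(x) = G(y)_i(f(x)) = \psi(G)(f(x) \joint y)$ when $j \ge 0$, and $\psi(f^\ast G)(x \joint \vide) = g(x) = g'f(x) = \psi(G)(f(x) \joint \vide)$ when $j = -1$---this last case being the only place where the commutativity $g = g'f$ of the given triangle enters. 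If you replace the ``by construction'' claim by this short verification, which forces you to invoke the explicit formula for $f^\ast$ and the conventions defining $\psi$ in the proof of proposition~\ref{prop:pu_cotr_Cda}, your argument becomes exactly the paper's proof.
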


\begin{proof}
  Il s'agit de démontrer que les deux \oo-foncteurs allant du coin supérieur
  gauche au coin inférieur droit coïncident sur les cellules ou, autrement
  dit, que pour tout $l \ge 0$, le carré
  \[
    \xymatrix{
    \Hom_{\ooCat}(\Dn{l}, \nu(\cotr{L}{g'})) \ar[r]^{\nu(f^\ast)} \ar[d]
    & \Hom_{\ooCat}(\Dn{l}, \nu(\cotr{L}{g})) \ar[d] \\
    \Hom_{\ooCat}(\Dn{l}, \cotr{\nu(L)}{\nu(g')}) \ar[r]_{\nu(f)^\ast} &
    \Hom_{\ooCat}(\Dn{l}, \cotr{\nu(L)}{\nu(g)})
    }
  \]
  est commutatif. Par définition de $\nu(f)^\ast$ et des morphismes
  canoniques (voir la preuve de la proposition~\ref{prop:tr_Cda_ooCat}),
  cela revient à montrer que le carré
  \[
    \xymatrix@C=3pc{
    \Hom_{\Cda}(\lambda(\Dn{l}), \cotr{L}{g'})
    \ar[r]^{f^\ast} \ar[d]_{\psi}
    & \Hom_{\Cda}(\lambda(\Dn{l}), \cotr{L}{g}) \ar[d]^{\psi} \\
    \Hom_{\cotr{\Cda}{K'}}(K' \joint \lambda(\Dn{l}), L)
    \ar[r]_{f \joint \lambda(\Dn{l})}
    & \Hom_{\cotr{\Cda}{K}}(K \joint \lambda(\Dn{l}), L) \pbox{,} \\
    }
  \]
  où $f^\ast$ désigne la postcomposition par $f^\ast$, $f \joint
  \lambda(\Dn{l})$ la précomposition par $f \joint \lambda(\Dn{l})$ et
  $\psi$ les isomorphismes de (la preuve de) la
  proposition~\ref{prop:pu_cotr_Cda}, est commutatif. Soit donc $G :
  \lambda(\Dn{l}) \to \cotr{L}{g'}$ un morphisme. Soient $x$ dans $K_i$
  et $y$ dans $\lambda(\Dn{l})_j$ avec $i \ge -1$, $j \ge -1$ et $i + 1 + j
  \ge 0$. Si $j \ge 0$, on a
  \[
    \psi(f^\ast G)(x \joint y) = (f^\ast G)(y)_i(x) = G(y)_i(f(x)) =
    \psi(G)(f(x) \joint y),
  \]
  la première et la dernière égalité résultant de la définition de $\psi$ et
  la deuxième de la définition de $f^\ast$. Si $j = -1$, on peut supposer $y
  = \vide$ et on a
  \[
    \psi(f^\ast G)(x \joint \vide) = g(x) = g'f(x) =
    \psi(G)(f(x) \joint \vide),
  \]
  la première et la dernière égalité résultant encore une fois de la
  définition de $\psi$, d'où le résultat.
\end{proof}

\begin{paragr}
  Un cas particulier du paragraphe~\ref{paragr:fonc_Cda_comm} est celui
  d'un triangle commutatif
  \[
    \shorthandoff{;}
    \xymatrix@C=1.5pc{
      \vide \ar[rr]^{\vide_K} \ar[dr]_{\vide_L}_{}="f" & & K \ar[dl]^g \\
      & L
      \ar@{}"f";[ur]_(.15){}="ff"
      \ar@{}"f";[ur]_(.55){}="oo"
      \ar@<-1.0ex>@2"ff";"oo"^{\id{}} & \pbox{,}
    }
  \]
  où $\vide_K$ et $\vide_L$ désignent les uniques morphismes de source
  $\vide$ et de buts respectifs~$K$ et~$L$, et $1$ désigne l'antihomotopie
  identité de $\vide_L$. En vertu de ce même paragraphe, on obtient
  un morphisme
  \[ \vide_K^\ast : \cotr{L}{g} \to \cotr{L}{\vide_L}. \]
  On vérifie immédiatement que le morphisme de $\cotr{L}{\vide_L}$ vers $L$
  qui, pour $i \ge 0$, envoie un élément $(u_j)_{j \ge -1}$ de
  $(\cotr{L}{\vide_L})_i$ sur $u_{-1}(1)$ dans $L_i$ est un isomorphisme.
  On obtient donc un morphisme
  \[ \cotr{L}{g} \to L \]
  qu'on appellera \ndef[morphisme d'oubli associé à une tranche]{morphisme
  d'oubli}. Explicitement, ce morphisme envoie, pour $i \ge 0$,
  un élément $(u_j)_{j \ge -1}$ de $(\cotr{L}{g})_i$ sur $u_{-1}(1)$ dans
  $L_i$.
\end{paragr}

\begin{prop}\label{prop:img_tri_Cda_au-dessus}
  Le morphisme $(f, h)^\ast : \cotr{L}{g'} \to \cotr{L}{g}$ est
  au-dessus de $L$. Autrement dit, le triangle
  \[
    \xymatrix@C=1.5pc{
      \cotr{L}{g'} \ar[rr]^{(f, h)^\ast} \ar[rd]_{U'}
      & & \cotr{L}{g} \ar[dl]^{U} \\
      & L & \pbox{,}
    }
  \]
  où $U$ et $U'$ désignent les morphismes d'oubli, est commutatif.
\end{prop}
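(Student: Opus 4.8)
The plan is to show that the two morphisms $U \circ (f, h)^\ast$ and $U'$, both going from $\cotr{L}{g'}$ to $L$, coincide, which is precisely the asserted commutativity of the triangle. Since a morphism of complexes dirigés augmentés is determined by its action in each degree, it suffices to check this equality degree by degree, and in fact everything will reduce to a single trivial computation on the $(-1)$-component of the elements involved.

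First I would recall that, by construction, each forgetful morphism is the map extracting the $(-1)$-component evaluated at $1$: for $i \ge 0$, the morphism $U$ sends an element $(u_j)_{j \ge -1}$ of $(\cotr{L}{g})_i$ to $u_{-1}(1) \in L_i$, and likewise for $U'$. Hence, fixing $i \ge 0$ and an element $u' = (u'_j)_{j \ge -1}$ of $(\cotr{L}{g'})_i$, the identity to establish is
\[
  U\big((f, h)^\ast_i(u')\big) = U'(u'),
\]
that is, $(f, h)^\ast_i(u')_{-1}(1) = u'_{-1}(1)$.

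The key (and essentially only) step is the identity $(f, h)^\ast_i(u')_{-1} = u'_{-1}$, already observed when the morphism was defined in the paragraph~\ref{paragr:img_tri_Cda}. It follows at once from the defining formula $(f, h)^\ast_i(u')_j = u'_j f_j + e(u'_{-1}(1)) \cdot h_j$ evaluated at $j = -1$, using the two conventions in force there: $f_{-1} = \id{\Z}$ (a morphism of complexes dirigés augmentés is the identity on the degree $-1$ part $\Z$) and $h_{-1} = 0$. Evaluating at $1$ then yields $(f, h)^\ast_i(u')_{-1}(1) = u'_{-1}(1)$, i.e.\ $U \circ (f, h)^\ast = U'$, as desired.

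I do not expect any real obstacle: all the substance was already absorbed into the construction of $(f, h)^\ast$ and into the verification, carried out in the proposition~\ref{prop:img_tri_Cda}, that this construction does define a morphism of complexes dirigés augmentés. The present statement merely records that the construction is compatible with the forgetful morphisms, and its proof is a one-line unwinding of the definitions of $(f, h)^\ast$, $U$ and $U'$.
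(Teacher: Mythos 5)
Your proof is correct and follows exactly the paper's argument: the paper likewise reduces everything to the identity $(f,h)^\ast_i(u')_{-1} = u'_{-1}$ (already noted at the end of the paragraph defining $(f,h)^\ast$, via the conventions $f_{-1} = \id{\Z}$ and $h_{-1} = 0$), and concludes $U(f,h)^\ast(u') = u'_{-1}(1) = U'(u')$ in one line.
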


\begin{proof}
  Pour tout $i \ge 0$ et tout $u' = (u'_j)_{j \ge -1}$ dans
  $(\cotr{L}{g'})_i$, on a
  \[ U(f, h)^\ast(u') = u'_{-1}(1) = U'(u'), \]
  d'où l'assertion.
\end{proof}

\section{Fonctorialité des morphismes associés aux triangles}

\begin{paragr}\label{paragr:fonct_tri_id_Cda}
  Soit $g : K \to L$ un morphisme de complexes dirigés augmentés, où le
  complexe $L$ est décent. Formons le triangle
  \[
    \shorthandoff{;}
    \xymatrix@C=1.5pc{
    K \ar[rr]^{\id{K}} \ar[dr]_(0.40){g}_{}="f" & & K \ar[dl]^(0.40){g} \\
      & L
      \ar@{}"f";[ur]_(.15){}="ff"
      \ar@{}"f";[ur]_(.55){}="oo"
      \ar@<-0.5ex>@2"ff";"oo"^{\id{g}}
      & \pbox{,}
    }
  \]
  où $\id{g}$ désigne l'antihomotopie identité de $g$. À partir de ce
  triangle, en vertu de la proposition~\ref{prop:img_tri_Cda}, on obtient un
  morphisme $(\id{K}, \id{g})^\ast : \cotr{L}{g} \to \cotr{L}{g}$.
\end{paragr}

\begin{prop}
  On a $(\id{K}, \id{g})^\ast = \id{\cotr{L}{g}}$.
\end{prop}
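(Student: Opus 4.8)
The statement asserts that the morphism $(\id{K}, \id{g})^\ast : \cotr{L}{g} \to \cotr{L}{g}$, obtained by applying Proposition~\ref{prop:img_tri_Cda} to the identity triangle on $g$, is the identity morphism of $\cotr{L}{g}$. The plan is simply to unwind the explicit definition of $(f, h)^\ast$ from paragraph~\ref{paragr:img_tri_Cda} in the special case $f = \id{K}$ and $h = \id{g}$, and check that the resulting component maps are all identities. Since a morphism of complexes dirig\'es augment\'es is determined by its action in each degree, it suffices to verify this degree by degree, indeed componentwise on the defining products.

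First I would recall that, by paragraph~\ref{paragr:img_tri_Cda}, for every $i \ge 0$ and every $u = (u_j)_{j \ge -1}$ in $(\cotr{L}{g})_i$, the components of $(\id{K}, \id{g})^\ast_i(u)$ are given, for all $j$, by the uniform formula
\[
  (\id{K}, \id{g})^\ast_i(u)_j = u_j (\id{K})_j + e(u_{-1}(1)) (\id{g})_j.
\]
Now $(\id{K})_j$ is the identity morphism $\id{K_j} : K_j \to K_j$, so $u_j (\id{K})_j = u_j$. Moreover, the antihomotopie $\id{g}$ is the \emph{identity antihomotopie} of $g$ in the sense of paragraph~\ref{paragr:def_antih_id}, which by definition has $(\id{g})_j = 0$ for all $j$. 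Hence the second summand vanishes and we obtain $(\id{K}, \id{g})^\ast_i(u)_j = u_j$ for every $j \ge -1$, that is, $(\id{K}, \id{g})^\ast_i(u) = u$. Since this holds for all $i \ge 0$, the morphism $(\id{K}, \id{g})^\ast$ coincides with $\id{\cotr{L}{g}}$, as claimed.

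There is no real obstacle here: the result is a direct consequence of the two facts that the identity morphism of $K$ induces the identity in each degree and that the identity antihomotopie is given by zero maps, so both corrections built into the formula for $(f, h)^\ast$ degenerate. The only point deserving a word of care is that the formula used is the one valid for all $i \ge 0$ (including the delicate case $i = 0$, where the term $e(u_{-1}(1)).h_j$ genuinely intervenes in general); but since $h = \id{g}$ is zero, this term disappears uniformly, so no separate treatment of $i = 0$ is needed.
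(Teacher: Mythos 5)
Your proof is correct and takes essentially the same approach as the paper, whose entire proof is the observation that the equality is immediate from the definition of $(\id{K}, \id{g})^\ast$. You have simply spelled out the two degenerations --- $u_j(\id{K})_j = u_j$ and $(\id{g})_j = 0$ --- that make the defining formula collapse to the identity, including the correct remark that the $i=0$ correction term vanishes because the identity antihomotopy is zero.
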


\begin{proof}
  Cela résulte immédiatement de la définition de $(\id{K}, \id{g})^\ast$.
\end{proof}

\begin{paragr}
  Considérons maintenant un diagramme
  \[
    \shorthandoff{;}
    \xymatrix@C=2.5pc@R=2.5pc{
      K \ar[r]^f \ar[dr]_{}="g"_(.40){g}
      & K' \ar[r]^{f'}_(.75){}="fp" \ar[d]_(.70){}="gp"_(.50){g'} & K''
      \ar[dl]_{}="gpp"^(.33){g''} \\
      & L
      \ar@{}"g";[u]_(0.10){}="x"
      \ar@{}"g";[u]_(.85){}="y"
      \ar@<-0.1ex>@2"x";"y"^(.30)h
      \ar@{}"gp";"fp"_(.25){}="x2"
      \ar@{}"gp";"fp"_(.75){}="y2"
      \ar@<0.4ex>@2"x2";"y2"^(0.40){h'\!}
    }
  \]
  de complexes dirigés augmentés, où $h$ et $h'$ sont des antihomotopies de
  $g$ vers $g'f$ et de $g'$ vers $g''f'$ respectivement. On suppose toujours
  le complexe $L$ décent.

  En composant ce diagramme, on obtient un triangle
  \[
    \shorthandoff{;}
    \xymatrix@C=1.5pc{
    K \ar[rr]^{f'f} \ar[dr]_{g}_{}="f" & & K' \ar[dl]^(0.42){g''} \\
    & L
    \ar@{}"f";[ur]_(.15){}="ff"
    \ar@{}"f";[ur]_(.55){}="oo"
    \ar@<-0.5ex>@2"ff";"oo"^{h''}
    & \pbox{,}
    }
  \]
  où $h''$ est l'antihomotopie  $h'f + h$ (voir les
  paragraphes~\ref{paragr:def_antih_sesqui} et~\ref{paragr:def_antih_comp})
  En vertu de la proposition~\ref{prop:img_tri_Cda}, on obtient donc un
  triangle
  \[
    \xymatrix@C=1.5pc{
      \cotr{L}{g''} \ar[rr]^{(f', h')^\ast} \ar[rd]_{(f'f, h'f + h)^\ast}
      & & \cotr{L}{g'} \ar[dl]^{(f, h)^\ast} \\
      & \cotr{L}{g}
    }
  \]
  de complexes dirigés augmentés.
\end{paragr}

La proposition suivante affirme que ce triangle est commutatif.

\begin{prop}\label{prop:fonct_tri_Cda}
  On a $(f, h)^\ast(f', h')^\ast = (f'f, h'f + h)^\ast$.
\end{prop}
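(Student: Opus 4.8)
The plan is to verify the identity $(f, h)^\ast(f', h')^\ast = (f'f, h'f + h)^\ast$ by a direct computation on each degree $i \ge 0$, unwinding the explicit formulas from paragraph~\ref{paragr:img_tri_Cda}. Recall that for a morphism $u'' = (u''_j)_{j \ge -1}$ in $(\cotr{L}{g''})_i$, the morphism $(f', h')^\ast_i$ sends it to the family whose $j$-component is $u''_j f'_j + e(u''_{-1}(1)).h'_j$ (using the uniform convention $e(z) = 0$ in nonzero degree, so the term $h'_j$ only genuinely contributes when $i = 0$). The key simplification is that $(f, h)^\ast$, $(f', h')^\ast$ and $(f'f, h'f+h)^\ast$ all fix the $(-1)$-component: one checks immediately from the formulas that $(f, h)^\ast_i(u')_{-1} = u'_{-1}$, so that the scalar $e(u''_{-1}(1))$ is preserved under each of these operations.

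First I would fix $i \ge 0$ and $u'' = (u''_j)_{j \ge -1}$ in $(\cotr{L}{g''})_i$, set $w = (f', h')^\ast_i(u'')$, and compute $(f, h)^\ast_i(w)_j$ for each $j \ge -1$. Writing $z = u''_{-1}(1)$ and using $(f', h')^\ast_i(u'')_{-1} = u''_{-1}$ so that $e(w_{-1}(1)) = e(z)$, the uniform formula gives
\[
  (f, h)^\ast_i(w)_j = w_j f_j + e(z).h_j
  = (u''_j f'_j + e(z).h'_j) f_j + e(z).h_j
  = u''_j f'_j f_j + e(z).(h'_j f_j + h_j).
\]
On the other hand, applying $(f'f, h'f+h)^\ast_i$ directly to $u''$ and recalling that the composite antihomotopy is $h'' = h'f + h$ with $(h''){}_j = h'_j f_j + h_j$ (see paragraphs~\ref{paragr:def_antih_sesqui} and~\ref{paragr:def_antih_comp}), one gets exactly
\[
  (f'f, h'f+h)^\ast_i(u'')_j = u''_j (f'f)_j + e(z).(h'f+h)_j
  = u''_j f'_j f_j + e(z).(h'_j f_j + h_j).
\]
These two expressions coincide for every $j$, which establishes the identity.

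The two points that deserve care, rather than being genuinely hard, are the bookkeeping of the $h$-terms and the degree-zero case. The uniform-in-$i$ convention $e(z) = 0$ for $z$ of nonzero degree (introduced in paragraph~\ref{paragr:img_tri_Cda}) is what lets me treat all degrees simultaneously: for $i > 0$ the terms $e(z).h_j$ and $e(z).h'_j$ vanish automatically, while for $i = 0$ they are present and match on both sides precisely because the composite antihomotopy is defined additively. The only subtlety is to confirm that the scalar $e(z)$ fed into the $h$-terms is the same on both sides; this is exactly the preservation of the $(-1)$-component noted above, since $w_{-1} = u''_{-1}$, so $e(w_{-1}(1)) = e(u''_{-1}(1)) = e(z)$. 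With these observations in place the proof is a one-line verification degree by degree; I expect no real obstacle beyond keeping the indices and the definition of the pasted antihomotopy straight.
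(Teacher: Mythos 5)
Your proof is correct and follows essentially the same route as the paper's: a degree-by-degree computation using the uniform formula $(f,h)^\ast_i(u')_j = u'_jf_j + e(u'_{-1}(1)).h_j$, with the key observation that the $(-1)$-component is preserved (the paper phrases this as the morphisms being above $L$, via the forgetful morphisms, but it is the same fact). The algebra matches the paper's line for line.
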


\begin{proof}
  Soient $i \ge 0$ et $u'' = (u''_j)_{j \ge -1}$ un élément de
  $(\cotr{L}{g''})_i$. En posant $z = u''_{-1}(1)$ et en utilisant le fait que
  chacun des morphismes du triangle de l'énoncé est au-dessus de $L$, pour
  tout $j \ge -1$, on obtient
  \[
    \begin{split}
    (f, h)^\ast_i (f', h')^\ast_i(u'')_j
      & =
      (f', h')^\ast_i(u'')_jf_j + e(z)h_j  \\
      & =
      (u''_jf'_j + e(z)h'_j)f_j + e(z)h_j  \\
      & =
      u''_jf'_jf_j + e(z)(h'_jf_j + h_j)  \\
      & =
      (f'f, h'f + h)_i^\ast(u'')_j,
    \end{split}
  \]
  d'où le résultat.
\end{proof}

\section{Homotopie associée à un cône}

\begin{paragr}\label{paragr:img_cone_Cda}
  Dans cette section, on fixe un diagramme
  \[
    \shorthandoff{;:}
    \xymatrix@C=1.5pc@R=3pc{
    K \ar@/^2ex/[rr]^(.33){f'}_{}="1" \ar@/_2ex/[rr]^(.30)f_{}="0"
    \ar[dr]_{}="f"_{\phantom{g'}g}
    \ar@2"0";"1"_k
    & & K' \ar[dl]^{g'} \\
    & L
    \ar@{}"f";[ur]_(.15){}="ff"
    \ar@{}"f";[ur]_(.55){}="oo"
    \ar@<-0.5ex>@/^1ex/@{:>}"ff";"oo"^(.18){h'\!\!}_(.30){}="h'"
    \ar@<-2.0ex>@/^-1ex/@2"ff";"oo"_(.36){h}_(.80){}="h"
    \ar@3"h";"h'"_(.20){H_{}}
    }
  \]
  de complexes dirigés augmentés, où $h$ et $h'$ sont des antihomotopies de
  source $g$ et de buts respectifs $g'f$ et $g'f'$, $k$ est une
  antihomotopie de $f$ vers $f'$ et $H$ est une $2$\nbd-antihomotopie (voir
  le paragraphe~\ref{paragr:def_n-homot}) de $g'k + h$ vers $h'$. On suppose
  le complexe~$L$ décent.

  À partir de ces données, en vertu de la
  proposition~\ref{prop:img_tri_Cda}, on obtient deux morphismes
  \[
    (f, h)^\ast, (f', h')^\ast : \cotr{L}{g'} \to \cotr{L}{g}.
  \]
  On va définir une homotopie $(k, H)^\ast$ de $(f', h')^\ast$ vers $(f,
  h)^\ast$.
  \notindex{$(k, H)^\ast : (f', h')^\ast \Rightarrow (f, h)^\ast$}%
  Pour tout $i \ge 0$, on définit un morphisme $(k, H)^\ast_i
  : (\cotr{L}{g'})_i \to (\cotr{L}{g})_{i+1}$ en posant, pour tout $u' =
  (u_j)_{j \ge -1}$ dans~$(\cotr{L}{g'})_i$ et tout $j \ge -1$,
  \[
    (k, H)^\ast_i(u')_j =
    \begin{cases}
      u'_{j+1}k_j + e(u'_{-1}(1)).H_j & \text{si $i = 0$,} \\
      u'_{j+1}k_j & \text{sinon}.
    \end{cases}
  \]
  Comme dans la section~\ref{sec:img_tri_Cda}, on étendra parfois
  l'augmentation $e$ de $L$ à tout élément homogène de $L$ en posant~$e(z) =
  0$ si $z$ est un élément homogène de degré non nul. Avec cette convention,
  pour $i \ge 0$, on a
  \[
    (k, H)^\ast_i(u')_j = u'_{j+1}k_j + e(u'_{-1}(1)).H_j.
  \]
  On conviendra que, pour tout $i \le -1$, on a
  \[ H_i = 0. \]
  Pour $j = -1$, en vertu de cette convention ainsi que de nos précédentes
  conventions, on trouve $(k, H)^\ast_i(u')_{-1} = 0$. Plus généralement,
  on a $(k, H)^\ast_i(u')_j = 0$ pour tout~$j \le -1$ et l'égalité
  $(k, H)^\ast_i(u')_j = u'_{j+1}k_j + e(u'_{-1}(1)).H_j$ est donc
  valable pour tout $j$ dans $\Z$.
\end{paragr}

\begin{prop}\label{prop:img_cone_Cda}
  Les morphismes $(k, H)^\ast_i : (\cotr{L}{g'})_i \to (\cotr{L}{g})_{i+1}$,
  pour $i \ge 0$, définissent une homotopie de $(f', h')^\ast$ vers $(f,
  h)^\ast$.
\end{prop}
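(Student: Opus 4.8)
The plan is to check directly the two conditions that make the family $((k, H)^\ast_i)_{i \ge 0}$ an homotopy of augmented directed complexes from $(f', h')^\ast$ to $(f, h)^\ast$, in the sense of paragraph~\ref{paragr:def_antihomotopie}: the compatibility with the positivity submonoids, namely $(k, H)^\ast_i((\cotr{L}{g'})^\ast_i) \subset (\cotr{L}{g})^\ast_{i+1}$, and the homotopy identity
\[
  d_{i+1}(k, H)^\ast_i + (k, H)^\ast_{i-1} d_i = (f, h)^\ast_i - (f', h')^\ast_i,
  \qquad\text{pour $i \ge 0$,}
\]
with the convention $(k, H)^\ast_{-1} = 0$ (inherited from the convention $h_{-1} = 0$). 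Both are verified component by component on the index $j$, using the explicit formula $(k, H)^\ast_i(u')_j = u'_{j+1} k_j + e(u'_{-1}(1)) H_j$ together with the formula for $(f, h)^\ast$ recalled in paragraph~\ref{paragr:img_tri_Cda}.

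For the positivity compatibility I would unwind this formula: if $u'$ is in $(\cotr{L}{g'})^\ast_i$, then $u'_{j+1}(K'^\ast_{j+1}) \subset L^\ast_{i+j+2}$, while $k$ being an antihomotopy and $H$ a $2$-antihomotopy (paragraph~\ref{paragr:def_n-homot}) send positivity submonoids into positivity submonoids. The only subtle point is the sign of the coefficient $e(u'_{-1}(1))$, and this is exactly where the decency hypothesis on $L$ enters: since $u'_{-1}(1)$ lies in $L^\ast_i$ (because $K'^\ast_{-1} = \N$) and $L$ is décent (paragraph~\ref{paragr:def_decent}), one has $e(u'_{-1}(1)) \ge 0$ when $i = 0$, whereas for $i \ge 1$ the term $e(u'_{-1}(1)) H_j$ vanishes by the convention extending $e$ by $0$ in positive degrees.

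For the homotopy identity I would split according to the degree. Writing $z = u'_{-1}(1) \in L_i$, the term $e(z) H_j$ is present only for $i = 0$, so I treat the two cases separately. For $i \ge 1$, the element $w = d_i u'$ satisfies $e(w_{-1}(1)) = e(d_i z) = 0$ (by $e d_1 = 0$ when $i = 1$ and for degree reasons otherwise), so all $H$-contributions disappear; expanding $d_{i+1}(k, H)^\ast_i(u')_j$ and $(k, H)^\ast_{i-1}(d_i u')_j$, the two occurrences of $d_{i+j+2} u'_{j+1} k_j$ cancel, and the surviving terms combine through the antihomotopy relation $d_{j+1} k_j - k_{j-1} d_j = (-1)^j(f'_j - f_j)$ for $k$ to yield exactly $u'_j(f_j - f'_j)$, which matches the right-hand side since its $e(z)$-part is zero.

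The delicate case is $i = 0$, which I expect to be the main obstacle. Here the identity reduces to $d_1 (k, H)^\ast_0(u')_j$, and the term $d_{j+2} u'_{j+1} k_j$ can no longer be cancelled by a $(k, H)^\ast_{-1}$ contribution. The key is to invoke the defining relation of $(\cotr{L}{g'})_0$, applied in degree $j+1$, to rewrite $d_{j+2} u'_{j+1} k_j = u'_j d_{j+1} k_j + (-1)^j e(z) g'_{j+1} k_j$. Substituting, the $u'$-part again collapses via the $k$-relation to $u'_j(f_j - f'_j)$ and leaves an extra term $-e(z) g'_{j+1} k_j$; this defect is then cancelled precisely by the contribution $e(z) g'_{j+1} k_j$ produced by the $2$-antihomotopy relation $d_{j+2} H_j - H_{j-1} d_j = (-1)^j(h'_j - g'_{j+1} k_j - h_j)$ for $H$, leaving $e(z)(h_j - h'_j)$ and completing the match with the right-hand side $u'_j(f_j - f'_j) + e(z)(h_j - h'_j)$. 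The conceptual content is that the $2$-antihomotopy datum $H$ is exactly what absorbs the defect $g'_{j+1} k_j$ created by the degree-$0$ constraint defining $\cotr{L}{g'}$.
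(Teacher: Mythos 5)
Your proposal is correct and follows essentially the same route as the paper's proof: the positivity compatibility (which the paper dismisses as evident, while you rightly pinpoint where decency of $L$ enters), then the chain-homotopy identity verified by splitting into $i \ge 1$ (where the $H$-terms vanish via $ed_i = 0$ and the antihomotopy relation for $k$ does the work) and $i = 0$ (where the degree-$0$ defining relation of $\cotr{L}{g'}$ in degree $j+1$ produces the defect $g'_{j+1}k_j$ absorbed exactly by the $2$-antihomotopy relation for $H$). This is precisely the computation carried out in the paper.
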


\begin{proof}
   La compatibilité aux sous-monoïdes de positivité est évidente.
   Vérifions que $(k, H)^\ast$ est bien une homotopie de complexes de chaînes
   de $(f', h')^\ast$ vers~$(f, h)^\ast$.  Soient $i \ge 0$ et $u' =
   (u'_j)_{j \ge -1}$ dans $(\cotr{L}{g'})_i$. Si $i \ge 1$, pour tout $j
   \ge -1$, on a
   {
     \allowdisplaybreaks
     \begin{align*}
       (d_{i+1}(k, H)^\ast_i + (k, H)^\ast_{i-1}d_i)(u')_j
       & =
       (-1)^{j+1}(d_{i+j+2}(k, H)^\ast_i(u')_j - (k, H)^\ast_i(u')_{j-1}d_j)
       \\*
       & \phantom{=1}
       \quad + (d_i(u')_{j+1}k_j + e(d_i(u')_{-1}(1))H_j) \\
       & =
       (-1)^{j+1}(d_{i+j+2}(k, H)^\ast_i(u')_j - (k, H)^\ast_i(u')_{j-1}d_j)
       \\*
       & \phantom{=1}
       \quad + d_i(u')_{j+1}k_j \\*
       & \phantom{=1} \text{(puisque $ed_i = 0$ pour tout $i \ge 1$)} \\
       & =
       (-1)^{j+1}(d_{i+j+2}u'_{j+1}k_j - u'_jk_{j-1}d_j) \\*
       & \phantom{=1}
       \quad + (-1)^{j+2}(d_{i+j+2}u'_{j+1} - u'_jd_{j+1})k_j \\
       & =
       (-1)^{j+1}u'_j(d_{j+1}k_j - k_{j-1}d_j) \\
       & =
       -u'_j(f'_j - f_j) \\
       & =
       u'_jf_j - u'_jf'_j \\
       & =
       ((f, h)^\ast_i - (f', h')^\ast_i)(u')_j.
     \end{align*}
   }%
   Si maintenant $i = 0$, de sorte qu'on a, pour tout $j$ dans $\Z$,
   \[
     d_{j+1}u'_j - u'_{j-1}d_j = (-1)^{j+1} e(z)g'_j,
   \]
   où on a posé $z = u_{-1}(1)$, alors, pour tout $j \ge -1$, on a
   {
     \allowdisplaybreaks
     \begin{align*}
       (d_1(k, H)^\ast_0)(u')_j
       & =
       (-1)^{j+1}(d_{j+2}(k, H)^\ast_0(u')_j - (k, H)^\ast_0(u')_{j-1}d_j) \\
       & =
       (-1)^{j+1}\big[d_{j+2}(u'_{j+1}k_j + e(z)H_j) - (u'_jk_{j-1} +
         e(z)H_{j-1})d_j\big] \\
       & =
         (-1)^{j+1}\big[d_{j+2}u'_{j+1}k_j + e(z)(d_{j+2}H_j - H_{j-1}d_j)
           - u'_jk_{j-1}d_j\big] \\
       & =
       (-1)^{j+1}\big[(u'_jd_{j+1} + (-1)^{j+2}e(z)g'_{j+1})k_j \\*
       & \phantom{=1} \quad
       + (-1)^{j} e(z)(h'_j - (g'_{j+1}k_j + h_j)) - u'_jk_{j-1}d_j\big] \\
       & =
       (-1)^{j+1}\big[(u'_jd_{j+1}k_j - u'_jk_{j-1}d_j + (-1)^j e(z)(h'_j -
       h_j)\big] \\
       & =
       (-1)^{j+1}u'_j(d_{j+1}k_j - k_{j-1}d_j) - e(z)(h'_j - h_j) \\
       & =
       -u'_j(f'_j - f_j) - e(z)(h'_j - h_j) \\
       & =
       (u'_jf_j + e(z)h_j) - (u'_jf'_j + e(z)h'_j) \\
       & =
       ((f, h)^\ast_0 - (f', h')^\ast_0)(u')_j,
     \end{align*}
   }%
   d'où le résultat.
\end{proof}

\begin{prop}\label{prop:img_cone_Cda_au-dessus}
  L'homotopie $(k, H)^\ast$ de $(f', h')^\ast$ vers $(f, h)^\ast$, qui sont
  deux morphismes de $\cotr{L}{g'}$ vers $\cotr{L}{g}$ au-dessus de $L$, est
  au-dessus de $L$ au sens où, en notant $U : \cotr{L}{g} \to L$ et $U' :
  \cotr{L}{g'} \to L$ les morphismes d'oubli, on a $U(k, H)^\ast = \id{U'}$.
\end{prop}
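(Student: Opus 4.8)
The plan is to compute the composite homotopy $U(k, H)^\ast$ componentwise and to check that it is identically zero, which by definition of the identity homotopy is exactly the assertion. Recall from paragraph~\ref{paragr:def_antih_sesqui} that, $U : \cotr{L}{g} \to L$ being a morphism and $(k, H)^\ast$ a homotopy, the composite $U(k, H)^\ast$ is the homotopy of morphisms $\cotr{L}{g'} \to L$ with components $(U(k, H)^\ast)_i = U_{i+1} \circ (k, H)^\ast_i$ for $i \ge 0$. Since Proposition~\ref{prop:img_tri_Cda_au-dessus} gives $U(f, h)^\ast = U'$ and $U(f', h')^\ast = U'$, the composite $U(k, H)^\ast$ is indeed a homotopy from $U'$ to $U'$, so comparing it with $\id{U'}$ makes sense.

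First I would unwind the forgetful morphism $U$, which sends an element $(w_j)_{j \ge -1}$ of $(\cotr{L}{g})_{i+1}$ to its component $w_{-1}(1) \in L_{i+1}$. Hence for any $u' = (u'_j)_{j \ge -1}$ in $(\cotr{L}{g'})_i$ one has
\[ (U(k, H)^\ast)_i(u') = U_{i+1}\big((k, H)^\ast_i(u')\big) = (k, H)^\ast_i(u')_{-1}(1). \]
The key step is to evaluate this $(-1)$-component. From the defining formula $(k, H)^\ast_i(u')_j = u'_{j+1}k_j + e(u'_{-1}(1)).H_j$, specializing to $j = -1$ yields $(k, H)^\ast_i(u')_{-1} = u'_0 k_{-1} + e(u'_{-1}(1)).H_{-1}$, which vanishes because $k_{-1} = 0$ and $H_{-1} = 0$ by the conventions fixed when $(k, H)^\ast$ was introduced. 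Therefore $(U(k, H)^\ast)_i(u') = 0$ for all $i \ge 0$ and all $u'$, i.e. $(U(k, H)^\ast)_i = 0$ for every $i$; since $(\id{U'})_i = 0$ by paragraph~\ref{paragr:def_antih_id}, this gives $U(k, H)^\ast = \id{U'}$.

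I do not expect any genuine obstacle here: the whole content reduces to the triviality of the $(-1)$-component of $(k, H)^\ast$, which was built into its definition, exactly as in the analogous statements for the morphisms $(f, h)^\ast$. The only point deserving attention is to apply the composition convention $(gH)_i = g_{i+1}H_i$ of paragraph~\ref{paragr:def_antih_sesqui} with the correct degree shift of the forgetful morphism, after which the computation is immediate.
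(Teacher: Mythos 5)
Your proof is correct and follows exactly the paper's own argument: the forgetful morphism extracts the $(-1)$-component, and $(k,H)^\ast_i(u')_{-1} = u'_0 k_{-1} + e(u'_{-1}(1)).H_{-1} = 0$ by the conventions $k_{-1}=0$ (from the general definition of antihomotopies) and $H_{-1}=0$ (fixed when $(k,H)^\ast$ was defined), so the composite homotopy is the zero family, i.e.\ $\id{U'}$. The paper's proof is precisely this one-line computation; your additional remarks (the degree shift in $U_{i+1}\circ(k,H)^\ast_i$ and the sanity check via Proposition~\ref{prop:img_tri_Cda_au-dessus} that source and target are both $U'$) are consistent with it.
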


\begin{proof}
  Pour tout $i \ge 0$ et $u' = (u'_j)_{j \ge -1}$ dans
  $(\cotr{L}{g'})_i$, on a
  \[
    U(k, H)^\ast(u') = ((k, H)^\ast(u')_{-1})(1) = 0 = \id{U'}(u'),
  \]
  d'où l'assertion.
\end{proof}

\section{Fonctorialités des morphismes associés aux cônes}

\begin{paragr}\label{paragr:fonct_cone_id_Cda}
  Considérons un diagramme
  \[
    \shorthandoff{;}
    \xymatrix@C=1.5pc{
      K \ar[rr]^f \ar[dr]_{g}_{}="f" & & K' \ar[dl]^(0.42){g'} \\
      & L
      \ar@{}"f";[ur]_(.15){}="ff"
      \ar@{}"f";[ur]_(.55){}="oo"
      \ar@<-0.5ex>@2"ff";"oo"^{h}
    }
  \]
  de complexes dirigés augmentés, où $h$ est une antihomotopie de $g$ vers
  $g'f$. On suppose le complexe $L$ décent.

  À partir de ce diagramme, on peut former un diagramme
  \[
    \shorthandoff{;:}
    \xymatrix@C=1.5pc@R=3pc{
    K \ar@/^2ex/[rr]^(.33){f}_{}="1" \ar@/_2ex/[rr]^(.30)f_{}="0"
    \ar[dr]_{}="f"_{\phantom{g'}g}
    \ar@2"0";"1"_{\id{f}}
    & & K' \ar[dl]^{g'} \\
    & L
    \ar@{}"f";[ur]_(.15){}="ff"
    \ar@{}"f";[ur]_(.55){}="oo"
    \ar@<-0.5ex>@/^1ex/@{:>}"ff";"oo"^(.18){h\!\!}_(.30){}="h'"
    \ar@<-2.0ex>@/^-1ex/@2"ff";"oo"_(.36){h}_(.80){}="h"
    \ar@3"h";"h'"_(.20){\,\id{h}}
    & \pbox{,}
    }
  \]
  où $\id{f}$ désigne l'antihomotopie identité de $f$ et $\id{h}$ la
  $2$-antihomotopie identité de $h$ (voir le
  paragraphe~\ref{paragr:def_antih_id}). On obtient
  donc, en vertu de la proposition~\ref{prop:img_cone_Cda}, une
  homotopie~$(\id{f}, \id{h})^\ast$ de $(f, h)^\ast$ vers $(f, h)^\ast$.
\end{paragr}

\begin{prop}\label{prop:fonct_cone_id_Cda}
  On a $(\id{f}, \id{h})^\ast = \id{(f, h)^\ast}$.
\end{prop}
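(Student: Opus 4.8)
La proposition affirme que l'homotopie
$(\id{f}, \id{h})^\ast$, construite via la
proposition~\ref{prop:img_cone_Cda} à partir du diagramme dégénéré du
paragraphe~\ref{paragr:fonct_cone_id_Cda} (où la antihomotopie $k$ est
l'identité $\id{f}$ et la $2$-antihomotopie $H$ est l'identité $\id{h}$),
coïncide avec l'homotopie identité $\id{(f, h)^\ast}$ du morphisme
$(f, h)^\ast : \cotr{L}{g'} \to \cotr{L}{g}$. Comme les autres énoncés de
ce type dans la section (par exemple la proposition précédant
immédiatement celle-ci, qui établit $(\id{K}, \id{g})^\ast =
\id{\cotr{L}{g}}$), il s'agit d'une vérification directe par retour aux
définitions explicites.

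\begin{proof}
  Cela résulte immédiatement de la définition de $(\id{f}, \id{h})^\ast$.
  En effet, d'après le paragraphe~\ref{paragr:def_antih_id}, l'antihomotopie
  identité $\id{f}$ et la $2$-antihomotopie identité $\id{h}$ sont données
  par $(\id{f})_j = 0$ et $(\id{h})_j = 0$ pour tout $j \ge 0$. Ainsi, pour
  tout $i \ge 0$, tout $u' = (u'_j)_{j \ge -1}$ dans $(\cotr{L}{g'})_i$ et
  tout $j \ge -1$, on obtient, avec les conventions de la sous-section,
  \[
    (\id{f}, \id{h})^\ast_i(u')_j = u'_{j+1}(\id{f})_j +
    e(u'_{-1}(1)).(\id{h})_j = 0,
  \]
  ce qui est exactement la valeur de l'homotopie identité $\id{(f,
  h)^\ast}$ de $(f, h)^\ast$ (voir le
  paragraphe~\ref{paragr:def_antih_id}), d'où le résultat.
\end{proof}
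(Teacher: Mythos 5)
Votre preuve est correcte et suit essentiellement la même démarche que celle de l'article, dont la preuve se réduit à « cela résulte immédiatement de la définition de $(\id{f}, \id{h})^\ast$ » ; vous avez simplement explicité cette vérification immédiate, à savoir que les composantes $(\id{f})_j = 0$ et $(\id{h})_j = 0$ donnent $(\id{f}, \id{h})^\ast_i(u')_j = 0$ pour tous $i, j$, ce qui est précisément l'homotopie identité de $(f,h)^\ast$.
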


\begin{proof}
  Cela résulte immédiatement de la définition de $(\id{f}, \id{h})^\ast$.
\end{proof}

\begin{paragr}\label{paragr:fonct_cone_sg_Cda}
  Considérons maintenant un diagramme
    \[
      \shorthandoff{;:}
      \xymatrix@C=3.5pc@R=3.5pc{
      K  \ar[r]^f \ar[dr]_{}="g"_(.4){\phantom{g''}g\!} &
      K' \ar@/^2ex/[r]^(.33){f''}_{}="1"
      \ar@/_2ex/[r]^(.30){f'}_{}="0"_(.70){}="fp"
      \ar[d]_(.50){}="gp2"_(.20){}="gp"_(0.52){g'} &
      K'' \ar[dl]^(.4){\!g''} &
      \ar@2"0";"1"_{k} \\
        & L
      \ar@{}"g";[u]_(0.10){}="x"
      \ar@{}"g";[u]_(.75){}="y"
      \ar@<-0.1ex>@2"x";"y"^(.30)h
      \ar@{}"gp2";"fp"_(.10){}="ff2"
      \ar@{}"gp2";"fp"_(.55){}="oo2"
      \ar@<+0.5ex>@/^1ex/@{:>}"ff2";"oo2"^{\!\!h''}_(.30){}="h'''"
      \ar@<-0.5ex>@/^-1.5ex/@2"ff2";"oo2"_(.47){\!\!\!h'}_(.80){}="h''"
      \ar@3"h''";"h'''"_(.20){H}
      }
    \]
    de complexes dirigés augmentés, où $h$, $h'$ et $h''$
    sont des antihomotopies de $g$ vers~$g'f$, de $g'$ vers $g''f'$ et de
    $g'$ vers $g''f''$ respectivement, $k$ est une antihomotopie de $f'$
    vers~$f''$ et $H$ est une $2$-antihomotopie de $g''k + h'$ vers $h''$.
    On suppose toujours le complexe $L$ décent.

    À partir de ces données, en vertu des
    propositions~\ref{prop:img_tri_Cda} et~\ref{prop:img_cone_Cda}, on
    obtient un diagramme
    \[
      \shorthandoff{;:}
      \xymatrix@C=4.5pc{
        \cotr{L}{g''}
        \ar@/^2.5ex/[r]^{(f'', h'')^\ast}_{}="1"
        \ar@/_2.5ex/[r]_{(f', h')^\ast}_{}="0"
        \ar@2"1";"0"^(.55){\,(k, H)^\ast}
        &
       \cotr{L}{g'}
       \ar[r]^{(f, h)^\ast}
       & \cotr{L}{g} \pbox{.}
      }
    \]
    En vertu de la proposition~\ref{prop:fonct_tri_Cda}, on a
    \[
      (f, h)^\ast(f'', h'')^\ast = (f''f, h''f + h)^\ast
      \quadet
      (f, h)^\ast(f', h')^\ast = (f'f, h'f + h)^\ast,
    \]
    et en composant ce diagramme on obtient donc une homotopie
    $(f, h)^\ast(k, H)^\ast$ de~$(f''f, h''f+h)^\ast$
    vers $(f'f, h'f+h)^\ast$.

    Par ailleurs, en composant le diagramme de
    départ, on obtient un diagramme
    \[
      \shorthandoff{;:}
      \xymatrix@C=1.5pc@R=3pc{
        K \ar@/^2ex/[rr]^(.33){f''f}_{}="1" \ar@/_2ex/[rr]^(.30){f'f}_{}="0"
        \ar[dr]_{}="f"_{\phantom{g'}g}
        \ar@2"0";"1"_{kf}
        & & K'' \ar[dl]^{g''} \\
        & L
        \ar@{}"f";[ur]_(.15){}="ff"
        \ar@{}"f";[ur]_(.55){}="oo"
        \ar@<-0.5ex>@/^1ex/@{:>}"ff";"oo"^(.18){\!\!}_(.30){}="h'"
        \ar@<-2.0ex>@/^-1ex/@2"ff";"oo"_(.36){}_(.80){}="h"
        \ar@3"h";"h'"_(.20){} & \pbox{,}
        }
    \]
    où la $2$-flèche courbée de devant est l'antihomotopie $h'f + h$,
    la $2$-flèche courbée de derrière, en pointillé, l'antihomotopie $h''f + h$ et la
    $3$-flèche est la $2$-antihomotopie $Hf + \id{h}$ (voir les
    paragraphes~\ref{paragr:def_antih_sesqui}
    et~\ref{paragr:def_homot_cod2}) de $(g''k + h')f + h = g''(kf) + (h'f +
    h)$ vers $h''f + h$. Pour tout $n \ge 0$, on a~$(Hf + \id{h})_n =
    (Hf)_n$ et on désignera plus simplement cette $2$\nbd-antihomotopie par
    $Hf$. On obtient donc, toujours en vertu de la
    proposition~\ref{prop:img_cone_Cda}, une homotopie~$(kf, Hf)^\ast$ de
    $(f''f, h''f+h)^\ast$ vers $(f'f, h'f+h)^\ast$.
\end{paragr}

\begin{prop}\label{prop:fonct_cone_sg_Cda}
  On a $(f, h)^\ast (k, H)^\ast = (kf, Hf)^\ast$.
\end{prop}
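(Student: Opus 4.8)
The plan is to prove the identity by comparing both sides componentwise on an arbitrary element, using the explicit formulas of paragraph~\ref{paragr:def_antih_sesqui} together with the definitions of $(f, h)^\ast$ and of the homotopy associated with a cone. Since both $(f,h)^\ast(k,H)^\ast$ and $(kf, Hf)^\ast$ are already known to be homotopies from $(f''f, h''f+h)^\ast$ to $(f'f, h'f+h)^\ast$ (the first by proposition~\ref{prop:fonct_tri_Cda}, the second by construction in paragraph~\ref{paragr:fonct_cone_sg_Cda}), it suffices to check that their underlying families of group morphisms coincide in each degree.

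First I would unwind the left-hand side. The composite $(f,h)^\ast(k,H)^\ast$ is the postcomposition of the homotopy $(k,H)^\ast$ by the morphism $(f,h)^\ast$, so by paragraph~\ref{paragr:def_antih_sesqui} (applied with $n=1$) its $i$\nbd-th component is $(f,h)^\ast_{i+1}\circ(k,H)^\ast_i$. Fix $i \ge 0$ and $u' = (u'_j)_{j \ge -1}$ in $(\cotr{L}{g''})_i$, and set $z = u'_{-1}(1)$. By definition $(k,H)^\ast_i(u')$ is the element $w = (w_j)_{j \ge -1}$ of $(\cotr{L}{g'})_{i+1}$ with $w_j = u'_{j+1}k_j + e(z)H_j$ for all $j$ in $\Z$. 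Applying $(f,h)^\ast_{i+1}$ and observing that $i+1 \ge 1$, so that the defining formula of $(f,h)^\ast$ reduces to postcomposition by $f$ (its degree\nbd-$0$ branch, which alone carries the antihomotopy $h$, is never reached), I obtain
\[
  \big((f,h)^\ast(k,H)^\ast\big)_i(u')_j = w_j f_j = u'_{j+1}k_j f_j + e(z)H_j f_j.
\]

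For the right-hand side, the homotopy $(kf, Hf)^\ast$ is attached to the composed cone, its defining antihomotopy being $kf$ and its defining $2$\nbd-antihomotopy being $Hf$. By paragraph~\ref{paragr:def_antih_sesqui} one has $(kf)_j = k_j f_j$ and $(Hf)_j = H_j f_j$, so the formula defining the homotopy associated with a cone gives directly
\[
  (kf, Hf)^\ast_i(u')_j = u'_{j+1}(kf)_j + e(z)(Hf)_j = u'_{j+1}k_j f_j + e(z)H_j f_j.
\]
Comparing the two displays yields the asserted equality.

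The computation is thus essentially immediate once the conventions are unwound, and I do not expect any genuine difficulty. The only point requiring care is the bookkeeping of the index shifts, combined with the case distinction in the definition of $(f,h)^\ast$. The key observation making everything uniform is that $(f,h)^\ast$ is always evaluated here in degree $i+1 \ge 1$: its special degree\nbd-$0$ branch is never triggered, which is exactly why the $h$\nbd-contribution disappears and the result depends only on $k$, $H$ and $f$, matching the antihomotopy $kf$ and the $2$\nbd-antihomotopy $Hf$ of the composed cone.
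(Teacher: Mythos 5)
Your proof is correct and is essentially the paper's own argument: both sides are evaluated componentwise on an element of $(\cotr{L}{g''})_i$, the left side unwound as $(f,h)^\ast_{i+1}(k,H)^\ast_i$ where the degree shift forces the non-degree-$0$ branch of $(f,h)^\ast$ (plain postcomposition by $f$), and the result matched against $u''_{j+1}k_jf_j + e(z)H_jf_j = (kf,Hf)^\ast_i(u'')_j$. The only difference is presentational — you spell out the well-definedness of both homotopies and the identities $(kf)_j = k_jf_j$, $(Hf)_j = H_jf_j$, which the paper leaves implicit.
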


\begin{proof}
  Soient $i \ge 0$ et $u'' = (u''_j)_{j \ge -1}$ un élément de
  $(\cotr{L}{g''})_i$. Pour tout $j \ge -1$, en posant $z = u''_{-1}(1)$, on
  a
  \[
    \begin{split}
      ((f, h)^\ast (k, H)^\ast)_i (u'')_j
      & =
      (f, h)^\ast_{i+1} (k, H)^\ast_i (u'')_j \\
      & =
      (k, H)^\ast_i(u'')_jf_j \\
      & =
      u''_{j+1}k_jf_j + e(z)H_jf_j\\
      & =
      (kf, Hf)^\ast_i(u'')_j,
    \end{split}
  \]
  d'où le résultat.
\end{proof}

\begin{paragr}\label{paragr:fonct_cone_sd_Cda}
  De même, considérons un diagramme
  \[
      \shorthandoff{;:}
      \xymatrix@C=3.5pc@R=3.5pc{
      K
      \ar@/^2ex/[r]^(.33){f'}_{}="1"
      \ar@/_2ex/[r]^(.30){f}_{}="0"_(.70){}="f"
      \ar[dr]_{}="g"_(.40){\phantom{g''}g\!}
      \ar@2"0";"1"_{k}
      &
      K' \ar[r]^{f''}_(.75){}="fp"
         \ar[d]_(.70){}="gp2"_(.20){}="gp"^(0.52){g'}
      &
      K'' \ar[dl]^(.40){\!g''}
      \\
      & L
      \ar@{}"g";"gp"_(.15){}="ff1"
      \ar@{}"g";"gp"_(.80){}="oo1"
      \ar@<-0.0ex>@/^1ex/@{:>}"ff1";"oo1"^(.35){h'\!\!}_(.30){}="h'"
      \ar@<-1.0ex>@/^-1.5ex/@2"ff1";"oo1"_(.50){\!h}_(.80){}="h"
      \ar@3"h";"h'"_(.20){H_{}}
      \ar@{}"gp2";"fp"_(.25){}="x2"
      \ar@{}"gp2";"fp"_(.75){}="y2"
      \ar@<0.4ex>@2"x2";"y2"^{h''}
      }
  \]
  de complexes dirigés augmentés, où $h$, $h'$ et $h''$ sont des
  antihomotopies de $g$ vers~$g'f$, de $g$ vers $g'f'$ et de
  $g'$ vers $g''f''$ respectivement, $k$ est une antihomotopie de $f$
  vers~$f'$ et $H$ est une $2$-antihomotopie de $g'k + h$ vers $h'$. On
  suppose toujours le complexe~$L$ décent.

  À partir de ces données, on obtient un diagramme
  \[
    \shorthandoff{;:}
    \xymatrix@C=4.5pc{
    \cotr{L}{g''}
    \ar[r]^{(f'', h'')^\ast}
    &
    \cotr{L}{g'}
    \ar@/^2.5ex/[r]^{(f'\!, h')^\ast}_{}="1"
    \ar@/_2.5ex/[r]_{(f, h)^\ast}_{}="0"
    \ar@2"1";"0"^(.55){\,(k, H)^\ast}
    & \cotr{L}{g} \pbox{.}
    }
  \]
  En vertu de la proposition~\ref{prop:fonct_tri_Cda}, on a
  \[
      (f', h')^\ast(f'', h'')^\ast = (f''f', h''f' + h')^\ast
      \quadet
      (f, h)^\ast(f'', h'')^\ast = (f''f, h''f + h)^\ast,
  \]
  et en composant ce diagramme on obtient donc une homotopie
  $(k, H)^\ast(f'', h'')^\ast$ de~$(f''f', h''f'+h')^\ast$
  vers $(f''f, h''f+h)^\ast$.

  Par ailleurs, en composant le diagramme de
  départ, on obtient un diagramme
  \[
    \shorthandoff{;:}
    \xymatrix@C=1.5pc@R=3pc{
      K \ar@/^2ex/[rr]^(.33){f''\!f'}_{}="1" \ar@/_2ex/[rr]^(.30){f''\!f}_{}="0"
      \ar[dr]_{}="f"_{\phantom{g'}g}
      \ar@2"0";"1"_{f''\!k}
      & & K'' \ar[dl]^{g''} \\
      & L
      \ar@{}"f";[ur]_(.15){}="ff"
      \ar@{}"f";[ur]_(.55){}="oo"
      \ar@<-0.5ex>@/^1ex/@{:>}"ff";"oo"^(.18){\!\!}_(.30){}="h'"
      \ar@<-2.0ex>@/^-1ex/@2"ff";"oo"_(.36){}_(.80){}="h"
      \ar@3"h";"h'"_(.20){\,H'} & \pbox{,}
      }
  \]
  où la $2$-flèche courbée de devant est l'antihomotopie $h''f
  + h$, la $2$\nbd-flèche courbée de derrière, en pointillé, l'antihomotopie
  $h''f' + h'$ et la $3$-flèche est la $2$-antihomotopie \hbox{$H' =
  (\id{h''f'} + H) + (h''k + \id{h})$}
  (où les « + » de gauche et de droite désignent l'opération définie au
  paragraphe~\ref{paragr:def_homot_cod2}, celui du milieu celle définie au
  paragraphe~\ref{paragr:def_antih_comp} et~$h''k'$ la $2$-antihomotopie du
  paragraphe~\ref{paragr:def_homot_Gray}). Ceci a bien un sens puisque,
  $h''k + \id{h}$ étant une $2$\nbd-antihomotopie de $((g''f'')k + h''f) +
  h$ vers $(h''f + g'k) + h$ et $\id{h''f'} + H$ une $2$-antihomotopie de
  $h''f' + (g'k + h)$ vers $h''f' + h'$, $H'$ est une $2$\nbd-antihomotopie
  de $g''(f''k) + (h''f + h)$ vers $h''f' + h'$. On désignera plus
  simplement~$H'$ par $H + h''k$.  On obtient donc une homotopie $(f''k, H +
  h''k)^\ast$ de~$(f''f', h''f'+h')^\ast$ vers $(f''f, h''f+h)^\ast$.
\end{paragr}

\begin{prop}\label{prop:fonct_cone_sd_Cda}
  On a $(k, H)^\ast (f'', h'')^\ast = (f''k, H + h''k)^\ast$.
\end{prop}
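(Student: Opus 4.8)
The proof proceeds by a direct componentwise computation, entirely parallel to that of Proposition~\ref{prop:fonct_cone_sg_Cda}. The plan is to fix $i \ge 0$ and an element $u'' = (u''_j)_{j \ge -1}$ of $(\cotr{L}{g''})_i$, set $z = u''_{-1}(1)$, and evaluate both sides on $u''$, comparing for each $j \ge -1$ the resulting maps $K_j \to L_{i+1+j}$ (extending, as throughout this section, the augmentation $e$ of $L$ to all homogeneous elements by $e(z) = 0$ in nonzero degree).

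First I would unwind the left-hand side. Since $(k, H)^\ast (f'', h'')^\ast$ is the whiskering of the homotopy $(k, H)^\ast$ by the morphism $(f'', h'')^\ast$ on the source side, paragraph~\ref{paragr:def_antih_sesqui} gives $((k, H)^\ast (f'', h'')^\ast)_i = (k, H)^\ast_i \circ (f'', h'')^\ast_i$. Writing $w = (f'', h'')^\ast_i(u'')$, the formula of paragraph~\ref{paragr:img_tri_Cda} yields $w_j = u''_j f''_j + e(z)\, h''_j$; in particular $w_{-1} = u''_{-1}$, so that $e(w_{-1}(1)) = e(z)$. Applying the defining formula of the homotopy associated to a cone then gives, for each $j$,
\[
  (k, H)^\ast_i(w)_j = w_{j+1} k_j + e(z) H_j
  = u''_{j+1} f''_{j+1} k_j + e(z)\, h''_{j+1} k_j + e(z) H_j.
\]

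Next I would expand the right-hand side $(f''k, H + h''k)^\ast$. Here $f''k$ is the postcomposition of the antihomotopy $k$ by the morphism $f''$, so by paragraph~\ref{paragr:def_antih_sesqui} one has $(f''k)_j = f''_{j+1} k_j$; and $h''k$ is the Gray-type composition of paragraph~\ref{paragr:def_homot_Gray}, whence $(h''k)_j = h''_{j+1} k_j$. Substituting into the defining formula of the cone functoriality gives
\[
  (f''k, H + h''k)^\ast_i(u'')_j
  = u''_{j+1} f''_{j+1} k_j + e(z)\,(H + h''k)_j,
\]
so the whole matter reduces to identifying the component $(H + h''k)_j$ with $H_j + h''_{j+1} k_j$; this is exactly what makes the two expressions coincide.

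The computation itself is routine; the only point that requires care is the bookkeeping for the $2$-antihomotopy $H' = H + h''k$, which by definition is the composite $(\id{h''f'} + H) + (h''k + \id{h})$. This is where I expect the main (if modest) obstacle to lie, since one must track which occurrence of ``$+$'' denotes the codimension-$1$ composition of paragraph~\ref{paragr:def_antih_comp} and which the codimension-$2$ composition of paragraph~\ref{paragr:def_homot_cod2}, and confirm that the identity $2$-antihomotopies $\id{h''f'}$ and $\id{h}$ drop out at the level of components by paragraph~\ref{paragr:def_antih_id}. Once this reduction $(H + h''k)_j = H_j + (h''k)_j = H_j + h''_{j+1} k_j$ is in hand, the two computed expressions agree term by term, which establishes the claimed equality.
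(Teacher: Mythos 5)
Your proof is correct and follows essentially the same route as the paper's: a direct componentwise computation that unwinds $((k,H)^\ast(f'',h'')^\ast)_i(u'')_j$ via the whiskering formula, expands $(f'',h'')^\ast_i(u'')_{j+1}$, and matches the result against the defining formula of $(f''k, H+h''k)^\ast$, using $(f''k)_j = f''_{j+1}k_j$, $(h''k)_j = h''_{j+1}k_j$ and $((f'',h'')^\ast_i(u''))_{-1} = u''_{-1}$. Your explicit check that the identity $2$-antihomotopies in $H' = (\id{h''f'} + H) + (h''k + \id{h})$ vanish componentwise, so that $(H + h''k)_j = H_j + h''_{j+1}k_j$, is left implicit in the paper but is exactly the right point to verify.
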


\begin{proof}
  Soient $i \ge 0$ et $u'' = (u''_j)_{j \ge -1}$ un élément de
  $(\cotr{L}{g''})_i$. Pour tout $j \ge -1$, en posant $z = u''_{-1}(1)$, on
  a
  {
    \allowdisplaybreaks
    \begin{align*}
      ((k, H)^\ast(f'',h'')^\ast)_i (u'')_j
      & =
      (k, H)^\ast_i(f'',h'')^\ast_i (u'')_j \\
      & =
      (f'', h'')^\ast_i(u'')_{j+1}k_j + e(z)H_j\\
      & =
      (u''_{j+1}f''_{j+1} + e(z)h''_{j+1})k_j + e(z)H_j\\
      & =
      u''_{j+1}f''_{j+1}k_j + e(z)(H_j + h''_{j+1}k_j)\\
      & =
      (f''k, H + h''k)^\ast_i(u'')_j,
    \end{align*}
  }%
  d'où le résultat.
\end{proof}

\begin{paragr}\label{paragr:fonct_cone_Cda}
  Enfin, considérons un diagramme
  \[
      \shorthandoff{;:!}
      \xymatrix@C=2pc@R=4.5pc{
        K \ar@/^3.5ex/[rr]^(.30)*+<-.6em>{\labelstyle f''}_(.65){}="2"
        \ar[rr]^(.25)*+<-.3em>{\labelstyle f'}_(.65){}="1"
        \ar@/_3.5ex/[rr]^(.20)*+<-.3em>{\labelstyle f}_(.65){}="0"
        \ar[dr]_{}="f"_{\phantom{g'}g}
        \ar@2"0";"1"_k
        \ar@2"1";"2"_{k'}
        & & K' \ar[dl]^{g'} \\
        & L
        \ar@{}"f";[ur]_(.15){}="ff"
        \ar@{}"f";[ur]_(.55){}="oo"
        \ar@<2.0ex>@/^1.5ex/@{:>}"ff";"oo"^(.40)*+<-.3em>{\labelstyle h''\!\!}_(.30){}="h''"
        \ar@<0ex>@/^0ex/@{:>}"ff";"oo"^(.0)*+<-.5em>{\labelstyle{h'\!\!}}_(.30){}="h'"_(.70){}="h'2"
        \ar@<-2.0ex>@/^-1.5ex/@2"ff";"oo"_(.36){h}_(.80){}="h"
        \ar@3"h'2";"h''"_(.28){H'}
        \ar@3"h";"h'"_(.20){H_{}}
        }
  \]
  de complexes dirigés augmentés, où $h$, $h'$ et $h''$ sont des
  antihomotopies de $g$ vers~$g'f$, $g'f'$ et $g'f''$ respectivement, $k$ et
  $k'$ sont des antihomotopies de $f$ vers $f'$ et $f'$ vers~$f''$
  respectivement et $H$ et $H'$ sont des $2$-antihomotopies de $g'k + h$ vers
  $h'$ et de~$g'k' + h'$ vers~$h''$ respectivement. On suppose toujours
  le complexe $L$ décent.

  À partir de ces données, en vertu de la
  proposition~\ref{prop:img_cone_Cda}, on obtient un diagramme
  \[
    \shorthandoff{;:}
    \xymatrix@C=5pc{
    \cotr{L}{g'}
    \ar@/^5.5ex/[r]^(.38){(f''\!, h'')^\ast}_{}="0"
    \ar[r]_(.30){(f'\!, h')^\ast}_{}="1"
    \ar@/_5.5ex/[r]_(.33){(f, h)^\ast}_{}="2"
    \ar@2"1";"2"^(.40){(k, H)^\ast}
    \ar@2"0";"1"^(.75){(k'\!, H')^\ast}
    &
    \cotr{L}{g} \pbox{.}
    }
  \]
  En composant ce diagramme on obtient une homotopie $(k, H)^\ast + (k',
  H')^\ast$ de $(f'', h'')^\ast$ vers $(f, h)^\ast$.

  Par ailleurs, en composant le diagramme de
  départ, on obtient un diagramme
  \[
    \shorthandoff{;:}
    \xymatrix@C=1.5pc@R=3pc{
    K \ar@/^2ex/[rr]^(.33){f''}_{}="1" \ar@/_2ex/[rr]^(.30)f_{}="0"
    \ar[dr]_{}="f"_{\phantom{g'}g}
    \ar@2"0";"1"_{k''}
    & & K' \ar[dl]^{g'} \\
    & L
    \ar@{}"f";[ur]_(.15){}="ff"
    \ar@{}"f";[ur]_(.55){}="oo"
    \ar@<-0.5ex>@/^1ex/@{:>}"ff";"oo"^(.18){h''\!\!}_(.30){}="h'"
    \ar@<-2.0ex>@/^-1ex/@2"ff";"oo"_(.36){h}_(.80){}="h"
    \ar@3"h";"h'"_(.20){\,H''} & \pbox{,}
    }
  \]
  où $k''$ désigne l'antihomotopie $k' + k$ et $H''$ la $2$-antihomotopie
  $H' + (\id{g'k'} + H)$ (où le «~+~» de gauche désigne l'opération
  définie au paragraphe~\ref{paragr:def_antih_comp} et celui de droite celle
  définie au paragraphe~\ref{paragr:def_homot_cod2}).
  Ceci a un sens puisque, $\id{g'k'} + H$ étant une
  $2$\nbd-antihomotopie de $g'k' + (g'k + h)$ vers~\hbox{$g'k' + h'$} et $H'$
  une $2$-antihomotopie de $g'k' + h'$ vers~$h''$, $H''$ est une
  $2$-antihomotopie de~\hbox{$g'(k'+k) + h$} vers~$h''$.  On désignera plus
  simplement $H''$ par $H' + H$.  On obtient donc, toujours en vertu de la
  proposition~\ref{prop:img_cone_Cda}, une homotopie $(H' + H, k' + k)^\ast$
  de~$(f'', h'')^\ast$ vers $(f, h)^\ast$.
\end{paragr}

\begin{prop}\label{prop:fonct_cone_Cda}
  On a $(k, H)^\ast + (k', H')^\ast = (k' + k, H' + H)^\ast$.
\end{prop}

\begin{proof}
  Soient $i \ge 0$ et $u' = (u'_j)_{j \ge -1}$ un élément de
  $(\cotr{L}{g'})_i$. Pour tout $j \ge -1$, en posant $z = u'_{-1}(1)$, on
  a
  {
    \allowdisplaybreaks
    \begin{align*}
      ((k, H)^\ast + (k', H')^\ast)_i (u')_j
      & =
      (k, H)^\ast_i(u')_j + (k', H')^\ast_i(u')_j \\
      & =
      (u'_{j+1}k_j + e(z)H_j) + (u'_{j+1}k'_j + e(z)H'_j) \\
      & =
      u'_{j+1}(k'_j + k_j) + e(z)(H'_j + H_j) \\
      & =
      (k' + k, H' + H)^\ast_i (u')_j
    \end{align*}
  }
  d'où le résultat.
\end{proof}

\chapter[Fonctorialités des tranches : résultats pour les
\pdfoo-catégories][Fonctorialités des tranches pour les
\pdfoo-catégories]{Fonctorialités des tranches :\\ résultats pour les
\pdfoo-catégories}
\label{sec:fonct_tr}

\kern-5pt

Fixons $C$ une \oo-catégorie. Les tranches définies dans le
chapitre~\ref{sec:joint} permettent d'associer à tout \oo-foncteur $u : A \to
C$ une \oo-catégorie $\cotr{C}{u}$. Le but de ce chapitre est de montrer,
sous des hypothèses techniques que l'on conjecture être superflues,
que cette correspondance s'étend en une correspondance
{
  \shorthandoff{;:}
  \allowdisplaybreaks
  \begin{align*}
  \raisebox{20pt}{
  \xymatrix{
    A \ar[d]_u \\
    C
  }
  }
  \hskip4.2em
  & \mapsto
  \hskip4.7em
  \cotr{C}{u}\pbox{,} \\
  \raisebox{20pt}{
    \xymatrix@C=1.5pc{
      A \ar[rr]^v \ar[dr]_{u}_{}="f" & & A' \ar[dl]^(0.42){u'} \\
      & C
      \ar@{}"f";[ur]_(.15){}="ff"
      \ar@{}"f";[ur]_(.55){}="oo"
      \ar@<-0.5ex>@2"ff";"oo"^{\alpha}
    }
  }
  \quad
  & \mapsto
  \hskip2.2em
  \xymatrix{\cotr{C}{u'} \ar[r]^{(v, \alpha)^\ast} & \cotr{C}{u} \pbox{,}} \\
  \raisebox{20pt}{
    \xymatrix@C=1.5pc@R=3pc{
    A \ar@/^2ex/[rr]^(.33){v'}_{}="1" \ar@/_2ex/[rr]^(.30)v_{}="0"
    \ar[dr]_{}="f"_{\phantom{u'}u}
    \ar@2"0";"1"_{\beta}
    & & A' \ar[dl]^{u'} \\
    & C
    \ar@{}"f";[ur]_(.15){}="ff"
    \ar@{}"f";[ur]_(.55){}="oo"
    \ar@<-0.5ex>@/^1ex/@{:>}"ff";"oo"^(.18){\alpha'\!\!}_(.30){}="h'"
    \ar@<-2.0ex>@/^-1ex/@2"ff";"oo"_(.36){\alpha}_(.80){}="h"
    \ar@3"h";"h'"_(.20){\,\,\Lambda}
    }
  }
  \quad
  & \mapsto
  \quad
      \xymatrix@C=4.5pc{
        \cotr{C}{u'}
        \ar@/^2.5ex/[r]^{(v', \alpha')^\ast}_{}="1"
        \ar@/_2.5ex/[r]_{(v, \alpha)^\ast}_{}="0"
        \ar@2"1";"0"^(.55){\,(\beta, \Lambda)^\ast}
        &
       \cotr{C}{u'} \pbox{,}
      }
  \end{align*}
}%
où les $2$-flèches et $3$-flèches à gauche du signe <<~$\mapsto$~>>
sont des transformations lax et des $2$-transformations lax respectivement,
et la $2$-flèche à droite de ce signe est une transformation oplax. Plus
généralement, on conjecture que cette correspondance s'étend en un
\oo-foncteur de Gray (voir la conjecture~\ref{conj:fond}).

\section{Un lemme pour se ramener aux complexes dirigés augmentés}

Le but de cette section est de démontrer un lemme technique qui nous
permettra de ramener les fonctorialités des tranches pour les \oo-catégories
aux fonctorialités analogues pour les complexes dirigés augmentés qu'on a
établies dans le chapitre précédent.

\begin{paragr}
  Dans cette section, et uniquement dans celle-ci, nous utiliserons la
  théorie du produit tensoriel de Gray telle qu'elle est rappelée dans
  l'appendice~\ref{sec:Gray}. Si $C$ et~$D$ sont deux \oo-catégories, nous
  noterons $C \otimes D$ leur produit tensoriel (de Gray). Rappelons (voir
  l'appendice~\ref{sec:trans_oplax} et plus particulièrement le corollaire
  \ref{coro:trans_oplax_abs}) que la donnée d'une transformation oplax entre
  deux \oo-foncteurs de $C$ vers~$D$ correspond à celle qu'un \oo-foncteur
  $\Dn{1} \otimes C \to D$, où $\Dn{1}$ désigne la \oo-catégorie du
  paragraphe~\ref{paragr:def_disque}.
\end{paragr}

\begin{paragr}
  Dans cette section, on fixe
  \[ \xymatrix{K \ar[r]^g & L & \ar[l]_{g'} K'} \]
  deux morphismes de complexes dirigés augmentés et $S$ un complexe dirigé
  augmenté.

  À partir de ces données, on définit deux foncteurs de $\cotr{\Cda}{L}$
  vers $\Cda$ :
  \[
      (M, L \xto{a} M) \mapsto S \otimes \cotr{M}{ag'}
    \quadet
      (M, L \xto{a} M) \mapsto \cotr{M}{ag}\,;
  \]
  ainsi que deux foncteurs de $\cotr{\ooCat}{\nu(L)}$ vers $\ooCat$ :
  \[
      (C, \nu(L) \xto{b} C) \mapsto \nu(S) \otimes \cotr{C}{b\nu(g')}
    \quadet
      (C, \nu(L) \xto{b} C) \mapsto \cotr{C}{b\nu(g)}.
  \]
\end{paragr}

\begin{lemme}\label{lemme:fonct_Cda_ooCat}
  On suppose que $K$, $K'$, $L$ et $S$ sont des complexes de Steiner forts et
  que $g'$ est une inclusion rigide ordonnée \noemph{(voir le
  paragraphe~\ref{paragr:def_rigide_ordonnee})}. Alors, pour tout morphisme
  \[ \alpha_{(M, a)} : S \otimes \cotr{M}{ag'} \to \cotr{M}{ag}, \]
  naturel en $(M, a)$ dans $\cotr{\Cda}{L}$, il existe un et un seul
  \oo-foncteur
  \[ \beta_{(C, b)} : \nu(S) \otimes \cotr{C}{b\nu(g')} \to \cotr{C}{b\nu(g)}, \]
  naturel en $(C, b)$ dans $\cotr{\ooCat}{\nu(L)}$, tel que, pour tout
  complexe de Steiner fort $M$ muni d'un morphisme $a : L \to M$, le
  diagramme
  \[
    \xymatrix@C=3pc{
      \nu(S) \otimes \nu(\cotr{M}{ag'}) \ar[r]^\mu \ar[d]_\simeq
      &
      \nu(S \otimes \cotr{M}{ag'})
      \ar[r]^-{\nu(\alpha_{(M, a)})}
      &
      \nu(\cotr{M}{ag}) \ar[d]^\simeq
      \\
      \nu(S) \otimes (\cotr{\nu(M)}{\nu(ag')})
      \ar[rr]_{\beta_{(\nu(M), \nu(a))}}
      & &
      \cotr{\nu(M)}{\nu(ag)} \pbox{,}
    }
  \]
  où les isomorphismes verticaux proviennent de ceux de la
  proposition~\ref{prop:tr_Cda_ooCat} et $\mu$ désigne
  la contrainte du foncteur monoïdal lax $\nu$ \noemph{(voir la
  proposition~\ref{prop:lambda_nu_mon_tens})}, soit commutatif.
\end{lemme}

\begin{proof}
  Par adjonction, la donnée d'un \oo-foncteur
  \[ \beta_{(C, b)} : \nu(S) \otimes \cotr{C}{b\nu(g')} \to \cotr{C}{b\nu(g)} \]
  est équivalente à celle d'un \oo-foncteur
  \[ \cotr{C}{b\nu(g')} \to \HomLax(\nu(S), \cotr{C}{b\nu(g)}), \]
  où $\HomLax$ désigne le $\Hom$ interne à gauche du produit tensoriel,
  introduit au paragraphe~\ref{paragr:def_HomOpLax}. Par ailleurs,
  la sous-catégorie pleine de $\ooCat$ formée des \oo-catégories de Steiner
  fortes étant dense dans $\ooCat$ (car elle contient $\Theta$, elle-même
  dense dans $\ooCat$ d'après la proposition~\ref{prop:Theta_dense}), la
  donnée d'un tel \oo-foncteur est équivalente à celle d'une application
  \[
    \Hom_{\ooCat}(\nu(T), \cotr{C}{b\nu(g')})
    \to
    \Hom_{\ooCat}(\nu(T), \HomLax(\nu(S), \cotr{C}{b\nu(g)})),
  \]
  naturelle en $T$ dans la sous-catégorie pleine $\Stf$ de $\Cda$ formée des
  complexes de Steiner forts (en vertu de la pleine fidélité du foncteur
  $\nu$ restreint aux complexes de Steiner forts, voir le
  théorème~\ref{thm:Steiner}). Par adjonction, la donnée d'une telle
  application correspond à celle d'une application
  \[
    \Hom_{\ooCat}(\nu(T), \cotr{C}{b\nu(g')})
    \to
    \Hom_{\ooCat}(\nu(S) \otimes \nu(T), \cotr{C}{b\nu(g)}),
  \]
  ou, de nouveau par adjonction (voir le
  paragraphe~\ref{paragr:def_tranche}), à celle d'une application
  \[
    \begin{split}
      \MoveEqLeft
      \Hom_{\cotr{\ooCat}{\nu(K')}}((\nu(K') \joint \nu(T), \iota_1), (C,
      b\nu(g')))
      \\
      & \qquad\qquad \to
      \Hom_{\cotr{\ooCat}{\nu(K)}}((\nu(K) \joint (\nu(S)
      \otimes \nu(T)), \iota_1), (C, b\nu(g))),
    \end{split}
  \]
  ou encore, en vertu de la propriété universelle de la somme amalgamée, à
  celle d'une application
  \[
    \begin{split}
    \MoveEqLeft
    \Hom_{\cotr{\ooCat}{\nu(L)}}((\nu(L) \amalg_{\nu(K')} (\nu(K') \joint
    \nu(T)), \e_1), (C, b)) \\
    & \qquad\qquad \to
    \Hom_{\cotr{\ooCat}{\nu(L)}}((\nu(L) \amalg_{\nu(K)} (\nu(K) \joint
    (\nu(S) \otimes \nu(T))), \e_1), (C, b)),
    \end{split}
  \]
  où $\e_1$ désigne la première inclusion canonique dans une somme
  amalgamée. Puisqu'on demande que le \oo-foncteur $\beta_{(C, b)}$ soit
  naturel en $(C, b)$ dans $\cotr{\ooCat}{\nu(L)}$, en vertu du lemme de
  Yoneda, la donnée de l'application ci-dessus pour tout $(C, b)$ est
  équivalente à celle d'un \oo-foncteur
  \[
    \nu(L) \amalg_{\nu(K)} (\nu(K) \joint (\nu(S) \otimes \nu(T)))
    \to
    \nu(L) \amalg_{\nu(K')} (\nu(K') \joint \nu(T))
  \]
  au-dessous de $\nu(L)$, naturel en $T$ dans $\Stf$, ou, ce qui revient au
  même, d'un \oo-foncteur
  \[
    \nu(K) \joint (\nu(S) \otimes \nu(T))
    \to
    \nu(L) \amalg_{\nu(K')} (\nu(K') \joint \nu(T))
  \]
  faisant commuter le carré
  \[
    \xymatrix@C=2.5pc{
      \nu(K) \joint (\nu(S) \otimes \nu(T)) \ar[r] & \nu(L) \amalg_{\nu(K')}
      (\nu(K') \joint \nu(T)) \\
      \nu(K) \ar[u]^{\iota_1} \ar[r]_{\nu(g)} & \nu(L) \ar[u]_{\e_1} \pbox{,}
    }
  \]
  toujours naturel en $T$ dans $\Stf$.

  En vertu de la stabilité des complexes de Steiner forts par produit
  tensoriel et joint (proposition~\ref{prop:tens_Steiner} et
  corollaire~\ref{coro:joint_Steiner}) et de la compatibilité du foncteur $\nu$ à ces
  opérations (théorèmes~\ref{thm:produit_tens} et \ref{thm:joint}), le
  complexe $K \joint (S \otimes T)$ est un complexe de Steiner fort et on a
  un isomorphisme canonique
  \[
    \nu(K) \joint (\nu(S) \otimes \nu(T))
    \simeq
    \nu(K \joint (S \otimes T)).
  \]
  De même, le complexe $K' \joint T$ est un complexe de Steiner fort et on a
  un isomorphisme canonique
  \[
    \nu(K') \joint \nu(T)
    \simeq
    \nu(K' \joint T).
  \]
  De plus, puisque, d'une part, le morphisme $g' : K' \to L$ est une
  inclusion rigide ordonnée par hypothèse et, d'autre part, le morphisme
  $\iota_1 : K' \to K' \joint T$ est une inclusion rigide ordonnée en vertu
  du corollaire~\ref{coro:joint_Steiner}, le
  théorème~\ref{thm:nu_somme_amalg} entraîne que la somme amalgamée $L
  \amalg_{K'} (K' \joint T)$ est un complexe de Steiner fort et qu'on a un
  isomorphisme canonique
  \[
   \nu(L) \amalg_{\nu(K')} (\nu(K') \joint \nu(T))
   \simeq
   \nu(L \amalg_{K'} (K' \joint T)).
  \]
  Ainsi, puisque le foncteur $\nu$ restreint aux complexes de Steiner forts
  est pleinement fidèle, la donnée du \oo-foncteur $\beta_{(C, b)}$, naturel
  en~$(C, b)$ dans~$\cotr{\ooCat}{\nu(L)}$, est équivalente à celle d'un
  morphisme
  \[
    \alpha'_T : K \joint (S \otimes T) \to L \amalg_{K'} (K' \joint T),
  \]
  naturel en $T$ dans $\Stf$, faisant commuter le carré
  \[
    \xymatrix@C=2.5pc{
      K \joint (S \otimes T) \ar[r]^-{\alpha'_T} & L \amalg_{K'} (K' \joint T) \\
      K \ar[u]^{\iota_1} \ar[r]_g & L \ar[u]_{\e_1} \pbox{.}
    }
  \]

  Nous allons montrer qu'un morphisme
  \[ \alpha_{(M, a)} : S \otimes \cotr{M}{ag'} \to \cotr{M}{ag}, \]
  naturel en $(M, a)$ dans $\cotr{\Cda}{L}$, permet de définir un morphisme
  $\alpha'_T$ comme ci-dessus. En effet, par adjonction, le morphisme 
  $\alpha_{(M, a)}$ correspond à un morphisme
  \[ \cotr{M}{ag'} \to \Homig_{\Cda}(S, \cotr{M}{ag}), \]
  où $\Homig_{\Cda}$ désigne le $\Hom$ interne à gauche du
  produit tensoriel de $\Cda$ (qui existe en vertu du
  paragraphe~\ref{paragr:def_produit_cda}). On dispose donc d'une
  application
  \[
    \Hom_{\Cda}(T, \cotr{M}{ag'}) \to \Hom_{\Cda}(T,
    \Homig_{\Cda}(S, \cotr{M}{ag})),
  \]
  naturelle en $T$ dans $\Cda$, ainsi qu'en $(M, a)$ dans $\cotr{\Cda}{L}$.
  Cette application correspond, par adjonction, à une application
  \[
    \Hom_{\Cda}(T, \cotr{M}{ag'}) \to \Hom_{\Cda}(S \otimes T,
    \cotr{M}{ag}),
  \]
  ou, de nouveau par adjonction (voir la
  proposition~\ref{prop:pu_cotr_Cda}), à une application
  \[ \Hom_{\cotr{\Cda}{K'}}((K' \joint T, \iota_1), (M, ag'))
  \to \Hom_{\cotr{\Cda}{K}}((K \joint (S \otimes T), \iota_1), (M, ag)), \]
  ou encore, en vertu de la propriété universelle de la somme amalgamée, à
  celle d'une application
  \[
     \begin{split}
      \MoveEqLeft
      \Hom_{\cotr{\Cda}{L}}((L \amalg_{K'} (K' \joint T), \e_1), (M, a)) \\
      & \qquad \qquad 
      \to \Hom_{\cotr{\Cda}{L}}((L \amalg_K (K \joint (S
        \otimes T)), \e_1), (M, a)).
     \end{split}
  \]
  Ainsi, le lemme de Yoneda fournit un morphisme
  \[
    L \amalg_K (K \joint (S \otimes T)) \to L \amalg_{K'} (K' \joint T)
  \]
  au-dessous de $L$ ou, ce qui revient au même, un morphisme
  \[
    \alpha'_T : K \joint (S \otimes T) \to L \amalg_{K'} (K' \joint T)
  \]
  faisant commuter le carré
  \[
    \xymatrix@C=2.5pc{
      K \joint (S \otimes T) \ar[r]^-{\alpha'_T} & L \amalg_{K'} (K' \joint T) \\
      K \ar[u]^{\iota_1} \ar[r]_g & L \ar[u]_{\e_1} \pbox{.}
    }
  \]
  Ce morphisme est naturel en $T$ dans $\Cda$.

  Ainsi, on a associé à tout morphisme $\alpha_{(M, a)}$, naturel en $(M,
  a)$ dans $\cotr{\Cda}{L}$, un morphisme $\alpha'_T$ naturel en $T$ dans
  $\Stf$ (et même dans $\Cda$). Or, on a montré que la donnée d'un tel
  $\alpha'_T$ est équivalente à celle d'un \oo-foncteur $\beta_{(C, b)}$,
  naturel en $(C, b)$ dans $\cotr{\ooCat}{\nu(L)}$. On vérifie que la
  manière dont est défini $\beta_{(C, b)}$ à partir de $\alpha'_T$, lui-même
  défini à partir de $\alpha_{(M, a)}$, s'exprime par la commutativité du
  diagramme de l'énoncé, d'où le résultat.
\end{proof}

\begin{rem}
  La construction du lemme précédent est naturelle en $S$ au sens suivant.
  Si
  \[
    \alpha_1 : S_1 \otimes \cotr{M}{ag'} \to \cotr{M}{ag}
    \quadet
    \alpha_2 : S_2 \otimes \cotr{M}{ag'} \to \cotr{M}{ag}
  \]
  sont deux morphismes comme dans l'énoncé du lemme (en particulier,
  $S_1$ et $S_2$ sont des complexes de Steiner forts) et que $f : S_1 \to
  S_2$ est un morphisme de complexes dirigés augmentés rendant le diagramme
  \[
    \xymatrix@R=1pc{
     S_1 \otimes \cotr{M}{ag'} \ar[dr]^{\alpha_1} \ar[dd]_{f \otimes
     \cotr{M}{ag'}} \\
     & \cotr{M}{ag} \\
     S_2 \otimes \cotr{M}{ag'} \ar[ur]_{\alpha_2}
    }
  \]
  commutatif, alors le
  diagramme
  \[
    \xymatrix@R=1pc{
     \nu(S_1) \otimes \cotr{C}{b\nu(g')} \ar[dr]^{\beta_1} \ar[dd]_{\nu(f) \otimes
     \cotr{C}{b\nu(g')}} \\
     & \cotr{C}{b\nu(g)} \\
     \nu(S_2) \otimes \cotr{C}{b\nu(g')} \ar[ur]_{\beta_2} &
      \hbox{\phantom{$\cotr{C}{b\nu(g)}$}}\pbox{,}
    }
  \]
  où $\beta_1$ et $\beta_2$ sont les \oo-foncteurs associés par le lemme à
  $\alpha_1$ et $\alpha_2$ respectivement, est également commutatif. Cela
  résulte immédiatement de la propriété d'unicité du \oo-foncteur $\beta_1$ donnée par le
  lemme.
\end{rem}

Les deux énoncés qui suivent sont des reformulations de ce lemme dans les
cas \hbox{$S = \lambda(\Dn{0})$} et $S = \lambda(\Dn{1})$,
où $\Dn{0}$ et $\Dn{1}$ désignent les \oo-catégories du
paragraphe~\ref{paragr:def_disque}, cas qui seront les seuls que nous
utiliserons dans ce texte. \emph{Nous supposons, comme dans le lemme, que
les complexes $K$, $K'$ et $L$ sont des complexes de Steiner forts et que le
morphisme~$g'$ est une inclusion rigide ordonnée.}

\begin{coro}\label{coro:lemme_D0}
  Pour tout morphisme
  \[ \alpha_{(M, a)} : \cotr{M}{ag'} \to \cotr{M}{ag}, \]
  naturel en $(M, a)$ dans $\cotr{\Cda}{L}$, il existe un et un seul
  \oo-foncteur
  \[ \beta_{(C, b)} : \cotr{C}{b\nu(g')} \to \cotr{C}{b\nu(g)}, \]
  naturel en $(C, b)$ dans $\cotr{\ooCat}{\nu(L)}$, tel que, pour tout
  complexe de Steiner fort $M$ muni d'un morphisme $a : L \to M$, le carré
  \[
    \xymatrix@C=3.5pc{
      \nu(\cotr{M}{ag'})
      \ar[r]^-{\nu(\alpha_{(M, a)})}
      \ar[d]_\simeq
      &
      \nu(\cotr{M}{ag})
      \ar[d]^\simeq
      \\
      \cotr{\nu(M)}{\nu(ag')}
      \ar[r]_-{\beta_{(\nu(M), \nu(a))}}
      &
      \cotr{\nu(M)}{\nu(ag)} \pbox{,}
    }
  \]
  où les isomorphismes verticaux sont ceux de la
  proposition~\ref{prop:tr_Cda_ooCat}, soit commutatif.
\end{coro}

\begin{proof}
  C'est ce qu'on obtient en reformulant le cas $S = \lambda(\Dn{0})$ du
  lemme précédent en utilisant le fait que $\lambda(\Dn{0})$ et
  $\nu(\lambda(\Dn{0})) \simeq \Dn{0}$ sont les unités des produits
  tensoriels sur $\Cda$ et $\ooCat$ respectivement (voir les paragraphes
  \ref{paragr:def_produit_comp} et \ref{paragr:def_tens}).
\end{proof}

\begin{coro}\label{coro:lemme_D1}
  Pour toute homotopie
  \[
    \shorthandoff{;:}
    \xymatrix@C=5pc@R=3.5pc{
      \cotr{M}{ag'}
      \ar@/^3ex/[r]^(.50){\alpha'_{(M, a)}}_{}="1"
      \ar@/_3ex/[r]_(.50){\alpha_{(M, a)}}_{}="0"
      \ar@2"1";"0"|{\Gamma_{(M, a)}}
      &
      \cotr{M}{ag}
      \pbox{,}
    }
  \]
  naturelle en $(M, a)$ dans $\cotr{\Cda}{L}$, il existe une et une seule
  transformation oplax
  \[
    \shorthandoff{;:}
    \xymatrix@C=5pc@R=3.5pc{
      \cotr{C}{b\nu(g')}
      \ar@/^3ex/[r]^(.50){\beta'_{(C, b)}}_{}="1"
      \ar@/_3ex/[r]_(.50){\beta_{(C, b)}}_{}="0"
      \ar@2"1";"0"|{\Delta_{(C, b)}}
      &
      \cotr{C}{b\nu(g)}
      \pbox{,}
    }
  \]
  naturelle en $(C, b)$ dans $\cotr{\ooCat}{\nu(L)}$, telle que, pour tout
  complexe de Steiner fort $M$ muni d'un morphisme $a : L \to M$,
  le diagramme
  \[
    \shorthandoff{;:}
    \xymatrix@C=5pc@R=3.5pc{
      \nu(\cotr{L}{ag'})
      \ar[d]_{\simeq}
      \ar@/^3ex/[r]^(.50){\nu(\alpha'_{(M, a)})}_{}="1"
      \ar@/_3ex/[r]_(.50){\nu(\alpha_{(M, a)})}_{}="0"
      \ar@2"1";"0"|{\nu(\Gamma_{(M, a)})}
      &
      \nu(\cotr{L}{ag})
      \ar[d]^{\simeq}
      \\
      \cotr{\nu(L)}{\nu(ag')}
      \ar@/^3ex/[r]^(.50){\beta'_{(\nu(M), \nu(a))}}_{}="2"
      \ar@/_3ex/[r]_(.50){\beta_{(\nu(M), \nu(a))}}_{}="1"
      \ar@2"2";"1"|{\Delta_{(\nu(M), \nu(a))}}
      &
      \cotr{\nu(L)}{\nu(ag)} \pbox{,}
      }
  \]
  où les isomorphismes verticaux sont ceux de la
  proposition~\ref{prop:tr_Cda_ooCat} et $\nu(\Gamma_{(M, a)})$ désigne la
  transformation oplax associée à l'homotopie $\Gamma_{(M, a)}$ \noemph{(voir le
  paragraphe~\ref{paragr:def_nu_homot})}, soit commutatif.
\end{coro}

\begin{proof}
  C'est ce qu'on obtient en reformulant le cas $S = \lambda(\Dn{1})$ du
  lemme précédent en utilisant la description des homotopies en termes de
  produit tensoriel par $\lambda(\Dn{1})$ (voir le
  paragraphe~\ref{paragr:homot_abs}), la description des
  transformations oplax et de leurs composés avec un \oo-foncteur en termes
  de produit tensoriel par $\nu(\lambda(\Dn{1})) \simeq \Dn{1}$ (voir le
  corollaire~\ref{coro:trans_oplax_abs} et la
  proposition~\ref{prop:trans_sesqui_abs}) et la description de la
  transformation oplax associée à une homotopie (voir le
  paragraphe~\ref{paragr:def_nu_homot}).
\end{proof}

\begin{rem}\label{rem:nat_trans_oplax}
  La naturalité en $(C, b)$ dans $\cotr{\ooCat}{\nu(L)}$ de la
  transformation oplax $\Delta_{(C, b)}$ du corollaire précédent peut
  s'exprimer de la manière suivante. Pour tout triangle commutatif
  \[
    \xymatrix@C=1.5pc{
      & \nu(L) \ar[dl]_{b} \ar[dr]^{b'} \\
      C \ar[rr]_u & & C'
    }
  \]
  de $\ooCat$, le diagramme
  \[
    \shorthandoff{;:}
    \xymatrix@C=5pc@R=3.5pc{
      \cotr{C}{b\nu(g')}
      \ar[d]_{u_\ast}
      \ar@/^3ex/[r]^(.50){\beta'_{(C, b)}}_{}="1"
      \ar@/_3ex/[r]_(.50){\beta_{(C, b)}}_{}="0"
      \ar@2"1";"0"|{\Delta_{(C, b)}}
      &
      \cotr{C}{b\nu(g)}
      \ar[d]^{u_\ast}
      \\
      \cotr{C'}{b'\nu(g')}
      \ar@/^3ex/[r]^(.50){\beta'_{(C', b')}}_{}="2"
      \ar@/_3ex/[r]_(.50){\beta_{(C', b')}}_{}="1"
      \ar@2"2";"1"|{\Delta_{(C', b')}}
      &
      \cotr{C'}{b'\nu(g)} \pbox{,}
      }
  \]
  où les \oo-foncteurs verticaux sont ceux induits par $u$ vu comme
  morphisme de $\cotr{\ooCat}{\nu(K')}$ et de $\cotr{\ooCat}{\nu(K)}$
  respectivement, est commutatif au sens où on a
  \[ u_\ast \comp \Delta_{(C, b)} = \Delta_{(C', b')} \comp u_\ast. \]
  La naturalité de l'homotopie $\Gamma_{(M, a)}$ en $(M, a)$ dans
  $\cotr{\Cda}{L}$ peut s'exprimer de manière analogue.
\end{rem}

\section{\pdfoo-foncteur associé à un triangle}
\label{sec:img_tri}

\begin{paragr}\label{paragr:ann_img_tri}
  Dans cette section, on fixe
  \[
    \shorthandoff{;}
    \xymatrix@C=1.5pc{
      K \ar[rr]^f \ar[dr]_{g}_{}="f" & & K' \ar[dl]^(0.42){g'} \\
      & L
      \ar@{}"f";[ur]_(.15){}="ff"
      \ar@{}"f";[ur]_(.55){}="oo"
      \ar@<-0.5ex>@2"ff";"oo"^{h}
    }
  \]
  un triangle dans la catégorie des complexes dirigés augmentés commutant à
  une antihomotopie $h$ de~$g$ vers $g'f$ près.

  Soit $C$ une \oo-catégorie munie d'un \oo-foncteur
  \hbox{$b : \nu(L) \to C$}.
  On souhaite associer à ces données un \oo-foncteur
  \[
    (f, h, b)^\ast : \cotr{C}{b\nu(g')} \to \cotr{C}{b\nu(g)}
  \]
  \notindex{$(f, h, b)^\ast : \cotr{C}{b\nu(g')} \to \cotr{C}{b\nu(g)}$}%
  au-dessus de $C$. Nous y parviendrons sous l'hypothèse que les complexes
  $K$, $K'$ et $L$ sont des complexes de Steiner forts et que le morphisme
  $g'$ est une inclusion rigide ordonnée (voir le
  paragraphe~\ref{paragr:def_rigide_ordonnee}).

  Notons que lorsque $C = \nu(M)$ pour $M$ un complexe dirigé augmenté et
  $b = \nu(a)$ pour $a : L \to M$, on dispose bien d'un tel \oo-foncteur.
  En effet, on a un triangle
  \[
    \shorthandoff{;}
    \xymatrix@C=1.5pc{
      K \ar[rr]^f \ar[dr]_{ag}_{}="f" & & K' \ar[dl]^(0.42){ag'} \\
      & M
      \ar@{}"f";[ur]_(.15){}="ff"
      \ar@{}"f";[ur]_(.55){}="oo"
      \ar@<-0.5ex>@2"ff";"oo"^{ah}
    }
  \]
  et donc, en vertu de la proposition~\ref{prop:img_tri_Cda},
  un \oo-foncteur
  \[
    \nu((f, ah)^\ast) : \nu(\cotr{M}{ag'}) \to \nu(\cotr{M}{ag})
  \]
  qui, à travers les isomorphismes de la
  proposition~\ref{prop:tr_Cda_ooCat}, définit un \oo-foncteur
  comme souhaité.
\end{paragr}

\begin{thm}\label{thm:img_tri}
  On suppose que $K$, $K'$ et $L$ sont des complexes de Steiner forts et que
  $g$ est une inclusion rigide ordonnée. Soit $C$ une \oo-catégorie
  munie d'un \oo-foncteur $b : \nu(L) \to C$. Il existe un et un seul
  \oo-foncteur
  \[ (f, h, b)^\ast : \cotr{C}{b\nu(g')} \to \cotr{C}{b\nu(g)}, \]
  naturel en $(C, b)$ dans $\cotr{\ooCat}{\nu(L)}$, tel que, pour tout
  complexe de Steiner fort $M$ muni d'un morphisme $a : L \to M$, le
  carré
  \[
    \xymatrix@C=3.5pc{
      \nu(\cotr{M}{ag'})
      \ar[r]^-{\nu((f, ah)^\ast)}
      \ar[d]_\simeq
      &
      \nu(\cotr{M}{ag})
      \ar[d]^\simeq
      \\
      \cotr{\nu(M)}{\nu(ag')}
      \ar[r]_-{(f, h, \nu(a))^\ast}
      &
      \cotr{\nu(M)}{\nu(ag)}
      \pbox{,}
    }
  \]
  où les isomorphismes verticaux sont ceux de la
  proposition~\ref{prop:tr_Cda_ooCat}, soit commutatif.
\end{thm}

\begin{proof}
  C'est ce qu'affirme le corollaire~\ref{coro:lemme_D0} pour
  \[
    \alpha_{(M, a)} = (f, ah)^\ast : \cotr{M}{ag'} \to \cotr{M}{ag},
  \]
  dont la naturalité résulte immédiatement des formules du
  paragraphe~\ref{paragr:img_tri_Cda}.
\end{proof}

\begin{rem}\label{rem:img_tri_expl}
  En déroulant la preuve du théorème précédent (et donc celle du
  lemme~\ref{lemme:fonct_Cda_ooCat}) et en utilisant les formules du
  paragraphe~\ref{paragr:img_tri_Cda} définissant le morphisme $(f, ah)^\ast
  : \cotr{M}{ag'} \to \cotr{M}{ag}$, on peut décrire le
  \oo-foncteur~$(f, h, b)^\ast$ de la manière suivante. Soit $T$ un complexe
  de Steiner fort. Il suffit de décrire l'application
  \[
    \Hom_{\ooCat}(\nu(T), \cotr{C}{b\nu(g')}) \to \Hom_{\ooCat}(\nu(T),
    \cotr{C}{b\nu(g)})
  \]
  induite par $(f, h, b)^\ast$. Or, on a des bijections naturelles
  \[
     \begin{split}
       \Hom_{\ooCat}(\nu(T), \cotr{C}{b\nu(g')})
     & \simeq \Hom_{\cotr{\ooCat}{\nu(L)}}((\nu(L \amalg_{K'} (K' \joint T)),
     \nu(\e_1)), (C, b)) \\
     & \subset \Hom_{\ooCat}(\nu(L \amalg_{K'} (K' \joint T)), C), \\
     \Hom_{\ooCat}(\nu(T), \cotr{C}{b\nu(g)})
     & \simeq \Hom_{\cotr{\ooCat}{\nu(K)}}((\nu(K \joint T), \nu(\iota_1)),
     (C, b\nu(g))) \\
     & \subset \Hom_{\ooCat}(\nu(K \joint T), C),
     \end{split}
   \]
  où $\e_1$ désigne la première inclusion canonique,
  et l'application
  \[
    \Hom_{\ooCat}(\nu(T), \cotr{C}{b\nu(g')}) \to \Hom_{\ooCat}(\nu(T),
    \cotr{C}{b\nu(g)})
  \]
  est induite par le morphisme
  \[
    (f, h)_\ast :
    K \joint T
    \to
    L \amalg_{K'} (K' \joint T)
  \]
  défini de la manière suivante :
  \[
      (f, h)_\ast(x \joint y) =
      \begin{cases}
        g(x) & \text{si $y = \vide$,} \\
        f(x) \joint y + e(y)h(x) & \text{sinon,}
      \end{cases}
  \]
  où on convient que $f(\vide) = \vide$, $h(\vide) = 0$ et $e(y) = 0$ si $y$
  n'est pas de degré $0$. Nous n'utiliserons pas cette description de $(f,
  h, b)^\ast$ dans la suite de ce texte.
\end{rem}

\begin{paragr}
  Considérons le cas particulier d'un triangle commutatif
  \[
    \shorthandoff{;}
    \xymatrix@C=1.5pc{
      K \ar[rr]^f \ar[dr]_{g}_{}="f" & & K' \ar[dl]^(0.42){g'} \\
                                     & L
      \ar@{}"f";[ur]_(.15){}="ff"
      \ar@{}"f";[ur]_(.55){}="oo"
      \ar@<-0.5ex>@2"ff";"oo"^{\id{g}}
      & \pbox{,}
    }
  \]
  de complexes dirigés augmentés, c'est-à-dire le cas où $h$ est
  l'antihomotopie identité de $g = g'f$. On suppose que les complexes $K$,
  $K'$ et $L$ sont des complexes de Steiner forts et que le morphisme $g'$
  est une inclusion rigide ordonnée.

  Soit $C$ une \oo-catégorie munie d'un \oo-foncteur
  \hbox{$b : \nu(L) \to C$}. On pose
  \[
    c = b\nu(g) \quadet c' = b\nu(g').
  \]

  En vertu du théorème~\ref{thm:img_tri}, on dispose d'un \oo-foncteur
  \[ (f, \id{g}, b)^\ast : \cotr{C}{c'} \to \cotr{C}{c}. \]
  Par ailleurs, en considérant le triangle commutatif
  \[
    \shorthandoff{;}
    \xymatrix@C=1.5pc{
    \nu(K) \ar[rr]^{\nu(f)} \ar[dr]_{b\nu(g)}_{}="f" & &
    \nu(K') \ar[dl]^{b\nu(g')} \\
    & C & \pbox{,}
    }
  \]
  obtenu un appliquant le foncteur $\nu$ et en composant par $b : \nu(L)
  \to C$, on obtient, en vertu du paragraphe~\ref{paragr:desc_morph_tr}, un
  \oo-foncteur
  \[ \nu(f)^\ast : \cotr{C}{c'} \to \cotr{C}{c}. \]
\end{paragr}

\begin{prop}\label{prop:fonct_img_tri_comm}
  Les \oo-foncteurs $(f, \id{g}, b)^\ast$ et $\nu(f)^\ast$ du paragraphe
  ci-dessus coïncident.
\end{prop}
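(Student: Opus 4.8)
The plan is to reduce the asserted equality to the universal case of a strong Steiner target and there invoke the complex-level statement, Proposition~\ref{prop:morph_tr_Cda_ooCat}. Both $\infty$-functors are, by construction, induced by morphisms of coaugmented $\infty$-cocategories with common source $\nu(K) \joint \Dn{\var}$ and common target $\nu(L) \amalg_{\nu(K')} (\nu(K') \joint \Dn{\var})$. On one side, $(f, \id{}, b)^\ast$ is induced by $(f, \id{})_\ast$ (Theorem~\ref{thm:img_tri} and the following remark). On the other side, the first thing I would do is observe that $\nu(f)^\ast$ is induced, through the $\e_2$-isomorphism of paragraph~\ref{paragr:cocat_tr_e2} (Lemma~\ref{lemme:iso_e_2}), by the composite morphism $\widetilde{f}_\ast = \e_2 \circ (\nu(f) \joint \Dn{\var})$. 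This amounts to matching the precomposition description of the slice functor in paragraph~\ref{paragr:desc_morph_tr} with the identification of $\cotr{C}{c'}$ with $\Hom_b(\nu(L) \amalg_{\nu(K')} (\nu(K') \joint \Dn{\var}), C)$ furnished by $\e_2$; it is a direct unwinding of the two definitions, using that precomposition by $\e_2$ undoes that identification and leaves precisely the precomposition by $\nu(f) \joint \Dn{\var}$ defining $\nu(f)^\ast$.

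Granting this, it suffices to prove the equality of the two cocategory morphisms $(f, \id{})_\ast = \widetilde{f}_\ast$, since equal morphisms induce the same functors on all slices. By Proposition~\ref{prop:cocat_val_Cda} both coaugmented $\infty$-cocategories take values in the full subcategory of strong Steiner $\infty$-categories, so the Yoneda lemma for coaugmented $\infty$-cocategories (Proposition~\ref{prop:cocat_Yoneda}\,(\emph{a})) reduces this equality to checking that the two morphisms induce the same $\infty$-functor $\cotr{\nu(M)}{\nu(ag')} \to \cotr{\nu(M)}{\nu(ag)}$ for every strong Steiner complex $M$ and every $a : L \to M$, where one writes $b = \nu(a)$ by Theorem~\ref{thm:Steiner}.

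In this universal case I would identify both induced functors at the level of complexes. By the very construction of $(f, \id{})_\ast$ (paragraph~\ref{paragr:img_tri_co} with $h = \id{}$, so that the relevant antihomotopy $ah$ is the identity and $(f, ah)^\ast = f^\ast$ is the morphism of~\ref{paragr:fonc_Cda_comm}), the functor induced by $(f, \id{})_\ast$ is $\nu(f^\ast)$, read through the isomorphism $\nu(\cotr{M}{ag'}) \simeq \cotr{\nu(M)}{\nu(ag')}$ of Proposition~\ref{prop:tr_Cda_ooCat}. The functor induced by $\widetilde{f}_\ast$ is, by the first step specialised to $C = \nu(M)$, the functor $\nu(f)^\ast$ attached to the triangle $\nu(K) \to \nu(K')$ over $\nu(M)$. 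Since the triangle $K \xto{f} K'$ over $M$ with legs $ag$ and $ag'$ is commutative (here $g = g'f$), Proposition~\ref{prop:morph_tr_Cda_ooCat} asserts exactly that $\nu(f^\ast)$ and $\nu(f)^\ast$ agree through these isomorphisms, which closes the argument.

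The step I expect to be the main obstacle is the first one: verifying that $\nu(f)^\ast$, defined in~\ref{paragr:desc_morph_tr} by precomposition with $\nu(f) \joint \Dn{\var}$, coincides with the functor induced by $\e_2 \circ (\nu(f) \joint \Dn{\var})$ through the $\e_2$-isomorphism. Everything else is a formal application of the Yoneda lemma for cocategories together with the already-established complex-level statement, but this compatibility demands care in tracking the coaugmentations and the role of $\e_2$, as the asymmetry between the source and target of $(f, \id{})_\ast$ noted in remark~\ref{rem:morph_cocat_asym} is precisely what the comparison must absorb.
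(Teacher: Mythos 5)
Votre preuve est correcte et suit essentiellement la m\^eme strat\'egie que celle de l'article : exprimer $\nu(f)^\ast$ comme le $\infty$-foncteur induit par $\e_2 \circ (\nu(f) \joint \Dn{\var})$ (ce que l'article formule comme la commutativit\'e d'un carr\'e de morphismes d'objets $\infty$-cocat\'egorie coaugment\'ee, dont le c\^ot\'e gauche est l'identit\'e), puis r\'eduire par le lemme de Yoneda (propositions~\ref{prop:cocat_val_Cda} et~\ref{prop:cocat_Yoneda}) au cas universel $C = \nu(M)$ avec $M$ un complexe de Steiner fort, et conclure par la proposition~\ref{prop:morph_tr_Cda_ooCat}. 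La premi\`ere \'etape que vous signalez comme l'obstacle principal est en fait imm\'ediate par fonctorialit\'e contravariante de $\HomAug(\var, C)$, exactement comme l'article la traite.
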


\begin{proof}
  Il résulte de la définition de $\nu(f)^\ast : \cotr{C}{c'} \to
  \cotr{C}{c}$ (paragraphe~\ref{paragr:desc_morph_tr}) que ce \oo-foncteur
  est naturel en $(C, b)$ dans $\cotr{\ooCat}{\nu(L)}$. Ainsi, en vertu de
  la caractérisation du \oo-foncteur $(f, 1, b)^\ast$ donnée par le
  théorème~\ref{thm:img_tri}, il suffit de vérifier, pour tout complexe
  de Steiner fort $M$ muni d'un morphisme $a : L \to M$, la commutativité du
  carré
  \[
    \xymatrix@C=3.5pc{
      \nu(\cotr{M}{ag'})
      \ar[r]^-{\nu((f, \id{ag})^\ast)}
      \ar[d]_\simeq
      &
      \nu(\cotr{M}{ag})
      \ar[d]^\simeq
      \\
      \cotr{\nu(M)}{\nu(ag')}
      \ar[r]_-{\nu(f)^\ast}
      &
      \cotr{\nu(M)}{\nu(ag)}
      \pbox{,}
    }
  \]
  où les isomorphismes verticaux sont ceux de la
  proposition~\ref{prop:tr_Cda_ooCat}, ce qui résulte de
  la proposition~\ref{prop:morph_tr_Cda_ooCat}.
\end{proof}

\begin{rem}\label{rem:conv_pas_rig_ord}
  Si l'on ne suppose pas que le morphisme $g'$ est une inclusion rigide
  ordonnée, le théorème~\ref{thm:img_tri} ne fournit par de \oo-foncteur
  $(f, \id{g}, b)^\ast$. Dans ce cas, il sera parfois commode de définir ce
  \oo-foncteur par l'égalité de la proposition précédente :
  \[ (f, \id{g}, b)^\ast = \nu(f)^\ast. \]
\end{rem}

\begin{rem}\label{rem:img_tri_oubli}
  Il résulte de la proposition~\ref{prop:fonct_img_tri_comm} et de la
  convention de la remarque précédente que, si $g : K
  \to L$ est un morphisme entre complexes de Steiner forts,
  en considérant le triangle
  \[
    \shorthandoff{;}
    \xymatrix@C=1.5pc{
    \vide \ar[rr]^{\vide_K} \ar[dr]_{\vide_L}_{}="f" & & K \ar[dl]^{g} \\
      & L
      \ar@{}"f";[ur]_(.15){}="ff"
      \ar@{}"f";[ur]_(.55){}="oo"
      \ar@<-1.0ex>@2"ff";"oo"^(0.50){\id{\vide_{\!L}}\,} & \pbox{,}
    }
  \]
  où $\vide_K$ et $\vide_L$ désignent les uniques morphismes de source
  $\vide$ et de buts respectifs $K$ et $L$, le \oo-foncteur
  \[
    (\vide_K, \id{\vide_{\!L}}, b)^\ast : \cotr{C}{c} \to \cotr{C}{\vide} \simeq C
  \]
  est le \oo-foncteur d'oubli du paragraphe~\ref{paragr:desc_morph_tr}.
\end{rem}

\section{Fonctorialité des \pdfoo-foncteurs associés aux triangles}
\label{sec:fonct_tri}

\begin{paragr}\label{paragr:fonct_tri_id}
  Soient $g : K \to L$ une inclusion rigide ordonnée entre complexes de
  Steiner forts et $C$ une \oo-catégorie munie d'un \oo-foncteur \hbox{$b :
  \nu(L) \to C$}. On pose~$c = b\nu(g)$.

  Formons, comme dans le paragraphe~\ref{paragr:fonct_tri_id_Cda}, le triangle commutatif
  \[
    \shorthandoff{;}
    \xymatrix@C=1.5pc{
    K \ar[rr]^{\id{K}} \ar[dr]_(0.40){g}_{}="f" & & K \ar[dl]^(0.40){g} \\
      & L
      \ar@{}"f";[ur]_(.15){}="ff"
      \ar@{}"f";[ur]_(.55){}="oo"
      \ar@<-0.5ex>@2"ff";"oo"^{\id{g}}
      & \pbox{.}
    }
  \]
  À partir de ce triangle, en vertu du théorème~\ref{thm:img_tri}, on obtient un
  \oo-foncteur $(\id{K}, \id{g}, b)^\ast : \cotr{C}{c} \to \cotr{C}{c}$.
\end{paragr}

\begin{prop}
  On a $(\id{K}, \id{g}, b)^\ast = \id{\cotr{C}{c}}$.
\end{prop}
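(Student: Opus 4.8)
The key point is that the antihomotopy $\id{g}$ is an identity, so the triangle of paragraph~\ref{paragr:fonct_tri_id} is strictly commutative; we are thus in the situation covered by Proposition~\ref{prop:fonct_img_tri_comm}. The plan is to invoke that proposition in order to reduce the computation to the trivial instance of the construction $w \mapsto w^\ast$ of paragraph~\ref{paragr:desc_morph_tr} attached to an identity \oo-foncteur.

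First I would apply Proposition~\ref{prop:fonct_img_tri_comm} with $f = \id{K}$, obtaining
\[
  (\id{K}, \id{g}, b)^\ast = \nu(\id{K})^\ast,
\]
where the right-hand side is the \oo-foncteur associated by paragraph~\ref{paragr:desc_morph_tr} to the commutative triangle deduced from $\id{K}$ by applying $\nu$ and composing with $b$. Since $\nu$ is a functor, $\nu(\id{K}) = \id{\nu(K)}$, so this triangle is the trivial one both of whose legs equal $c = b\nu(g)$.

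It then remains to check that $w^\ast$ sends an identity \oo-foncteur to an identity. By the description recalled in paragraph~\ref{paragr:desc_morph_tr}, the \oo-foncteur $\id{\nu(K)}^\ast$ is induced by precomposition with the morphism $\id{\nu(K)} \joint \Dn{\var}$; by functoriality of the joint this is the identity morphism of $\nu(K) \joint \Dn{\var}$, whence $\nu(\id{K})^\ast = \id{\cotr{C}{c}}$, as required.

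I do not expect any genuine obstacle: the statement is the \oo-categorical counterpart of the proposition following paragraph~\ref{paragr:fonct_tri_id_Cda}, whose proof is immediate from the definitions, and the only thing one must be attentive to is that we are truly in the identity-antihomotopy case, so that Proposition~\ref{prop:fonct_img_tri_comm} is applicable. Should one prefer to avoid appealing to Proposition~\ref{prop:fonct_img_tri_comm}, the same conclusion follows by unwinding the definition of $(f, h, b)^\ast$ from paragraph~\ref{paragr:img_tri_co}: the Yoneda lemma of Proposition~\ref{prop:cocat_Yoneda} reduces the claim to the case $C = \nu(M)$ with $M$ a strong Steiner complex and $b = \nu(a)$, where the morphism at stake is $\nu((\id{K}, a\,\id{g})^\ast) = \nu((\id{K}, \id{ag})^\ast)$, which equals $\nu(\id{\cotr{M}{ag}})$ by the proposition following paragraph~\ref{paragr:fonct_tri_id_Cda} and is therefore an identity; naturality then gives the result for every $C$.
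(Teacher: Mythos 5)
Your proof is correct and takes essentially the same route as the paper: it invokes Proposition~\ref{prop:fonct_img_tri_comm} to identify $(\id{K}, \id{g}, b)^\ast$ with $\nu(\id{K})^\ast$, then observes that the latter is the identity \oo-foncteur (the paper declares this immediate, where you spell out the precomposition by $\id{\nu(K)} \joint \Dn{\var}$). The alternative argument you sketch via the augmented directed complexes and the Yoneda lemma is exactly the formal deduction the paper alludes to ("on pourrait d\'eduire formellement cette \'egalit\'e du cas des complexes dirig\'es augment\'es") and sets aside as less simple.
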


\begin{proof}
  En vertu de la proposition~\ref{prop:fonct_img_tri_comm}, on a
  $(\id{K}, \id{g}, b)^\ast = \nu(\id{K})^\ast$, où $\nu(\id{K})^\ast$ est
  le \oo-foncteur du paragraphe~\ref{paragr:desc_morph_tr}. Or, il est
  immédiat que $\nu(\id{K})^\ast$ est le \oo-foncteur identité.
\end{proof}

\begin{paragr}\label{paragr:ann_fonct_tri}
  Considérons maintenant un diagramme
  \[
    \shorthandoff{;}
    \xymatrix@C=2.5pc@R=2.5pc{
      K \ar[r]^f \ar[dr]_{}="g"_(.40){g}
      & K' \ar[r]^{f'}_(.75){}="fp" \ar[d]_(.70){}="gp"_(.50){g'} & K''
      \ar[dl]_{}="gpp"^(.33){g''} \\
      & L
      \ar@{}"g";[u]_(0.10){}="x"
      \ar@{}"g";[u]_(.85){}="y"
      \ar@<-0.1ex>@2"x";"y"^(.30)h
      \ar@{}"gp";"fp"_(.25){}="x2"
      \ar@{}"gp";"fp"_(.75){}="y2"
      \ar@<0.4ex>@2"x2";"y2"^(0.40){h'\!}
    }
  \]
  de complexes dirigés augmentés, où $h$ et $h'$ sont des antihomotopies
  de~$g$ vers $g'f$ et de~$g'$ vers $g''f'$ respectivement. On suppose que
  les complexes $K$, $K'$, $K''$ et $L$ sont des complexes de Steiner forts
  et que les morphismes $g'$ et $g''$ sont des inclusions rigides ordonnées.

  Soit $C$ une \oo-catégorie munie d'un \oo-foncteur
  \hbox{$b : \nu(L) \to C$}. On pose
  \[
    c = b\nu(g), \quad c' = b\nu(g') \quadet c'' = b\nu(g'').
  \]

  En composant le diagramme ci-dessus, on obtient un triangle
  \[
    \shorthandoff{;}
    \xymatrix@C=1.5pc{
    K \ar[rr]^{f'f} \ar[dr]_{g}_{}="f" & & K'' \ar[dl]^(0.42){g''} \\
    & L
    \ar@{}"f";[ur]_(.15){}="ff"
    \ar@{}"f";[ur]_(.55){}="oo"
    \ar@<-0.5ex>@2"ff";"oo"^{h''}
    & \pbox{,}
    }
  \]
  où $h''$ est l'antihomotopie  $h'f + h$. En vertu du
  théorème~\ref{thm:img_tri}, on obtient donc un triangle
  \[
    \xymatrix@C=1.5pc{
      \cotr{C}{c''} \ar[dr]_{(f', h', b)^\ast} \ar[rr]^{(f'f, h'f + h, b)^\ast}
      & & \cotr{C}{c} \\
      & \cotr{C}{c'} \ar[ur]_{(f, h, b)^\ast} 
    }
  \]
  de \oo-foncteurs.
\end{paragr}

La proposition suivante affirme que ce triangle est commutatif.

\begin{prop}\label{prop:fonct_tri}
  On a $(f, h, b)^\ast(f', h', b)^\ast = (f'f, h'f + h, b)^\ast$.
\end{prop}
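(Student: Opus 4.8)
The plan is to deduce the identity from its augmented-directed-complex analogue, Proposition~\ref{prop:fonct_tri_Cda}, following exactly the Yoneda mechanism already used to prove Proposition~\ref{prop:fonct_img_tri_comm}. I would first recall that each of the three $\infty$-functors $(f, h, b)^\ast$, $(f', h', b)^\ast$ and $(f'f, h'f + h, b)^\ast$ is, by Theorem~\ref{thm:img_tri} and the remark following it, induced by a morphism of coaugmented $\infty$-cocategory objects in $\ooCat$ --- namely $(f, h)_\ast$, $(f', h')_\ast$ and $(f'f, h'f + h)_\ast$ together with the canonical inclusions $\e_2$ --- the passage from the morphism to the $\infty$-functor being given by applying $\HomAug(-, C)$ and specialising via $b$. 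In particular both the composite $(f, h, b)^\ast (f', h', b)^\ast$ and the single $\infty$-functor $(f'f, h'f + h, b)^\ast$ depend naturally on the ambient data $(C, b)$, this dependence being entirely encoded at the level of coaugmented $\infty$-cocategory objects.

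By Proposition~\ref{prop:cocat_val_Cda} all of the coaugmented $\infty$-cocategory objects involved take their values in the full subcategory of strong Steiner $\infty$-categories. Therefore, by the Yoneda lemma for such objects (Proposition~\ref{prop:cocat_Yoneda}), it suffices to verify the desired equality of $\infty$-functors when $C = \nu(M)$ for $M$ a strong Steiner complex.

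For such $C = \nu(M)$, Theorem~\ref{thm:Steiner} lets me write $b = \nu(a)$ for a unique morphism $a : L \to M$ of augmented directed complexes. By the very definition of these $\infty$-functors (paragraph~\ref{paragr:img_tri_co}) together with the isomorphism of Proposition~\ref{prop:tr_Cda_ooCat}, each of them is identified with $\nu$ applied to the corresponding morphism of augmented directed complexes, so that $(f, h, \nu(a))^\ast = \nu\bigl((f, ah)^\ast\bigr)$, $(f', h', \nu(a))^\ast = \nu\bigl((f', ah')^\ast\bigr)$ and $(f'f, h'f + h, \nu(a))^\ast = \nu\bigl((f'f, a(h'f + h))^\ast\bigr)$. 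Since $a(h'f + h) = (ah')f + ah$ is immediate from the formulas of paragraphs~\ref{paragr:def_antih_sesqui} and~\ref{paragr:def_antih_comp}, Proposition~\ref{prop:fonct_tri_Cda} applied to the antihomotopies $ah$ and $ah'$ gives $(f, ah)^\ast (f', ah')^\ast = (f'f, a(h'f + h))^\ast$; applying the functor $\nu$ and using that it preserves composition then settles the case $C = \nu(M)$.

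The main obstacle is the bookkeeping hidden in the first two steps: one must present both the composite $\infty$-functor and the directly defined one as coming from morphisms of coaugmented $\infty$-cocategory objects valued in strong Steiner $\infty$-categories, so that Proposition~\ref{prop:cocat_Yoneda} genuinely applies. The mild asymmetry between the source $\nu(K) \joint \Dn{\var}$ and the target $\nu(L) \amalg_{\nu(K')} (\nu(K') \joint \Dn{\var})$ of $(f, h)_\ast$ (see Remark~\ref{rem:morph_cocat_asym}) must be absorbed using the inclusions $\e_2$, exactly as in the proof of Proposition~\ref{prop:fonct_img_tri_comm}. Once the reduction to $C = \nu(M)$ is secured, the remaining argument is purely formal, resting only on the functoriality of $\nu$ and the already-proved complex-level identity.
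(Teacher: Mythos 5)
Your proposal is correct and is essentially the paper's own proof: both arguments reduce the statement, via the Yoneda lemma for coaugmented $\infty$-cocategory objects (Propositions~\ref{prop:cocat_val_Cda} and~\ref{prop:cocat_Yoneda}), with the asymmetry of $(f,h)_\ast$ absorbed through the inclusions $\e_2$ and the variant morphism of Remark~\ref{rem:morph_cocat_asym}, to the case $C = \nu(M)$ for $M$ a strong Steiner complex, where the equality becomes the image under $\nu$ of Proposition~\ref{prop:fonct_tri_Cda} applied to the antihomotopies $ah$ and $ah'$. You even cite the correct key ingredient: the paper's text at this point refers to Proposition~\ref{prop:morph_tr_Cda_ooCat}, but the diagram to which it is applied shows that the intended reference is Proposition~\ref{prop:fonct_tri_Cda}, exactly as in your argument.
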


\begin{proof}
  Par définition, les \oo-foncteurs $(f, h, b)^\ast$ et $(f', h', b)^\ast$
  sont naturels en $(C, b)$ et il en est donc de même de leur composé $(f,
  h, b)^\ast(f', h', b)^\ast$. Ainsi, en vertu de la caractérisation du
  \oo-foncteur $(f'f, h'f + h, b)^\ast$ donnée par le
  théorème~\ref{thm:img_tri}, il suffit de vérifier l'égalité recherchée
  lorsque $C = \nu(M)$ pour $M$ un complexe de Steiner fort et $b = \nu(a)$
  pour $a : L \to M$. Or, cette égalité résulte de la
  proposition~\ref{prop:fonct_tri_Cda} appliquée au diagramme
  \[
    \shorthandoff{;}
    \begin{gathered}[b]
    \xymatrix@C=2.5pc@R=2.5pc{
      K \ar[r]^f \ar[dr]_{}="g"_(.40){ag}
      & K' \ar[r]^{f'}_(.75){}="fp" \ar[d]_(.70){}="gp"_(.50){ag'} & K''
      \ar[dl]_{}="gpp"^(.33){ag''} \\
      & M
      \ar@{}"g";[u]_(0.10){}="x"
      \ar@{}"g";[u]_(.85){}="y"
      \ar@<-0.1ex>@2"x";"y"^(.30){ah}
      \ar@{}"gp";"fp"_(.25){}="x2"
      \ar@{}"gp";"fp"_(.75){}="y2"
      \ar@<0.4ex>@2"x2";"y2"^{ah'}
      &
      \pbox{.}
    }
    \\[-\dp\strutbox]
    \end{gathered}
    \qedhere
  \]
\end{proof}

\begin{rem}\label{rem:conv_fonct_tri}
  Si $h$ est l'antihomotopie $\id{g}$, alors, avec le convention énoncée
  dans la remarque~\ref{rem:conv_pas_rig_ord}, la conclusion de la
  proposition précédente reste valable sans supposer que $g'$ est une
  inclusion rigide ordonnée. En effet, dans la preuve de la proposition,
  cette hypothèse sur $g'$ n'est utilisée que pour justifier l'existence de
  $(f, h, b)^\ast$ et sa naturalité en $(C, b)$.
\end{rem}

\begin{prop}\label{prop:img_tri_au-dessus}
  Le \oo-foncteur
  \[
    (f, h, b)^\ast : \cotr{C}{c'} \to \cotr{C}{c}
  \]
  du théorème~\ref{thm:img_tri} est au-dessus de $C$. Autrement dit, le
  triangle
  \[
    \xymatrix@C=1.5pc{
      \cotr{C}{c'} \ar[rr]^{(f, h, b)^\ast} \ar[rd]_{U'}
      & & \cotr{C}{c} \ar[dl]^{U} \\
      & C & \pbox{,}
    }
  \]
  où $U$ et $U'$ désignent les \oo-foncteurs d'oubli, est commutatif.
\end{prop}

\begin{proof}
  En appliquant la proposition~\ref{prop:fonct_tri} sous la forme donnée par
  la remarque précédente au diagramme
  \[
    \shorthandoff{;}
    \xymatrix@C=2.5pc@R=2.5pc{
    \vide \ar[r]^{\vide_K} \ar[dr]_{}="g"_(.40){\vide_L}
      & K \ar[r]^{f}_(.75){}="fp" \ar[d]_(.70){}="gp"_(.54){g} & K'
      \ar[dl]_{}="gpp"^(.33){g'} \\
      & L
      \ar@{}"g";[u]_(0.10){}="x"
      \ar@{}"g";[u]_(.85){}="y"
      \ar@<-0.1ex>@2"x";"y"^(.30){\id{\vide_{\!L}}\!\!}
      \ar@{}"gp";"fp"_(.25){}="x2"
      \ar@{}"gp";"fp"_(.75){}="y2"
      \ar@<0.4ex>@2"x2";"y2"^{h\,}
      & \pbox{,}
    }
  \]
  où $\vide_M$, pour $M$ un complexe dirigé augmenté, désigne l'unique
  morphisme de $\vide$ vers~$M$, on obtient l'égalité
  \[
    (\vide_K, \id{\vide_{\!L}}, b)^\ast (f, h, b)^\ast
    = (\vide_{K'}, \id{\vide_{\!L}}, b)^\ast.
  \]
  Or, en vertu de la remarque~\ref{rem:img_tri_oubli}, on a
  \[
    (\vide_K, \id{\vide_{\!L}}, b)^\ast = U
    \quadet
    (\vide_{K'}, \id{\vide_{\!L}}, b)^\ast = U',
  \]
  d'où le résultat.
\end{proof}

\section{Transformation oplax associée à un cône}

\begin{paragr}\label{paragr:ann_img_cone}
  Dans cette section, on fixe un diagramme
    \[
      \shorthandoff{;:}
      \xymatrix@C=1.5pc@R=3pc{
        K \ar@/^2ex/[rr]^(.33){f'}_{}="1" \ar@/_2ex/[rr]^(.30)f_{}="0"
        \ar[dr]_{}="f"_{\phantom{g'}g}
        \ar@2"0";"1"_k
        & & K' \ar[dl]^{g'} \\
        & L
        \ar@{}"f";[ur]_(.15){}="ff"
        \ar@{}"f";[ur]_(.55){}="oo"
        \ar@<-0.5ex>@/^1ex/@{:>}"ff";"oo"^(.18){h'\!\!}_(.30){}="h'"
        \ar@<-2.0ex>@/^-1ex/@2"ff";"oo"_(.36){h}_(.80){}="h"
        \ar@3"h";"h'"_(.20){H_{}}
        }
    \]
  de complexes dirigés augmentés, où $h$ et $h'$ sont des antihomotopies
  de source $g$ et de buts respectifs $g'f$ et $g'f'$, $k$ est une
  antihomotopie de $f$ vers $f'$ et $H$ est une $2$\nbd-antihomotopie de $g'k +
  h$ vers $h'$. On suppose que les complexes $K$, $K'$ et $L$ sont des
  complexes de Steiner forts et que $g'$ est une inclusion rigide ordonnée.

  Soit $C$ une \oo-catégorie munie d'un \oo-foncteur \hbox{$b : \nu(L) \to
  C$}. En vertu du théorème~\ref{thm:img_tri} et de la
  proposition~\ref{prop:img_tri_au-dessus}, on peut associer à ces données
  des \oo-foncteurs
  \[
    (f, h, b)^\ast, (f', h', b)^\ast : \cotr{C}{b\nu(g')} \to
    \cotr{C}{b\nu(g)}
  \]
  au-dessus de $C$. Nous allons définir une transformation oplax
  \[
    \shorthandoff{;:}
    \xymatrix@C=5pc@R=3.5pc{
      \cotr{C}{b\nu(g')}
      \ar@/^3ex/[r]^(.50){(f', h', b)^\ast}_{}="1"
      \ar@/_3ex/[r]_(.50){(f, h, b)^\ast}_{}="0"
      \ar@2"1";"0"|{(k, H, b)^\ast}
      &
      \cotr{C}{b\nu(g)}
    }
  \]
  \notindex{$(k, H, b)^\ast : (f', h', b)^\ast \Rightarrow (f, h, b)^\ast$}%
  au-dessus de $C$.

  Notons que lorsque $C = \nu(M)$ pour $M$ un complexe dirigé augmenté et
  $b = \nu(a)$ pour $a : L \to M$, on dispose bien d'une telle
  transformation oplax. En effet, on a un diagramme
  \[
      \shorthandoff{;:}
      \xymatrix@C=1.5pc@R=3pc{
        K \ar@/^2ex/[rr]^(.33){f'}_{}="1" \ar@/_2ex/[rr]^(.30)f_{}="0"
        \ar[dr]_{}="f"_{\phantom{g'}ag}
        \ar@2"0";"1"_k
        & & K' \ar[dl]^{ag'} \\
        & M
        \ar@{}"f";[ur]_(.15){}="ff"
        \ar@{}"f";[ur]_(.55){}="oo"
        \ar@<-0.5ex>@/^1ex/@{:>}"ff";"oo"^(.18){ah'\!\!}_(.30){}="h'"
        \ar@<-2.0ex>@/^-1ex/@2"ff";"oo"_(.30){ah\,\,}_(.80){}="h"
        \ar@3"h";"h'"_(.20){aH_{}}
        }
  \]
  et donc, en vertu de la proposition~\ref{prop:img_cone_Cda},
  une homotopie
  \[
    \shorthandoff{;:}
    \xymatrix@C=5pc@R=3.5pc{
      \cotr{M}{ag'}
      \ar@/^3ex/[r]^(.50){(f', ah')^\ast}_{}="1"
      \ar@/_3ex/[r]_(.50){(f, ah)^\ast}_{}="0"
      \ar@2"1";"0"|{(k, aH)^\ast}
      &
      \cotr{M}{ag} \pbox{.}
    }
  \]
  Or, en vertu du paragraphe~\ref{paragr:def_nu_homot}, cette homotopie
  induit une transformation oplax
  \[
    \shorthandoff{;:}
    \xymatrix@C=5pc@R=3.5pc{
      \nu(\cotr{M}{ag'})
      \ar@/^3ex/[r]^(.50){\nu((f', ah')^\ast)}_{}="1"
      \ar@/_3ex/[r]_(.50){\nu((f, ah)^\ast)}_{}="0"
      \ar@2"1";"0"|{\nu((k, aH)^\ast)}
      &
      \nu(\cotr{M}{ag}) \pbox{,}
    }
  \]
  transformation oplax qui, à travers les isomorphismes de la
  proposition~\ref{prop:tr_Cda_ooCat}, définit une transformation oplax
  comme souhaitée.
\end{paragr}

\begin{thm}\label{thm:img_cone}
  Soit $C$ une \oo-catégorie munie d'un \oo-foncteur \hbox{$b : \nu(L) \to
  C$}. Il existe une et une seule transformation oplax
  \[
    \shorthandoff{;:}
    \xymatrix@C=5pc@R=3.5pc{
      \cotr{C}{b\nu(g')}
      \ar@/^3ex/[r]^(.50){(f', h', b)^\ast}_{}="1"
      \ar@/_3ex/[r]_(.50){(f, h, b)^\ast}_{}="0"
      \ar@2"1";"0"|{(k, H, b)^\ast}
      &
      \cotr{C}{b\nu(g)}
    }
  \]
  naturelle en $(C, b)$ dans $\cotr{\ooCat}{\nu(L)}$ \noemph{(voir la
  remarque~\ref{rem:nat_trans_oplax})}, telle que, pour tout complexe de
  Steiner fort $M$ muni d'un morphisme $a : L \to M$, le diagramme
  \[
    \shorthandoff{;:}
    \xymatrix@C=5pc@R=3.5pc{
      \nu(\cotr{M}{ag'})
      \ar[d]_{\simeq}
      \ar@/^3ex/[r]^(.50){\nu((f', ah')^\ast)}_{}="1"
      \ar@/_3ex/[r]_(.50){\nu((f, ah)^\ast)}_{}="0"
      \ar@2"1";"0"|{\nu((k, aH)^\ast)}
      &
      \nu(\cotr{M}{ag})
      \ar[d]^{\simeq}
      \\
      \cotr{\nu(M)}{\nu(ag')}
      \ar@/^3ex/[r]^(.50){(f', h', \nu(a))^\ast}_{}="2"
      \ar@/_3ex/[r]_(.50){(f, h, \nu(a))^\ast}_{}="1"
      \ar@2"2";"1"|{(k, H, \nu(a))^\ast}
      &
      \cotr{\nu(M)}{\nu(ag)} \pbox{,}
      }
  \]
  où les isomorphismes verticaux sont ceux de la
  proposition~\ref{prop:tr_Cda_ooCat} et $\nu((k, aH)^\ast)$ désigne la
  transformation oplax associée à l'homotopie $(k, aH)^\ast$ par la
  construction du paragraphe~\ref{paragr:def_nu_homot}, soit
  commutatif.
\end{thm}

\begin{proof}
  C'est ce qu'affirme le corollaire~\ref{coro:lemme_D1} pour
  \[
    \Gamma_{(M, a)} =
    \shorthandoff{;:}
    \xymatrix@C=5pc@R=3.5pc{
      \cotr{M}{ag'}
      \ar@/^3ex/[r]^(.50){(f', ah')^\ast}_{}="1"
      \ar@/_3ex/[r]_(.50){(f, ah)^\ast}_{}="0"
      \ar@2"1";"0"|{(k, aH)^\ast}
      &
      \cotr{M}{ag}
      \pbox{,}
    }
  \]
  dont la naturalité résulte immédiatement des formules du
  paragraphe~\ref{paragr:img_cone_Cda}.
\end{proof}

\begin{rem}
  En déroulant la preuve du théorème précédent (et donc celle du
  lemme~\ref{lemme:fonct_Cda_ooCat}) et en utilisant les formules du
  paragraphe~\ref{paragr:img_cone_Cda} définissant l'homotopie $(k,
  aH)^\ast$, on peut décrire la transformation oplax $(k, H,
  b)^\ast$, ou plus précisément le \oo-foncteur $\cotr{C}{b\nu(g')} \to
  \HomLax(\Dn{1}, \cotr{C}{b\nu(g)})$ qui lui est associé (voir la
  proposition~\ref{prop:cotrans_abs}), de la manière suivante. Soit $T$ un
  complexe de Steiner fort. Il suffit de décrire l'application
  \[
    \Hom_{\ooCat}(\nu(T), \cotr{C}{b\nu(g')}) \to \Hom_{\ooCat}(\nu(T),
    \HomLax(\Dn{1}, \cotr{C}{b\nu(g)}))
  \]
  induite par ce \oo-foncteur. Or, on a des bijections naturelles
  \[
    \Hom_{\ooCat}(\nu(T), \cotr{C}{b\nu(g')})
     \simeq \Hom_{\cotr{\ooCat}{\nu(L)}}((\nu(L \amalg_{K'} (K' \joint T)),
     \nu(\e_1)), (C, b)),
  \]
  où $\e_1$ désigne la première inclusion canonique, et
   \[
     \begin{split}
       \MoveEqLeft \Hom_{\ooCat}(\nu(T), \HomLax(\Dn{1}, \cotr{C}{b\nu(g)})) \\
     & \simeq \Hom_{\cotr{\ooCat}{\nu(K)}}((\nu(K \joint (\lambda(\Dn{1})
       \otimes T)), \nu(\iota_1)), (C, b\nu(g))),
     \end{split}
   \]
  et l'application
  \[
    \Hom_{\ooCat}(\nu(T), \cotr{C}{b\nu(g')}) \to \Hom_{\ooCat}(\nu(T),
    \HomLax(\Dn{1}, \cotr{C}{b\nu(g)}))
  \]
  est induite par le morphisme
  \[
    (k, H)_\ast : K \joint (\lambda(\Dn{1}) \otimes T) \to L \amalg_{K'} (K' \joint T)
  \]
  défini de la manière suivante :
  \[
    \begin{split}
      (k, H)_\ast(x \joint \vide) & = g(x), \\
      (k, H)_\ast(x \joint (d^0_0 \otimes y)) & = f'(x) \joint y + e(y)h'(x), \\
      (k, H)_\ast(x \joint (d^1_0 \otimes y)) & = f(x) \joint y + e(y)h(x), \\
      (k, H)_\ast(x \joint (d_1 \otimes y)) & = k(x) \joint y + e(y)H(x),
   \end{split}
  \]
  où $d$ désigne la cellule principale de $\Dn{1}$ et où, en plus des
  conventions du paragraphe~\ref{rem:img_tri_expl}, on convient que
  $k(\vide) = 0$ et que~$H(\vide) = 0$. Nous n'utiliserons pas
  cette description de $(k, H, b)^\ast$ dans la suite de ce texte.
\end{rem}

\section{Fonctorialités des transformations oplax associées aux cônes}
\label{sec:fonct_tr_cone}

\begin{paragr}
  Considérons un diagramme
  \[
    \shorthandoff{;}
    \xymatrix@C=1.5pc{
      K \ar[rr]^f \ar[dr]_{g}_{}="f" & & K' \ar[dl]^(0.42){g'} \\
      & L
      \ar@{}"f";[ur]_(.15){}="ff"
      \ar@{}"f";[ur]_(.55){}="oo"
      \ar@<-0.5ex>@2"ff";"oo"^{h}
    }
  \]
  de complexes dirigés augmentés, où $h$ est une antihomotopie de $g$ vers
  $g'f$. On suppose que les complexes $K$, $K'$ et $L$ sont des complexes de
  Steiner forts et que le morphisme~$g'$ est une inclusion rigide ordonnée.

  Soit $C$ une \oo-catégorie munie d'un \oo-foncteur
  \hbox{$b : \nu(L) \to C$}. On pose
  \[
    c = b\nu(g) \quadet c' = b\nu(g').
  \]

  À partir de ce diagramme, on peut former, comme dans le
  paragraphe~\ref{paragr:fonct_cone_id_Cda}, le diagramme
  \[
    \shorthandoff{;:}
    \xymatrix@C=1.5pc@R=3pc{
    K \ar@/^2ex/[rr]^(.33){f}_{}="1" \ar@/_2ex/[rr]^(.30)f_{}="0"
    \ar[dr]_{}="f"_{\phantom{g'}g}
    \ar@2"0";"1"_{\id{f}}
    & & K' \ar[dl]^{g'} \\
    & L
    \ar@{}"f";[ur]_(.15){}="ff"
    \ar@{}"f";[ur]_(.55){}="oo"
    \ar@<-0.5ex>@/^1ex/@{:>}"ff";"oo"^(.18){h\!\!}_(.30){}="h'"
    \ar@<-2.0ex>@/^-1ex/@2"ff";"oo"_(.36){h}_(.80){}="h"
    \ar@3"h";"h'"_(.20){\,\id{h}}
    & \pbox{.}
    }
  \]
  On obtient donc, en vertu du théorème~\ref{thm:img_cone}, une
  transformation oplax~$(\id{f}, \id{h}, b)^\ast$ de $(f, h, b)^\ast$ vers $(f, h,
  b)^\ast$.
\end{paragr}

\begin{prop}\label{prop:fonct_cone_id}
  On a $(\id{f}, \id{h}, b)^\ast = \id{(f, h, b)^\ast}$.
\end{prop}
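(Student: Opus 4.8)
$(\id{f}, \id{h}, b)^\ast = \id{(f, h, b)^\ast}$.

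Let me understand the setup. We have a diagram of directed augmented complexes with $h$ an antihomotopy from $g$ to $g'f$. The complexes $K, K', L$ are strong Steiner complexes and $g'$ is a rigid ordered inclusion. We have a $\omega$-category $C$ with $b: \nu(L) \to C$, and we set $c = b\nu(g)$, $c' = b\nu(g')$.

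From the earlier material:
- Theorem~\ref{thm:img_tri} gives $(f, h, b)^\ast : \cotr{C}{c'} \to \cotr{C}{c}$.
- Theorem~\ref{thm:img_cone} gives the oplax transformation $(k, H, b)^\ast$ associated to a cone.
- Here $k = \id{f}$, $H = \id{h}$.

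The key point is how these $\omega$-categorical constructions are built: they are induced (via the Yoneda lemma for $\omega$-cocategories, Proposition~\ref{prop:cocat_Yoneda}) from the corresponding constructions on directed augmented complexes. So I expect the $\omega$-categorical identity to follow formally from the analogous statement for directed augmented complexes, which is Proposition~\ref{prop:fonct_cone_id_Cda}.

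Let me trace through the construction of $(k, H)_\ast$ more carefully to see the reduction.

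The oplax transformation $(k,H)_\ast$ (between morphisms of coaugmented $\omega$-cocategories) is defined via Yoneda, ultimately reducing to the case $C = \nu(M)$ for $M$ a strong Steiner complex. In that case, with $b = \nu(a)$ for $a: L \to M$, we have the homotopy $(k, H)^\ast$ from $(f', ah')^\ast$ to $(f, ah)^\ast$ (Proposition~\ref{prop:img_cone_Cda}), and $(k,H)_\ast$ is defined as $\nu((k,H)^\ast)$ via the correspondence of paragraph~\ref{paragr:def_nu_homot} (homotopies of complexes $\to$ oplax transformations of $\omega$-categories).

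Now I specialize to $k = \id{f}$, $H = \id{h}$.

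**The plan.** The plan is to reduce everything, via the Yoneda lemma for coaugmented $\omega$-cocategories (Proposition~\ref{prop:cocat_Yoneda}), to the corresponding statement about directed augmented complexes, namely Proposition~\ref{prop:fonct_cone_id_Cda}, which asserts that $(\id{f}, \id{h})^\ast = \id{(f, h)^\ast}$ as homotopies of directed augmented complexes. First I would recall that, by construction, the oplax transformation $(k, H, b)^\ast$ of Theorem~\ref{thm:img_cone} is induced by the oplax transformation $(k, H)_\ast$ between morphisms of coaugmented $\omega$-cocategories, and that this latter transformation is itself determined—again by the Yoneda lemma, since all the coaugmented $\omega$-cocategories involved take values in strong Steiner $\omega$-categories (Proposition~\ref{prop:cocat_val_Cda})—by its values on test objects $C = \nu(M)$ with $M$ a strong Steiner complex.

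**Key step.** On such a test object, writing $b = \nu(a)$ with $a : L \to M$ (possible by Theorem~\ref{thm:Steiner}), the transformation $(k, H)_\ast$ specializes to $\nu((k, H)^\ast)$, where $(k, H)^\ast$ is the homotopy between morphisms of directed augmented complexes furnished by Proposition~\ref{prop:img_cone_Cda}, and $\nu(-)$ is the functor sending homotopies to oplax transformations defined in paragraph~\ref{paragr:def_nu_homot}. Setting $k = \id{f}$ and $H = \id{h}$, Proposition~\ref{prop:fonct_cone_id_Cda} gives $(\id{f}, \id{h})^\ast = \id{(f, ah)^\ast}$ as a homotopy of complexes. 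Since the functor $\nu(-)$ from homotopies to oplax transformations sends identity homotopies to identity oplax transformations (this is the compatibility of that functor with identities, which holds by its very construction), we obtain that $\nu((\id{f}, \id{h})^\ast) = \id{\nu((f, ah)^\ast)}$, i.e.\ the specialization of $(\id{f}, \id{h})_\ast$ on $\nu(M)$ is the identity oplax transformation of the specialization of $(f, h)_\ast$.

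**Conclusion.** Since this holds naturally for every test object $\nu(M)$, the Yoneda lemma (Proposition~\ref{prop:cocat_Yoneda}, part~\ref{item:Yoneda_b}) forces $(\id{f}, \id{h})_\ast = \id{(f, h)_\ast}$ as oplax transformations of coaugmented $\omega$-cocategories, and therefore, passing to the induced transformations via Theorem~\ref{thm:img_cone}, we conclude $(\id{f}, \id{h}, b)^\ast = \id{(f, h, b)^\ast}$. The only real content lies in making the reduction to the complex-level case precise and in invoking the compatibility of the homotopy-to-oplax-transformation functor with identities; both are formal once the machinery of Propositions~\ref{prop:cocat_Yoneda} and~\ref{prop:fonct_cone_id_Cda} is in place. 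I expect no serious obstacle, as the statement is the $\omega$-categorical shadow of an already-established identity for complexes; the proof is simply the observation that the defining construction transports identities to identities.
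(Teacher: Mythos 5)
Your proposal is correct and follows essentially the same route as the paper's proof: both reduce the statement, via the Yoneda lemma for coaugmented \oo-cocat\'egorie objects (Proposition~\ref{prop:cocat_Yoneda}) and the fact that these objects take values in strong Steiner \oo-categories (Proposition~\ref{prop:cocat_val_Cda}), to the case $C = \nu(M)$ with $M$ a strong Steiner complex, where the equality becomes Proposition~\ref{prop:fonct_cone_id_Cda} combined with the compatibility of $\nu$ with identity homotopies. The only nuance is that this last compatibility is not purely ``by construction'' but is a separately proved statement (Proposition~\ref{prop:compat_mu_id}), which the paper cites explicitly at exactly this point.
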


\begin{proof}
  Le \oo-foncteur $(f, h, b)^\ast$ est naturel en $(C, b)$ par définition et
  il en est donc de même de sa transformation oplax identité $\id{(f, h,
  b)^\ast}$ (cela résulte formellement du fait que les transformations oplax
  identité sont des identités pour la composition horizontale par un
  \oo-foncteur).  Ainsi, en vertu de la caractérisation de la transformation
  oplax $(\id{f}, \id{h}, b)^\ast$ donnée par le
  théorème~\ref{thm:img_cone}, il suffit de démontrer l'égalité recherchée
  lorsque \hbox{$C = \nu(M)$} pour $M$ un complexe de Steiner fort et $b =
  \nu(a)$ pour~$a : L \to M$. Or, cela résulte de la
  proposition~\ref{prop:fonct_cone_id_Cda} appliquée au diagramme
  \[
    \shorthandoff{;}
    \xymatrix@C=1.5pc{
      K \ar[rr]^f \ar[dr]_{ag}_{}="f" & & K' \ar[dl]^(0.42){ag'} \\
      & M
      \ar@{}"f";[ur]_(.15){}="ff"
      \ar@{}"f";[ur]_(.55){}="oo"
      \ar@<-0.5ex>@2"ff";"oo"^{ah\,}
      & \pbox{,}
    }
  \]
  ainsi que de la proposition~\ref{prop:compat_mu_id} qui
  affirme que $\nu$ envoie une homotopie identité sur une transformation
  oplax identité.
\end{proof}

\begin{paragr}\label{paragr:fonct_cone_sg}
  Considérons maintenant un diagramme
    \[
      \shorthandoff{;:}
      \xymatrix@C=3.5pc@R=3.5pc{
      K  \ar[r]^f \ar[dr]_{}="g"_(.4){\phantom{g''}g\!} &
      K' \ar@/^2ex/[r]^(.33){f''}_{}="1"
      \ar@/_2ex/[r]^(.30){f'}_{}="0"_(.70){}="fp"
      \ar[d]_(.50){}="gp2"_(.20){}="gp"_(0.52){g'} &
      K'' \ar[dl]^(.4){\!g''} &
      \ar@2"0";"1"_{k} \\
        & L
      \ar@{}"g";[u]_(0.10){}="x"
      \ar@{}"g";[u]_(.75){}="y"
      \ar@<-0.1ex>@2"x";"y"^(.30)h
      \ar@{}"gp2";"fp"_(.10){}="ff2"
      \ar@{}"gp2";"fp"_(.55){}="oo2"
      \ar@<+0.5ex>@/^1ex/@{:>}"ff2";"oo2"^{\!\!h''}_(.30){}="h'''"
      \ar@<-0.5ex>@/^-1.5ex/@2"ff2";"oo2"_(.47){\!\!\!h'}_(.80){}="h''"
      \ar@3"h''";"h'''"_(.20){H}
      }
    \]
    de complexes dirigés augmentés, où $h$, $h'$ et $h''$
    sont des antihomotopies de $g$ vers~$g'f$, de $g'$ vers $g''f'$ et de
    $g'$ vers $g''f''$ respectivement, $k$ est une antihomotopie de $f'$
    vers~$f''$ et $H$ est une $2$-antihomotopie de $g''k + h'$ vers
    $h''$. On suppose que les complexes $K$, $K'$, $K''$ et $L$ sont
    des complexes de Steiner forts et que les morphismes $g'$ et $g''$ sont
    des inclusions rigides ordonnées.

    Soit $C$ une \oo-catégorie munie d'un \oo-foncteur
    \hbox{$b : \nu(L) \to C$}. On pose
    \[
      c = b\nu(g),\quad c' = b\nu(g') \quadet c'' = b\nu(g'').
    \]

    À partir de ces données, comme dans le
    paragraphe~\ref{paragr:fonct_cone_sg_Cda} et avec les mêmes notations,
    en utilisant les théorèmes~\ref{thm:img_tri} et~\ref{thm:img_cone}, ainsi
    que la proposition~\ref{prop:fonct_tri}, on obtient deux transformations
    oplax
    \[ (f, h, b)^\ast \comp (k, H, b)^\ast \quadet (kf, Hf, b)^\ast \]
    de $(f''f, h''f+h, b)^\ast$ vers $(f'f, h'f+h, b)^\ast$, qui sont deux
    \oo-foncteurs de $\cotr{C}{c''}$ vers~$\cotr{C}{c}$.
\end{paragr}

\begin{prop}\label{prop:fonct_cone_sg}
  On a $(f, h, b)^\ast \comp (k, H, b)^\ast = (kf, Hf, b)^\ast$.
\end{prop}

\begin{proof}
  Le \oo-foncteur $(f, h, b)^\ast$ et la transformation oplax $(k, H,
  b)^\ast$ sont naturels en $(C, b)$ par définition et il en est donc de
  même de la transformation oplax composée $(f, h, b)^\ast \comp (k, H,
  b)^\ast$ (cela résulte formellement de l'associativité de la composition
  horizontale d'un \oo-foncteur et d'une transformation oplax). Ainsi, en
  raisonnant comme dans la preuve de la proposition précédente, on se ramène
  au cas où $C = \nu(M)$ pour $M$ un complexe de Steiner fort et $b =
  \nu(a)$ pour $a : L \to M$. Or, ce cas résulte de la
  proposition~\ref{prop:fonct_cone_sg_Cda} appliquée au diagramme
  \[
    \shorthandoff{;:}
    \xymatrix@C=3.5pc@R=3.7pc{
      K  \ar[r]^f \ar[dr]_{}="g"_(.40){\phantom{g''}ag\!} &
      K' \ar@/^2ex/[r]^(.33){f''}_{}="1"
      \ar@/_2ex/[r]^(.30){f'}_{}="0"_(.70){}="fp"
      \ar[d]_(.50){}="gp2"_(.20){}="gp"_(0.52){ag'} &
      K'' \ar[dl]^(.40){\!ag''} &
      \ar@2"0";"1"_{k} \\
      & M
      \ar@{}"g";[u]_(0.10){}="x"
      \ar@{}"g";[u]_(.75){}="y"
      \ar@<-0.1ex>@2"x";"y"^(.30){ah}
      \ar@{}"gp2";"fp"_(.10){}="ff2"
      \ar@{}"gp2";"fp"_(.55){}="oo2"
      \ar@<+0.5ex>@/^1ex/@{:>}"ff2";"oo2"^{\quad ah''}_(.30){}="h'''"
      \ar@<-0.5ex>@/^-1.5ex/@2"ff2";"oo2"_(.60){\nquad\!\! ah'}_(.80){}="h''"
      \ar@3"h''";"h'''"_(.20){\,\,aH}
      & \pbox{,}
    }
  \]
  ainsi que de la proposition~\ref{prop:compat_mu_sesqui} affirmant que
  $\nu$ envoie le composé horizontal d'un morphisme et d'une homotopie
  sur le composé horizontal du \oo-foncteur et de la transformation oplax
  associés.
\end{proof}

\begin{rem}
  Pour les mêmes raisons que celles données dans la
  remarque~\ref{rem:conv_fonct_tri}, si $h$ est l'antihomotopie $\id{g}$,
  la conclusion de la proposition précédente reste valable sans supposer que
  $g'$ est une inclusion rigide ordonnée. 
\end{rem}

\begin{prop}
  La transformation oplax $(k, H, b)^\ast$ du théorème~\ref{thm:img_cone} de
  $(f', h', b)^\ast$ vers $(f, h, b)^\ast$ est au-dessus de $C$. Autrement
  dit, on a l'égalité \hbox{$U \comp (k, H, b)^\ast = \id{U'}$}, où $U :
  \cotr{C}{c} \to C$ et $U' : \cotr{C}{c'} \to C$ désignent les
  \oo-foncteurs d'oubli.
\end{prop}

\begin{proof}
  En appliquant la proposition~\ref{prop:fonct_cone_sg} sous la forme
  donnée par la remarque précédente au diagramme
  \[
    \shorthandoff{;:}
    \xymatrix@C=3.5pc@R=3.5pc{
    \vide  \ar[r]^{\vide_K} \ar[dr]_{}="g"_(.40){\vide_L\!} &
    K \ar@/^2ex/[r]^(.33){f'}_{}="1"
    \ar@/_2ex/[r]^(.30){f}_{}="0"_(.70){}="fp"
    \ar[d]_(.50){}="gp2"_(.20){}="gp"_(0.52){g} &
    K' \ar[dl]^(.40){\!g'} &
    \ar@2"0";"1"_{k} \\
    & L
    \ar@{}"g";[u]_(0.10){}="x"
    \ar@{}"g";[u]_(.75){}="y"
    \ar@<-0.1ex>@2"x";"y"^(.30){\id{\vide_{\!L}}^{}\!}
    \ar@{}"gp2";"fp"_(.10){}="ff2"
    \ar@{}"gp2";"fp"_(.55){}="oo2"
    \ar@<+0.5ex>@/^1ex/@{:>}"ff2";"oo2"^{\!\!h'}_(.30){}="h''"
    \ar@<-0.5ex>@/^-1.5ex/@2"ff2";"oo2"_(.47){\!\!\!h}_(.80){}="h'"
    \ar@3"h'";"h''"_(.20){H_{}}
    & \pbox{,}
    }
  \]
  où $\vide_M$, pour $M$ un complexe dirigé augmenté, désigne l'unique
  morphisme de $\vide$ vers~$M$, on obtient l'égalité
  \[
    (\vide_K, \id{\vide_{\!L}}, b)^\ast \comp (k, H, b)^\ast =
    (\id{\vide_{\!K'}}, \id{\id{\vide_{\!L}}^{}}, b)^\ast.
  \]
  Or, en vertu de la remarque~\ref{rem:img_tri_oubli} et de la
  proposition~\ref{prop:fonct_cone_id}, on a
  \[
    (\vide_K, \id{\vide_{\!L}}, b)^\ast = U
    \quadet
    (\id{\vide_{\!K'}}, \id{\id{\vide_{\!L}}^{}}, b)^\ast =
    \id{(\vide_{K'}, \id{\vide_{\!L}}, b)^\ast} = \id{U'},
  \]
  d'où le résultat.
\end{proof}

\begin{paragr}
  De même, considérons un diagramme
    \[
      \shorthandoff{;:}
      \xymatrix@C=3.5pc@R=3.5pc{
      K
      \ar@/^2ex/[r]^(.33){f'}_{}="1"
      \ar@/_2ex/[r]^(.30){f}_{}="0"_(.70){}="f"
      \ar[dr]_{}="g"_(.40){\phantom{g''}g\!}
      \ar@2"0";"1"_{k}
      &
      K' \ar[r]^{f''}_(.75){}="fp"
         \ar[d]_(.70){}="gp2"_(.20){}="gp"^(0.52){g'}
      &
      K'' \ar[dl]^(.40){\!g''}
      \\
      & L
      \ar@{}"g";"gp"_(.15){}="ff1"
      \ar@{}"g";"gp"_(.80){}="oo1"
      \ar@<-0.0ex>@/^1ex/@{:>}"ff1";"oo1"^(.35){h'\!\!}_(.30){}="h'"
      \ar@<-1.0ex>@/^-1.5ex/@2"ff1";"oo1"_(.50){\!h}_(.80){}="h"
      \ar@3"h";"h'"_(.20){H_{}}
      \ar@{}"gp2";"fp"_(.25){}="x2"
      \ar@{}"gp2";"fp"_(.75){}="y2"
      \ar@<0.4ex>@2"x2";"y2"^{h''}
      }
    \]
  de complexes dirigés augmentés, où $h$, $h'$ et $h''$ sont des
  antihomotopies de $g$ vers~$g'f$, de $g$ vers $g'f'$ et de
  $g'$ vers $g''f''$ respectivement, $k$ est une antihomotopie de $f$
  vers~$f'$ et $H$ est une $2$-antihomotopie de $g'k + h$ vers $h'$. On
  suppose toujours que les complexes $K$, $K'$, $K''$ et $L$ sont des
  complexes de Steiner forts et que les morphismes $g'$ et $g''$ sont des
  inclusions rigides ordonnées.

  Soit $C$ une \oo-catégorie munie d'un \oo-foncteur
  \hbox{$b : \nu(L) \to C$}. On pose
  \[
    c = b\nu(g),\quad c' = b\nu(g') \quadet c'' = b\nu(g'').
  \]

  À partir de ces données, comme dans le
  paragraphe~\ref{paragr:fonct_cone_sd_Cda} et avec les mêmes notations,
  en utilisant les théorèmes~\ref{thm:img_tri} et~\ref{thm:img_cone}, ainsi
  que la proposition~\ref{prop:fonct_tri}, on obtient deux transformations
  oplax
  \[
    (k, H, b)^\ast \comp (f'', h'', b)^\ast
    \quadet
    (f''k, H + h''k, b)^\ast
  \]
  de~$(f''f', h''f'+h', b)^\ast$ vers $(f''f, h''f+h, b)^\ast$.
\end{paragr}

\begin{prop}
  On a $(k, H, b)^\ast \comp (f'', h'', b)^\ast = (f''k, H + h''k, b)^\ast$.
\end{prop}

\begin{proof}
  La démonstration est une adaptation immédiate de la démonstration de la
  proposition~\ref{prop:fonct_cone_sg}, l'usage de la
  proposition~\ref{prop:fonct_cone_sg_Cda} étant remplacé par celui de la
  proposition~\ref{prop:fonct_cone_sd_Cda}.
\end{proof}

\begin{paragr}
  Enfin, considérons un diagramme
  \[
      \shorthandoff{;:!}
      \xymatrix@C=2pc@R=4.5pc{
        K \ar@/^3.5ex/[rr]^(.30)*+<-.6em>{\labelstyle f''}_(.65){}="2"
        \ar[rr]^(.25)*+<-.3em>{\labelstyle f'}_(.65){}="1"
        \ar@/_3.5ex/[rr]^(.20)*+<-.3em>{\labelstyle f}_(.65){}="0"
        \ar[dr]_{}="f"_{\phantom{g'}g}
        \ar@2"0";"1"_k
        \ar@2"1";"2"_{k'}
        & & K' \ar[dl]^{g'} \\
        & L
        \ar@{}"f";[ur]_(.15){}="ff"
        \ar@{}"f";[ur]_(.55){}="oo"
        \ar@<2.0ex>@/^1.5ex/@{:>}"ff";"oo"^(.40)*+<-.3em>{\labelstyle h''\!\!}_(.30){}="h''"
        \ar@<0ex>@/^0ex/@{:>}"ff";"oo"^(.0)*+<-.5em>{\labelstyle{h'\!\!}}_(.30){}="h'"_(.70){}="h'2"
        \ar@<-2.0ex>@/^-1.5ex/@2"ff";"oo"_(.36){h}_(.80){}="h"
        \ar@3"h'2";"h''"_(.28){H'}
        \ar@3"h";"h'"_(.20){H_{}}
        }
  \]
  de complexes dirigés augmentés, où $h$, $h'$ et $h''$ sont des
  antihomotopies de $g$ vers~$g'f$, $g'f'$ et $g'f''$ respectivement, $k$ et
  $k'$ sont des antihomotopies de $f$ vers $f'$ et de $f'$ vers~$f''$
  respectivement et $H$ et $H'$ sont des $2$-antihomotopies de $g'k + h$ vers
  $h'$ et de~$g'k' + h'$ vers~$h''$ respectivement. On suppose que les
  complexes $K$, $K'$ et $L$ sont des complexes de Steiner forts et
  que le morphisme $g'$ est une inclusion rigide ordonnée.

  Soit $C$ une \oo-catégorie munie d'un \oo-foncteur
  \hbox{$b : \nu(L) \to C$}. On pose
  \[
    c = b\nu(g) \quadet c' = b\nu(g').
  \]

  À partir de ces données, comme dans le
  paragraphe~\ref{paragr:fonct_cone_Cda} et avec les mêmes notations, en
  utilisant le théorème~\ref{thm:img_cone}, on obtient deux transformations
  oplax
  \[
    (k, H, b)^\ast \circ (k', H', b)^\ast
    \quadet
    (k' + k, H' + H, b)^\ast
  \]
  de~$(f'', h'', b)^\ast$ vers $(f, h, b)^\ast$ (la composition verticale
  des transformations oplax est définie dans le
  paragraphe~\ref{paragr:def_comp_trans}).
\end{paragr}

\begin{prop}
  On a $(k, H, b)^\ast \circ (k', H', b)^\ast = (k' + k, H' + H, b)^\ast$.
\end{prop}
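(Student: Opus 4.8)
The final proposition asserts that the vertical composite of two oplax transformations associated to cones equals the oplax transformation associated to the composite cone: $(k, H, b)^\ast \circ (k', H', b)^\ast = (k' + k, H' + H, b)^\ast$. Let me sketch how I would prove this.
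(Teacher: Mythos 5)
Your proposal contains no proof: after restating the statement and announcing that you will ``sketch how to prove this,'' it stops. There is no argument to evaluate --- no reduction, no computation, no invocation of any lemma.

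For comparison, the actual work required is substantial. The paper's proof considers the diagram of morphisms of coaugmented $\infty$-cocategory objects from $\nu(K) \joint \cocatD$ to $\nu(L) \amalg_{\nu(K')} (\nu(K') \joint \cocatD)$ given by $(f,h)_\ast$, $(f',h')_\ast$, $(f'',h'')_\ast$, together with the oplax transformations $(k,H)_\ast$, $(k',H')_\ast$ and $(k'+k, H'+H)_\ast$ and the inclusion $\e_2 : \nu(K') \joint \cocatD \to \nu(L) \amalg_{\nu(K')} (\nu(K') \joint \cocatD)$. Applying $\Hom(\var, C)$ for an arbitrary $\infty$-category $C$ with $b : \nu(L) \to C$, and using that the $\infty$-functor induced by $\e_2$ is invertible (lemme~\ref{lemme:iso_e_2}), one sees that the desired equality of oplax transformations is equivalent to the commutativity of the left part of the resulting diagram. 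Since all the coaugmented $\infty$-cocategory objects involved take values in strong Steiner $\infty$-categories (proposition~\ref{prop:cocat_val_Cda}), the Yoneda lemma for such objects (proposition~\ref{prop:cocat_Yoneda}) reduces the verification to the case $C = \nu(M)$ with $M$ a strong Steiner complex, where $b = \nu(a)$ for a morphism $a : L \to M$. In that case the statement becomes, modulo the compatibility of $\nu$ with vertical composition of homotopies (proposition~\ref{prop:compat_mu_comp}), exactly the chain-level identity $(k, aH)^\ast + (k', aH')^\ast = (k'+k, aH'+aH)^\ast$ of proposition~\ref{prop:fonct_cone_Cda}. None of these steps --- the passage to cocategory objects, the invertibility of $\e_2$, the Yoneda reduction to Steiner complexes, or the chain-level computation --- appears in your text, so the proposal cannot be accepted as a proof.
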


\begin{proof}
  Les transformations oplax $(k, H, b)^\ast$ et $(k', H', b)^\ast$ sont
  naturelles en $(C, b)$ par définition et il en est donc de même de la transformation
  oplax composée $(k, H, b)^\ast \circ (k', H', b)^\ast$ (cela résulte de la
  compatibilité entre la composition horizontale par un \oo-foncteur et la
  composition verticale des transformations oplax, voir
  l'appendice~\ref{sec:conj} qui montre que les \oo-catégories,
  \oo-foncteurs et transformations oplax forment une sesquicatégorie).
  Ainsi, en raisonnant comme dans les démonstrations
  précédentes, on se ramène au cas où $C = \nu(M)$ pour $M$ un complexe de
  Steiner fort et $b = \nu(a)$ pour $a : L \to M$. Or, ce cas résulte de la
  proposition~\ref{prop:fonct_cone_Cda} appliquée au diagramme
  \[
    \shorthandoff{;:!}
    \xymatrix@C=2pc@R=4.5pc{
      K \ar@/^3.5ex/[rr]^(.30)*+<-.6em>{\labelstyle f''}_(.65){}="2"
      \ar[rr]^(.25)*+<-.3em>{\labelstyle f'}_(.65){}="1"
      \ar@/_3.5ex/[rr]^(.20)*+<-.3em>{\labelstyle f}_(.65){}="0"
      \ar[dr]_{}="f"_{\phantom{g'}ag}
      \ar@2"0";"1"_k
      \ar@2"1";"2"_{k'}
      & & K' \ar[dl]^{ag'} \\
      & M
      \ar@{}"f";[ur]_(.15){}="ff"
      \ar@{}"f";[ur]_(.55){}="oo"
      \ar@<2.0ex>@/^1.5ex/@{:>}"ff";"oo"^(.40)*+<-.3em>{\labelstyle ah''\!\!\!}_(.30){}="h''"
      \ar@<0ex>@/^0ex/@{:>}"ff";"oo"^(.0)*+<-.5em>{\labelstyle{ah'\!\!}}_(.30){}="h'"_(.70){}="h'2"
      \ar@<-2.0ex>@/^-1.5ex/@2"ff";"oo"_(.36){ah}_(.80){}="h"
      \ar@3"h'2";"h''"
      \ar@3"h";"h'"
      & \pbox{,}
      }
  \]
  où les $3$-flèches sont, du bas vers le haut, les $2$-antihomotopies $aH$
  et $aH'$, ainsi que de la proposition~\ref{prop:compat_mu_comp} affirmant
  que $\nu$ envoie le composé horizontal de deux homotopies sur le composé
  horizontal des deux transformations oplax associées.
\end{proof}

\appendix

\chapter{Produit tensoriel \pdfoo-catégorique}
\label{sec:Gray}

Le but de cet appendice est d'introduire le produit tensoriel de Gray et les
$\Hom$ internes à gauche et à droite correspondants, et de démontrer leurs
principales propriétés. La définition du produit tensoriel que nous adoptons
est inspirée d'idées de Steiner \cite[Section 7]{Steiner}.

\begin{paragr}\label{paragr:base_tens}
  On rappelle qu'on a défini au paragraphe \ref{paragr:def_produit_cda},
  selon Steiner, le produit tensoriel de deux complexes dirigés augmentés
  et qu'on obtient ainsi une structure de catégorie monoïdale bifermée sur
  la catégorie des complexes dirigés augmentés.

  Si $K$ et $L$ sont deux complexes dirigés augmentés admettant des bases
  $X$ et~$Y$ respectivement, on vérifie immédiatement que le complexe dirigé
  augmenté $K \otimes L$ admet pour base l'ensemble
  %
  \notindex{$X \otimes Y$}%
  \[
  X \otimes Y = \{ x \otimes y \mid x \in X, y \in Y \}.
  \]
\end{paragr}

\begin{lemme}[Steiner]\label{lemme:tab_tens}
  Soient $K$ et $L$ des complexes dirigés augmentés à base. Pour tout
  élément homogène $x$ de $K$ et tout élément homogène $y$ de $L$, tout $r
  \ge 0$ et $\epsilon = 0, 1$, on~a
  \[
    \atom{x \otimes y}^\epsilon_r =
    \sum_{\substack{p + q = r\\0 \le p \le |x|,\, 0 \le q \le |y|}}
    \atom{x}^\epsilon_p \otimes \atom{y}^{p+\epsilon \bmod 2}_q.
  \]
\end{lemme}

\begin{proof}
  La formule est donnée dans \cite[exemple 3.10]{Steiner} dans le cas où $x$
  et $y$ sont dans la base de $K$ et $L$ respectivement. Elle se démontre
  par récurrence, essentiellement comme la formule de notre
  lemme~\ref{lemme:tab_joint}
\end{proof}

\begin{prop}[Steiner]\label{prop:tens_Steiner}
  Si $K$ et $L$ sont des complexes dirigés augmentés à base unitaire
  \emph{\lp}\emph{resp.} à base fortement sans boucle\emph{\rp}, alors il en
  est de même de $K \otimes L$. En particulier, si $K$ et $L$ sont des
  complexes de Steiner forts, alors il en est de même de $K \otimes L$.
\end{prop}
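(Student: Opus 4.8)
Je dois montrer que si $K$ et $L$ sont des complexes dirigés augmentés à base unitaire (resp. à base fortement sans boucle), alors il en est de même de $K \otimes L$.

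\section*{}

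Le plan est de décalquer les démonstrations des propositions~\ref{prop:joint_base_unit} et~\ref{prop:joint_sans_boucle} relatives au joint, en utilisant cette fois la formule pour les atomes d'un produit tensoriel fournie par le lemme~\ref{lemme:tab_tens} en lieu et place du lemme~\ref{lemme:tab_joint}. On note $X$ et $Y$ les bases respectives de $K$ et $L$ ; d'après le paragraphe~\ref{paragr:base_tens}, le complexe $K \otimes L$ est à base, de base l'ensemble des $x \otimes y$ avec $x$ dans $X$ et $y$ dans $Y$, de sorte qu'il suffit de vérifier les propriétés d'unitarité et de non-bouclage fort sur ces générateurs.

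Pour l'unitarité, je calculerais d'abord, à l'aide du lemme~\ref{lemme:tab_tens} appliqué en degré $r = 0$, que pour tout $x \otimes y$ dans la base de $K \otimes L$ on a
\[
\atom{x \otimes y}^0_0 = \atom{x}^0_0 \otimes \atom{y}^0_0
\quadet
\atom{x \otimes y}^1_0 = \atom{x}^1_0 \otimes \atom{y}^1_0.
\]
Comme l'augmentation de $K \otimes L$ est multiplicative, $e(a \otimes b) = e(a)e(b)$ (voir le paragraphe~\ref{paragr:def_produit_comp}), on en déduit $e(\atom{x \otimes y}^\e_0) = e(\atom{x}^\e_0)\,e(\atom{y}^\e_0) = 1$ pour $\e = 0, 1$, puisque $K$ et $L$ sont à base unitaire. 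Ceci établit que $K \otimes L$ est à base unitaire.

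Pour le non-bouclage fort, je commencerais par établir, par le même argument de non-simplification pour des raisons de bidegrés que dans la preuve de la proposition~\ref{prop:joint_sans_boucle}, les formules
\[
d(x \otimes y)_+ = d(x)_+ \otimes y + x \otimes d(y)_{(-1)^{|x|}}
\quadet
d(x \otimes y)_- = d(x)_- \otimes y + x \otimes d(y)_{(-1)^{|x|+1}}
\]
(avec les conventions habituelles en degré $0$). Sur la base $X \otimes Y$, je définirais alors une relation $\lec$ en posant
\[
x \otimes y \lec x' \otimes y' \quaddefssi
{
\setstretch{0}
\begin{cases}
& x \lN^K x' \\
\text{ou} \\
& \text{$x = x'$, $|x|$ est pair et $y \leN^L y'$} \\
\text{ou} \\
& \text{$x = x'$, $|x|$ est impair et $y' \leN^L y$,}
\end{cases}
}
\]
où $\leN^K$, $\leN^L$ et $\lN^K$ désignent les relations d'ordre, et d'ordre strict, du paragraphe~\ref{paragr:def_le_N} pour $K$ et $L$. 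Cette relation est un ordre lexicographique (la première coordonnée étant comparée via $\leN^K$ et, à $x$ fixé, la seconde via $\leN^L$ ou via l'ordre opposé suivant la parité, constante, de $|x|$) ; l'antisymétrie et la transitivité en découlent exactement comme dans le cas du joint.

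Le cœur de la preuve consistera à vérifier que toute relation génératrice de $\leN$ sur $K \otimes L$ entraîne $\lec$. En utilisant les formules ci-dessus, l'appartenance de $x \otimes y$ à $\supp(d(x' \otimes y')_-)$ ou de $x' \otimes y'$ à $\supp(d(x \otimes y)_+)$ force, par disjonction des bidegrés, soit $y = y'$ avec $x \lN^K x'$ (premier cas de $\lec$), soit $x = x'$ avec $y \lN^L y'$ ou $y' \lN^L y$ suivant la parité de $|x|$ (deuxième et troisième cas). Il en résultera que $\leN \subseteq \lec$ ; la relation de préordre $\leN$ étant contenue dans l'ordre $\lec$, elle est antisymétrique, donc c'est un ordre, ce qui montrera que $K \otimes L$ est à base fortement sans boucle. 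La principale difficulté est précisément cette analyse de cas : il faut mener avec soin la comptabilité des signes dans $d(x \otimes y)_\pm$ et l'assignation des parités — qui, à cause du signe $(-1)^{|x|}$ du produit tensoriel au lieu du $(-1)^{|x|+1}$ du joint, se trouve inversée par rapport à la proposition~\ref{prop:joint_sans_boucle} — afin de s'assurer que chacune des relations génératrices retombe bien dans la clause correspondante de $\lec$. Contrairement au cas du joint, aucune hypothèse de décence n'est requise, faute d'élément $\vide$ intervenant dans la différentielle. L'assertion finale sur les complexes de Steiner forts découlera alors des deux parties, un tel complexe étant par définition à base, unitaire et fortement sans boucle.
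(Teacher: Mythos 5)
Votre proposition est correcte, mais elle suit une route diff�rente de celle de l'article : pour cette proposition, attribu�e � Steiner, l'article ne donne aucun argument et renvoie simplement � \cite[exemple 3.10]{Steiner}. Vous reconstruisez au contraire une preuve auto-contenue en d�calquant les d�monstrations que l'article consacre au joint (propositions~\ref{prop:joint_base_unit} et~\ref{prop:joint_sans_boucle}), ce qui est exactement dans l'esprit du texte. Les points essentiels de votre plan sont exacts : le lemme~\ref{lemme:tab_tens} en degr� $r = 0$ donne bien $\atom{x \otimes y}^\e_0 = \atom{x}^\e_0 \otimes \atom{y}^\e_0$, d'o� l'unitarit� par multiplicativit� de l'augmentation ; vos formules pour $d(x \otimes y)_+$ et $d(x \otimes y)_-$ sont les bonnes (aucune simplification n'est possible, par disjonction des bidegr�s et des supports de $d(x)_+$ et $d(x)_-$), et l'�change des clauses de parit� dans la d�finition de $\lec$ --- cons�quence du signe $(-1)^{|x|}$ du produit tensoriel au lieu du $(-1)^{|x|+1}$ du joint --- est correctement rep�r� ; l'analyse de cas des relations g�n�ratrices de $\leN$ retombe bien sur les trois clauses de $\lec$, et vous avez raison qu'aucune hypoth�se de d�cence n'est requise puisque la base de $K \otimes L$ ne fait intervenir aucun terme en $\vide$ (c'est pr�cis�ment pourquoi l'�nonc� tensoriel, contrairement � la proposition~\ref{prop:joint_sans_boucle}, ne suppose pas les complexes d�cents). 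Ce que chaque approche apporte : la citation de l'article est �conomique et cr�dite la source originale ; votre argument rend l'appendice ind�pendant de \cite{Steiner} et uniformise le traitement du joint et du produit tensoriel. Seule r�serve mineure : votre texte est r�dig� au conditionnel, c'est donc un plan d�taill� plut�t qu'une preuve enti�rement r�dig�e ; les v�rifications annonc�es (caract�re d'ordre de $\lec$, analyse de cas) sont toutefois de pure routine et ne pr�sentent aucun obstacle.
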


\begin{proof}
  Voir \cite[exemple 3.10]{Steiner}.
\end{proof}

\begin{rem}
  En vertu de la proposition précédente (et du fait que le complexe dirigé
  augmenté $\Zdec$ du paragraphe~\ref{paragr:def_produit_cda} est de Steiner
  fort), la catégorie des complexes de Steiner forts est une sous-catégorie
  monoïdale de la catégorie monoïdale des complexes dirigés augmentés définie
  par le produit tensoriel.
\end{rem}

\begin{lemme}\label{lemme:tens_morph_atom}
  Soient $f : K \to K'$ et $g : L \to L'$ des morphismes entre complexes
  dirigés augmentés à base et soient $x$ un élément homogène de $K$ et $y$
  un élément homogène de $L$. On suppose qu'il existe un élément homogène
  $x'$ de $K$ et un élément homogène $y'$ de $L$ tels qu'on ait
  \[
     f(\atom{x}) = \atom{x'}
     \quadet
     g(\atom{y}) = \atom{y'}.
  \]
  Alors on a
  \[ (f \otimes g)(\atom{x \otimes y}) = \atom{x' \otimes y'}. \]
\end{lemme}

\begin{proof}
  La preuve est une adaptation immédiate de la preuve de l'assertion
  analogue pour le joint (lemme~\ref{lemme:joint_morph_atom}) en remplaçant
  l'usage de la formule du lemme~\ref{lemme:tab_joint} par celui de la
  formule du lemme~\ref{lemme:tab_tens}.
\end{proof}

\begin{prop}\label{prop:tens_rig}
  Si $f : K \to K'$ et $g : L \to L'$ sont deux morphismes rigides \noemph{(voir le
  paragraphe~\ref{paragr:def_rigide})} entre complexes dirigés augmentés à
  base, alors leur produit tensoriel $f \otimes g : K \otimes L \to
  K' \otimes L'$ est également rigide.
\end{prop}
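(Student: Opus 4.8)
Il s'agit de démontrer que le produit tensoriel de deux morphismes rigides est rigide. C'est exactement l'analogue, pour le produit tensoriel, de la proposition~\ref{prop:joint_rig} déjà démontrée pour le joint. La structure logique de la preuve doit donc être parallèle.

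**Le plan.**

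Laissez-moi réfléchir à la manière dont la proposition~\ref{prop:joint_rig} a été prouvée. Sa preuve tient en une ligne : « Cela résulte immédiatement du lemme précédent », c'est-à-dire du lemme~\ref{lemme:joint_morph_atom}. Ce lemme affirme que si $f$ et $g$ respectent les tableaux d'atomes (au sens $f(\atom{x}^\e_k) = \atom{f(x)}^\e_k$ et de même pour $g$), alors leur joint $f \joint g$ respecte aussi les tableaux d'atomes, et en particulier envoie un atome de la base sur l'atome correspondant.

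Ici, la situation est identique mais pour le produit tensoriel. L'ingrédient clé est le lemme~\ref{lemme:tens_morph_atom}, qui vient juste d'être énoncé et qui est précisément l'analogue de~\ref{lemme:joint_morph_atom} : si $f$ et $g$ respectent les tableaux d'atomes, alors $f \otimes g$ les respecte aussi.

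**Déroulement et obstacle principal.**

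Voici donc la preuve que je propose.

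\begin{proof}
  Cela résulte immédiatement du lemme~\ref{lemme:tens_morph_atom}.
  En effet, dire que $f$ et $g$ sont rigides signifie, par définition (voir
  le paragraphe~\ref{paragr:def_rigide}), que pour tout élément $x$ de la
  base de $K$ et tout élément $y$ de la base de $L$, on a
  $f(\atom{x}^\e_k) = \atom{f(x)}^\e_k$ et $g(\atom{y}^\e_k) =
  \atom{g(y)}^\e_k$ pour $\e = 0, 1$ et tout $k$. Soit alors $x \otimes y$
  un élément de la base de $K \otimes L$ (voir le
  paragraphe~\ref{paragr:base_tens}), avec $x$ dans la base de $K$ et $y$
  dans la base de $L$. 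En vertu du lemme~\ref{lemme:tens_morph_atom}, on a
  $(f \otimes g)(\atom{x \otimes y}^\e_k) = \atom{f(x) \otimes g(y)}^\e_k$
  pour tout $k \ge 0$ et $\e = 0, 1$, ce qui montre que $f \otimes g$ envoie
  tout élément de la base de $K \otimes L$ sur un élément de la base de $K'
  \otimes L'$ en respectant les tableaux d'atomes. Le morphisme $f \otimes
  g$ est donc rigide.
\end{proof}

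Il n'y a ici aucun obstacle sérieux : tout le travail délicat a déjà été absorbé dans le lemme~\ref{lemme:tens_morph_atom}, dont la preuve est elle-même renvoyée par adaptation immédiate à celle du lemme~\ref{lemme:joint_morph_atom}. Le seul point auquel je veillerais est de m'assurer que la définition de rigidité du paragraphe~\ref{paragr:def_rigide} — à savoir $f(\atom{b}^\e_k) = \atom{f(b)}^\e_k$ pour tout $b$ dans la base et tout $k$ avec $0 \le k \le |b|$ — soit exactement l'hypothèse requise par le lemme~\ref{lemme:tens_morph_atom}, ce qui est bien le cas puisque ce lemme prend précisément cette forme comme prémisse. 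La seule subtilité formelle, purement cosmétique, est que le produit $f \otimes g$ doit être défini sur des éléments de base de la forme $x \otimes y$ ; or le paragraphe~\ref{paragr:base_tens} garantit que ces $x \otimes y$ parcourent bien toute la base de $K \otimes L$, de sorte que la conclusion s'applique à tous les générateurs.
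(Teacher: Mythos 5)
Votre preuve est correcte et suit exactement la même démarche que celle du texte, qui se réduit à invoquer le lemme~\ref{lemme:tens_morph_atom} (la preuve du texte dit simplement : « Cela résulte immédiatement du lemme précédent »). Vous ne faites qu'expliciter les détails — prérigidité via la description de la base de $K \otimes L$ du paragraphe~\ref{paragr:base_tens}, puis condition sur les tableaux d'atomes via le lemme — ce qui est fidèle à l'argument du texte.
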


\begin{proof}
  Cela résulte immédiatement du lemme précédent.
\end{proof}

\begin{prop}\label{prop:tens_morph_atom}
  Soient $f : K \to K'$ et $g : L \to L'$ des morphismes entre complexes
  dirigés augmentés à base unitaire et soient $x$ un élément de la base de $K$ et $y$
  un élément de la base de $L$. On suppose qu'il existe un élément $x'$ de
  la base de~$K'$ et un élément $y'$ de la base de $L'$ tels qu'on ait
  \[
    \nu(f)(\atom{x}) = \id{\atom{x'}}
    \quadet
    \nu(g)(\atom{y}) = \id{\atom{y'}},
  \]
  où $\id{}$ désigne une identité itérée (éventuellement $0$ fois).
  Alors on a
  \[ \nu(f \otimes g)(\atom{x \otimes y}) = \id{\atom{x' \otimes y'}}. \]
  En particulier, lorsque $x' = f(x)$ et $y' = g(y)$ vérifient les
  hypothèses ci-dessus, on a
  \[ \nu(f \otimes g)(\atom{x \otimes y}) = \atom{f(x) \otimes g(y)}. \]
\end{prop}

\begin{proof}
  Cela résulte immédiatement du lemme~\ref{lemme:tens_morph_atom}.
\end{proof}

\begin{prop}\label{prop:tens_atom_univ}
  Soient $K$ et $L$ deux complexes dirigés augmentés à base unitaire, $x$
  un élément de la base de $K$ de degré $i$ et $y$ un élément de la base de
  $L$ de degré $j$. Notons $z$ la cellule principale de $\Dn{i}$ et $t$
  celle de $\Dn{j}$. Alors le diagramme
  \[
    \xymatrix@C=4pc{
    \nu(\lambda(\Dn{i}) \otimes \lambda(\Dn{j}))
    \ar[r]^-{\nu\big(\widetilde{\atom{x}} \otimes \widetilde{\atom{y}}\big)}
    & \nu(K \otimes L) \\
    \Dn{i+j} \ar[u]^{\atom{z \otimes t}}
    \ar[ur]_{\atom{x \otimes y}}
    & \pbox{,}
    }
  \]
  où $\widetilde{\atom{x}} : \lambda(\Dn{i}) \to K$ et $\widetilde{\atom{y}}
  : \lambda(\Dn{j}) \to L$ désignent les transposés de $\atom{x} : \Dn{i}
  \to \nu(K)$ et $\atom{y} : \Dn{j} \to \nu(L)$ respectivement,
  est commutatif.
\end{prop}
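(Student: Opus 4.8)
Cette proposition est l'analogue pour le produit tensoriel de Gray de la proposition~\ref{prop:joint_atom_univ} déjà établie pour le joint. Il s'agit de montrer que l'atome $\atom{x \otimes y}$ de $\nu(K \otimes L)$ s'obtient, via le morphisme $\nu\big(\widetilde{\atom{x}} \otimes \widetilde{\atom{y}}\big)$, comme image de l'atome universel $\atom{z \otimes t}$ du produit tensoriel des disques. Autrement dit, la commutativité du diagramme se réduit à l'égalité
\[
  \nu\big(\widetilde{\atom{x}} \otimes \widetilde{\atom{y}}\big)(\atom{z \otimes t}) = \atom{x \otimes y}.
\]

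**Stratégie.**

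Mon plan est de reproduire exactement la démonstration de la proposition~\ref{prop:joint_atom_univ}, en remplaçant systématiquement le joint par le produit tensoriel et, corrélativement, le lemme~\ref{lemme:joint_morph_atom} par son pendant tensoriel, le lemme~\ref{lemme:tens_morph_atom}. La clé est que ce dernier lemme est déjà disponible dans l'excerpt et fournit précisément l'outil de transport des atomes le long d'un produit tensoriel de morphismes. D'abord, je rappellerais que par définition des transposés, on a les égalités
\[
  \nu(\widetilde{\atom{x}})(\atom{z}) = \atom{x}
  \quadet
  \nu(\widetilde{\atom{y}})(\atom{t}) = \atom{y},
\]
qui traduisent simplement que $\widetilde{\atom{x}}$ (resp. $\widetilde{\atom{y}}$) est le transposé par l'adjonction $\lambda \dashv \nu$ du $\infty$\nbd-foncteur $\atom{x}$ (resp. $\atom{y}$) corepésentant l'atome. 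Ces égalités expriment exactement que $\nu(\widetilde{\atom{x}})$ et $\nu(\widetilde{\atom{y}})$ envoient l'atome sur l'atome, ce qui est l'hypothèse d'entrée du lemme~\ref{lemme:tens_morph_atom} appliqué à $f = \widetilde{\atom{x}}$ et $g = \widetilde{\atom{y}}$.

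**Étapes et obstacle principal.**

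La démonstration serait donc très courte : il suffit d'invoquer le lemme~\ref{lemme:tens_morph_atom} avec $f = \widetilde{\atom{x}}$, $g = \widetilde{\atom{y}}$, et les éléments de base $z$ de $\lambda(\Dn{i})$ et $t$ de $\lambda(\Dn{j})$. L'hypothèse $\nu(f)(\atom{z}) = \atom{f(z)}$ et $\nu(g)(\atom{t}) = \atom{g(t)}$ est satisfaite car $\widetilde{\atom{x}}(z) = x$ et $\widetilde{\atom{y}}(t) = y$ (les transposés envoient la cellule principale sur l'élément de base correspondant), et car $\nu(\widetilde{\atom{x}})(\atom{z}) = \atom{x} = \atom{\widetilde{\atom{x}}(z)}$ par définition même du transposé. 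La conclusion du lemme donne alors directement
\[
  \nu(\widetilde{\atom{x}} \otimes \widetilde{\atom{y}})(\atom{z \otimes t}) = \atom{x \otimes y},
\]
ce qui est l'assertion voulue. Le seul point demandant un soin mineur est de vérifier que $z \otimes t$ est bien dans la base de $\lambda(\Dn{i}) \otimes \lambda(\Dn{j})$ — ce qui résulte du paragraphe~\ref{paragr:base_tens} puisque $z$ et $t$ sont les éléments de base de degré maximal de $\lambda(\Dn{i})$ et $\lambda(\Dn{j})$ respectivement — et que ces deux complexes sont à base unitaire, ce qui est établi au paragraphe~\ref{paragr:desc_lambda_Dn}. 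Je n'anticipe aucun véritable obstacle : tout le travail calculatoire a déjà été absorbé dans le lemme~\ref{lemme:tens_morph_atom}, lui-même déduit de la formule explicite du lemme~\ref{lemme:tab_tens}. La preuve se réduit ainsi à une application directe, parallèle en tout point au cas du joint.
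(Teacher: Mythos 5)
Votre proposition est correcte et suit exactement la même démarche que la preuve de l'article : réduction de la commutativité du diagramme à l'égalité $\nu\big(\widetilde{\atom{x}} \otimes \widetilde{\atom{y}}\big)(\atom{z \otimes t}) = \atom{x \otimes y}$, puis application directe du lemme~\ref{lemme:tens_morph_atom} en utilisant le fait que, par définition des transposés, $\nu(\widetilde{\atom{x}})(\atom{z}) = \atom{x}$ et $\nu(\widetilde{\atom{y}})(\atom{t}) = \atom{y}$. Les vérifications supplémentaires que vous mentionnez (appartenance de $z \otimes t$ à la base, caractère unitaire des bases de $\lambda(\Dn{i})$ et $\lambda(\Dn{j})$) sont correctes mais implicites dans l'article.
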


\begin{proof}
  Il s'agit de montrer qu'on a
  \[
    \nu\big(\widetilde{\atom{x}} \otimes \widetilde{\atom{y}}\big)(\atom{z
    \otimes t}) = \atom{x \otimes y}.
  \]
  Cela résulte de la proposition précédente puisque, par définition, on a
  \[
    \nu(\widetilde{\atom{x}})(\atom{z}) = \atom{x}
    \quadet
    \nu(\widetilde{\atom{y}})(\atom{t}) = \atom{y}.
    \qedhere
  \]
\end{proof}

\begin{prop}
  Soient $K$ un complexe de Steiner fort et $F : I \to \Cda$ un système de
  Steiner fort \noemph{(voir le paragraphe~\ref{paragr:def_syst_Steiner})}. Alors le
  foncteur
  \[
    \begin{split}
      K \otimes F & : I \to \Cda \\
      &\phantom{{:I}} i \mapsto K \otimes F(i)
    \end{split}
  \]
  est un système de Steiner fort.
\end{prop}

\begin{proof}
  Le foncteur $F$ étant un système rigide, il en est de même du foncteur
  \hbox{$K \otimes F : i \mapsto K \otimes F(i)$} en vertu de la
  proposition~\ref{prop:tens_rig}. Par ailleurs, puisque d'après la
  proposition~\ref{prop:tens_Steiner} les complexes de Steiner forts sont
  stables par produit tensoriel, le foncteur $K \otimes F$ est à valeurs
  dans les complexes de Steiner forts. Enfin, le foncteur $K \otimes \var$
  admettant un adjoint à droite, il commute aux limites inductives. Le
  morphisme canonique
  \[
    \limind_{i \in I} (K \otimes F(i))
    \to
    K \otimes \limind_{i \in I} F(i)
  \]
  est donc un isomorphisme de complexes dirigés augmentés et, pour tout
  objet $i_0$ de~$I$, le morphisme canonique $K \otimes F(i_0) \to \limind_{i
  \in I} (K \otimes F(i))$ s'identifie à travers cet isomorphisme au produit
  tensoriel $K \otimes F(i_0) \to K \otimes \limind_{i \in I} F(i)$ de $K$
  et du morphisme canonique associé à $F$. On en déduit le résultat en
  invoquant de nouveau les propositions~\ref{prop:tens_Steiner} et
  \ref{prop:tens_rig}.
\end{proof}

\begin{coro}
  Si $K$ est un complexe de Steiner fort, alors le foncteur
  \[
    \begin{aligned}
      \Cda & \to \ooCat \\
      L & \mapsto \nu(K \otimes L)
    \end{aligned}
  \]
  commute aux limites inductives des systèmes de Steiner forts.
\end{coro}

\begin{proof}
  Puisque le foncteur $\nu$ commute aux limites inductives des systèmes de
  Steiner forts (théorème~\ref{thm:nu_syst_Steiner}),
  l'assertion résulte de la proposition précédente.
\end{proof}

\begin{coro}
  Soit $K$ un complexe de Steiner fort. Alors le foncteur
  \[
    \begin{split}
      \Theta & \to \ooCat \\
      S  & \mapsto \nu(K \otimes \lambda(S))
    \end{split}
  \]
  commute aux sommes globulaires.
\end{coro}

\begin{proof}
  Cela résulte du corollaire précédent puisqu'en vertu de la
  proposition~\ref{prop:Theta_Steiner}, les sommes globulaires proviennent
  de systèmes de Steiner forts.
\end{proof}

\begin{paragr}\label{paragr:Hom_lax}
  Fixons $K$ un complexe de Steiner fort. Soit $C$ une \oo-catégorie. Nous
  allons définir une \oo-catégorie \nnot{$\HomLax(\nu(K), C)$}. Il résulte du
  corollaire précédent que le foncteur
  \[
    \begin{split}
      \Theta^\op & \to \Ens \\
      S  & \mapsto \Hom_{\ooCat}(\nu(K \otimes \lambda(S)), C)
    \end{split}
  \]
  envoie les sommes globulaires sur des produits globulaires, au sens du
  paragraphe~\ref{paragr:pu_Theta}. Ainsi, en vertu de ce même
  paragraphe, ce foncteur définit une \oo-catégorie et c'est cette
  \oo-catégorie qu'on notera $\HomLax(\nu(K), C)$. Autrement dit, avec les
  notations du paragraphe~\ref{paragr:pu_Theta}, on pose
  \[ \HomLax(\nu(K), C) = \Hom_{\ooCat}(\nu(K \otimes \lambda(\Dn{\var})), C). \]
  Explicitement, les $i$-flèches de $\HomLax(\nu(K), C)$ sont les
  \oo-foncteurs de $\nu(K \otimes \lambda(\Dn{i}))$ vers $C$.

  Notons qu'un morphisme $f : K \to K'$ entre complexes de Steiner forts
  induit, pour tout \oo-catégorie $C$, une application
  \[
    \Hom_{\ooCat}(\nu(K' \otimes \lambda(S)), C)
      \to \Hom_{\ooCat}(\nu(K \otimes \lambda(S)), C),
  \]
  naturelle en $S$ dans $\Theta$. Ainsi, toujours en vertu du
  paragraphe~\ref{paragr:pu_Theta}, un tel morphisme induit un \oo-foncteur
  $\HomLax(\nu(K'), C) \to \HomLax(\nu(K), C)$.
\end{paragr}

\begin{prop}\label{prop:pu_lax}
  Fixons $K$ un complexe de Steiner fort. Pour tout complexe de Steiner fort
  $L$ et toute \oo-catégorie $C$, on a une bijection
  \[
    \Hom_{\ooCat}(\nu(K \otimes L), C) \simeq
    \Hom_{\ooCat}(\nu(L), \HomLax(\nu(K), C)),
  \]
  naturelle en $L$ et $C$.
\end{prop}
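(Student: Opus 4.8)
Le plan est de reproduire, en le simplifiant, l'argument de la proposition~\ref{prop:pu_cotranche}, le caract�re biferm� du produit tensoriel rendant inutiles les complications li�es � la bicoaugmentation et aux tranches qui apparaissaient dans le cas du joint. On commence par observer que, $K$ et $L$ �tant des complexes de Steiner forts, il en est de m�me de $K \otimes L$ (proposition~\ref{prop:tens_Steiner}) et de $K \otimes \lambda(\Dn{i})$ pour tout $i \ge 0$, puisque $\lambda(\Dn{i})$ est de Steiner fort (voir le paragraphe~\ref{paragr:desc_lambda_Dn}). En vertu du th�or�me~\ref{thm:Steiner_pol}, les \oo-cat�gories $\nu(K \otimes L)$, $\nu(L)$ et $\nu(K \otimes \lambda(\Dn{i}))$ sont donc engendr�es librement au sens des polygraphes par leurs atomes. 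On construira ensuite deux applications inverses l'une de l'autre
\[
  \phi : \Hom_{\ooCat}(\nu(K \otimes L), C) \to \Hom_{\ooCat}(\nu(L),
  \HomLax(\nu(K), C))
\]
et $\psi$ en sens inverse.

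Pour $\phi$, on envoie un \oo-foncteur $F : \nu(K \otimes L) \to C$ sur le \oo-foncteur $\phi(F)$ associant � une $i$-fl�che $y : \Dn{i} \to \nu(L)$, de transpos�e $\tilde{y} : \lambda(\Dn{i}) \to L$, la $i$-fl�che de $\HomLax(\nu(K), C)$ donn�e par le compos�
\[
  \phi(F)(y) = \quad \nu(K \otimes \lambda(\Dn{i}))
  \xto{\nu(K \otimes \tilde{y})} \nu(K \otimes L) \xto{F} C.
\]
La naturalit� en $S$ dans $\Theta$ de cette formule assurera que $\phi(F)$ est bien un \oo-foncteur. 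Pour $\psi$, on part d'un \oo-foncteur $G : \nu(L) \to \HomLax(\nu(K), C)$ et on d�finit $\psi(G) : \nu(K \otimes L) \to C$ sur les atomes : si $x \otimes y$ est dans la base de $K \otimes L$ avec $x$ de degr� $i$ et $y$ de degr� $j$, on pose
\[
  \psi(G)(\atom{x \otimes y}) = \quad \Dn{i+j}
  \xto{\atom{x \otimes z'_j}} \nu(K \otimes \lambda(\Dn{j}))
  \xto{G(\atom{y})} C,
\]
o� $z'$ d�signe la cellule principale de $\Dn{j}$.

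Comme dans la proposition~\ref{prop:pu_cotranche}, le point d�licat, et principal obstacle, sera la v�rification que cette formule sur les atomes est compatible aux sources et aux buts, donc d�finit bien un \oo-foncteur. On proc�dera par r�currence sur la dimension : la source de l'atome universel $\atom{z_i \otimes z'_j}$ de $\nu(\lambda(\Dn{i}) \otimes \lambda(\Dn{j}))$ �tant exprimable, gr�ce � la proposition~\ref{prop:eng_pol_comp}, par une formule de composition $\chi$ en les $\atom{z^\e_k \otimes z'^\ep_l}$ de dimension strictement inf�rieure, la proposition~\ref{prop:tens_atom_univ} garantira que la m�me formule $\chi$ calcule la source de tout atome $\atom{x \otimes y}$. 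On en d�duira l'�galit� $\psi(G)(s(\atom{x \otimes y})) = \chi[G(\atom{y^\ep_l})(\atom{x^\e_k \otimes z''_l})]$, o� $z''$ d�signe la cellule principale de $\Dn{l}$, puis l'identit� cl�
\[
  G(\atom{y^\ep_l})(\atom{x^\e_k \otimes z''_l})
  = G(\atom{y})(\atom{x^\e_k \otimes z'^\ep_l}),
\]
cons�quence de la compatibilit� de $G$ aux sources et buts, de la description des morphismes $\lambda(\sigma_l^j)$ et $\lambda(\tau_l^j)$ donn�e au paragraphe~\ref{paragr:desc_lambda_cocat} et du lemme~\ref{lemme:tens_morph_atom}. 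Ceci permettra de conclure que $\psi(G)(s(\atom{x \otimes y})) = s(\psi(G)(\atom{x \otimes y}))$, et l'on traitera les buts de mani�re analogue ; c'est cette v�rification combinatoire qui constitue l'essentiel du travail.

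Enfin, on v�rifiera que $\phi$ et $\psi$ sont inverses l'une de l'autre en les �valuant sur les atomes : les �galit�s $\psi\phi(F)(\atom{x \otimes y}) = F(\atom{x \otimes y})$ et $\phi\psi(G)(\atom{y})(\atom{x \otimes z^\e_l}) = G(\atom{y})(\atom{x \otimes z^\e_l})$ r�sulteront directement des d�finitions, et l'�galit� des \oo-foncteurs correspondants s'obtiendra par libert� au sens des polygraphes (th�or�me~\ref{thm:Steiner_pol}). La naturalit� en $L$ et $C$ est imm�diate sur les formules d�finissant $\phi$, ce qui ach�vera la d�monstration.
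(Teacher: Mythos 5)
Votre proposition est correcte et suit essentiellement la m�me d�marche que la preuve du texte : m�mes formules pour $\phi$ et $\psi$ (transposition $\nu(K \otimes \tilde{y})$ d'un c�t�, �valuation $G(\atom{y})$ sur l'atome $\atom{x \otimes z'_j}$ de l'autre), m�me v�rification des compatibilit�s aux sources et buts via la formule de composition $\chi$ fournie par la proposition~\ref{prop:eng_pol_comp}, la proposition~\ref{prop:tens_atom_univ} et le lemme~\ref{lemme:tens_morph_atom}, et m�me conclusion par libert� au sens des polygraphes (th�or�me~\ref{thm:Steiner_pol}). Il n'y a rien � ajouter.
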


\begin{proof}
  Si $M$ est un complexe de Steiner fort et $t$ est un élément de la base de
  $M$ de degré $i \ge 0$, on notera, pour simplifier,
  $t^\epsilon_j = \atom{t}^\epsilon_j$, pour $0 \le j \le i$
  (voir le paragraphe~\ref{paragr:def_atome} pour la notation
  $\atom{t}^\e_j$).

  On va produire des fonctions
  \[
    \phi :
    \Hom_{\ooCat}(\nu(K \otimes L), C)
    \to
    \Hom_{\ooCat}(\nu(L), \HomLax(\nu(K), C))
  \]
  et
  \[
    \psi :
    \Hom_{\ooCat}(\nu(L), \HomLax(\nu(K), C))
    \to
    \Hom_{\ooCat}(\nu(K \otimes L), C)
  \]
  inverses l'une de l'autre.

  Commençons par $\phi$. Soit $F : \nu(K \otimes L) \to C$ un \oo-foncteur.
  On définit un \oo-foncteur $\phi(F) : \nu(L) \to \HomLax(\nu(K), C)$ de la
  manière suivante. Soit \hbox{$y : \Dn{i} \to \nu(L)$}, pour $i \ge 0$, une
  $i$-flèche de $\nu(L)$.  On doit lui associer une $i$\nbd-flèche~$\phi(F)(y)$
  de $\HomLax(\nu(K), C)$, c'est-à-dire un \oo-foncteur $\nu(K \otimes
  \lambda(\Dn{i})) \to C$. On pose
  \[
    \phi(F)(y) = \quad \nu(K \otimes \lambda(\Dn{i})) \xto{\nu(K \otimes
    \tilde{y})} \nu(K \otimes L) \xto{F} C,
  \]
  où on a noté $\tilde{y} : \lambda(\Dn{i}) \to L$ le transposé de $y : \Dn{i}
  \to \nu(L)$. Le fait qu'on obtienne bien ainsi un \oo-foncteur résulte
  de la naturalité en $\Dn{i}$, et plus généralement en~$S$ dans~$\Theta$,
  du \oo-foncteur $\phi(F)(y)$. En particulier, pour $x$ un élément de degré
  $i$ de la base de~$K$, $y$ un élément de la base de $L$ et $z$ la cellule
  principale de $\Dn{i}$, on a, pour $j$ tel que $0 \le j \le i$ et
  $\epsilon = 0, 1$,
  \[
    \phi(F)(\atom{y})(\atom{x \otimes z^\epsilon_j}) = F(\atom{x \otimes
    y^\epsilon_j}).
  \]
  De plus, cette formule détermine $\phi(F)$ de manière unique puisque les
  \oo-catégories $\nu(L)$ et~$\nu(K \otimes \lambda(\Dn{i}))$ sont engendrées
  librement au sens des polygraphes par leurs atomes en vertu du
  théorème~\ref{thm:Steiner_pol}.

  Définissons maintenant $\psi$. Soit $G : \nu(L) \to \HomLax(\nu(K), C)$ un
  \oo-foncteur. Il s'agit de définir un \oo-foncteur $\psi(G) : \nu(K \otimes L)
  \to C$. En vertu du théorème~\ref{thm:Steiner_pol}, la \oo-catégorie
  $\nu(K \otimes L)$ est engendrée librement au sens des polygraphes par ses
  atomes.  Il suffit donc de définir $\psi(G)$ sur les atomes de $\nu(K
  \otimes L)$ et de vérifier les compatibilités aux sources et aux buts.
  Soient $x$ de degré $i$ dans la base de $K$ et $y$ de degré $j$ dans la
  base de $L$. On pose
  \[
    \psi(G)(\atom{x \otimes y}) = \quad
    \Dn{i+j}
    \xto{\atom{x \otimes z'_j}}
    \nu(K \otimes \lambda(\Dn{j}))
    \xto{G(\atom{y})}
    C,
  \]
  où $z'$ désigne la cellule principale de $\Dn{j}$.

  Vérifions maintenant les compatibilités aux sources et aux buts. Fixons
  $m \ge 0$ et supposons que les formules ci-dessus définissent bien un
  $m$-foncteur. Il s'agit de montrer que, pour tous $x$ et $y$ comme
  ci-dessus avec $i + j = m + 1$, on a
  \[
    \psi(G)(s(\atom{x \otimes y})) = s(\psi(G)(\atom{x \otimes y}))
    \quad\text{et}\quad
    \psi(G)(t(\atom{x \otimes y})) = t(\psi(G)(\atom{x \otimes y})).
  \]
  Montrons la première égalité, la seconde se démontrant de manière
  analogue. Notons $z$ et $z'$ les cellules principales respectives de
  $\Dn{i}$ et $\Dn{j}$. Puisque la \oo-catégorie \hbox{$\nu(\lambda(\Dn{i})
  \otimes \lambda(\Dn{j}))$} est engendrée librement au sens des polygraphes
  par ses atomes, en vertu de la proposition~\ref{prop:eng_pol_comp}, ses
  atomes l'engendrent également par compositions et il existe donc une
  formule $\chi$ exprimant la source de $\atom{z_i \otimes z'_j}$ en
  fonction des $\atom{z^\e_k \otimes z'^\ep_l}$ avec $0 \le k \le i$, $0 \le
  l \le j$, $k + l < i + j$, $\e = 0, 1$ et $\ep = 0, 1$. On notera
  $\chi[\atom{z^\e_k \otimes z'^\ep_l}]$ l'évaluation de la
  formule $\chi$ en les éléments $z^\e_k \otimes z'^\ep_l$. On a donc
  $s(\atom{z_i \otimes z'_j}) = \chi[\atom{z^\e_k \otimes z'^\ep_l}]$. Plus
  généralement, il résulte de la proposition~\ref{prop:tens_atom_univ} que
  la même formule $\chi$ permet de calculer la source d'un atome $\atom{m
  \otimes n}$, où $m$ est de degré~$i$ et $n$ de degré $j$, d'un produit
  tensoriel quelconque de complexes dirigés augmentés à base unitaire~$M$ et
  $N$. On obtient ainsi
  \[
    \begin{split}
      \psi(G)(s(\atom{x \otimes y}))
      & =
      \psi(G)(\chi[\atom{x^\e_k \otimes y^\ep_l}]) \\*
      & =
      \chi[\psi(G)(\atom{x^\e_k \otimes y^\ep_l})]\\*
      & \phantom{=1} \text{(car $\psi(G)$ est un $m$-foncteur et $k + l
    < i + j = m + 1$)} \\
      & = \chi[G(\atom{y^\ep_l})(\atom{x^\e_k \otimes z''_l})],
    \end{split}
  \]
  où $z''$ désigne la cellule principale de $\Dn{l}$, cette dernière
  égalité résultant de la définition de $\psi(G)$. Par ailleurs, on a
  \[
    G(\atom{y^\ep_l})(\atom{x^\e_k \otimes z''_l})
    = G(\atom{y})(\atom{x^\e_k \otimes z'^\ep_l}).
  \]
  En effet, pour $\ep = 0$, on a
  \[
    \begin{split}
      G(\atom{y^0_l})(\atom{x^\e_k \otimes z''_l})
      & =
      G(s_l(\atom{y}))(\atom{x^\e_k \otimes z''_l}) \\
      & =
      s_l(G(\atom{y}))(\atom{x^\e_k \otimes z''_l}) \\
      & =
      G(\atom{y})
      \big(\nu(K \otimes \lambda(\sigma_l^i))(\atom{x^\e_k \otimes z''_l})\big)
        \\*
      & \phantom{=1} \text{(par définition des sources des
      cellules de $\HomLax(\nu(K), C)$)} \\
      & =
      G(\atom{y})(\atom{x^\e_k \otimes z'^0_l}),
    \end{split}
  \]
  la dernière égalité étant conséquence de la
  proposition~\ref{prop:tens_morph_atom} puisque $\sigma_l^i(\atom{z''_l})
  = \atom{z'^0_l}$. La démonstration dans le cas $\ep = 1$ est analogue. En
  insérant cette égalité dans notre calcul précédent, on obtient
   {
    \allowdisplaybreaks
    \begin{align*}
      \psi(G)(s(\atom{x \otimes y}))
      & = \chi[G(\atom{y^\ep_l})(\atom{x^\e_k \otimes z''_l})] \\
      & = \chi[G(\atom{y})(\atom{x^\e_k \otimes z'^\ep_l})] \\*
      & =
      G(\atom{y})(\chi[\atom{x^\e_k \otimes z'^\ep_l}]) \\*
      & \phantom{=1} \text{(car $G(\atom{y})$ est un \oo-foncteur)} \\
      & =
      G(\atom{y})(s(\atom{x \otimes z'_j})) \\
      & =
      s(G(\atom{y})(\atom{x \otimes z'_j})) \\
      & =
      s(\psi(G)(\atom{x \otimes y})),
    \end{align*}
  }%
  la dernière égalité résultant de la définition de $\psi(G)$,
  ce qui achève de montrer que~$\psi(G)$ est bien un \oo-foncteur.

  Enfin, vérifions que $\phi$ et $\psi$ sont bien des bijections
  inverses l'une de l'autre. Soient $F : \nu(K \otimes L) \to C$ et $G :
  \nu(L) \to \HomLax(\nu(K), C)$ deux \oo-foncteurs. On a, avec les notations
  précédentes,
  \[
    \psi\phi(F)(\atom{x \otimes y}) = \phi(F)(\atom{y})(\atom{x \otimes z'_j})
    = F(\atom{x \otimes y})
  \]
  et
  \[
    \phi\psi(G)(\atom{y})(\atom{x \otimes z^\epsilon_j}) = \psi(G)(\atom{x
    \otimes y^\epsilon_j}) = G(\atom{y})(\atom{x \otimes z^\epsilon_j}).
  \]
  Les \oo-foncteurs $\psi\phi(F)$ et $F$ (resp. les \oo-foncteurs
  $\phi\psi(G)$ et $G$) coïncident donc sur les atomes et sont donc égaux, ce
  qu'il fallait démontrer.
\end{proof}

\begin{paragr}\label{paragr:pu_oplax}
  Soit $L$ un complexe de Steiner fort. On montre de même que le foncteur
  \[
    \begin{aligned}
      \Cda & \to \ooCat \\
      K & \mapsto \nu(K \otimes L)
    \end{aligned}
  \]
  commute aux limites inductives des systèmes de Steiner forts. On en déduit
  que le foncteur
  \[
    \begin{split}
      \Theta^\op & \to \Ens \\
      S  & \mapsto \Hom_{\ooCat}(\nu(\lambda(S) \otimes L), C)
    \end{split}
  \]
  envoie les sommes globulaires sur des produits globulaires.  On définit
  alors, comme dans le paragraphe~\ref{paragr:Hom_lax}, pour toute
  \oo-catégorie $C$, une \oo-catégorie
  \nnot[$\HomOpLax(\nu(K), C)$]{$\HomOpLax(\nu(L), C)$} en posant
  \[
     \HomOpLax(\nu(L), C) =
     \Hom_{\ooCat}(\nu(\lambda(\Dn{\var}) \otimes L), C).
  \]
  On montre, comme dans la proposition précédente, que, pour tout complexe
  de Steiner fort $K$ et toute \oo-catégorie $C$, on a une bijection
  \[
    \Hom_{\ooCat}(\nu(K \otimes L), C) \simeq
    \Hom_{\ooCat}(\nu(K), \HomOpLax(\nu(L), C)),
  \]
  naturelle en $K$ et $C$.
\end{paragr}

\begin{thm}\label{thm:produit_tens}
  Il existe une et une seule (à unique isomorphisme monoïdal près) structure
  de catégorie monoïdale sur $\ooCat$ de produit
  \begin{alignat*}{2}
    \otimes & : \ooCat \times \ooCat & \to \ooCat\\
    & \phantom{=1}\quad\,\,(A, B) & \mapsto A \otimes B
  \end{alignat*}
  ayant les deux propriétés suivantes :
  \begin{enumerate}
    \item le foncteur $\nu_{\vert \Stf} : \Stf \to \ooCat$, où la catégorie
      des complexes de Steiner forts~$\Stf$ est munie de la structure de
      catégorie monoïdale définie par le produit tensoriel, s'étend en un
      foncteur monoïdal ;
    \item le foncteur $\otimes : \ooCat \times \ooCat \to \ooCat$ commute
      aux petites limites inductives en chaque variable.
  \end{enumerate}
  De plus, cette structure monoïdale est bifermée.
\end{thm}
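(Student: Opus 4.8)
The plan is to deduce this theorem exactly as we did for the join in Theorem~\ref{thm:joint}, namely by an application of the Day-type extension result in Corollary~\ref{coro:Day_loc} (or rather Theorem~\ref{thm:Day_loc}), but now using the \emph{trivial} bicoaugmentation of Example~\ref{exem:coaug} rather than the local one, so that local biclosedness becomes genuine biclosedness. More precisely, I would apply the extension machinery to $\C = \ooCat$ and to $\D$ the full subcategory of strong Steiner $\infty$-categories, identified via $\nu$ with the category $\Stf$ of strong Steiner complexes equipped with the monoidal structure given by the \emph{Gray tensor product} $\otimes$. The small dense subcategory required by the hypotheses will again be $\ThetaAug$, which is dense in $\ooCat$ by Proposition~\ref{prop:Theta_dense} and whose objects are strong Steiner by Proposition~\ref{prop:Theta_Steiner}.

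\emph{First I would} verify that $\D$ is stable under $\otimes$ and that $\Stf$ is a monoidal subcategory: this is precisely the content of Proposition~\ref{prop:tens_Steiner} together with the fact that the unit $\Zdec$ is a strong Steiner complex, so the restriction $\nu_{\vert\Stf}$ is a monoidal functor for the tensor product. \emph{Next} the crucial input is the existence of the two families of adjunction bijections demanded by the theorem; these are supplied by Proposition~\ref{prop:pu_lax} and the symmetric statement in paragraph~\ref{paragr:pu_oplax}, which together furnish the internal $\Hom$ functors $\HomLax$ and $\HomOpLax$ representing, respectively, the left and right closed structure on strong Steiner objects. Because the tensor product is biclosed rather than merely locally biclosed (the key difference from the join: the functors $\nu(K\otimes\var)$ and $\nu(\var\otimes L)$ commute with \emph{all} small inductive limits of strong Steiner systems, not only the connected ones, since $K\otimes\var$ admits a right adjoint on $\Cda$), I would package the hypotheses of Corollary~\ref{coro:Day_loc} using the trivial bicoaugmentation, whose local biclosedness is equivalent to ordinary biclosedness by the remark following paragraph~\ref{paragr:def_loc_ferm}. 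This yields the unique monoidal structure on $\ooCat$ extending that of $\D$, with tensor product commuting with small colimits in each variable, and it is biclosed.

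\emph{The main obstacle} I expect lies not in the formal Day-type argument, which is now essentially mechanical given the corollary, but in confirming that the closed-structure bijections of Proposition~\ref{prop:pu_lax} and paragraph~\ref{paragr:pu_oplax} really do assemble into functors $H,H'$ of the shape required by the hypotheses of the corollary, naturally in all three variables simultaneously. This is the tensor analogue of Proposition~\ref{prop:pu_cotranche}, and its proof rests on the polygraphic freeness of $\nu(K\otimes L)$ (Theorem~\ref{thm:Steiner_pol}) together with the universal atom computation of Proposition~\ref{prop:tens_atom_univ} and Lemma~\ref{lemme:tab_tens}, exactly paralleling the join case; the delicate point is the source/target compatibility check for the candidate $\oo$-functor $\psi(G)$, carried out on atoms via the composition formula $\chi$. \emph{Finally} I would observe that, since here the unit $\Zdec$ of the tensor product is \emph{not} an initial object of $\ooCat$ (unlike the join, whose unit is the empty $\infty$-category), one cannot invoke the local bicoaugmentation; instead the trivial bicoaugmentation must be used, and one checks that the resulting notion of local biclosedness collapses to biclosedness, giving the last sentence of the statement.
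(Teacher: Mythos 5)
Your proposal is correct and follows essentially the same route as the paper: the paper deduces the theorem by applying Theorem~\ref{thm:Day} to $\C = \ooCat$ and $\D$ the category of strong Steiner \oo-cat�gories equipped with the Gray tensor product, with $\Theta$ as the small dense subcategory, the hypotheses being precisely Proposition~\ref{prop:pu_lax} and paragraph~\ref{paragr:pu_oplax}. Your detour through Theorem~\ref{thm:Day_loc} with the trivial bicoaugmentation (and $\ThetaAug$ in place of $\Theta$) amounts to the same thing, since, as the paper notes in the remark following Corollary~\ref{coro:Day_loc}, Theorem~\ref{thm:Day} is exactly the case of Theorem~\ref{thm:Day_loc} for the trivial bicoaugmentation, for which local biclosedness coincides with biclosedness.
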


\begin{proof}
  L'assertion résulte du théorème~\ref{thm:Day} appliqué à $\C = \ooCat$
  et~$\D$~la catégorie des \oo-catégories de Steiner fortes (voir le
  paragraphe~\ref{paragr:def_ooCat_Stf}) munie du produit tensoriel (par
  l'identification de cette sous-catégorie à celle des complexes de Steiner
  forts), la petite sous-catégorie dense de l'énoncé étant la catégorie
  $\Theta$. En effet, les hypothèses de ce théorème sont précisément le
  contenu de la proposition~\ref{prop:pu_lax} et du
  paragraphe~\ref{paragr:pu_oplax}.
\end{proof}

\begin{paragr}\label{paragr:def_tens}
  On appellera \ndef[produit tensoriel!de Gray]{produit tensoriel de Gray}
  ou plus simplement \ndef[produit tensoriel!$\infty$-catégorique]{produit
  tensoriel} le produit monoïdal
  \[
    \otimes : \ooCat \times \ooCat \to \ooCat
  \]
  \notindex{$A \otimes B$}%
  défini dans le théorème précédent. Si $K$ et $L$ sont des complexes de
  Steiner forts, on a, en vertu de ce même théorème, un isomorphisme
  canonique
  \[
    \nu(K) \otimes \nu(L) \simeq \nu(K \otimes L).
  \]
  En particulier, si $S$ et $T$ sont deux objets de $\Theta$, puisqu'en
  vertu de la proposition~\ref{prop:Theta_Steiner} on a $S \simeq
  \nu\lambda(S)$ et $T \simeq \nu\lambda(T)$, on obtient un isomorphisme
  canonique
  \[ S \otimes T \simeq \nu(\lambda(S) \otimes \lambda(T)). \]
  Plus généralement, si $A$ et $B$ sont deux \oo-catégories, on a des
  isomorphismes canoniques
  \[
    A \otimes B
    \simeq \limind_{\substack{S \to A \in \tr{\Theta}{A}\\
    T \to B \in \tr{\Theta}{B}}} S \otimes T
    \simeq \limind_{\substack{S \to A \in \tr{\Theta}{A}\\
    T \to B \in \tr{\Theta}{B}}} \nu(\lambda(S) \otimes \lambda(T)).
  \]
  En effet, puisque la catégorie $\Theta$ est dense dans $\ooCat$, toute
  \oo-catégorie est limite inductive canonique d'objets de $\Theta$. La
  formule résulte alors du fait que le produit tensoriel commute aux limites
  inductives en chaque variable.

  Notons que l'unité du produit tensoriel est l'image par $\nu$ du complexe
  dirigé augmenté $\Zdec$ du paragraphe~\ref{paragr:def_produit_cda},
  c'est-à-dire la \oo-catégorie finale $\Dn{0}$. Si $A$ est une
  \oo-catégorie, on a donc
  \[
    A \otimes \Dn{0} \simeq A \simeq \Dn{0} \otimes A.
  \]
\end{paragr}

\begin{rem}
  Le produit tensoriel défini au paragraphe précédent coïncide avec celui
  introduit par Al-Agl et Steiner dans \cite{AlAglSteiner} et
  étudié par Crans dans \cite{CransThese}. En effet, le produit tensoriel
  étudié dans ces textes commutant aux limites inductives en chaque
  variable, il suffit de vérifier que les deux produits tensoriels
  coïncident sur une sous-catégorie dense de $\ooCat$. On peut vérifier que
  c'est le cas pour la sous-catégorie pleine des cubes, c'est-à-dire des
  objets de la forme $\Delta_1 \otimes \cdots \otimes \Delta_1$, où
  $\Delta_1$ apparaît $n$ fois avec $n \ge 0$.
\end{rem}

\begin{paragr}\label{paragr:def_HomOpLax}
  En vertu du théorème~\ref{thm:produit_tens}, la structure de catégorie
  monoïdale définie par le produit tensoriel est bifermée. On obtient donc
  des foncteurs
  \[
    \HomOpLax : \ooCat^\op \times \ooCat \to \ooCat
    \quad\text{et}\quad
    \HomLax : \ooCat^\op \times \ooCat \to \ooCat
  \]
  \notindex{$\HomOpLax(A, B)$, $\HomLax(A, B)$}%
  vérifiant, pour toutes \oo-catégories $A$, $B$ et $C$,
  \[
      \Hom_{\ooCat}(A \otimes B, C)
      \simeq \Hom_{\ooCat}(A, \HomOpLax(B, C))
  \]
  et
  \[
      \Hom_{\ooCat}(A \otimes B, C)
      \simeq \Hom_{\ooCat}(B, \HomLax(A, C)),
  \]
  et ce de manière naturelle en $A$, $B$ et $C$.
  En particulier, pour tout $i \ge 0$, on a
  \[
    \HomOpLax(A, B)_i = \Hom_{\ooCat}(\Dn{i} \otimes A, B)
  \]
  et
  \[
    \HomLax(A, B)_i = \Hom_{\ooCat}(A \otimes \Dn{i}, B).
  \]

  Notons que, dans le cas où $A$ est de la forme $\nu(K)$ pour
  $K$ un complexe de Steiner fort, la \oo-catégorie $\HomLax(\nu(K), C)$ que
  l'on vient d'introduire coïncide, en vertu de la
  proposition~\ref{prop:pu_lax}, avec celle définie dans le
  paragraphe~\ref{paragr:Hom_lax}. De même pour le foncteur $\HomOpLax$
  défini au paragraphe~\ref{paragr:pu_oplax}.
\end{paragr}

\begin{prop}\label{prop:lambda_nu_mon_tens}
  Les foncteurs
  \[
    \lambda : \ooCat \to \Cda
    \quadet
    \nu : \Cda \to \ooCat
  \]
  sont monoïdal et monoïdal lax respectivement, les catégories $\ooCat$ et
  $\Cda$ étant toutes deux munies des structures de catégorie monoïdale
  définies par le produit tensoriel.
\end{prop}

\begin{proof}
  La démonstration est une adaptation immédiate de la preuve de l'assertion
  analogue pour le joint (proposition~\ref{prop:lambda_nu_monoidaux_joint}).
\end{proof}

\begin{prop}\label{prop:dual_tens_cda}
  Soient $K$ et $L$ deux complexes dirigés augmentés. Les applications
  \[
    \begin{split}
      (K \otimes L)_p & \to (L \otimes K)_p \\
      x \otimes y & \mapsto y \otimes x \pbox{,}
    \end{split}
  \]
  pour $p \ge 0$, définissent des isomorphismes
  \[
    (K \otimes L)^\opp \simeq L^\opp \otimes K^\opp
    \quadet
    (K \otimes L)^\co \simeq L^\co \otimes K^\co.
  \]
  En particulier, les applications identité de $(K \otimes L)_p$, pour $p
  \ge 0$, définissent un isomorphisme
  \[
    (K \otimes L)^\op \simeq K^\op \otimes L^\op.
  \]
\end{prop}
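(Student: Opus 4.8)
La strat�gie consiste � reprendre, en l'adaptant, la d�monstration de la proposition~\ref{prop:dual_joint_cda}. Notons $s : x \otimes y \mapsto y \otimes x$ l'application de l'�nonc�. La compatibilit� aux augmentations est imm�diate, puisque $e(y \otimes x) = e(y)e(x) = e(x)e(y) = e(x \otimes y)$ et que les dualit�s pr�servent les augmentations ; la compatibilit� aux sous-mono�des de positivit� l'est �galement, ces derniers �tant engendr�s par les $x \otimes y$ avec $x$ dans $K^\ast_i$ et $y$ dans $L^\ast_j$, et les dualit�s ne modifiant pas les sous-mono�des de positivit� (voir le paragraphe~\ref{paragr:def_dual_Cda}) ; la bijectivit� est claire. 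Tout le contenu se trouve donc, comme pour le joint, dans la compatibilit� aux diff�rentielles, que je traiterais simultan�ment pour les deux isomorphismes annonc�s.

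Pour l'isomorphisme dual impair, je calculerais, pour un �l�ment homog�ne $x \otimes y$ de degr� $p = |x| + |y| \ge 1$ et en notant $d'$ les diff�rentielles dual impair, d'une part $s d'_{K \otimes L}(x \otimes y)$ et d'autre part $d'_{L^\opp \otimes K^\opp}(y \otimes x)$, en utilisant la formule $d'z = (-1)^{|z|} dz$, la r�gle de Leibniz $d(x \otimes y) = dx \otimes y + (-1)^{|x|} x \otimes dy$ (voir le paragraphe~\ref{paragr:def_produit_comp}) et la d�finition du produit tensoriel de $L^\opp$ et $K^\opp$. Un d�compte des signes montre que les deux expressions se r�duisent toutes deux � $(-1)^{|x|+|y|} y \otimes dx + (-1)^{|y|} dy \otimes x$, d'o� l'isomorphisme. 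Le cas du dual pair serait strictement identique, la formule $d''z = (-1)^{|z|+1} dz$ rempla�ant $d'z = (-1)^{|z|} dz$ et produisant un signe global suppl�mentaire qui se propage de la m�me mani�re.

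Le point d�licat --- et la raison pour laquelle l'isomorphisme dual pair est ici valable, contrairement au cas du joint (voir la remarque qui suit la proposition~\ref{prop:dual_joint_cda}) --- est le traitement du degr�~$0$. Dans la d�finition du produit tensoriel (paragraphe~\ref{paragr:def_produit_comp}), la r�gle de Leibniz fait intervenir la \emph{vraie} diff�rentielle des complexes de cha�nes, pour laquelle $d_0 = 0$, et non l'augmentation ; les formules $d'z = (-1)^{|z|}dz$ et $d''z = (-1)^{|z|+1}dz$ sont par cons�quent valables en \emph{tout} degr�, les deux membres �tant nuls en degr�~$0$. C'est pr�cis�ment la convention $d_0 = e$ du joint qui brisait l'�galit� du c�t� dual pair ; en son absence, il n'y a plus d'obstacle, au-del� du soin apport� aux signes.

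Enfin, l'isomorphisme pour le dual total s'obtiendrait par composition plut�t que par un nouveau calcul. En appliquant le foncteur dual pair au premier isomorphisme et en utilisant l'�galit� $(K \otimes L)^\op = ((K \otimes L)^\opp)^\co$ (comme dans le cas des \oo-cat�gories, voir le paragraphe~\ref{paragr:dual_ooCat}), puis l'isomorphisme dual pair appliqu� � $L^\opp$ et $K^\opp$, on obtient une cha�ne d'isomorphismes $(K \otimes L)^\op \simeq (L^\opp \otimes K^\opp)^\co \simeq K^\op \otimes L^\op$. Les foncteurs de dualit� agissant comme l'identit� sur les groupes ab�liens gradu�s sous-jacents, chacune de ces deux fl�ches est induite par $s$, et leur compos�e $s \circ s$ est l'identit� : on retrouve bien l'isomorphisme donn� par les applications identit�, comme annonc�. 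On pourrait alternativement le v�rifier directement � l'aide de la formule $d'''z = -dz$, �galement valable en tout degr� ici puisque $d_0 = 0$.
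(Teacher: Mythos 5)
Votre preuve est correcte et suit essentiellement la m�me d�marche que celle de l'article : v�rification directe des signes pour les dualit�s impaire et paire, puis d�duction de la dualit� totale par composition des deux premiers isomorphismes \emph{via} $s \circ s = \mathrm{id}$. Votre explication du r�le du degr�~$0$ (la r�gle de Leibniz du produit tensoriel utilisant la vraie diff�rentielle, avec $d_0 = 0$, et non la convention $d_0 = e$ propre au joint) est exactement le point soulev� par la remarque de l'article suivant la proposition~\ref{prop:dual_joint_cda}, et justifie bien pourquoi les dualit�s paire et totale passent ici contrairement au cas du joint.
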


\begin{proof}
  Commençons par démontrer le résultat pour le dual impair. La compatibilité
  aux augmentations, aux sous-monoïdes de positivité et la bijectivité sont
  évidentes. Il s'agit donc de vérifier la compatibilité aux
  différentielles. Notons $s : x \otimes y \mapsto y \otimes x$ l'application
  de l'énoncé. Pour tout élément homogène de $K \otimes L$ de
  la forme $x \otimes y$ et de degré au moins $1$, on a,
  en notant $d'$ les différentielles dual impair,
  \[
    \begin{split}
      sd'(x \otimes y) & = (-1)^{|x| + |y|}sd(x \otimes y)\\
      & = (-1)^{|x| + |y|}s(dx \otimes y + (-1)^{|x|} x \otimes dy)\\
      & = (-1)^{|x| + |y|}y \otimes dx + (-1)^{|y|} dy \otimes x
    \end{split}
  \]
  et
  \[
    \begin{split}
      ds(x \otimes y) & = d(y \otimes x)
      = d'y \otimes x + (-1)^{|y|}y \otimes d'x \\
      & = (-1)^{|y|} dy \otimes x + (-1)^{|y| + |x|}y \otimes dx \\
      & = (-1)^{|x| + |y|}y \otimes dx + (-1)^{|y|} dy \otimes x,
    \end{split}
  \]
  d'où le premier isomorphisme. Le résultat pour le dual pair se démontre de
  manière analogue : en notant $d'$ les différentielles dual pair, on a
  \[
    \begin{split}
      sd'(x \otimes y) & = (-1)^{|x| + |y| + 1}sd(x \otimes y)\\
      & = (-1)^{|x| + |y| + 1}s(dx \otimes y + (-1)^{|x|} x \otimes dy)\\
      & = (-1)^{|x| + |y| + 1}y \otimes dx + (-1)^{|y| + 1} dy \otimes x
    \end{split}
  \]
  et
  \[
    \begin{split}
      ds(x \otimes y) & = d(y \otimes x)
      = d'y \otimes x + (-1)^{|y|}y \otimes d'x \\
      & = (-1)^{|y| + 1} dy \otimes x + (-1)^{|y| + |x| + 1}y \otimes dx \\
      & = (-1)^{|x| + |y| + 1}y \otimes dx + (-1)^{|y| + 1} dy \otimes x,
    \end{split}
  \]
  et on obtient ainsi le deuxième isomorphisme. Le troisième se déduit des
  deux premiers : l'application $s \circ s$, qui n'est autre que l'identité,
  définit un isomorphisme
  \[
    (A \otimes B)^\op = ((A \otimes B)^\opp)^\co \simeq (B^\opp \otimes
    A^\opp)^\co \simeq (A^\opp)^\co \otimes (B^\opp)^\co = A^\op
    \otimes B^\op. \qedhere
  \]
\end{proof}

\begin{rem}
  Outre la dualité triviale, les dualités considérées dans l'énoncé
  précédent sont les seules pour lesquelles on a des isomorphismes de ce
  type.
\end{rem}

\begin{prop}\label{prop:dual_tens}
  Soient $A$ et $B$ deux \oo-catégories. On a des isomorphismes naturels
  canoniques
  \[
    (A \otimes B)^\opp \simeq B^\opp \otimes A^\opp
    \quadet
    (A \otimes B)^\co \simeq B^\co \otimes A^\co.
  \]
  En particulier, on a un isomorphisme naturel canonique
  \[ (A \otimes B)^\op \simeq A^\op \otimes B^\op. \]
\end{prop}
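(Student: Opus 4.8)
Il s'agit de montrer que, pour deux $\infty$-catégories $A$ et $B$, on a des isomorphismes naturels canoniques
$$(A \otimes B)^\opp \simeq B^\opp \otimes A^\opp \quad\text{et}\quad (A \otimes B)^\co \simeq B^\co \otimes A^\co,$$
et par conséquent $(A \otimes B)^\op \simeq A^\op \otimes B^\op$.

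Regardons le modèle fourni par la preuve de la proposition~\ref{prop:dual_joint} (le cas analogue pour le joint). Cette preuve suit exactement le schéma qu'il convient d'adopter ici : on écrit $A$ et $B$ comme limites inductives canoniques d'objets de $\Theta$, on fait entrer la dualité dans la limite inductive, on transporte la dualité sur les complexes dirigés augmentés via la proposition~\ref{prop:dual_lambda_nu}, on applique l'isomorphisme au niveau des complexes (ici la proposition~\ref{prop:dual_tens_cda} au lieu de la proposition~\ref{prop:dual_joint_cda}), puis on ressort des limites inductives.

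\begin{proof}
  La démonstration est une adaptation immédiate de la preuve de la
  proposition~\ref{prop:dual_joint} en remplaçant le joint par le produit
  tensoriel et l'usage de la proposition~\ref{prop:dual_joint_cda} par celui
  de la proposition~\ref{prop:dual_tens_cda}.

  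Démontrons d'abord l'isomorphisme pour le dual impair. Observons tout
  d'abord que si $S$ est un objet de $\Theta$, alors le complexe dirigé
  augmenté $\lambda(S)^\opp$ est un complexe de Steiner fort. En effet,
  d'après les propositions~\ref{prop:dual_lambda_nu} et~\ref{prop:dual_Theta},
  on a $\lambda(S)^\opp \simeq \lambda(S^\opp) \simeq \lambda(T)$, pour un
  certain $T$ dans $\Theta$, et on conclut en vertu de la
  proposition~\ref{prop:Theta_Steiner}. En particulier, si $S$ et $T$ sont
  deux objets de $\Theta$, en vertu du paragraphe~\ref{paragr:def_tens}, on a
  \[
    \nu(\lambda(T)^\opp \otimes \lambda(S)^\opp) \simeq
    \nu(\lambda(T)^\opp) \otimes \nu(\lambda(S)^\opp).
  \]
  Ceci étant établi, considérons deux \oo-catégories $A$ et $B$. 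On a, en
  désignant par $S$ et $T$ des objets de $\Theta$,
  {
    \allowdisplaybreaks
    \begin{align*}
      (A \otimes B)^\opp
      & \simeq \big(\limind_{S \to A, T \to B} \nu(\lambda(S) \otimes
      \lambda(T))\big)^\opp \\*
      & \phantom{\simeq 1} \text{(en vertu du paragraphe~\ref{paragr:def_tens})} \\
      & \simeq \limind_{S \to A, T \to B} \nu(\lambda(S) \otimes
      \lambda(T))^\opp \\*
      & \phantom{\simeq 1} \text{(puisque $C \mapsto C^\opp$ est une
        équivalence de catégories)} \\
      & \simeq \limind_{S \to A, T \to B} \nu((\lambda(S) \otimes \lambda(T))^\opp) \\*
      & \phantom{\simeq 1} \text{(en vertu de la
        proposition~\ref{prop:dual_lambda_nu})} \\
      & \simeq \limind_{S \to A, T \to B} \nu(\lambda(T)^\opp \otimes \lambda(S)^\opp) \\*
      & \phantom{\simeq 1} \text{(en vertu de la
        proposition~\ref{prop:dual_tens_cda})} \\
      & \simeq \limind_{S \to A, T \to B} \nu(\lambda(T)^\opp) \otimes \nu(\lambda(S)^\opp) \\*
      & \phantom{\simeq 1} \text{(en vertu du paragraphe préliminaire à cette
        preuve)} \\
      & \simeq \limind_{T \to B} \nu(\lambda(T)^\opp) \otimes
                \limind_{S \to A} \nu(\lambda(S)^\opp) \\*
      & \phantom{\simeq 1} \text{(en vertu du théorème~\ref{thm:produit_tens})} \\
      & \simeq (\limind_{T \to B} \nu\lambda(T))^\opp \otimes
                (\limind_{S \to A} \nu\lambda(S))^\opp \\*
      & \phantom{\simeq 1} \text{(par une nouvelle application de la
            proposition~\ref{prop:dual_lambda_nu})} \\
      & \simeq (\limind_{T \to B} T)^\opp \otimes
                (\limind_{S \to A} S)^\opp \\*
      & \phantom{\simeq 1} \text{(en vertu de la proposition~\ref{prop:Theta_Steiner})} \\
      & \simeq B^\opp \otimes A^\opp.
    \end{align*}
  }
  La démonstration pour le dual pair est identique, en utilisant le second
  isomorphisme de la proposition~\ref{prop:dual_tens_cda}. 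Enfin,
  l'isomorphisme pour le dual total s'en déduit : puisque $C^\op = (C^\opp)^\co
  = (C^\co)^\opp$ (voir le paragraphe~\ref{paragr:dual_ooCat}), on a
  \[
    (A \otimes B)^\op = ((A \otimes B)^\opp)^\co \simeq (B^\opp \otimes
    A^\opp)^\co \simeq (A^\opp)^\co \otimes (B^\opp)^\co = A^\op \otimes B^\op,
  \]
  ce qu'il fallait démontrer.
\end{proof}

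Le point délicat, comme dans le cas du joint, réside dans l'observation préliminaire assurant que $\lambda(S)^\opp$ est de Steiner fort : c'est ce qui permet de scinder $\nu$ appliqué au produit tensoriel dualisé, et donc de séparer les deux limites inductives pour ressortir chacune des dualités. Ici, contrairement au cas du joint où seule la dualité impaire donnait un isomorphisme, on dispose pour le produit tensoriel des trois dualités grâce à la proposition~\ref{prop:dual_tens_cda}, et le traitement des duals pair et impair est parfaitement parallèle.
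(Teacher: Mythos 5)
Your proof is correct and follows exactly the paper's approach: the paper's own proof consists precisely of the remark that one adapts the proof of the proposition~\ref{prop:dual_joint} (the joint case) by replacing the use of the proposition~\ref{prop:dual_joint_cda} with that of the proposition~\ref{prop:dual_tens_cda}, which is the adaptation you have carried out in full, including the preliminary observation that $\lambda(S)^\opp$ is a strong Steiner complex and the deduction of the total dual by composing the odd and even dualities. The only differences are cosmetic: you spell out the steps the paper leaves implicit, and you note correctly that the colimit interchange here needs no connectedness hypothesis since the Gray tensor product commutes with all colimits in each variable (th\'eor\`eme~\ref{thm:produit_tens}).
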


\begin{proof}
  La preuve est une adaptation immédiate de la preuve de l'assertion
  analogue pour le joint (proposition~\ref{prop:dual_joint}) en remplaçant
  l'usage de la proposition~\ref{prop:dual_joint_cda} par celui de la
  proposition~\ref{prop:dual_tens_cda}.
\end{proof}

\begin{prop}\label{prop:dual_HomLax}
  Soient $A$ et $B$ deux \oo-catégories. On a des isomorphismes naturels
  canoniques
  \[
    \begin{split}
      \HomOpLax(A, B)^\opp & \simeq \HomLax(A^\opp, B^\opp), \\
      \HomOpLax(A, B)^\co & \simeq \HomLax(A^\co, B^\co), \\
      \HomOpLax(A, B)^\op & \simeq \HomOpLax(A^\op, B^\op).
    \end{split}
  \]
\end{prop}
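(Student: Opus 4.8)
Le plan est de déduire ces trois isomorphismes par adjonction à partir des isomorphismes de dualité pour le produit tensoriel obtenus dans la proposition~\ref{prop:dual_tens}, exactement comme la proposition~\ref{prop:dual_tr} déduit la compatibilité des tranches aux dualités de celle du joint. Le point de départ est que, les foncteurs $\HomOpLax$ et $\HomLax$ étant définis comme adjoints à droite internes (paragraphe~\ref{paragr:def_HomOpLax}), il suffit de vérifier chaque isomorphisme après application du foncteur $\Hom_{\ooCat}(X, \var)$ pour $X$ parcourant $\ooCat$, puis de conclure par le lemme de Yoneda. Comme les dualités sont des involutions (paragraphe~\ref{paragr:dual_ooCat}), ce sont des auto-équivalences de $\ooCat$, et elles transforment donc les ensembles de $\Hom$ en ensembles de $\Hom$ entre les objets duaux.

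Traitons d'abord le premier isomorphisme. Pour toute \oo-catégorie $X$, je calculerais
\[
  \begin{split}
    \Hom_{\ooCat}(X, \HomOpLax(A, B)^\opp)
    & \simeq \Hom_{\ooCat}(X^\opp, \HomOpLax(A, B)) \\
    & \simeq \Hom_{\ooCat}(X^\opp \otimes A, B) \\
    & \simeq \Hom_{\ooCat}((A^\opp \otimes X)^\opp, B) \\
    & \simeq \Hom_{\ooCat}(A^\opp \otimes X, B^\opp) \\
    & \simeq \Hom_{\ooCat}(X, \HomLax(A^\opp, B^\opp)),
  \end{split}
\]
où le premier et l'avant-dernier isomorphisme proviennent de ce que $\dual{}$ est une involution, le deuxième et le dernier sont les adjonctions définissant $\HomOpLax$ et $\HomLax$ (paragraphe~\ref{paragr:def_HomOpLax}), et le troisième est l'isomorphisme $(A^\opp \otimes X)^\opp \simeq X^\opp \otimes A^\opp$ fourni par la proposition~\ref{prop:dual_tens}. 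On conclut par le lemme de Yoneda. Le deuxième isomorphisme se démontre de façon identique en remplaçant partout $\dual{}$ par le dual pair $\dual{}^\co$ et en utilisant l'isomorphisme $(A^\co \otimes X)^\co \simeq X^\co \otimes A^\co$, également donné par la proposition~\ref{prop:dual_tens}.

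Pour le troisième isomorphisme, concernant le dual total, deux voies s'offrent à moi. La plus directe est de remarquer que $\dual{}^\op = (\dual{}^\opp)\circ(\dual{}^\co)$ (paragraphe~\ref{paragr:dual_ooCat}) et de composer les deux isomorphismes déjà établis :
\[
  \HomOpLax(A, B)^\op
  \simeq \big(\HomOpLax(A, B)^\opp\big)^\co
  \simeq \HomLax(A^\opp, B^\opp)^\co
  \simeq \HomOpLax(A^{\opp\co}, B^{\opp\co})
  = \HomOpLax(A^\op, B^\op),
\]
où le deuxième isomorphisme est le premier cas de l'énoncé appliqué à $(A, B)$ et le troisième est le deuxième cas appliqué à $(A^\opp, B^\opp)$, compte tenu de ce que $\HomLax$ et $\HomOpLax$ s'échangent sous chacune des deux dualités. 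Alternativement, on peut reproduire le calcul d'adjonction ci-dessus en utilisant directement l'isomorphisme $(A \otimes B)^\op \simeq A^\op \otimes B^\op$ de la proposition~\ref{prop:dual_tens} : ici l'ordre des facteurs n'est pas renversé, ce qui explique pourquoi $\HomOpLax$ reste $\HomOpLax$ (et non $\HomLax$) dans ce troisième cas. Je n'anticipe pas de véritable obstacle : toute la difficulté conceptuelle réside dans la proposition~\ref{prop:dual_tens}, déjà acquise, et le seul soin à apporter est de bien suivre, dans chaque cas, lequel des deux $\Hom$ internes apparaît — ce que le renversement ou non de l'ordre des facteurs dans la proposition~\ref{prop:dual_tens} détermine sans ambiguïté.
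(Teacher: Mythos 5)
Votre démonstration est correcte et suit essentiellement la même stratégie que celle de l'article : un argument de type Yoneda combinant l'adjonction du paragraphe~\ref{paragr:def_HomOpLax} avec les isomorphismes de la proposition~\ref{prop:dual_tens} pour les deux premiers cas (l'article traite les dualités paire et impaire simultanément via un $D$ générique, vous les traitez séparément, ce qui revient au même), puis la composition $\dual{}^\op = \dual{}^\co \circ \dual{}^\opp$ pour le troisième, exactement comme dans l'article.
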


\begin{proof}
  Notons $D$ la dualité paire ou impaire. Pour toute \oo-catégorie $C$, on a
  {
    \allowdisplaybreaks
    \begin{align*}
      \Hom_{\ooCat}(C, D(\HomOpLax(A, B)))
      & \simeq
      \Hom_{\ooCat}(D(C), \HomOpLax(A, B)) \\
      & \simeq
      \Hom_{\ooCat}(D(C) \otimes A, B) \\*
      & \phantom{\simeq1} \text{(par adjonction)} \\
      & \simeq
      \Hom_{\ooCat}(D(D(C) \otimes A), D(B)) \\
      & \simeq
      \Hom_{\ooCat}(D(A) \otimes C, D(B)) \\*
      & \phantom{\simeq1} \text{(en vertu de la proposition précédente)} \\
      & \simeq
      \Hom_{\ooCat}(C, \HomLax(D(A), D(B))),
    \end{align*}
  }
  d'où les deux premiers isomorphismes. Le troisième en résulte :
  \begin{align*}
    \HomOpLax(A, B)^\op
    & = (\HomOpLax(A, B)^\opp)^\co \\
    & \simeq \HomLax(A^\opp, B^\opp)^\co \\
    & \simeq \HomOpLax((A^\opp)^\co, (B^\opp)^\co) \\
    & \simeq \HomOpLax(A^\op, B^\op). \qedhere
  \end{align*}
\end{proof}

\begin{prop}
  Soient $p, q \ge 0$ deux entiers. Si $A$ est une $p$-catégorie et $B$
  une $q$-catégorie, alors on a un isomorphisme canonique
  \[
    A \otimes B \simeq
    \limind_{\substack{
        (S, S \to A) \in \tr{\Theta_p}{A}\\
        (T, T \to B) \in \tr{\Theta_q}{B}}}
    S \otimes T.
  \]
  En particulier, le produit tensoriel de $A$ et $B$ est une $(p +
  q)$-catégorie.
\end{prop}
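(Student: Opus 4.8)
The plan is to transcribe, essentially word for word, the proof of Proposition~\ref{prop:dim_joint}, substituting the Gray tensor product for the join throughout. Two structural facts drive the argument: the density of $\Theta_p$ in $\nCat{p}$, and the commutation of $\otimes$ with the relevant inductive limits. The only ingredient specific to the tensor product is an elementary dimension count, which I would dispose of first.

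First I would record the tensor-product analogue of the dimension lemmas preceding Proposition~\ref{prop:dim_joint}: if $K$ is a directed augmented complex of dimension at most $p$ and $L$ one of dimension at most $q$, then $K \otimes L$ has dimension at most $p + q$. This is immediate from the explicit formula of paragraph~\ref{paragr:def_produit_comp}, namely $(K \otimes L)_r = \bigoplus_{i + j = r} K_i \otimes L_j$, which vanishes as soon as $r > p + q$. Combined with the isomorphism $S \otimes T \simeq \nu(\lambda(S) \otimes \lambda(T))$ of paragraph~\ref{paragr:def_tens} and the fact that $\lambda$ carries a $p$-category to a complex of dimension at most $p$, this shows that $S \otimes T$ is a $(p+q)$-category whenever $S$ is an object of $\Theta_p$ and $T$ an object of $\Theta_q$, since $\nu$ sends a complex of dimension at most $p+q$ to a $(p+q)$-category.

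Next, I would invoke Proposition~\ref{prop:Theta_n_dense} (applied with $n = p$ and with $n = q$) to obtain the canonical presentations
\[
    A \simeq \limind_{(S, S \to A) \in \tr{\Theta_p}{A}} S
    \quadet
    B \simeq \limind_{(T, T \to B) \in \tr{\Theta_q}{B}} T.
\]
Since the tensor product commutes with all small inductive limits in each variable by Theorem~\ref{thm:produit_tens}, I may commute $\otimes$ past both limits and obtain the announced isomorphism, the double limit being computed over $\tr{\Theta_p}{A} \times \tr{\Theta_q}{B}$. The final assertion follows at once: $A \otimes B$ is then exhibited as an inductive limit of $(p+q)$-categories by the first step, and, the inclusion $\nCat{p+q} \hookto \ooCat$ being a left adjoint (of $\tb{p+q}$) and hence preserving inductive limits, this colimit is itself a $(p+q)$-category.

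I expect no real obstacle: unlike the join, the tensor product is biclosed, so it commutes with \emph{all} small inductive limits in each variable and no connectivity restriction on the density presentations is required. The one step demanding a little care is the dimension lemma of the paragraph above, but it is entirely routine, following from the additivity of degrees under the tensor product of chain complexes.
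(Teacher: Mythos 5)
Your proposal is correct and is exactly what the paper does: its proof of this proposition consists of the single sentence that it is an immediate adaptation of the proof of Proposition~\ref{prop:dim_joint}, and your argument is precisely that adaptation (density of $\Theta_p$ and $\Theta_q$, commutation of $\otimes$ with inductive limits via Theorem~\ref{thm:produit_tens}, and the elementary dimension count for $K \otimes L$). You also correctly identify the two points where the adaptation differs from the join case, namely that no connectivity restriction on the index categories is needed since $\otimes$ commutes with all small inductive limits, and that the dimension bound becomes $p+q$ rather than $p+1+q$.
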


\begin{proof}
  La preuve est une adaptation immédiate de la preuve de l'assertion
  analogue pour le joint (proposition~\ref{prop:dim_joint}).
\end{proof}

\goodbreak

\emph{Dans la suite de cet appendice on fixe un entier $n \ge 0$.}

\begin{paragr}
  Soient $A$ et $B$ deux $n$-catégories. On
  appelle \ndef[produit tensoriel!$n$-catégorique]{produit tensoriel
  $n$-catégorique} de $A$ et $B$ la $n$-catégorie
  \[ A \otimes_n B = \ti{n}(A \otimes B). \]
  \notindex{$A \otimes_n B$}%
  Cette opération définit un foncteur
  \[
    \begin{split}
      \nCat{n} \times \nCat{n} & \to \nCat{n} \\
      (A, B) \quad\,\,\, & \mapsto A \otimes_n B.
    \end{split}
  \]
\end{paragr}

\begin{lemme}
  Soient $K$ et $L$ deux complexes dirigés augmentés.  Les morphismes
  canoniques $K \to \ti{n}(K)$ et $L \to \ti{n}(L)$ induisent un
  isomorphisme
  \[
    \ti{n}(K \otimes L) \simeq \ti{n}(\ti{n}(K) \otimes \ti{n}(L)).
  \]
\end{lemme}

\begin{proof}
  La preuve est une adaptation facile de la preuve de l'assertion
  analogue pour le joint (lemme~\ref{lemme:joint_n}).
\end{proof}

\begin{prop}
  Soient $A$ et $B$ deux \oo-catégories. Les \oo-foncteurs canoniques $A \to
  \ti{n}(A)$ et $B \to \ti{n}(B)$ induisent un isomorphisme
  \[
    \ti{n}(A \otimes B) \simeq \ti{n}(\ti{n}(A) \otimes \ti{n}(B)).
  \]
\end{prop}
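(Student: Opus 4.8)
Le plan est de reproduire la d\'emonstration de la proposition~\ref{prop:tronque_joint}, en rempla\c{c}ant partout le joint par le produit tensoriel. Je commencerais donc par traiter le cas o\`u $A = S$ et $B = T$ sont des objets de $\Theta$. L'ingr\'edient cl\'e, exactement comme dans le cas du joint, est que pour tout objet $U$ de $\Theta$, le complexe dirig\'e augment\'e $\ti{n}(\lambda(U))$ est de Steiner fort : en effet, d'apr\`es la proposition~\ref{prop:tr_lambda} on a $\ti{n}(\lambda(U)) \simeq \lambda(\ti{n}(U))$, et puisque $\ti{n}(U)$ est encore un objet de $\Theta$, on conclut par la proposition~\ref{prop:Theta_Steiner}.

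Avec cette observation, j'encha\^inerais les isomorphismes suivants. Partant de $\ti{n}(S \otimes T) \simeq \ti{n}\nu(\lambda(S) \otimes \lambda(T))$ (paragraphe~\ref{paragr:def_tens}), j'utiliserais la proposition~\ref{prop:tr_nu} pour commuter $\ti{n}$ et $\nu$, puis le lemme pr\'ec\'edent pour remplacer $\lambda(S) \otimes \lambda(T)$ par $\ti{n}(\lambda(S)) \otimes \ti{n}(\lambda(T))$, obtenant $\nu\ti{n}(\ti{n}(\lambda(S)) \otimes \ti{n}(\lambda(T)))$. Les complexes $\ti{n}(\lambda(S))$ et $\ti{n}(\lambda(T))$ \'etant de Steiner forts, le caract\`ere mono\"idal de $\nu$ sur les complexes de Steiner forts (th\'eor\`eme~\ref{thm:produit_tens}) permettrait alors de faire entrer le produit tensoriel sous $\nu$, puis de r\'eappliquer la proposition~\ref{prop:tr_nu} et la proposition~\ref{prop:Theta_Steiner} pour conclure $\ti{n}(S \otimes T) \simeq \ti{n}(\ti{n}(S) \otimes \ti{n}(T))$.

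Pour le cas g\'en\'eral, je me ram\`enerais au cas de $\Theta$ par un argument de densit\'e. Puisque $\Theta$ est dense dans $\ooCat$ (proposition~\ref{prop:Theta_dense}), toute \oo-cat\'egorie est limite inductive canonique d'objets de $\Theta$. En utilisant que $\ti{n}$ est un adjoint \`a gauche et que le produit tensoriel commute aux petites limites inductives en chaque variable (th\'eor\`eme~\ref{thm:produit_tens}), j'\'echangerais $\ti{n}$ et $\otimes$ avec ces limites inductives, exactement comme dans la preuve de la proposition~\ref{prop:tronque_joint}. On notera la diff\'erence avec le joint : la structure \'etant ici biferm\'ee, on dispose de toutes les limites inductives et non seulement des connexes, et l'on travaille avec la sous-cat\'egorie dense $\Theta$ plut\^ot qu'avec $\ThetaAug$.

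Il n'y a ici aucune difficult\'e r\'eelle : la d\'emonstration est formellement identique \`a celle du joint. Le seul point substantiel est l'appartenance de $\ti{n}(\lambda(U))$ aux complexes de Steiner forts, qui autorise le passage du produit tensoriel sous $\nu$ ; ce point est d\'ej\`a acquis gr\^ace aux propositions~\ref{prop:tr_lambda} et~\ref{prop:Theta_Steiner}.
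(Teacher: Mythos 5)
Votre démonstration est correcte et suit exactement la voie du texte : la preuve donnée dans l'article consiste précisément à dire que l'argument de la proposition~\ref{prop:tronque_joint} s'adapte immédiatement au produit tensoriel, et vous explicitez fidèlement cette adaptation (cas de $\Theta$ via $\ti{n}(\lambda(U)) \simeq \lambda(\ti{n}(U))$ de Steiner fort, puis densité de $\Theta$ et commutation aux limites inductives). Les deux différences que vous signalez — $\Theta$ au lieu de $\ThetaAug$ et limites inductives quelconques au lieu des connexes — sont bien les seuls ajustements nécessaires.
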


\begin{proof}
  La preuve est une adaptation immédiate de la preuve de l'assertion
  analogue pour le joint (proposition~\ref{prop:tronque_joint}).
\end{proof}

\begin{prop}
  La structure de catégorie monoïdale sur $\ooCat$ définie par le produit
  tensoriel induit une structure de catégorie monoïdale sur $\nCat{n}$ pour
  le produit tensoriel $n$-catégorique.
\end{prop}

\begin{proof}
  La preuve est une adaptation immédiate de la preuve de l'assertion
  analogue pour le joint (proposition~\ref{prop:joint_n_monoidal}).
\end{proof}

\begin{prop}
  Soit $B$ une $n$-catégorie. Pour toute \oo-catégorie $A$, les
  \oo-catégories $\HomOpLax(A, B)$ et $\HomLax(A, B)$ sont des
  $n$-catégories.
\end{prop}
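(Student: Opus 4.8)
The plan is to treat $\HomOpLax(A, B)$ first and then deduce the case of $\HomLax(A, B)$ by duality. First I would recall, from the paragraph~\ref{paragr:def_HomOpLax}, that for every $i \ge 0$ one has
\[
  \HomOpLax(A, B)_i = \Hom_{\ooCat}(\Dn{i} \otimes A, B),
\]
and that the $\infty$-category structure on $\HomOpLax(A, B)$ is the one induced by the coaugmented \oo-cocat�gorie $\Dn{\var} \otimes A$ (the functor $\var \otimes A$ commuting with connected inductive limits, being a left adjoint). In particular, the degeneracy map $\HomOpLax(A, B)_i \to \HomOpLax(A, B)_{i+1}$, $x \mapsto \id{x}$, is precomposition with $\kappa_i \otimes A : \Dn{i+1} \otimes A \to \Dn{i} \otimes A$. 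I would then use, exactly as in the proof of proposition~\ref{prop:tr_nCat}, the characterisation: an \oo-cat�gorie $C$ is an $n$-cat�gorie if and only if this degeneracy map $C_i \to C_{i+1}$ is a bijection for every $i \ge n$ (injectivity is automatic, and surjectivity expresses that every $(i+1)$-fl�che is trivial).

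Next, since $B$ is an $n$-cat�gorie, a \oo-foncteur $\Dn{i} \otimes A \to B$ factors uniquely through the canonical \oo-foncteur $\Dn{i} \otimes A \to \ti{n}(\Dn{i} \otimes A)$, so that $\HomOpLax(A, B)_i \simeq \Hom_{\nCat{n}}(\ti{n}(\Dn{i} \otimes A), B)$. Hence it suffices to establish the key lemma: for every $i \ge n$, the \oo-foncteur $\kappa_i \otimes A$ induces an isomorphism $\ti{n}(\Dn{i+1} \otimes A) \simeq \ti{n}(\Dn{i} \otimes A)$. This is the tensor analogue of the lemma preceding proposition~\ref{prop:tr_nCat}. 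I would prove it by combining the truncation isomorphism
\[
  \ti{n}(\Dn{m} \otimes A) \simeq \ti{n}(\ti{n}(\Dn{m}) \otimes \ti{n}(A))
\]
established above (the tensor analogue of proposition~\ref{prop:tronque_joint}) with the fact that, for $i \ge n$, the intelligent truncation $\ti{n}(\kappa_i) : \ti{n}(\Dn{i+1}) \to \ti{n}(\Dn{i})$ is an isomorphism: both source and target are canonically $\Dn{n}$, the principal cell of $\Dn{i+1}$ and the two top nontrivial cells of $\Dn{i}$ all collapsing under $\ti{n}$. Functoriality of $\otimes$ then makes $\ti{n}(\kappa_i) \otimes \ti{n}(A)$, and hence its $n$-tronqu� intelligent, an isomorphism, which is the lemma.

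Combining the two previous steps, for $i \ge n$ the degeneracy $\HomOpLax(A, B)_i \to \HomOpLax(A, B)_{i+1}$ is a bijection, so $\HomOpLax(A, B)$ is an $n$-cat�gorie. Finally I would obtain the statement for $\HomLax(A, B)$ by duality: by proposition~\ref{prop:dual_HomLax} one has
\[
  \HomLax(A, B) \simeq \HomOpLax(A^\co, B^\co)^\co,
\]
and since the dual pair of an $n$-cat�gorie is again an $n$-cat�gorie, $B^\co$ is an $n$-cat�gorie; thus $\HomOpLax(A^\co, B^\co)$ is an $n$-cat�gorie by the case already treated, and therefore so is its dual pair $\HomLax(A, B)$.

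The only genuine work is the key lemma, that is, verifying that $\ti{n}(\kappa_i)$ is an isomorphism for $i \ge n$ and propagating it through the tensor truncation isomorphism; everything else is formal and parallels the proof of proposition~\ref{prop:tr_nCat} essentially verbatim.
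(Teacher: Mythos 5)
Your proposal is correct and is exactly the argument the paper intends: the paper's proof simply says it is an immediate adaptation of the proof of the analogous statement for the join (proposition~\ref{prop:tr_nCat}), and what you write out — factoring cells through $\ti{n}(\Dn{i}\otimes A)$, using the tensor analogue of proposition~\ref{prop:tronque_joint} to show $\kappa_i$ induces an isomorphism on $n$-truncations for $i\ge n$, then handling $\HomLax$ by the duality of proposition~\ref{prop:dual_HomLax} — is precisely that adaptation, carried out in detail.
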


\begin{proof}
  La preuve est une adaptation immédiate de la preuve de l'assertion
  analogue pour le joint (proposition~\ref{prop:tr_nCat}).
\end{proof}

\begin{prop}
  La structure de catégorie monoïdale sur $\nCat{n}$ définie par le produit
  tensoriel $n$-catégorique est bifermée, ses $\Hom$ internes étant donnés
  par les restrictions de $\HomOpLax$ et $\HomLax$ à $\nCat{n}$.
\end{prop}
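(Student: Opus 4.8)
La d�monstration sera une adaptation imm�diate de celle de la proposition~\ref{prop:tr_nCat_univ}, le r�le jou� par la biferm�ture locale du joint �tant ici tenu par la biferm�ture du produit tensoriel \oo-cat�gorique (th�or�me~\ref{thm:produit_tens}). Par d�finition, on a $A \otimes_n B = \ti{n}(A \otimes B)$ et, $\nCat{n}$ �tant une sous-cat�gorie pleine de $\ooCat$, il suffit d'�tablir, pour $A$, $B$ et $C$ trois $n$-cat�gories et de mani�re naturelle en ces variables, des bijections
\[
  \Hom_{\nCat{n}}(A \otimes_n B, C) \simeq \Hom_{\nCat{n}}(A, \HomOpLax(B, C))
\]
et
\[
  \Hom_{\nCat{n}}(A \otimes_n B, C) \simeq \Hom_{\nCat{n}}(B, \HomLax(A, C)).
\]

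Je traiterais la premi�re, la seconde �tant analogue. L'id�e est de d�duire ces bijections de celles d�j� disponibles dans $\ooCat$ (voir le paragraphe~\ref{paragr:def_HomOpLax}) gr�ce � l'adjonction entre $\ti{n}$ et le foncteur d'inclusion $\nCat{n} \hookto \ooCat$. Plus pr�cis�ment, je produirais la cha�ne d'isomorphismes naturels
{
  \allowdisplaybreaks
  \begin{align*}
    \Hom_{\nCat{n}}(A \otimes_n B, C)
    & \simeq \Hom_{\ooCat}(\ti{n}(A \otimes B), C) \\
    & \simeq \Hom_{\ooCat}(A \otimes B, C) \\
    & \simeq \Hom_{\ooCat}(A, \HomOpLax(B, C)) \\
    & \simeq \Hom_{\nCat{n}}(A, \HomOpLax(B, C)),
  \end{align*}
}
o� le premier isomorphisme r�sulte de la d�finition de $\otimes_n$ et du fait que $\nCat{n}$ est pleine dans $\ooCat$, le deuxi�me de l'adjonction $\ti{n} \dashv (\nCat{n} \hookto \ooCat)$ puisque $C$ est une $n$-cat�gorie, le troisi�me de la biferm�ture du produit tensoriel sur $\ooCat$ (th�or�me~\ref{thm:produit_tens}), et le dernier du fait que $A$ et $\HomOpLax(B, C)$ sont toutes deux des $n$-cat�gories, la seconde en vertu de la proposition pr�c�dente.

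L'unique ingr�dient non formel est donc cette proposition pr�c�dente, qui assure que les $\Hom$ internes $\HomOpLax$ et $\HomLax$ pr�servent les $n$-cat�gories en leur seconde variable : c'est elle qui garantit que les foncteurs $\HomOpLax$ et $\HomLax$, restreints � $\nCat{n}^\op \times \nCat{n}$, prennent bien leurs valeurs dans $\nCat{n}$, et c'est donc leur restriction qui intervient dans les bijections ci-dessus. La naturalit� en les trois variables de chaque isomorphisme de la cha�ne, qui d�coule de la naturalit� des adjonctions et des bijections de biferm�ture en jeu, montre alors que le foncteur $\var \otimes_n B$ admet pour adjoint � droite $\HomOpLax(B, \var)_{\vert \nCat{n}}$ et que le foncteur $A \otimes_n \var$ admet pour adjoint � droite $\HomLax(A, \var)_{\vert \nCat{n}}$. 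Ceci �tablit que la structure de cat�gorie mono�dale sur $\nCat{n}$ d�finie par le produit tensoriel $n$-cat�gorique est biferm�e, ses $\Hom$ internes �tant les restrictions annonc�es de $\HomOpLax$ et $\HomLax$ � $\nCat{n}$.
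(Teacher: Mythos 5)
Votre démonstration est correcte et suit exactement la voie prévue par le texte : la preuve du papier se contente de renvoyer à une adaptation immédiate du cas du joint (corollaire~\ref{coro:joint_n_biferme}, lui-même issu de la proposition~\ref{prop:tr_nCat_univ}), et votre chaîne d'isomorphismes est précisément cette adaptation écrite en détail, avec le même usage de l'adjonction $\ti{n} \dashv (\nCat{n} \hookto \ooCat)$, de la bifermeture du théorème~\ref{thm:produit_tens} et de la proposition précédente garantissant que $\HomOpLax(B,C)$ et $\HomLax(A,C)$ sont des $n$-catégories.
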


\begin{proof}
  La preuve est une adaptation immédiate de la preuve de l'assertion
  analogue pour le joint (corollaire~\ref{coro:joint_n_biferme}).
\end{proof}

\begin{paragr}
  Pour $n = 1$, le produit tensoriel $1$-catégorique n'est autre que le
  produit cartésien. En effet, le produit tensoriel $1$-catégorique comme le
  produit cartésien commutant aux limites inductives en chaque variable, il
  suffit de démontrer que ces deux foncteurs coïncident sur une
  sous-catégorie dense de $\Cat$. On peut le vérifier pour la sous-catégorie
  pleine de $\Cat$ formée des $\Deltan{1} \times \dots \times \Deltan{1}$,
  où $\Deltan{1}$ apparaît $n$ fois avec~$n \ge 0$. Il s'agit alors de
  montrer qu'on a $\ti{1}(\Deltan{1} \otimes \cdots \otimes \Deltan{1}) =
  \Deltan{1} \times \cdots \times \Deltan{1}$, ce qu'on vérifie sans
  difficulté en utilisant les complexes dirigés augmentés.

  De même, si $n = 2$, on peut démontrer que le produit tensoriel
  $2$-catégorique est le produit tensoriel introduit par
  Gray dans \cite{GrayFCT}. Comme ci-dessus, il suffit de démontrer qu'on a
  $\ti{2}(\Deltan{1} \otimes \cdots \otimes \Deltan{1}) \simeq \Deltan{1}
  \otimesG \cdots \otimesG \Deltan{1}$, où $\otimesG$ désigne le
  produit tensoriel de~\cite{GrayFCT}, ce qu'on peut également faire en
  utilisant les complexes dirigés augmentés.
\end{paragr}

\chapter[Compléments sur les transformations oplax]{Compléments sur
les\\ transformations oplax}
\label{sec:trans_oplax}

\kern-10pt

Dans cet appendice, on montre que les transformations oplax entre
\oo-foncteurs de $C$ vers $D$, définies par des formules explicites au
paragraphe~\ref{paragr:def_trans}, sont en correspondance canonique avec les
\oo-foncteurs de $\Dn{1} \otimes C$ vers $D$. On étudie par ailleurs le
lien entre $\Dn{1} \otimes C$, $\Dn{0} \joint C$ et une
suspension $\susp{C}$ de $C$.

\section{Description de la \pdfoo-catégorie des cylindres}

Si $C$ est une \oo-catégorie, le paragraphe~\ref{paragr:def_HomOpLax}
permet de définir une \oo-catégorie $\HomLax(\Dn{1}, C)$. Le but de cette
section est de décrire explicitement cette \oo-catégorie.

\begin{paragr}
  Fixons $i \ge 0$. Nous allons commencer par décrire la
  \hbox{$(i+1)$}\nbd-caté\-gorie~\hbox{$\Dn{1} \otimes \Dn{i}$}. On notera $a$
  la cellule principale de $\Dn{1}$ et $x$ celle de $\Dn{i}$. En vertu de la
  proposition~\ref{prop:Theta_Steiner} et du
  paragraphe~\ref{paragr:def_tens}, on a
  \[
    \Dn{1} \otimes \Dn{i} \simeq \nu(\lambda(\Dn{1}) \otimes \lambda(\Dn{i})).
  \]
  Par ailleurs, les paragraphes~\ref{paragr:desc_lambda_Dn}
  et~\ref{paragr:base_tens} montrent que le complexe $\lambda(\Dn{1})
  \otimes \lambda(\Dn{i})$ a pour base l'ensemble formé des
  \[
    a^0_0 \otimes x^\e_k, \quad a^1_0 \otimes x^\e_k,
    \quad a \otimes x^\e_k,
  \]
  où $k$ varie entre $0$ et $i$, et $\e = 0, 1$ (en se souvenant que
  $x^0_i = x^1_i$).
\end{paragr}

\begin{prop}\label{prop:s_t_cyl_vide}
  Pour tout $k$ tel que $0 < k \le i$, $\e = 0, 1$ et $\ep = 0, 1$, on a
  \[
    s(\atom{a^\ep_0 \otimes x^\e_k}) = \atom{a^\ep_0 \otimes x^0_{k-1}}
    \quadet
    t(\atom{a^\ep_0 \otimes x^\e_k}) = \atom{a^\ep_0 \otimes x^1_{k-1}}.
  \]
\end{prop}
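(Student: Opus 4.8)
La proposition affirme que, pour $0 < k \le i$ et $\ep, \e = 0, 1$, les source et but de l'atome $\atom{a^\ep_0 \otimes x^\e_k}$ dans $\nu(\lambda(\Dn{1}) \otimes \lambda(\Dn{i}))$ sont donn\'es par $\atom{a^\ep_0 \otimes x^0_{k-1}}$ et $\atom{a^\ep_0 \otimes x^1_{k-1}}$ respectivement. L'observation cruciale est que $a^\ep_0$ est un \'el\'ement de degr\'e $0$ de la base de $\lambda(\Dn{1})$, donc $d(a^\ep_0) = \aug(a^\ep_0)\,\var$ avec $\aug(a^\ep_0) = 1$ au sens de l'augmentation, et surtout $\atom{a^\ep_0}$ n'a qu'une seule composante non triviale, � savoir $\atom{a^\ep_0}^\eta_0 = a^\ep_0$ pour $\eta = 0, 1$ (et $0$ en degr\'e sup\'erieur). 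La strat\'egie est donc de calculer directement la diff\'erentielle de l'\'el\'ement $a^\ep_0 \otimes x^\e_k$ de $\lambda(\Dn{1}) \otimes \lambda(\Dn{i})$ et d'en d\'eduire les source et but par d\'efinition des atomes (voir le paragraphe~\ref{paragr:def_atome}).

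\textbf{Premi\`ere \'etape : le calcul de la diff\'erentielle.}
D'abord je calcule $d(a^\ep_0 \otimes x^\e_k)$ gr\^ace \`a la formule du produit tensoriel (paragraphe~\ref{paragr:def_produit_comp}). Puisque $|a^\ep_0| = 0$, on a
\[
  d(a^\ep_0 \otimes x^\e_k) = d(a^\ep_0) \otimes x^\e_k + a^\ep_0 \otimes d(x^\e_k).
\]
Or $d(a^\ep_0) = 0$ dans $\lambda(\Dn{1})$ (car $a^\ep_0$ est de degr\'e $0$, donc sa diff\'erentielle \emph{en tant qu'\'el\'ement du complexe de cha\^ines} est nulle), et $d(x^\e_k) = x^1_{k-1} - x^0_{k-1}$ d'apr\`es la description de $\lambda(\Dn{i})$ au paragraphe~\ref{paragr:desc_lambda_Dn}. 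On obtient donc
\[
  d(a^\ep_0 \otimes x^\e_k) = a^\ep_0 \otimes x^1_{k-1} - a^\ep_0 \otimes x^0_{k-1}.
\]

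\textbf{Deuxi\`eme \'etape : l'identification des atomes.}
Je pars de la d\'efinition r\'ecursive des composantes d'un atome. Par d\'efinition, $s(\atom{a^\ep_0 \otimes x^\e_k})$ est l'atome associ\'e \`a $\atom{a^\ep_0 \otimes x^\e_k}^0_{k-1}$, c'est-\`a-dire \`a $d(a^\ep_0 \otimes x^\e_k)_-$, et de m\^eme pour le but avec $d(a^\ep_0 \otimes x^\e_k)_+$. Comme les supports de $a^\ep_0 \otimes x^1_{k-1}$ et $a^\ep_0 \otimes x^0_{k-1}$ sont disjoints dans la base $\{a^\ep_0 \otimes x^\eta_l\}$ du produit tensoriel (paragraphe~\ref{paragr:base_tens}), la d\'ecomposition en partie positive et n\'egative se lit imm\'ediatement :
\[
  d(a^\ep_0 \otimes x^\e_k)_- = a^\ep_0 \otimes x^0_{k-1}
  \quad\text{et}\quad
  d(a^\ep_0 \otimes x^\e_k)_+ = a^\ep_0 \otimes x^1_{k-1}.
\]
Il reste alors \`a v\'erifier que l'atome associ\'e \`a l'\'el\'ement $a^\ep_0 \otimes x^\eta_{k-1}$ co\"incide bien avec $\atom{a^\ep_0 \otimes x^\eta_{k-1}}$, ce qui r\'esulte de la formule explicite du lemme~\ref{lemme:tab_tens} appliqu\'ee \`a $a^\ep_0$ et $x^\eta_{k-1}$ : comme $\atom{a^\ep_0}^\delta_p = 0$ d\`es que $p > 0$, la somme du lemme se r\'eduit au seul terme $p = 0$, donnant $\atom{a^\ep_0 \otimes x^\eta_{k-1}}^\delta_r = a^\ep_0 \otimes \atom{x^\eta_{k-1}}^\delta_r$, ce qui est exactement le tableau de l'atome annonc\'e.

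\textbf{L'obstacle principal.}
Je ne pr\'evois pas de v\'eritable difficult\'e ici : l'\'enonc\'e est structurellement analogue \`a la proposition~\ref{prop:s_t_cone_vide} de la section sur le joint, dont la preuve se r\'eduit \'egalement \`a un calcul de diff\'erentielle. Le seul point demandant un peu de soin est la gestion des conventions de signe et de degr\'e au bord (paragraphe~\ref{paragr:def_nu}), et la v\'erification que $a^\ep_0$, \'etant de degr\'e $0$, a bien une diff\'erentielle de cha\^ine nulle (il ne faut pas la confondre avec l'augmentation, qui intervient dans la formule du joint mais pas ici puisque $a^\ep_0$ n'est pas tensoris\'e avec $\var$). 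La preuve doit donc se r\'eduire \`a l'\'egalit\'e imm\'ediate
\[
  d(a^\ep_0 \otimes x^\e_k) = a^\ep_0 \otimes x^1_{k-1} - a^\ep_0 \otimes x^0_{k-1},
\]
suivie de la lecture directe des parties positive et n\'egative.
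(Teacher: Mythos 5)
Your proposal is correct and follows exactly the paper's own route: the paper's proof is precisely the one-line computation $d(a^\ep_0 \otimes x^\e_k) = a^\ep_0 \otimes d(x^\e_k) = a^\ep_0 \otimes x^1_{k-1} - a^\ep_0 \otimes x^0_{k-1}$, from which the identification of source and target "r\'esulte imm\'ediatement". Your second step, reading off the positive and negative parts and checking via the lemme~\ref{lemme:tab_tens} that the truncated tables agree with the atoms $\atom{a^\ep_0 \otimes x^\eta_{k-1}}$, simply spells out what the paper leaves implicit.
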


\begin{proof}
  Cela résulte immédiatement de l'égalité
  \[
    d(a^\ep_0 \otimes x^\e_k) = a^\ep_0 \otimes d(x^\e_k)
      = a^\ep_0 \otimes x^1_{k-1} - a^\ep_0 \otimes x^0_{k-1}.
    \qedhere
  \]
\end{proof}

\begin{lemme}\label{lemme:tab_cyl}
  Pour tout $k$ tel que $0 \le k \le i$, $\e = 0, 1$, tout $l$ tel que $0 \le l < k + 1$
  et $\ep = 0, 1$, on a
  \[
    \atom{a \otimes x^\e_k}^\ep_l =
    a^\ep_0 \otimes x^\eta_l + a \otimes x^\epd_{l-1},
  \]
  où $\eta$ vaut $\e$ si $k = l$ et $\ep$ sinon, en convenant que
  $x^0_{-1} = 0 = x^1_{-1}$. En particulier, on~a
  \[
    \atom{a \otimes x^\e_k}^\ep_0 = a^\ep_0 \otimes x^\eta_0.
  \]
\end{lemme}

\begin{proof}
  En vertu du lemme~\ref{lemme:tab_tens} et de la convention de l'énoncé
  pour le cas $l = 0$, on a
  \[
    \atom{a \otimes x^\e_k}^\ep_l
    =
    \atom{a}^\ep_0 \otimes \atom{x^\e_k}^\ep_l +
    \atom{a}^\ep_1 \otimes \atom{x^\e_k}^\epd_{l-1}
    =
    a^\ep_0 \otimes x^\eta_l + a \otimes x^\epd_{l-1},
  \]
  ce qu'on voulait démontrer.
\end{proof}

\begin{prop}\label{prop:s_t_cyl}
  Pour tout $k$ tel que $0 \le k \le i$ et $\e = 0, 1$,
  on a
  \[
    s(\atom{a \otimes x^\e_k}) =
      \atom{a \otimes x^1_{k-1}} \ast_{k-1} \cdots \ast_1 \atom{a \otimes x^1_0}
        \ast_0 \atom{a^0_0 \otimes x^\e_k}
  \]
  et
  \[
    t(\atom{a \otimes x^\e_k}) =
    \atom{a^1_0 \otimes x^\e_k} \ast_0 \atom{a \otimes x^0_0} \ast_1 \cdots
    \ast_{k-1} \atom{a \otimes x^0_{k-1}}.
  \]
\end{prop}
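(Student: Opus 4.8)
Cet \'enonc\'e est l'analogue tensoriel de la proposition~\ref{prop:s_t_cone} et je suivrais la m\^eme strat\'egie. Puisque $\Dn{1} \otimes \Dn{i} \simeq \nu(\lambda(\Dn{1}) \otimes \lambda(\Dn{i}))$ et que ce complexe est de Steiner fort, la \oo-cat\'egorie $\Dn{1} \otimes \Dn{i}$ est engendr\'ee librement au sens des polygraphes par ses atomes (th\'eor\`eme~\ref{thm:Steiner_pol}) ; en particulier, deux de ses cellules co\"incident d\`es que leurs tableaux co\"incident. La premi\`ere \'etape consiste \`a observer que les deux formules de l'\'enonc\'e sont le cas $l = k$ d'une description plus g\'en\'erale des sources et buts it\'er\'es $s_l(\atom{a \otimes x^\e_k})$ et $t_l(\atom{a \otimes x^\e_k})$, pour $0 \le l \le k$. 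Je d\'egagerais donc, comme dans le lemme~\ref{lemme:s_t_cone_iter}, un lemme auxiliaire affirmant que ces sources et buts it\'er\'es sont donn\'es par les compos\'es it\'er\'es attendus, les atomes extr\^emes faisant intervenir une valeur de bord $\eta$ (valant $\e$ si $k = l$, et $0$ ou $1$ sinon selon qu'il s'agit de la source ou du but) dict\'ee par le lemme~\ref{lemme:tab_cyl}.

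La preuve de ce lemme se ferait par r\'ecurrence sur $l$. Pour $l = 0$, les \'egalit\'es r\'esultent directement du lemme~\ref{lemme:tab_cyl}, qui donne $\atom{a \otimes x^\e_k}^\ep_0 = a^\ep_0 \otimes x^\eta_0$. Pour le pas de r\'ecurrence, il y a trois v\'erifications \`a mener, exactement comme dans la preuve du lemme~\ref{lemme:s_t_cone_iter} : (a)~montrer que le compos\'e it\'er\'e figurant au rang $l$ est bien d\'efini, en contr\^olant la $j$-composabilit\'e gr\^ace \`a la proposition~\ref{prop:s_t_cyl_vide} et \`a l'hypoth\`ese de r\'ecurrence ; (b)~v\'erifier que la composante de dimension $l$ de ce compos\'e co\"incide avec $\atom{a \otimes x^\e_k}^\ep_l$ (calcul\'ee au moyen du lemme~\ref{lemme:tab_cyl}) en utilisant la formule de composition des tableaux du paragraphe~\ref{paragr:def_nu} ; (c)~v\'erifier la compatibilit\'e aux sources et aux buts en invoquant l'hypoth\`ese de r\'ecurrence. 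L'\'egalit\'e des cellules s'ensuit alors puisque les tableaux les d\'eterminent. Le point~(b) repose sur l'observation que, dans le compos\'e, les atomes interm\'ediaires $\atom{a \otimes x^\e_m}$ de degr\'e $1 + m < l$ contribuent par des identit\'es it\'er\'ees dont la composante de dimension $l$ est nulle, de sorte que seuls les deux atomes extr\^emes subsistent.

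La principale difficult\'e, par rapport au cas du joint, tient au d\'efaut d'effondrement : dans la proposition~\ref{prop:s_t_cone}, la source se r\'eduisait \`a un unique atome $\atom{a \joint x^1_{k-1}}$, tandis qu'ici \emph{source et but} sont tous deux des compos\'es it\'er\'es non triviaux et sym\'etriques. La r\'ecurrence doit donc traiter simultan\'ement $s_l$ et $t_l$ (les conditions de composabilit\'e de~(a) couplant les deux), et il faut prendre garde aux deux valeurs de bord distinctes de $\eta$ (la valeur $0$ du c\^ot\'e source, la valeur $1$ du c\^ot\'e but). Cette gestion des indices, quoique sans difficult\'e conceptuelle, est la partie d\'elicate du travail.
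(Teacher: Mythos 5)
Votre proposition est correcte et suit essentiellement la d\'emonstration du texte : celle-ci d\'eduit aussi l'\'enonc\'e du cas $l = k$ d'un lemme plus g\'en\'eral sur les sources et buts it\'er\'es (le lemme~\ref{lemme:s_t_cyl_iter}), \'etabli par r\'ecurrence simultan\'ee sur $l$ pour $s_l$ et $t_l$, avec les m\^emes ingr\'edients (lemme~\ref{lemme:tab_cyl} pour le cas $l = 0$ et les composantes de dimension maximale, proposition~\ref{prop:s_t_cyl_vide} et hypoth\`ese de r\'ecurrence pour la composabilit\'e et la compatibilit\'e aux sources et buts) et les m\^emes valeurs de bord $\eta_0$, $\eta_1$.
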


\begin{proof}
  C'est le cas $l = k$ du lemme plus général suivant.
\end{proof}

\begin{lemme}\label{lemme:s_t_cyl_iter}
  Pour tous $k, l$ tels que $0 \le l \le k \le i$ et $\e = 0, 1$,
  on a
  \[
    s_l(\atom{a \otimes x^\e_k}) =
      \atom{a \otimes x^1_{l-1}} \ast_{l-1} \cdots \ast_1 \atom{a \otimes x^1_0}
      \ast_0 \atom{a^0_0 \otimes x^{\eta_0}_l}
  \]
  et
  \[
    t_l(\atom{a \otimes x^\e_k}) =
    \atom{a^1_0 \otimes x^{\eta_1}_l} \ast_0 \atom{a \otimes x^0_0} \ast_1
    \cdots
    \ast_{l-1} \atom{a \otimes x^0_{l-1}},
  \]
  où $\eta_0$ et $\eta_1$ valent $\e$ si $k = l$ et $0$ et $1$
  respectivement sinon.
\end{lemme}

\begin{proof}
  On va démontrer les deux égalités simultanément par récurrence sur $l$.
  Pour $l = 0$, en vertu du lemme~\ref{lemme:tab_cyl}, pour $\ep = 0, 1$, on
  a $\atom{a \otimes x^\e_k}^\ep_0 = a^\ep_0 \otimes x^{\eta_\ep}_0$
  et donc
  \[
    s_0(\atom{a \otimes x^\e_k}) = \atom{a^0_0 \otimes x^{\eta_0}_0}
    \quadet
    t_0(\atom{a \otimes x^\e_k}) = \atom{a^1_0 \otimes x^{\eta_1}_0}.
  \]
  Supposons maintenant l'égalité démontrée au rang $l - 1$ et
  montrons-la au rang $l \le k$. Par la proposition~\ref{prop:s_t_cyl_vide}
  et l'hypothèse de récurrence, on a
  \[
    \begin{split}
      \MoveEqLeft
      t_{l-1}\big(\atom{a \otimes x^1_{l-2}} \ast_{l-2} \cdots \ast_1
        \atom{a\otimes x^1_0} \ast_0 \atom{a^0_0 \otimes x^{\eta_0}_l}\big) \\
      & =
      \atom{a \otimes x^1_{l-2}} \ast_{l-2} \cdots \ast_1 \atom{a \otimes x^1_0}
      \ast_0 \atom{a^0_0 \otimes x^1_{l-1}} \\
      & = s_{l-1}(\atom{a \otimes x^1_{l-1}})
    \end{split}
  \]
  et
  \[
    \begin{split}
      \MoveEqLeft
      s_{l-1}\big(\atom{a^1_0 \otimes x^{\eta_1}_l} \ast_0 \atom{a \otimes x^0_0} \ast_1
        \cdots \ast_{l-2} \atom{a \otimes x^0_{l-2}}\big) \\
      & =
        \atom{a^1_0 \otimes x^0_{l-1}} \ast_0 \atom{a \otimes x^0_0} \ast_1
        \cdots \ast_{l-2} \atom{a \otimes x^0_{l-2}} \\
      & =
      t_{l-1}(\atom{a \otimes x^0_{l-1}})
    \end{split}
  \]
  et les cellules
  \[
    \begin{split}
      u & = \atom{a \otimes x^1_{l-1}} \ast_{l-1}
      \big(\atom{a \otimes x^1_{l-2}} \ast_{l-2}
      \cdots \ast_1 \atom{a \otimes x^1_0}
      \ast_0 \atom{a^0_0 \otimes x^{\eta_0}_l}\big), \\
      v & = \big(\atom{a^1_0 \otimes x^{\eta_1}_l} \ast_0 \atom{a \otimes
      x^0_0} \ast_1 \cdots \ast_{l-2} \atom{a \otimes x^0_{l-2}}\big)
      \ast_{l-1} \atom{a \otimes x^0_{l-1}}
    \end{split}
  \]
  sont donc bien définies. Par ailleurs, en utilisant le
  lemme~\ref{lemme:tab_cyl}, on obtient
  \[
    u^{}_l = a \otimes x^1_{l-1} + a^0_0 \otimes x^{\eta_0}_l =
    s_l(\atom{a \otimes x^\e_k})^{}_l
  \]
  et de même
  \[
    v^{}_l = a^1_0 \otimes x^{\eta_1}_l + a \otimes x^0_{l-1} =
    t_l(\atom{a \otimes x^\e_k})^{}_l.
  \]
  Pour conclure, il suffit donc de montrer qu'on a, d'une part,
  \[
    s(s_l(\atom{a \otimes x^\e_k})) = s(u)
    \quadet
    t(s_l(\atom{a \otimes x^\e_k})) = t(u)
  \]
  et, d'autre part,
  \[
    s(t_l(\atom{a \otimes x^\e_k})) = s(v)
    \quadet
    t(t_l(\atom{a \otimes x^\e_k})) = t(v).
  \]
  Or, en utilisant l'hypothèse de récurrence, on a
  \[
    \begin{split}
      s(s_l(\atom{a \otimes x^\e_k}))
      & =
      s_{l-1}(\atom{a \otimes x^\e_k}) \\
      & =
      \atom{a \otimes x^1_{l-2}} \ast_{l-2} \cdots \ast_1 \atom{a \otimes x^1_0}
      \ast_0 \atom{a^0_0 \otimes x^{0}_{l-1}} \\
      & =
      s_{l-1}\big(
        \atom{a \otimes x^1_{l-1}} \ast_{l-1} \atom{a \otimes x^1_{l-2}} \ast_{l-2}
        \cdots \ast_1 \atom{a \otimes x^1_0} \ast_0 \atom{a^0_0 \otimes
        x^{\eta_0}_{l}}
      \big) \\
      & =
      s(u),
    \end{split}
  \]
  \[
    \begin{split}
      t(s_l(\atom{a \otimes x^\e_k}))
      & =
      t_{l-1}(\atom{a \otimes x^\e_k}) \\
      & =
      \atom{a^1_0 \otimes x^{1}_{l-1}} \ast_0 \atom{a \otimes x^0_0} \ast_1
      \cdots \ast_{l-2} \atom{a \otimes x^0_{l-2}} \\
      & =
      t_{l-1}(\atom{a \otimes x^1_{l-1}}) \\
      & =
      t_{l-1}\big(
        \atom{a \otimes x^1_{l-1}} \ast_{l-1} \atom{a \otimes x^1_{l-2}} \ast_{l-2}
        \cdots \ast_1 \atom{a \otimes x^1_0} \ast_0 \atom{a^0_0 \otimes
        x^{\eta_0}_{l}}
      \big) \\
      & =
      t(u),
    \end{split}
  \]
  \[
    \begin{split}
      s(t_l(\atom{a \otimes x^\e_k}))
      & =
      s_{l-1}(\atom{a \otimes x^\e_k}) \\
      & =
      \atom{a \otimes x^1_{l-2}} \ast_{l-2} \cdots \ast_1 \atom{a \otimes x^1_0}
      \ast_0 \atom{a^0_0 \otimes x^{0}_{l-1}} \\
      & =
      s_{l-1}(\atom{a \otimes x^0_{l-1}}) \\
      & =
      s_{l-1}\big(
        \atom{a^1_0 \otimes x^{\eta_1}_{l}} \ast_0 \atom{a \otimes x^0_0} \ast_1
        \cdots \ast_{l-2} \atom{a \otimes x^0_{l-2}} \ast_{l-1} \atom{a \otimes x^0_{l-1}}
      \big) \\
      & =
      s(v)
    \end{split}
  \]
  et
  \[
    \begin{split}
      t(t_l(\atom{a \otimes x^\e_k}))
      & =
      t_{l-1}(\atom{a \otimes x^\e_k}) \\
      & =
        \atom{a^1_0 \otimes x^{1}_{l-1}} \ast_0 \atom{a \otimes x^0_0}
        \ast_1 \cdots \ast_{l-2} \atom{a \otimes x^0_{l-2}} \\
      & =
      t_{l-1}\big(
        \atom{a^1_0 \otimes x^{\eta_1}_{l}} \ast_0 \atom{a \otimes x^0_0} \ast_1
        \cdots \ast_{l-2} \atom{a \otimes x^0_{l-2}} \ast_{l-1} \atom{a \otimes x^0_{l-1}}
      \big) \\
      & =
      t(v),
    \end{split}
  \]
  d'où le résultat.
\end{proof}

\begin{prop}\label{prop:desc_cyl_pol}
  Soit $C$ une \oo-catégorie. Fixons
  \begin{itemize}
    \item $c$ et $d$ deux $i$-flèches de $C$ ;
  \item pour tout $k$ tel que $0 \le k \le i$ et $\e = 0, 1$, $\alpha^\e_k$
    une $(k+1)$-flèche de $C$, avec~$\alpha^0_i = \alpha^1_i$,
  \end{itemize}
  vérifiant
  les égalités
  \[
    s(\alpha^\e_k) =
    \alpha^1_{k-1} \comp_{k-1} \dots \comp_1 \alpha^1_0 \comp_0 c^\e_k
    \quadet
    t(\alpha^\e_k) = d^\e_k \ast_0 \alpha^0_0 \ast_1 \dots \ast_{k-1}
    \alpha^0_{k-1},
  \]
  où, pour $e = c, d$, on a posé
  \[
    e^\e_k =
      \begin{cases}
        s_k(e) & \text{si $\e = 0$,} \\
        t_k(e) & \text{si $\e = 1$.}
      \end{cases}
  \]
  Alors il existe un et un seul \oo-foncteur $h : \Dn{1} \otimes \Dn{i} \to C$
  tel que
  \[
    c = h(\atom{a^0_0 \otimes x_i}),
    \quad
    d = h(\atom{a^1_0 \otimes x_i})
    \quadet
    \alpha^\e_k = h(\atom{a \otimes x^\e_k}),
  \]
  pour tout $k$ tel que $0 \le k \le i$ et $\e = 0, 1$.
\end{prop}

\begin{proof}
  En vertu du théorème~\ref{thm:Steiner_pol}, la \oo-catégorie
  $\nu(\lambda(\Dn{1}) \otimes \lambda(\Dn{i}))$, isomorphe à la
  \oo-catégorie $\Dn{1} \otimes \Dn{i}$, est engendrée librement par ses
  atomes au sens des polygraphes. On conclut alors comme dans la preuve
  de la proposition~\ref{prop:desc_tr_pol} en utilisant les
  propositions~\ref{prop:s_t_cyl_vide} et~\ref{prop:s_t_cyl}.
\end{proof}

\begin{paragr}\label{paragr:desc_cyl_pol}
  Soit $C$ une \oo-catégorie. Par définition (voir le
  paragraphe~\ref{paragr:def_HomOpLax}), les $i$\nbd-flèches de
  $\HomLax(\Dn{1}, C)$ correspondent aux \oo-foncteurs $h : \Dn{1} \otimes
  \Dn{i} \to C$.  En vertu de la proposition précédente, un tel \oo-foncteur
  est déterminé par un triplet~$(c, d, \alpha)$, où $c$ et $d$ sont des
  $i$-flèches de $C$ et $\alpha$ est une famille de cellules~$\alpha^\e_k$
  de~$C$, pour $0 \le k \le i$ et $\e = 0, 1$, avec $\alpha^0_i =
  \alpha^1_i$,
  \[
    \begin{split}
      \alpha^\e_k & :
        \alpha^1_{k-1} \comp_{k-1} \dots \comp_1 \alpha^1_0 \comp_0 c^\e_k
        \to
        d^\e_k \ast_0 \alpha^0_0 \ast_1 \dots \ast_{k-1} \alpha^0_{k-1},
        \quad
        \text{$(k+1)$-flèche},
    \end{split}
  \]
  où, pour $e = c, d$, on a posé
  \[
    e^\e_k =
      \begin{cases}
        s_k(e) & \text{si $\e = 0$,} \\
        t_k(e) & \text{si $\e = 1$.}
      \end{cases}
  \]
  Dans la suite de cette section, on identifiera les $i$-flèches de
  $\HomLax(\Dn{1}, C)$ avec de tels triplets~$(c, d, \alpha)$. On notera
  $\alpha_i$ pour $\alpha^0_i = \alpha^1_i$.

  Voici une représentation graphique des objets, des $1$-flèches et des
  $2$-flèches de~$\HomLax(\Dn{1}, C)$ :
  \[
   \xymatrix@R=3pc{
   c \ar[d]_{\alpha_0} \\
   d \pbox{,}
   }
   \qquad
   \qquad
    \shorthandoff{;}
    \xymatrix@C=3pc@R=3pc{
      c^0_0 \ar[r]^c \ar[d]_{\alpha^0_0} &
      c^1_0 \ar[d]^{\alpha^1_0} \\
      d^0_0 \ar[r]_d & d^1_0
      \ar@{}[u];[l]_(.30){}="s"
      \ar@{}[u];[l]_(.70){}="t"
      \ar@2"s";"t"_{\alpha_1}
      \pbox{,}
    }
   \qquad
   \qquad
    \shorthandoff{:;}
    \xymatrix@C=3pc@R=3pc{
      c^0_0
      \ar@/^2ex/[r]^(0.70){c^0_1}_{}="0"
      \ar@/_2ex/[r]_(0.70){c^1_1}_{}="1"
      \ar[d]_{}="f"_{\alpha^0_0}
      \ar@2"0";"1"_{c\,\,}
      &
      c^1_0
      \ar[d]^{\alpha^1_0} \\
      d^0_0
      \ar@{.>}@/^2ex/[r]^(0.30){d^0_1}_{}="0"
      \ar@/_2ex/[r]_(0.30){d^1_1}_{}="1"
      \ar@{:>}"0";"1"_{d\,\,}
      &
      d^1_0
      \ar@{}[u];[l]_(.40){}="x"
      \ar@{}[u];[l]_(.60){}="y"
      \ar@<-1.5ex>@/_1ex/@{:>}"x";"y"_(0.60){\alpha^0_1\,}_{}="0"
      \ar@<1.5ex>@/^1ex/@2"x";"y"^(0.40){\!\alpha^1_1}_{}="1"
      \ar@{}"1";"0"_(.05){}="z"
      \ar@{}"1";"0"_(.95){}="t"
      \ar@3{>}"z";"t"_{\alpha}
      \pbox{.}
    }
  \]
\end{paragr}

Le but de la suite de cette section est de décrire la structure de
\oo-catégorie de~$\HomLax(\Dn{1}, C)$ en termes des $(c, d, \alpha)$.

\begin{paragr}
  En vertu de l'adjonction définissant la \oo-catégorie $\HomLax(\Dn{1},
  C)$, les sources, buts, identités et compositions de cette \oo-catégorie
  sont induits par les \oo-foncteurs
  \begin{alignat*}{3}{}
    \Dn{1} \otimes \sigma_i & : \Dn{1} \otimes \Dn{i-1} \to \Dn{1} \otimes \Dn{i}
             && \qquad\text{pour $i \ge 1$,} \\
    \Dn{1} \otimes \tau_i & : \Dn{1} \otimes \Dn{i-1} \to \Dn{1} \otimes \Dn{i}
             && \qquad\text{pour $i \ge 1$,} \\
    \Dn{1} \otimes \kappa_i & : \Dn{1} \otimes \Dn{i+1} \to \Dn{1} \otimes \Dn{i}
             && \qquad\text{pour $i \ge 0$,} \\
    \Dn{1} \otimes \nabla^i_j & : \Dn{1} \otimes \Dn{i} \to (\Dn{1} \otimes
    \Dn{i}) \amalg_{\Dn{1} \otimes \Dn{j}} (\Dn{1} \otimes \Dn{i})
             && \qquad\text{pour $i > j \ge 0$,}
  \end{alignat*}
  où  $\sigma_i$, $\tau_i$, $\kappa_i$ et $\nabla^i_j$ désignent les
  \oo-foncteurs des paragraphes~\ref{paragr:def_disque}
  et~\ref{paragr:def_kappa_nabla}, et où on a identifié $(\Dn{1} \otimes
  \Dn{i}) \amalg_{\Dn{1} \otimes \Dn{j}} (\Dn{1} \otimes \Dn{i})$ et $\Dn{1}
  \otimes (\Dn{i} \amalg_{\Dn{j}} \Dn{i})$.
\end{paragr}

Nous allons commencer par décrire concrètement les morphismes $\Dn{1}
\otimes \sigma_i$, $\Dn{1} \otimes \tau_i$ et $\Dn{1} \otimes \kappa_i$.
On note toujours $a$ la cellule principale de $\Dn{1}$.

\begin{prop}\label{prop:desc_cyl_s_t}
  Fixons $i \ge 1$ et notons $x$ la cellule principale de $\Dn{i-1}$ et $y$
  celle de $\Dn{i}$. Alors le \oo-foncteur $\Dn{1} \otimes \sigma_i : \Dn{1}
  \otimes \Dn{i-1} \to \Dn{1} \otimes \Dn{i}$ est donné par
  \begin{alignat*}{3}{}
    \atom{a^\ep_0 \otimes x^\e_k} & \mapsto \atom{a^\ep_0 \otimes y^\e_k} &&
      \qquad\text{pour $0 \le k < i - 1$, $\e = 0, 1$ et $\ep = 0, 1$,}\\
    \atom{a^\ep_0 \otimes x_{i-1}} & \mapsto \atom{a^\ep_0 \otimes
      y^0_{i-1}} &&
      \qquad\text{pour $\ep = 0, 1$,} \\
    \atom{a \otimes x^\e_k} & \mapsto \atom{a \otimes y^\e_k} &&
      \qquad\text{pour $0 \le k < i - 1$ et $\e = 0, 1$,}\\
    \atom{a \otimes x_{i-1}} & \mapsto \atom{a \otimes y^0_{i-1}}.
  \end{alignat*}
  De même, le \oo-foncteur $\Dn{1} \otimes \tau_i : \Dn{1} \otimes \Dn{i-1} \to \Dn{1} \otimes
  \Dn{i}$ est donné par
  \begin{alignat*}{3}{}
    \atom{a^\ep_0 \otimes x^\e_k} & \mapsto \atom{a^\ep_0 \otimes y^\e_k} &&
      \qquad\text{pour $0 \le k < i - 1$, $\e = 0, 1$ et $\ep = 0, 1$,}\\
    \atom{a^\ep_0 \otimes x_{i-1}} & \mapsto \atom{a^\ep_0 \otimes
      y^1_{i-1}} &&
      \qquad\text{pour $\ep = 0, 1$,} \\
    \atom{a \otimes x^\e_k} & \mapsto \atom{a \otimes y^\e_k} &&
      \qquad\text{pour $0 \le k < i - 1$ et $\e = 0, 1$,}\\
    \atom{a \otimes x_{i-1}} & \mapsto \atom{a \otimes y^1_{i-1}}.
  \end{alignat*}
\end{prop}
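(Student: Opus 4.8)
La proposition demande de calculer explicitement le $\infty$\nbd-foncteur $\sigma_i : \Dn{1} \otimes \Dn{i-1} \to \Dn{1} \otimes \Dn{i}$ sur les atomes de $\Dn{1} \otimes \Dn{i-1}$. La stratégie est exactement celle déjà utilisée pour la proposition~\ref{prop:desc_tr_s_t} dans le cas du joint : il s'agit de déduire le résultat du lemme~\ref{lemme:tens_morph_atom} combiné à la description du morphisme $\lambda(\sigma_i) : \lambda(\Dn{i-1}) \to \lambda(\Dn{i})$ donnée au paragraphe~\ref{paragr:desc_lambda_cocat}. Je traiterais d'abord le cas de $\sigma_i$, celui de $\tau_i$ étant parfaitement analogue (en remplaçant partout $y^0_{i-1}$ par $y^1_{i-1}$).

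D'abord, je rappellerais que, par définition de la structure d'objet \oo-cocatégorie sur $\Dn{1} \otimes \cocatD$, le \oo-foncteur $\sigma_i : \Dn{1} \otimes \Dn{i-1} \to \Dn{1} \otimes \Dn{i}$ est $\id{\Dn{1}} \otimes \sigma_i$, où le second $\sigma_i : \Dn{i-1} \to \Dn{i}$ est celui de $\cocatD$ (exemple~\ref{exem:cocatD}). Par la proposition~\ref{prop:Theta_Steiner}, ce \oo-foncteur s'identifie à $\nu(f \otimes g)$, où $f = \id{\lambda(\Dn{1})}$ et $g = \lambda(\sigma_i) : \lambda(\Dn{i-1}) \to \lambda(\Dn{i})$. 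Pour appliquer le lemme~\ref{lemme:tens_morph_atom}, je dois vérifier que $f$ et $g$ satisfont l'hypothèse $\nu(f)(\atom{u}) = \atom{f(u)}$ et $\nu(g)(\atom{v}) = \atom{g(v)}$ sur les éléments de base. Pour $f = \id{}$ c'est trivial, et pour $g = \lambda(\sigma_i)$ cela résulte du fait que $\lambda(\sigma_i)$ est une inclusion rigide ordonnée (déjà observé dans la preuve de la proposition~\ref{prop:Theta_Steiner}), donc en particulier un morphisme rigide au sens du paragraphe~\ref{paragr:def_rigide}, ce qui signifie précisément $\nu(\lambda(\sigma_i))(\atom{v}) = \atom{\lambda(\sigma_i)(v)}$.

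Le lemme~\ref{lemme:tens_morph_atom} donne alors, pour tout élément de base $b \otimes x^\e_k$ de $\lambda(\Dn{1}) \otimes \lambda(\Dn{i-1})$ (où $b \in \{a^0_0, a^1_0, a\}$),
\[
  \nu(f \otimes g)(\atom{b \otimes x^\e_k}) = \atom{f(b) \otimes g(x^\e_k)}
    = \atom{b \otimes \lambda(\sigma_i)(x^\e_k)}.
\]
Il ne reste plus qu'à insérer les valeurs de $\lambda(\sigma_i)$ rappelées au paragraphe~\ref{paragr:desc_lambda_cocat} : on a $\lambda(\sigma_i)(x^\e_k) = y^\e_k$ pour $0 \le k < i-1$ et $\lambda(\sigma_i)(x_{i-1}) = y^0_{i-1}$ (avec la cellule principale $x_{i-1}$ envoyée sur la source $y^0_{i-1}$ de la cellule principale de $\Dn{i}$). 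En distinguant selon que $b = a^\ep_0$ (cellule de $\Dn{1}$ de degré $0$) ou $b = a$ (cellule principale de $\Dn{1}$), on obtient exactement les huit lignes de l'énoncé. La seule subtilité à surveiller est la compatibilité des conventions $\atom{b \otimes \cdot}$ lorsque $b = a^\ep_0$ est de degré $0$, mais les hypothèses du lemme~\ref{lemme:tens_morph_atom} s'appliquent sans réserve puisque $\lambda(\Dn{1})$ et $\lambda(\Dn{i-1})$ sont à base unitaire (paragraphe~\ref{paragr:desc_lambda_Dn}). Il n'y a pas d'obstacle réel ici : la seule vigilance demandée est de bien reporter le cas particulier $k = i-1$ (où la cellule principale passe à $y^0_{i-1}$ pour $\sigma_i$, à $y^1_{i-1}$ pour $\tau_i$) et de ne pas confondre les deux familles d'atomes $a^\ep_0 \otimes \cdot$ et $a \otimes \cdot$.
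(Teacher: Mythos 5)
Votre preuve est correcte et suit essentiellement la même voie que celle de l'article, qui se réduit à invoquer le lemme~\ref{lemme:tens_morph_atom} et la description de $\lambda(\sigma_i)$ et $\lambda(\tau_i)$ donnée au paragraphe~\ref{paragr:desc_lambda_cocat}. Vous explicitez en plus la vérification des hypothèses du lemme (rigidité de $\lambda(\sigma_i)$, bases unitaires), ce que l'article laisse implicite.
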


\begin{proof}
  Cela résulte immédiatement de la proposition~\ref{prop:tens_morph_atom}
  et des formules
  \[
     \sigma_i(\atom{x^\e_k}) =
     \begin{cases}
       \atom{y^\e_k} & \text{si $0 \le k < i - 1$,} \\
       \atom{y^0_{i-1}} & \text{si $k = i - 1$,}
     \end{cases}
     \quadet
     \tau_i(\atom{x^\e_k}) =
     \begin{cases}
       \atom{y^\e_k} & \text{si $0 \le k < i - 1$,} \\
       \atom{y^1_{i-1}} & \text{si $k = i - 1$,}
     \end{cases}
   \]
   pour $\e = 0,1$.
\end{proof}

\begin{prop}\label{prop:desc_cyl_k}
  Fixons $i \ge 0$ et notons $x$ la cellule principale de $\Dn{i+1}$ et $y$
  celle de $\Dn{i}$. Alors le \oo-foncteur
  $\Dn{1} \otimes \kappa_i : \Dn{1} \otimes \Dn{i+1} \to \Dn{1} \otimes \Dn{i}$
  est donné par
  \begin{alignat*}{3}{}
    \atom{a^\ep_0 \otimes x^\e_k} & \mapsto \atom{a^\ep_0 \otimes y^\e_k} &&
      \qquad\text{pour $0 \le k < i$, $\e = 0, 1$ et $\ep = 0, 1$,}\\
    \atom{a^\ep_0 \otimes x^\e_i} & \mapsto \atom{a^\ep_0 \otimes y_i} &&
      \qquad\text{pour $\e = 0, 1$ et $\ep = 0, 1$,}\\
    \atom{a^\ep_0 \otimes x_{i+1}} & \mapsto \id{\atom{a^\ep_0 \otimes
      y_i}} &&
      \qquad\text{pour $\ep = 0, 1$,}\\
    \atom{a \otimes x^\e_k} & \mapsto \atom{a \otimes y^\e_k} &&
      \qquad\text{pour $0 \le k < i$ et $\e = 0, 1$,}\\
    \atom{a \otimes x^\e_i} & \mapsto \atom{a \otimes y_i} &&
      \qquad\text{pour $\e = 0, 1$,}\\
    \atom{a \otimes x_{i+1}} & \mapsto \id{\atom{a \otimes y_i}}.
  \end{alignat*}
\end{prop}

\begin{proof}
  Cela résulte immédiatement de la proposition~\ref{prop:tens_morph_atom}
  et de la formule
  \[
    \kappa_i(\atom{x^\e_k}) =
      \begin{cases}
        \atom{y^\e_k} & \text{si $0 \le k < i$,} \\
        \atom{y_i} & \text{si $k = i$,} \\
        \id{\atom{y_i}} & \text{si $k = i+1$,}
      \end{cases}
  \]
  pour $\e = 0,1$.
\end{proof}

\begin{paragr}
  Fixons maintenant $j$ tel que $0 \le j < i$. Nous allons décrire
  explicitement le \oo-foncteur
  \[
    \Dn{1} \otimes \nabla^i_j : \Dn{1} \otimes \Dn{i} \to \Dn{1} \otimes \Dn{i}
    \amalg_{\Dn{1} \otimes \Dn{j}} \Dn{1} \otimes \Dn{i}.
  \]
  Nous noterons $x$ la cellule principale de l'objet $\Dn{i}$
  apparaissant dans la source de~$\Dn{1} \otimes \nabla^i_j$, et $y$ et $z$
  les cellules principales des objets $\Dn{i}$ apparaissant de gauche à
  droite dans le but de $\Dn{1} \otimes \nabla^i_j$.

  En vertu du paragraphe \ref{paragr:desc_lambda_cocat} et avec ses
  notations, une base du complexe dirigé augmenté
  \[
    \lambda\big(\Dn{1} \otimes \Dn{i}
    \amalg_{\Dn{1} \otimes \Dn{j}} \Dn{1} \otimes \Dn{i}\big)
    \simeq
    \lambda(\Dn{1}) \otimes \lambda(\Dn{i} \amalgDn{j} \Dn{i})
  \]
  est donnée par les
  \[
    \atom{a^\ep_0 \otimes y^\e_i}, \quad
    \atom{a^\ep_0 \otimes z^\e_i}, \quad
    \atom{a \otimes y^\e_i}, \quad
    \atom{a \otimes z^\e_i},
  \]
  pour $0 \le k \le i$, $\e = 0, 1$ et $\ep = 0, 1$, modulo les
  identifications
  \[
      y^0_j = z^1_j, \quad
      y^\e_k = z^\e_k \quad\text{pour $0 \le k < j$ et $\e = 0, 1$,} \\
  \]
  ainsi que les identifications triviales $y^0_i = y^1_i$ et
  $z^0_i = z^1_i$.
\end{paragr}

\begin{lemme}
  Pour tout $k$ tel que $j < k \le i$ et $\e = 0, 1$, la $(k+1)$-flèche
  \[
    \begin{split}
      & \big(
          \atom{a^1_0 \otimes y^{\eta_1}_{j+1}} \ast_0 \atom{a \otimes z^0_0} \ast_1 \cdots
          \ast_{j-1} \atom{a \otimes z^0_{j-1}} \ast_j \atom{a \otimes z^\e_k}
        \big) \\
      & \phantom{=1} \quad
        \ast_{j+1} \big(
          \atom{a \otimes y^\e_k} \comp_j \atom{a \otimes y^1_{j-1}}
          \comp_{j-1} \cdots \comp_1 \atom{a \otimes y^1_0} \comp_0
          \atom{a^0_0 \otimes z^{\eta_0}_{j+1}}
        \big)
    \end{split}
  \]
  de $\Dn{1} \otimes \Dn{i} \amalg_{\Dn{1} \otimes \Dn{j}} \Dn{1} \otimes
  \Dn{i}$, où $\eta_0$ et $\eta_1$ valent $\e$ si $k = j + 1$ et $0$ et $1$
  respectivement sinon, est bien définie.
\end{lemme}

\begin{proof}
  En vertu de la proposition~\ref{prop:s_t_cyl}, la $j$-flèche
  \[
      \atom{a \otimes y^1_{j-1}} \ast_{j-1} \cdots \ast_1 \atom{a \otimes y^1_0}
        \ast_0 \atom{a^0_0 \otimes y^0_j}
  \]
  est bien définie.
  Puisque, pour $l \le j - 1$, on a
  \[
    t_l(\atom{a^0_0 \otimes z^{\eta_0}_{j+1}})
    =
    \atom{a^0_0 \otimes z^1_l}
    =
    \atom{a^0_0 \otimes y^1_l}
    =
    t_l(\atom{a^0_0 \otimes y^0_j}),
  \]
  on en déduit que la cellule
  \[
    \atom{a \otimes y^1_{j-1}} \comp_{j-1} \cdots \comp_1 \atom{a \otimes
    y^1_0} \comp_0 \atom{a^0_0 \otimes z^{\eta_0}_{j+1}}
  \]
  est également bien définie. De plus, on a
  \[
    \begin{split}
     \MoveEqLeft
      t_j\big(
        \atom{a \otimes y^1_{j-1}} \comp_{j-1} \cdots \comp_1 \atom{a \otimes
        y^1_0} \comp_0 \atom{a^0_0 \otimes z^{\eta_0}_{j+1}}
      \big) \\
      & =
        \atom{a \otimes y^1_{j-1}} \comp_{j-1} \cdots \comp_1 \atom{a \otimes
        y^1_0} \comp_0 \atom{a^0_0 \otimes z^{1}_{j}} \\
      & =
        \atom{a \otimes y^1_{j-1}} \comp_{j-1} \cdots \comp_1 \atom{a \otimes
        y^1_0} \comp_0 \atom{a^0_0 \otimes y^{0}_{j}} \\
      & =
      s_j(\atom{a \otimes y^\e_k}),
    \end{split}
  \]
  la dernière égalité résultant du lemme~\ref{lemme:s_t_cyl_iter},
  ce qui montre que la cellule
  \[
    \atom{a \otimes y^\e_k} \comp_j \atom{a \otimes y^1_{j-1}}
    \comp_{j-1} \cdots \comp_1 \atom{a \otimes y^1_0} \comp_0
    \atom{a^0_0 \otimes z^{\eta_0}_{j+1}}
  \]
  est bien définie. On montre de même que la cellule
  \[
    \atom{a^1_0 \otimes y^{\eta_1}_{j+1}} \ast_0 \atom{a \otimes z^0_0} \ast_1 \cdots
    \ast_{j-1} \atom{a \otimes z^0_{j-1}} \ast_j \atom{a \otimes z^\e_k}
  \]
  est bien définie. Enfin, on a
  \[
    \begin{split}
     \MoveEqLeft
     s_{j+1}\big(
      \atom{a^1_0 \otimes y^{\eta_1}_{j+1}} \ast_0 \atom{a \otimes z^0_0}
      \ast_1 \cdots \ast_{j-1} \atom{a \otimes z^0_{j-1}} \ast_j
      \atom{a \otimes z^\e_k}
     \big) \\
        & =
      \atom{a^1_0 \otimes y^{\eta_1}_{j+1}} \ast_0 \atom{a \otimes z^0_0}
      \ast_1 \cdots \ast_{j-1} \atom{a \otimes z^0_{j-1}} \ast_j
      s_{j+1}(\atom{a \otimes z^\e_k}) \\
      & =
      \atom{a^1_0 \otimes y^{\eta_1}_{j+1}} \ast_0 \atom{a \otimes z^0_0}
      \ast_1 \cdots \ast_{j-1} \atom{a \otimes z^0_{j-1}} \\*
      & \phantom{=1} \quad
        \ast_j \atom{a \otimes z^1_{j}} \ast_{j}
        \atom{a \otimes z^1_{j-1}} \comp_{j-1} \cdots \ast_1
        \atom{a \otimes z^1_0} \ast_0 \atom{a^0_0 \otimes z^{\eta_0}_{j+1}},
      \end{split}
    \]
    la dernière égalité résultant du lemme~\ref{lemme:s_t_cyl_iter}, et
    {
      \allowdisplaybreaks
      \begin{align*}
      \MoveEqLeft
        t_{j+1}\big(
          \atom{a \otimes y^\e_k} \comp_j \atom{a \otimes y^1_{j-1}}
          \comp_{j-1} \cdots \comp_1 \atom{a \otimes y^1_0} \comp_0
          \atom{a^0_0 \otimes z^{\eta_0}_{j+1}}
        \big) \\
      & =
        t_{j+1}(\atom{a \otimes y^\e_k}) \comp_j \atom{a \otimes y^1_{j-1}}
          \comp_{j-1} \cdots \comp_1 \atom{a \otimes y^1_0} \comp_0
          \atom{a^0_0 \otimes z^{\eta_0}_{j+1}} \\
      & =
        \atom{a^1_0 \otimes y^{\eta_1}_{j+1}} \ast_0 \atom{a \otimes y^0_0} \ast_1
        \cdots \ast_{j} \atom{a \otimes y^0_{j}} \\
      & \phantom{=1} \quad \comp_j \atom{a \otimes y^1_{j-1}}
          \comp_{j-1} \cdots \comp_1 \atom{a \otimes y^1_0} \comp_0
          \atom{a^0_0 \otimes z^{\eta_0}_{j+1}} \\*
      & \phantom{=1} \text{(en vertu du lemme~\ref{lemme:s_t_cyl_iter})} \\
      & =
        \atom{a^1_0 \otimes y^{\eta_1}_{j+1}} \ast_0 \atom{a \otimes y^0_0} \ast_1
        \cdots \ast_{j-1} \atom{a \otimes y^0_{j-1}} \\*
      & \phantom{=1} \quad \comp_j \atom{a \otimes y^0_j} \comp_j \atom{a
      \otimes y^1_{j-1}} \comp_{j-1} \cdots \comp_1 \atom{a \otimes y^1_0}
      \comp_0 \atom{a^0_0 \otimes z^{\eta_0}_{j+1}} \\
      & =
        \atom{a^1_0 \otimes y^{\eta_1}_{j+1}} \ast_0 \atom{a \otimes z^0_0} \ast_1
        \cdots \ast_{j-1} \atom{a \otimes z^0_{j-1}} \\*
      & \phantom{=1} \quad \comp_j \atom{a \otimes z^1_j} \comp_j \atom{a
      \otimes z^1_{j-1}} \comp_{j-1} \cdots \comp_1 \atom{a \otimes z^1_0}
      \comp_0 \atom{a^0_0 \otimes z^{\eta_0}_{j+1}},
    \end{align*}
  }%
  ce qui achève de montrer que la cellule de l'énoncé est bien définie.
\end{proof}

\begin{lemme}\label{lemme:s_t_nabla_cyl}
  Pour tout $k$ tel que $j < k \le i$, $\e = 0, 1$ et tout $l$
  tel que \hbox{$0 \le l \le k + 1$}, on a
  \[
    \begin{split}
    \MoveEqLeft
    (\Dn{1} \otimes \nabla^i_j)(\atom{a \otimes x^\e_k})^0_l = \\ & \quad
      \begin{cases}
        a^0_0 \otimes z^0_l + a \otimes y^1_{l-1} &
          \text{si $0 \le l \le j$,} \\
        a^0_0 \otimes y^{\eta_0}_l + a^0_0 \otimes z^{\eta_0}_l + a \otimes y^1_{l-1} &
          \text{si $l = j+1$,} \\
        a^0_0 \otimes y^{\eta_0}_l + a^0_0 \otimes z^{\eta_0}_l +
        a \otimes y^1_{l-1} + a \otimes z^1_{l-1} &
          \text{si $j+1 < l < k+1$,} \\
        a \otimes y^\e_{l-1} + a \otimes z^\e_{l-1} &
          \text{si $l = k+1$,} \\
      \end{cases}
    \end{split}
  \]
  où $\eta_0$ vaut $\e$ si $l = k$ et $0$ sinon, et
  \[
    \begin{split}
    \MoveEqLeft
    (\Dn{1} \otimes \nabla^i_j)(\atom{a \otimes x^\e_k})^1_l = \\ & \quad
      \begin{cases}
        a^1_0 \otimes y^1_l + a \otimes z^0_{l-1} &
          \text{si $0 \le l \le j$,} \\
        a^1_0 \otimes y^{\eta_1}_l + a^1_0 \otimes z^{\eta_1}_l + a \otimes z^0_{l-1} &
          \text{si $l = j+1$,} \\
        a^1_0 \otimes y^{\eta_1}_l + a^1_0 \otimes z^{\eta_1}_l +
        a \otimes y^0_{l-1} + a \otimes z^0_{l-1} &
          \text{si $j+1 < l < k+1$,} \\
        a \otimes y^\e_{l-1} + a \otimes z^\e_{l-1} &
          \text{si $l = k+1$,} \\
      \end{cases}
    \end{split}
  \]
  où $\eta_1$ vaut $\e$ si $l = k$ et $1$ sinon.
\end{lemme}

\begin{proof}
  Soit $\ep \in \{0, 1\}$. Pour $l < k + 1$, en utilisant le
  lemme~\ref{lemme:tab_cyl}, on a
  \[
    \begin{split}
    (\Dn{1} \otimes \nabla^i_j)(\atom{a \otimes x^\e_k})^\ep_l
    & =
    \lambda(\Dn{1} \otimes \nabla^i_j)(\atom{a \otimes x^\e_k}^\ep_l) \\
    & =
    \lambda(\Dn{1} \otimes \nabla^i_j)(a^\ep_0 \otimes x^{\eta_\ep}_l + a \otimes x^\epd_{l-1})
    \\
    & =
    a^\ep_0 \otimes \lambda(\nabla^i_j)(x^{\eta_\ep}_l) + a \otimes
      \lambda(\nabla^i_j)(x^\epd_{l-1})
    \end{split}
  \]
  et on obtient le résultat en utilisant la description de
  $\lambda(\nabla^i_j)$ donnée au paragraphe~\ref{paragr:desc_lambda_cocat}.
  Pour $l = k + 1$, en utilisant de nouveau le lemme~\ref{lemme:tab_cyl}, on
  a
  \[
    (\Dn{1} \otimes \nabla^i_j)(\atom{a \otimes x^\e_k})^\ep_l
    =
    \lambda(\Dn{1} \otimes \nabla^i_j)(a \otimes x^\e_k)
    =
    a \otimes \lambda(\nabla^i_j)(x^\e_k),
  \]
   et on conclut en invoquant de nouveau le
   paragraphe~\ref{paragr:desc_lambda_cocat}.
\end{proof}

\begin{prop}\label{prop:desc_cyl_nabla}
  Le \oo-foncteur $\Dn{1} \otimes \nabla^i_j : \Dn{1} \otimes \Dn{i} \to \Dn{1} \otimes \Dn{i}
  \amalg_{\Dn{1} \otimes \Dn{j}} \Dn{1} \otimes \Dn{i}$ est donné par
  {
    \allowdisplaybreaks
  \begin{alignat*}{3}{}
    \atom{a^\ep_0 \otimes x^0_k} & \mapsto \atom{a^\ep_0 \otimes z^0_k}
                                   && \qquad\text{pour $0 \le k \le j$ et
                                                    $\ep = 0, 1$,} \\
    \atom{a^\ep_0 \otimes x^1_k} & \mapsto \atom{a^\ep_0 \otimes y^1_k}
                                   && \qquad\text{pour $0 \le k \le j$ et
                                                    $\ep = 0, 1$,} \\
    \atom{a^\ep_0 \otimes x^\e_k} & \mapsto \atom{a^\ep_0 \otimes y^\e_k} \ast_j
                                   \atom{a^\ep_0 \otimes z^\e_k}
                                   && \qquad\text{pour $j < k \le i$,
                                      $\e = 0, 1$ et $\ep = 0, 1$,} \\
    \atom{a \otimes x^0_k} & \mapsto \atom{a \otimes z^0_k}
                                   && \qquad\text{pour $0 \le k \le j$,} \\
    \atom{a \otimes x^1_k} & \mapsto \atom{a \otimes y^1_k}
                                   && \qquad\text{pour $0 \le k \le j$,} \\
    \atom{a \otimes x^\e_k} & \mapsto u^\e_k && \qquad\text{pour $j < k \le i$
                                          et $\e = 0,1$},
  \end{alignat*}
  }%
  où
  \[
    \begin{split}
      u^\e_k & =
      \big(\atom{a^1_0 \otimes y^{\eta_1}_{j+1}} \ast_0 \atom{a \otimes z^0_0} \ast_1 \cdots
        \ast_{j-1} \atom{a \otimes z^0_{j-1}} \ast_j \atom{a \otimes
        z^\e_k}\big) \\
      & \phantom{=1} \quad
        \ast_{j+1} \big(
          \atom{a \otimes y^\e_k} \comp_j \atom{a \otimes y^1_{j-1}}
          \comp_{j-1} \cdots \comp_1 \atom{a \otimes y^1_0} \comp_0
          \atom{a^0_0 \otimes z^{\eta_0}_{j+1}}
        \big),
    \end{split}
  \]
  avec $\eta_0$ et $\eta_1$ valant $\e$ si $k = j+1$ et $0$ et $1$
  respectivement sinon.
\end{prop}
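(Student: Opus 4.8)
La d�monstration suivra pas � pas celle de la proposition~\ref{prop:desc_tr_nabla}, dont cet �nonc� est l'exact analogue pour le produit tensoriel (cylindre) � la place du joint. Puisque, en vertu du th�or�me~\ref{thm:Steiner_pol}, la \oo-cat�gorie $\Dn{1} \otimes \Dn{i} \simeq \nu(\lambda(\Dn{1}) \otimes \lambda(\Dn{i}))$ est engendr�e librement au sens des polygraphes par ses atomes, il suffira de v�rifier les formules de l'�nonc� sur chacun des atomes $\atom{a^\ep_0 \otimes x^\e_k}$ et $\atom{a \otimes x^\e_k}$. Le plan est donc de distinguer les m�mes trois familles d'atomes que dans le cas du joint.

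Pour les atomes de la forme $\atom{a^\ep_0 \otimes x^\e_k}$, l'approche sera d'invoquer la naturalit� de $\nabla^i_j$. Les deux morphismes $a^\ep_0 : \Dn{0} \to \Dn{1}$ induisent des carr�s commutatifs reliant les inclusions $\Dn{i} \simeq \Dn{0} \otimes \Dn{i} \to \Dn{1} \otimes \Dn{i}$ au \oo-foncteur de cocomposition $\nabla^i_j : \Dn{i} \to \Dn{i} \amalgDn{j} \Dn{i}$. La formule cherch�e r�sultera alors de la description explicite de ce dernier donn�e au paragraphe~\ref{paragr:def_kappa_nabla}, exactement comme pour les atomes $\atom{\vide \joint x^\e_k}$ dans la preuve de la proposition~\ref{prop:desc_tr_nabla}.

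Pour les atomes de la forme $\atom{a \otimes x^\e_k}$ avec $0 \le k \le j$, j'appliquerais le lemme~\ref{lemme:tens_morph_atom} au morphisme $\id{\lambda(\Dn{1})} \otimes \lambda(\nabla^i_j)$, conjointement � la description du morphisme $\lambda(\nabla^i_j) : \lambda(\Dn{i}) \to \lambda(\Dn{i} \amalgDn{j} \Dn{i})$ donn�e au paragraphe~\ref{paragr:desc_lambda_cocat}; c'est un calcul direct sur les g�n�rateurs.

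Le cas des atomes $\atom{a \otimes x^\e_k}$ avec $j < k \le i$ sera la partie d�licate. Il s'agira de v�rifier, pour tout $l$ tel que $0 \le l \le k+1$ et $\ep = 0, 1$, l'�galit� $(\nabla^i_j(\atom{a \otimes x^\e_k}))^\ep_l = (u^\e_k)^\ep_l$. Les membres de gauche ayant d�j� �t� calcul�s dans le lemme~\ref{lemme:s_t_nabla_cyl}, il restera � �valuer les membres de droite : je d�velopperais le compos� d�finissant $u^\e_k$, appliquerais la formule du lemme~\ref{lemme:tab_cyl} � chacun de ses facteurs, et utiliserais les formules de source et but it�r�s des propositions~\ref{prop:s_t_cyl} et~\ref{lemme:s_t_cyl_iter} pour d�terminer quels termes subsistent. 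La principale difficult� sera le suivi soigneux des distinctions de cas ($l \le j$, $l = j+1$, $j+1 < l < k+1$ et $l = k+1$) et des conventions r�gissant $\eta_0$ et $\eta_1$, tout particuli�rement � la fronti�re $l = j+1$ o� les deux facteurs du compos� en $\ast_{j+1}$ se rencontrent et o� il faudra contr�ler la (non\nbd-)simplification des termes de bord pour des raisons de bidegr�, tout comme dans la preuve correspondante pour le joint. Une fois ces calculs men�s, la comparaison terme � terme avec le lemme~\ref{lemme:s_t_nabla_cyl} donnera l'�galit� voulue dans chacun des cas, d'o� le r�sultat.
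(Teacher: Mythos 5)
Your proposal follows the paper's proof essentially verbatim: the same three families of atoms, naturality with respect to $\sigma_1, \tau_1 : \Dn{0} \to \Dn{1}$ combined with the explicit description of $\nabla^i_j : \Dn{i} \to \Dn{i} \amalgDn{j} \Dn{i}$ for the atoms $\atom{a^\ep_0 \otimes x^\e_k}$, the lemme~\ref{lemme:tens_morph_atom} together with the paragraphe~\ref{paragr:desc_lambda_cocat} for the atoms $\atom{a \otimes x^\e_k}$ with $k \le j$, and, for $k > j$, the componentwise verification of $(\nabla^i_j(\atom{a \otimes x^\e_k}))^\ep_l = (u^\e_k)^\ep_l$ by comparing the lemme~\ref{lemme:s_t_nabla_cyl} with an evaluation of $u^\e_k$ via le lemme~\ref{lemme:tab_cyl} and the same case distinctions. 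Carrying out the computations you sketch yields exactly the paper's argument.
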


\begin{proof}
  Le cas des atomes de la forme $\atom{a^\ep_0 \otimes x^\e_k}$ résulte de la
  commutativité des carrés
  \[
    \xymatrix@C=3.5pc{
      \Dn{1} \otimes \Dn{i} \ar[r]^-{\Dn{1} \otimes \nabla^i_j} &
    \Dn{1} \otimes \big(\Dn{i} \amalg_{\Dn{j}} \Dn{i}\big) \\
    \Dn{0} \otimes \Dn{i} \ar[u]^{\partial \otimes \Dn{i}} \ar[r]_-{\Dn{0}
    \otimes \nabla^i_j} &
    \Dn{0} \otimes \big(\Dn{i} \amalgDn{j} \Dn{i}\big)
    \ar[u]_{\partial \otimes (\Dn{i} \amalgDn{j} \Dn{i})} \pbox{,}
    }
  \]
  où $\partial$ vaut $\sigma_1 : \Dn{0} \to \Dn{1}$ ou $\tau_1 : \Dn{0} \to
  \Dn{1}$, ainsi que de la description explicite du \oo-foncteur $\nabla^i_j
  : \Dn{i} \to \Dn{i} \amalgDn{j} \Dn{i}$ (voir le
  paragraphe~\ref{paragr:def_kappa_nabla}).

  Le cas des atomes de la forme $\atom{a \otimes x^\e_k}$ avec $0 \le k \le
  j$ est conséquence de la proposition~\ref{prop:tens_morph_atom} et des
  formules
  \[
    \nabla^i_j(\atom{x^0_k}) = \atom{z^0_k}
      \quadet
    \nabla^i_j(\atom{x^1_k}) = \atom{y^1_k},
  \]
  pour $0 \le k \le j$.

  Enfin, traitons le cas des atomes de la forme $\atom{a \otimes x^\e_k}$
  avec $k > j$.  Soient $l$ tel que $0 \le l \le k+1$ et $\ep = 0,1$. Il
  s'agit de montrer l'égalité $(\Dn{1} \otimes \nabla^i_j)(\atom{a \otimes x^\e_k})^\ep_l =
  (u^\e_k)^\ep_l$.  Le membre de gauche a été calculé dans le
  lemme~\ref{lemme:s_t_nabla_cyl}. Calculons celui de droite.

  Si $l \le j$, on a
  \[
    s_l(u^\e_k) = s_l\big(
          \atom{a \otimes y^1_{l-1}} \comp_{l-1} \cdots \comp_1 \atom{a
          \otimes y^1_0} \comp_0 \atom{a^0_0 \otimes z^{\eta_0}_{j+1}}
      \big)
  \]
  et donc
  \[
    (u^\e_k)^0_l
    = \atom{a \otimes y^1_{l-1}}^0_l + \atom{a^0_0 \otimes z^{\eta_0}_{j+1}}^0_l
    = a \otimes y^1_{l-1} + a^0_0 \otimes z^0_l.
  \]
  De même, on a
  \[
    (u^\e_k)^1_l
    = \atom{a^1_0 \otimes y^{\eta_1}_{j+1}}^1_l
      + \atom{a \otimes z^0_{l-1}}^1_l
    = a^1_0 \otimes y^1_l + a \otimes z^0_{l-1}.
  \]
  Si $l = j+1$, on a
  \[
    s_l(u^\e_k) = s_l\big(
          \atom{a \otimes y^\e_k} \comp_j \atom{a \otimes y^1_{j-1}}
          \comp_{j-1} \cdots \comp_1 \atom{a \otimes y^1_0} \comp_0
          \atom{a^0_0 \otimes z^{\eta_0}_{j+1}}
        \big)
  \]
  et donc, en utilisant le lemme~\ref{lemme:tab_cyl},
  \[
    (u^\e_k)^0_l
    = \atom{a \otimes y^\e_k}^0_l + \atom{a^0_0 \otimes z^{\eta_0}_{j+1}}^0_l
    = a^0_0 \otimes y^{\eta_0}_l + a \otimes y^1_{l-1} + a^0_0 \otimes z^{\eta_0}_l.
  \]
  De même, on a
  \[
    (u^\e_k)^1_l
    = \atom{a^1_0 \otimes y^{\eta_1}_{j+1}}^1_l + \atom{a \otimes z^\e_k}^1_l
    = a^1_0 \otimes y^{\eta_1}_{l} + a^1_0 \otimes z^{\eta_1}_l + a \otimes z^0_{l-1}.
  \]
  Si $j + 1 <  l \le k + 1$, on a
  \[
    (u^\e_k)^\ep_l = \atom{a \otimes z^\e_k}^\ep_l + \atom{a \otimes
    y^\e_k}^\ep_l
  \]
  et donc, pour $j + 1 < l < k + 1$, toujours en vertu du lemme~\ref{lemme:tab_cyl},
  \[
    (u^\e_k)^\ep_l =
      a^\ep_0 \otimes z^{\eta_\ep}_l + a \otimes z^\epd_{l-1} +
      a^\ep_0 \otimes y^{\eta_\ep}_l + a \otimes y^\epd_{l-1}
  \]
  et, pour $l = k + 1$,
  \[
    (u^\e_k)^\ep_l = a \otimes z^\e_k + a \otimes y^\e_k.
  \]
  On a bien retrouvé dans tous les cas la valeur de $(\Dn{1} \otimes
  \nabla^i_j)(\atom{a \otimes x^\e_k})^\ep_l$ obtenue dans le
  lemme~\ref{lemme:s_t_nabla_cyl}, ce qui achève la démonstration.
\end{proof}

\begin{prop}\label{prop:desc_cyl}
  Soit $C$ une \oo-catégorie. Fixons une $i$-flèche $(c, d, \alpha)$
  de~$\HomLax(\Dn{1}, C)$.
  \begin{enumerate}
    \item Si $i \ge 1$, on a $s(c, d, \alpha) = (s(c), s(d), \gamma)$, où
      \begin{alignat*}{3}{}
        \gamma^\e_k & = \alpha^\e_k
                   && \qquad\text{pour $0 \le k < i - 1$ et $\e = 0,1$,} \\
        \gamma_{i-1} & = \alpha^0_{i-1}.
      \end{alignat*}
    \item Si $i \ge 1$, on a $t(c, d, \alpha) = (t(c), t(d), \gamma)$, où
      \begin{alignat*}{3}{}
        \gamma^\e_k & = \alpha^\e_k
                   && \qquad\text{pour $0 \le k < i - 1$ et $\e = 0,1$,} \\
        \gamma_{i-1} & = \alpha^1_{i-1}.
      \end{alignat*}
    \item Si $i \ge 0$, on a $\id{(c, d, \alpha)} = (\id{c}, \id{d},
    \gamma)$, où
      \begin{alignat*}{3}{}
        \gamma^\e_k & = \alpha^\e_k
                   && \qquad\text{pour $0 \le k < i$ et $\e = 0,1$,} \\
        \gamma^\e_i & = \alpha_i
                   && \qquad\text{pour $\e = 0, 1$,} \\
        \gamma_{i+1} & = \id{\alpha_i}.
      \end{alignat*}
  \end{enumerate}
  Soit $(e, f, \beta)$ une seconde $i$-flèche de~$\HomLax(\Dn{1}, C)$.
  \begin{enumerate}[resume]
    \item Si $(c, d, \alpha)$ et $(e, f, \beta)$ sont $j$-composables pour un $j$
      tel que $0 \le j < i$, alors on~a $(c, d, \alpha) \comp_j (e, f,
      \beta) = (c \comp_j e, d \comp_j f, \gamma)$, où
      \begin{alignat*}{3}{}
        \gamma^0_k & = \beta^0_k
                   && \qquad\text{pour $0 \le k \le j$,} \\
        \gamma^1_k & = \alpha^1_k
                   && \qquad\text{pour $0 \le k \le j$}
      \end{alignat*}
      et
      \[
        \begin{split}
          \gamma^\e_k & =
          \big(d^{\eta_1}_{j+1} \comp_0 \beta^0_0 \comp_1 \cdots \comp_{j-1}
            \beta^0_{j-1} \comp_j \beta^\e_k\big) \\
          & \phantom{=1} \quad
            \comp_{j+1}
            \big(\alpha^\e_k \comp_j \alpha^1_{j-1} \comp_{j-1} \cdots
            \comp_1 \alpha^1_0 \comp_0 e^{\eta_0}_{j+1}\big)
        \end{split}
      \]
      pour $j < k \le i$ et $\e = 0, 1$, où $\eta_0$ et $\eta_1$ valent $\e$
      si $k = j + 1$ et $0$ et $1$ respectivement sinon.
  \end{enumerate}
\end{prop}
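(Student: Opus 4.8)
The plan is to prove Proposition~\ref{prop:desc_cyl} exactly as its analogue, Proposition~\ref{prop:desc_tr}, was proved: namely, by translating the explicit formulas for the \oo-cocat\'egorie morphisms $\sigma_i$, $\tau_i$, $\kappa_i$ and $\nabla^i_j$ of $\Dn{1} \otimes \cocatD$ through the bijection of Proposition~\ref{prop:desc_cyl_pol} and the identification of $i$-fl\`eches of $\HomLax(\Dn{1}, C)$ with triplets $(c, d, \alpha)$ described in the paragraph~\ref{paragr:desc_cyl_pol}. The point is that the structure of \oo-cat\'egorie on $\HomLax(\Dn{1}, C) = \Hom(\nu(\lambda(\Dn{\var}) \otimes \lambda(\Dn{1})), C)$ is, by the very definition of the $\Hom$ of an \oo-cocat\'egorie (see paragraph~\ref{paragr:def_Hom_coaug}), \emph{induced} by precisely these cosource, cobut, counit and cocomposition morphisms. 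So each of the four assertions of the proposition is a formal consequence of one of the explicit descriptions we have already established for $\Dn{1} \otimes \cocatD$.

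First I would treat the source, but and identity assertions. Assertion (a) (resp.\ (b)) follows by reading off Proposition~\ref{prop:desc_cyl_s_t}: a $i$-fl\`eche $(c, d, \alpha)$ is the \oo-foncteur $h : \Dn{1} \otimes \Dn{i} \to C$ sending $\atom{a^0_0 \otimes x_i} \mapsto c$, $\atom{a^1_0 \otimes x_i} \mapsto d$ and $\atom{a \otimes x^\e_k} \mapsto \alpha^\e_k$, and its source is $h \circ \sigma_i$; using the formulas for $\sigma_i$ one reads that the new $c$, $d$ are $s(c)$, $s(d)$ and that the components $\gamma^\e_k$ are the stated restrictions, the top-dimensional component becoming $\alpha^0_{i-1}$ because $\sigma_i$ sends $\atom{a \otimes x_{i-1}} \mapsto \atom{a \otimes y^0_{i-1}}$. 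Assertion (c) follows identically from Proposition~\ref{prop:desc_cyl_k}, the degenerate components $\gamma^\e_i = \alpha_i$ and $\gamma_{i+1} = \id{\alpha_i}$ coming from the fact that $\kappa_i$ sends $\atom{a \otimes x^\e_i} \mapsto \atom{a \otimes y_i}$ and $\atom{a \otimes x_{i+1}}$ to an identity.

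The substance of the proof lies in assertion (d), the composition formula, which is the translation of Proposition~\ref{prop:desc_cyl_nabla}. Here the $j$-composite $(c,d,\alpha)\comp_j(e,f,\beta)$ is the \oo-foncteur obtained by precomposing $(h, h') : \Dn{1}\otimes\Dn{i} \amalg_{\Dn{1}\otimes\Dn{j}} \Dn{1}\otimes\Dn{i} \to C$ with $\nabla^i_j$. For $\e = 0, 1$ and $k \le j$ the formula for $\nabla^i_j$ on $\atom{a \otimes x^0_k}$ and $\atom{a \otimes x^1_k}$ gives directly $\gamma^0_k = \beta^0_k$ and $\gamma^1_k = \alpha^1_k$; for $j < k \le i$ one evaluates $h$ resp.\ $h'$ on the big composite cell $u^\e_k$ of Proposition~\ref{prop:desc_cyl_nabla}, and the two bracketed factors of $u^\e_k$ become precisely the two bracketed factors of the stated $\gamma^\e_k$. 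The main obstacle, as in the $\cotr{C}{c}$ case, is purely bookkeeping: one must carefully match the $\ast_j$- and $\comp_j$-structure of $u^\e_k$, together with the correct values $\eta_0, \eta_1$ (equal to $\e$ when $k = j+1$ and to $0, 1$ otherwise), against the target formula, keeping track of the conventions from paragraph~\ref{paragr:conv_ooCat} on iterated composition and the priority of lower-dimensional operations. Since every ingredient---the explicit $\nabla^i_j$, the bijection of Proposition~\ref{prop:desc_cyl_pol}, and the well-definedness lemma preceding Proposition~\ref{prop:desc_cyl_nabla}---is already in hand, no genuinely new idea is required, and the proof reduces to recording these four translations; hence it can be dispatched with the single sentence that the formulas follow from Propositions~\ref{prop:desc_cyl_s_t}, \ref{prop:desc_cyl_k} and~\ref{prop:desc_cyl_nabla} through the identification of paragraph~\ref{paragr:desc_cyl_pol}, exactly as Proposition~\ref{prop:desc_tr} followed from its three companion propositions.
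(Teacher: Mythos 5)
Your proposal is correct and follows exactly the paper's own argument: the paper proves Proposition~\ref{prop:desc_cyl} in one sentence, stating that the formulas are the translation, through the bijection of Proposition~\ref{prop:desc_cyl_pol} and the identification of the paragraph~\ref{paragr:desc_cyl_pol}, of the formulas established in Propositions~\ref{prop:desc_cyl_s_t}, \ref{prop:desc_cyl_k} and~\ref{prop:desc_cyl_nabla}. Your item-by-item elaboration (source/but from $\sigma_i$, $\tau_i$, identity from $\kappa_i$, composition from $\nabla^i_j$) is just a spelled-out version of the same translation.
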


\begin{proof}
  Ces formules sont la traduction, à travers la bijection de la
  proposition~\ref{prop:desc_cyl_pol} et du
  paragraphe~\ref{paragr:desc_cyl_pol}, des formules obtenues dans les
  propositions~\ref{prop:desc_cyl_s_t}, \ref{prop:desc_cyl_k}
  et~\ref{prop:desc_cyl_nabla}.
\end{proof}

\begin{rem}\label{rem:HC}
  Il résulte de la description de $\HomLax(\Dn{1}, C)$ obtenue dans la
  proposition précédente que cette \oo-catégorie est isomorphe à
  la \oo-catégorie~$HC$ des cylindres dans $C$ introduite par Métayer
  dans~\cite{MetPolRes} (voir également~\cite{LafMetPolRes} pour une
  description alternative de cette \oo-catégorie).
\end{rem}

\section{Transformations oplax et produit tensoriel}

\begin{paragr}
  Fixons deux \oo-foncteurs $u, v : C \to D$. Le but de cette section
  est de montrer que les transformations oplax de $u$ vers $v$, telles que
  définies au paragraphe~\ref{paragr:def_trans}, sont en bijection avec les
  \oo-foncteurs $H : \Dn{1} \otimes C \to D$ rendant commutatif le diagramme
  \[
    \xymatrix@C=3pc{
    C \ar[dr]^u \ar[d]_{\sigma_1 \otimes C} \\
    \Dn{1} \otimes C \ar[r]^H & D \\
    C \ar[ur]_v \ar[u]^{\tau_1 \otimes C} & \pbox{,} \\
    }
  \]
  où, d'une part, on a identifié $C$ et $\Dn{0} \otimes C$ et, d'autre part,
  $\sigma_1, \tau_1 : \Dn{0} \to \Dn{1}$ désignent les \oo-foncteurs du
  paragraphe~\ref{paragr:def_disque}.
\end{paragr}

\begin{paragr}\label{paragr:cohomot_Gray}
  Par adjonction,
  un \oo-foncteur $H : \Dn{1} \otimes C \to D$, comme dans le
  paragraphe précédent, correspond à un \oo-foncteur $K : C \to
  \HomLax(\Dn{1}, D)$ rendant commutatif le diagramme de \oo-foncteurs
  \[
    \xymatrix@C=3pc{
    & D \\
    C \ar[ur]^u \ar[dr]_v \ar[r]^(.40)K & \HomLax(\Dn{1}, D)
    \ar[u]_{\pi^0} \ar[d]^{\pi^1} \\
    & D \pbox{,}
    }
  \]
  où on a posé
  \[
    \begin{split}
      \pi^0 & = \HomLax(\sigma_1, D) : \HomLax(\Dn{1}, D) \to
      \HomLax(\Dn{0}, D), \\
      \pi^1 & = \HomLax(\tau_1, D) : \HomLax(\Dn{1}, D) \to
      \HomLax(\Dn{0}, D)
    \end{split}
  \]
  \notindex{$\pi^0, \pi^1 : \HomLax(\Dn{1}, C) \to C$}%
  et identifié $\HomLax(\Dn{0}, D)$ à $D$ (ce qui est licite puisque
  $\Dn{0}$ est l'unité du produit tensoriel).

  L'action du \oo-foncteur $\pi^0$ sur les $i$-flèches, pour $i
  \ge 0$, est par définition induite par le \oo-foncteur
  $\sigma_1 \otimes \Dn{i} : \Dn{0} \otimes \Dn{i} \to \Dn{1} \otimes
  \Dn{i}$. On en déduit qu'en utilisant la description des $i$-flèches de
  $\HomLax(\Dn{1}, D)$ donnée au paragraphe~\ref{paragr:desc_cyl_pol}, on
  a $\pi^0(c, d, \alpha) = c$. De
  même, on a $\pi^1(c, d, \alpha) = d$.
\end{paragr}

\begin{paragr}\label{paragr:app_cohomot_trans}
  Considérons un morphisme de \oo-graphes $K : C \to \HomLax(\Dn{1}, D)$
  (c'est-à-dire une application des cellules de $C$ vers les cellules
  de $\HomLax(\Dn{1}, D)$ qui préserve la dimension des cellules et leurs
  sources et buts) rendant commutatif le diagramme du paragraphe précédent.
  La commutativité de ce diagramme signifie exactement que si $x$ est une
  cellule de $C$, les deux premières composantes de $K(x)$ sont $u(x)$ et
  $v(x)$. Notons $\alpha(x)$ la troisième composante et $\alpha_x$
  la $(i+1)$-flèche $\alpha(x)_i$, où $i$ est la dimension de $x$. Par
  définition, on a
  \[
     \alpha_x :
     \alpha(x)^1_{i-1} \comp_{i-1} \dots \comp_1 \alpha(x)^1_0 \comp_0 u(x)
     \to
     v(x) \ast_0 \alpha(x)^0_0 \ast_1 \dots \ast_{i-1} \alpha(x)^0_{i-1}.
  \]
  Or, en vertu de la proposition~\ref{prop:desc_cyl} et de la
  compatibilité de $K$ aux sources, on a
  \[
      \alpha(x)^0_l = s_l(\alpha(x))_l = \alpha(s_l(x))_l = \alpha_{s_l(x)}
  \]
  et, de même,
  \[
    \alpha(x)^1_l = \alpha_{t_l(x)},
  \]
  et donc
  \[
     \alpha_x :
     \alpha_{t_{i-1}(x)} \comp_{i-1} \dots \comp_1 \alpha_{t_0(x)} \comp_0 u(x)
     \to
     v(x) \ast_0 \alpha_{s_0(x)} \ast_1 \dots \ast_{i-1} \alpha_{s_{i-1}(x)}.
  \]
  Autrement dit, l'application $x \mapsto \alpha_x$ est une
  prétransformation oplax de $u$ vers $v$ au sens du paragraphe~\ref{paragr:def_trans}.
\end{paragr}

\begin{prop}
  Les morphismes de \oo-graphes $K : C \to \HomLax(\Dn{1}, D)$ comme dans
  le paragraphe précédent sont en bijection avec les prétransformations oplax
  de $u$ vers $v$ \forlang{via} l'application définie au paragraphe
  précédent.
\end{prop}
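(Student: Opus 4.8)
Le plan est de construire une bijection explicite entre les morphismes de
\oo-graphes $K : C \to \HomLax(\Dn{1}, D)$ satisfaisant la condition de
commutativité du paragraphe~\ref{paragr:cohomot_Gray} et les
prétransformations oplax de $u$ vers $v$, puis de vérifier que les deux
constructions sont inverses l'une de l'autre. Dans un sens, la construction
est déjà entièrement décrite au paragraphe~\ref{paragr:app_cohomot_trans} :
à un tel morphisme $K$ on associe l'application $x \mapsto \alpha_x$, où
$\alpha_x = \alpha(x)_i$ est la $(i+1)$-fl�che donnée par la troisième
composante de $K(x)$. Le calcul qui y est mené, utilisant la
proposition~\ref{prop:desc_cyl} pour identifier $\alpha(x)^\e_l$ avec
$\alpha_{s_l(x)}$ ou $\alpha_{t_l(x)}$, montre précisément que les
source et but de $\alpha_x$ sont ceux requis par la définition
d'une prétransformation oplax au paragraphe~\ref{paragr:def_trans}. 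Il
reste donc essentiellement à construire l'application réciproque et à
vérifier la bijectivité.

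\textbf{Construction de la réciproque.}
D'abord je partirais d'une prétransformation oplax $\alpha$ de $u$ vers $v$,
c'est-�-dire de la donnée, pour toute cellule $x$ de $C$, d'une
$(i+1)$-fl�che $\alpha_x$ ayant les source et but prescrits. Pour toute
$i$-fl�che $x$, je poserais
\[
  K(x) = (u(x), v(x), \alpha(x)),
\]
où $\alpha(x)$ est la famille définie par $\alpha(x)^\e_l = \alpha_{s_l(x)}$
pour $\e = 0$ et $\alpha(x)^\e_l = \alpha_{t_l(x)}$ pour $\e = 1$, et
$\alpha(x)_i = \alpha_x$. Le point à vérifier est que ce triplet est bien
une $i$-fl�che de $\HomLax(\Dn{1}, D)$ au sens du
paragraphe~\ref{paragr:desc_cyl_pol}, ce qui revient exactement aux
égalités de source et de but que satisfait $\alpha_x$ par hypothèse. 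La
compatibilité de $K$ aux sources et aux buts de $C$ résulte alors des
formules des points~(\emph{a}) et~(\emph{b}) de la
proposition~\ref{prop:desc_cyl}, combinées aux identités
$s_l(s(x)) = s_l(x)$ et $t_l(t(x)) = t_l(x)$ pour les cellules itérées.

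\textbf{Bijectivité et obstacle principal.}
Pour conclure, je vérifierais que les deux constructions sont mutuellement
inverses. Dans un sens, partant de $K$, l'application
$\paragraphe~\ref{paragr:app_cohomot_trans}$ extrait $\alpha_x = \alpha(x)_i$,
puis la reconstruction redonne $\alpha(x)^\e_l = \alpha_{s_l(x)}$ ou
$\alpha_{t_l(x)}$, ce qui coïncide avec les composantes de $K(x)$ grâce
précisément aux identités établies au
paragraphe~\ref{paragr:app_cohomot_trans}. Dans l'autre sens, partant
d'une prétransformation oplax, on retrouve trivialement la même famille.
Le point délicat n'est pas la bijectivité elle-même, qui est essentiellement
formelle, mais la vérification que la troisième composante
\emph{reconstruite} à partir des seuls $\alpha_x$ redonne bien \emph{toute}
la donnée $\alpha(x)$ du triplet $K(x)$ : il faut s'assurer qu'une
$i$-fl�che de $\HomLax(\Dn{1}, D)$ est entièrement déterminée par sa
$(i+1)$-fl�che de tête $\alpha(x)_i$ une fois connus $u(x)$ et $v(x)$ et la
contrainte de commutativité, les composantes de dimension inférieure étant
forcées par la compatibilité aux sources et buts. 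C'est exactement ce que
garantit l'identification $\alpha(x)^\e_l = \alpha_{s_l(x)}$, $\alpha_{t_l(x)}$
via la proposition~\ref{prop:desc_cyl}, de sorte que l'obstacle se résout
en invoquant cette description explicite de la \oo-cat�gorie des cylindres.
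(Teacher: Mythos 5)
Votre démonstration est correcte et suit essentiellement la même démarche que celle du texte : on construit l'application réciproque en envoyant une prétransformation oplax $\alpha$ sur le morphisme de \oo-graphes $K(x) = (u(x), v(x), \alpha(x))$ avec $\alpha(x)^0_l = \alpha_{s_l(x)}$ et $\alpha(x)^1_l = \alpha_{t_l(x)}$, puis on vérifie que les deux constructions sont mutuellement inverses. Les vérifications supplémentaires que vous détaillez (bonne définition du triplet, compatibilité aux sources et buts via la proposition~\ref{prop:desc_cyl}) sont précisément ce que le texte résume par «~on vérifie facilement~».
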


\begin{proof}
  Il suffit de construire une application inverse à l'application définie au
  paragraphe précédent. On vérifie facilement qu'on obtient un tel inverse
  en envoyant une prétransformation oplax $\alpha$ de $u$ vers $v$ sur le
  morphisme de \oo-graphes $K : C \to \HomLax(\Dn{1}, D)$ défini par, pour
  $x$ une $i$-flèche de $C$, pour $i \ge 0$,
  \[
      K(x) = (u(x), v(x), \alpha(x)),
  \]
  où, pour $0 \le l \le i$, on a posé
  \[
    \alpha(x)^0_l = \alpha_{s_l(x)}
    \quadet
    \alpha(x)^1_l = \alpha_{t_l(x)}. \qedhere
  \]
\end{proof}

\begin{prop}\label{prop:cotrans_abs}
  Les \oo-foncteurs $K : C \to \HomLax(\Dn{1}, D)$ comme dans le
  paragraphe~\ref{paragr:cohomot_Gray} sont en bijection avec les
  transformations oplax de $u$ vers $v$ \forlang{via} l'application définie
  au paragraphe~\ref{paragr:app_cohomot_trans}.
\end{prop}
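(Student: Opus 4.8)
The statement to prove is Proposition~\ref{prop:cotrans_abs}, which asserts that $\infty$-functors $K : C \to \HomLax(\Dn{1}, D)$ satisfying the commutativity condition of paragraph~\ref{paragr:cohomot_Gray} are in bijection with oplax transformations from $u$ to $v$, via the assignment described in paragraph~\ref{paragr:app_cohomot_trans}.

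Le plan est de d\'eduire cette proposition de la proposition pr\'ec\'edente, qui \'etablit d\'ej\`a que l'application du paragraphe~\ref{paragr:app_cohomot_trans} r\'ealise une bijection entre les morphismes de \oo-graphes $K : C \to \HomLax(\Dn{1}, D)$ rendant commutatif le diagramme du paragraphe~\ref{paragr:cohomot_Gray} et les pr\'etransformations oplax de $u$ vers $v$. Comme un \oo-foncteur n'est autre qu'un morphisme de \oo-graphes compatible aux identit\'es et aux compositions, et comme une transformation oplax n'est autre qu'une pr\'etransformation oplax v\'erifiant les axiomes (a) et (b) du paragraphe~\ref{paragr:def_trans}, il suffira de montrer que, \`a travers cette bijection, $K$ est un \oo-foncteur si et seulement si la pr\'etransformation oplax associ\'ee $\alpha$ est une transformation oplax. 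Tout le travail consistera \`a comparer terme \`a terme, \`a l'aide de la description explicite de la structure de \oo-cat\'egorie de $\HomLax(\Dn{1}, D)$ obtenue dans la proposition~\ref{prop:desc_cyl}, les cellules $K(\id{x})$ et $\id{K(x)}$ d'une part, et $K(x \comp_j y)$ et $K(x) \comp_j K(y)$ d'autre part.

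On \'ecrira $K(x) = (u(x), v(x), \alpha(x))$, o\`u, pour une $i$-cellule $x$, on a $\alpha(x)^\e_k = \alpha_{s_k(x)}$ si $\e = 0$ et $\alpha(x)^\e_k = \alpha_{t_k(x)}$ si $\e = 1$ pour $k < i$, et $\alpha(x)_i = \alpha_x$. Traitons d'abord les identit\'es. Les deux premi\`eres composantes de $K(\id{x})$ et de $\id{K(x)}$ co\"incident puisque $u$ et $v$ sont des \oo-foncteurs et par la partie (c) de la proposition~\ref{prop:desc_cyl} ; les composantes inf\'erieures co\"incident automatiquement car $s_k(\id{x}) = s_k(x)$ et $t_k(\id{x}) = t_k(x)$ pour $k < i$, tandis que $s_i(\id{x}) = t_i(\id{x}) = x$. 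La seule \'egalit\'e non automatique est donc celle des composantes de t\^ete, qui, toujours d'apr\`es la partie (c), s'\'ecrit exactement $\alpha_{\id{x}} = \id{\alpha_x}$, c'est-\`a-dire l'axiome (a). Cette partie est ais\'ee dans les deux sens.

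Pour les compositions, la comparaison des composantes de t\^ete de $K(x \comp_j y)$ et $K(x) \comp_j K(y)$ redonne, via la formule de la partie (d) de la proposition~\ref{prop:desc_cyl} et le dictionnaire $c = u(x)$, $d = v(x)$, $e = u(y)$, $f = v(y)$, $\alpha^\e_k = \alpha_{s_k/t_k}$, pr\'ecis\'ement l'axiome (b) du paragraphe~\ref{paragr:def_trans} : le sens direct (si $K$ est un \oo-foncteur, alors $\alpha$ est une transformation oplax) se lira donc imm\'ediatement sur cette composante de t\^ete. Le point d\'elicat sera le sens r\'eciproque, car il faudra contr\^oler \'egalement les composantes $\alpha(x \comp_j y)^\e_k$ pour $j < k < i$ ; celles-ci valent $\alpha_{s_k(x \comp_j y)} = \alpha_{s_k(x) \comp_j s_k(y)}$ (et de m\^eme pour les buts), et leur co\"incidence avec la formule de la partie (d) r\'esulte de l'axiome (b) appliqu\'e aux couples $(s_k(x), s_k(y))$ et $(t_k(x), t_k(y))$ en dimension $k$. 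L'\'equivalence se d\'emontre ainsi globalement, en invoquant l'axiome (b) \`a toutes les dimensions. L'essentiel du travail sera donc la v\'erification que la formule de composition de la proposition~\ref{prop:desc_cyl} et celle du paragraphe~\ref{paragr:def_trans} sont litt\'eralement identiques apr\`es traduction, le reste se r\'eduisant \`a une v\'erification routini\`ere sur les sources et buts it\'er\'es.
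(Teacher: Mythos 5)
Your proposal is correct and follows essentially the same route as the paper's own proof: both reduce to the preceding proposition on pretransformations and then check, via the explicit description of $\HomLax(\Dn{1}, D)$ given by the cylinder proposition, that $K$ is an \oo-functor if and only if $\alpha$ satisfies the two oplax axioms, with the delicate converse direction handled exactly as in the paper by applying the composition axiom in dimension $k$ to the iterated sources and targets $s_k(x), s_k(y)$ and $t_k(x), t_k(y)$ for $j < k < i$. Nothing essential is missing.
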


\begin{proof}
  Soient $K : C \to \HomLax(\Dn{1}, D)$ un tel \oo-foncteur, $\alpha(x)$
  la troisième composante de $K(x)$ pour $x$ une cellule de $C$ et
  $\alpha$ la prétransformation oplax associée. Commençons par vérifier que
  $\alpha$ est une transformation oplax. Si $x$ est une $i$-flèche de $C$,
  pour un $i \ge 0$, on a, par fonctorialité de $K$,
  \[
    (u(\id{x}), v(\id{x}), \alpha(\id{x}))
    =
    \id{(u(x), v(x), \alpha(x))}
  \]
  et donc, en vertu de la proposition~\ref{prop:desc_cyl},
  \begin{equation}\label{eq:id}\tag{$\star_1$}
    \alpha(\id{x})_{i+1} = \id{\alpha(x)_i},
  \end{equation}
  ou encore
  \[
    \alpha_{\id{x}} = \id{\alpha_x}.
  \]
  Si maintenant $x$ et $y$ sont deux $i$-flèches $j$-composables de $C$,
  pour $i > j \ge 0$, alors on a, toujours par fonctorialité de $K$,
  \[
    (u(x \comp_j y), v(x \comp_j y), \alpha(x \comp_j y))
    =
    (u(x), v(x), \alpha(x)) \comp_j (u(y), v(y), \alpha(y)).
  \]
  En vertu de la proposition~\ref{prop:desc_cyl}, on a donc
  \begin{equation}\label{eq:comp}\tag{$\star_2$}
    \begin{split}
      \alpha(x \comp_j y)_i & =
            \big(t_{j+1}(v(x)) \comp_0 \alpha(y)^0_0 \comp_1 \cdots \comp_{j-1}
            \alpha(y)^0_{j-1} \comp_j \alpha(y)_i\big) \\*
          & \phantom{=1} \quad
            \comp_{j+1}
            \big(\alpha(x)_i \comp_j \alpha(x)^1_{j-1} \comp_{j-1} \cdots
            \comp_1 \alpha(x)^1_0 \comp_0 s_{j+1}(u(y))\big),
    \end{split}
  \end{equation}
  ou encore
  \[
    \begin{split}
      \alpha_{x \comp_j y} & =
      \left(v(t_{j+1}(x)) \comp_0 \alpha_{s_0(y)} \comp_1 \cdots
      \comp_{j-1} \alpha_{s_{j-1}(y)} \comp_j \alpha_y\right) \\*
      & \phantom{=1} \qquad
      \comp_{j+1} \left(\alpha_x \comp_j \alpha_{t_{j-1}(x)} \comp_{j-1}
      \cdots \comp_1 \alpha_{t_0(x)} \comp_0 u(s_{j+1}(y))\right),
    \end{split}
  \]
  ce qui montre que $\alpha$ est bien une transformation oplax.

  Pour conclure, en vertu de la proposition précédente, il nous suffit de
  montrer que si $K : C \to \HomLax(\Dn{1}, D)$ est un morphisme de
  \oo-graphes comme dans le paragraphe~\ref{paragr:app_cohomot_trans} qui
  satisfait de plus aux relations~\eqref{eq:id} et~\eqref{eq:comp}, alors $K$ est un
  \oo-foncteur. Si $x$ est une $i$-flèche, pour un $i \ge 0$, on a, pour $k$
  tel que $0 \le k \le i$,
  \[
    \alpha(\id{x})^0_k =
    \alpha(s_k(\id{x}))_k = \alpha(s_k(x))_k = \alpha(x)^0_k
  \]
  et de même, on a $\alpha(\id{x})^1_k = \alpha(x)^1_k$, ce qui, en vertu de
  la relation~\eqref{eq:id}, montre la compatibilité aux identités (voir la
  proposition~\ref{prop:desc_cyl}).
  Vérifions maintenant la compatibilité aux compositions. Soient donc $x$ et $y$
  deux $i$-flèches $j$-composables de $C$, pour~$i > j \ge 0$. Un calcul
  similaire à celui qu'on vient de mener montre que si $0 \le k \le j$, on
  a
  \[
    \alpha(x \comp_j y)^0_k = \alpha(y)^0_k
    \quadet
    \alpha(x \comp_j y)^1_k = \alpha(x)^1_k.
  \]
  Soit maintenant $k$ tel que $j < k < i$. En utilisant la
  relation~\eqref{eq:comp}, on obtient
  \[
    \begin{split}
      \MoveEqLeft
      \alpha(x \comp_j y)^0_k \\
      & = \alpha(s_k(x \comp_j y))_k
      = \alpha(s_k(x) \comp_j s_k(y))_k \\
      & =
      \big(t_{j+1}(v(s_k(x)) \comp_0 \alpha(s_k(y))^0_0 \comp_1 \cdots
      \comp_{j-1} \alpha(s_k(y))^0_{j-1} \comp_j \alpha(s_k(y))_k\big) \\*
      & \phantom{=1} \quad
      \comp_{j+1}
      \big(\alpha(s_k(x))_k \comp_j \alpha(s_k(x))^1_{j-1} \comp_{j-1} \cdots
      \comp_1 \alpha(s_k(x))^1_0 \comp_0 s_{j+1}(u(s_k(y)))\big) \\
      & =
      \big(t_{j+1}s_k(v(x)) \comp_0 \alpha(y)^0_0 \comp_1 \cdots
      \comp_{j-1} \alpha(y)^0_{j-1} \comp_j \alpha(y)^0_k\big) \\*
      & \phantom{=1} \quad
      \comp_{j+1}
      \big(\alpha(x)^0_k \comp_j \alpha(x)^1_{j-1} \comp_{j-1} \cdots
      \comp_1 \alpha(x)^1_0 \comp_0 s_{j+1}(u(y))\big).
    \end{split}
  \]
  Or, $t_{j+1}s_k(v(x))$ vaut $s_{j+1}(v(x))$ si $k = j + 1$ et
  $t_{j+1}(v(x))$ sinon, et on obtient donc bien la valeur de $\alpha(x
  \comp_j y)^0_k$ attendue (voir la proposition~\ref{prop:desc_cyl}). Un
  calcul similaire montre l'assertion analogue pour $\alpha(x \comp_j
  y)^1_j$, ce qui achève de montrer que $K$ est un \oo-foncteur.
\end{proof}

\begin{coro}\label{coro:trans_oplax_abs}
  Les \oo-foncteurs $H : \Dn{1} \otimes C \to D$
  rendant commutatif le diagramme
  \[
    \xymatrix@C=3pc{
    C \ar[dr]^u \ar[d]_{\sigma_1 \otimes C} \\
    \Dn{1} \otimes C \ar[r]^H & D \\
    C \ar[ur]_v \ar[u]^{\tau_1 \otimes C} \\
    }
  \]
  sont en bijection canonique avec les transformations oplax de $u$ vers
  $v$.
\end{coro}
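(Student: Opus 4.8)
Corollaire (statement to prove): The $\infty$-functors $H : \Dn{1} \otimes C \to D$ making the displayed triangle commute are in canonical bijection with the oplax transformations from $u$ to $v$.

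Let me sketch how I would prove this corollary.

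The plan is to deduce the corollary formally from the preceding proposition~\ref{prop:cotrans_abs} by passing through the adjunction defining $\HomLax$. The starting point is the biclosedness of the Gray tensor product (th�or�me~\ref{thm:produit_tens}), which yields, via the definition recalled in the paragraph~\ref{paragr:def_HomOpLax}, a natural bijection
\[
  \Hom_{\ooCat}(\Dn{1} \otimes C, D) \simeq \Hom_{\ooCat}(C, \HomLax(\Dn{1}, D)).
\]
Under this bijection an $\infty$-functor $H : \Dn{1} \otimes C \to D$ corresponds to an $\infty$-functor $K : C \to \HomLax(\Dn{1}, D)$, and I would first verify that the triangle-commutativity condition imposed on $H$ in the statement translates exactly into the diagram-commutativity condition imposed on $K$ in the paragraph~\ref{paragr:cohomot_Gray}. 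This is the content of that paragraph: precomposing $H$ with $\sigma_1 \otimes C$ and $\tau_1 \otimes C$ corresponds, after transposition, to postcomposing $K$ with $\pi^0 = \HomLax(\sigma_1, D)$ and $\pi^1 = \HomLax(\tau_1, D)$, so the requirement $H \circ (\sigma_1 \otimes C) = u$ and $H \circ (\tau_1 \otimes C) = v$ becomes precisely $\pi^0 \circ K = u$ and $\pi^1 \circ K = v$.

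With this translation in hand, the second step is simply to invoke proposition~\ref{prop:cotrans_abs}, which establishes that the $\infty$-functors $K : C \to \HomLax(\Dn{1}, D)$ satisfying exactly these two boundary conditions are in canonical bijection with the oplax transformations from $u$ to $v$. Composing the adjunction bijection with the bijection of proposition~\ref{prop:cotrans_abs} then gives the desired canonical bijection between the $\infty$-functors $H$ making the triangle commute and the oplax transformations from $u$ to $v$. I would also note that the naturality of the adjunction isomorphism ensures the resulting correspondence is indeed canonical, as claimed.

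Since both hard ingredients are already available, there is no genuine obstacle here; the corollary is a purely formal consequence. The only point requiring a moment of care is the identification of the boundary conditions under adjunction, i.e.\ checking that the two legs $\sigma_1 \otimes C$ and $\tau_1 \otimes C$ transpose to $\pi^0$ and $\pi^1$. This follows from the functoriality of the adjunction in the first variable together with the description of the action of $\pi^0$ and $\pi^1$ on cells given at the end of paragraph~\ref{paragr:cohomot_Gray} (namely $\pi^0(c, d, \alpha) = c$ and $\pi^1(c, d, \alpha) = d$), and I would present it as a one-line remark rather than a separate computation.
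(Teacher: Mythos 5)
Votre preuve est correcte et suit exactement la m�me d�marche que celle du texte : le corollaire y est d�duit imm�diatement de la proposition~\ref{prop:cotrans_abs} et du paragraphe~\ref{paragr:cohomot_Gray}, qui identifie par adjonction les \oo-foncteurs $H : \Dn{1} \otimes C \to D$ v�rifiant les conditions au bord avec les \oo-foncteurs $K : C \to \HomLax(\Dn{1}, D)$ au-dessus de $(u,v)$. Votre r�daction ne fait qu'expliciter ce que le texte laisse implicite, notamment la transposition des conditions $H \circ (\sigma_1 \otimes C) = u$ et $H \circ (\tau_1 \otimes C) = v$ en $\pi^0 \circ K = u$ et $\pi^1 \circ K = v$.
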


\begin{proof}
  Cela résulte immédiatement de la proposition précédente et du
  paragraphe~\ref{paragr:cohomot_Gray}.
\end{proof}

\begin{coro}\label{coro:trans_lax_abs}
  Les \oo-foncteurs $H : C \otimes \Dn{1} \to D$
  rendant commutatif le diagramme
  \[
    \xymatrix@C=3pc{
    C \ar[dr]^u \ar[d]_{C \otimes \sigma_1} \\
    C \otimes \Dn{1} \ar[r]^H & D \\
    C \ar[ur]_v \ar[u]^{C \otimes \tau_1} \\
    }
  \]
  sont en bijection canonique avec les transformations lax de $u$ vers
  $v$.
\end{coro}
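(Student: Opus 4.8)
Il s'agit de démontrer le corollaire~\ref{coro:trans_lax_abs}, à savoir que les \oo-foncteurs $H : C \otimes \Dn{1} \to D$ faisant commuter le diagramme avec les deux inclusions $C \otimes \sigma_1$ et $C \otimes \tau_1$ sont en bijection canonique avec les transformations lax de $u$ vers $v$. C'est l'énoncé dual, au sens des transformations lax, du corollaire~\ref{coro:trans_oplax_abs} déjà établi pour les transformations oplax via le produit tensoriel à gauche.

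**Stratégie.** Le plan est de déduire ce corollaire du corollaire~\ref{coro:trans_oplax_abs} par dualité, en utilisant la définition même des transformations lax. Rappelons (voir le paragraphe définissant les transformations lax) qu'une transformation lax de $u$ vers $v$ est \emph{par définition} une transformation oplax de $u^\co : C^\co \to D^\co$ vers $v^\co : C^\co \to D^\co$. D'abord, j'appliquerais le corollaire~\ref{coro:trans_oplax_abs} aux \oo-foncteurs $u^\co, v^\co : C^\co \to D^\co$ : les transformations oplax de $u^\co$ vers $v^\co$, c'est-à-dire les transformations lax de $u$ vers $v$, sont en bijection canonique avec les \oo-foncteurs $H' : \Dn{1} \otimes C^\co \to D^\co$ faisant commuter le diagramme correspondant avec $\sigma_1 \otimes C^\co$ et $\tau_1 \otimes C^\co$.

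**Étapes clés.** Ensuite, j'utiliserais la compatibilité du produit tensoriel de Gray à la dualité paire, établie dans la proposition~\ref{prop:dual_tens} : on dispose d'un isomorphisme naturel canonique $(A \otimes B)^\co \simeq B^\co \otimes A^\co$. En appliquant cet isomorphisme à $A = \Dn{1}$ et $B = C^\co$, on obtient $(\Dn{1} \otimes C^\co)^\co \simeq (C^\co)^\co \otimes (\Dn{1})^\co = C \otimes \Dn{1}^\co$. Or $\Dn{1}$ est une $1$-catégorie, donc $\Dn{1}^\co = \Dn{1}$ (la dualité paire n'agit pas sur les $1$-flèches). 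Ainsi la donnée d'un \oo-foncteur $H' : \Dn{1} \otimes C^\co \to D^\co$ est équivalente, par passage au dual pair (qui est une involution, donc une équivalence de catégories bijective sur les \oo-foncteurs), à celle d'un \oo-foncteur $H : (\Dn{1} \otimes C^\co)^\co \to D$, soit $H : C \otimes \Dn{1} \to D$. Il reste à vérifier que, sous ces identifications, les deux conditions de commutativité se correspondent : le \oo-foncteur $\sigma_1 \otimes C^\co$ devient, après dualité et via l'isomorphisme de la proposition~\ref{prop:dual_tens}, le \oo-foncteur $C \otimes \sigma_1$ (puisque $\sigma_1^\co = \sigma_1$ dans $\nCat{1}$), et de même pour $\tau_1$ ; et les \oo-foncteurs de comparaison $u$ et $v$ sont inchangés puisque $(u^\co)^\co = u$.

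**Principal obstacle.** La partie purement formelle (dualisation, involutivité) est immédiate ; le point délicat est de vérifier soigneusement que l'isomorphisme de naturalité de la proposition~\ref{prop:dual_tens} échange bien les morphismes structuraux $\sigma_1 \otimes \var$ et $\var \otimes \sigma_1$ de la bonne manière, c'est-à-dire que le carré de compatibilité reliant la dualité, l'inclusion $\sigma_1 \otimes C^\co$ et l'inclusion $C \otimes \sigma_1$ commute effectivement. Ceci repose sur la naturalité de l'isomorphisme $(A \otimes B)^\co \simeq B^\co \otimes A^\co$ appliquée au morphisme $\sigma_1 : \Dn{0} \to \Dn{1}$, combinée à l'identification $\Dn{0}^\co = \Dn{0}$, $\Dn{1}^\co = \Dn{1}$ et à la contrainte d'unité du produit tensoriel. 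Une fois cette compatibilité acquise, la bijection cherchée s'obtient en composant la bijection du corollaire~\ref{coro:trans_oplax_abs} avec l'involution de dualité, d'où le résultat. J'indiquerais simplement que la preuve est une adaptation immédiate, par dualité, de celle du corollaire~\ref{coro:trans_oplax_abs}, en invoquant les propositions~\ref{prop:dual_tens} et la définition des transformations lax.
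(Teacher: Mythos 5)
Votre preuve est correcte et suit essentiellement la même démarche que celle du texte : on déduit l'énoncé du corollaire~\ref{coro:trans_oplax_abs} en appliquant la dualité paire, via la proposition~\ref{prop:dual_tens} et les égalités $(\Dn{1})^\co = \Dn{1}$, $(\sigma_1)^\co = \sigma_1$, $(\tau_1)^\co = \tau_1$, puis en invoquant la définition des transformations lax comme transformations oplax des duaux pairs. La seule différence, purement cosmétique, est le sens dans lequel vous parcourez l'involution $\var^\co$ (du côté oplax vers le côté lax, alors que le texte dualise directement le diagramme de l'énoncé).
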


\begin{proof}
  En vertu de la proposition~\ref{prop:dual_tens}, en appliquant la dualité
  paire au diagramme de l'énoncé on obtient le diagramme
  \[
    \xymatrix@C=3pc{
    C^\co \ar[dr]^{u^\co} \ar[d]_{(\sigma_1)^\co \otimes C^\co} \\
    (\Dn{1})^\co \otimes C^\co \ar[r]^{H^\co} & D^\co \\
    C^\co \ar[ur]_{v^\co} \ar[u]^{(\tau_1)^\co \otimes C^\co} \\
    }
  \]
  et, puisque $(\Dn{1})^\co = \Dn{1}$, $(\sigma_1)^\co = \sigma_1$ et
  $(\tau_1)^\co = \tau_1$, ce diagramme n'est autre que
  \[
    \xymatrix@C=3pc{
    C^\co \ar[dr]^{u^\co} \ar[d]_{\sigma_1 \otimes C^\co} \\
    \Dn{1} \otimes C^\co \ar[r]^{H^\co} & D^\co \\
    C^\co \ar[ur]_{v^\co} \ar[u]^{\tau_1 \otimes C^\co} & \pbox{.} \\
    }
  \]
  Le \oo-foncteur $H^\co$ définit donc, en vertu du corollaire précédent,
  une transformation oplax de $u^\co$ vers $v^\co$, c'est-à-dire une
  transformation lax de $u$ vers $v$ (voir le
  paragraphe~\ref{paragr:def_trans}).
\end{proof}

\begin{rem}\label{rem:not_HomOpLax}
  Les deux corollaires précédents justifient les notations
  $\HomOpLax(C, D)$ et $\HomLax(C, D)$. En effet, les $1$-flèches de ces
  \oo-catégories sont respectivement les \oo-foncteurs $\Dn{1} \otimes C \to
  D$ et les \oo-foncteurs $C \otimes \Dn{1} \to D$ (voir le
  paragraphe~\ref{paragr:def_HomOpLax}) et correspondent donc
  respectivement, en vertu de ces corollaires, aux transformations oplax et
  aux transformations lax entre \oo-foncteurs de $C$ vers~$D$
  (voir le paragraphe~\ref{paragr:HomOpLax_ob_fl} pour plus de
  détails).
\end{rem}

\begin{rem}
  On peut facilement déduire la remarque~\ref{rem:dual_trans} des
  corollaires précédents en utilisant les dualités de $\ooCat$ (et notamment
  la proposition~\ref{prop:dual_tens}).
\end{rem}

\begin{prop}
  Soit $\alpha$ une transformation oplax de $u$ vers $v$. Notons
  \hbox{$H : \Dn{1} \otimes C \to D$} le \oo-foncteur correspondant. Pour
  toute $i$-flèche $x$ de $C$, pour un~$i \ge 0$, la $(i+1)$-flèche
  $\alpha_x$ est l'image par le \oo-foncteur
  \[
    \Dn{1} \otimes \Dn{i} \xto{\Dn{1} \otimes x} \Dn{1} \otimes
    C \xto{H} D
  \]
  de l'unique $(i+1)$-flèche non triviale de~$\Dn{1} \otimes \Dn{i}$.
\end{prop}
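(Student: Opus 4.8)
The plan is to unwind the definitions and the adjunction chain we have just established, reducing the statement to an identity about the unique non-trivial cell of $\Dn{1} \otimes \Dn{i}$. Recall that by Corollary~\ref{coro:trans_oplax_abs}, the transformation oplax $\alpha$ corresponds to the $\infty$-functor $H : \Dn{1} \otimes C \to D$ making the relevant triangle commute, and this correspondence factors through the $\infty$-functor $K : C \to \HomLax(\Dn{1}, D)$ of the proof of Proposition~\ref{prop:cotrans_abs}. The cell $\alpha_x$ is extracted from $K(x) = (u(x), v(x), \alpha(x))$ as the top cell $\alpha(x)_i = \alpha_x$ of the $i$-cell $K(x)$ of $\HomLax(\Dn{1}, D)$.

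First I would identify the unique non-trivial $(i+1)$-cell of $\Dn{1} \otimes \Dn{i}$. By the description at the start of this subsection, $\Dn{1} \otimes \Dn{i} \simeq \nu(\lambda(\Dn{1}) \otimes \lambda(\Dn{i}))$ is a strong Steiner $\infty$-category whose basis includes $a \otimes x_i$, where $a$ is the principal cell of $\Dn{1}$ and $x_i$ that of $\Dn{i}$; this is the unique basis element of degree $i+1$, hence (being non-trivial) the unique non-trivial $(i+1)$-cell is the atom $\atom{a \otimes x_i}$. The claim then reads
\[
  \alpha_x = H\big((\Dn{1} \otimes x)(\atom{a \otimes x_i})\big).
\]

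Next I would compute the right-hand side by passing through the adjunction. The $\infty$-functor $\Dn{1} \otimes x : \Dn{1} \otimes \Dn{i} \to \Dn{1} \otimes C$ is, under the transpose bijection $\Hom(\Dn{1} \otimes C, D) \simeq \Hom(C, \HomLax(\Dn{1}, D))$, exactly the operation that pulls back $K$ along $x : \Dn{i} \to C$; that is, $H \circ (\Dn{1} \otimes x)$ transposes to $K \circ x$, which sends the principal cell of $\Dn{i}$ to the $i$-cell $K(x) = (u(x), v(x), \alpha(x))$ of $\HomLax(\Dn{1}, D)$. Unwinding the definition of $\HomLax(\Dn{1}, D)_i = \Hom(\Dn{1} \otimes \Dn{i}, D)$ from the paragraph~\ref{paragr:def_HomOpLax}, the $i$-cell $(u(x), v(x), \alpha(x))$ \emph{is} by construction the $\infty$-functor $\Dn{1} \otimes \Dn{i} \to D$ given by $H \circ (\Dn{1} \otimes x)$, and its top component $\alpha(x)_i = \alpha_x$ is precisely the image of $\atom{a \otimes x_i}$ under that $\infty$-functor. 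This closes the argument.

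The main obstacle will be bookkeeping rather than conceptual: one must verify that the identification in Proposition~\ref{prop:desc_cyl_pol} of an $i$-cell $(c, d, \alpha)$ of $\HomLax(\Dn{1}, D)$ with an $\infty$-functor $\Dn{1} \otimes \Dn{i} \to D$ sends the distinguished top datum $\alpha_i$ to the value on $\atom{a \otimes x_i}$, and that this is compatible with the transpose of $\Dn{1} \otimes x$. Both are immediate from the way the bijection of Proposition~\ref{prop:desc_cyl_pol} was set up (it reads off $\alpha^\e_k = h(\atom{a \otimes x^\e_k})$, so in particular $\alpha_i = h(\atom{a \otimes x_i})$), so the proof reduces to citing that bijection together with the naturality of the adjunction transpose in the cell $x$. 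I expect the whole argument to be short once these identifications are made explicit.
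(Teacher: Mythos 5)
Your proposal is correct and follows essentially the same route as the paper's proof: pass to the transpose $K : C \to \HomLax(\Dn{1}, D)$, use Proposition~\ref{prop:cotrans_abs} to identify $\alpha_x = \alpha(x)_i$, observe that by naturality of the adjunction the $i$-cell $K(x) = K \circ x$ corresponds to $H \circ (\Dn{1} \otimes x)$, and read off the top datum via Proposition~\ref{prop:desc_cyl_pol}. Your explicit identification of the unique non-trivial $(i+1)$-cell with the atom $\atom{a \otimes x_i}$ is a minor elaboration that the paper leaves implicit in its appeal to that same proposition.
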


\begin{proof}
  Notons $K : C \to \HomLax(\Dn{1}, D)$ le \oo-foncteur correspondant à~$H$
  par adjonction.
  En vertu de la proposition~\ref{prop:cotrans_abs}, en notant~$\alpha(x)$
  la troisième composante de $K(x)$ (voir le
  paragraphe~\ref{paragr:desc_cyl_pol}), on a $\alpha_x = \alpha(x)_i$. Or,
  par adjonction, la $i$-flèche $K(x)$
  \[ \Dn{i} \xto{x} C \xto{K} \HomLax(\Dn{1}, D) \]
  correspond au composé
  \[
    \Dn{1} \otimes \Dn{i} \xto{\Dn{1} \otimes x} \Dn{1} \otimes
    C \xto{H} D
  \]
  et, en vertu de la proposition~\ref{prop:desc_cyl_pol}, la $(i+1)$-flèche
  $\alpha(x)_i$ est l'image par ce \oo-foncteur de l'unique $(i+1)$-flèche
  non triviale de $\Dn{1} \otimes \Dn{i}$, d'où le résultat.
\end{proof}

\begin{prop}\label{prop:trans_sesqui_abs}
  Soient $v_0, v_1 : C \to D$ deux \oo-foncteurs et $\alpha$ une
  transformation oplax de $v_0$ vers $v_1$. Notons $H : \Dn{1} \otimes C \to
  D$ le \oo-foncteur correspondant à~$\alpha$.
  \begin{enumerate}
    \item Pour tout \oo-foncteur $u : B \to C$, la transformation oplax
      $\alpha \comp u$ \noemph{(voir le
      paragraphe~\ref{paragr:def_trans_comp})} correspond au \oo-foncteur
      \[
        \Dn{1} \otimes B
        \xto{\Dn{1} \otimes u} \Dn{1} \otimes C \xto{H} D.
      \]
    \item Pour tout \oo-foncteur $w : D \to E$, la transformation oplax
     $w \comp \alpha$ \noemph{(voir également le
     paragraphe~\ref{paragr:def_trans_comp})} correspond au \oo-foncteur
      \[
        \Dn{1} \otimes C \xto{H} D \xto{w} E.
      \]
  \end{enumerate}
\end{prop}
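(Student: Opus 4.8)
Le plan est de d�duire les deux assertions de la proposition pr�c�dente, qui caract�rise, pour toute $i$-fl�che $x$ de $C$, la cellule $\alpha_x$ comme l'image de l'unique $(i+1)$-fl�che non triviale de $\Dn{1} \otimes \Dn{i}$ par le \oo-foncteur $H \op (\Dn{1} \otimes x)$, o� l'on voit $x$ comme un \oo-foncteur $\Dn{i} \to C$ gr�ce au paragraphe~\ref{paragr:def_disque}. Je commencerais par observer que, en vertu du corollaire~\ref{coro:trans_oplax_abs}, pour montrer qu'une transformation oplax donn�e correspond � un \oo-foncteur donn�, il suffit de v�rifier que la transformation oplax que la bijection associe � ce \oo-foncteur co�ncide avec elle. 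Comme une transformation oplax est enti�rement d�termin�e par la famille de ses composantes, tout se ram�nera donc � une v�rification cellule par cellule.

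Pour l'assertion~(a), je poserais $H' = H \op (\Dn{1} \otimes u) : \Dn{1} \otimes B \to D$ et noterais $\beta$ la transformation oplax correspondante. Pour toute $i$-fl�che $y$ de $B$, la proposition pr�c�dente donne que $\beta_y$ est l'image de l'unique $(i+1)$-fl�che non triviale de $\Dn{1} \otimes \Dn{i}$ par $H' \op (\Dn{1} \otimes y)$. La fonctorialit� du produit tensoriel fournit alors
\[
  H' \op (\Dn{1} \otimes y)
  = H \op (\Dn{1} \otimes u) \op (\Dn{1} \otimes y)
  = H \op \big(\Dn{1} \otimes (u \op y)\big),
\]
et comme $u \op y : \Dn{i} \to C$ n'est autre que la $i$-fl�che $u(y)$ de $C$, une seconde application de la proposition pr�c�dente (cette fois � $H$ et � la cellule $u(y)$) donne $\beta_y = \alpha_{u(y)} = (\alpha \comp u)_y$ (voir le paragraphe~\ref{paragr:def_trans_comp}), d'o� $\beta = \alpha \comp u$.

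L'assertion~(b) se traite de mani�re analogue et encore plus directement : en posant $H'' = w \op H : \Dn{1} \otimes C \to E$ et en notant $\gamma$ la transformation oplax correspondante, la proposition pr�c�dente donne que $\gamma_x$ est l'image de l'unique $(i+1)$-fl�che non triviale de $\Dn{1} \otimes \Dn{i}$ par $H'' \op (\Dn{1} \otimes x) = w \op \big(H \op (\Dn{1} \otimes x)\big)$ ; or l'image par $H \op (\Dn{1} \otimes x)$ est pr�cis�ment $\alpha_x$, toujours par la proposition pr�c�dente, de sorte que $\gamma_x = w(\alpha_x) = (w \comp \alpha)_x$, et donc $\gamma = w \comp \alpha$. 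Ces deux arguments ne reposant que sur la proposition pr�c�dente et sur la fonctorialit� du produit tensoriel en chaque variable, aucune �tape ne devrait pr�senter de difficult� s�rieuse ; le seul point demandant un minimum de soin est l'identification de la composition $u \op y$ des \oo-foncteurs avec la cellule $u(y)$, qui rel�ve simplement de la d�finition des cellules comme \oo-foncteurs de source un disque.
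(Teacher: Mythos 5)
Your proof is correct, but it follows a slightly different route than the paper's. The paper argues on the adjoint side: it transposes $H$ to $K : C \to \HomLax(\Dn{1}, D)$, notes that $H \op (\Dn{1} \otimes u)$ and $w \op H$ transpose respectively to $K \op u$ and $\HomLax(\Dn{1}, w) \op K$, and then reads off the components of the associated transformations directly from the explicit description of the bijection of la proposition~\ref{prop:cotrans_abs} (via le paragraphe~\ref{paragr:app_cohomot_trans}), obtaining $\beta_x = \alpha_{u(x)}$ and $\gamma_x = w(\alpha_x)$ at once. You instead stay on the tensor side and invoke the immediately preceding proposition (the characterization of $\alpha_x$ as the image of the unique non-trivial $(i+1)$-cell of $\Dn{1} \otimes \Dn{i}$), combined with functoriality of $\otimes$ and the corepresentability of $\Fl_i$ by $\Dn{i}$ to identify $\Dn{1} \otimes (u \op y)$ with $\Dn{1}\otimes u(y)$. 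Both arguments are short and sound; yours has the advantage of treating the correspondence as a black box (only its universal-cell characterization is used, with no need to re-enter the description of the cylinder \oo-cat�gorie $\HomLax(\Dn{1}, D)$), while the paper's unwinding of the adjunction makes the component formulas immediate and handles both assertions with the same two-line computation. One small point you pass over in silence, as does the paper: to speak of \emph{the} transformation corresponding to $H \op (\Dn{1} \otimes u)$ (resp.\ $w \op H$), one should note that this \oo-foncteur does make the boundary diagram of le corollaire~\ref{coro:trans_oplax_abs} commute with $v_0 u$ and $v_1 u$ (resp.\ $w v_0$ and $w v_1$) on the sides; this is an immediate naturality check, but it is what legitimizes applying the bijection at all.
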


\begin{proof}
  Commençons par démontrer la première assertion. Par adjonction, en notant
  $K : C \to \HomLax(\Dn{1}, C)$ le transposé de $H$, cela revient à
  montrer que la transformation oplax $\alpha \comp u$ correspond, dans la
  bijection de la proposition~\ref{prop:cotrans_abs}, au \oo-foncteur
  \[
    B \xto{u} C \xto{K} \HomLax(\Dn{1}, D).
  \]
  Or, par définition (voir le paragraphe~\ref{paragr:app_cohomot_trans}), si
  $x$ est une cellule de $B$, la transformation oplax $\beta$ correspondant à
  $Ku$ vérifie $\beta_x = \alpha_{u(x)}$ et on a donc bien $\beta = \alpha
  \comp u$.

  De même, établir la seconde assertion revient à montrer que la
  transformation oplax $w \comp \alpha$ correspond, dans la bijection de la
  proposition~\ref{prop:cotrans_abs}, au \oo-foncteur
  \[
    C \xto{K} \HomLax(\Dn{1}, D)
      \xto{\HomLax(\Dn{1}, w)} \HomLax(\Dn{1}, E),
  \]
  ce qui se vérifie comme ci-dessus en utilisant de nouveau le
  paragraphe~\ref{paragr:app_cohomot_trans}.
\end{proof}

\section{Composition verticale des transformations oplax}

\emph{Dans cette section, on fixe deux \oo-catégories $C$ et $D$.}

\begin{paragr}\label{paragr:HomOpLax_ob_fl}
  Considérons la \oo-catégorie $\HomOpLax(C, D)$ (voir le
  paragraphe~\ref{paragr:def_HomOpLax}). Par définition, pour $i
  \ge 0$, ses $i$-flèches sont les \oo-foncteurs $\Dn{i} \otimes C \to D$.
  En particulier, modulo l'identification $C \simeq \Dn{0} \otimes C$, ses
  objets sont les \oo-foncteurs $C \to D$ et ses $1$-flèches les
  \oo-foncteurs $\Dn{1} \otimes C \to D$, les objets source et but étant
  obtenus par précomposition par $\sigma_1 \otimes C : C \to \Dn{1} \otimes C$
  et $\tau_1 \otimes C : C \to \Dn{1} \otimes C$ respectivement, en
  identifiant toujours $C$ et $\Dn{0} \otimes C$. Ainsi, en vertu du
  corollaire~\ref{coro:trans_oplax_abs}, les $1$-flèches de~$\HomOpLax(C,
  D)$ d'un \oo-foncteur $u : C \to D$ vers un \oo-foncteur $v : C \to D$
  correspondent aux transformations oplax de $u$ vers $v$.
\end{paragr}

\begin{paragr}\label{paragr:def_comp_trans}
  En vertu du paragraphe précédent, la composition des $1$-flèches de
  la \oo-catégorie $\HomOpLax(C, D)$ définit une composition des
  transformations oplax. Plus précisément, fixons $u$, $v$ et $w$ trois
  \oo-foncteurs de $C$ vers $D$. Si $\alpha$ est une transformation oplax de
  $u$ vers $v$ et $\alpha'$ une transformation oplax de $v$ vers $w$, alors
  on dispose d'une transformation oplax de $u$ vers $w$ qu'on notera \nnot{$\alpha'
  \circ \alpha$}.

  Cette composition peut se décrire de la manière suivante.
  Les transformations oplax~$\alpha$ et $\alpha'$ correspondent à des
  \oo-foncteurs $H : \Dn{1} \otimes C \to D$ et \hbox{$H' : \Dn{1} \otimes C \to
  D$}.  Par ailleurs, le \oo-foncteur $\nabla^1_0 : \Dn{1} \to \Dn{1}
  \amalg_{\Dn{0}} \Dn{1}$ du paragraphe~\ref{paragr:def_kappa_nabla} induit
  un \oo-foncteur
  \[
    \nabla^1_0 \otimes C : \Dn{1} \otimes C \longto (\Dn{1} \amalg_{\Dn{0}}
    \Dn{1}) \otimes C \simeq (\Dn{1} \otimes C) \amalg_C (\Dn{1} \otimes C).
  \]
  La transformation oplax $\alpha' \circ \alpha$ correspond alors au
  \oo-foncteur composé
  \[
    \Dn{1} \otimes C
      \xto{\nabla^1_0 \otimes C}
    (\Dn{1} \otimes C) \amalg_C (\Dn{1} \otimes C)
      \xto{(H', H)}
    D.
  \]
\end{paragr}

\begin{paragr}\label{paragr:def_id_trans_abs}
  Soit $u : C \to D$ un \oo-foncteur. On peut voir $u$ comme un objet
  de~$\HomOpLax(C, D)$ et on dispose donc d'une $1$-flèche $\id{u}$ de
  $\HomOpLax(C, D)$ qu'on verra comme une transformation oplax de $u$ vers
  $u$. Explicitement, la transformation oplax~$\id{u}$ correspond au
  \oo-foncteur
  \[ \Dn{1} \otimes C \xto{\kappa_0 \otimes C} C \xto{u} D, \]
  induit par le \oo-foncteur $\kappa_0 : \Dn{1} \to \Dn{0}$ du
  paragraphe~\ref{paragr:def_disque}, où on a encore identifié $\Dn{0}
  \otimes C$ et~$C$. Par adjonction, elle correspond également, dans la
  bijection de la proposition~\ref{prop:cotrans_abs}, au \oo-foncteur
  \[
    C \xto{\HomLax(\kappa_0, C)} \HomLax(\Dn{1}, C)
     \xto{\HomLax(\Dn{1}, u)} \HomLax(\Dn{1}, D),
  \]
  où cette fois on a identifié $\HomLax(\Dn{0}, C)$ et $C$.
\end{paragr}

On a déjà défini, au paragraphe~\ref{paragr:def_trans_id}, une
transformation oplax $\id{u}$. Dans la suite de cette section, sauf
mention du contraire, $\id{u}$ fera toujours référence à la transformation
oplax que l'on vient d'introduire. On va montrer que les deux définitions
sont équivalentes.

\begin{prop}\label{prop:desc_cone_triv}
  Fixons $i \ge 0$ et notons $a$ la cellule principale de $\Dn{1}$, $b$
  celle de $\Dn{0}$ et $x$ celle de $\Dn{i}$. Alors le \oo-foncteur
  $\kappa_0 \otimes \Dn{i} : \Dn{1} \otimes \Dn{i} \to \Dn{0} \otimes
  \Dn{i}$ est donné par
  \begin{alignat*}{3}{}
    \atom{a^\ep_0 \otimes x^\e_k} & \mapsto \atom{b \otimes x^\e_k} &&
      \qquad\text{pour $0 \le k \le i$, $\e = 0, 1$ et $\ep = 0, 1$,}\\
    \atom{a \otimes x^\e_k} & \mapsto \id{\atom{b \otimes x^\e_k}} &&
      \qquad\text{pour $0 \le k \le i$ et $\e = 0, 1$.}
  \end{alignat*}
\end{prop}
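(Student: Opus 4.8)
$\joint$ français $\cotr$ atomes $\nu$ décalée.

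The plan is to proceed exactly as for the analogous computations of the source and but maps (propositions \ref{prop:desc_tr_s_t} and \ref{prop:desc_tr_k}, as well as \ref{prop:desc_cyl_s_t} and \ref{prop:desc_cyl_k}). Since $\Dn{1} \otimes \Dn{i} \simeq \nu(\lambda(\Dn{1}) \otimes \lambda(\Dn{i}))$ is generated freely in the polygraphic sense by its atoms (théorème \ref{thm:Steiner_pol}), a \oo-functor out of it is determined by its values on the atoms, so it suffices to verify the stated formulas on the generators $\atom{a^\ep_0 \otimes x^\e_k}$ and $\atom{a \otimes x^\e_k}$, checking compatibility with sources and targets where needed.

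First I would treat the atoms $\atom{a^\ep_0 \otimes x^\e_k}$. These are the images of the atoms $\atom{x^\e_k}$ of $\nu(\lambda(\Dn{i}))$ under $\iota_1$ or $\iota_2$ (the two faces $a^0_0, a^1_0$), and $\kappa_0 \otimes \Dn{i}$ is compatible with these inclusions by naturality of the tensor product in the $\Dn{1}$-variable. Invoking lemme \ref{lemme:tens_morph_atom} together with the description of $\lambda(\kappa_0) : \lambda(\Dn{1}) \to \lambda(\Dn{0})$ from paragraphe \ref{paragr:desc_lambda_cocat} (which sends $a^\ep_0 \mapsto b$ for $\ep = 0, 1$), one obtains $\atom{a^\ep_0 \otimes x^\e_k} \mapsto \atom{b \otimes x^\e_k}$ directly, with no source-target verification required since $\lambda(\kappa_0)$ respects the atom structure on these faces.

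Next I would handle the atoms $\atom{a \otimes x^\e_k}$, which is where the content lies. Here one computes, using lemme \ref{lemme:tab_tens} (the tableau formula for the tensor product) together with the fact that $\lambda(\kappa_0)(a) = 0$ (paragraphe \ref{paragr:desc_lambda_cocat}), that the top-dimensional component vanishes:
\[
  (\kappa_0 \otimes \Dn{i})(\atom{a \otimes x^\e_k})_{k+1}
  = \lambda(\kappa_0)(a) \otimes x^\e_k
  = 0,
\]
which shows the image is a trivial cell, i.e. an identity. To identify which identity, it then suffices to compute its source. Using the explicit description of $s(\atom{a \otimes x^\e_k})$ from proposition \ref{prop:s_t_cyl} and applying $\kappa_0 \otimes \Dn{i}$ (already known on the lower atoms by the first step), the composite collapses: each factor $\atom{a \otimes x^1_l}$ for $l < k$ maps to an identity $\id{\atom{b \otimes x^1_l}}$ and the remaining factor $\atom{a^0_0 \otimes x^\e_k}$ maps to $\atom{b \otimes x^\e_k}$, so by the standard absorption of identities one gets $s\big((\kappa_0 \otimes \Dn{i})(\atom{a \otimes x^\e_k})\big) = \atom{b \otimes x^\e_k}$, hence the image is $\id{\atom{b \otimes x^\e_k}}$.

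The main obstacle I anticipate is purely bookkeeping: correctly chasing the iterated $j$-composites appearing in the source formula of proposition \ref{prop:s_t_cyl} through $\kappa_0 \otimes \Dn{i}$ and verifying that the resulting expression telescopes to a single identity. This is entirely analogous to the corresponding passage in the proof of proposition \ref{prop:desc_cyl_k} (the case of the atoms $\atom{a \otimes x_{i+1}}$ treated there), so no genuinely new difficulty arises; it is a matter of carefully tracking the compositions and applying the compatibility of $\kappa_0 \otimes \Dn{i}$ with sources and targets.
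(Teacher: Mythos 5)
Your overall strategy is sound, and its first two steps---the atoms $\atom{a^\ep_0 \otimes x^\e_k}$ via $\lambda(\kappa_0)(a^\ep_0) = b$, and the vanishing $\lambda(\kappa_0)(a) \otimes x^\e_k = 0$ showing that each $\atom{a \otimes x^\e_k}$ is sent to an identity---essentially coincide with the paper's proof (which handles the first family even more cheaply, by functoriality from $\kappa_0\sigma_1 = \id{\Dn{0}} = \kappa_0\tau_1$). The weak point is your identification step. The parenthetical ``already known on the lower atoms by the first step'' is not correct: the first step covers only the atoms $\atom{a^\ep_0 \otimes x^\e_k}$, whereas the claim $\atom{a \otimes x^1_l} \mapsto \id{\atom{b \otimes x^1_l}}$ for $l < k$ is exactly the statement being proven at a lower index; as written, your collapse argument is therefore circular. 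Two immediate repairs exist: either organize the argument as an induction on $k$ (the case $k=0$ having an empty composite), or, better, observe that you never need to know \emph{which} identities occur. Indeed, since $\kappa_0 \otimes \Dn{i}$ is a \oo-foncteur, the image of the source formula of la proposition~\ref{prop:s_t_cyl} is a well-defined composite in which each factor $(\kappa_0 \otimes \Dn{i})(\atom{a \otimes x^1_l})$ is \emph{some} identity; composability then forces each such factor to be an iterated identity of the $l$-target of the remaining cell, and the unit axioms collapse the whole composite onto $(\kappa_0 \otimes \Dn{i})(\atom{a^0_0 \otimes x^\e_k}) = \atom{b \otimes x^\e_k}$, as desired.

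For comparison, the paper short-circuits all of this bookkeeping: it computes only the degree-$k$ chain-level component of the source,
\[
  s\big((\kappa_0 \otimes \Dn{i})(\atom{a \otimes x^\e_k})\big)_k
  = (\lambda(\kappa_0) \otimes \lambda(\Dn{i}))(a^0_0 \otimes x^\e_k + a \otimes x^1_{k-1})
  = b \otimes x^\e_k
\]
using le lemme~\ref{lemme:tab_cyl}, and then invokes a rigidity fact: in $\Dn{0} \otimes \Dn{i} \simeq \Dn{i}$, the only $k$-fl\`eche $y$ with $y_k = b \otimes x^\e_k$ is $\atom{b \otimes x^\e_k}$. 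This one-line computation replaces both the induction and the identity chasing. Your route, modeled on the proof of la proposition~\ref{prop:desc_cyl_k}, does work once repaired, but it is genuinely longer here because for $\kappa_0$---unlike for $\kappa_i$ with $i \ge 1$, where the lower factors map to honest atoms---\emph{all} the atoms $\atom{a \otimes x^\e_k}$ degenerate, so every factor of the composite to be collapsed is of the problematic kind.
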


\begin{proof}
  Le cas des atomes de la forme $\atom{a^\ep_0 \otimes x^\e_k}$ résulte, par
  fonctorialité, des égalités $\kappa_0\sigma_1 = \id{\Dn{0}} = \kappa_0\tau_1$.
  Par ailleurs, puisque $\lambda(\kappa_0)(a) = 0$, l'image d'un atome de
  la forme $\atom{a \otimes x^\e_k}$ est une identité. De plus, en utilisant
  le lemme~\ref{lemme:tab_cyl}, on~a
  \[
    \begin{split}
      s\big((\kappa_0 \otimes \Dn{i})(\atom{a \otimes x^\e_k})\big)_k
      & =
      (\kappa_0 \otimes \Dn{i})(s(\atom{a \otimes x^\e_k}))^{}_k \\
      & =
      (\lambda(\kappa_0) \otimes \lambda(\Dn{i}))(\atom{a \otimes
      x^\e_k}^0_k) \\
      & =
      (\lambda(\kappa_0) \otimes \lambda(\Dn{i}))(a^0_0 \otimes x^\e_k + a
      \otimes x^1_{k-1}) \\
      & =
      b \otimes x^\e_k,
    \end{split}
  \]
  où la dernière égalité résulte des relations $\lambda(\kappa_0)(a^0_0) = b$
  et $\lambda(\kappa_0)(a) = 0$. Or, la seule $k$-flèche $y$ de $\Dn{0} \otimes \Dn{i}
  \simeq \Dn{i}$ telle que $y_k = b \otimes x^\e_k$ est $\atom{b \otimes
  x^\e_k}$, d'où le résultat.
\end{proof}

\begin{prop}\label{prop:trans_id_abs}
  Soit $u : C \to D$ un \oo-foncteur. Pour toute cellule $x$ de $C$, on a
  \[ (\id{u})_x = \id{u(x)}. \]
\end{prop}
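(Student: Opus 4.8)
Soit $u : C \to D$ un \oo-foncteur. Pour toute cellule $x$ de $C$, on a $(\id{u})_x = \id{u(x)}$, o� $\id{u}$ d�signe la transformation oplax introduite au paragraphe~\ref{paragr:def_id_trans_abs}.

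Le plan est de comparer les deux descriptions dont nous disposons pour la transformation oplax $\id{u}$ : celle donn�e abstraitement au paragraphe~\ref{paragr:def_id_trans_abs} par le \oo-foncteur compos� $\Dn{1} \otimes C \xto{\kappa_0 \otimes C} C \xto{u} D$, et l'expression �l�mentaire $(\id{u})_x$ fournie par la bijection de la proposition~\ref{prop:cotrans_abs}. D'apr�s la proposition pr�c�dant directement cet �nonc� (celle qui d�crit, pour une transformation oplax $\alpha$ de \oo-foncteur associ� $H$, la cellule $\alpha_x$ comme l'image de l'unique $(i+1)$\nbd-fl�che non triviale de $\Dn{1} \otimes \Dn{i}$ par le \oo-foncteur $\Dn{1} \otimes \Dn{i} \xto{\Dn{1} \otimes x} \Dn{1} \otimes C \xto{H} D$), il suffit de calculer cette image dans le cas o� $H = u \circ (\kappa_0 \otimes C)$.

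Concr�tement, je fixerais $x$ une $i$\nbd-fl�che de $C$, pour $i \ge 0$, et je consid�rerais le \oo-foncteur compos�
\[
  \Dn{1} \otimes \Dn{i}
  \xto{\Dn{1} \otimes x}
  \Dn{1} \otimes C
  \xto{\kappa_0 \otimes C}
  C
  \xto{u}
  D.
\]
Par fonctorialit� du produit tensoriel (voir le th�or�me~\ref{thm:produit_tens}), le \oo-foncteur $(\kappa_0 \otimes C) \circ (\Dn{1} \otimes x)$ co�ncide avec $(\kappa_0 \otimes \Dn{i})$ suivi de $\Dn{0} \otimes x \simeq x$, de sorte que le compos� ci-dessus vaut $u \circ x \circ (\kappa_0 \otimes \Dn{i})$, o� l'on a identifi� $\Dn{0} \otimes \Dn{i}$ � $\Dn{i}$. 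Or la proposition~\ref{prop:desc_cone_triv} que nous venons d'�tablir affirme pr�cis�ment que $\kappa_0 \otimes \Dn{i}$ envoie l'unique $(i+1)$\nbd-fl�che non triviale $\atom{a \otimes x_i}$ de $\Dn{1} \otimes \Dn{i}$ sur l'identit� $\id{\atom{b \otimes x_i}}$, c'est-�-dire sur l'identit� de la cellule principale de $\Dn{0} \otimes \Dn{i} \simeq \Dn{i}$.

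Il ne reste alors qu'� appliquer $u \circ x$ � cette identit�. Puisque les \oo-foncteurs pr�servent les identit�s, l'image de $\id{\atom{b \otimes x_i}}$ par $u \circ x$ est $\id{u(x)}$, car $x$ envoie la cellule principale de $\Dn{i}$ sur $x$ lui-m�me et $u(x)$ en est l'image par $u$. On obtient donc bien $(\id{u})_x = \id{u(x)}$, ce qui identifie la transformation oplax du paragraphe~\ref{paragr:def_id_trans_abs} � celle introduite par des formules explicites au paragraphe~\ref{paragr:def_trans_id}. Je ne pr�vois pas d'obstacle r�el ici : toute la difficult� combinatoire a �t� absorb�e par la proposition~\ref{prop:desc_cone_triv}, et le seul point demandant un l�ger soin est la v�rification que la composition $(\kappa_0 \otimes C) \circ (\Dn{1} \otimes x)$ se r��crit correctement via la fonctorialit� du produit tensoriel et les identifications d'unit� $\Dn{0} \otimes (\var) \simeq (\var)$.
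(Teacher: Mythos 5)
Your proof is correct and takes essentially the same route as the paper's: in both cases the decisive ingredient is the proposition~\ref{prop:desc_cone_triv} describing $\kappa_0 \otimes \Dn{i}$, applied to the unique non-trivial $(i+1)$\nbd-cell of $\Dn{1} \otimes \Dn{i}$. The only (inessential) difference is that the paper first reduces to the case $u = \id{C}$ via the proposition~\ref{prop:trans_sesqui_abs} and computes on the adjoint side with $\HomLax(\kappa_0, C)$, whereas you keep a general $u$ and conclude directly on the tensor side, using the functoriality of $\otimes$ together with the fact that \oo-functors preserve identities.
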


\begin{proof}
  En vertu de la proposition~\ref{prop:trans_sesqui_abs}, on peut supposer
  que $u$ est un \oo-foncteur identité. Si $x$ est une $i$-flèche de $C$,
  pour un $i \ge 0$, l'image de $x$ par le \oo-foncteur
  \[ \HomLax(\kappa_0, C) : C \to \HomLax(\Dn{1}, C) \]
  est obtenue en composant
  \[
    \Dn{1} \otimes \Dn{i} \xto{\kappa_0 \otimes \Dn{i}}
    \Dn{0} \otimes \Dn{i} \simeq \Dn{i} \xto{x} C.
  \]
  En vertu de la proposition précédente, cette image est $\id{x}$. On a donc
  $(\id{u})_x = \id{x}$, ce qu'on voulait démontrer.
\end{proof}

\begin{rem}
  La proposition précédente affirme que la transformation oplax~$\id{u}$
  définie dans cette section coïncide avec la transformation oplax
  $\id{u}$ définie au paragraphe~\ref{paragr:def_trans_id}.
\end{rem}

\section{Homotopies et transformations oplax}

\begin{paragr}\label{paragr:homot_abs}
  Soient $f, g : K \to L$ deux morphismes de complexes dirigés augmentés.
  La donnée d'une homotopie $h$ de $f$ vers $g$ est équivalente à celle d'un
  morphisme de complexes dirigés augmentés $H : \lambda(\Dn{1}) \otimes K
  \to L$ rendant le diagramme
  \[
    \xymatrix@C=3pc{
    K \ar[dr]^f \ar[d]_{\lambda(\sigma_1) \otimes K} \\
    \lambda(\Dn{1}) \otimes K \ar[r]^H & L \\
    K \ar[ur]_g \ar[u]^{\lambda(\tau_1) \otimes K}
    }
  \]
  commutatif. En effet, l'assertion analogue pour les complexes de chaînes
  est bien connue et on vérifie immédiatement qu'elle s'étend aux complexes
  dirigés augmentés. Explicitement, en notant $a$ la cellule principale de
  $\Dn{1}$, l'homotopie $h$ associée à $H$ est définie par $h(x) = H(a
  \otimes x)$ pour tout élément homogène $x$ de $K$.
\end{paragr}

\begin{rem}
  On déduit du paragraphe précédent, par dualité (en utilisant notamment la
  proposition~\ref{prop:dual_tens_cda}), comme dans la preuve du
  corollaire~\ref{coro:trans_lax_abs}, qu'une antihomotopie $h$ de $f$ vers $g$
  correspond à un morphisme de complexes dirigés augmentés $H : K \otimes
  \lambda(\Dn{1}) \to L$ rendant le diagramme
  \[
    \xymatrix@C=3pc{
    K \ar[dr]^f \ar[d]_{K \otimes \lambda(\sigma_1)} \\
    K \otimes \lambda(\Dn{1}) \ar[r]^H & L \\
    K \ar[ur]_g \ar[u]^{K \otimes \lambda(\tau_1)} \\
    }
  \]
  commutatif.
\end{rem}

\begin{rem}\label{rem:n-homot_abs}
  Plus généralement, pour $n \ge 1$, on peut vérifier que la donnée d'une
  $n$-homotopie (voir le paragraphe~\ref{paragr:def_n-homot}) de $K$ vers
  $L$ correspond à la donnée d'un morphisme
  \[
    \lambda(\Dn{n}) \otimes K \to L,
  \]
  les $(n-1)$-homotopies source et but correspondant aux morphismes obtenus
  en précomposant par
  \[
    \lambda(\sigma_n) \otimes K, \lambda(\tau_n) \otimes K :
    \lambda(\Dn{n-1}) \otimes K \to \lambda(\Dn{n}) \otimes K.
  \]
  On vérifie de même qu'on obtient l'assertion analogue pour les
  $n$-antihomotopies (voir également le paragraphe~\ref{paragr:def_n-homot})
  en inversant les facteurs du produit tensoriel.
\end{rem}

\begin{paragr}\label{paragr:homot_sesqui_abs_Cda}
  Soient $f_0, f_1 : K \to L$ deux morphismes de complexes dirigés augmentés
  et $h$ une homotopie de $f_0$ vers $f_1$. Notons $H : \lambda(\Dn{1})
  \otimes K \to L$ le morphisme correspondant.
  \begin{enumerate}
    \item Pour tout morphisme $g : J \to K$, l'homotopie $hg$
      (voir le paragraphe~\ref{paragr:def_antih_sesqui})
      correspond au morphisme
      \[
        \lambda(\Dn{1}) \otimes J
        \xto{\lambda(\Dn{1}) \otimes g} \lambda(\Dn{1}) \otimes K
        \xto{H} L.
      \]
      En effet, par naturalité, un tel morphisme correspond bien à une
      homotopie de $f_0g$ vers $f_1g$ et, en notant $a$ la cellule
      principale de $\Dn{1}$, pour $x$ un élément homogène de $J$, on a
      $H(\lambda(\Dn{1}) \otimes g)(a \otimes x) = H(a \otimes g(x)) = h(g(x)).$
    \item Pour tout morphisme $g : L \to M$, l'homotopie $gh$
      (voir également le paragraphe~\ref{paragr:def_antih_sesqui})
      correspond au morphisme
      \[
        \lambda(\Dn{1}) \otimes K \xto{H} L \xto{g} M.
      \]
      En effet, un tel morphisme correspond bien à une homotopie de $gf_0$
      vers $gf_1$ et, en notant $a$ la cellule principale de $\Dn{1}$, pour
      $x$ un élément homogène de $K$, on~a~$gH(a, x) = g(h(x))$.
  \end{enumerate}

  Par ailleurs, pour tout morphisme de complexes dirigés augmentés $f : K
  \to L$, l'homotopie $\id{f}$ (voir le
  paragraphe~\ref{paragr:def_antih_id})
  correspond au morphisme
  \[ \lambda(\Dn{1}) \otimes K \xto{\lambda(\kappa_0) \otimes K} K \xto{f} L, \]
  induit par le \oo-foncteur $\kappa_0 : \Dn{1} \to \Dn{0}$ du
  paragraphe~\ref{paragr:def_disque}. En effet, par naturalité, un tel
  morphisme correspond bien à une homotopie de $f$ vers $f$ et, en
  notant $a$ la cellule principale de $\Dn{1}$, pour $x$ un élément homogène
  de $K$, on~a~\hbox{$f(\lambda(\kappa_0) \otimes K)(a \otimes x) = f(0
  \otimes x) = 0$}.
\end{paragr}

\begin{paragr}\label{paragr:homot_comp_abs_Cda}
  Soient $f_0, f_1, f_2 : K \to L$ trois morphismes de complexes dirigés
  augmentés, $h$ une homotopie de $f_0$ vers $f_1$ et $h'$ une homotopie
  de $f_1$ vers $f_2$. D'après le paragraphe~\ref{paragr:homot_abs}, ces
  homotopies correspondent à des morphismes $H : \lambda(\Dn{1}) \otimes K
  \to L$ et $H' : \lambda(\Dn{1}) \otimes K \to L$ respectivement. Par
  ailleurs, comme dans le paragraphe~\ref{paragr:def_comp_trans}, on peut
  définir un morphisme $H''$ en composant
  \[
    \lambda(\Dn{1}) \otimes K
      \xto{\lambda(\nabla^1_0) \otimes K}
    (\lambda(\Dn{1}) \otimes K) \amalg_K (\lambda(\Dn{1}) \otimes K)
      \xto{(H', H)}
    L.
  \]
  Le morphisme $H''$ correspond à une homotopie de $f_0$ vers $f_2$
  qui n'est autre que l'homotopie $h' + h$ (voir le
  paragraphe~\ref{paragr:def_antih_comp}).
  En effet, en notant $a$, $b$ et $c$ les cellules principales des copies de
  $\lambda(\Dn{1})$ apparaissant, dans l'ordre, dans la source de
  $\lambda(\nabla^1_0) \otimes K$, puis de gauche à droite dans son but,
  on a, pour tout élément homogène $x$ de $K$,
  \[
     H''(a \otimes x) = (H', H)(b \otimes x + c \otimes x)
      = H'(b \otimes x) + H(c \otimes x) = h'(x) + h(x),
  \]
  d'où l'assertion.
\end{paragr}

\begin{paragr}\label{paragr:def_nu_homot}
  Soient $f, g : K \to L$ deux morphismes de complexes dirigés augmentés. À
  toute homotopie $h$ de $f$ vers $g$, on associe une transformation oplax
  \nnot[$\nu(h) : \nu(f) \Rightarrow \nu(g)$]{$\nu(h)$} de~$\nu(f)$
  vers~$\nu(g)$ de la manière suivante.  L'homotopie $h$ correspond, en
  vertu du paragraphe~\ref{paragr:homot_abs}, à un morphisme $H :
  \lambda(\Dn{1}) \otimes K \to L$. On obtient un \oo-foncteur $\Dn{1}
  \otimes \nu(K) \to \nu(L)$ en composant
  \[
    \Dn{1} \otimes \nu(K) \simeq \nu(\lambda(\Dn{1})) \otimes \nu(K)
    \to \nu(\lambda(\Dn{1}) \otimes K) \xto{\nu(H)} \nu(L),
  \]
  où la flèche du milieu est la contrainte du foncteur monoïdal lax $\nu$
  (voir la proposition~\ref{prop:lambda_nu_mon_tens}). Ce
  \oo-foncteur correspond à son tour, cette fois en vertu du
  corollaire~\ref{coro:trans_oplax_abs}, à une transformation oplax qui, par
  naturalité des flèches en jeu, va de $\nu(f)$ vers $\nu(g)$.
\end{paragr}

\begin{prop}\label{prop:homot_trans_oplax}
  Soient $f, g : K \to L$ deux morphismes de complexes dirigés augmentés
  avec $K$ un complexe de Steiner fort.  Le foncteur $\nu$ induit une
  bijection entre les homotopies de~$f$ vers $g$ et les transformations
  oplax de $\nu(f)$ vers $\nu(g)$.
\end{prop}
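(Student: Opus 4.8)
Le plan est de r�duire le cas g�n�ral au cas o� $L$ est lui aussi un complexe de Steiner fort, puis d'exploiter dans ce cas la pleine fid�lit� du foncteur $\nu$ restreint aux complexes de Steiner (th�or�me~\ref{thm:Steiner}). D'abord, je constaterais qu'en vertu du paragraphe~\ref{paragr:def_nu_homot}, on dispose d'une application
\[
  h \mapsto \nu(h)
\]
des homotopies de $f$ vers $g$ vers les transformations oplax de $\nu(f)$ vers $\nu(g)$, et qu'il s'agit de montrer que cette application est bijective lorsque $K$ est de Steiner fort. J'observerais ensuite que, d'apr�s le paragraphe~\ref{paragr:homot_abs}, une homotopie $h$ de $f$ vers $g$ correspond � un morphisme $H : \lambda(\Dn{1}) \otimes K \to L$ rendant commutatif le triangle appropri�, et que, d'apr�s le corollaire~\ref{coro:trans_oplax_abs}, une transformation oplax de $\nu(f)$ vers $\nu(g)$ correspond � un \oo-foncteur $\Dn{1} \otimes \nu(K) \to \nu(L)$ rendant commutatif le triangle analogue.

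Le point technique central est que $\lambda(\Dn{1}) \otimes K$ est un complexe de Steiner fort. En effet, $\lambda(\Dn{1})$ est un complexe de Steiner fort (paragraphe~\ref{paragr:desc_lambda_Dn}) et $K$ l'est par hypoth�se, donc la proposition~\ref{prop:tens_Steiner} assure que leur produit tensoriel l'est �galement. Par cons�quent, gr�ce � l'isomorphisme canonique $\Dn{1} \otimes \nu(K) \simeq \nu(\lambda(\Dn{1}) \otimes K)$ fourni par le th�or�me~\ref{thm:produit_tens} (via la proposition~\ref{prop:Theta_Steiner} donnant $\Dn{1} \simeq \nu\lambda(\Dn{1})$), les \oo-foncteurs $\Dn{1} \otimes \nu(K) \to \nu(L)$ sont exactement les \oo-foncteurs $\nu(\lambda(\Dn{1}) \otimes K) \to \nu(L)$. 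Par adjonction entre $\lambda$ et $\nu$, ceux-ci sont en bijection avec les morphismes $\lambda\nu(\lambda(\Dn{1}) \otimes K) \to L$, et le th�or�me~\ref{thm:Steiner} identifie $\lambda\nu(\lambda(\Dn{1}) \otimes K)$ � $\lambda(\Dn{1}) \otimes K$ puisque ce dernier est de Steiner. On obtient ainsi une bijection entre les morphismes $\lambda(\Dn{1}) \otimes K \to L$ et les \oo-foncteurs $\Dn{1} \otimes \nu(K) \to \nu(L)$.

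Il resterait � v�rifier que cette cha�ne de bijections est compatible aux conditions de bord (c'est-�-dire qu'elle envoie les morphismes au-dessous de $K$, correspondant � des homotopies de $f$ vers $g$, sur les \oo-foncteurs rendant commutatif le triangle du corollaire~\ref{coro:trans_oplax_abs}, correspondant � des transformations oplax de $\nu(f)$ vers $\nu(g)$) et qu'elle co�ncide avec l'application $h \mapsto \nu(h)$ du paragraphe~\ref{paragr:def_nu_homot}. La compatibilit� aux bords d�coule de la naturalit� des isomorphismes $\Dn{1} \otimes \nu(\var) \simeq \nu(\lambda(\Dn{1}) \otimes \var)$ par rapport aux morphismes $\lambda(\sigma_1) \otimes K$ et $\lambda(\tau_1) \otimes K$, compatibilit� d�j� implicitement utilis�e dans la d�finition m�me de $\nu(h)$. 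La v�rification que la bijection construite est bien $h \mapsto \nu(h)$ r�sulte essentiellement du d�roulement des d�finitions : le \oo-foncteur associ� � $\nu(h)$ est pr�cis�ment le compos� faisant intervenir la contrainte mono�dale de $\nu$, laquelle n'est autre que l'unit� de l'adjonction transport�e � travers l'isomorphisme de Steiner. Le principal obstacle me semble donc �tre cette derni�re identification, de nature surtout bureaucratique : il faut s'assurer que les diverses contraintes mono�dales, l'isomorphisme du th�or�me~\ref{thm:produit_tens} et les unit�s d'adjonction s'agencent coh�remment, sans quoi la bijection abstraite obtenue pourrait ne pas co�ncider avec l'application concr�te $h \mapsto \nu(h)$ que l'�nonc� demande d'�tudier.
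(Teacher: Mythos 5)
Your proposal is correct and follows essentially the same route as the paper: the key point is Steiner-strongness of $\lambda(\Dn{1}) \otimes K$ (proposition~\ref{prop:tens_Steiner}), after which les th\'eor\`emes~\ref{thm:Steiner} et~\ref{thm:produit_tens} turn the relevant maps of Hom-sets into bijections. The ``bureaucratic'' coincidence you flag at the end is exactly how the paper opens its proof: by adjunction, the map $h \mapsto \nu(h)$ of paragraph~\ref{paragr:def_nu_homot} \emph{is by construction} the composite
\[
\Hom_{\Cda}(\lambda(\Dn{1}) \otimes K, L)
\to \Hom_{\Cda}(\lambda\nu(\lambda(\Dn{1}) \otimes K), L)
\simeq \Hom_{\ooCat}(\nu(\lambda(\Dn{1}) \otimes K), \nu(L))
\to \Hom_{\ooCat}(\Dn{1} \otimes \nu(K), \nu(L)),
\]
so no separate verification of that coincidence is needed.
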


\begin{proof}
  Par adjonction, la correspondance $h \mapsto \nu(h)$ du paragraphe
  précédent peut se décrire comme le composé
  \[
    \begin{split}
      \Hom_{\Cda}(\lambda(\Dn{1}) \otimes K, L)
      & \to
      \Hom_{\Cda}(\lambda\nu(\lambda(\Dn{1}) \otimes K), L) \\
      & \simeq
      \Hom_{\ooCat}(\nu(\lambda(\Dn{1}) \otimes K), \nu(L)) \\
      & \to
      \Hom_{\ooCat}(\nu(\lambda(\Dn{1})) \otimes \nu(K), \nu(L)) \\
      & \simeq
      \Hom_{\ooCat}(\Dn{1} \otimes \nu(K), \nu(L)),
    \end{split}
  \]
  où la première application est induite par la coünité du couple de
  foncteurs adjoints~$(\lambda, \nu)$ et la troisième par la contrainte du
  foncteur monoïdal lax $\nu$. Mais, puisque $K$ est un complexe de Steiner
  fort, il en est de même de $\lambda(\Dn{1}) \otimes K$ (voir la
  proposition~\ref{prop:tens_Steiner}) et les théorèmes~\ref{thm:Steiner}
  et~\ref{thm:produit_tens} entraînent que ces deux applications
  sont des bijections, d'où le résultat.
\end{proof}

Montrons maintenant la compatibilité aux compositions et identités
de l'extension de $\nu$ aux homotopies.

\begin{prop}\label{prop:compat_mu_sesqui}
  Soient $K$ et $L$ deux complexes dirigés augmentés et $h$ une homotopie
  entre morphismes de $K$ vers $L$.
  \begin{enumerate}
    \item Pour tout morphisme de complexes dirigés augmentés $g : J \to K$,
      on a \[ \nu(hg) = \nu(h) \comp \nu(g). \]
    \item Pour tout morphisme de complexes dirigés augmentés $g : L \to M$,
      on a \[ \nu(gh) = \nu(g) \comp \nu(h). \]
  \end{enumerate}
\end{prop}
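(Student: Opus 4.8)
The plan is to reduce both equalities to the naturality of the lax monoidal constraint of the functor $\nu$ for the Gray tensor product (proposition~\ref{prop:lambda_nu_mon_tens}), by translating every operation in sight into a statement about $\infty$-functors out of $\lambda(\Dn{1}) \otimes K$ on the side of homotopies and out of $\Dn{1} \otimes \nu(K)$ on the side of oplax transformations. Throughout I denote by $\mu_{A, B} : \nu(A) \otimes \nu(B) \to \nu(A \otimes B)$ the lax monoidal constraint of $\nu$, and I use the canonical isomorphism $\Dn{1} \simeq \nu(\lambda(\Dn{1}))$ of proposition~\ref{prop:Theta_Steiner}. Recall from paragraph~\ref{paragr:def_nu_homot} that, writing $H : \lambda(\Dn{1}) \otimes K \to L$ for the morphism corresponding to a homotopy $h$ (paragraph~\ref{paragr:homot_abs}), the oplax transformation $\nu(h)$ corresponds, via corollary~\ref{coro:trans_oplax_abs}, to the $\infty$-functor $\nu(H) \circ \mu_{\lambda(\Dn{1}), K}$.

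First I would treat assertion~(a). By paragraph~\ref{paragr:homot_sesqui_abs_Cda}, the homotopy $hg$ corresponds to $H \circ (\lambda(\Dn{1}) \otimes g)$, so by functoriality of $\nu$ the oplax transformation $\nu(hg)$ corresponds to $\nu(H) \circ \nu(\lambda(\Dn{1}) \otimes g) \circ \mu_{\lambda(\Dn{1}), J}$. On the other hand, by proposition~\ref{prop:trans_sesqui_abs}~(a) the oplax transformation $\nu(h) \comp \nu(g)$ corresponds to $\nu(H) \circ \mu_{\lambda(\Dn{1}), K} \circ (\Dn{1} \otimes \nu(g))$. It therefore suffices to check the equality of $\infty$-functors
\[
  \nu(\lambda(\Dn{1}) \otimes g) \circ \mu_{\lambda(\Dn{1}), J}
  = \mu_{\lambda(\Dn{1}), K} \circ (\id{\Dn{1}} \otimes \nu(g)),
\]
and this is precisely the naturality square of $\mu$ applied to the two morphisms $\id{\lambda(\Dn{1})} : \lambda(\Dn{1}) \to \lambda(\Dn{1})$ and $g : J \to K$ of $\Cda$ (using $\id{\Dn{1}} = \nu(\id{\lambda(\Dn{1})})$ under the identification above). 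Precomposing the two sides with this equality and postcomposing with $\nu(H)$ then yields the claim by corollary~\ref{coro:trans_oplax_abs}.

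For assertion~(b) I would argue analogously, but here the two $\infty$-functors coincide on the nose. Indeed, by paragraph~\ref{paragr:homot_sesqui_abs_Cda} the homotopy $gh$ corresponds to $g \circ H$, so $\nu(gh)$ corresponds to $\nu(g) \circ \nu(H) \circ \mu_{\lambda(\Dn{1}), K}$; and by proposition~\ref{prop:trans_sesqui_abs}~(b) the oplax transformation $\nu(g) \comp \nu(h)$ corresponds to $\nu(g) \circ \bigl(\nu(H) \circ \mu_{\lambda(\Dn{1}), K}\bigr)$, which is the very same $\infty$-functor. By corollary~\ref{coro:trans_oplax_abs} the two oplax transformations are thus equal. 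Since every ingredient is purely formal, I do not expect a genuine obstacle; the only point demanding care is to keep track of the identification $\Dn{1} \simeq \nu(\lambda(\Dn{1}))$ and, in assertion~(a), to invoke the naturality of $\mu$ in its first variable along an identity morphism — which is exactly where the asymmetry between the two statements (a computation in~(a) versus a tautology in~(b)) originates.
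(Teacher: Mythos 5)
Your proposal is correct and is essentially the paper's own proof: both reduce assertion (a) to the naturality square of the lax monoidal constraint of $\nu$ (the paper draws it as a commutative diagram, you write it as an equation), and both observe that assertion (b) is a tautology since the two composites of $\nu(g)$, $\nu(H)$ and the constraint are literally the same $\infty$-functor. The only differences are notational (naming the constraint $\mu$ and spelling out the correspondences from proposition~\ref{prop:trans_sesqui_abs}, paragraphe~\ref{paragr:homot_sesqui_abs_Cda} and corollaire~\ref{coro:trans_oplax_abs} in formulas rather than diagrams).
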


\begin{proof}
  Notons $H : \lambda(\Dn{1}) \otimes K \to L$ le morphisme correspondant
  à~$h$. En vertu de la proposition~\ref{prop:trans_sesqui_abs} et du
  paragraphe~\ref{paragr:homot_sesqui_abs_Cda}, les transformations oplax de
  la première assertion correspondent aux deux \oo-foncteurs
  $\Dn{1} \otimes \nu(J) \to \nu(L)$ qu'on peut définir à partir du
  diagramme
  \[
    \xymatrix{
      \Dn{1} \otimes \nu(J) \ar[d]_{\Dn{1} \otimes \nu(g)} \ar[r] &
      \nu(\lambda(\Dn{1}) \otimes J) \ar[d]^{\nu(\lambda(\Dn{1}) \otimes g)}\\
      \Dn{1} \otimes \nu(K) \ar[r] &
      \nu(\lambda(\Dn{1}) \otimes K) \ar[r]^-{\nu(H)} & \nu(L) \pbox{,}
    }
  \]
  où les flèches non nommées sont la contrainte du foncteur monoïdal lax
  $\nu$. Or, le carré de ce diagramme est commutatif par
  naturalité, d'où la première assertion.

  D'après les mêmes résultats, les transformations oplax de la seconde
  assertion correspondent aux deux manières de composer les trois
  \oo-foncteurs
  \[
    \Dn{1} \otimes \nu(K) \to \nu(\lambda(\Dn{1}) \otimes K) \xto{\nu(H)}
    \nu(L) \xto{\nu(g)} \nu(M),
  \]
  où, de nouveau, la flèche non nommée est la contrainte du foncteur
  monoïdal lax~$\nu$, d'où le résultat.
\end{proof}

\begin{prop}\label{prop:compat_mu_id}
  Soit $f : K \to L$ un morphisme de complexes dirigés augmentés. On
  a
  \[ \nu(\id{f}) = \id{\nu(f)}. \]
\end{prop}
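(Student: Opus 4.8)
The plan is to reduce the identity $\nu(\id{f}) = \id{\nu(f)}$ to the already established description of how $\nu$ transports the identity homotopy and the identity transformation oplax through the product tensoriel. Concretely, both sides are transformations oplax from $\nu(f)$ vers $\nu(f)$, and by the proposition~\ref{prop:homot_trans_oplax} (applied once we know $K$ is a complexe de Steiner fort, which we may assume as we are extending $\nu$ to homotopies), it suffices to compare the \oo-foncteurs $\Dn{1} \otimes \nu(K) \to \nu(L)$ that correspond to each side. The key observation is that both constructions factor through the \oo-foncteur $\kappa_0 : \Dn{1} \to \Dn{0}$, so the comparison comes down to a naturality square.

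First I would recall, from the paragraphe~\ref{paragr:homot_sesqui_abs_Cda}, that the identity homotopy $\id{f}$ corresponds to the morphism
\[
  \lambda(\Dn{1}) \otimes K \xto{\lambda(\kappa_0) \otimes K} K \xto{f} L.
\]
Applying the definition of $\nu(\id{f})$ from the paragraphe~\ref{paragr:def_nu_homot}, I obtain the \oo-foncteur
\[
  \Dn{1} \otimes \nu(K) \simeq \nu(\lambda(\Dn{1})) \otimes \nu(K)
  \to \nu(\lambda(\Dn{1}) \otimes K)
  \xto{\nu(\lambda(\kappa_0) \otimes K)} \nu(K) \xto{\nu(f)} \nu(L),
\]
where the unnamed arrow is the contrainte du foncteur mono\"idal lax $\nu$. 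On the other side, the identity transformation oplax $\id{\nu(f)}$ is, by the paragraphe~\ref{paragr:def_id_trans_abs}, the transformation oplax corresponding to $\Dn{1} \otimes \nu(K) \xto{\kappa_0 \otimes \nu(K)} \nu(K) \xto{\nu(f)} \nu(L)$, where I identify $\Dn{0} \otimes \nu(K)$ avec $\nu(K)$.

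The heart of the argument is then to check that the composite
\[
  \Dn{1} \otimes \nu(K) \to \nu(\lambda(\Dn{1}) \otimes K)
  \xto{\nu(\lambda(\kappa_0) \otimes K)} \nu(K)
\]
coincides avec $\kappa_0 \otimes \nu(K) : \Dn{1} \otimes \nu(K) \to \nu(K)$. This is exactly a naturality statement for the contrainte mono\"idale of $\nu$ with respect to the morphism $\kappa_0 : \Dn{1} \to \Dn{0}$, combined with the fact that the contrainte degenerates to an identity when one factor is the unit\'e $\Dn{0}$ (equivalently $\lambda(\Dn{0}) \simeq \Zdec$). Since $\nu(f)$ is then postcomposed on both sides, the two \oo-foncteurs agree, hence the two transformations oplax agree. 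I expect the main obstacle to be purely bookkeeping: keeping track of the various identifications $\Dn{0} \otimes \var \simeq \var$, $\nu(\lambda(\Dn{1})) \simeq \Dn{1}$, and the precise form of the contrainte mono\"idale lax of $\nu$, and verifying that they are compatible as claimed. No genuinely new computation is needed once these coherences are invoked; alternatively, one can bypass them entirely by pointing out that $\nu(\id{f})$ is defined so as to be the image under $\nu$ of an identity homotopy, and that the constructions of the paragraphes~\ref{paragr:def_id_trans_abs} and~\ref{paragr:def_nu_homot} both unwind to the same factorisation through $\kappa_0$, which is the shortest route to the result.
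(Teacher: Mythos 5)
Your proposal follows essentially the same route as the paper's own proof: you unwind $\id{\nu(f)}$ (via the paragraphe~\ref{paragr:def_id_trans_abs}) and $\nu(\id{f})$ (via the paragraphes~\ref{paragr:homot_sesqui_abs_Cda} and~\ref{paragr:def_nu_homot}) into factorisations through $\kappa_0$, and you conclude by the naturality of the contrainte du foncteur mono\"idal lax $\nu$ with respect to $\kappa_0 : \Dn{1} \to \Dn{0}$, together with the unit coherence --- which is exactly the commutative square the paper exhibits. One correction, though: the reduction to comparing the two \oo-foncteurs $\Dn{1} \otimes \nu(K) \to \nu(L)$ should not be justified by the proposition~\ref{prop:homot_trans_oplax} under an assumed Steiner hypothesis; $K$ is an arbitrary complexe dirig\'e augment\'e in the statement and cannot simply be ``assumed'' to be de Steiner fort. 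The correct (and hypothesis-free) justification is the corollaire~\ref{coro:trans_oplax_abs}, which gives the bijection between transformations oplax and \oo-foncteurs $\Dn{1} \otimes \nu(K) \to \nu(L)$; since both $\nu(\id{f})$ and $\id{\nu(f)}$ are \emph{defined} through their corresponding \oo-foncteurs, comparing those \oo-foncteurs suffices tautologically, and the rest of your argument goes through unchanged.
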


\begin{proof}
  En vertu de la proposition~\ref{prop:trans_id_abs} et du
  paragraphe~\ref{paragr:homot_sesqui_abs_Cda}, cela résulte de la
  commutativité, par naturalité, du diagramme
  \[
    \xymatrix{
      \Dn{1} \otimes \nu(K) \ar[d]_{\kappa_0 \otimes \nu(K)} \ar[r] &
      \nu(\lambda(\Dn{1}) \otimes K) \ar[d]^{\nu(\lambda(\kappa_0) \otimes K)}\\
      \Dn{0} \otimes \nu(K) \ar[r] &
      \nu(\lambda(\Dn{0}) \otimes K) \ar[r]^-{\nu(f)} & \nu(L) \pbox{,}
    }
  \]
  où les flèches non nommées sont la contrainte du foncteur monoïdal lax~$\nu$.
\end{proof}

\begin{prop}\label{prop:compat_mu_comp}
  Soient $f_0, f_1, f_2 : K \to L$ trois morphismes de complexes dirigés
  augmentés, $h$ une homotopie de $f_0$ vers $f_1$ et $h'$ une homotopie de
  $f_1$ vers $f_2$. On a
  \[ \nu(h' + h) = \nu(h') \circ \nu(h). \]
\end{prop}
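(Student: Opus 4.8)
<br>

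The plan is to mimic closely the proof of the composition-compatibility results just established, most immediately the proof of Proposition~\ref{prop:compat_mu_comp} for the codimension-1 composition (the paragraph~\ref{paragr:def_antih_comp} operation) —— here we instead treat the vertical composition $h' + h$ of \emph{transformations oplax}, which corresponds to the composition in codimension~1 governed by $\nabla^1_0$ and defined in paragraph~\ref{paragr:def_comp_trans}. Concretely, the statement to prove is $\nu(h' + h) = \nu(h') \circ \nu(h)$, where on the left $h' + h$ is the composition of homotopies from paragraph~\ref{paragr:def_antih_comp} and on the right $\circ$ is the vertical composition of transformations oplax from paragraph~\ref{paragr:def_comp_trans}. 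The key fact enabling the translation is paragraph~\ref{paragr:homot_comp_abs_Cda}, which describes the homotopy $h' + h$ at the level of complexes dirigés augmentés via the cocomposition morphism $\lambda(\nabla^1_0) \otimes K$, matching the description of $\circ$ in paragraph~\ref{paragr:def_comp_trans} via $\nabla^1_0 \otimes \nu(K)$.

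First I would reduce everything, by adjunction, to a diagram of $\infty$-foncteurs with target $\nu(L)$ and source $\Dn{1} \otimes \nu(K)$, exactly as in the proofs of Propositions~\ref{prop:compat_mu_sesqui}, \ref{prop:compat_mu_id} and~\ref{prop:compat_mu_comp}. Writing $H, H' : \lambda(\Dn{1}) \otimes K \to L$ for the morphisms corresponding to $h$ and $h'$ (paragraph~\ref{paragr:homot_abs}), the homotopy $h' + h$ corresponds by paragraph~\ref{paragr:homot_comp_abs_Cda} to the composite
\[
  \lambda(\Dn{1}) \otimes K
    \xto{\lambda(\nabla^1_0) \otimes K}
  (\lambda(\Dn{1}) \otimes K) \amalg_K (\lambda(\Dn{1}) \otimes K)
    \xto{(H', H)}
  L.
\]
Applying $\nu$ and precomposing with the lax monoidal constraint of $\nu$ (Proposition~\ref{prop:lambda_nu_mon_tens}), I obtain the $\infty$-foncteur representing $\nu(h' + h)$. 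On the other side, the vertical composite $\nu(h') \circ \nu(h)$ is, by paragraph~\ref{paragr:def_comp_trans}, represented by
\[
  \Dn{1} \otimes \nu(K)
    \xto{\nabla^1_0 \otimes \nu(K)}
  (\Dn{1} \otimes \nu(K)) \amalg_{\nu(K)} (\Dn{1} \otimes \nu(K))
    \xto{(\nu(H'), \nu(H))}
  \nu(L).
\]
The core of the argument is then the commutativity, by naturality of the lax monoidal constraint of $\nu$, of the square
\[
  \xymatrix{
    \Dn{1} \otimes \nu(K)
      \ar[d]_{\nabla^1_0 \otimes \nu(K)} \ar[r] &
    \nu(\lambda(\Dn{1}) \otimes K)
      \ar[d]^{\nu(\lambda(\nabla^1_0) \otimes K)} \\
    (\Dn{1} \otimes \nu(K)) \amalg_{\nu(K)} (\Dn{1} \otimes \nu(K)) \ar[r] &
    \nu\big((\lambda(\Dn{1}) \otimes K) \amalg_K (\lambda(\Dn{1}) \otimes K)\big),
  }
\]
where the horizontal arrows come from the constraint of $\nu$; one must also identify the two amalgamated sums compatibly, using that $\Dn{1} \otimes \nu(K) \simeq \nu(\lambda(\Dn{1}) \otimes K)$ and that the relevant complexes are de Steiner fort so that $\nu$ commutes with the amalgamated sum at issue (via Corollaire following the théorème~\ref{thm:nu_syst_Steiner}, as used implicitly in the analogous proofs).

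The main obstacle, as in the preceding propositions, will be the careful bookkeeping of the identification of the target amalgamated sum under $\nu$ —— one needs that $\nu$ transforms $(\lambda(\Dn{1}) \otimes K) \amalg_K (\lambda(\Dn{1}) \otimes K)$ into $(\Dn{1} \otimes \nu(K)) \amalg_{\nu(K)} (\Dn{1} \otimes \nu(K))$ compatibly with the monoidal constraints, and that $(H', H)$ is carried to $(\nu(H'), \nu(H))$. Since $K$ is assumed de Steiner fort, $\lambda(\Dn{1}) \otimes K$ is de Steiner fort by Proposition~\ref{prop:tens_Steiner}, and $\iota_1 : K \to \lambda(\Dn{1}) \otimes K$ is an inclusion rigide ordonnée, so the relevant pushout is computed correctly by $\nu$; the reduction to the complex case then follows verbatim from the structure of the proof of Proposition~\ref{prop:compat_mu_comp}. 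Once the square commutes, composing the two paths yields the equality of the two $\infty$-foncteurs, hence of the two transformations oplax after applying the bijection of Proposition~\ref{prop:homot_trans_oplax}, which gives $\nu(h' + h) = \nu(h') \circ \nu(h)$ as desired.
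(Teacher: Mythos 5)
There is a genuine gap: at the decisive moment you write ``Since $K$ is assumed de Steiner fort'', but Proposition~\ref{prop:compat_mu_comp} carries no such hypothesis --- $K$ and $L$ are \emph{arbitrary} augmented directed complexes, and $\nu(h)$ is defined in paragraph~\ref{paragr:def_nu_homot} for arbitrary $K$ using only the \emph{lax} monoidal constraint of $\nu$, which is not invertible in general. Consequently the two identifications your argument rests on are unavailable: the comparison $\Dn{1} \otimes \nu(K) \to \nu(\lambda(\Dn{1}) \otimes K)$ is an isomorphism only for strong Steiner complexes (Theorem~\ref{thm:produit_tens}), and there is no reason for $\nu$ to commute with the pushout $(\lambda(\Dn{1}) \otimes K) \amalg_K (\lambda(\Dn{1}) \otimes K)$. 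This generality is not optional: the paper invokes Proposition~\ref{prop:compat_mu_comp} in the last proof of Section~\ref{sec:fonct_tr} for homotopies between morphisms $\cotr{M}{ag'} \to \cotr{M}{ag}$, and these slice complexes are not known to be Steiner. Even granting your extra hypothesis, the citations would not close the argument: the corollary following Theorem~\ref{thm:nu_somme_amalg} requires the order $\leN$ on the basis of the \emph{middle} complex (here $K$ itself) to be total; the ``inclusion rigide ordonn\'ee'' property is proved in the paper for the join ($\iota_1$, Corollary~\ref{coro:joint_Steiner}) but never for the tensor maps $\lambda(\sigma_1) \otimes K, \lambda(\tau_1) \otimes K : K \to \lambda(\Dn{1}) \otimes K$; and the final bijection you need is Corollary~\ref{coro:trans_oplax_abs} (valid for all $\infty$-categories), not Proposition~\ref{prop:homot_trans_oplax}, whose Steiner hypothesis you again do not have.

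The repair --- and it is exactly what the paper does --- is to never ask for isomorphisms. Keep the bottom horizontal arrow of your square as the map induced by the lax monoidal constraint $(\Dn{1} \amalg_{\Dn{0}} \Dn{1}) \otimes \nu(K) \to \nu(\lambda(\Dn{1} \amalg_{\Dn{0}} \Dn{1}) \otimes K)$, a mere natural transformation (the identifications of its source with $(\Dn{1} \otimes \nu(K)) \amalg_{\nu(K)} (\Dn{1} \otimes \nu(K))$ and of its target with $\nu\big((\lambda(\Dn{1}) \otimes K) \amalg_{K} (\lambda(\Dn{1}) \otimes K)\big)$ only use that $\otimes$, $\lambda$ and $\var \otimes K$ preserve the relevant colimits, with no Steiner condition). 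Naturality then gives your square, as you say; but what remains --- and what your proposal never actually verifies --- is the \emph{triangle} comparing $(\nu(h'), \nu(h))$ with the composite of this constraint and $\nu((H', H))$. This is checked without any hypothesis on $K$ by precomposing with the two canonical $\infty$-functors $\Dn{1} \otimes \nu(K) \to (\Dn{1} \otimes \nu(K)) \amalg_{\nu(K)} (\Dn{1} \otimes \nu(K))$: each of the two resulting triangles commutes by the very definition of $\nu(h)$ and $\nu(h')$, and one concludes by the universal property of this pushout, which is a genuine pushout computed in $\ooCat$ rather than an image under $\nu$. Your proof as written establishes the statement only in the special case of a strong Steiner source, which is insufficient for the use the paper makes of it.
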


\begin{proof}
  Notons $H, H' : \lambda(\Dn{1}) \otimes K \to L$ les morphismes
  correspondant respectivement à~$h$ et $h'$. En vertu des
  paragraphes~\ref{paragr:def_comp_trans}
  et~\ref{paragr:homot_comp_abs_Cda}, il suffit de montrer la commutativité
  du diagramme
  \[
    \xymatrix{
      \Dn{1} \otimes \nu(K)
        \ar[d]_{\nabla^1_0 \otimes \nu(K)} \ar[r]
      &
      \nu(\lambda(\Dn{1}) \otimes K)
        \ar[d]^{\nu(\lambda(\nabla^1_0) \otimes K)}
      \\
      (\Dn{1} \otimes \nu(K)) \amalg_{\nu(K)} (\Dn{1} \otimes \nu(K))
        \ar[dr]_{(\nu(h'), \nu(h))\quad} \ar[r]
      &
      \nu\big((\lambda(\Dn{1}) \otimes K) \amalg_{K} (\lambda(\Dn{1}) \otimes
      K)\big) \ar[d]^{\nu((H', H))} \\
      & \nu(L) \pbox{,}
    }
  \]
  où les flèches non nommées sont induites par la contrainte du foncteur
  monoïdal lax~$\nu$ et, plus précisément, pour celle du bas, par le morphisme
  \[
    (\Dn{1} \amalg_{\Dn{0}} \Dn{1}) \otimes \nu(K)
    \to
    \nu(\lambda(\Dn{1} \amalg_{\Dn{0}} \Dn{1}) \otimes K).
  \]
  Or, le rectangle de ce diagramme est commutatif par naturalité. Pour
  conclure, il nous suffit donc de vérifier la commutativité du triangle.
  En précomposant ce triangle par les deux morphismes canoniques
  \[
    \Dn{1} \otimes \nu(K) \to
    (\Dn{1} \otimes \nu(K)) \amalg_{\nu(K)} (\Dn{1} \otimes \nu(K)),
  \]
  on obtient deux triangles qui commutent par définition de $\nu(h)$ et
  $\nu(h')$, d'où le résultat.
\end{proof}

\begin{rem}
  Dualement, si $h$ est une antihomotopie de complexes dirigés augmentés, on
  peut lui associer une transformation lax $\nu(h)$. Les
  propositions~\ref{prop:homot_trans_oplax}, \ref{prop:compat_mu_sesqui},
  \ref{prop:compat_mu_id} et~\ref{prop:compat_mu_comp} entraînent, par
  dualité, leurs analogues pour les antihomotopies et les transformations
  lax.
\end{rem}

\section{Joint et transformations oplax}

\begin{paragr}\label{paragr:incl_cone_cyl}
  Soient $C$ une \oo-catégorie et $c$ un objet de $C$. On définit un
  foncteur d'inclusion $\cotr{C}{c} \hookto \HomLax(\Dn{1}, C)$ de la
  manière suivante.  Pour tout $i \ge 0$, si $(d, \alpha)$ est une
  $i$\nbd-flèche de $\cotr{C}{c}$ (voir le
  paragraphe~\ref{paragr:desc_tr_pol}), on lui
  associe la $i$-flèche $(\id{c}, d, \alpha)$ de $\HomLax(\Dn{1}, C)$ (voir
  le paragraphe~\ref{paragr:desc_cyl_pol}), où $\id{c}$ désigne l'identité
  itérée de $c$ en dimension $i$. Le fait qu'on obtienne bien ainsi une
  $i$-flèche de $\HomLax(\Dn{1}, C)$ résulte des
  paragraphes~\ref{paragr:desc_tr_pol} et \ref{paragr:desc_cyl_pol}, ainsi
  que de la formule
  \[ \alpha^1_{k-1} \comp_{k-1} \cdots \comp_1 \alpha^1_0 \comp \id{c} =
  \alpha^1_{k-1}. \]
  Par ailleurs, la fonctorialité de cette correspondance découle des
  propositions~\ref{prop:desc_tr} et~\ref{prop:desc_cyl}, ainsi que de la
  formule ci-dessus.
\end{paragr}

\begin{prop}\label{prop:incl_cyl_cone}
  Pour toute \oo-catégorie $C$ et tout objet $c$ de $C$, le carré
  \[
    \xymatrix{
      \cotr{C}{c} \ar[r] \ar[d] & \HomLax(\Dn{1}, C) \ar[d]^{\pi^0} \\
      \Dn{0} \ar[r]_c & C \pbox{,}
    }
  \]
  où la flèche horizontale du haut est le \oo-foncteur défini au paragraphe
  précédent et $\pi^0$ désigne le \oo-foncteur du
  paragraphe~\ref{paragr:cohomot_Gray}, est cartésien.
\end{prop}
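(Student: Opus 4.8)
The plan is to verify directly that the square is cartesian by computing the fiber product on the right and comparing it with $\cotr{C}{c}$. First I would observe that the pullback of $\pi^0 : \HomLax(\Dn{1}, C) \to C$ along $c : \Dn{0} \to C$ is, level by level, the set of $i$-fl\`eches $(c', d, \alpha)$ of $\HomLax(\Dn{1}, C)$ whose first component $c'$ equals the iterated identity $\id{c}$ in dimension $i$. Indeed, by the description of $\pi^0$ obtained in paragraphe~\ref{paragr:cohomot_Gray}, one has $\pi^0(c', d, \alpha) = c'$, so a $i$-fl\`eche of $\HomLax(\Dn{1}, C)$ lies over the image of $c$ precisely when $c' = \id{c}$. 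The universal property of the fiber product then identifies the pullback with the set of triples $(\id{c}, d, \alpha)$, and the canonical projection to $\HomLax(\Dn{1}, C)$ is the inclusion sending $(\id{c}, d, \alpha)$ to itself.

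Next I would compare this with the \oo-foncteur of paragraphe~\ref{paragr:incl_cone_cyl}, which sends an $i$-fl\`eche $(d, \alpha)$ of $\cotr{C}{c}$ to $(\id{c}, d, \alpha)$. The point is that this assignment is a bijection between the $i$-fl\`eches of $\cotr{C}{c}$ and the triples $(\id{c}, d, \alpha)$ of $\HomLax(\Dn{1}, C)$ with first component $\id{c}$. For surjectivity and injectivity I would examine the defining constraints: an $i$-fl\`eche $(c', d, \alpha)$ of $\HomLax(\Dn{1}, C)$ satisfies (see paragraphe~\ref{paragr:desc_cyl_pol}) the source/target equations involving $c'^{\e}_k$, whereas an $i$-fl\`eche $(d, \alpha)$ of $\cotr{C}{c}$ satisfies (see paragraphe~\ref{paragr:desc_tr_pol}) the analogous equations with $c$ in the role of the apex. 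When $c' = \id{c}$, so that $c'^{\e}_k = \id{c}$ for all $k$ and $\e$, the factor $\alpha^1_{k-1} \comp_{k-1} \cdots \comp_1 \alpha^1_0 \comp_0 c'^\e_k$ collapses to $\alpha^1_{k-1}$ via the identity $\alpha^1_{k-1} \comp_{k-1} \cdots \comp_1 \alpha^1_0 \comp_0 \id{c} = \alpha^1_{k-1}$ already used in paragraphe~\ref{paragr:incl_cone_cyl}. This shows the constraints on $(\id{c}, d, \alpha)$ in $\HomLax(\Dn{1}, C)$ are exactly the constraints on $(d, \alpha)$ in $\cotr{C}{c}$, giving the desired bijection on each set of $i$-fl\`eches.

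Finally I would upgrade this level-wise bijection to an isomorphism of \oo-cat\'egories compatible with the two legs of the square. Commutativity of the square is immediate: following $\cotr{C}{c} \to \HomLax(\Dn{1}, C) \xto{\pi^0} C$ sends $(d, \alpha)$ to $\id{c}$, which is the image of the unique map through $\Dn{0} \xto{c} C$. The structural compatibility — that the bijections assemble into a \oo-foncteur and that the inverse is also a \oo-foncteur — follows by comparing the explicit formulas for sources, buts, identit\'es and compositions in the two \oo-cat\'egories, namely proposition~\ref{prop:desc_tr} for $\cotr{C}{c}$ and proposition~\ref{prop:desc_cyl} for $\HomLax(\Dn{1}, C)$; one checks that setting the first component to the relevant iterated identity of $c$ intertwines these operations, again using the collapsing identity above. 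I expect the main obstacle to be purely bookkeeping: verifying that the composition formula of proposition~\ref{prop:desc_cyl}, specialized to triples with first component $\id{c}$, matches term by term the composition formula of proposition~\ref{prop:desc_tr} (in particular that the terms $d^{\eta_1}_{j+1} \comp_0 \beta^0_0 \comp_1 \cdots$ reduce correctly when the apex is a single object rather than a higher cell). Since both formulas are already established, this reduces to a routine but somewhat lengthy comparison, and no genuinely new argument is needed.
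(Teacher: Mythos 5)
Votre preuve est correcte et suit essentiellement la même démarche que celle de l'article : le carré est cartésien parce que le produit fibré se calcule cellule par cellule (les $i$-flèches de $\HomLax(\Dn{1}, C)$ au-dessus de $c$ sont exactement les triplets $(\id{c}, d, \alpha)$), et l'inclusion du paragraphe~\ref{paragr:incl_cone_cyl} — dont la bonne définition et la fonctorialité reposent déjà sur la formule de simplification $\alpha^1_{k-1} \comp_{k-1} \cdots \comp_1 \alpha^1_0 \comp_0 \id{c} = \alpha^1_{k-1}$ et sur les propositions~\ref{prop:desc_tr} et~\ref{prop:desc_cyl} — induit une bijection sur ces cellules. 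Le seul point que vous auriez pu simplifier : un \oo-foncteur bijectif sur les cellules est automatiquement un isomorphisme, donc la vérification finale de compatibilité des compositions pour l'inverse est superflue, la fonctorialité de l'inclusion étant déjà acquise au paragraphe précédent.
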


\begin{proof}
  Cela résulte immédiatement de la description de l'inclusion $\cotr{C}{c}
  \hookto \HomLax(\Dn{1}, C)$ donnée au paragraphe précédent.
\end{proof}

\begin{coro}
  Soient $C$ une \oo-catégorie et $c$ un objet de $C$. Pour toute
  \oo-catégorie $A$, on a une bijection naturelle entre les \oo-foncteurs
  $A \to \cotr{C}{c}$ et les transformations oplax entre \oo-foncteurs de $A$ vers $C$
  de source le \oo-foncteur constant de valeur $c$. De plus, le but de la
  transformation oplax associée à un tel \oo-foncteur est le composé de
  $A \to \cotr{C}{c} \xto{U} C$, où $U$ désigne le \oo-foncteur d'oubli.
\end{coro}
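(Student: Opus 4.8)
Le plan est de d�duire la bijection naturelle directement du carr� cart�sien de la proposition~\ref{prop:incl_cyl_cone} en appliquant le foncteur $\Hom_{\ooCat}(A, \var)$, qui commute aux limites projectives et transforme donc ce carr� cart�sien en un carr� cart�sien d'ensembles.

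D'abord, je rappellerais que le carr� de la proposition~\ref{prop:incl_cyl_cone}
\[
  \xymatrix{
    \cotr{C}{c} \ar[r] \ar[d] & \HomLax(\Dn{1}, C) \ar[d]^{\pi^0} \\
    \Dn{0} \ar[r]_c & C
  }
\]
est cart�sien. En appliquant le foncteur $\Hom_{\ooCat}(A, \var)$, qui pr�serve les produits fibr�s puisqu'il s'agit d'un foncteur repr�sentable, on obtient un carr� cart�sien d'ensembles. Concr�tement, un \oo-foncteur $A \to \cotr{C}{c}$ est donc �quivalent � la donn�e d'un \oo-foncteur $F : A \to \HomLax(\Dn{1}, C)$ et d'un \oo-foncteur $A \to \Dn{0}$ (ce dernier �tant unique, $\Dn{0}$ �tant la \oo-cat�gorie finale) rendant commutatif le triangle correspondant, c'est-�-dire tel que $\pi^0 \comp F$ soit le \oo-foncteur constant de valeur $c$.

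Ensuite, j'interpr�terais le \oo-foncteur $F : A \to \HomLax(\Dn{1}, C)$. En vertu du paragraphe~\ref{paragr:HomOpLax_ob_fl} et du corollaire~\ref{coro:trans_oplax_abs} (via l'adjonction d�finissant $\HomLax$ au paragraphe~\ref{paragr:def_HomOpLax}), un tel \oo-foncteur correspond � une transformation oplax entre \oo-foncteurs de $A$ vers $C$. Plus pr�cis�ment, par adjonction, $F$ correspond � un \oo-foncteur $A \otimes \Dn{1} \to C$, et le corollaire~\ref{coro:trans_lax_abs} relie de tels \oo-foncteurs aux transformations lax ; mais ici c'est la description duale qui intervient, et il est plus direct de voir $\HomLax(\Dn{1}, C)$ comme param�trant les transformations oplax de source et but donn�s. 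La condition $\pi^0 \comp F = $ (constant de valeur $c$) se traduit, via la description de $\pi^0$ au paragraphe~\ref{paragr:cohomot_Gray} (o� $\pi^0(c, d, \alpha) = c$), exactement en le fait que la source de cette transformation oplax est le \oo-foncteur constant de valeur $c$.

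Enfin, pour la derni�re assertion concernant le but, j'utiliserais la description explicite de l'inclusion $\cotr{C}{c} \hookto \HomLax(\Dn{1}, C)$ donn�e au paragraphe~\ref{paragr:incl_cone_cyl}, qui envoie $(d, \alpha)$ sur $(\id{c}, d, \alpha)$, combin�e avec la description du \oo-foncteur d'oubli $U : \cotr{C}{c} \to C$ (paragraphe~\ref{paragr:desc_morph_tr}) qui correspond � l'extraction de la composante $d$. Le but de la transformation oplax, obtenu via $\pi^1$ (qui v�rifie $\pi^1(c, d, \alpha) = d$ d'apr�s le paragraphe~\ref{paragr:cohomot_Gray}), s'identifie alors au compos� $A \to \cotr{C}{c} \xto{U} C$. \textbf{L'�tape d�licate} ne sera pas le raisonnement par produit fibr�, qui est formel, mais le soin apport� � v�rifier la naturalit� en $A$ de toutes les bijections interm�diaires et la coh�rence des identifications $\HomLax(\Dn{0}, C) \simeq C$ utilis�es tacitement ; l'essentiel du travail a d�j� �t� accompli dans la proposition~\ref{prop:incl_cyl_cone} et dans la sous-section reliant transformations oplax et produit tensoriel.
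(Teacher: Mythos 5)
Your proof takes essentially the same route as the paper's: the first assertion is obtained from the cartesian square of Proposition~\ref{prop:incl_cyl_cone} combined with the identification of $\infty$-functors into $\HomLax(\Dn{1}, C)$ (with prescribed composite with $\pi^0$) with oplax transformations (Proposition~\ref{prop:cotrans_abs}), and the second assertion from the commutativity of the triangle expressing $U$ as $\pi^1$ composed with the inclusion $\cotr{C}{c} \hookto \HomLax(\Dn{1}, C)$, exactly as in the paper. One parenthetical slip is worth correcting: by the adjunction of paragraph~\ref{paragr:def_HomOpLax}, an $\infty$-functor $A \to \HomLax(\Dn{1}, C)$ transposes to an $\infty$-functor $\Dn{1} \otimes A \to C$ (not $A \otimes \Dn{1} \to C$), so the relevant comparison is Corollary~\ref{coro:trans_oplax_abs} and not Corollary~\ref{coro:trans_lax_abs}, which is consistent with the oplax conclusion; since you ultimately invoke the direct description via Proposition~\ref{prop:cotrans_abs}, this slip does not affect the validity of your argument.
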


\begin{proof}
  La première assertion est conséquence immédiate de la proposition
  précédente et de la description des transformations oplax en termes de
  $\HomLax(\Dn{1}, C)$ (proposition~\ref{prop:cotrans_abs}). La seconde
  résulte de la commutativité du triangle
  \[
    \xymatrix@C=0.5pc{
      \cotr{C}{c} \ar[drrr]_U \ar[rrrrr]&&&&& \HomLax(\Dn{1}, C) \ar[dll]^{\pi^1} \\
      &&& C && \pbox{,}
    }
  \]
  où $U$ désigne le \oo-foncteur d'oubli et $\pi^1$ le \oo-foncteur du
  paragraphe~\ref{paragr:cohomot_Gray}.
\end{proof}

\begin{coro}
  Soient $C$ une \oo-catégorie et $c$ un objet de $C$. Pour tout $i \ge 0$,
  les $i$-flèches de $\cotr{C}{c}$ sont en correspondance bijective
  canonique avec les couples $(d, \alpha)$, où $d$ est une $i$-flèche de $C$
  et $\alpha$ est une transformation oplax du \oo-foncteur $\Dn{i} \to C$
  constant de valeur $c$ vers le \oo-foncteur $d : \Dn{i} \to C$.
\end{coro}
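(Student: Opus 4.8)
Pour une $\infty$-catégorie $C$ et un objet $c$ de $C$, on veut identifier, pour tout $i \ge 0$, les $i$-flèches de $\cotr{C}{c}$ avec les couples $(d, \alpha)$ où $d$ est une $i$-flèche de $C$ et $\alpha$ est une transformation oplax du $\infty$-foncteur constant de valeur $c$ (de source $\Dn{i}$) vers le $\infty$-foncteur $d : \Dn{i} \to C$.

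Le plan est de déduire cet énoncé du corollaire précédent. En effet, ce corollaire établit une bijection naturelle entre les $\infty$-foncteurs $A \to \cotr{C}{c}$ et les transformations oplax entre $\infty$-foncteurs de $A$ vers $C$ de source le $\infty$-foncteur constant de valeur $c$, et précise de plus que le but d'une telle transformation oplax s'obtient en composant avec le $\infty$-foncteur d'oubli $U : \cotr{C}{c} \to C$. D'abord, j'appliquerais ce corollaire au cas particulier $A = \Dn{i}$. Une $i$-flèche de $\cotr{C}{c}$ est, par la propriété corepresentante du disque $\Dn{i}$ (voir le paragraphe~\ref{paragr:def_disque}), la même chose qu'un $\infty$-foncteur $\Dn{i} \to \cotr{C}{c}$. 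Le corollaire transforme donc une telle $i$-flèche en une transformation oplax $\alpha$ entre $\infty$-foncteurs $\Dn{i} \to C$, de source le $\infty$-foncteur constant de valeur $c$.

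Ensuite, il reste à extraire la $i$-flèche $d$ et à vérifier qu'elle correspond au but de $\alpha$. Pour cela, j'utiliserais la seconde assertion du corollaire précédent : le but de la transformation oplax associée à un $\infty$-foncteur $\Dn{i} \to \cotr{C}{c}$ est le composé $\Dn{i} \to \cotr{C}{c} \xto{U} C$. Or un $\infty$-foncteur $\Dn{i} \to C$ est exactement la donnée d'une $i$-flèche de $C$, toujours par corepresentation. On pose donc $d$ égale à l'image de la $i$-flèche considérée par le $\infty$-foncteur d'oubli $U$, ce qui est précisément le but de $\alpha$ vu comme $i$-flèche de $C$. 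La naturalité de la bijection du corollaire en $A$ (ici appliquée à $A = \Dn{i}$ et à ses morphismes corepresentant source, but, identité et compositions) garantit que cette correspondance $i$-flèche $\mapsto (d, \alpha)$ est bien compatible avec la structure, mais pour le présent énoncé seule la bijection ensembliste en chaque dimension est requise.

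La vérification principale est de s'assurer que la correspondance ainsi obtenue est bien une bijection pour chaque $i$ fixé, ce qui découle immédiatement de la bijectivité affirmée dans le corollaire précédent combinée à l'identification $\Hom_{\ooCat}(\Dn{i}, \var) \simeq (\var)_i$. Je n'anticipe pas d'obstacle sérieux : tout le travail de fond — la construction de l'inclusion $\cotr{C}{c} \hookto \HomLax(\Dn{1}, C)$ au paragraphe~\ref{paragr:incl_cone_cyl}, le caractère cartésien du carré de la proposition~\ref{prop:incl_cyl_cone}, et l'interprétation des transformations oplax via $\HomLax(\Dn{1}, \var)$ (corollaire~\ref{coro:trans_oplax_abs} et proposition~\ref{prop:cotrans_abs}) — a déjà été effectué dans les énoncés qui précèdent. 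Le seul point demandant un soin minime est la traduction entre le langage « $\infty$-foncteur de source $\Dn{i}$ » et celui de « $i$-flèche », qui est purement formelle grâce à la propriété corepresentante du disque. Ainsi le présent énoncé n'est qu'une reformulation, en termes de cellules, du corollaire précédent, et sa preuve consiste essentiellement à spécialiser ce dernier à $A = \Dn{i}$.
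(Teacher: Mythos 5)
Votre preuve est correcte et suit exactement la démarche du texte : l'énoncé se déduit du corollaire précédent appliqué à $A = \Dn{i}$, en utilisant la propriété coreprésentante du disque pour identifier $i$-flèches et $\infty$-foncteurs de source $\Dn{i}$. Votre rédaction explicite simplement les détails (identification de $d$ via le $\infty$-foncteur d'oubli) que le texte laisse implicites dans sa preuve d'une ligne.
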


\begin{proof}
  Cela résulte du corollaire précédent appliqué à $A = \Dn{i}$.
\end{proof}

\begin{paragr}
  Soit $C$ une \oo-catégorie. On définit un \oo-foncteur $\Dn{1} \otimes C
  \to \Dn{0} \joint C$ de la manière suivante. Si $A$ est une \oo-catégorie,
  en utilisant les adjonctions des paragraphes~\ref{paragr:def_tranche}
  et~\ref{paragr:def_HomOpLax}, ainsi que le \oo-foncteur du
  paragraphe~\ref{paragr:incl_cone_cyl}, on obtient une application
  \[
    \begin{split}
      \Hom_{\ooCat}(\Dn{0} \joint C, A)
      & \xto{\sim} {\smash{\coprod_{a \in A_0}} \Hom_{\ooCat}(C, \cotr{A}{a})} \\
      & \qquad\qquad\qquad\big\downarrow \\
      & \Hom_{\ooCat}(C, \HomLax(\Dn{1}, A))
       \xto{\sim} \Hom_{\ooCat}(\Dn{1} \otimes C, A),
    \end{split}
  \]
  naturelle en $A$, et donc le \oo-foncteur recherché en vertu du lemme de
  Yoneda.
\end{paragr}

\begin{coro}
  Pour toute \oo-catégorie $C$, le carré
  \[
    \xymatrix@C=2.5pc{
      C \ar[d] \ar[r]^-{\sigma_1 \otimes C} & \Dn{1} \otimes C \ar[d] \\
      \Dn{0} \ar[r]_-{\iota_1} & \Dn{0} \joint C \pbox{,}
    }
  \]
  où la flèche verticale de droite est le \oo-foncteur défini au paragraphe
  précédent et $\sigma_1$ désigne le \oo-foncteur du paragraphe
  \ref{paragr:def_disque}, est cocartésien.
\end{coro}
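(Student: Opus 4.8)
The claim is that for every $\infty$-category $C$, the square
\[
  \xymatrix@C=2.5pc{
    C \ar[d] \ar[r]^-{\sigma_1 \otimes C} & \Dn{1} \otimes C \ar[d] \\
    \Dn{0} \ar[r]_-{\iota_1} & \Dn{0} \joint C
  }
\]
is cocartesian. The plan is to verify the dual universal property: a $\infty$-functor out of $\Dn{0} \joint C$ corresponds to an $\infty$-functor out of $\Dn{1} \otimes C$ together with the data of a cone structure, and that the pushout condition matches exactly. Equivalently, one checks that the square becomes cartesian after applying $\Hom_{\ooCat}(-, A)$ for an arbitrary test $\infty$-category $A$.

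\textbf{The main line of argument.}
First I would fix an $\infty$-category $A$ and apply $\Hom_{\ooCat}(-, A)$ to the square. To show the original square is cocartesian, it suffices to show the resulting square of sets
\[
  \xymatrix@C=2.5pc{
    \Hom_{\ooCat}(\Dn{0} \joint C, A) \ar[d] \ar[r] &
    \Hom_{\ooCat}(\Dn{1} \otimes C, A) \ar[d] \\
    \Hom_{\ooCat}(\Dn{0}, A) \ar[r] & \Hom_{\ooCat}(C, A)
  }
\]
is cartesian, naturally in $A$. The key is to identify each corner. The lower-left corner is $A_0$, the set of objects of $A$. By the adjunction of paragraph~\ref{paragr:def_tranche}, the top-left corner decomposes as $\coprod_{a \in A_0} \Hom_{\ooCat}(C, \cotr{A}{a})$, where the map down to $A_0$ records the object $a$. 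By the adjunction of paragraph~\ref{paragr:def_HomOpLax}, the top-right corner is $\Hom_{\ooCat}(C, \HomLax(\Dn{1}, A))$. The $\infty$-functor $\Dn{1} \otimes C \to \Dn{0} \joint C$ defined in the paragraph just above the statement is precisely the one engineered (via Yoneda) to make these identifications compatible: it fits into the commutative factorization displayed there, using the inclusion $\cotr{A}{a} \hookto \HomLax(\Dn{1}, A)$ of paragraph~\ref{paragr:incl_cone_cyl}.

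\textbf{Reducing to a pullback of $\Hom$-sets.}
The heart of the verification is Proposition~\ref{prop:incl_cyl_cone}, which asserts that the square
\[
  \xymatrix{
    \cotr{A}{a} \ar[r] \ar[d] & \HomLax(\Dn{1}, A) \ar[d]^{\pi^0} \\
    \Dn{0} \ar[r]_a & A
  }
\]
is cartesian. Applying $\Hom_{\ooCat}(C, -)$ to this cartesian square (and using that $\Hom_{\ooCat}(C, -)$, being a right adjoint in each argument to nothing in particular but nonetheless a representable-valued functor, preserves limits and in particular pullbacks) shows that $\Hom_{\ooCat}(C, \cotr{A}{a})$ is the fibre over $a \in \Hom_{\ooCat}(C, \Dn{0}) \hookto \Hom_{\ooCat}(C, A)$ of the map $\pi^0_* : \Hom_{\ooCat}(C, \HomLax(\Dn{1}, A)) \to \Hom_{\ooCat}(C, A)$. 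Taking the coproduct over all $a \in A_0$ then exhibits $\coprod_{a} \Hom_{\ooCat}(C, \cotr{A}{a})$ as the full preimage, under $\pi^0_*$, of the constant $\infty$-functors $C \to \Dn{0} \to A$. This is exactly the statement that the square of $\Hom$-sets above is a pullback, since the bottom arrow $\Hom_{\ooCat}(\Dn{0}, A) \to \Hom_{\ooCat}(C, A)$ is precomposition by $C \to \Dn{0}$, which picks out exactly the constant $\infty$-functors.

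\textbf{Where the obstacle lies.}
The steps are all formal once the identifications are in place, so the delicate point is bookkeeping: I must check that the $\infty$-functor $\Dn{1} \otimes C \to \Dn{0} \joint C$ defined in the preceding paragraph is genuinely the comparison map induced by the cocartesian square, and that under the two adjunctions the map $\pi^0_*$ really corresponds to the projection $\coprod_a \Hom(C,\cotr{A}{a}) \to A_0$. This amounts to tracing through the naturality of the defining diagram of that $\infty$-functor and confirming compatibility of $\sigma_1 \otimes C$ with $\iota_1$ on the nose. The verification that $\pi^0$ restricted to the fibre gives the forgetful $\infty$-functor $\cotr{A}{a} \to A$ composed correctly is handled by Proposition~\ref{prop:incl_cyl_cone} itself, so I expect no genuine difficulty beyond careful diagram-chasing; the whole result then follows by the Yoneda lemma since the identifications are natural in $A$.
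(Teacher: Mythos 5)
Your proof is correct and takes essentially the same route as the paper's: both test the square against an arbitrary $\infty$-category $A$, decompose $\Hom_{\ooCat}(\Dn{0} \joint C, A)$ via the slice adjunction as $\coprod_{a \in A_0}\Hom_{\ooCat}(C,\cotr{A}{a})$, use Proposition~\ref{prop:incl_cyl_cone} (plus the fact that $\Hom_{\ooCat}(C,-)$ preserves pullbacks) to identify each term with a fibre of $\pi^0$, pass through the tensor--hom adjunction to $\Hom_{\ooCat}(\Dn{1}\otimes C, A)$, and conclude by Yoneda. The only difference is presentational: the paper writes the chain of bijections ending in $\Hom_{\ooCat}(\Dn{1} \otimes C, A) \times_{\Hom_{\ooCat}(C, A)} \Hom_{\ooCat}(\Dn{0}, A)$ directly, while you phrase the same computation as cartesianness of the square of $\Hom$-sets.
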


\begin{proof}
  Si $A$ est une \oo-catégorie, en vertu des adjonctions utilisées dans le
  paragraphe précédent et de la proposition~\ref{prop:incl_cyl_cone}, on a
  {\let\ooCat\ooCatTiny
  \[
    \begin{split}
      \Hom_{\ooCat}(\Dn{0} \joint C, A)
      & \simeq \coprod_{a \in A_0} \Hom_{\ooCat}(C, \cotr{A}{a}) \\
      & \simeq \coprod_{a \in A_0} \Hom_{\ooCat}(C, \HomLax(\Dn{1}, A))
        \times_{\Hom_{\ooCat}(C, A)} \{a\} \\
      & \simeq \coprod_{a \in A_0} \Hom_{\ooCat}(\Dn{1} \otimes C, A)
        \times_{\Hom_{\ooCat}(C, A)} \{a\} \\
      & \simeq \Hom_{\ooCat}(\Dn{1} \otimes C, A)
        \times_{\Hom_{\ooCat}(C, A)} \Hom_{\ooCat}(\Dn{0}, A) \\
      & \simeq \Hom_{\ooCat}((\Dn{1} \otimes C) \amalg_C \Dn{0}, A),
    \end{split}
  \]
  }
  d'où le résultat.
\end{proof}

\section{Suspension et transformations oplax}

\begin{paragr}\label{paragr:incl_susp_cyl}
  Soit $C$ une \oo-catégorie et soient $c$ et $d$ deux objets de $C$. On
  notera \nnot{$\Homi_C(c, d)$} la \oo-catégorie
  des cellules de $C$ de $0$-source $c$ et de $0$-but $d$.  On définit un foncteur
  d'inclusion $\Homi_C(c,d)^\op \hookto \HomLax(\Dn{1}, C)$ de la
  manière suivante. Pour tout~$i \ge 0$, on associe à une $i$-flèche de
  $\Homi_C(c,d)^\op$, qui correspond par définition à une $(i+1)$\nbd-flèche~$f$
  de $C$ de $0$-source $c$ et de $0$-but $d$, la $i$-flèche $(\id{c},
  \id{d}, \alpha)$ de~$\HomLax(\Dn{1}, C)$ (voir le
  paragraphe~\ref{paragr:desc_cyl_pol}), où $\id{c}$ et $\id{d}$ désignent
  les identités itérées respectives de $c$ et $d$ en dimension $i$, et où
  $\alpha$ est défini par
  \[ \alpha^0_k = t^{}_{k+1}(f) \quadet \alpha^1_k = s^{}_{k+1}(f), \]
  pour $0 \le k \le i$ (en particulier, on a $\alpha_i = f$).
  Le fait qu'on obtienne bien ainsi une $i$-flèche de $\HomLax(\Dn{1}, C)$
  résulte du paragraphe \ref{paragr:desc_cyl_pol}, ainsi que des formules
  \[ \alpha^1_{k-1} \comp_{k-1} \cdots \comp_1 \alpha^1_0 \comp_0 \id{c} =
  \alpha^1_{k-1}
  \quadet
  \id{d} \ast_0 \alpha^0_0 \ast_1 \dots \ast_{k-1} \alpha^0_{k-1} =
  \alpha^0_{k-1}.
  \]
  Par ailleurs, la fonctorialité de cette correspondance découle de la
  proposition~\ref{prop:desc_cyl}, ainsi que des formules ci-dessus.
\end{paragr}

\begin{prop}\label{prop:incl_susp_cone}
  Pour toute \oo-catégorie $C$ et tous objets $c$ et $d$ de $C$, le
  \oo-foncteur d'inclusion $\Homi_C(c,d)^\op \hookto \HomLax(\Dn{1}, C)$
  induit un isomorphisme
  \[
    \Homi_C(c,d)^\op \simeq \{c\} \times_C \HomLax(\Dn{1}, C)
    \times_C \{d\},
  \]
  le membre de droite désignant la limite projective du diagramme
  \[
    \xymatrix@C=1.5pc{
      \Dn{0} \ar[dr]_c & & \HomLax(\Dn{1}, C) \ar[dl]^{\pi^0}
      \ar[dr]_{\pi^1} & & \Dn{0} \ar[dl]^d \\
             & C & & C & \pbox{,}
    }
  \]
  où $\pi^0$ et $\pi^1$ sont les \oo-foncteurs du
  paragraphe~\ref{paragr:cohomot_Gray}.
\end{prop}

\begin{proof}
  Cela résulte immédiatement de la description de l'inclusion
  $\Homi_C(c,d)^\op \hookto \HomLax(\Dn{1}, C)$ donnée au paragraphe
  précédent.
\end{proof}

\begin{coro}
  Soit $C$ une \oo-catégorie et soient $c$ et $d$ deux objets de~$C$. Pour toute
  \oo-catégorie $A$, on a une bijection naturelle entre les \oo-foncteurs
  \hbox{$A \to \Homi_C(c, d)^\op$} et les transformations oplax entre \oo-foncteurs
  de $A$ vers $C$ de source le \oo-foncteur constant de valeur $c$ et de but
  le \oo-foncteur constant de valeur~$d$.
\end{coro}

\begin{proof}
  L'assertion est conséquence immédiate de la proposition précédente et de
  la description des transformations oplax en termes de $\HomLax(\Dn{1}, C)$
  (proposition~\ref{prop:cotrans_abs}).
\end{proof}

\begin{coro}
  Soit $C$ une \oo-catégorie et soient $c$ et $d$ deux objets de $C$. Pour tout
  $i \ge 0$, les transformations oplax du \oo-foncteur $\Dn{i} \to C$
  constant de valeur~$c$ vers le \oo-foncteur $\Dn{i} \to C$ constant de
  valeur $d$ sont en bijection naturelle avec les $(i+1)$\nbd-flèches de $C$ de
  $0$-source $c$ et de $0$-but $d$. De plus, si une transformation
  oplax~$\alpha$ et une $(i+1)$\nbd-cellule $f$ de $C$ se correspondent dans cette
  bijection, alors, pour $i \ge 1$, les $i$-cellules source et but de $f$
  correspondent aux transformations oplax $\alpha \comp \tau_i$ et $\alpha
  \comp \sigma_i$, où $\sigma_i$ et $\tau_i$ désignent les \oo-foncteurs du
  paragraphe~\ref{paragr:def_disque}.
\end{coro}

\begin{proof}
  La première assertion résulte de la bijection du corollaire précédent dans
  le cas $A = \Dn{i}$ et la seconde de la naturalité de cette même bijection
  appliquée à $\sigma_i$ et $\tau_i$.
\end{proof}

\begin{paragr}
  Soit $C$ une \oo-catégorie. La \ndef[suspension!d'une
  $\infty$-catégorie]{suspension} de $C$ est la \oo-catégorie
  \nnot{$\susp{C}$} définie de la manière suivante:
  \begin{itemize}
    \item les objets de $\susp{C}$ sont $0$ et $1$ ;
    \item si $x$ et $y$ sont deux objets de $\susp{C}$,
    on a
    \[
      \Homi_{\susp{C}}(x, y) =
        \begin{cases}
          C^\op & \text{si $x = 0$ et $y = 1$,} \\
          \ast & \text{si $x = y$,} \\
          \vide & \text{sinon ;}
        \end{cases}
    \]
    \item les compositions et les identités sont définies de la façon
    évidente.
  \end{itemize}

  On définit un \oo-foncteur $\Dn{1} \otimes C \to \susp{C}$ comme suit.
  Soit $A$ une \oo-catégorie. On vérifie immédiatement que
  la donnée d'un \oo-foncteur de $\susp{C}$ vers $A$ correspond à la donnée de
  deux objets $a$ et $b$ de $A$, images respectives des objets $0$ et $1$,
  et d'un \oo-foncteur de $\Homi_{\susp{C}}(0, 1) = C^\op$ vers
  $\Homi_{A}(a, b)$. Ainsi, on dispose de bijections naturelles
  \[
    \begin{split}
      \Hom_{\ooCat}(\susp{C}, A)
      & \simeq
      \coprod_{a, b \in A_0} \Hom_{\ooCat}(C^\op, \Homi_{A}(a, b)) \\
      & \simeq
      \coprod_{a, b \in A_0} \Hom_{\ooCat}(C, \Homi_{A}(a, b)^\op).
    \end{split}
  \]
  En utilisant le \oo-foncteur du paragraphe~\ref{paragr:incl_susp_cyl} et
  l'adjonction du paragraphe~\ref{paragr:def_tranche}, on obtient donc une
  application
  \[
    \begin{split}
      \Hom_{\ooCat}(\susp{C}, A)
      & \xto{\sim} {\smash{\coprod_{a, b \in A_0}} \Hom_{\ooCat}(C,
      \Homi_{A}(a, b)^\op)} \\
      & \qquad\qquad\qquad\big\downarrow \\
      & \Hom_{\ooCat}(C, \HomLax(\Dn{1}, A))
       \xto{\sim} \Hom_{\ooCat}(\Dn{1} \otimes C, A),
    \end{split}
  \]
  naturelle en $A$, et donc un \oo-foncteur $\Dn{1} \otimes C \to \susp{C}$
  en vertu du lemme de Yoneda.
\end{paragr}

\begin{coro}
  Pour toute \oo-catégorie $C$, le \oo-foncteur $\Dn{1} \otimes C \to
  \susp{C}$ défini au paragraphe précédent induit un isomorphisme
  \[ \susp{C} \simeq \Dn{0} \amalg_C (\Dn{1} \otimes C) \amalg_C
  \Dn{0}, \]
  le membre de droite désignant la limite inductive du diagramme
  \[
    \xymatrix@C=2pc{
      \Dn{0} && \Dn{1} \otimes C && \Dn{0} \\
      & C \ar[ul] \ar[ur]^{\sigma_1 \otimes C} && C \ar[ul]_{\tau_1
      \otimes C} \ar[ur] & \pbox{,}
    }
  \]
  où $\sigma_1$ et $\tau_1$ sont les \oo-foncteurs du paragraphe
  \ref{paragr:def_disque}.
\end{coro}

\begin{proof}
  Si $A$ est une \oo-catégorie, en vertu du paragraphe précédent et de la
  proposition~\ref{prop:incl_susp_cone}, on a
  {\let\ooCat\ooCatTiny
  \[
    \begin{split}
      \MoveEqLeft
      \Hom_{\ooCat}(\susp{C}, A) \\
      & \simeq \coprod_{a,b \in A_0} \Hom_{\ooCat}(C, \Homi_A(a, b)^\op) \\
      & \simeq \coprod_{a,b \in A_0} \{a\} \times_{\Hom_{\ooCat}(C, A)} \Hom_{\ooCat}(C, \HomLax(\Dn{1}, A))
        \times_{\Hom_{\ooCat}(C, A)} \{b\} \\
      & \simeq \coprod_{a,b \in A_0} \{a\} \times_{\Hom_{\ooCat}(C, A)}
      \Hom_{\ooCat}(\Dn{1} \otimes C, A) \times_{\Hom_{\ooCat}(C, A)} \{b\} \\
      & \simeq
      \Hom_{\ooCat}(\Dn{0}, A)
      \underset{\substack{\quad\\\mathclap{\Hom_{\ooCat}(C, A)}}}{\times}
      \Hom_{\ooCat}(\Dn{1} \otimes C, A) 
      \underset{\substack{\quad\\\mathclap{\Hom_{\ooCat}(C, A)}}}{\times}
      \Hom_{\ooCat}(\Dn{0}, A) \\
      & \simeq
      \Hom_{\ooCat}(\Dn{0} \amalg_C (\Dn{1} \otimes C) \amalg_C \Dn{0}, A),
    \end{split}
  \]
  }
  d'où le résultat.
\end{proof}

\chapter{Fonctorialités des tranches : conjectures}
\label{sec:conj}

\kern-1pt

Le but de cet appendice est de dégager des conjectures de fonctorialité des
tranches généralisant les résultats obtenus dans le
chapitre~\ref{sec:fonct_tr}. Pour formuler ces conjectures, on a besoin,
entre autres, d'introduire les notions de \oo-sesquicatégorie, de
\oo-catégorie de Gray (catégorie enrichie dans $\ooCat$ muni du produit
tensoriel de Gray), de \oo-catégorie de Gray gauche, et de montrer que la
sesquicatégorie des \oo-catégories, \oo-foncteurs et transformations oplax
(resp. transformations lax) provient d'une \oo-catégorie de Gray $\ooCatGr$
(resp. d'une \oo-catégorie de Gray gauche $\ooCatGrg$). On commence par
développer les notions correspondantes dans un cadre monoïdal (non
nécessairement symétrique) général.

\begin{paragr}\label{paragr:def_sesqui}
Soit $\Catmod$ une catégorie munie d'un foncteur $P:\Catmod\to\Ens$ à
valeurs dans la catégorie des ensembles qu'on appellera \ndef{foncteur
points}. Une \ndef[$z$@$\Catmod$-sesquicatégorie]{$\Catmod$\nbd-sesquicatégorie}, ou
\ndef[sesquicatégorie enrichie]{sesquicatégorie enrichie dans~$\Catmod$},
est une catégorie $\C$ munie d'un foncteur
\[
\VHom_\C:\C^\op\times\C\to\Catmod
\]
\notindex{$\VHom_\C : \C^\op \times \C \to \Catmod$}%
et d'un isomorphisme de foncteurs
\[
\UseTwocells
\xymatrixcolsep{.0pc}
\xymatrixrowsep{-.1pc}
\xymatrix{
\C^\op\times\C\ar[rrrrrr]^{\VHom_\C}\ar[rrrdddddd]_{\Hom_\C}
&&&&&&\Catmod\ar[llldddddd]^{P}
\\&&&&\ar@2{<-}[lldd]_\sim
\\&&&&&&&
\\&&&&&&&
\\
\\
\\
&&&\Ens
&&&,
}
\]
qui le plus souvent, pour simplifier, sera considéré comme étant l'identité.
On dit que $\C$ est la \ndef[catégorie sous-jacente!à une
$\Catmod$-sesquicatégorie]{catégorie sous-jacente} à une telle
$\Catmod$\nbd-sesquicatégorie, que les objets de $\C$ sont ses
\ndef[objet!d'une $\Catmod$-sesquicatégorie]{objets} et que, pour $x,y$ deux
objets, $\VHom_\C(x,y)$ est son
\ndef[objet de morphismes!d'une $\Catmod$-sesquicatégorie]{objet de
morphismes de~$x$ vers $y$}.

Si $\Catmod$ est une catégorie monoïdale d'objet unité $\CatmodI$, on
dispose d'un foncteur points canonique
\[
P=\Hom_\Catmod(\CatmodI,\var):\Catmod\to\Ens,
\]
d'où une notion de $\Catmod$\nbd-sesquicatégorie. Toutes les
$\Catmod$\nbd-sesquicatégories considérées pour $\Catmod$ une catégorie
monoïdale seront relatives à ce foncteur points. On remarque que le produit
tensoriel de la catégorie monoïdale ne joue aucun rôle dans cette
définition.
\end{paragr}

\begin{exem}\label{exem:bourbaki_sesqui}
Si $\Catmod$ est la catégorie des ensembles avec comme foncteur points le
foncteur identité, une $\Catmod$\nbd-sesquicatégorie n'est autre qu'une
catégorie. Si $\Catmod$ est la catégorie $\Cat$ des petites
catégories munie du foncteur points $\Ob:\Cat\to\Ens$, alors on vérifie
facilement qu'une $\Catmod$\nbd-sesquicatégorie est ce qu'on appelle
habituellement une sesquicatégorie (voir par
exemple~\cite[section 2]{StreetCatStruct}).
\end{exem}

\begin{paragr}\label{paragr:def_oosesqui}
Une \ndef[$\infty$-sesquicatégorie]{\oo-sesquicatégorie} est une
sesquicatégorie enrichie dans la catégorie $\ooCat$ des \oo-catégories munie
du foncteur points
$\Ob:\ooCat\to\Ens$, associant à une \oo-catégorie l'ensemble de ses objets
ou, de façon équivalente, est une sesqui\-catégorie enrichie dans la catégorie
monoïdale $\ooCat$ munie du produit tensoriel défini, au choix, par le
produit cartésien ou par le produit tensoriel de Gray (voir le
paragraphe~\ref{paragr:def_tens}), puisque dans ces deux cas l'objet unité
est un objet final de $\ooCat$.

Soit $\C$ une \oo-sesquicatégorie. Pour $i\ge1$, une \ndef[$i$-flèche!d'une
$\infty$-sesquicatégorie]{$i$\nbd-flèche}
de $\C$ est une $(i-1)$\nbd-flèche de la \oo-catégorie $\VHom_\C(x,y)$,
pour des objets $x,y$ de $\C$, et on dit que $x$ est sa
\ndef[]{$0$\nbd-source} et $y$ son \ndef[]{$0$\nbd-but}. Pour $0<j\le i$, sa
\ndef[$j$-source d'une $i$-flèche!d'une
$\infty$-sesqui\-catégorie]{$j$\nbd-source} et son \ndef[$j$-but d'une
$i$-flèche!d'une $\infty$-sesquicatégorie]{$j$\nbd-but}
sont respectivement sa $(j-1)$\nbd-source et son $(j-1)$\nbd-but comme
cellule de $\VHom_\C(x,y)$.  La commutativité du triangle du
paragraphe~\ref{paragr:def_sesqui} montre que les $1$\nbd-flèches de $\C$
sont les flèches de sa catégorie sous-jacente. Pour $0\le k<i,j$, on dit
qu'une $i$\nbd-flèche $\alpha$ de $\C$ et une $j$\nbd-flèche $\beta$ de $\C$
sont \ndef[$i$-flèches $j$-composables!d'une
$\infty$-sesquicatégorie]{$k$\nbd-composables} si la $k$\nbd-source de
$\alpha$ est égale au $k$\nbd-but de $\beta$, et alors si $k>0$, leur
\ndef[$j$-composé de $i$-flèches!d'une
$\infty$-sesqui\-catégorie]{$k$\nbd-composé} \nnot{$\alpha\comp_k\beta$} est défini
comme étant leur $(k-1)$\nbd-composé dans la \oo-catégorie $\VHom_\C(x,y)$,
où $x$ et $y$ sont respectivement la $0$\nbd-source, nécessairement commune,
et le $0$\nbd-but, nécessairement commun, de $\alpha$ et $\beta$. Si $k=0$,
leur $0$\nbd-composé $\alpha\comp_0\beta$ n'est défini que si $i=1$ ou $j=1$.
Dans ce cas, soient $x$ la $0$\nbd-source de $\beta$, $y$ son $0$\nbd-but,
qui est alors égal à la $0$\nbd-source de $\alpha$, et $z$ le $0$\nbd-but de
$\alpha$. Si $i=1$, le \ndef[]{$0$\nbd-composé} $\alpha\comp_0\beta$ est
défini comme étant l'image de la cellule $\beta$ de $\VHom_\C(x,y)$ par le
\oo-foncteur
\[
\VHom_\C(x,\alpha):\VHom_C(x,y)\to\VHom_\C(x,z)
\]
et, si $j=1$, comme étant l'image de la cellule $\alpha$ de $\VHom_\C(y,z)$ par le \oo-foncteur
\[
\VHom_\C(\beta,z):\VHom_C(y,z)\to\VHom_\C(x,z).
\]
Si $x$ est un objet de $\C$, l'\ndef[unité d'une cellule!d'une
$\infty$-sesqui\-catégorie]{unité} de $x$ est l'unité \nnot[$1_x$,
$1_\alpha$]{$1_x$} de $x$ dans la catégorie sous-jacente à $\C$ (qui est une
$1$\nbd-flèche de $\C$) et si, pour $i>0$, $\alpha$ est une $i$\nbd-flèche
de $\C$ de $0$\nbd-source~$x$ et $0$\nbd-but~$y$, l'\ndef[]{unité} de
$\alpha$ est son unité $1_\alpha$ comme cellule de $\VHom_\C(x,y)$ (qui est
une $(i+1)$\nbd-flèche de $\C$).  Ces données satisfont aux mêmes axiomes
que les données correspondantes d'une \oo-catégorie, sauf la règle de
Godement pour la $0$\nbd-composition (autrement dit, pour $i,j>1$, si
$\alpha$ est une $i$\nbd-flèche de $1$\nbd-source $a_0$ et $1$\nbd-but $a_1$
et $\beta$ une $j$\nbd-flèche de $1$\nbd-source~$b_0$ et $1$\nbd-but $b_1$
et si $\alpha$ et $\beta$ sont $0$\nbd-composables
\[
\UseTwocells
\xymatrixcolsep{2.5pc}
\xymatrix{
\protect\phantom{\bullet}\rtwocell<5>^{b_0}_{b_1}{\,\beta}&
\protect\phantom{\bullet}\rtwocell<5>^{a_0}_{a_1}{\,\alpha}&
\protect\phantom{\bullet}\pbox{,}
}
\]
le composé $(\alpha\comp_0b_1)\comp_1(a_0\comp_0\beta)$ n'est pas en général
égal au composé $(a_1\comp_0\beta)\comp_1(\alpha\comp_0 b_0)$, ce qui
empêche de définir le $0$\nbd-composé de $\alpha$ et $\beta$). Ces données
soumises à ces axiomes fournissent une définition équivalente des
\oo-sesquicatégories.
\end{paragr}

\begin{paragr}\label{paragr:sesqui_tranches}
Soient $C$ une \oo-catégorie et $c$ un objet de $C$. Dans la
description explicite de la \oo-catégorie tranche $\cotr{C}{c}$ donnée dans
le chapitre~\ref{sec:desc_expl}, les seules $0$\nbd-compositions dans $C$
figurant dans les formules définissant les cellules de $\cotr{C}{c}$, ainsi
que leurs compositions, sont de la forme $\alpha\comp_0\beta$, où $\alpha$
ou $\beta$ est une $1$\nbd-flèche de $C$ (voir le
paragraphe~\ref{paragr:desc_tr_pol} et la proposition~\ref{prop:desc_tr}).
De plus, si $(x,f:c\to x)$ et \hbox{$(x',f':c\to x')$} sont deux objet de
$\cotr{C}{c}$ et $i>1$, toute $i$\nbd-flèche non triviale de $C$ qui est une
composante d'une cellule de la \oo-catégorie
$\Homi_{\cotr{C}{c}}((x,f),(x',f'))$ a comme $0$\nbd-source $c$ ou $x$ et
comme $0$-but~$x'$. Ainsi, aucune utilisation de la règle de Godement pour
la $0$\nbd-composition de $C$ n'intervient pour vérifier que les formules du
paragraphe~\ref{paragr:desc_tr_pol} et de la proposition~\ref{prop:desc_tr}
définissent une \oo-catégorie $\Homi_{\cotr{C}{c}}((x,f),(x',f'))$.

On en déduit que si $\C$ est une \oo-sesquicatégorie, $c$ un objet de $\C$
et \hbox{$f:c\to x$}, $f':c\to x'$ deux $1$\nbd-flèches de $\C$ de source
$c$, on peut définir une \oo-catégorie notée
$\VHom_{\cotr{\C}{c}}((x,f),(x',f'))$
\notindex{$\VHom_{\cotr{\C}{c}}$, $\Homi_{\cotrm{\C}{c}}$,
$\Homi_{\tr{\C}{c}}$, $\Homi_{\trm{\C}{c}}$}%
par les mêmes formules que dans le cas d'une \oo-catégorie~$C$.

En revanche, il
n'est pas possible en général d'assembler, pour $(x,f)$ et~$(x',f')$
variables, les \oo-catégories $\VHom_{\cotr{\C}{c}}((x,f),(x',f'))$ en une
\oo-sesquicatégorie.
En effet, étant donné une \oo-catégorie $C$ et un objet $c$ de $C$, on
observe que dans la vérification du fait que, si $\alpha$ et $\beta$ sont une
$2$\nbd-flèche et une $1$\nbd-flèche $0$\nbd-composables de~$\cotr{C}{c}$,
alors les formules de la proposition~\ref{prop:desc_tr} définissant le
composé $\alpha\comp_0\beta$ définissent bien une $2$\nbd-flèche de
$\cotr{C}{c}$,
on utilise la règle de Godement pour la $0$\nbd-composition de $C$.

De façon analogue, on peut définir une \oo-catégorie
\smash{$\Homi_{\cotrm{\C}{c}}((x,f),(x',f'))$} et, pour $g:x\to c$ et
$g':x'\to c$ deux $1$\nbd-flèches de $\C$ de but $c$, des \oo-catégories
$\Homi_{\tr{\C}{c}}((x,g),(x',g'))$ et
\smash{$\Homi_{\trm{\C}{c}}((x,g),(x',g'))$} mais, par contre,  pas de
\oo-sesqui\-catégories associées. Pour pouvoir le faire, on a besoin d'une
structure plus forte, celle de \oo-catégorie de Gray ou de \oo-catégorie de
Gray gauche (voir le paragraphe~\ref{paragr:Graycat} et la
conjecture~\ref{conject:conj_prelim}).
\end{paragr}

\begin{paragr}\label{paragr:cat_enrich}
Soit $\Catmod$ une catégorie monoïdale de produit tensoriel $\otimes$ et
d'objet unité $\CatmodI$. On rappelle qu'une \ndef[catégorie
enrichie]{catégorie enrichie dans $\Catmod$}, ou plus simplement
\ndef[$z$@$\Catmod$-catégorie]{$\Catmod$\nbd-catégorie}, $\Vcat{C}$ est la
donnée
\begin{itemize}
\item d'une classe \nnot{$\Ob(\Vcat{C})$} dont les éléments sont appelés les
\ndef[objet!d'une $\Catmod$-catégorie]{objets} de $\Vcat{C}$; 
\item pour tous $x,y$ objets de $\Vcat{C}$, d'un objet
\nnot{$\VHom_{\Vcat{C}}(x,y)$} de $\Catmod$, appelé \ndef[objet de
morphismes!d'une $\Catmod$-catégorie]{objet de morphismes de $x$ vers $y$};
\item pour tous $x,y,z$ objets de $\Vcat{C}$, d'un morphisme
\[ \Vcomp_{z,y,x}:\VHom_{\Vcat{C}}(y,z)\otimes\VHom_{\Vcat{C}}(x,y)\to\VHom_{\Vcat{C}}(x,z) \]
\notindex{$\Vcomp_{z,y,x} : \VHom_{\Vcat{C}}(y,z)\otimes\VHom_{\Vcat{C}}(x,y)\to\VHom_{\Vcat{C}}(x,z)$}%
de $\Catmod$, appelé \ndef[morphisme de composition d'une
$\Catmod$-catégorie]{morphisme de composition};
\item pour tout $x$ objet de $\Vcat{C}$, d'un morphisme
\nnot{$\VId_x:\CatmodI\to\VHom_{\Vcat{C}}(x,x)$} de $\Catmod$, appelé
\ndef[morphisme d'unité d'une $\Catmod$-catégorie]{morphisme d'unité},
\end{itemize}
ces données satisfaisant aux conditions usuelles d'associativité et d'unité,
autrement dit rendant commutatifs les diagrammes
\[
\xymatrixrowsep{2.5pc}
\xymatrixcolsep{3.5pc}
\xymatrix{
\VHom_{\Vcat{C}}(z,t)\otimes\VHom_{\Vcat{C}}(y,z)\otimes\VHom_{\Vcat{C}}(x,y)\ar[r]^-{\Vcomp_{t,z,y}\otimes1}\ar[d]_{1\otimes\Vcomp_{z,y,x}}
&\VHom_{\Vcat{C}}(y,t)\otimes\VHom_{\Vcat{C}}(x,y)\ar[d]^{\Vcomp_{t,y,x}}
\\
\VHom_{\Vcat{C}}(z,t)\otimes\VHom_{\Vcat{C}}(x,z)\ar[r]_-{\Vcomp_{t,z,x}}
&\VHom_{\Vcat{C}}(x,t) \pbox{,}
}
\]
\[
\xymatrix{
\VHom_{\Vcat{C}}(x,y)\otimes\CatmodI\ar[r]^-{\simeq}\ar[d]_{1\otimes\VId_x}
&\VHom_{\Vcat{C}}(x,y)
&\CatmodI\otimes\VHom_{\Vcat{C}}(x,y)\ar[l]_-{\simeq}\ar[d]^{\VId_y\otimes1}
\\
\VHom_{\Vcat{C}}(x,y)\otimes\VHom_{\Vcat{C}}(x,x)\ar[ru]_{\,\,\Vcomp_{y,x,x}}
&&\VHom_{\Vcat{C}}(y,y)\otimes\VHom_{\Vcat{C}}(x,y)\ar[lu]^{\Vcomp_{y,y,x}}
\pbox{,}
}
\]
pour $x,y,z,t$ objets de $\Vcat{C}$ (où on omet pour simplifier les
contraintes d'associativité et d'unité de $\Catmod$).

Une \ndef[$z$@$\Catmod$-catégorie gauche]{$\Catmod$\nbd-catégorie gauche}
est une catégorie enrichie dans la catégorie monoïdale \ndef[catégorie
monoïdale!transposée]{transposée} de $\Catmod$, c'est-à-dire dans la
catégorie monoïdale ayant même catégorie sous-jacente et même unité que
$\Catmod$ mais comme produit tensoriel le foncteur
\[
  \begin{split}
    \Catmod\times\Catmod & \to\Catmod \\
     (X,Y) & \mapsto Y\otimes X \pbox{,}
  \end{split}
\]
obtenu à partir de $\otimes$ en composant avec l'isomorphisme de symétrie du
produit cartésien. Les notions de $\Catmod$\nbd-catégorie et
$\Catmod$\nbd-catégorie gauche sont bien distinctes, sauf dans le cas où la
catégorie monoïdale $\Catmod$ est symétrique, auquel cas ces deux notions
sont essentiellement équivalentes, la symétrie définissant une bijection
entre $\Catmod$\nbd-catégories et $\Catmod$\nbd-catégories gauches.
\end{paragr}

\begin{exem}\label{exem:bourbaki_enrich}
Une catégorie enrichie dans la catégorie monoïdale des ensembles, avec comme
produit tensoriel le produit cartésien, n'est autre qu'une catégorie
ordinaire.
\end{exem}

\begin{paragr}\label{paragr:imdir_enrich}
Soient $\Catmod$ et $\Catmod'$ deux catégories monoïdales de produits
tensoriels et unités $\otimes$, $\CatmodI$ et $\otimes'$, $\CatmodI'$
respectivement, et $F:\Catmod\to\Catmod'$ un foncteur monoïdal lax. À toute
$\Catmod$\nbd-catégorie $\Vcat{C}$, le foncteur $F$ associe une
$\Catmod'$\nbd-catégorie \nnot[$F_\ast(\Vcat{C})$]$\Vcat{C}'=F_*(\Vcat{C})$,
appelée \ndef[image directe d'une $\Catmod$-catégorie]{image directe de
$\Vcat{C}$ par $F$}, définie comme suit :
\begin{itemize}
\item les objets de $\Vcat{C}'$ sont les mêmes que ceux de $\Vcat{C}$;
\item pour $x,y$ objets de $\Vcat{C}'$, l'objet de morphismes de $x$ vers $y$ est
$F(\VHom_{\Vcat{C}}(x,y))$;
\item pour $x,y,z$ objets de $\Vcat{C}'$, le morphisme de composition est le composé
\[
F(\VHom_{\Vcat{C}}(y,z))\otimes'F(\VHom_{\Vcat{C}}(x,y))\to
F(\VHom_{\Vcat{C}}(y,z)\otimes\VHom_{\Vcat{C}}(x,y))\to
F(\VHom_{\Vcat{C}}(x,z)),
\]
la flèche de gauche venant de la contrainte de composition du foncteur
monoïdal lax $F$ et la flèche de droite étant $F(\Vcomp_{z,y,x})$; \item
pour $x$ objet de $\Vcat{C}'$, le morphisme d'unité est le composé
\[
\CatmodI'\to F(\CatmodI)\to F(\VHom_{\Vcat{C}}(x,x)),
\]
la flèche de gauche venant de la contrainte d'unité du foncteur monoïdal lax
$F$ et la flèche de droite étant $F(\VId_x)$.
\end{itemize}
\end{paragr}

\begin{exem}\label{exem:cat_sousj}
Soit $\Catmod$ une catégorie monoïdale de produit tensoriel $\otimes$ et
d'objet unité $\CatmodI$. Le foncteur
\[
\Hom_{\Catmod}(\CatmodI,\var):\Catmod\to\Ens
\]
s'enrichit en un foncteur monoïdal lax à valeurs dans la catégorie des
ensembles munie du produit cartésien en définissant pour $X,Y$ objets de
$\Catmod$ la contrainte de composition
\[
\Hom_{\Catmod}(\CatmodI,X)\times\Hom_{\Catmod}(\CatmodI,Y)\to\Hom_{\Catmod}(\CatmodI,X\otimes Y)
\]
par
\[
(x:\CatmodI\to X,y:\CatmodI\to Y)\mapsto x\otimes y:\CatmodI\simeq \CatmodI\otimes\CatmodI\to X\otimes Y
\]
et la contrainte d'unité
\[
\{*\}\to\Hom_{\Catmod}(\CatmodI,I)\qquad\hbox{par}\qquad *\mapsto 1_\CatmodI.
\]
Pour toute $\Catmod$-catégorie $\Vcat{C}$, l'image directe de $\Vcat{C}$ par
ce foncteur monoïdal lax est, en vertu de
l'exemple~\ref{exem:bourbaki_enrich}, une catégorie ordinaire appelée
\ndef[catégorie sous-jacente!à une $\Catmod$-catégorie]{catégorie
sous-jacente} à la $\Catmod$\nbd-catégorie $\Vcat{C}$.

La \ndef[catégorie sous-jacente!à une $\Catmod$-catégorie gauche]{catégorie
sous-jacente} à une $\Catmod$\nbd-catégorie gauche $\Vcat{C}'$ est la
catégorie sous-jacente à $\Vcat{C}'$, vue comme catégorie
enrichie dans la catégorie monoïdale transposée de $\Catmod$.
\end{exem}

\begin{paragr}\label{paragr:sesquicat_sousj}
Soient $\Catmod$ une catégorie monoïdale de produit tensoriel $\otimes$ et
d'objet unité $\CatmodI$, et $\Vcat{C}$ une $\Catmod$-catégorie de catégorie
sous-jacente $\C$. On peut munir la catégorie $\C$ d'une structure canonique
de $\Catmod$\nbd-sesquicatégorie, appelée
\ndef[$z$@$\Catmod$-sesquicatégorie!sous-jacente!à une
$\Catmod$-catégorie]{$\Catmod$\nbd-sesquicatégorie sous-jacente} à la
$\Catmod$\nbd-catégorie $\Vcat{C}$, comme suit. Le foncteur
$\VHom_\C:\C^\op\times\C\to\Catmod$ associe à un objet $(x,y)$ de
$\C^\op\times\C$ l'objet $\VHom_{\Vcat{C}}(x,y)$ de $\Catmod$. Si $f:x'\to
x$ et $g:y\to y'$ sont des morphismes de $\C$, la flèche $\VHom_{\C}(f,g)$
de $\Catmod$ est définie par le diagramme commutatif
\[
\xymatrixcolsep{3pc}
\xymatrix{
\VHom_{\Vcat{C}}(x,y)\ar[r]^-{\VHom_{\C}(f,g)}
&\VHom_{\Vcat{C}}(x',y')\\
\CatmodI\otimes\VHom_{\Vcat{C}}(x,y)\otimes\CatmodI\ar[r]_-{g\otimes1\otimes f}\ar[u]^\wr
&\VHom_{\Vcat{C}}(y,y')\otimes\VHom_{\Vcat{C}}(x,y)\otimes\VHom_{\Vcat{C}}(x',x)\ar[u]_{\Vcomp_{y',y,x'}(1\otimes\Vcomp_{y,x,x'})}
}
\]
(où la flèche verticale de gauche vient de la contrainte d'unité de
$\Catmod$ et où on néglige les contraintes d'associativité). Il est alors
immédiat que le triangle
\[
\xymatrix{
\C^\op\times\C\ar[rr]^{\VHom_\C}\ar[rd]_{\Hom_\C}
&&\Catmod\ar[ld]^{\Hom_{\Catmod}(\CatmodI,\var)}
\\
&\Ens
}
\]
du paragraphe~\ref{paragr:def_sesqui} est strictement commutatif.

La \ndef[$z$@$\Catmod$-sesquicatégorie!sous-jacente!à une $\Catmod$-catégorie
gauche]{$\Catmod$\nbd-sesquicatégorie sous-jacente} à une
$\Catmod$\nbd-catégorie gauche $\Vcat{C}'$ est la
$\Catmod$\nbd-sesqui\-catégorie sous-jacente à $\Vcat{C}'$, vue comme
catégorie enrichie dans la catégorie monoïdale transposée de $\Catmod$ (on
rappelle que le produit tensoriel de $\Catmod$ n'intervient pas dans la
définition d'une $\Catmod$\nbd-sesquicatégorie).
\end{paragr}

\begin{exem}\label{exem:standard_enrich}
Soit $\Catmod$ une catégorie monoïdale fermée à droite (voir le
paragraphe~\ref{paragr:biferme}) de produit tensoriel $\otimes$ et d'objet
unité $\CatmodI$, de sorte qu'il existe un foncteur $\Homd_\Catmod :
\Catmod^\op \times \Catmod \to\Catmod$ et une bijection
\[
\Hom_\Catmod(X \otimes Y, Z) \simeq \Hom_\Catmod(X, \Homd_\Catmod(Y, Z)),
\]
naturelle en $X$, $Y$ et $Z$ dans $\Catmod$. On définit une
$\Catmod$\nbd-catégorie \nnot[$\VCatd$, $\VCatg$]{$\VCatd$} ayant mêmes
objets que $\Catmod$ et telle que $\VHom_\VCatd(X,Y)=\Homd_\Catmod(X,Y)$
comme suit. La bijection ci-dessus, appliquée à $X=\Homd_\Catmod(Y, Z)$,
fournit un morphisme de $\Catmod$, dit d'\ndef[morphisme d'évaluation dans
une catégorie monoïdale fermée à droite]{évaluation},
\[
\ev_{Z,Y}:\Homd_\Catmod(Y, Z)\otimes Y\to Z
\]
correspondant par cette bijection à $1_{\Homd_\Catmod(Y, Z)}$. Le composé
$\ev_{Z,Y}(1\otimes\ev_{Y,X})$
\[
\Homd_\Catmod(Y, Z)\otimes\Homd_\Catmod(X,Y)\otimes X\to\Homd_\Catmod(Y,Z)\otimes Y\to Z
\]
définit par adjonction le morphisme de composition. De même, l'isomorphisme
\hbox{$\CatmodI\otimes X\simeq X$} définit par adjonction le morphisme
d'unité $\CatmodI\to\Homd_\Catmod(X,X)$. Une vérification simple mais
fastidieuse prouve les propriétés d'associativité et d'unité (voir par
exemple \cite{Kelly}). La catégorie sous-jacente à la
$\Catmod$\nbd-catégorie $\VCatd$ n'est autre que la catégorie sous-jacente à
la catégorie monoïdale $\Catmod$ et la $\Catmod$\nbd-sesquicatégorie
sous-jacente à $\VCatd$ est définie par le foncteur $\Homd_\Catmod :
\Catmod^\op \times \Catmod \to\Catmod$.

Si la catégorie monoïdale $\Catmod$, au lieu d'être fermée à droite est
fermée à gauche, de sorte qu'il existe un foncteur $\Homg_\Catmod :
\Catmod^\op \times \Catmod \to \Catmod$ et une bijection
\[
\Hom_\Catmod(X \otimes Y, Z) \simeq \Hom_\Catmod(Y, \Homg_\Catmod(X, Z)),
\]
naturelle en $X$, $Y$ et $Z$ dans $\Catmod$, il faut se garder de croire
qu'on puisse définir en général une $\Catmod$\nbd-catégorie ayant mêmes
objets que $\Catmod$ et dont l'objet de morphismes soit donné par
$\Homg_\Catmod$. En revanche, en appliquant ce qui précède à la catégorie
monoïdale~$\Catmod'$ transposée de $\Catmod$, on déduit l'existence d'une
$\Catmod$\nbd-catégorie \emph{gauche} $\VCatg$ ayant mêmes objets que
$\Catmod$ et dont l'objet de morphismes est défini par $\Homg_\Catmod$. En
effet, la catégorie monoïdale $\Catmod'$ est alors fermée à droite et, pour
$X,Y$ objets de $\Catmod$, on~a
\[
\Homg_\Catmod(X,Y)=\Homd_{\Catmod'}(X,Y).
\]
La catégorie sous-jacente à cette $\Catmod$\nbd-catégorie gauche est la
catégorie sous-jacente à la catégorie monoïdale $\Catmod$ et sa
$\Catmod$\nbd-sesquicatégorie sous-jacente est définie par le foncteur
$\Homg_\Catmod : \Catmod^\op \times \Catmod \to\Catmod$.
\end{exem}

\begin{paragr}\label{paragr:fonct_enr}
Soient $\Catmod$ une catégorie monoïdale de produit tensoriel $\otimes$ et
d'objet unité~$\CatmodI$, et $\Vcat{C}$ et $\Vcat{C'}$ deux catégories
enrichies dans $\Catmod$ (voir le paragraphe~\ref{paragr:cat_enrich}). On
rappelle qu'un \ndef{foncteur enrichi}, ou plus simplement
\ndef[$z$@$\Catmod$-foncteur]{$\Catmod$\nbd-foncteur}, $\Vcat{F}$ de
$\Vcat{C}$ vers $\Vcat{C'}$ est
la donnée
\begin{itemize}
  \item d'une application
    \nnot{$\Vcat{F}_0:\Ob(\Vcat{C})\to\Ob(\Vcat{C'})$};
  \item pour tous $x,y$ objets de $\Vcat{C}$, d'un morphisme de $\Catmod$
%
\notindex{$\Vcat{F}_{y, x} : \VHom_{\Vcat{C}}(x,y)\to\VHom_{\Vcat{C'}}(\Vcat{F}_0(x),\Vcat{F}_0(y))$}%
\[
\Vcat{F}_{y,x}:\VHom_{\Vcat{C}}(x,y)\to\VHom_{\Vcat{C'}}(\Vcat{F}_0(x),\Vcat{F}_0(y)),
\]
\end{itemize}
ces données satisfaisant aux compatibilités usuelles aux compositions et aux
unités, autrement dit rendant commutatifs les diagrammes
\[
\xymatrixcolsep{5pc}
\xymatrixrowsep{3pc}
\xymatrix{
\VHom_{\Vcat{C}}(y,z)\otimes\VHom_{\Vcat{C}}(x,y)\ar[r]^-{\Vcomp_{z,y,x}}\ar[d]_{\Vcat{F}_{z,y}\otimes\,\Vcat{F}_{y,x}}
&\VHom_{\Vcat{C}}(x,z)\ar[d]^{\Vcat{F}_{z,x}}
\\
\VHom_{\Vcat{C'}}(\Vcat{F}_0(y),\Vcat{F}_0(z))\otimes\VHom_{\Vcat{C'}}(\Vcat{F}_0(x),\Vcat{F}_0(y))\ar[r]^-{\Vcomp_{\Vcat{F}_0(z),\Vcat{F}_0(y),\Vcat{F}_0(x)}}
&\VHom_{\Vcat{C'}}(\Vcat{F}_0(x),\Vcat{F}_0(z)) \pbox{,}
}
\]

\[
\xymatrixcolsep{3pc}
\xymatrixrowsep{.8pc}
\xymatrix{
&&\VHom_{\Vcat{C}}(x,x)\ar[dd]^{\Vcat{F}_{x,x}}
\\
&\CatmodI\ar[ru]^(.4){\VId_x}\ar[rd]_(.35){\VId_{\Vcat{F}_0(x)}}
\\
&&\VHom_{\Vcat{C'}}(\Vcat{F}_0(x),\Vcat{F}_0(x)) \pbox{,}
&
}
\]
pour $x,y,z$ objets de $\Vcat{C}$.

Si $\Vcat{C}$ et $\Vcat{C'}$ sont deux $\Catmod$\nbd-catégories gauches, un
\ndef[$z$@$\Catmod$-foncteur gauche]{$\Catmod$\nbd-foncteur gauche} de
$\Vcat{C}$ vers $\Vcat{C'}$ est un
foncteur enrichi dans la catégorie monoïdale transposée de $\Catmod$ de
$\Vcat{C}$ vers $\Vcat{C'}$.
\end{paragr}

\begin{paragr}\label{paragr:fonct_enr_sousj}
Soient $\Catmod$ une catégorie monoïdale d'objet unité~$\CatmodI$,
$\Vcat{C}$~et~$\Vcat{C'}$ deux catégories enrichies dans $\Catmod$ et
$\Vcat{F}$ un foncteur enrichi de $\Vcat{C}$ vers $\Vcat{C'}$. Le foncteur
enrichi $\Vcat{F}$ induit un foncteur ordinaire $F$ de la catégorie $\C$
sous-jacente à $\Vcat{C}$ vers la catégorie $\C'$ sous-jacente à
$\Vcat{C'}$, appelé foncteur sous-jacent à $\Vcat{F}$, défini comme suit.
Sur les objets, le foncteur $F$ est défini par l'application $\Vcat{F}_0$.
Si \hbox{$f:x\to y$} est un morphisme de $\C$, autrement dit une flèche
$f:\CatmodI\to\VHom_{\Vcat{C}}(x,y)$ de $\Catmod$, le morphisme~$F(f)$ de
$\C'$ est défini par le composé
\[
\xymatrix{
\CatmodI\ar[r]^-f
&\VHom_{\Vcat{C}}(x,y)\ar[r]^-{\Vcat{F}_{y,x}}
&\VHom_{\Vcat{C'}}(\Vcat{F}_0(x),\Vcat{F}_0(y)).
}
\]
Le foncteur sous-jacent à un $\Catmod$\nbd-foncteur gauche est son foncteur
sous-jacent comme foncteur enrichi dans la catégorie monoïdale transposée de
$\Catmod$.
\end{paragr}

\begin{paragr}\label{paragr:sous-Vcat}
Soient $\Catmod$ une catégorie monoïdale et $\Vcat{C}$ une
$\Catmod$\nbd-catégorie. Pour toute partie de $\Ob(\Vcat{C})$, on définit
une $\Catmod$\nbd-catégorie, appelée \ndef[sous-$\Catmod$-catégorie
pleine]{sous-$\Catmod$\nbd-catégorie
pleine de $\Vcat{C}$ définie par cette partie}, dont la classe des objets
est cette partie et dont les objets de morphismes, les morphismes de
composition et les morphismes d'unité sont induits par ceux de $\Vcat{C}$.
L'inclusion de cette partie dans la classe des objets de $\Vcat{C}$
s'enrichit en un $\Catmod$\nbd-foncteur, appelé \ndef[$z$@$\Catmod$-foncteur!
d'inclusion]{$\Catmod$\nbd-foncteur d'inclusion}. De même, si $\Vcat{C}$ est
une $\Catmod$\nbd-catégorie gauche, toute partie de $\Ob(\Vcat{C})$ définit
une \ndef[sous-$\Catmod$-catégorie gauche
pleine]{sous-$\Catmod$\nbd-catégorie gauche pleine de $\Vcat{C}$} et un
\ndef[$z$@$\Catmod$-foncteur gauche!d'inclusion]{$\Catmod$\nbd-foncteur
gauche d'inclusion}.
\end{paragr}

\begin{paragr}\label{paragr:transp_Vcat}
Soit $\Catmod$ une catégorie monoïdale. À une $\Catmod$\nbd-catégorie
$\Vcat{C}$, on associe une $\Catmod$\nbd-catégorie gauche
\nnot{$\transp\Vcat{C}$},
\ndef[transposée!d'une $\Catmod$-catégorie]{transposée} de $\Vcat{C}$, ayant
mêmes objets que $\Vcat{C}$ et, pour $x,y$ objets de $\Vcat{C}$, comme objet
de morphismes de $x$ vers~$y$
\[
\VHom_{\transp\Vcat{C}}(x,y)=\VHom_{\Vcat{C}}(y,x).
\]
De même, à une $\Catmod$\nbd-catégorie gauche $\Vcat{C}$, on associe de
façon analogue une $\Catmod$\nbd-catégorie \ndef[transposée!d'une
$\Catmod$-catégorie gauche]{transposée}
$\transp\Vcat{C}$. Pour $\Vcat{C}$ une $\Catmod$\nbd-catégorie ou une
$\Catmod$\nbd-catégorie gauche, la catégorie sous-jacente à
$\transp\Vcat{C}$ est la catégorie opposée de la catégorie sous-jacente à
$\Vcat{C}$ et on a $\transp\transp\Vcat{C}=\Vcat{C}$.

Si $\Vcat{C}$ et $\Vcat{C}'$ sont deux $\Catmod$\nbd-catégories (resp. deux
$\Catmod$\nbd-catégories gauches) et $\Vcat{F}:\Vcat{C}\to\Vcat{C}'$ est un
$\Catmod$\nbd-foncteur (resp. un $\Catmod$\nbd-foncteur gauche), on définit
le $\Catmod$\nbd-foncteur gauche (resp. le $\Catmod$\nbd-foncteur)
\ndef[transposé!d'un $\Catmod$-foncteur]{transposé}
\termindex{transposé!d'un $\Catmod$-foncteur gauche}%
$\transp\Vcat{F}:\transp\Vcat{C}\to\transp\Vcat{C}'$ de la façon évidente.
\end{paragr}

\begin{exem}\label{exem:fonct_enr}
Soit $\Catmod$ une catégorie monoïdale bifermée (voir le
paragraphe~\ref{paragr:biferme}) de produit tensoriel $\otimes$,
de sorte qu'il existe des foncteurs
\[
\Homd_\Catmod,\Homg_\Catmod : \Catmod^\op \times \Catmod \to\Catmod
\]
et des bijections
\[
\Hom_\Catmod(Y, \Homg_\Catmod(X, Z))\simeq\Hom_\Catmod(X \otimes Y, Z) \simeq \Hom_\Catmod(X, \Homd_\Catmod(Y, Z)),
\]
naturelles en $X$, $Y$ et $Z$ dans $\Catmod$, et soient $\VCatd$ la
$\Catmod$\nbd-catégorie et $\VCatg$ la $\Catmod$\nbd-catégorie gauche
associées (voir l'exemple~\ref{exem:standard_enrich}). Pour $X$ un objet de
$\Catmod$, les affirmations suivantes sont purement formelles
(voir~\cite{Kelly}).
\begin{enumerate}
\item Il existe un $\Catmod$\nbd-foncteur canonique $\VCatd\to\VCatd$ dont le foncteur sous-jacent est le foncteur $\var\otimes X:\Catmod\to\Catmod$.
\item Il existe un $\Catmod$\nbd-foncteur gauche canonique $\VCatg\to\VCatg$ dont le foncteur sous-jacent est le foncteur $X\otimes\var:\Catmod\to\Catmod$.
\item Il existe un $\Catmod$\nbd-foncteur canonique $\VCatd\to\VCatd$ dont
le foncteur sous-jacent est le foncteur $\Homd_\Catmod(X,\var):\Catmod\to\Catmod$.
\item Il existe un $\Catmod$\nbd-foncteur gauche canonique
$\transp\VCatd\to\VCatg$ dont le foncteur sous-jacent est le foncteur
$\Homd_\Catmod(\var,X):\Catmod^\op\to\Catmod$.
\item Il existe un $\Catmod$\nbd-foncteur gauche canonique $\VCatg\to\VCatg$
dont le foncteur sous-jacent est le foncteur $\Homg_\Catmod(X,\var):\Catmod\to\Catmod$.
\item Il existe un $\Catmod$\nbd-foncteur canonique $\transp\VCatg\to\VCatd$
dont le foncteur sous-jacent est le foncteur $\Homg_\Catmod(\var,X):\Catmod^\op\to\Catmod$.
\end{enumerate}
\end{exem}

\begin{paragr}\label{paragr:Graycat}
Une \ndef[$\infty$-catégorie de Gray]{\oo-catégorie de Gray} est une
catégorie enrichie dans la
catégorie monoïdale des \oo-catégories avec comme produit tensoriel le
produit tensoriel de Gray (voir le paragraphe~\ref{paragr:def_tens}). Une
\ndef[$\infty$-catégorie de Gray gauche]{\oo-catégorie de Gray gauche} est
la variante gauche de cette notion,
autrement dit une catégorie enrichie dans la catégorie monoïdale des
\oo-catégories avec comme produit tensoriel le foncteur
\[
(X,Y)\mapsto Y\otimes X \simeq (X^\opp\otimes Y^\opp)^\opp\simeq (X^\co\otimes Y^\co)^\co
\]
(voir la proposition~\ref{prop:dual_tens}). Le produit tensoriel de Gray dans
$\ooCat$ n'étant pas symétrique, les notions de \oo-catégorie de Gray et
\oo-catégorie de Gray gauche ne sont pas équivalentes. Si $\Vcat{C}$ est une
\oo-catégorie de Gray ou une \oo-catégorie de Gray gauche et $x,y$ deux
objets de $\Vcat{C}$, alors par définition $\VHom_{\Vcat{C}}(x,y)$ est une
\oo-catégorie. On vérifie immédiatement que les objets de
$\VHom_{\Vcat{C}}(x,y)$ sont les morphismes de source~$x$ et but~$y$ dans la
catégorie sous-jacente à $\Vcat{C}$ (voir l'exemple~\ref{exem:cat_sousj}). De
plus, si $\C$ désigne la \oo-sesquicatégorie sous-jacente à $\Vcat{C}$, on
a $\VHom_\C(x,y)=\VHom_{\Vcat{C}}(x,y)$ (voir les
paragraphes~\ref{paragr:def_oosesqui} et~\ref{paragr:sesquicat_sousj}).
Pour $i\geq1$, une \ndef[$i$-flèche!d'une $\infty$-catégorie de
Gray]{$i$\nbd-flèche}
\termindex{$i$-flèche!d'une $\infty$-catégorie de Gray gauche}%
de $\Vcat{C}$ sera par définition une $i$\nbd-flèche
de sa \oo-sesquicatégorie sous-jacente $\C$ et de même pour les sources,
buts, compositions et unités.
\end{paragr}

\begin{exem}\label{exem:bourbaki_Gray}
  \abovedisplayskip=3.0pt
  \belowdisplayskip=3.0pt
Le foncteur identité de $\ooCat$ s'enrichit en un foncteur monoïdal lax de
source $\ooCat$ munie du produit tensoriel défini par le produit cartésien
et de but $\ooCat$ munie du produit tensoriel de Gray. Pour $X,Y$ deux
\oo-catégories, la contrainte de composition est
\[
(1_X\otimes p^{}_Y,p^{}_X\otimes1_Y):X\otimes Y\to X\times Y,
\]
où pour $Z$ une \oo-catégorie, $p^{}_Z$ désigne l'unique flèche de $Z$ vers
la \oo-catégorie finale, et la contrainte d'unité est l'identité (en vertu
du paragraphe~\ref{paragr:def_tens}, l'unité du produit tensoriel de Gray
est la catégorie finale). Or, une catégorie enrichie dans la catégorie
monoïdale des \oo-catégories avec comme produit tensoriel le produit
cartésien est simplement une \oo-catégorie (non nécessairement petite mais
localement petite). Ainsi, à une telle \oo-catégorie $\C$, on associe une
\oo-catégorie de Gray $\Vcat{C}$, image directe de $\C$ par ce foncteur
monoïdal lax, satisfaisant à l'égalité
\[
\VHom_{\Vcat{C}}(x,y)=\Homi_\C(x,y).
\]
\end{exem}

\begin{exem}\label{exem:standard_Gray}
On rappelle que la catégorie monoïdale $\ooCat$, munie du produit tensoriel
de Gray, est bifermée (voir le théorème~\ref{thm:produit_tens}), de sorte que
dans les notations du paragraphe~\ref{paragr:def_HomOpLax}, on a des
bijections
\[
\Hom_{\ooCat}(B,\HomLax(A,C))\simeq\Hom_{\ooCat}(A\otimes B,C)\simeq
\Hom_{\ooCat}(A,\HomOpLax(B,C))
\]
naturelles en les \oo-catégories $A,B,C$. En vertu de
l'exemple~\ref{exem:standard_enrich}, on a donc une \oo-catégorie de Gray
\nnot[$\ooCatGr$, $\ooCatGrg$]{$\ooCatGr$}, appelée \ndef[$\infty$-catégorie
de Gray!des $\infty$-catégories]{\oo-catégorie de Gray des \oo-catégories},
dont les objets sont les \oo-catégories et telle que pour tout couple $A,B$
de \oo-catégories, l'objet de morphismes de $A$ vers $B$ soit
$\HomOpLax(A,B)$.  La catégorie sous-jacente à $\ooCatGr$ n'est autre que
$\ooCat$. En vertu de la remarque~\ref{rem:not_HomOpLax}, les
$2$\nbd-flèches de $\ooCatGr$ sont les transformations oplax. Plus
généralement, pour $i\geq0$, les $(i+1)$\nbd-flèches de $\ooCatGr$ sont
appelées des \ndef[$i$-transformation!oplax]{$i$\nbd-transformations oplax} (une
$0$\nbd-transformation oplax est donc un \oo-foncteur et une
$1$\nbd-transformation oplax une transformation oplax ordinaire). De même,
on a une \oo-catégorie de Gray gauche~$\ooCatGrg$, appelée
\ndef[$\infty$-catégorie de Gray gauche!des
$\infty$-caté\-gories]{\oo-catégorie de Gray
gauche des \oo-catégories}, dont les objets sont les
\oo-catégories et telle que pour tout couple $A,B$ de \oo-catégories,
l'objet de morphismes de $A$ vers $B$ soit $\HomLax(A,B)$. La catégorie
sous-jacente à~$\ooCatGrg$ est $\ooCat$, et les $2$\nbd-flèches de
$\ooCatGrg$ sont les transformations lax (voir la
remarque~\ref{rem:not_HomOpLax}). Plus généralement, pour $i\geq0$, les
$(i+1)$\nbd-flèches de $\ooCatGr$ sont appelées des
\ndef[$i$-transformation!lax]{$i$\nbd-transformations lax}.
\end{exem}

\begin{paragr}\label{paragr:def_fonct_Gr}
Soient $\Vcat{C}$ et $\Vcat{C'}$ deux \oo-catégories de Gray. Un
\ndef[$\infty$-foncteur de Gray]{\oo-foncteur de Gray} de $\Vcat{C}$ vers $\Vcat{C'}$ est un foncteur
enrichi de $\Vcat{C}$ vers $\Vcat{C'}$ (voir le
paragraphe~\ref{paragr:fonct_enr}). De même, si~$\Vcat{C}$ et $\Vcat{C'}$
sont deux \oo-catégories de Gray gauches, un \ndef[$\infty$-foncteur de Gray
gauche]{\oo-foncteur de Gray gauche} de $\Vcat{C}$ vers $\Vcat{C'}$ est un
foncteur enrichi de $\Vcat{C}$ vers $\Vcat{C'}$.
\end{paragr}

\begin{paragr}\label{paragr:sous-GrCat}
En vertu du paragraphe~\ref{paragr:sous-Vcat}, si $\Vcat{C}$ est une
\oo-catégorie de Gray, toute partie de la classe des objets de $\Vcat{C}$
définit une \ndef[sous-$\infty$-catégorie de Gray pleine]{sous-\oo-catégorie
de Gray pleine} de $\Vcat{C}$ et un \ndef[$\infty$-foncteur de
Gray!d'inclusion]{\oo-foncteur de Gray d'inclusion}.
De même, si $\Vcat{C}$ est une \oo-catégorie de Gray gauche, toute partie de
la classe des objets de $\Vcat{C}$ définit une \ndef[sous-$\infty$-catégorie
de Gray gauche pleine]{sous-\oo-catégorie de Gray gauche pleine} de
$\Vcat{C}$ et un \ndef[$\infty$-foncteur de Gray
gauche!d'inclusion]{\oo-foncteur de Gray gauche d'inclusion}.
\end{paragr}

\begin{paragr}\label{paragr:dual_Gray}
Soit $\Vcat{C}$ une \oo-catégorie de Gray. On définit une \oo-catégorie de
Gray \nnot[$\Vcat{C}^\opp$, $\Vcat{C}^\co$,
$\Vcat{C}^\op$]{$\Vcat{C}^\opp$}, ayant mêmes objets que $\Vcat{C}$, en
posant
\[
\VHom_{\Vcat{C}^\opp}(x,y)=\VHom_{\Vcat{C}}(y,x)^\co,
\]
les morphismes de composition et d'unité étant définis de la
manière évidente (en utilisant la proposition~\ref{prop:dual_tens}),
et deux \oo-catégories de Gray gauches $\Vcat{C}^\co$ et $\Vcat{C}^\op$,
ayant également mêmes objets que $\Vcat{C}$, en posant
\[\begin{aligned}
&\VHom_{\Vcat{C}^\co}(x,y)=\VHom_{\Vcat{C}}(x,y)^\opp,\cr
\noalign{\vskip 3pt}
&\VHom_{\Vcat{C}^\op}(x,y)=\VHom_{\Vcat{C}}(y,x)^\op
\end{aligned}\]
(voir la proposition~\ref{prop:dual_tens}),
les morphismes de composition et d'unité étant toujours définis de la
manière évidente. En particulier, à une \oo-catégorie
de Gray $\Vcat{C}$, on en associe \emph{trois} autres $\Vcat{C}^\opp$,
$\transp\Vcat{C}^\co$ et $\transp\Vcat{C}^\op$ (voir le
paragraphe~\ref{paragr:transp_Vcat}). On définit de façon analogue, pour
$\Vcat{C}$ une \oo-catégorie de Gray gauche, une \oo-catégorie de Gray
gauche~$\Vcat{C}^\opp$ et deux \oo-catégories de Gray $\Vcat{C}^\co$ et
$\Vcat{C}^\op$.

Pour $\Vcat{C}$ une \oo-catégorie de Gray ou une \oo-catégorie de Gray
gauche, de catégorie sous-jacente $\C$, la catégorie sous-jacente à
$\Vcat{C}^\opp$ et à $\Vcat{C}^\op$ est $\C^\op$, la catégorie sous-jacente
à $\C^\co$ est $\C$ et on a des égalités
\[\begin{aligned}
&\Vcat{C}^{\opp\,\opp}=\Vcat{C}^{\co\,\co}=\Vcat{C}^{\op\,\op}=\Vcat{C}\,,\cr
&\Vcat{C}^{\opp\,\co}=\Vcat{C}^{\co\,\opp}=\Vcat{C}^{\op},\cr
&\Vcat{C}^{\opp\,\op}=\Vcat{C}^{\op\,\opp}=\Vcat{C}^{\co},\cr
&\Vcat{C}^{\co\,\op}=\Vcat{C}^{\op\,\co}=\Vcat{C}^{\opp}.
\end{aligned}\]

Si $\Vcat{C}$ et $\Vcat{C}'$ sont deux \oo-catégories de Gray et
$\Vcat{F}:\Vcat{C}\to\Vcat{C}'$ est un \oo-foncteur de Gray, on définit de
la façon évidente un \oo-foncteur de Gray
$\Vcat{F}^\opp:\Vcat{C}^\opp\to\Vcat{C}'^\opp$ et des \oo-foncteurs de Gray
gauches $\Vcat{F}^\co:\Vcat{C}^\co\to\Vcat{C}'^\co$ et
$\Vcat{F}^\op:\Vcat{C}^\op\to\Vcat{C}'^\op$. De même, si~$\Vcat{C}$
et~$\Vcat{C}'$ sont deux \oo-catégories de Gray gauches et
$\Vcat{F}:\Vcat{C}\to\Vcat{C}'$ est un \oo-foncteur de Gray gauche, on
définit un \oo-foncteur de Gray gauche
$\Vcat{F}^\opp:\Vcat{C}^\opp\to\Vcat{C}'^\opp$ et des \oo-foncteurs de Gray
$\Vcat{F}^\co:\Vcat{C}^\co\to\Vcat{C}'^\co$ et
$\Vcat{F}^\op:\Vcat{C}^\op\to\Vcat{C}'^\op$.
\end{paragr}

\begin{rem}\label{rem:dual_Gray}
Les isomorphismes naturels
\[
    \begin{split}
      \HomOpLax(A, B)^\opp & \simeq \HomLax(A^\opp, B^\opp), \\
      \HomOpLax(A, B)^\co & \simeq \HomLax(A^\co, B^\co), \\
      \HomOpLax(A, B)^\op & \simeq \HomOpLax(A^\op, B^\op),
    \end{split}
\]
pour $A,B$ des \oo-catégories (voir la proposition~\ref{prop:dual_HomLax}),
induisent des isomorphismes de \oo-catégories de Gray
\[
\begin{aligned}
&\xymatrix{
(\ooCatGrg)^\co\ar[r]^-{\mathbf{op}}_-\sim&\ooCatGr
},
\cr
&\xymatrix{
(\ooCatGr)^\opp\ar[r]^-{\mathbf{co}}_-\sim&\transp(\ooCatGrg)
},
\cr
&\xymatrix{
(\ooCatGrg)^\op\ar[r]^-{\mathbf{o}}_-\sim&\transp(\ooCatGrg)
}
\end{aligned}
\]
et des isomorphismes de \oo-catégories de Gray gauches
\[
\begin{aligned}
&\xymatrix{
(\ooCatGr)^\co\ar[r]^-{\mathbf{op}}_-\sim&\ooCatGrg
},
\cr
&\xymatrix{
(\ooCatGrg)^\opp\ar[r]^-{\mathbf{co}}_-\sim&\transp(\ooCatGr)
},
\cr
&\xymatrix{
(\ooCatGr)^\op\ar[r]^-{\mathbf{o}}_-\sim&\transp(\ooCatGr)
}
\end{aligned}
\]
de foncteurs sous-jacents respectifs $C\mapsto C^\opp$, $C\mapsto C^\co$
et $C\mapsto C^\op$.
\end{rem}

\begin{exem}\label{exem:fonct_enr_part}
En se souvenant que la catégorie monoïdale $\ooCat$, munie du produit
tensoriel de Gray, est bifermée (voir le théorème~\ref{thm:produit_tens}), de
sorte que dans les notations du paragraphe~\ref{paragr:def_HomOpLax} on a
des bijections
\[
\Hom_{\ooCat}(B,\HomLax(A,C))\simeq\Hom_{\ooCat}(A\otimes B,C)\simeq
\Hom_{\ooCat}(A,\HomOpLax(B,C))
\]
naturelles en les \oo-catégories $A$, $B$ et $C$, on obtient comme cas particulier
de l'exemple~\ref{exem:fonct_enr} les assertions suivantes pour $C$ une
\oo-catégorie.
\begin{enumerate}
\item Il existe un \oo-foncteur de Gray canonique $\ooCatGr\to\ooCatGr$ dont le foncteur sous-jacent est le foncteur $\var\otimes C:\ooCat\to\ooCat$.
\item Il existe un \oo-foncteur de Gray gauche canonique $\ooCatGrg\to\ooCatGrg$ dont le foncteur sous-jacent est le foncteur $C\otimes\var:\ooCat\to\ooCat$.
\item Il existe un \oo-foncteur de Gray canonique $\ooCatGr\to\ooCatGr$ dont le foncteur sous-jacent est le foncteur $\HomOpLax(C,\var):\ooCat\to\ooCat$.
\item Il existe un \oo-foncteur de Gray gauche canonique $\transp\ooCatGr\to\ooCatGrg$ dont le foncteur sous-jacent est le foncteur $\HomOpLax(\var,C):\ooCat^\op\to\ooCat$.
\item Il existe un \oo-foncteur de Gray gauche canonique $\ooCatGrg\to\ooCatGrg$ dont le foncteur sous-jacent est le foncteur $\HomLax(C,\var):\ooCat\to\ooCat$.
\item Il existe un \oo-foncteur de Gray canonique $\transp\ooCatGrg\to\ooCatGr$ dont le foncteur sous-jacent est le foncteur $\HomLax(\var,C):\ooCat^\op\to\ooCat$.
\end{enumerate}
\end{exem}

\begin{conj}\label{conject:conj_prelim}
\begin{enumerate}
  \item Soient $\Vcat{C}$ une \oo-catégorie de Gray de \oo-sesquicatégorie sous-jacente $\C$ et $c$ un objet de $\Vcat{C}$.
\begin{enumerate}[wide]
\item Il existe une \oo-catégorie de Gray canonique $\tr{\Vcat{C}}{c}$
  \notindex{$\tr{\Vcat{C}}{c}$, $\cotrm{\Vcat{C}}{c}$, $\trm{\Vcat{C}}{c}$,
  $\cotr{\Vcat{C}}{c}$}%
  dont les objets sont les couples $(x,g)$ formés d'un objet $x$ de $\Vcat{C}$ et d'une $1$\nbd-flèche $g:x\to c$ de $\Vcat{C}$, et telle que si $(x,g)$ et $(x',g')$ sont deux tels couples, on ait
\[
\VHom_{\tr{\Vcat{C}}{c}}((x,g),(x',g'))=\VHom_{\tr{\C}{c}}((x,g),(x',g'))
\]
\noemph{(voir le paragraphe~\ref{paragr:sesqui_tranches})}.
\item Il
existe une \oo-catégorie de Gray canonique $\cotrm{\Vcat{C}}{c}$ dont les
objets sont les couples $(x,f)$ formés d'un objet $x$ de $\Vcat{C}$ et d'une
$1$\nbd-flèche $f:c\to x$ de $\Vcat{C}$, et telle que si $(x,f)$ et
$(x',f')$ sont deux tels couples, on ait
\[
\VHom_{\cotrm{\Vcat{C}}{c}}((x,f),(x',f'))=\VHom_{\cotrm{\C}{c}}((x,f),(x',f'))
\]
\noemph{(voir le paragraphe~\ref{paragr:sesqui_tranches})}.
\end{enumerate}

\item Soient $\Vcat{C}$ une \oo-catégorie de Gray gauche de \oo-sesquicatégorie sous-jacente $\C$ et $c$ un objet de $\Vcat{C}$.
\begin{enumerate}[wide]
\item Il existe une \oo-catégorie de Gray gauche canonique $\trm{\Vcat{C}}{c}$ dont les objets sont les couples $(x,g)$ formés d'un objet $x$ de~$\Vcat{C}$ et d'une $1$\nbd-flèche $g:x\to c$ de~$\Vcat{C}$, et telle que si $(x,g)$ et $(x',g')$ sont deux tels couples, on ait
\[
\VHom_{\trm{\Vcat{C}}{c}}((x,g),(x',g'))=\VHom_{\trm{\C}{c}}((x,g),(x',g'))
\]
\noemph{(voir le paragraphe~\ref{paragr:sesqui_tranches})}.
\item Il existe une \oo-catégorie de Gray gauche canonique $\cotr{\Vcat{C}}{c}$ dont les objets sont les couples $(x,f)$ formés d'un objet $x$ de $\Vcat{C}$ et d'une $1$\nbd-flèche $f:c\to x$ de~$\Vcat{C}$, et telle que si $(x,f)$ et $(x',f')$ sont deux tels couples, on ait
\[
\VHom_{\cotr{\Vcat{C}}{c}}((x,f),(x',f'))=\VHom_{\cotr{\C}{c}}((x,f),(x',f'))
\]
\noemph{(voir le paragraphe~\ref{paragr:sesqui_tranches})}.
\end{enumerate}

\item Si $\Vcat{C}$ est une \oo-catégorie de Gray et $c$ un objet de $\Vcat{C}$, on a des isomorphismes canoniques de \oo-catégories de Gray
\[\begin{aligned}
&\tr{\Vcat{C}}{c}\simeq(\cotrm{\Vcat{C}^\opp}{c})^\opp\simeq(\trm{\Vcat{C}^\co}{c})^\co\simeq(\cotr{\Vcat{C}^\op}{c})^\op,\cr
&\cotrm{\Vcat{C}}{c}\simeq(\tr{\Vcat{C}^\opp}{c})^\opp\simeq(\cotr{\Vcat{C}^\co}{c})^\co\simeq(\trm{\Vcat{C}^\op}{c})^\op\cr
\end{aligned}\]
et, si $\Vcat{C}$ est une \oo-catégorie de Gray gauche et $c$ un objet de $\Vcat{C}$, on a des isomorphismes canoniques de \oo-catégories de Gray gauches
\[\begin{aligned}
&\trm{\Vcat{C}}{c}\simeq(\cotr{\Vcat{C}^\opp}{c})^\opp\simeq(\tr{\Vcat{C}^\co}{c})^\co\simeq(\cotrm{\Vcat{C}^\op}{c})^\op,\cr
&\cotr{\Vcat{C}}{c}\simeq(\trm{\Vcat{C}^\opp}{c})^\opp\simeq(\cotrm{\Vcat{C}^\co}{c})^\co\simeq(\tr{\Vcat{C}^\op}{c})^\op,\cr
\end{aligned}\]
tous ces isomorphismes étant compatibles entre eux \emph{via} les dualités
$\var^\opp$, $\var^\co$ et~$\var^\op$.
\end{enumerate}
\end{conj}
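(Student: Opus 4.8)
The plan is to reduce the statement to the construction of a single one of the four slice structures and then transport the rest by the dualities of Gray $\infty$-categories. Indeed, assertion~(c) is purely formal once~(a) and~(b) are established: the duality endofunctors $\var^\opp$, $\var^\co$ and $\var^\op$ on Gray and left Gray $\infty$-categories (paragraphe~\ref{paragr:dual_Gray}) act on hom-$\infty$-categories respectively by $\VHom^\co$, $\VHom^\opp$ and $\VHom^\op$, and the corresponding isomorphisms at the level of the underlying sesquicategorical slices are already available from the $\infty$-categorical slice dualities of the proposition~\ref{prop:dual_tr} and the remarque~\ref{rem:cojoint_ooCat}. Promoting these hom-level isomorphisms to isomorphisms of Gray, respectively left Gray, $\infty$-categories only requires checking their compatibility with the composition and unit $\infty$-functors, which is routine. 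Likewise assertion~(b) follows from~(a) by applying $\var^\opp$, so the whole content is concentrated in the construction~(a) of, say, $\tr{\Vcat{C}}{c}$ for $\Vcat{C}$ a Gray $\infty$-category.

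Since the objects and the hom-$\infty$-categories of $\tr{\Vcat{C}}{c}$ are prescribed --- the latter being the $\infty$-categories $\VHom_{\tr{\C}{c}}((x,g),(x',g'))$ already built on the underlying $\infty$-sesquicategory in the paragraphe~\ref{paragr:sesqui_tranches} --- what remains is to produce composition $\infty$-functors $\VHom_{\tr{\C}{c}}((x',g'),(x'',g''))\otimes\VHom_{\tr{\C}{c}}((x,g),(x',g'))\to\VHom_{\tr{\C}{c}}((x,g),(x'',g''))$, where $\otimes$ is the Gray tensor product (paragraphe~\ref{paragr:def_tens}), together with unit $\infty$-functors, satisfying the associativity and unit axioms of an enriched category. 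It is precisely here that the Gray enrichment of $\Vcat{C}$, and not merely its sesquicategorical structure, must be used: as observed in the paragraphe~\ref{paragr:sesqui_tranches}, the naive attempt to assemble these hom-$\infty$-categories into an $\infty$-sesquicategory already fails, the failure being governed by the Godement interchange law, and it is exactly the extra data carried by a Gray tensor product that is meant to repair it.

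The construction strategy I would follow is the one developed for the $\infty$-categorical slices in the section~\ref{sec:fonct_tr}, lifted to the enriched setting. The composition $\infty$-functors should be manufactured from the $\infty$-functors $(f, h, b)^\ast$ associated to triangles (théorème~\ref{thm:img_tri}) and the oplax transformations $(k, H, b)^\ast$ associated to cones (théorème~\ref{thm:img_cone}): a $1$-cell of a slice hom-$\infty$-category is a morphism between slices, a $2$-cell is a transformation between such morphisms, and their composites and interchanges are governed by the functoriality results, namely the proposition~\ref{prop:fonct_tri} and the cone functoriality propositions of the sous-section~\ref{sec:fonct_tr_cone}. To promote these finitely many low-dimensional data to a genuine enriched composition, I would encode each composition operation by a universal morphism of coaugmented $\infty$-cocategory objects valued in strong Steiner complexes, as in the section~\ref{sec:fonct_tr}, deduce the required identities from their augmented-directed-complex analogues by the Yoneda lemma for such objects (proposition~\ref{prop:cocat_Yoneda}), and finally pass from strong Steiner $\infty$-categories to arbitrary ones using that $\ThetaAug$ is dense, that its objects are strong Steiner (proposition~\ref{prop:Theta_Steiner}), and that the join commutes with connected inductive limits in each variable (théorème~\ref{thm:joint}).

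The hard part will be that the functoriality results of the section~\ref{sec:fonct_tr} supply only the first layers of the enriched structure --- compositions of $1$-cells and $2$-cells and their first coherences --- and only under the restrictive hypotheses that the source objects be strong Steiner $\infty$-categories and that certain morphisms be rigid ordered inclusions. Producing a full Gray $\infty$-category requires composition $\infty$-functors in all dimensions together with all their coherences, for which no finite axiomatization is available; the natural remedy is to realize the entire composition as a single universal construction at the level of $\infty$-cocategory objects and transfer it wholesale via Yoneda, but this demands controlling, under a suitable technical hypothesis on the cells of $\Vcat{C}$, that every $\infty$-category entering these universal objects is again strong Steiner, so that $\nu$ commutes with the colimits at play. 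Removing the Steiner restrictions for a general Gray $\infty$-category $\Vcat{C}$, whose hom-$\infty$-categories are arbitrary, is the second genuine obstacle, and it is the combination of these two difficulties, rather than the verification of any single axiom, that keeps the statement at the level of a conjecture.
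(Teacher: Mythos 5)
This statement is a conjecture: the paper gives no proof of it, only a reduction remark and supporting evidence, so your proposal can only be measured against those. Your honest conclusion that the statement remains open is therefore correct, and your opening reduction agrees in substance with the paper's remark following the conjecture: it suffices to construct a single one of the four slices, the remaining three being then \emph{defined} by the formulas of assertion~(\emph{c}). Note, however, that the paper's formulation is sharper than yours on this point. You propose to construct~(\emph{a}) and~(\emph{b}) independently and then \emph{verify}~(\emph{c}), calling the verification routine; but that verification is itself an enriched coherence check of exactly the kind that is unavailable before any construction exists, whereas taking~(\emph{c}) as the definition of the other slices makes it vacuous. Your identification of the hom-level dualities is essentially right (by the paragraph~\ref{paragr:dual_Gray}, the dualities act on hom objects by $\var^\co$, $\var^\opp$ and $\var^\op$ respectively, with transposition of source and target in the first and third cases), but the isomorphisms of the proposition~\ref{prop:dual_tr} and of the remark~\ref{rem:cojoint_ooCat} concern slices of $\infty$-categories under $\infty$-functors, not the hom $\infty$-categories of the paragraph~\ref{paragr:sesqui_tranches}, so even that step is not literally a citation.

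Where you genuinely diverge from the paper is in the envisioned construction. You propose to assemble the enriched composition of $\tr{\Vcat{C}}{c}$ from the low-dimensional functoriality results of the section~\ref{sec:fonct_tr} (the theorems~\ref{thm:img_tri} and~\ref{thm:img_cone} and their coherences), promoted through universal coaugmented $\infty$-cocategory objects and the Yoneda lemma (proposition~\ref{prop:cocat_Yoneda}); as you yourself observe, those results supply only $1$- and $2$-cell data, under restrictive Steiner and rigidity hypotheses, which is why this plan stalls. The paper's supporting evidence (paragraph~\ref{paragr:conj_Cda}) goes the other way around: it constructs, at the level of augmented directed complexes, explicit hom complexes $\Homd_L((K,g),(K',g'))$, $\Homg_L((K,g),(K',g'))$, etc., which assemble into $\Cda$-categories and left $\Cda$-categories of complexes above and below $L$, and then transports the \emph{entire} enriched structure in a single step by image directe along the lax monoidal functor $\nu$ (paragraph~\ref{paragr:imdir_enrich}, proposition~\ref{prop:lambda_nu_mon_tens}). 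This yields the conjecture for the full sub-Gray $\infty$-categories defined by strong Steiner $\infty$-categories with no dimension-by-dimension bootstrapping. Finally, for the general case, the missing ingredient the paper points to is not the density-plus-Yoneda argument you sketch, but a density argument combined with an \emph{enriched} version of the Day-type theorem~\ref{thm:Day_loc}; your proposal never identifies this, and it is precisely the tool that would be needed to pass from the Steiner case to arbitrary hom $\infty$-categories.
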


\begin{rem}
On remarque que pour démontrer cette conjecture, il suffit de démontrer une
seule des assertions (\emph{a.1}), (\emph{a.2}),
(\emph{b.1}) ou (\emph{b.2}). En effet, il suffit alors de définir les
autres « tranches » par les formules pertinentes de l'assertion~(\emph{c}).
\end{rem}

\begin{exem}\label{exem:part_conj}
On va s'intéresser plus spécialement au cas particulier de la conjecture
appliquée à la \oo-catégorie de Gray $\ooCatGr$ et à la \oo-catégorie de
Gray gauche $\ooCatGrg$. Ainsi, conjecturalement, pour toute \oo-catégorie
$C$, on dispose d'une~\oo-catégorie de Gray $\tr{\ooCatGr}{C}$
\notindex{$\tr{\ooCatGr}{C}$, $\cotrm{\ooCatGr}{C}$,
$\trm{\ooCatGrg}{C}$, $\cotr{\ooCatGrg}{C}$}%
des \oo-catégories au-dessus de $C$ et d'une \oo-catégorie de Gray
\smash{$\cotrm{\ooCatGr}{C}$} des \oo-catégories au-dessous de $C$. De même,
on dispose d'une \oo-catégorie de Gray gauche \smash{$\trm{\ooCatGrg}{C}$}
des \oo-catégories au-dessus de $C$ et d'une \oo-catégorie de Gray gauche
$\cotr{\ooCatGrg}{C}$ des \oo-catégories au-dessous de $C$. On note
\[
\tr{\ooCatopl}{C},\quad \cotrm{\ooCatopl}{C},\quad \trm{\ooCatlax}{C}
\quadtext{et} \cotr{\ooCatlax}{C}
\]
\notindex{$\tr{\ooCatopl}{C}$, $\cotrm{\ooCatopl}{C}$, $\trm{\ooCatlax}{C}$,
$\cotr{\ooCatlax}{C}$}%
les catégories sous-jacentes à
\[
\tr{\ooCatGr}{C},\quad \cotrm{\ooCatGr}{C},\quad \trm{\ooCatGrg}{C}
\quadtext{et}
\cotr{\ooCatGrg}{C}
\]
respectivement. Les objets de $\tr{\ooCatopl}{C}$ et
de \smash{$\trm{\ooCatlax}{C}$} sont les mêmes que ceux de $\tr{\ooCat}{C}$.
De même, les objets de \smash{$\cotrm{\ooCatopl}{C}$} et de
$\cotr{\ooCatlax}{C}$ sont les mêmes que ceux de $\cotr{\ooCat}{C}$. Dans le
tableau suivant, on indique les $2$\nbd-diagrammes définissant les
morphismes d'un objet vers un objet «~prime~» dans ces quatre catégories :
\[
\UseTwocells
\xymatrixcolsep{.0pc}
\xymatrixrowsep{-.1pc}
\xymatrix{
A\ar[rrrrrr]^{u}\ar[rrrdddddd]_{g}
&&&&&&A'\ar[llldddddd]^{g'}
&\kern 10pt&
A\ar[rrrrrr]^{u}\ar[rrrdddddd]_{g}
&&&&&&A'\ar[llldddddd]^{g'}
&\kern 10pt&
&&&C\ar[llldddddd]_f\ar[rrrdddddd]^{f'}
&&&
&\kern 10pt&
&&&C\ar[llldddddd]_f\ar[rrrdddddd]^{f'}
&&&
\\
&&&&\ar@2{->}[lldd]^{\alpha}_{}
&&&&
&&&&\ar@2{<-}[lldd]^{\beta}_{}
&&&&
&&&&&&
&&
\\&&&&&&&&&&&&&&&&&&&&&&&&
\\&&&&&&&&&&&&&&&&&&&&\ar@2{<-}[lldd]^{\gamma}_{}
&&
&&
&&&&\ar@2{->}[lldd]^{\delta}_{}
\\&&&&&&&&&&&&&&&&&&&&&&&&&&&&&&
\\&&&&&&&&&&&&&&&&&&&&&&&&&&&&&&
\\
&&&C
&&&&&
&&&C
&&&&&
A\ar[rrrrrr]_u
&&&&&&A'
&&
A\ar[rrrrrr]_u
&&&&&&A'
\\
\\
&&&\ar@{}[r]_{\displaystyle\tr{\ooCatopl}{C}\ \ \vrule height 12pt width 0pt}
&&&&&
&&&\ar@{}[r]_{\displaystyle\trm{\ooCatlax}{C}\ \ }
&&&&&
&&&\ar@{}[r]_{\displaystyle\cotrm{\ooCatopl}{C}\ \ }
&&&&&
&&&\ar@{}[r]_{\displaystyle\cotr{\ooCatlax}{C} \pbox{,}\ \ \vrule height 12pt width 0pt}
&&&&&
}
\]
$\alpha$ et $\gamma$ étant des transformations oplax et $\beta$ et $\delta$
des transformations lax. On vérifie aussitôt
qu'on a des foncteurs d'inclusion canoniques
\[\begin{aligned}
&\tr{\ooCat}{C}\to\tr{\ooCatopl}{C}\,,\quad\tr{\ooCat}{C}\to\trm{\ooCatlax}{C}\,,\cr
\noalign{\vskip 4pt}
&\cotr{\ooCat}{C}\to\cotrm{\ooCatopl}{C}\,,\quad\cotr{\ooCat}{C}\to\cotr{\ooCatlax}{C}
\end{aligned}\]
(en admettant que les morphismes dans les catégories buts se composent de la
manière évidente) induisant l'identité sur les objets et envoyant les
triangles commutatifs sur les $2$\nbd-triangles de même contour munis de la
transformation oplax ou lax unité.
\end{exem}

\begin{conj}\label{conj:fond}
Soit $C$ une \oo-catégorie.
\begin{enumerate}
\item Il existe un \oo-foncteur de Gray canonique \smash{$\ooCatGr\to\cotrm{\ooCatGr}{C}$} dont le foncteur sous-jacent est le composé
\[
\ooCat\to\cotr{\ooCat}{C}\to\cotrm{\ooCatopl}{C}\,,
\]
où la flèche de gauche est le foncteur
\[
\var\joint C:A\mapsto(A\joint C,\,i_2:C\to A\joint C)
\]
et la flèche de droite le foncteur d'inclusion canonique.
\item Il existe un \oo-foncteur de Gray gauche canonique \smash{$\ooCatGrg\to\cotr{\ooCatGrg}{C}$} dont le foncteur sous-jacent est le composé
\[
\ooCat\to\cotr{\ooCat}{C}\to\cotr{\ooCatlax}{C}\,,
\]
où la flèche de gauche est le foncteur
\[
C\joint\var:A\mapsto(C\joint A,\,i_1:C\to C\joint A)
\]
et la flèche de droite le foncteur d'inclusion canonique.
\item Il existe un \oo-foncteur de Gray canonique \smash{$\cotrm{\ooCatGr}{C}\to\ooCatGr$} dont le foncteur sous-jacent composé avec l'inclusion
$\cotr{\ooCat}{C}\to\cotrm{\ooCatopl}{C}$ est le foncteur
\[
  \begin{split}
    \cotr{\ooCat}{C} & \to \ooCat\\
    (A,\,f:C\to A) & \mapsto \trm{A}{f} \pbox{.}
  \end{split}
\]
\item Il existe un \oo-foncteur de Gray gauche canonique \smash{$(\tr{\ooCatGr}{C})^\op\to\ooCatGrg$} dont le foncteur sous-jacent composé avec l'inclusion
$$(\tr{\ooCat}{C})^\op\to(\tr{\ooCatopl}{C})^\op$$ est le foncteur
\[
  \begin{split}
    (\tr{\ooCat}{C})^\op & \to\ooCat\\
    (A,\,f:A\to C) & \mapsto \trm{C}{f} \pbox{.}
  \end{split}
\]
\item Il existe un \oo-foncteur de Gray gauche canonique \smash{$\cotr{\ooCatGrg}{C}\to\ooCatGrg$} dont le foncteur sous-jacent composé avec l'inclusion
$\cotr{\ooCat}{C}\to\cotr{\ooCatlax}{C}$ est le foncteur
\[
  \begin{split}
    \cotr{\ooCat}{C} & \to\ooCat\\
    (A,\,f:C\to A) & \mapsto \cotr{A}{f} \pbox{.}
  \end{split}
\]
\item Il existe un \oo-foncteur de Gray canonique \smash{$(\trm{\ooCatGrg}{C})^\op\to\ooCatGr$} dont le foncteur sous-jacent composé avec l'inclusion
$(\tr{\ooCat}{C})^\op\to(\trm{\ooCatlax}{C})^\op$ est le foncteur
\[
  \begin{split}
    (\tr{\ooCat}{C})^\op & \to\ooCat\\
    (A,\,f:A\to C) & \mapsto \cotr{C}{f} \pbox{.}
  \end{split}
\]
\end{enumerate}
\end{conj}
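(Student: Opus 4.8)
\textbf{Plan de d�monstration de la conjecture~\ref{conj:fond}.}

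L'id�e directrice est d'assembler en \oo-foncteurs de Gray les r�sultats de fonctorialit� �tablis dans la section~\ref{sec:fonct_tr}. Rappelons que la derni�re assertion~(\emph{f}) est celle qui nous int�resse le plus en vue du th�or�me~A \oo-cat�gorique et que, gr�ce aux dualit�s du paragraphe~\ref{paragr:dual_Gray} et aux isomorphismes de la remarque~\ref{rem:dual_Gray}, les six assertions sont �quivalentes entre elles (il suffira d'en �tablir une seule, les autres s'en d�duisant en appliquant $\var^\opp$, $\var^\co$ ou $\var^\op$). Je me concentrerais donc sur l'assertion~(\emph{f}), c'est-�-dire sur la construction du \oo-foncteur de Gray $(\trm{\ooCatGrg}{C})^\op \to \ooCatGr$. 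Le probl�me central est que d�finir un \oo-foncteur de Gray, c'est-�-dire un foncteur enrichi dans $(\ooCat, \otimes)$, exige bien plus que la donn�e d'un foncteur sous-jacent : il faut fournir, pour tout couple d'objets, un \oo-foncteur \emph{strict} entre les \oo-cat�gories de morphismes (les $\VHom$), compatible aux morphismes de composition et d'unit�. Or la premi�re difficult� est que les \oo-cat�gories de Gray $\trm{\ooCatGrg}{C}$ n'ont d'existence que conjecturale (conjecture~\ref{conject:conj_prelim}) ; il faudra donc soit supposer cette conjecture pr�liminaire, soit ne l'utiliser qu'au niveau des $\VHom$, qui eux sont bien d�finis par le paragraphe~\ref{paragr:sesqui_tranches}.

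La strat�gie concr�te consiste � traduire l'action sur les cellules, illustr�e dans l'introduction, en termes des donn�es enrichies. Sur les objets, le foncteur envoie $(A, f : A \to C)$ sur la tranche $\cotr{C}{f}$. Sur les \oo-cat�gories de morphismes, il faut, pour deux objets $(A, f)$ et $(A', f')$, produire un \oo-foncteur de $\VHom_{(\trm{\ooCatGrg}{C})^\op}((A,f),(A',f'))$ vers $\HomOpLax(\cotr{C}{f'}, \cotr{C}{f})$. Les cellules de la source correspondent, via le paragraphe~\ref{paragr:sesqui_tranches} et les descriptions de $\HomLax$, � des triangles de \oo-foncteurs $A \to A'$ au-dessus de $C$ commutant � une transformation lax sup�rieure pr�s ; les cellules du but, d'apr�s le corollaire~\ref{coro:trans_oplax_abs}, sont des transformations oplax sup�rieures entre \oo-foncteurs de tranches. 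C'est exactement la correspondance � triangle/c�ne $\mapsto$ \oo-foncteur/transformation oplax � construite dans les sous-sections~\ref{sec:img_tri} et~\ref{sec:fonct_tr_cone} : le th�or�me~\ref{thm:img_tri} fournit l'action sur les $1$-cellules et le th�or�me~\ref{thm:img_cone} l'action sur les $2$-cellules. Je proc�derais donc en d�gageant d'abord, au niveau des complexes dirig�s augment�s, la structure enrichie compl�te (toutes dimensions) � partir des r�sultats de la section~\ref{sec:img_tri_Cda}, puis en la transportant aux \oo-cat�gories par le lemme de Yoneda pour les objets \oo-cocat�gorie coaugment�e (proposition~\ref{prop:cocat_Yoneda}), exactement comme dans les preuves des propositions~\ref{prop:fonct_tri} et suivantes.

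La v�rification des axiomes de foncteur enrichi (paragraphe~\ref{paragr:fonct_enr}) --- compatibilit� aux morphismes de composition $\Vcomp$ et aux morphismes d'unit� $\VId$ --- se ram�nera pr�cis�ment aux �nonc�s de fonctorialit� d�j� d�montr�s : la compatibilit� aux identit�s aux propositions~\ref{prop:fonct_tri_id}-type (foncteur associ� � un triangle identit�) et~\ref{prop:fonct_cone_id}, et la compatibilit� � la composition aux propositions~\ref{prop:fonct_tri}, ainsi qu'aux �nonc�s de fonctorialit� des c�nes (compatibilit�s � gauche et � droite, paragraphes~\ref{paragr:fonct_cone_sg} et~\ref{paragr:fonct_cone_sd}, et composition verticale). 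L'obstacle principal, � mon sens, est \textbf{l'hypoth�se de restriction Steiner}. Tous les r�sultats de la section~\ref{sec:fonct_tr} exigent que $K$, $K'$, $L$ soient des complexes de Steiner forts et que $g'$ soit une inclusion rigide ordonn�e ; or un \oo-foncteur de Gray doit agir sur \emph{tous} les objets de $\trm{\ooCatGrg}{C}$, sans aucune restriction de ce type. La g�n�ralisation au cas non-Steiner n�cessiterait soit un argument de densit�/limite inductive permettant d'�tendre la correspondance par continuit� (ce qui est d�licat car les tranches ne commutent aux limites inductives que connexes, et la fonctorialit� en la premi�re variable met en jeu des pr�compositions dont le comportement limite est subtil), soit une reformulation abstraite contournant les formules explicites. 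C'est pourquoi le texte annonce (voir l'introduction) que le r�sultat n'est �tabli qu'en basse dimension et sous hypoth�ses restrictives : la pleine conjecture reste ouverte, et je ne m'attendrais � prouver ici qu'une version partielle, valable pour les diagrammes se factorisant par des complexes de Steiner forts avec $u'$ un monomorphisme v�rifiant l'hypoth�se technique, la lev�e de ces hypoth�ses �tant le v�ritable c\oe{}ur de la difficult�.
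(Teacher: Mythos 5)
L'\'enonc\'e que vous traitez est une \emph{conjecture} : le texte n'en donne aucune d\'emonstration, et votre plan doit donc \^etre compar\'e au programme esquiss\'e au paragraphe~\ref{paragr:conj_Cda}. Sur l'essentiel, votre plan co\"{\i}ncide avec ce programme : les analogues pour les complexes dirig\'es augment\'es (avec $L$ \emph{d\'ecent}) sont vrais et se d\'emontrent par formules explicites ; ils donnent la conjecture restreinte aux \oo-cat\'egories de Steiner fortes, par le m\^eme m\'ecanisme de transport que dans la section~\ref{sec:fonct_tr} (lemme de Yoneda pour les objets \oo-cocat\'egorie coaugment\'ee, th\'eor\`emes~\ref{thm:img_tri} et~\ref{thm:img_cone}) ; et l'extension au cas g\'en\'eral --- le point dur, que vous identifiez correctement --- n'est qu'esp\'er\'ee par les auteurs, via un argument de densit\'e et une version enrichie du th\'eor\`eme~\ref{thm:Day_loc}. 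Votre conclusion (seule une version partielle, sous hypoth\`eses de type Steiner, est accessible avec les outils du texte) est exactement la position du texte.

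En revanche, votre r\'eduction initiale est erron\'ee : vous affirmez que les six assertions sont \'equivalentes entre elles et qu'il suffit d'en \'etablir une seule. La remarque qui suit la conjecture ne donne que des \'equivalences par paires, (\emph{a})$\,\Leftrightarrow\,$(\emph{b}), (\emph{c})$\,\Leftrightarrow\,$(\emph{e}) et (\emph{d})$\,\Leftrightarrow\,$(\emph{f}), obtenues par les dualit\'es de la remarque~\ref{rem:dual_Gray} et l'assertion (\emph{c}) de la conjecture~\ref{conject:conj_prelim}. Rien ne permet de passer de (\emph{f}) \`a (\emph{a}) ou \`a (\emph{c}) : dans l'exemple~\ref{exem:fonct_enr}, les six \'enonc\'es formels se d\'eduisent tous de la \emph{bifermeture} de la structure mono\"{\i}dale, et non les uns des autres ; or le joint n'est que localement biferm\'e, et c'est pr\'ecis\'ement ce d\'efaut qui en fait des \'enonc\'es distincts --- une adjonction entre les foncteurs sous-jacents ne s'enrichit pas automatiquement en adjonction entre \oo-cat\'egories de Gray. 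Votre plan, centr\'e sur (\emph{f}), ne produirait donc au mieux que (\emph{d}) et (\emph{f}), et non les quatre autres assertions.
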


\begin{rem}\label{}
Les assertions (\emph{a})--(\emph{f}) de la conjecture ci-dessus sont les
analogues pour le joint des assertions formelles (\emph{a})--(\emph{f}) de
l'exemple~\ref{exem:fonct_enr_part} relatives au produit tensoriel de Gray.
Par ailleurs, en utilisant les isomorphismes de la
remarque~\ref{rem:dual_Gray} ainsi que ceux de l'assertion (\emph{c}) de la
conjecture~\ref{conject:conj_prelim}, on remarque que pour prouver
l'assertion (\emph{b}) (resp. (\emph{e}), resp. (\emph{f})) de la
conjecture~\ref{conj:fond}, il suffit de démontrer l'assertion (\emph{a})
(resp. (\emph{c}), resp. (\emph{d})) et réciproquement. De même, la
conjecture~\ref{conj:fond} est équivalente à la conjecture analogue
pour le foncteur $\joint'$ et les «~tranches~»~$\tr{C}{f}$ et
\smash{$\cotrm{C}{f}$} (voir la remarque~\ref{rem:cojoint_ooCat}).
\end{rem}

\begin{rem}
L'assertion (\emph{f}) de la conjecture~\ref{conj:fond} est une vaste
généralisation des résultats du chapitre~\ref{sec:fonct_tr}. Par exemple,
si
\[
    \shorthandoff{;}
    \xymatrix@C=1.5pc{
      A \ar[rr]^f \ar[dr]_{c}_{}="f" & & A' \ar[dl]^(0.42){c'} \\
      & C
      \ar@{}"f";[ur]_(.15){}="ff"
      \ar@{}"f";[ur]_(.55){}="oo"
      \ar@<-0.5ex>@2"ff";"oo"^{\alpha}
    }
  \]
est un diagramme dans $\ooCat$, avec $\alpha$ une transformation lax,
autrement dit un diagramme représentant une $1$\nbd-flèche
$(f,\alpha):(A,c)\to(A',c')$ de \smash{$\trm{\ooCatGrg}{C}$} (qui est une
flèche de sa catégorie sous-jacente \smash{$\trm{\ooCatlax}{C}$}), en
appliquant le foncteur \smash{$(\trm{\ooCatlax}{C})^\op\to\ooCat$}
sous-jacent au \oo-foncteur de Gray canonique
\smash{$(\trm{\ooCatGrg}{C})^\op\to\ooCatGr$}, on obtient un \oo-foncteur
$(f,\alpha)^\ast:\cotr{C}{c'} \to \cotr{C}{c}$, généralisant celui du
théorème~\ref{thm:img_tri}. Par fonctorialité, on obtient une généralisation
des résultats de la section~\ref{sec:fonct_tri}. De même, si
\[
      \shorthandoff{;:}
        \xymatrix@C=2.1pc@R=3pc{
        A \ar@/^2ex/[rr]^(.33){f'}_{}="1" \ar@/_2ex/[rr]^(.30)f_{}="0"
        \ar[dr]_{}="f"_{\phantom{c'}c}
        \ar@2"0";"1"_\beta
        & & A' \ar[dl]^{c'} \\
        & C
        \ar@{}"f";[ur]_(.15){}="ff"
        \ar@{}"f";[ur]_(.55){}="oo"
        \ar@<-0.5ex>@/^1ex/@{:>}"ff";"oo"^(.18){\alpha'\!\!}_(.30){}="h'"
        \ar@<-2.0ex>@/^-1ex/@2"ff";"oo"_(.36){\alpha}_(.80){}="h"
        \ar@3"h";"h'"_(.16){\,\Lambda{}}
        }
    \]
est un diagramme dans $\ooCat$, avec $\alpha$, $\alpha'$ et $\beta$ des
transformations lax et $\Lambda$ une $2$\nbd-transformation lax, autrement
dit un diagramme représentant une $2$\nbd-flèche
\hbox{$(\beta,\Lambda):(f,\alpha)\to(f',\alpha')$} de
\smash{$\trm{\ooCatGrg}{C}$}, en appliquant le \oo-foncteur de Gray
canonique \smash{$(\trm{\ooCatGrg}{C})^\op\to\ooCatGr$}, on obtient une
$2$\nbd-flèche de $\ooCatGr$, autrement dit une $2$-transformation oplax de
source $(f',h')^\ast$ et but $(f,h)^\ast$, généralisant ainsi le
théorème~\ref{thm:img_cone}. Enfin, une description plus précise de la
\oo-sesquicatégorie sous-jacente à \smash{$\trm{\ooCatGrg}{C}$} fournirait
une généralisation des résultats de fonctorialité de la
section~\ref{sec:fonct_tr_cone}.
\end{rem}

\begin{paragr}\label{paragr:conj_Cda}
Les conjectures~\ref{conject:conj_prelim} et~\ref{conj:fond} sont étayées
par le fait que des conjectures analogues sont vraies pour les complexes
dirigés augmentés. Plus précisément, on rappelle que la catégorie monoïdale
$\Cda$ des complexes dirigés augmentés, munie du produit tensoriel, est
bifermée (voir le paragraphe~\ref{paragr:def_produit_cda}), de sorte qu'il
existe des foncteurs
\nnot[$\HomCdad(K, L)$, $\HomCdag(K, L)$]%
  {$\HomCdad,\HomCdag:\Cda^\op\times\Cda^{}\to\Cda^{}$} et des bijections
\[
\Hom_{\Cda}(L,\HomCdag(K,M))\simeq\Hom_{\Cda}(K\otimes
L,M)\simeq\Hom_{\Cda}(K,\HomCdad(L,M)),
\]
naturelles en les complexes dirigés augmentés $K,L,M$. En vertu de
l'exemple~\ref{exem:standard_enrich}, on dispose donc d'une
$\Cda$\nbd-catégorie ayant mêmes objets que $\Cda$ et dont l'objet de
morphismes est défini par $\HomCdad$, et d'une $\Cda$\nbd-catégorie gauche
ayant également les mêmes objets que $\Cda$ et dont l'objet de morphismes
est défini par $\HomCdag$. Comme le foncteur $\nu : \Cda \to \ooCat$
s'enrichit en un foncteur monoïdal lax (voir la
proposition~\ref{prop:lambda_nu_mon_tens}), on peut appliquer la
construction de l'image directe du paragraphe~\ref{paragr:imdir_enrich} pour
obtenir une \oo-catégorie de Gray \nnot[$\CdaGr$, $\CdaGrg$]{$\CdaGr$}
(resp. une \oo-catégorie de Gray gauche~$\CdaGrg$) ayant comme objets les
complexes dirigés augmentés et telle que pour tous objets $K,L$, 
\[
\VHom_{\CdaGr}(K,L)=\nu(\HomCdad(K,L))\quad \hbox{(resp. }\
\VHom_{\CdaGrg}(K,L)=\nu(\HomCdag(K,L))\text{)}.
\]
On remarque que, essentiellement par définition, pour $i\geq0$, les
$i$\nbd-flèches de $\VHom_{\CdaGr}(K,L)$ (resp. de $\VHom_{\CdaGrg}(K,L)$)
sont les $i$\nbd-homotopies (resp. les $i$\nbd-anti\-homo\-topies) de
complexes dirigés augmentés de $K$ vers~$L$ (voir le
paragraphe~\ref{paragr:def_n-homot}). En
particulier, les objets de $\VHom_{\CdaGr}(K,L)$ ou de
$\VHom_{\CdaGrg}(K,L)$ sont les morphismes de $K$ vers $L$. De plus, il
résulte facilement des théorèmes~\ref{thm:Steiner} et~\ref{thm:produit_tens}
ainsi que des propriétés d'adjonction que si $K$ est un complexe de Steiner fort,
on a des isomorphismes canoniques
\[
\VHom_{\CdaGr}(K,L)\simeq\HomOpLax(\nu(K),\nu(L))\ \hbox{ et\,}\
\VHom_{\CdaGrg}(K,L)\simeq\HomLax(\nu(K),\nu(L)).
\]
En particulier, la sous-\oo-catégorie de Gray pleine de $\CdaGr$ définie par
les complexes de Steiner forts s'identifie à la sous-\oo-catégorie de
Gray pleine de $\ooCatGr$ définie par les \oo-catégories de Steiner fortes (voir
le paragraphe~\ref{paragr:def_ooCat_Stf}), et de même pour les
sous-\oo-catégories de Gray gauches pleines de $\CdaGrg$ et
$\ooCatGrg$ correspondantes.

Par ailleurs, pour tout complexe dirigé augmenté $L$ et tout couple de
morphismes $g:K\to L$, $g:K'\to L$ (resp. $f:L\to K$, $f':L\to K'$) de
$\Cda$ de but $L$ (resp. de source $L$), on sait définir des complexes
dirigés augmentés explicites
\[\begin{aligned}
&\Homd_L((K,g),(K',g'))\quad\hbox{et}\quad\Homg_L((K,g),(K',g'))\cr
\hbox{(resp.}\quad&\Homi_{\mathrm{d}}^L((K,f),(K',f'))
\quad\hbox{et}\quad
\Homi_{\mathrm{g}}^L((K,f),(K',f'))\text{)},\phantom{(resp.}\
\end{aligned}\]
pouvant s'assembler pour former deux $\Cda$\nbd-catégories dont les objets
sont les complexes dirigés augmentés respectivement au-dessus et au-dessous
de $L$ et dont les objets de morphismes sont définis par
$\Homd_L((K,g),(K',g'))$ et $\Homi_{\mathrm{d}}^L((K,f),(K',f'))$, et deux
$\Cda$\nbd-catégories gauches dont les objets sont aussi respectivement les
complexes dirigés augmentés au-dessus et au-dessous de $L$ et dont les
objets de morphismes sont définis par $\Homg_L((K,g),(K',g'))$ et
$\Homi_{\mathrm{g}}^L((K,f),(K',f'))$. En appliquant la construction de
l'image directe du paragraphe~\ref{paragr:imdir_enrich} par le foncteur
monoïdal lax $\nu$, on obtient deux \oo-catégories de Gray notées
  \nnot[$\tr{\CdaGr}{L}$, $\cotrm{\CdaGr}{L}$, $\trm{\CdaGrg}{L}$,
$\cotr{\CdaGrg}{L}$]{$\tr{\CdaGr}{L}$} et \smash{$\cotrm{\CdaGr}{L}$} et
deux \oo-catégories de Gray gauches notées \smash{$\trm{\CdaGrg}{L}$} et
$\cotr{\CdaGrg}{L}$. On peut démontrer que celles-ci satisfont les
affirmations de la conjecture~\ref{conject:conj_prelim} relatives à la
\oo-catégorie de Gray $\CdaGr$ et à la \oo-catégorie de Gray gauche
$\CdaGrg$. En particulier, cela implique le cas de la
conjecture~\ref{conject:conj_prelim} exposé dans
l'exemple~\ref{exem:part_conj} pour la sous-\oo-catégorie de Gray pleine de
$\ooCatGr$ et la sous-\oo-catégorie de Gray gauche pleine de $\ooCatGrg$
définies par les \oo-catégories de Steiner fortes.

De plus, si le complexe dirigé augmenté $L$ est décent (voir le
paragraphe~\ref{paragr:def_decent}), on peut définir explicitement les
morphismes de complexes dirigés augmentés pertinents pour construire les
\oo-foncteurs de Gray et \oo-foncteurs de Gray gauches correspondant à
l'analogue de la conjecture~\ref{conj:fond} pour les complexes dirigés
augmentés. Ceci permet comme ci-dessus d'établir la
conjecture~\ref{conj:fond} si on se restreint aux \oo-catégories de Steiner
fortes.

Ces résultats seront exposés plus en détails ailleurs. On espère pouvoir en
déduire, par un argument de densité et une version « enrichie » de la
généralisation du théorème de Day démontrée dans ce texte
(théorème~\ref{thm:Day_loc}), la conjecture~\ref{conj:fond} en toute
généralité.
\end{paragr}

\def\bibname{Références}
\bibliography{biblio}
\bibliographystyle{mysmfplain}
\begin{flushleft}
\printindex[not]
\printindex[term]
\end{flushleft}

\end{document}